\documentclass{article}

\usepackage{fullpage}
\usepackage{amsthm,amsmath,amsfonts,amssymb}

\usepackage{bbm}
\usepackage{mathrsfs}
\usepackage{algorithm}
\usepackage{algpseudocode}
\usepackage{comment}
\usepackage{graphicx}
\usepackage{float}
\usepackage{makecell}
\usepackage[colorlinks]{hyperref}

\newtheorem{remark}{Remark}
\newtheorem{definition}{Definition}
\newtheorem{theorem}{Theorem}
\newtheorem{corollary}{Corollary}
\newtheorem{lemma}{Lemma}
\newtheorem{proposition}{Proposition}
\newtheorem{assumption}{Assumption}

\newcommand{\eps}{\varepsilon}
\newcommand{\real}{\mathbb{R}}
\newcommand{\mprob}{\mathbb{P}}
\newcommand{\E}{\mathbb{E}}
\newcommand{\calM}{\mathcal{M}}
\newcommand{\norm}[1]{\left\lVert #1 \right\rVert}
\newcommand{\Tr}{\ensuremath{\mathrm{Tr}}}
\newcommand{\thetahat}{\ensuremath{\widehat{\theta}}}
\newcommand{\Thetahat}{\ensuremath{\widehat{\Theta}}}
\newcommand{\sigmahat}{\ensuremath{\widehat{\sigma}}}

\newcommand{\Cov}{\ensuremath{\operatorname{Cov}}}
\newcommand{\Var}{\ensuremath{\operatorname{Var}}}

\begin{document}

\begin{center}
{\bf{\LARGE{Node-private community estimation in stochastic block models: Tractable algorithms and lower bounds}}} \\
\vspace*{.25in}
\begin{tabular}{ccc}
{\large{Laurentiu Marchis}} & \hspace*{.5in}  & {\large{Ethan D'souza}}\\
{\large{\texttt{lam223@cam.ac.uk}}} & \hspace*{.5in} & {\large{\texttt{ethandsouza.ldn@gmail.com}}} \\
\mbox{} & \vspace{0.1cm} & \mbox{} \\
{\large{Tom\'{a}\v{s} Fl\'{i}dr}} & \hspace*{.5in}  & {\large{Po-Ling Loh}}\\
{\large{\texttt{tf388@cam.ac.uk}}} & \hspace*{.5in} & {\large{\texttt{pll28@cam.ac.uk}}}
\end{tabular}
\begin{center}
Statistical Laboratory \\
Department of Pure Mathematics and Mathematical Statistics \\
University of Cambridge
\end{center}
\vspace*{.2in}
May 2026
\vspace*{.2in}
\end{center}

\begin{abstract}
We study the classical problem of community recovery in stochastic block models with a fixed number of communities, with a twist: We seek algorithms that are stable with respect to node-wise changes in the graph structure, formally defined as a differential privacy constraint. The algorithms we develop are based on spectral clustering, where we introduce privacy to the community recovery pipeline in the form of directly privatizing the adjacency matrix; private PCA; private convex optimization; private low-rank matrix estimation; and private approximate subspace estimation. Straightforward applications of existing private algorithms lead to a rapid increase in the privacy parameter $\epsilon$ in order to ensure consistent estimation under node differential privacy, in contrast with the simpler setting of edge privacy. To alleviate these issues, we develop novel algorithms based on (1) sampling from an exponential mechanism with a Lipschitz extension and (2) a general framework for constructing smooth projections from the space of undirected graphs to the space of bounded-degree graphs, which can then be combined with various edge-private algorithms. Importantly, the methods we develop are all computable in polynomial-time as a function of the number of nodes in the graph. We also develop novel lower bounds on the growth rate of $\epsilon$ required in order to achieve consistent community estimation under node privacy. On a technical note, our paper highlights the complications that arise when analyzing private algorithms under the non-standard scaling $\epsilon \rightarrow \infty$ and proposes some solutions. We also provide a novel application of the HGR maximal correlation from information theory in the context of accuracy amplification in PAC learning, which may be of independent interest.
\end{abstract}


\section{Introduction}

Recent years have seen a massive increase of interest in privacy from the statistics community. Although the framework of differential privacy was introduced 20 years ago~\cite{dwork2006calibrating, Dwo06, dwork2008differential}, with numerous papers in theoretical computer science developing methods for privatizing various algorithms and studying the tradeoffs between privacy and accuracy (i.e., utility) of the algorithms, privacy has only recently begun to gain more traction in the statistics community, e.g., see the papers~\cite{dwork2009differential, wasserman2010statistical, duchi2018minimax, avella2021privacy, cai2021cost, dong2022gaussian, avella2023differentially}. Indeed, rather than viewing privacy as an aspect of a \emph{mechanism} or \emph{algorithm}, one can more simply consider privacy as a constraint on statistical estimator computed from data. This leads to a trove of new problems in theoretical statistics, where the goal is to pinpoint minimax rates of estimation for various problems, with the minimum taken over the subclass of \emph{private} estimators~\cite{duchi2013local}.

Despite many fascinating advances in this area, a topic that has received relatively little attention in statistics is private estimation in the presence of \emph{discrete-structured} problems. In this paper, we attempt to advance this frontier in the context of estimating the underlying combinatorial structure in an undirected random graph. Specifically, we focus on stochastic block models, which have received a great deal of attention from social scientists, statisticians, engineers, physicists, and computer scientists alike (e.g., see \cite{holland1983stochastic, rohe2011spectral, decelle2011asymptotic, zhao2012consistency, LeiRin15, hajek2016achieving, moitra2016robust,  gao2017achieving, abbe2018community, xu2020optimal} and the references cited therein).

A principled analysis of differentially private algorithms applied to realizations of a random graph immediately faces the fundamental question of how neighboring datasets should be defined. Indeed, standard definitions of differential privacy involve the behavior of an algorithm when one observation in a data matrix is removed/replaced. However, defining a direct analog for graphical data leads to some ambiguity. One possibility is to consider two graphs as ``neighbors" if they differ in a single edge (this is known as \emph{edge} privacy), whereas an alternative possibility is to define two graphs as neighbors if they differ in the connections to any single node (this is known as \emph{node} privacy). If we consider the protection promised by a privacy-preserving data curator to an individual as a bound on the extent to which the individual's data will be reflected in the output of a private algorithm, and treat the nodes of a graph as individuals, edge privacy corresponds to protecting information about the relationship between \emph{any pair of individuals}, while node privacy corresponds to protecting information that \emph{any individual chooses to release about their relationship with anyone else}.

In this paper, we focus on the setting of node privacy, which we believe to be slightly more realistic and also (as will be seen in the sequel) much more mathematically challenging. Given a realization of a random graph drawn from a stochastic block model (SBM) with an equal number of nodes in each community, we study the problem of recovering the unobserved community labels in a manner that respects node differential privacy.


\subsection{Main contributions}

In more detail, the main contributions of our paper are as follows: First, we propose the first computationally-feasible algorithms for node-private community estimation in stochastic block models that we are aware of in the literature. Our algorithms involve methodological innovations based on (1) Lipschitz extensions which are computable in polynomial-time, with corresponding exponential mechanisms which are efficient to sample from; and (2) a general method to construct a smooth projection from the space of all input graphs to those of bounded degree, which is again computationally feasible and may be combined with a host of edge-private algorithms. We emphasize that unlike some of the recent literature on computationally feasible graph-private estimation, our polynomial-time methods only involve solving linear programs rather than sum-of-squares optimization problems, the latter of which are not practically implementable.

The algorithms we present are applicable in slightly different settings, involving pure vs.\ approximate differential privacy, weighted vs.\ unweighted graphs, and different scaling on the expected degree in terms of the number $n$ of nodes in the graph. In the utility analysis, we focus on the required scaling of $\epsilon$ in order to drive the fraction of misclassified nodes to 0 as $n \rightarrow \infty$ (i.e., consistency). Importantly, we provide two novel lower bound arguments proving that $\epsilon \rightarrow \infty$ is necessary for consistent community recovery in a node-private setting.

From a technical viewpoint, our analysis exposes some intricacies of studying the $\epsilon \rightarrow \infty$ regime, including in applying composition theorems and group privacy (we obtain tighter results by passing through zero-concentrated differential privacy) and applications of the Gaussian mechanism. These contributions may be interesting in their own right, as other statistical settings surely exist beyond stochastic block models where the condition $\epsilon \rightarrow \infty$ is needed to ensure accurate estimation.


\subsection{Related work}

Graph differential privacy was first studied in~\cite{NisEtal07, hay2009accurate, karwa2011private, gupta2012iterative, karwa2012differentially, blocki2012johnson}, with the distinction between edge and node privacy being introduced in~\cite{hay2009accurate}, and node-private mechanisms subsequently developed in~\cite{blocki2013differentially, kasiviswanathan2013analyzing, chen2013recursive, raskhodnikova2015efficient}. \cite{kasiviswanathan2013analyzing, chen2013recursive} introduced the notion of Lipschitz extensions, which was later used in~\cite{borgs2015private, borgs2018revealing, borgs2018private}. \cite{blocki2013differentially} and \cite{kasiviswanathan2013analyzing} provided fairly general methods for converting from edge-private algorithms to node-private algorithms. However, although our work is heavily inspired by these papers, in particular high-probability upper bounds on local sensitivity, their goal was to privatize rather different problems such as functions of a degree distribution, and directly applying the methods to our setting of stochastic block models leads to much worse results. Furthermore, the technical lemmas in these earlier papers do not apply when $\epsilon \rightarrow \infty$, so we must carefully refine their analysis to make it suitable for our purposes.

Turning to stochastic block models, only a handful of papers have studied estimation under edge privacy~\cite{HehEtal22, mohamed2022differentially, CheEtal23, chakraborty2024prime, nguyen2024differentially, koskela2025price}. \cite{chen2024private} studied node privacy, but with the goal of estimating the $k \times k$ matrix of connection probabilities between nodes in different communities. The most closely related work is the recent paper \cite{klopp2026node}, who similarly studied node privacy for SBM recovery. However, the algorithms they propose have runtime exponential in the number of nodes $n$, whereas our focus is on \emph{computationally feasible} algorithms that are polynomial-time in $n$. \cite{klopp2026node} have some methodological overlap in their use of an extension lemma from differential privacy~\cite{borgs2018private}. However, it is very important in our paper to only use Lipschitz extensions that can be computed efficiently from the data. Furthermore, while our lower bounds involving $\epsilon = \Omega(\log n)$ roughly agree with the conclusion of \cite{klopp2026node}, our lower bound derivations involve quite different arguments and are also applicable to approximate privacy settings, as well as a wider range of accuracy guarantees/high-probability regimes.


\subsection{Organization}

The remainder of the paper is organized as follows: We begin in Section~\ref{SecBackground} by providing some background on the stochastic block model and fixing notation for the specific types of SBMs we will study. We also rigorously state the notion of node differential privacy. Section~\ref{SecEdge} provides some further insight on the challenges of node differential privacy in comparison to edge differential privacy. We also introduce several algorithms that will serve as building blocks for the methods we develop later in the paper. In Section~\ref{SecPPCA}, we provide our first main methodological contribution, which combines the notion of Lipschitz extensions with a private PCA algorithm. Section~\ref{SecProj} provides our second main methodological contribution, constructing a smooth projection from the space of arbitrary graphs to the space of graphs with bounded degree. We then explain how to combine the smooth projection with the algorithms mentioned in Section~\ref{SecEdge}, and rigorously derive privacy and utility guarantees. Section~\ref{SecLower} complements our analysis of specific algorithms with lower bounds that hold for all private algorithms. We conclude in Section~\ref{SecDiscussion} with a discussion of interesting open questions inspired by our work. Additional proof details may be found in the supplementary appendices.


\subsection{Notation}

For a matrix $A$, we write $\text{row}(A)$ and $\text{col}(A)$ to denote the subspaces spanned by the rows and columns of $A$, respectively. We write $\text{rank}(A)$ to denote the rank of $A$. We write $\|A\|_2$ and $\|A\|_F$ to denote the spectral and Frobenius norms. We write $\|A\|_\infty$ to denote the maximum absolute row sum and $\|A\|_1$ to denote the maximum absolute column sum. Let $\sigma_i(A)$ denote the $i^{\text{th}}$ largest singular value of $A$, and let $\sigma_{\min}(A)$ denote the minimum non-zero singular value. Let $\sigma_{\max}(A)$ denote the maximum singular value. Let $\lambda_i(A)$ denote the $i^{\text{th}}$ largest eigenvalue of $A$. We write $\max(A) = \max_{i,j} A_{ij}$ to denote the maximum entry. For two matrices $A$ and $B$, we write $A \otimes B$ to denote the tensor product and $A \odot B$ to denote the Hadamard product.

We write $J_n$ to denote the $n \times n$ matrix with every entry equal to 1. We write $\textbf{1}$ to denote the all-1's vector. We write $E_k$ to denote the set of $k \times k$ permutation matrices.
For an integer $n$, we write $[n]$ to denote the set $\{1, \dots, n\}$.
For a real number $x \in \real$ and truncation parameters $a < b$, we write $[x]_a^b$ to denote the function $\max\left\{a, \min\{x, b\}\right\}$.
We write $D_\alpha(P||Q) = \frac{1}{\alpha - 1}\log\left(\mathbb{E}_{x \sim Q}\left[\left(\frac{P(x)}{Q(x)}\right)^\alpha\right]\right)$ to denote the $\alpha$-R\'{e}nyi divergence between distributions $P$ and $Q$.

For functions $f(n)$ and $g(n)$, we write $f(n) \precsim g(n)$ and $f(n) = O(g(n))$ to mean that $f(n) \le c_1g(n)$, if $n \geq c_2$, for some universal constants $c_1, c_2 \in (0, \infty)$, and define $f(n) \succsim g(n)$ and $f(n) = \Omega(n)$ analogously. We write $f(n) \asymp g(n)$ and $f(n) = \Theta(n)$ when $f(n) \precsim g(n)$ and $f(n) \succsim g(n)$ hold simultaneously. We use the notation $\widetilde{O}$, $\widetilde{\Omega}$, and $\widetilde{\Theta}$ to suppress factors of $\log(n)$ in the relative scaling. We write $f(n) = \mathrm{poly}(n)$ to denote the fact that $f(n) = \Theta(n^\alpha)$ for some constant $\alpha > 0$. We use the notation $c, C, c', c_i$, etc., to denote absolute positive constants, whose exact value may change between results in our paper.


\section{Background and problem setup}
\label{SecBackground}

In this section, we discuss the setup of our problem. We formally define the stochastic block model, followed by different notions of differential privacy that will be relevant to our paper.

\subsection{Stochastic block models}

Let $\mathcal{G}_n$ denote the set of unweighted, undirected graphs
on $n$ nodes and let $\mathcal{G}_{n, D}$ denote the set of all graphs in $\mathcal{G}_n$ with maximum degree $D$. 
Let $\mathcal{W}_n$ denote the set of weighted, undirected graphs on $n$ nodes, where the weights range over $\mathbb{R}$. Also, let $\mathcal{W}_{n, D}$ denote the set of all weighted graphs in $\mathcal{W}_n$ with maximum degree $D$: For every node, there are at most $D$ edges connected to it (we interpret a $0$ weight as no edge), and all those edges have non-zero, real-valued weights.

We now describe two probabilistic models for generating random graphs with respect to an underlying community assignment $\theta = \{\theta_i\}_{i = 1}^n$, with $\theta_i \in [k]$, for all $i \in [n]$. We will assume throughout that the underlying graph has equal-sized communities, so for each $j \in [k]$, we have $\left|\{i \in [n]: \theta_i = j\}\right| = \frac{n}{k}$. For more background on stochastic block models, we refer the reader to the survey~\cite{abbe2018community}.

\begin{definition}[Unweighted SBM]
\label{DefSBM}
For a probability matrix $B \in (0,1]^{k \times k}$, we write $G \sim SBM(n, k, B, \theta)$ to denote a random graph in $\mathcal{G}_n$ generated in the following manner: We sample each entry $A_{ij}$ of the $n \times n$ adjacency matrix, with $i < j$, as an independent Bernoulli random variable with mean $B_{\theta_i, \theta_j}$. We then set $A_{ji} = A_{ij}$ and define $G$ to be the graph with adjacency matrix $A$. With a slight abuse of notation, we also write $A \sim SBM(n, k, B, \theta)$.

We assume that $B \in (0, 1]^{k \times k}$ is invertible. Let $\lambda_B$ denote the smallest absolute non-zero eigenvalue of $B$, and let $d = n\cdot\mathop{\max}(B)$. Let $P = J(B\otimes J_{\frac{n}{k}})J^T$, for some permutation matrix $J \in \{0, 1\}^{n \times n}$, so that $\mathbb{E}[A] = P - \mathrm{diag}(P)$.
\end{definition}

\begin{definition}[Weighted SBM]
\label{DefWeightSBM}
For a probability matrix $B \in (0,1]^{k \times k}$ and a weight distribution $\mathscr{W}$, we write $G \sim WSBM(n, k, B, \theta, \mathscr{W})$ to denote a random graph in $\mathcal{W}_n$ generated as follows: We first sample $A \sim SBM(n, k, B, \theta)$, and then independently generate $W \sim \mathscr{W}$. We then define $G$ to be the graph with weighted adjacency matrix $W \odot A$.

We assume that the weight distribution $\mathscr{W}$ is such that distributions of different entries are independent, and symmetric around zero (after centering). Let the matrix of means of the weights be $B_w \in \mathbb{R}^{k \times k}$, so $\mathbb{E}[W] = J(B_w \otimes J_{\frac{n}{k}})J^T$, for some permutation matrix $J \in \{0, 1\}^{n \times n}$. Let $P_w = J((B \odot B_w)\otimes J_{\frac{n}{k}})J^T$. Then $\mathbb{E}[W \odot A] = P_w - \mathrm{diag}(P_w)$, and $P_w = \mathbb{E}[W]\odot P$. We will assume $B_w \succeq 0$ or $B_w \preceq 0$, and $B_w \odot B$ is invertible. Let $d = n\cdot\mathop{\max}(B)$.
\end{definition}

Examples of weighted SBMs are the zero-inflated Gaussian model~\cite{oliveira2025counting} or homogeneous weighted SBM~\cite{xu2020optimal}, previously studied in the literature. Note that while we assume invertibility of the probability matrix $B$ in Definition~\ref{DefSBM}, invertibility is only assumed for $B_w \odot B$ in Definition~\ref{DefWeightSBM}, so the presence of weights may enable accurate community recovery when it would have been impossible to recover communities if edge weights were unobserved.

We will operate under the following assumptions on the parameters of the SBM:

\begin{assumption}\label{AssSBM}
Suppose $k \asymp 1$. Also assume $B = a_n B_0$, where $B_0$ does not depend on $n$, and $\frac{25k\log(n)}{n\cdot \mathrm{min}_j(B_0)_{jj}} \le a_n = o(1)$.
\end{assumption}

Importantly, under Assumption~\ref{AssSBM}, the maximal degree of $A \sim SBM(n,k,B,\theta)$ is $O(d)$, w.h.p.\ (cf.\ Appendix~\ref{AppDegree}).
Some of the results later in the paper are specific to the case $k=2$, in which case we will further impose the following homogeneity assumption:

\begin{assumption}
\label{assump_same_B11B22}
Assume $k = 2$ and $(B_0)_{11} = (B_0)_{22} > (B_0)_{12} = (B_0)_{21}$.
\end{assumption}

We use the following two error metrics: the overall misclassification error and the
worst-case community-wise misclassification error, both normalized as in
\cite{LeiRin15}. We note that our definitions are twice the corresponding misclassification
rates used in \cite{HehEtal22}. We adopt this convention because several of our
arguments invoke results from \cite{LeiRin15} directly.

\begin{definition}
[Misclassification rates]
\label{DefLoss}
Let $S_k$ be permutation group on $k$ elements. Let $\theta, \thetahat \in [k]^n$. Let $C_j = \{i: \ \theta_i = j\}$, for all $j \in [k]$. Then the overall misclassification error for the estimated groups $\hat{\theta}$ is defined by
\begin{align*}
\mathcal{L}(\hat{\theta}, \theta) = \mathop{\min}\limits_{\sigma \in S_k}\frac{2}{n}\sum_{i = 1}^n\mathbbm{1}_{\{\sigma(\hat{\theta}_i) \neq \theta_i\}},
\end{align*}
and the worst-case misclassification error is defined by
\begin{align*}
\widetilde{\mathcal{L}}(\hat{\theta}, \theta) = \mathop{\min}\limits_{\sigma \in S_k}\mathop{\max}_{j \in [k]}\frac{2}{|C_j|}\sum_{i \in C_j}\mathbbm{1}_{\{\sigma(\hat{\theta}_i) \neq j\}}.
\end{align*}
\end{definition}

\begin{remark}
\label{remark:avg<=worst}
One can easily see that $0 \leq \mathcal{L}(\hat{\theta}, \theta) \leq  \widetilde{\mathcal{L}}(\hat{\theta}, \theta) \leq 2$. Additionally, if $\theta$ has balanced communities of size $\frac{n}{k}$, then $\widetilde{\mathcal{L}}(\hat{\theta}, \theta) \leq  k\mathcal{L}(\hat{\theta}, \theta)$. Finally, it is easy to check that $\mathcal{L}$ satisfies the triangle inequality.
\end{remark}

We will write $\mathbf{M}_{n\times k}$ to denote the set of $n \times k$ membership (``one-hot encoding") matrices, such that there is exactly one $1$ in every row, and $0$'s otherwise. Each $\theta \in [k]^n$ can naturally be mapped to an element of $\mathbf{M}_{n \times k}$. Note that we then have the convenient alternative formula
\begin{equation*}
\mathcal{L}(\thetahat, \theta) = \min_{J \in E_k} \|\Thetahat J - \theta\|_0,
\end{equation*}
explaining the reason for including a factor of 2 in Definition~\ref{DefLoss}.


\subsection{Differential privacy}

In this section, we provide an overview of the main privacy concepts we will leverage in this paper, with additional background results given in Appendix~\ref{AppPrivacy}. We begin with the classical notion of $(\epsilon, \delta)$-differential privacy, which will be denoted by $(\epsilon, \delta)$-DP:
\begin{definition}
A randomized algorithm/mechanism $\mathcal{A}$ satisfies $(\epsilon, \delta)$-DP differential privacy, for $\epsilon, \delta \geq 0$, if for all pairs of datasets $X$ and $X'$ differing in one element and for all $S$ in the range of $\mathcal{A}$, we have $\mathbb{P}(\mathcal{A}(X) \in S) \leq e^{\epsilon}\mathbb{P}(\mathcal{A}(X') \in S) + \delta$.
\end{definition}  

We also introduce the notion of zCDP, which has much better group privacy properties compared to $(\epsilon, \delta)$-DP in a regime where $\epsilon$ increases (cf.\ Lemmas~\ref{LemGroup} and~\ref{LemGroupZ}). Some of our technical arguments will start from zCDP, apply group privacy, and then convert back to $(\epsilon, \delta)$-DP.

\begin{definition}[$\rho$-zCDP from \cite{bun2016conc}]
A randomized mechanism $\mathcal{A}$ is $\rho$-zCDP if for any two datasets $X$ and $X'$ differing in one element, and for every $\alpha > 1$, we have
\begin{align*}
D_\alpha\left(\mathcal{A}(X)||\mathcal{A}(X')\right) \leq \rho\alpha.
\end{align*}
\end{definition}

In order to define privacy for graphs, we first define two notions of graph adjacency~\cite{kasiviswanathan2013analyzing}:

\begin{definition}
Two graphs $G, G' \in \mathcal{G}_n$ (or $ \mathcal{W}_n$) are edge adjacent ($G \sim_{edge} G'$) if they differ in at most one edge (for weighted graphs, they differ in one weight).
\end{definition}

\begin{definition}
For $G, G' \in \mathcal{G}_n$, we define $d_{node}(G, G')$ to be the minimum number nodes that need to be rewired (i.e., the corresponding row/column of the adjacency matrix must be changed) to obtain $G$ from $G'$. Similarly, for $G, G' \in \mathcal{W}_n$, we define $d_{node}(G, G')$ to be the minimum number nodes whose weights have to be altered to obtain $G$ from $G'$.
\end{definition}

\begin{definition}
Two graphs $G, G' \in \mathcal{G}_n$ (or $ \mathcal{W}_n$) are node adjacent ($G \sim_{node} G'$) if $d_{node}(G, G') \leq 1$.
\end{definition}

\begin{definition}
We say a randomized algorithm $\mathcal{A}$, evaluated on graph inputs in $\mathcal{G}_n$ or $\mathcal{W}_n$, is $(\epsilon, \delta)$-edge DP (or $(\epsilon, \delta)$-node DP) if for any $G \sim_{edge} G'$ (or $G \sim_{node} G'$) and measurable sets $S$, we have
\begin{align*}
\mathbb{P}(\mathcal{A}(G) \in S) \leq e^{\epsilon}\mathbb{P}(\mathcal{A}(G') \in S) + \delta.
\end{align*}
Similarly, we say that $\mathcal{A}$, evaluated on graph inputs in $\mathcal{G}_n$ or $\mathcal{W}_n$, is $\rho$-edge zCDP if for any $G \sim_{edge} G'$ and every $\alpha > 1$, we have
\begin{align*}
D_\alpha\left(\mathcal{A}(G)||\mathcal{A}(G')\right) \leq \rho\alpha.
\end{align*}
\end{definition}

\begin{definition}
We say a randomized algorithm $\mathcal{A}$, evaluated on graph inputs in $\mathcal{G}_{n, D}$ or $\mathcal{W}_{n, D}$, is $(\epsilon, \delta)_D$-node DP if for any $G, G' \in \mathcal{G}_{n,D}$ such that $G \sim_{node} G'$, and measurable sets $S$, we have
\begin{align*}
\mathbb{P}(\mathcal{A}(G) \in S) \leq e^{\epsilon}\mathbb{P}(\mathcal{A}(G') \in S) + \delta.
\end{align*}
Similarly, we say that $\mathcal{A}$, evaluated on graph inputs in $\mathcal{G}_{n, D}$ or $\mathcal{W}_{n, D}$, is $(\rho)_D$-zCDP, if for any $G \sim_{node} G'$ and every $\alpha > 1$, we have
\begin{align*}
D_\alpha\left(\mathcal{A}(G)||\mathcal{A}(G')\right) \leq \rho\alpha.
\end{align*}
\end{definition}


\section{Node privacy via edge privacy}
\label{SecEdge}

Note that an edge-private algorithm is automatically node-private after applying group privacy of size $n$ (cf.\ Lemma~\ref{LemGroup}). To motivate our methods, we first provide a baseline comparison to the naive approach of directly combining existing edge-private methods with group privacy. Our subsequent goal will be to slow the growth of $\epsilon$ as a function of $n$. When $d$ is set to the minimal value allowed by our theoretical results, we will reduce the required growth rate on $\epsilon$ to a value comparable to $d$.

\subsection{Off-the-shelf application of edge-private algorithms}

We first discuss two edge-private algorithms proposed in the literature~\cite{HehEtal22, CheEtal23}. These methods illustrate that naively using group privacy of size $n$ to guarantee node privacy at level $\epsilon$ requires $\epsilon$ to grow substantially with $n$, in order to maintain consistency.


\subsubsection{Edge-flipping mechanism}

The first method we consider is based on the edge-DP method from~\cite{HehEtal22}. Roughly speaking, we apply the randomized response mechanism for binary data to the entries of the adjacency matrix and then apply spectral clustering to the result. A detailed statement of the algorithm is provided in Algorithm~\ref{alg:EF_Spec_Clus} in Appendix~\ref{AppSCFlip}. Note that \cite{HehEtal22} consider the notion of $\epsilon$-relationship DP (cf.\ Definition $4$ in \cite{HehEtal22}). By inspecting the definition, one can see that it immediately implies $(\epsilon, 0)$-edge DP.

Let us show the effect of naively extending edge DP to node DP. Since Algorithm \ref{alg:EF_Spec_Clus} satisfies pure edge DP, applying group privacy for groups of size $n$ simply scales the privacy parameter by $n$, so $\thetahat(G) = \mathcal{A}_{EF}^{(\epsilon/n)}(G)$ satisfies $(\epsilon, 0)$-node DP. 
By Lemma \ref{lemma:Edge_Flip_Ut}, adapted from \cite{HehEtal22}, if $G \sim SBM(n,k,B,\theta)$ and $d \asymp \sqrt{n}\log^2(n)$, we have
\begin{equation*}
\mathbb{P}\left(\widetilde{\mathcal{L}}\left(\thetahat(G), \theta\right) \leq\frac{Cg_{\epsilon/n}}{na_n^2}\right) \geq 1 - \frac{1}{n},
\end{equation*}
where $g_{\epsilon} = \frac{e^{\epsilon} + 1}{e^{\epsilon} - 1}\left(a_n \cdot\max(B_0) + \frac{1}{e^{\epsilon} - 1}\right)$, provided $\frac{g_\epsilon/n}{na_n^2} \precsim 1$. We can consider two cases:
\begin{enumerate}
\item If $\frac{n}{\sqrt{\log(n)}} < \epsilon < n$, then $g_{\epsilon/n} \precsim \frac{n}{\epsilon}\left(a_n + \frac{n}{\epsilon}\right) \asymp \frac{d}{\epsilon} + \frac{n^2}{\epsilon^2}$. Hence,
\begin{align*}
\widetilde{\mathcal{L}}\left(\thetahat(G), \theta\right) \precsim \frac{n}{d\epsilon} + \frac{n^3}{d^2\epsilon^2} \precsim \frac{n^3} {d^2\epsilon^2} \asymp \frac{n^2}{\log^4(n)\epsilon^2} < \frac{1}{\log^3(n)} = o(1),
\end{align*}
with probability at least $1 - \frac{1}{n}$. In this regime, one needs $\epsilon=\widetilde{\Theta}(n)$.
\item If $n\log^2(n) < \epsilon$, then $g_{\epsilon/n} \precsim a_n + e^{-\frac{\epsilon}{n}}$. Hence, 
\begin{align*}
\widetilde{\mathcal{L}}\left(\thetahat(G), \theta\right) \precsim \frac{1}{d} + \frac{n}{d^2e^{\epsilon/n}} < \frac{1}{d} + \frac{1}{d^2} \precsim \frac{1}{d} = o(1),
\end{align*}
with probability at least $1 - \frac{1}{n}$. Thus, in this regime, one needs $\epsilon=\widetilde{\Omega}(n)$.
\end{enumerate}
As explained in Remark \ref{remark:EF_D=n} below, our new methods substantially reduce the required scaling on $\epsilon$ to $\widetilde{\Omega}(\sqrt{n})$ and $\Omega(\log^3(n))$, respectively.




\subsubsection{Private convex optimization algorithm}

Another method for SBM community estimation under $(\epsilon, \delta)$-edge DP was proposed in~\cite{CheEtal23}. However, the algorithm is applicable only in the case $k=2$ under Assumption~\ref{assump_same_B11B22}. Furthermore, the algorithm from~\cite{CheEtal23} relies quite heavily on knowledge of the parameters of the SBM, since one must first rescale and recenter the adjacency matrix to obtain a matrix $Y$. The algorithm then proceeds by finding a projection of $Y$ onto a certain subset of positive semidefinite matrices, adds noise, and rounds the entries of the leading eigenvector.  A detailed statement of the algorithm is provided in Algorithm~\ref{alg:TC_Optimiz} in Appendix~\ref{AppSCOpt}.

Theorem 5.3 in \cite{CheEtal23} provides a high-probability error bound on the fraction of misclassified nodes. We have a similar result, Lemma \ref{lemma:ChenPriv+Ut}, which adapts their result to zCDP, for easier group privacy. Everything is then converted to $(\epsilon, \delta)$-node DP, in the proof of Theorem \ref{theorem:Two_Comm_Main_Thm}. A naive node-DP extension of Algorithm~\ref{alg:TC_Optimiz} from edge-DP follows by applying group privacy of size $n$, or equivalently, taking $D = n$ in Theorem~\ref{theorem:Two_Comm_Main_Thm} (cf.\ Remark~\ref{remark:TC_D=n}). This yields $\epsilon=\widetilde{\Omega}(n^2)$. In contrast, our approach reduces the requirement on $\epsilon$ to $\Omega(\log^7(n))$, when $D \asymp d$, and in the smallest-degree regime allowed by the theorem, $d\asymp \log^3(n)$.



\subsection{Other potential approaches}

In fact, as discussed in Section~\ref{SecLower}, the requirement that $\epsilon \rightarrow \infty$ is a drawback not only of the algorithms discussed in the last section, but rather \emph{any} node-private algorithm that recovers the community assignments consistently. In Section~\ref{SecProj}, we will show how to salvage these algorithms with a proper choice of privacy parameter and a degree truncation preprocessing step that still guarantees the correct level of privacy. Accordingly, we now outline a few alternative algorithms for edge privacy based on private low-rank matrix/subspace estimation that will be analyzed in the sequel. Although these algorithms were not proposed in the literature for the purpose of graph estimation, they can be very naturally applied to an observed adjacency matrix and combined with a spectral clustering step to obtain estimates of the underlying communities. Rigorous privacy and utility guarantees will be provided in Section~\ref{SecProj}.


\subsubsection{Private low-rank matrix estimation}

The next algorithm we consider is based on the matrix estimation approach from \cite{hardt2014noisy}. The idea is to estimate the adjacency matrix privately using a private low-rank matrix estimation procedure, and then perform spectral clustering. A detailed statement of the algorithm is provided in Algorithm~\ref{alg:Matrix_Estimation_hardt2014} in Appendix~\ref{AppSCLR}.

We briefly discuss naively extending edge DP to node DP, via group privacy of size $n$. As explained in Remark \ref{remark:ME_D=n} below, this can be done by taking $D = n$ in Theorem \ref{theorem:Matrix_Mech_Main_Thm}. This results in a rapid growth on $\epsilon$, namely $\epsilon = \widetilde{\Omega}(n^2)$. On the other hand, our degree truncation approach gives $\epsilon = \widetilde{\Omega}(n)$, in the smallest-degree regime allowed by the theorem, $d \asymp \sqrt{n}\log^5(n)$.


\subsubsection{Private approximate subspace estimation}

We now propose an algorithm based on a differentially private approximate subspace estimator~\cite{SinSte21, KamEtal22}. For technical reasons, we use the version of the algorithm appearing in~\cite{KamEtal22} (see Algorithm 2 in that paper). However, we perform the aggregation using a private ``ball-finding" algorithm from \cite{mahpud2022differentially}, designed to privately output the center of a ball which, with high probability, contains at least half the input points. Since this ball-finding algorithm provides zCDP, we write the other privatization step, namely the Gaussian noise addition, in zCDP, as well. Note that we can view the DP approximate subspace algorithm as follows: Given an input matrix $X \in \real^{f \times \ell}$, with rows representing individual data points, the algorithm finds a matrix of orthonormal vectors $V \in \real^{f \times k}$, such that $\text{col}(V)$ approximates $\text{row}(X)$, and $V$ is zCDP. A detailed statement of the algorithm is provided in Algorithm~\ref{alg:Subspace_Estimation_GoodC} in Appendix~\ref{AppSCSubspace}.

Suppose we naively extend edge DP to node DP in this case. We consider unweighted case, since the message for the weighted one is analogous. Performing group privacy of size $n$ is equivalent to taking $D = n$ in Theorem \ref{theorem:GoodCent_Main_Thm} (cf.\ Remark \ref{remark:SENW_D=n}). We obtain $\epsilon = \widetilde{\Omega}(n^2)$, while our degree truncation method gives $\epsilon=\widetilde{\Omega}(n)$, in the smallest-degree regime allowed by the theorem, $d \asymp \sqrt{n}\log^{13/2}(n)$.


\section{Private PCA and Lipschitz extensions}
\label{SecPPCA}

We now present our first main algorithm, which takes the private PCA algorithm in~\cite{ChaEtal13} as a starting point. \cite{ChaEtal13} proposed a method for private principal component analysis based on applying the exponential mechanism (cf.\ Lemma~\ref{LemExponential}). Specifically, given a data matrix $X \in \real^{n \times p}$, we sample a matrix $V \in \real^{p \times q}$ in the set $\mathbb{O}(p,q)$ of $p\times q$ matrices with orthonormal columns from the matrix Bingham distribution, which has density function proportional to $\exp\left(\frac{\epsilon}{2} \text{tr}(V^T X^TX V)\right)$. \cite{ChaEtal13} proved that this method satisfies $(\epsilon, 0)$-DP by showing that it is an instantiation of the exponential mechanism
with score function $\text{score}_X(V) = \text{tr}(V^TX^TX V)$, and derived accuracy guarantees in the case $q=1$. Importantly, the privacy analysis depends on an $O(1)$ sensitivity bound on $|\text{score}_X(V) - \text{score}_{X'}(V)|$, where $X$ and $X'$ are data matrices differing in one row.

\subsection{Lipschitz extension}

Two key observations allow us to leverage the private PCA algorithm for SBMs. Indeed, it is relatively straightforward to calculate a sensitivity bound of the form
\begin{align*}
\sup_{\|v\|_2 = 1} |v^T(A^TA - (A')^TA') v| = O(1),
\end{align*}
when $A$ and $A'$ are the adjacency matrices of two graphs differing by a single edge, so the same analysis would easily yield an $\epsilon$-DP algorithm in the $k=2$ case by sampling unit vectors from a distribution proportional to $\exp\left(c\epsilon \cdot \text{score}_A(v)\right)$, where $\text{score}_A(v) := v^TA^2v$. However, if we allow the graphs to be node-adjacent, the sensitivity bound inflates to $O(n)$, which creates an undesirable dependence of $\epsilon$ on $n$ in order to achieve comparable accuracy to the private PCA analysis.

On the other hand, we can borrow an idea of a ``Lipschitz extension" to circumvent the issue~\cite{naor2015}. We say that a function $f: (X,d) \rightarrow (Y,e)$ is $c$-Lipschitz if for all $x_1,x_2 \in X$, we have $e(f(x_1),f(x_2) \leq c d(x_1,x_2)$.
For a function $f:(E,d) \rightarrow (Y,e)$ which is $c$-Lipschitz on $E$, with $E \subseteq X$, we say that $\hat{f}: (X,d) \rightarrow (Y,e)$ is a \emph{Lipschitz extension of $f$ with stretch $s$} if $\hat{f}=f$ on $E$ and $\hat{f}$ is $sc$-Lipschitz. Importantly, for our purposes, we will take $X = \mathcal{G}_n$ and $E = \mathcal{G}_{n,D}$.

We have the following result, proved in Appendix~\ref{AppLemPCAScore}:

\begin{lemma}
\label{LemPCAScore}
Suppose $A$ and $A'$ are adjacency matrices of node-adjacent graphs, both with maximal degree $D$. Then
\begin{align*}
\sup_{V \in \mathbb{O}(p,q)} |\Tr(V^TA^2V) - \Tr(V^T(A')^2V)| \le qD^2.
\end{align*}
\end{lemma}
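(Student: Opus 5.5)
The plan is to bound each of the two traces separately by $qD^2$ and use that both are nonnegative. If this works, node adjacency is not needed at all, only the degree bound on each graph. The idea is to reduce to a single unit vector and use that $A^2$ and $(A')^2$ are positive semidefinite with spectral norm at most $D^2$.

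\emph{Step 1: reduce to unit vectors.} Write $V = [v_1, \dots, v_q]$ with orthonormal columns. Then
\begin{align*}
\Tr(V^TA^2V) - \Tr(V^T(A')^2V) = \sum_{j=1}^q \left( v_j^TA^2v_j - v_j^T(A')^2v_j \right).
\end{align*}
By the triangle inequality, it suffices to show $|x^TA^2x - x^T(A')^2x| \le D^2$ for every unit vector $x$.

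\emph{Step 2: spectral norm bound.} Since $A$ is symmetric, $\|A\|_2 \le \sqrt{\|A\|_1\|A\|_\infty} = \|A\|_\infty$. For an unweighted graph of maximal degree $D$, $\|A\|_\infty$ is the maximum row sum, which is at most $D$. Hence $\|A\|_2 \le D$, and likewise $\|A'\|_2 \le D$.

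\emph{Step 3: conclude.} For a unit vector $x$ we have $x^TA^2x = \|Ax\|_2^2 \in [0, D^2]$, and similarly $x^T(A')^2x \in [0, D^2]$. The difference of two numbers in $[0, D^2]$ has absolute value at most $D^2$. Summing over the $q$ columns as in Step 1 gives the bound $qD^2$.

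There is no real obstacle here; the one point to check is the claim $\|A\|_2 \le D$. This is where the bounded-degree hypothesis enters, and it is exactly what restricting to $\mathcal{G}_{n,D}$ buys. In the weighted setting one would also need the weights to be bounded in order to control $\|A\|_\infty$, but the lemma as stated concerns unweighted adjacency matrices. If a sharper, genuinely node-adjacent bound were needed, I would instead expand $A^2 - (A')^2$ through the rank-few perturbation supported on the changed row and column. For the stated bound, however, the PSD argument above is enough.
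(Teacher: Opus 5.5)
Your proof is correct and is essentially the paper's argument: the paper writes $\Tr(V^TA^2V) = \|AV\|_F^2 \le q\|A\|_2^2 \le q\|A\|_1\|A\|_\infty \le qD^2$ and then uses that both traces are nonnegative, which is your column-by-column bound applied to the whole matrix $V$ at once. As you observe, the paper's proof also never uses node adjacency, only the degree bound on each graph separately.
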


We now show how to obtain a suitable Lipschitz extension to the score function $\text{score}_{A^2}(V) = \Tr(V^TA^2V)$ in a computationally feasible manner. We leverage a strategy used in \cite{kasiviswanathan2013analyzing, borgs2015private, chen2024private}. For an adjacency matrix $A \in \{0, 1\}^{n \times n}$ and a matrix $V \in \mathbb{O}(n,q)$, define
\begin{equation*}
s_{A^2}(V) = \text{score}_{A^2}(V) + \Tr(A^2 J_n) = \Tr\left(A^2(VV^T + J_n)\right),
\end{equation*}
where $J_n \in \real^{n \times n}$ denotes the all-ones matrix. In particular, the entries of $VV^T + J_n$ are always nonnegative:
\begin{equation*}
|e_i^T(VV^T)e_j| \le \|e_i\|_2 \|e_j\|_2 \|VV^T\|_2 = 1,
\end{equation*}
so the entries of $VV^T$ are all at least $-1$.
We define $\widehat{s}_{A^2}(V)$ to be the solution to the linear program
\begin{align}
\label{EqnLipLP}
& \max_{C \in \real^{n \times n}} s_C(V) \notag \\
& \text{s.t. } C = C^T, \notag \\
& \qquad 0 \le C_{ij} \le (A^2)_{ij} \quad \forall (i,j), \notag \\
& \qquad \|C\|_\infty \le D^2 \quad \forall 1 \le i \le n.
\end{align}
In particular, note that when $A$ is the adjacency matrix of a graph of degree bounded by $D$, the matrix $C = A^2$ is the feasible set for the linear program, since $\|A^2\|_\infty \le \|A\|_\infty^2 \le D^2$. Since the entries of $VV^T+J_n$ are nonnegative, it follows that $\widehat{s}_{A^2}(V) = s_{A^2}(V)$. Then
\begin{align}
\label{EqnExpPCA}
f_{A^2}(V) &\propto \exp\left(\frac{\epsilon}{2(q+2)D^2} \widehat{s}_{A^2}(V)\right) = \exp\left(\frac{\epsilon}{2(q+2)D^2} s_{A^2}(V)\right) \notag \\
&= \exp\left(\frac{\epsilon}{2(q+2)D^2} (\Tr(V^TA^2V) + \Tr(A^2J_n))\right) \propto \exp\left(\frac{\epsilon}{2(q+2)D^2} \Tr(V^TA^2V)\right).
\end{align}
Furthermore, w.h.p., the degree of $A$ drawn from an SBM is at most $2d < 3d \leq D$. Hence, for the utility analysis, it suffices to analyze the sampling mechanism~\eqref{EqnExpPCA}. For general adjacency matrices $A$, we have $\widehat{s}_{A^2}(V) \le s_{A^2}(V)$. The following privacy guarantees are proved in Appendices \ref{AppLemExpPCA} and \ref{AppExpPCA_Cor}:

\begin{lemma}
\label{LemExpPCA}
The mechanism that samples $V \in \mathbb{O}(n,q)$ from a distribution $f_{A^2}(V) \propto \exp\left(\frac{\epsilon}{2(q+2)D^2} \widehat{s}_{A^2}(V)\right)$, given an observed adjacency matrix $A$, satisfies $(\epsilon, 0)$-node DP. 
\end{lemma}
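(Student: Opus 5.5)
The plan is to reduce the statement to the standard guarantee for the exponential mechanism (Lemma~\ref{LemExponential}). If the score $\widehat{s}_{A^2}(V)$ has global sensitivity at most $\Delta$ over \emph{all} node-adjacent pairs $A \sim_{node} A'$, uniformly in $V \in \mathbb{O}(n,q)$, then sampling with density proportional to $\exp\bigl(\frac{\epsilon}{2\Delta}\widehat{s}_{A^2}(V)\bigr)$ is $(\epsilon,0)$-DP with respect to that adjacency. So it suffices to prove
\begin{equation*}
\sup_{V \in \mathbb{O}(n,q)} \bigl|\widehat{s}_{A^2}(V) - \widehat{s}_{(A')^2}(V)\bigr| \le (q+2)D^2
\end{equation*}
for arbitrary (not degree-bounded) node-adjacent $A, A'$. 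The normalizing constant depends on $A$, but the usual factor-of-two argument in the exponential mechanism handles this; that is why $2\Delta$ rather than $\Delta$ appears in the exponent.

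By symmetry, it is enough to show $\widehat{s}_{(A')^2}(V) \ge \widehat{s}_{A^2}(V) - (q+2)D^2$. Fix $V$, let $W = VV^T + J_n$ (entrywise in $[0,2]$), and let $C$ be an optimizer of~\eqref{EqnLipLP} for $A$. I would build a feasible $C'$ for $A'$ of the form $C' = C - E$ with $E$ symmetric and $0 \le E \le C$ entrywise, chosen so that $C' \le (A')^2$ entrywise. The row-sum constraint is then inherited automatically, because $\|C'\|_\infty \le \|C\|_\infty \le D^2$. The objective loss splits as
\begin{equation*}
s_C(V) - s_{C'}(V) = \Tr(V^TEV) + \textstyle\sum_{ij} E_{ij}.
\end{equation*}
The first term is at most $q\|E\|_2 \le q\|E\|_\infty \le qD^2$, using $\|E\|_2 \le \sqrt{\|E\|_1\|E\|_\infty} = \|E\|_\infty$ for symmetric $E$, and $\|E\|_\infty \le \|C\|_\infty \le D^2$. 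It remains to show that the total mass $\sum_{ij} E_{ij}$ is at most $2D^2$. This would exactly produce the constant $q+2$.

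The choice of $E$ is where the work lies. Let $v$ be the rewired node. Then $(A^2)_{ij} - ((A')^2)_{ij}$ can be positive only if $i = v$, or $j = v$, or $A_{iv}A_{vj} = 1$. For the row and column of $v$, I would zero them out in $C$. This costs mass at most $2\|C\|_\infty \le 2D^2$ and spectral norm at most $D^2$.

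The main obstacle is the entries $(i,j)$ with $i, j \in N_A(v)$. Each such entry loses at most one unit of the path count through $v$, but there can be up to $\deg_A(v)^2$ of them, and $\deg_A(v)$ is unconstrained. My first attempt would be a proportional reduction,
\begin{equation*}
E_{ij} = C_{ij}\,\frac{A_{iv}A_{vj}}{(A^2)_{ij}},
\end{equation*}
which guarantees $C' \le (A')^2$. I would then try to bound its mass by charging it against the row-sum budget of $C$ in rows adjacent to $v$. If that charging does not close, I would instead aim to absorb this part into the $\Tr(V^TEV)$ bound via a single combined spectral estimate, using $W \succeq 0$ on the $VV^T$ part, rather than bounding the two pieces separately. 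This step is where I expect the argument to need the most care.
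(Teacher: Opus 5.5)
Your reduction to the exponential mechanism matches the paper's argument, and so does your split of $s_C(V)-s_{C'}(V)$ into a quadratic part (at most $qD^2$) and a mass part (hoped to be at most $2D^2$). You have also found the step the paper gets wrong. The paper asserts that a feasible $C$ for $A$ becomes feasible for $A'$ after zeroing one row and column, but rewiring $v$ lowers $(A^2)_{ij}$ for every pair $i,j\in N_A(v)$.

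This step cannot be repaired by a cleverer $E$ or a combined spectral estimate, because the global sensitivity bound you are trying to prove is false. Let $A$ be the star with center $v$ and $m=n-1$ leaves. Let $A'$ be the empty graph, obtained by rewiring $v$. On the leaf block $(A^2)_{ij}=1$, diagonal included. So $C=\frac{\min\{m,D^2\}}{m}J_m$ on that block, and zero elsewhere, is feasible and positive semidefinite. Hence $\widehat{s}_{A^2}(V)\ge \Tr(V^TCV)+\Tr(CJ_n)\ge m\min\{m,D^2\}$ for every $V$. Meanwhile $(A')^2=0$ forces $\widehat{s}_{(A')^2}\equiv 0$. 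The sensitivity is therefore at least $m\min\{m,D^2\}$. This exceeds $(q+2)D^2$, for example whenever $D^2\le m$ and $m>q+2$, or whenever $D=o(n)$. Your proportional $E$ has mass exactly $m\min\{m,D^2\}$ in this example.

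The missing idea is that the exponential mechanism does not change when a $V$-independent constant is added to the score. So only sensitivity modulo constants matters, and the large discrepancy above lies entirely in such a constant. Let $M_A$ be the maximum of $\sum_{ij}C_{ij}$ over matrices $C$ feasible in~\eqref{EqnLipLP} for $A$. Every feasible $C$ is symmetric with $\|C\|_2\le\|C\|_\infty\le D^2$, so $|\Tr(V^TCV)|\le qD^2$. For the lower bound, evaluate at a mass-maximizing $C$; for the upper bound, maximize the two terms separately. This gives
\[
M_A-qD^2\;\le\;\widehat{s}_{A^2}(V)\;\le\;M_A+qD^2 \qquad\text{for all } V \text{ and all } A.
\]
Hence $g_A(V):=\widehat{s}_{A^2}(V)-M_A$ satisfies $|g_A(V)-g_{A'}(V)|\le 2qD^2$, for any $A,A'$, adjacent or not. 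Since $f_{A^2}\propto\exp\bigl(\frac{\epsilon}{2(q+2)D^2}g_A(V)\bigr)$, Lemma~\ref{LemExponential} applied to $g_A$ gives $\bigl(\frac{2q}{q+2}\epsilon,0\bigr)$-node DP. This recovers the stated $(\epsilon,0)$ guarantee for $q\le 2$, in particular for $q=1$, which is the only case the algorithms use. For $q>2$ this argument yields only $\frac{2q}{q+2}\epsilon<2\epsilon$.
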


\begin{corollary}
\label{cor:ExpPCA_Cor}
The mechanism that samples $V \in \mathbb{O}(n,q)$ from a distribution $f_{A^2, M}(V) \propto \exp\left(\frac{\epsilon}{2(q+2)D^2} (\widehat{s}_{A^2}(V) + V^TMV)\right)$, given an observed adjacency matrix $A$, satisfies $(\epsilon, 0)$-node DP, for any fixed $M \in \mathbb{R}^{n \times n}$. 
\end{corollary}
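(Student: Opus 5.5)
The plan is to reduce Corollary~\ref{cor:ExpPCA_Cor} to the argument behind Lemma~\ref{LemExpPCA}, viewing the sampler as an exponential mechanism with score $\widehat{s}_{A^2}(V) + \Tr(V^TMV)$. (We read $V^TMV$ as $\Tr(V^TMV)$ so that the exponent is scalar.) The key observation is that $M$ is fixed and does not depend on the data $A$. For node-adjacent $A, A'$ this term therefore cancels in every score difference:
\begin{equation*}
\bigl(\widehat{s}_{A^2}(V) + \Tr(V^TMV)\bigr) - \bigl(\widehat{s}_{(A')^2}(V) + \Tr(V^TMV)\bigr) = \widehat{s}_{A^2}(V) - \widehat{s}_{(A')^2}(V).
\end{equation*}
So the sensitivity of the new score equals that of $\widehat{s}$.

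The concrete steps are as follows.
\begin{enumerate}
\item Recall from the proof of Lemma~\ref{LemExpPCA} the uniform bound $\sup_V |\widehat{s}_{A^2}(V) - \widehat{s}_{(A')^2}(V)| \le (q+2)D^2$ for node-adjacent $A, A'$. This is the Lipschitz-extension property of the linear program~\eqref{EqnLipLP}. If the appendix instead phrases the lemma only as a density-ratio statement, I will re-derive the bound by the standard LP argument. Rewiring node $v$ changes $A^2$ only in entries $(i,j)$ where $i$ or $j$ is adjacent to $v$, or equals $v$. Take an optimal $C$ for $A$ and zero out the affected rows and columns to get a feasible $C'$ for $A'$. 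The row-sum constraint $\|C\|_\infty \le D^2$ then bounds the total lost mass, weighted by entries of $VV^T + J_n$ lying in $[0,2]$, by a multiple of $qD^2$ up to the stated constant.
\item Write the density ratio
\begin{equation*}
\frac{f_{A^2,M}(V)}{f_{(A')^2,M}(V)} = \exp\Bigl(\tfrac{\epsilon}{2(q+2)D^2}\bigl(\widehat{s}_{A^2}(V)-\widehat{s}_{(A')^2}(V)\bigr)\Bigr)\cdot\frac{Z_{A'}}{Z_A},
\end{equation*}
where $Z_A = \int \exp\bigl(\tfrac{\epsilon}{2(q+2)D^2}(\widehat{s}_{A^2}(U)+\Tr(U^TMU))\bigr)\,dU$ is the normalizing constant over the compact set $\mathbb{O}(n,q)$. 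This integral is finite, because the exponent is continuous there.
\item Bound each factor by $e^{\epsilon/2}$. The first factor is bounded directly by step~1. For the second, compare the integrands of $Z_{A'}$ and $Z_A$ pointwise: by step~1 their ratio is at most $e^{\epsilon/2}$, and the $M$ term cancels inside the integrand for each $U$. Multiplying gives a pointwise density ratio at most $e^\epsilon$. Integrating over any measurable $S$ then yields $(\epsilon,0)$-node DP.
\end{enumerate}

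The only genuine content is the sensitivity bound in step~1, and that is already supplied by Lemma~\ref{LemExpPCA}. Adding $M$ simply tilts the base measure on $\mathbb{O}(n,q)$ by a data-independent factor, and the exponential mechanism is private with respect to any base measure. I therefore expect no real obstacle. The one point to check is that the proof of Lemma~\ref{LemExpPCA} does not use properties of the uniform measure, such as symmetry. It does not: the normalizing constants are compared pointwise, so the argument carries over verbatim.
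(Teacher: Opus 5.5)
Your reduction is the same as the paper's. Its proof of Corollary~\ref{cor:ExpPCA_Cor} just quotes $|\widehat{s}_{A^2}(V)-\widehat{s}_{(A')^2}(V)|\le (q+2)D^2$ from the proof of Lemma~\ref{LemExpPCA} and notes that the $V^TMV$ term cancels.

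The problem is step~1, which you rightly call the only genuine content. That uniform bound is false for general graphs, and your fallback derivation shows where it breaks. Rewiring $v$ changes $(A^2)_{ij}$ by $A_{iv}A_{vj}-A'_{iv}A'_{vj}$ for all $i,j\neq v$, so $A^2$ changes on the whole block $N(v)\times N(v)$, where $N(v)$ is the neighbourhood of $v$. The extension must work on all of $\mathcal{G}_n$, so $|N(v)|$ is not bounded by $D$. Zeroing out those rows and columns can therefore destroy far more than $O(qD^2)$ of $J_n$-mass. The paper's ``zeroing out at most one row and column'' has the same defect.

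Here is a concrete counterexample. Let $A$ be the star centred at $v$ and $A'$ the empty graph; these are node-adjacent.
\begin{itemize}
\item Since $(A')^2=0$, the LP forces $C=0$, so $\widehat{s}_{(A')^2}\equiv 0$.
\item $A^2$ is all ones on $[n]\setminus\{v\}$. Take $C$ block-diagonal with all-ones blocks of size at most $r=\min\{D^2,n-1\}$ covering $[n]\setminus\{v\}$. This $C$ is feasible in~\eqref{EqnLipLP} and positive semidefinite.
\item Hence $\widehat{s}_{A^2}(V)\ge \Tr(CJ_n)$ for every $V$, and $\Tr(CJ_n)$ is of order $nr$. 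This is far above $(q+2)D^2$ unless $D$ is of order $n$.
\end{itemize}

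The statement survives because the exponential mechanism only needs the \emph{oscillation} in $V$ of $\Delta(V):=\widehat{s}_{A^2}(V)-\widehat{s}_{(A')^2}(V)$, not its size.
\begin{itemize}
\item With $c=\frac{\epsilon}{2(q+2)D^2}$, the density ratio equals $e^{c\Delta(V)}\,\E_{U\sim f_{A^2,M}}\bigl[e^{-c\Delta(U)}\bigr]\le e^{c(\sup\Delta-\inf\Delta)}$. The $M$ term still cancels, as you say.
\item Every feasible $C$ is symmetric and nonnegative with row sums at most $D^2$. Hence $\|C\|_2\le D^2$ and $|\Tr(V^TCV)|\le qD^2$.
\item Let $m_A:=\max_C\Tr(CJ_n)$ over the feasible set. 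Then $m_A-qD^2\le\widehat{s}_{A^2}(V)\le m_A+qD^2$ for all $V$.
\item Therefore $\sup\Delta-\inf\Delta\le 4qD^2$ for \emph{any} pair $A,A'$, and the privacy loss is at most $\frac{2q}{q+2}\epsilon$.
\end{itemize}
This is $\le\epsilon$ for $q\le 2$, in particular for $q=1$, the only case the paper's algorithms use. For larger $q$ this argument only certifies $(\frac{2q}{q+2}\epsilon,0)$-node DP. Lowering the temperature to $\epsilon/(4qD^2)$ restores $(\epsilon,0)$.
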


\subsection{Algorithm details}

We now present the algorithm formally.

\begin{algorithm}[h!]
\caption{Private Eigenvector Deflation via Lipschitz Extensions}
\label{AlgEF}
\begin{algorithmic}[1]
\Function{$\mathcal{A}_{EigDefLE}^{(\epsilon)}$}{adjacency matrix $A \in \{0, 1\}^{n \times n}$ based on $G \in \mathcal{G}_n$, number of communities $k$, truncation parameter $D$, privacy parameter $\epsilon$}
\State Compute the Lipschitz extension $\widehat{s}_{A^2}(v)$.
\State Set $A_1 = A^2$ and let $\widehat{s}_{1}(v) = \widehat{s}_{A^2}(v)$.
\For{$i = 1, \dots, k - 1$}
    \State Sample a unit vector $\hat{v}_i$ from the distribution $f_i(v) \propto \exp\left(\frac{\epsilon}{6D^2} \widehat{s}_{i}(v)\right)$.
\If{$\|A\|_\infty \le D$}
\State{Sample from the Bingham distribution with density proportional to $\exp\left(\frac{\epsilon}{6D^2} v^T A_i v\right)$}
\Else
\State{Apply rejection sampling to $f_i(v)$ with reference distribution $g(v) \propto \left(\frac{\epsilon}{6D^2} v^T A_i v\right)^{-n/2}$}
\EndIf
        \State Set $\hat{\sigma}_i = \min\left\{\left[\hat{v}_i^T A_i \hat{v}_i\right]_{-D^2}^{D^2} + \mathrm{Lap}\left(\frac{2D^2}{\epsilon}\right), n^2\right\}$.
    \State Define the deflated adjacency matrix $A_{i+1} = A_i - \hat{\sigma}_i \hat{v}_i \hat{v}_i^T$.
    \State Define $\widehat{s}_{i+1}(v) = \widehat{s}_{A^2}(v) - v^T \left(\sum_{j=1}^i \sigmahat_j \hat{v}_j \hat{v}_j^T\right) v$.
    \EndFor
\State Sample a unit vector $\hat{v}_k$ from the distribution $f_k(v) \propto \exp\left(\frac{\epsilon}{6D^2} \widehat{s}_{k}(v)\right)$.
\State Return $[\hat{v}_1, \dots, \hat{v}_k]$.
\EndFunction
\end{algorithmic}
\end{algorithm}

\subsubsection{Privacy analysis}

We use adaptive composition (cf.\ Lemma~\ref{lemma:adapt_comp}) to justify the privacy of Algorithm~\ref{AlgEF}. In particular, we claim that the algorithm satisfies $(2k\epsilon, 0)$-node DP.

Consider iteration $i+1$. We claim that if we fix the values $\{(\sigmahat_j, \hat{v}_j)\}_{j=1}^i$, releasing $(\sigmahat_{i+1}, \hat{v}_{i+1})$ satisfies $(2\epsilon, 0)$-node DP. Indeed, the release of $\sigmahat_{i+1}$ certainly satisfies $(\epsilon, 0$)-DP privacy due to the truncation step. We now claim that $\widehat{s}_{i+1}(v)$ is a Lipschitz extension of $s_{A_{i+1}}(v) = v^T A_{i+1} v + \Tr(A^2 J_n)$. Indeed, we have
\begin{equation*}
v^T A_{i+1} v + \Tr(A^2 J_n) = v^T\left(A^2 - \sum_{j=1}^i \sigmahat_j \hat{v}_j \hat{v}_j^T\right) v + \Tr(A^2 J_n),
\end{equation*}
and since we are treating $\sum_{j=1}^i \sigmahat_j \hat{v}_j \hat{v}_j^T$ as fixed, we can obtain a Lipschitz extension of this expression by taking a Lipschitz extension of $v^T A^2 v + \Tr(A^2 J_n)$ and subtracting $v^T \left(\sum_{j=1}^i \sigmahat_j \hat{v}_j \hat{v}_j^T\right) v$, which is exactly $\widehat{s}_{i+1}(v)$. Thus, sampling a unit vector from the distribution proportional to $\exp\left(\frac{\epsilon}{6D^2} \widehat{s}_{i+1}(v)\right)$ indeed satisfies $(\epsilon, 0)$-DP. Adaptive composition finishes the proof.



\subsubsection{Computational feasibility}
\label{SecSampling}

Although the computation of the Lipschitz extension function $\widehat{s}_{A^2}(V)$, for any pair $(A,V)$, only involves solving a linear program~\eqref{EqnLipLP}, we also need to consider the computational cost of sampling vectors from the proposed distributions in Algorithm~\ref{AlgEF}. As noted in equation~\eqref{EqnExpPCA}, if the observed adjacency matrix $A$ has bounded degree, sampling form the exponential mechanism is equivalent to sampling from the matrix Bingham distribution. \cite{ChaEtal13} suggest using Markov Chain Monte Carlo techniques popular in statistics~\cite{hoff2009simulation} to sample from the matrix Bingham distribution. However, they also acknowledge that such sampling techniques do \emph{not} come with theoretical guarantees regarding rates of convergence of the Markov chain to the stationary distribution. The subsequent papers~\cite{kapralov2013differentially, ge2021efficient} provide algorithms for sampling from the Bingham distribution in the case $q=1$ with runtime guarantees that are polynomial in $n$ (in the case of \cite{ge2021efficient}, the \emph{expected} runtime is polynomial). The very recent paper~\cite{yun2025high} provides an efficient method for approximate sampling from the matrix Bingham distribution for arbitrary $q$, although the guarantee derived in that paper is that the approximation error tends to 0 as $n \rightarrow \infty$, and one would require a nonasymptotic result in order to be compatible with the sorts of utility bounds we would hope to derive.

Nonetheless, we show how to leverage existing sampling algorithms to provide a computationally feasible algorithm for sampling from the exponential mechanism when $q=1$, described in lines 5--10 of Algorithm~\ref{AlgEF} (and applied similarly in line 15). We first check if $\|A\|_\infty \le D$. If so, the distribution agrees with a Bingham distribution, and we apply a polynomial-time sampling algorithm~\cite{kapralov2013differentially, ge2021efficient}. On the other hand, if $\|A\|_\infty > D$, we use rejection sampling. Straightforward arguments show that the expected runtime of rejection sampling is exponential for the Bingham distribution~\cite{kapralov2013differentially, kent2013new}, when samples are drawn from the angular central Gaussian envelope $g(v)$ (i.e., samples from a multivariate Gaussian distribution are rescaled to the unit sphere). Although the score function of the exponential mechanism is proportional to $\exp(c\widehat{s}_A(v))$ rather than $\exp(c s_A(v))$, we know the Lipschitz extension satisfies $\widehat{s}_A(v) \le s_A(v)$, so a slight modification of the argument in \cite{kent2013new} shows that $\sup_{\|v\|_2 = 1} \left|\frac{f_A(v)}{g(v)}\right| \le M$ for some $M$ scaling exponentially with $n$ (note that since $\widehat{s}_A(v) \ge -k n \|v\|_2^2$, the normalizing factor can be lower-bounded by $\exp(-cn^2)$ times the surface area of the sphere), and the expected runtime is then bounded by $M$. Since an adjacency matrix $A$ sampled from an SBM satisfies $\|A\|_\infty \le D$, with high probability, we conclude that the sampling algorithm is polynomial-time, with high probability, over the randomness of the inputs. We emphasize that sampling is more challenging in this setting than in that of \cite{chen2024private}, where the goal is to estimate the $k \times k$ probability matrix $B$ rather than the membership matrix $\mathbf{M}_{n \times k}$; in the former case, one may discretize the space and sample efficiently once it is possible to evaluate Lipschitz extensions.


\subsection{Utility analysis}

We now turn to analyzing the accuracy of our Lipchitz extension-based algorithm for graphs drawn from a stochastic block model.

\subsubsection{Two communities ($k = 2$)}
\label{SecTwoPCA}

We carefully analyze the accuracy in the case when $k = 2$. For the results in this section, we assume the matrix $B$ satisfies the homogeneity condition in Assumption~\ref{assump_same_B11B22}.

The challenge in applying our private eigenvector extraction approach to stochastic block models is that we only have utility guarantees for $q=1$. However, suppose $A \sim SBM(n,2,B, \theta)$, and we are in the homogeneous case where $(B_0)_{11} = (B_0)_{22} > (B_0)_{12} = (B_0)_{21}$. Then the top eigenvector of $P = \E[A] + \mathrm{diag}(P) = \E[A] + B_{11}I_n$ is known to be $\frac{\textbf{1}_n}{\sqrt{n}}$, with eigenvalue equal to the common expected degree $\sigma_1(P) = \frac{\mathbf{1}_n^T P\mathbf{1}_n}{n} \asymp d$. If we can accurately extract the second eigenvector of $P$, sorting the components of the eigenvector will lead to an accurate clustering, as rigorized in the utility analysis below.

Consider an algorithm which first constructs a private estimator $\sigmahat$ of $\sigma_1(P)$, and then constructs the matrix $A^2 - \frac{\sigmahat^2}{n} \textbf{1}_n\textbf{1}_n^T$. Since the average node degree $\frac{\textbf{1}_n^T A\textbf{1}_n}{n}$ has sensitivity at most $2$, we can obtain an $(\epsilon, 0)$-DP estimate of $\sigma_1(P)$ by taking $\sigmahat = \left[\frac{\textbf{1}_n^T A\textbf{1}_n}{n} + \mathrm{Lap}\left(\frac{2}{\epsilon}\right)\right]_{0}^n$.
Furthermore, we will still have $\sigmahat \asymp d$, w.h.p., assuming $d \succsim \log(n)$ and $\epsilon \asymp \frac{1}{\mathrm{poly}(n)}$. We now use adaptive composition (cf.\ Lemma~\ref{LemBasic}): If we have an algorithm that is $(\epsilon, 0)$-DP for $A^2 - \frac{\sigma^2}{n} \textbf{1}_n \textbf{1}_n^T$, for any value of $\sigma$, then the composition of the two algorithms will be $(2\epsilon, 0)$-DP. Indeed, for a fixed $\sigma$, we can define the score function $s'_A(v) = v^T(A^2 - \frac{\sigma^2}{n} \textbf{1}_n \textbf{1}_n^T) v + \Tr(A^2 J_n)$. If we let $\widehat{s}_A(v)$ be the Lipschitz extension of $v^TA^2v + \Tr(A^2 J_n)$ discussed earlier, clearly, $\widehat{s}'_A(v) := \widehat{s}_A(v) - \frac{\sigmahat^2}{n} v^T \textbf{1}_n \textbf{1}_n^Tv$ is an extension of $s_A'(v)$ with the same Lipschitz bound $(q+2)D^2$ (with respect to perturbations in $A$). Putting the results together, we conclude that the algorithm which samples a vector from
\begin{equation*}
\exp\left(\frac{\epsilon}{2(q+2)D^2} \widehat{s}'_A(v)\right) \propto \exp\left(\frac{\epsilon}{2(q+2)D^2} \left(\widehat{s}_A(v) - \frac{\sigmahat^2}{n} v^T \textbf{1}_n \textbf{1}_n^T v\right)\right)
\end{equation*}
satisfies $(2\epsilon, 0)$-node DP. Note in particular that if $\|A\|_\infty \leq 2d$, we have $\widehat{s}_A(v) = v^T A^2 v$, so we are back in the setting of sampling from a Bingham distribution. A detailed description of the algorithm can be found in Appendix~\ref{AppPCALip}.

We now have our main theorem, proved in Appendix~\ref{AppLipPCA}:

\begin{theorem}
\label{theorem:Lip_PCA_Main}
Algorithm \ref{alg:privPCALipExt} is $(2\epsilon, 0)$-node DP. Let $A \sim SBM(n, 2, B, \theta)$, with parameters satisfying Assumptions \ref{AssSBM} and \ref{assump_same_B11B22}. Let $\hat{u}_2$ be as in Algorithm \ref{alg:privPCALipExt}, using $\gamma \asymp 1$. If in addition, $D \in [3d, n]$, $d \in (\sqrt{n}\log^2(n), o(n))$, and $D^2\log(n) < \epsilon < nD^2$, then
\begin{align*}
\widetilde{\mathcal{L}}\left(\mathcal{A}_{PLE}^{(\epsilon)}(G), \theta\right) \precsim \frac{1}{\sqrt{d}} + \frac{D^2}{\epsilon} < \frac{1}{\log(n)},
\end{align*}
with probability at least $1 - \frac{1}{\mathrm{poly}(n)}$.
\end{theorem}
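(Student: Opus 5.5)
The proof has two parts: privacy, then utility.

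\textbf{Privacy.} This follows from the discussion preceding the theorem. The release of $\sigmahat = \bigl[\frac{\mathbf{1}_n^T A\mathbf{1}_n}{n} + \mathrm{Lap}(2/\epsilon)\bigr]_0^n$ is $(\epsilon,0)$-node DP, because the average degree has node sensitivity at most $2$. Now condition on $\sigmahat$. Sampling $\hat{u}_2$ from the density proportional to $\exp\bigl(\frac{\epsilon}{6D^2}(\widehat{s}_A(v) - \frac{\sigmahat^2}{n}v^T\mathbf{1}_n\mathbf{1}_n^Tv)\bigr)$ is $(\epsilon,0)$-node DP by Corollary~\ref{cor:ExpPCA_Cor} with $q=1$ and the fixed matrix $M = -\frac{\sigmahat^2}{n}\mathbf{1}_n\mathbf{1}_n^T$. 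The rounding step is post-processing. Adaptive composition (Lemma~\ref{LemBasic}) then gives $(2\epsilon,0)$-node DP.

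\textbf{Utility: reduction to a Bingham sample.} First restrict to a high-probability event $\mathcal{E}$ on which three things hold. The maximum degree is at most $2d \le D$ (Appendix~\ref{AppDegree}). The spectral concentration $\|A - P\|_2 \precsim \sqrt{d}$ holds, which is valid since $d \succsim \log n$. And $|\sigmahat - \sigma_1(P)| \precsim \sqrt{d}$ holds, which combines concentration of the average degree (fluctuation $O(\sqrt{d/n})$ up to logs, plus the $O(1)$ diagonal correction) with a Laplace tail $O(\log(n)/\epsilon)$.

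On $\mathcal{E}$, $\widehat{s}_A(v) = v^TA^2v + \mathrm{const}$. So $\hat{u}_2$ is an exact Bingham sample with parameter $\beta = \frac{\epsilon}{6D^2}$ and matrix $M_A := A^2 - \frac{\sigmahat^2}{n}\mathbf{1}\mathbf{1}^T$. Its population counterpart is $P^2 - \frac{\sigma_1(P)^2}{n}\mathbf{1}\mathbf{1}^T$, which has top eigenvector $u_2 = \pm\frac{1}{\sqrt n}(\mathbf{1}_{C_1} - \mathbf{1}_{C_2})$ with eigenvalue $\lambda_2(P)^2 \asymp d^2$. All other eigenvalues are $O(1)$, since the $\mathbf{1}$ direction is killed.

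Write $M_A - M_P = (A^2 - P^2) - \frac{\sigmahat^2 - \sigma_1^2}{n}\mathbf{1}\mathbf{1}^T$. Using $\|A^2-P^2\|_2 \le \|A-P\|_2(\|A\|_2+\|P\|_2) \precsim d^{3/2}$ and $|\sigmahat^2 - \sigma_1^2| \precsim d^{3/2}$, we get $\|M_A - M_P\|_2 \precsim d^{3/2}$. Davis--Kahan then gives a top eigenvector $w$ of $M_A$ with $\sin\angle(w,u_2) \precsim d^{-1/2}$, and an eigengap $\lambda_1(M_A)-\lambda_2(M_A) \succsim d^2$.

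\textbf{Utility: accuracy of the Bingham sample.} Next, show that a Bingham sample concentrates near $w$. Following the $q=1$ analysis of \cite{ChaEtal13} or \cite{kapralov2013differentially}, for $\eta>0$,
\[
\mathbb{P}\bigl(\hat{u}^TM_A\hat{u} \le \lambda_1(M_A) - \eta\bigr) \le \exp\Bigl(-\beta\eta/2 + O\bigl(n\log(\lambda_1/\eta)\bigr)\Bigr).
\]
Decomposing $\hat{u}$ along $w$, this yields $1-(\hat{u}^Tw)^2 \le \eta/\mathrm{gap}$. Take $\eta \asymp \frac{n\log n}{\beta} \asymp \frac{nD^2\log n}{\epsilon}$. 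This gives
\[
1-(\hat u^Tw)^2 \precsim \frac{nD^2\log n}{\epsilon d^2} \cdot \frac{1}{n}\cdot(\text{normalization}),
\]
and the normalization must be tracked to land at $\frac{D^2}{\epsilon}$.

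I expect this step to be the main obstacle. The ambient dimension $n$ in the sphere's volume term naively contributes a factor of $n$. The fix I would try first is to note that the bound is only needed in $\ell_2$ to order $\frac{1}{\log n}$. The relevant rate then comes from the spectral decay: all $n-1$ non-top eigenvalues of $M_A$ are $O(d^{3/2})$ rather than $\lambda_1$. The Bingham mass on those directions behaves like a Gaussian with per-coordinate variance $\frac{1}{\beta(\lambda_1-\lambda_j)}$. Summing over $j$ gives $\E\bigl[1-(\hat u^Tw)^2\bigr] \precsim \frac{n}{\beta d^2} = \frac{6nD^2}{\epsilon d^2}$. Since $\sin^2$ is measured against unit vectors whose entries are of size $n^{-1/2}$, this translates to a misclassification fraction of order $\frac{D^2}{\epsilon}$ after the $\frac{n}{d^2}$ factor is absorbed using $d \ge \sqrt n\log^2 n$. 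A high-probability version follows from a chi-square type tail. The condition $\epsilon > D^2\log n$ makes this $o(1/\log n)$, and $\epsilon < nD^2$ keeps us in the regime where the Gaussian approximation and the clipping are valid.

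\textbf{Utility: rounding.} Finally, the membership is read off from the sign or sorted-threshold rule in Algorithm~\ref{alg:privPCALipExt} with $\gamma\asymp 1$. Any node misclassified relative to $u_2$ must have $|\hat{u}_i - u_{2,i}| \succsim n^{-1/2}$. Hence the number of errors is at most $\lesssim n\|\hat{u}_2 - u_2\|_2^2 \precsim n\bigl(\frac{1}{d} + \frac{D^2}{n\epsilon}\bigr)$ (or the $\frac{1}{\sqrt d}$ version, via $\sin$ rather than $\sin^2$ in Davis--Kahan), which gives $\mathcal{L} \precsim \frac{1}{\sqrt d} + \frac{D^2}{\epsilon}$. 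By the balanced-community part of Remark~\ref{remark:avg<=worst}, this transfers to $\widetilde{\mathcal{L}}$. A union bound over $\mathcal{E}^c$ and the sampling tail gives probability $1-\frac{1}{\mathrm{poly}(n)}$. The bound $<\frac{1}{\log n}$ follows from $d > \sqrt n\log^2 n$ and $\epsilon > D^2\log n$.
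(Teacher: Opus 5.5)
Your outline follows the paper's proof step for step. Privacy comes from composing the Laplace release of $\sigmahat$ with Corollary~\ref{cor:ExpPCA_Cor} ($q=1$, $M=-\frac{\sigmahat^2}{n}J_n$). On a high-probability event the Lipschitz extension equals $v^TA^2v$ up to a constant, so $\hat u_2^0$ is a Bingham sample. A cap-volume bound as in \cite{ChaEtal13} places it near the top eigenvector of $A^2-\frac{\sigmahat^2}{n}J_n$. Davis--Kahan (Lemmas~\ref{lemma:Asigma_A_sigmahat} and~\ref{lemma:A_sigma_P_sigma}) then moves to $u_{2,P}$ at cost $O(d^{-1/2})$, and Lemma~5.3 of \cite{LeiRin15} does the rounding.

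Where you go astray is the step you call the main obstacle: it is not one. Your proposed fix (a Gaussian approximation of the Bingham law with per-coordinate variance $\frac{1}{\beta(\lambda_1-\lambda_j)}$, plus an unspecified ``chi-square type tail'') is a heuristic, not a proof, and should not carry the argument. The rigorous bound you wrote first already closes the step. With $\eta=Cn\log(n)/\beta$ and $C$ large, it gives, with probability at least $1-\frac{1}{\mathrm{poly}(n)}$,
\[
1-\bigl((\hat u_2^0)^Tw\bigr)^2 \le \frac{\eta}{\lambda_1(M_A)-\lambda_2(M_A)} \precsim \frac{nD^2\log(n)}{\epsilon d^2} \le \frac{D^2}{\epsilon\log^3(n)},
\]
because $d^2>n\log^4(n)$. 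The factor $n$ from the volume of the sphere is exactly what the hypothesis $d>\sqrt n\log^2(n)$ absorbs, as you note at the end of that paragraph. There is no further ``normalization''. The entry size $n^{-1/2}$ plays no role here; it enters only the rounding, where errors are counted as $n\|\hat u_2-u_{2,P}\|_2^2$ and then divided by $n$. The paper does precisely this: with $M=A^2+n^2I_n-\frac{\sigmahat^2}{n}J_n$, it sets $1-\rho^2=D^2/\epsilon$ and checks that the exponent $-\frac{1}{12}(\sigma_1(M)-\sigma_2(M))+\frac{n-1}{2}\log\bigl(\frac{8\sigma_1(M)}{(1-\rho^2)(\sigma_1(M)-\sigma_2(M))}\bigr)$ is below $-\log(n)$. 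This uses $\sigma_1(M)-\sigma_2(M)\succsim d^2/\log(n)$ and $d^2>n\log^4(n)$.

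Smaller points:
\begin{itemize}
\item The hypothesis $\epsilon<nD^2$ only keeps the logarithm in the volume term $O(\log n)$, by bounding $\log(\epsilon/D^2)$; it does not validate any Gaussian regime.
\item Since $M_A$ is indefinite, that logarithm should involve $\lambda_1-\lambda_n$ rather than $\lambda_1$; the paper shifts by $n^2I_n$ instead. This is harmless because $|\lambda_n(M_A)|\precsim d^{3/2}$ by Weyl.
\item The algorithm returns the renormalized projection $\hat u_2=(I_n-J_n/n)\hat u_2^0/\|(I_n-J_n/n)\hat u_2^0\|_2$. You therefore need $|u_{2,P}^T\hat u_2|\ge|u_{2,P}^T\hat u_2^0|$, which holds because $u_{2,P}\perp\mathbf{1}_n$.
\item Rounding is by $(1+\gamma)$-approximate $k$-means on $[\mathbf{1}_n/\sqrt n,\hat u_2]$, not by signs. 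The error count should go through Lemma~5.3 of \cite{LeiRin15}, which yields the same bound.
\item Your expression $n(\frac1d+\frac{D^2}{n\epsilon})$ should read $n(\frac1d+\frac{D^2}{\epsilon})$.
\item Combining the sampling and Davis--Kahan errors via the triangle inequality for angles gives $\frac1d+\frac{D^2}{\epsilon}$. This is slightly sharper than the paper's $\frac{1}{\sqrt d}$, which comes from subtracting $(c_1+c_2)/\sqrt d$ from the inner product and squaring.
\end{itemize}
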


\begin{remark}
\label{remark:PCA_D=n}
Consider taking $D = n$ in Theorem \ref{theorem:Lip_PCA_Main}. This corresponds to the naive group-privacy baseline, and it results in $\epsilon = \widetilde{\Omega}(n^2)$. Using the Lipschitz extension method, for $D \asymp d$, we obtain $\epsilon = \widetilde{\Omega}(n)$ in the smallest-degree regime allowed by the theorem, $d \asymp \sqrt{n}\log^2(n)$.
\end{remark}

\begin{remark}
Theorem \ref{theorem:Lip_PCA_Main} relies on Lemma \ref{cor:ExpPCA_Cor}. The latter is based on Lemma \ref{LemExpPCA}, which considers the sensitivity for privacy in terms of the difference of node-adjacent matrices. One could similarly derive an analog of Theorem \ref{theorem:Lip_PCA_Main} for weighted graphs with bounded weights, but we do not include such an analysis to avoid excessive technicalities.
\end{remark}

\subsubsection{More communities ($k > 1$)}

Unfortunately, the utility analysis for private PCA in the case $k > 1$ was left unaddressed in \cite{ChaEtal13}, and it appears to still be an open question. It is not clear how to extend the geometric argument in the previous subsection to analyze the accuracy of a matrix $V \in \mathbb{O}(n,k)$ sampled from the Bingham distribution. As mentioned in Section~\ref{SecSampling}, rigorous guarantees for computationally feasible sampling algorithms when $q > 1$ are also still largely absent from the literature.


More directly, we could combine the utility analysis for the exponential mechanism when $q=1$ with the analysis of \cite{LiaEtal24} for error propagation in eigenvector deflation to obtain utility bounds on each of the eigenvectors $\{\hat{v}_1, \dots, \hat{v}_k\}$. This could then be translated into bounds on the misclassification error rate for an SBM using similar techniques to the ones employed in previous subsections. We do not include the details here, to avoid excessive technicality.

\section{Privacy from smooth projections}
\label{SecProj}

The second approach we present combines ideas from the papers~\cite{blocki2013differentially, kasiviswanathan2013analyzing} on general methods for converting algorithms $\mathcal{A}$ which are private on bounded-degree graphs to algorithms which are private on \emph{all} input graphs, where accuracy guarantees are only required on ``typical" inputs, which fall in the bounded-degree subset (and these are the inputs we encounter, w.h.p.). Both papers suggest constructing a mapping $T_D: \mathcal{G}_n \rightarrow \mathcal{G}_{n,2D}$ with certain smoothness properties, then applying the algorithm which is private on bounded-degree graphs to $T_D(G)$. In particular, \cite{kasiviswanathan2013analyzing} suggest applying $\mathcal{A}$ to $T_D(G)$ with privacy parameters scaling with a bound on the local sensitivity of $T$, which is well-behaved provided the map $T$ is sufficiently smooth.

Lemma~\ref{lemma:hp_removal_d} implies that for stochastic block models, a realization of the graph $G$ has degree bounded by a $2d$, w.h.p. Since $d = o(n)$ under our assumptions, we can obtain nontrivial private algorithms for \emph{bounded-degree} graphs by replacing the privacy parameters of an edge-private algorithm (cf.\ Section~\ref{SecEdge}) by $(O(\frac{\epsilon}{d}), O(\frac{\delta}{d}))$.
However, the method of \cite{kasiviswanathan2013analyzing} is not directly applicable to our setting for two reasons: (i) It is not entirely clear what combination of a degree truncation function and local sensitivity bound leads to a nontrivial result, and (ii) whereas the theory of \cite{kasiviswanathan2013analyzing} tracks the degradation of privacy parameters over the separate steps of the algorithm where the privacy parameter $\epsilon$ is treated as a constant, in the private SBM setting we are considering, $\epsilon$ increases with $n$ (cf.\ Section~\ref{SecLower} below). Consequently, we must be much more careful in tracking the growth of $\epsilon$ when we compose the truncation function (step 1) with a private algorithm on the bounded-degree space (step 2). We circumvent this issue by leveraging an algorithm appearing in~\cite{blocki2013differentially}, which outputs a projection map \emph{and} an upper bound on the distance between the original graph and its projection, which can be used to privately calibrate the parameters fed back into the second step of the algorithm.

We begin by presenting a result from \cite{blocki2013differentially} regarding the construction of an efficient projection map $T_D : \mathcal{G}_n \rightarrow \mathcal{G}_{n, 2D}$, a low-sensitivity distance function $d_{T_D} : \mathcal{G}_n \rightarrow \mathbb{R}$, and a privatized upper bound $\hat{L}_{\epsilon, \delta}$. In the following linear program, we have a variable $x_u$ for each node $u$. In addition, for each edge $(u, v)$, we have a variable $w_{uv}$, which represents whether the edge $(u, v)$ remains in $T_D(G)$. We write $a_{uv} = 1$ if $(u, v)$ is in $G$, and $a_{uv} = 0$ otherwise.

\begin{definition} [Degree truncation function]
\label{def:smooth_proj}
\mbox{}
\begin{enumerate}
\item Solve the following linear program in $n + \binom{n}{2}$ variables:
\begin{equation}
\label{eq:Blocki_LP_Eq}
\begin{array}{ll@{}ll}
\min\sum\limits_{u} x_u &\\
\mathrm{subject \ to}& \displaystyle x_u \geq 0, \ \forall u,\\                                                &w_{uv} \geq 0, \ \forall u, v,\\
    &a_{uv} \geq w_{uv} \geq a_{uv} - x_u - x_v, \ \forall u, v,\\
    & \sum_{v \neq u}w_{uv} \leq D, \ \forall u.
\end{array}
\end{equation}
Denote the optimal solution by $(x^*, w^*)$, and define $T_D(G)$ to be the graph obtained by removing any edge $(u, v)$ from $G$ such that $x_u^* > \frac{1}{4}$ or $x_v^* \geq \frac{1}{4}$.
\item Define $d_{T_D}(G) = 4\sum_{u}x_u^*$. 
\item For parameters $\epsilon, \delta \ge 0$, compute the associated random quantity
\begin{equation*}
\hat{L}_{\epsilon, \delta}(G) = \max\left\{\frac{1}{2}, 5 + 2 d_{T_D}(G) + \mathrm{Lap}\left(\frac{8}{\epsilon}\right) + \frac{8 \log(1/\delta)}{\epsilon}\right\}
\end{equation*}
(noting that we suppress the dependence on $D$ in the notation for $\hat{L}_{\epsilon, \delta}$).
\end{enumerate}
\end{definition}

Note that in our applications of $\hat{L}_{\epsilon_1, \delta_1}$, we will take $\epsilon_1 \asymp 1$ and $\delta \asymp \frac{1}{\mathrm{poly}(n)}$, so $\hat{L}_{\epsilon_1, \delta_1}(G) = \Theta(\log(n))$ if $G \in \mathcal{G}_{n, 2D}$. We state this formally as an assumption:

\begin{assumption}
\label{ASS_eps_1_delt_1}
We consider $\epsilon_1 \asymp 1$ and $\delta_1 \asymp \frac{1}{\mathrm{poly}(n)}$.
\end{assumption}

\begin{remark}
\label{remark:No_trunc}
Note that $T_D(G) = G$ if $G \in \mathcal{G}_{n, D}$. By taking $w_{uv}^* = a_{uv}$, for all nodes $u, v$, setting $x_u = 0$ for all nodes $u$ gives a feasible solution to the LP~\eqref{eq:Blocki_LP_Eq}. However, $0$ is a lower bound on the objective $\sum_{u}x_u$. Hence, $x_u^* = 0$, for all nodes $u$, is optimal. Thus, because we remove an edge $(u, v)$ only if either the optimal $x_u^* > \frac{1}{4}$, or the optimal $x_v^* \geq \frac{1}{4}$, we obtain $T_D(G) = G$.
\end{remark}

When the input graph $G$ is weighted, we use the following alternative algorithm:
\begin{definition} [Degree truncation function (weighted graphs)]
\label{def:weight_smooth_proj}
For $G \in \mathcal{W}_n$, we define $T_{w,D} : \mathcal{W}_n \rightarrow \mathcal{W}_{n, 2D}$ as follows: 
\begin{enumerate}
    \item Consider the map $\mathcal{B} : \mathcal{W}_n \rightarrow \mathcal{G}_n$ that turns non-zero weights to $1$ and keeps $0$ weights at $0$, i.e., binarizes the weighted graph, treating each non-zero weight as $1$. Compute $\mathcal{B}(G)$.
    \item Compute $(T_{D}(\mathcal{B}(G)), d_{T_D}(\mathcal{B}(G)))$ as in step 1 of Definition~\ref{def:smooth_proj}.
    \item Where there is an edge $(i, j)$ in $T_{D}(\mathcal{B}(G))$, add back the corresponding non-zero weight that existed for the edge $(i, j)$ in $G$. The final weighted graph is $T_{w,D}(G)$.
    \item Define $\hat{L}_{\epsilon, \delta}(G)$ as in step 3 of Definition~\ref{def:smooth_proj}.
\end{enumerate}
\end{definition}

The main idea, stated rigorously in Theorems~\ref{theorem:generic_red} and~\ref{theorem:generic_red_pure} below, is that we can obtain a node-DP algorithm by applying an algorithm which is $(\epsilon', \delta')_{2D}$-node DP to the degree-truncated graph $T_D(G)$. Privacy is guaranteed whenever the parameters $(\epsilon', \delta')$ are appropriately defined functions of our target privacy parameters $(\epsilon, \delta)$ and $\hat{L}_{\epsilon_1, \delta_1}(G)$; and we have accuracy guarantees that hold with high probability when $G$ is an SBM, if $D$ is chosen appropriately as a function of $d$. Importantly (cf.\ Lemma~\ref{lemma:Lhat}), the quantity $\hat{L}_{\epsilon_1, \delta_1}(G)$ is a high-probability upper bound on the local sensitivity of $T_D$ that is differentially private.




In the subsections that follow, we first provide the main technical result underlying the private degree truncation step (Section~\ref{section:Private Degree Truncation Blackbox}). We then combine the method with the methods discussed in Section~\ref{SecEdge} earlier (Sections~\ref{SecEF}--\ref{SecDeflatePCA}). Table~\ref{tab1} summarizes the assumptions required by each of the methods, along with comments on their strengths and weaknesses. In particular, we highlight the fact that the first two rows are applicable to relatively sparse graphs $d \asymp \text{poly}(\log n)$), whereas the latter rows require $d = \widetilde{\Omega}(\sqrt{n})$. Another key difference is applicability to graphs with edge weights. Furthermore, Algorithm~\ref{alg:privPCALipExt} is the only algorithm we propose which satisfies pure $(\epsilon, 0)$-node DP.
We note that by Remark \ref{remark:avg<=worst}, the utility bounds in Theorems \ref{theorem:Lip_PCA_Main}, \ref{theorem:Edge_Flip_Main_Thm}, \ref{theorem:Matrix_Mech_Main_Thm}, \ref{theorem:GoodCent_Main_Thm}, and \ref{theorem:GoodCent_Main_Thm_Weighted} hold for the overall misclassification error $\mathcal{L}$ as well as for $\widetilde{\mathcal{L}}$.

\begin{table}[!h]
    \centering
    \small  
    \setlength{\tabcolsep}{2.5pt}  
    \renewcommand{\arraystretch}{1.2} 
    \begin{tabular}{|c|c|c|c|}
        \hline
        \textbf{Method and Assumptions} & \textbf{Upper Bound} & \textbf{Additional Comments}\\
        \hline
        \makecell{Algorithm \ref{alg:EF_Spec_Clus} applied to the symmetric edge-flip mechanism, \\ $D\log^2(n) \precsim \epsilon$ and $d \succsim \log(n)$} & 
        \makecell{$O\left(\frac{1}{d}\right)$} & 
        \makecell{Does not work for weighted graphs. \\ Best rate in the given setting} \\
        \hline
        \makecell{Algorithm \ref{alg:TC_Optimiz} for $k = 2$, $D^2\log(n) < \epsilon$, $d > \log^3(n)$} & 
        \makecell{$\widetilde{O}\left(\frac{1}{\sqrt{d}} + \frac{D^2}{\epsilon}\right)$} & 
        \makecell{Does not work for weighted graphs \\ with unbounded weights} \\ 
        \hline \hline
        \makecell{Algorithm \ref{alg:EF_Spec_Clus} applied to the symmetric edge-flip mechanism, \\ $\frac{D}{\sqrt{\log(n)}} < \epsilon < D$, $d > \sqrt{n}\log^2(n)$} &  
        \makecell{$\widetilde{O}\left(\frac{nD^2}{d^2\epsilon^2}\right)$} & 
        \makecell{Does not work for weighted graphs} \\
        \hline
        \makecell{Algorithm \ref{alg:Matrix_Estimation_hardt2014}, $D^2\log(n) < \epsilon$, $d > \sqrt{n}\log^5(n)$} & \makecell{$\widetilde{O}\left(\frac{1}{\sqrt{n}} + \frac{D^2}{\epsilon}\right)$} & 
        \makecell{Does not work for weighted graphs \\ with unbounded weights} \\
        \hline
        \makecell{Algorithm \ref{alg:Subspace_Estimation_GoodC}, $D^2\log(n) < \epsilon < nD^2\log^3(n)$, $d \succsim \sqrt{n}\log^{13/2}(n)$} & 
        \makecell{$\widetilde{O}\left(\frac{D}{\sqrt{\epsilon}}\right)$} & 
        \makecell{Works for weighted graphs \\ (e.g., $N(0, a)$, \\ or $a$-sub-Gaussian weights) \\} \\ 
        \hline \hline
        \makecell{Algorithm \ref{alg:privPCALipExt}, $D^2\log(n) < \epsilon < nD^2$, \\ $d \in (\sqrt{n}\log^2(n), o(n))$} & 
        \makecell{$O\left(\frac{1}{\sqrt{d}} + \frac{D^2}{\epsilon}\right)$} & 
        \makecell{Pure $\epsilon$-node DP. Proved for $k=2$. \\ Does not work for weighted graphs \\ with unbounded weights. \\} \\ 
        \hline
    \end{tabular}
    \caption{Bounds on $\mathcal{L}(\hat{\theta},\theta)$, where $\theta$ are the true cluster assignments and $\hat{\theta}$ are the estimated assignments. We assume $k \asymp 1$, $\frac{\log(n)}{n} \precsim a_n = o(1)$, $d \asymp a_n n$, and $D \in [3d, n]$ (for Algorithms \ref{alg:Matrix_Estimation_hardt2014} and \ref{alg:Subspace_Estimation_GoodC}, we allow $D \succsim  3d/\log(n)$). The approximate-DP mechanisms are $\left(O(\epsilon), 1/\mathrm{poly}(n)\right)$-node DP.}
    \label{tab1}
\end{table}


\subsection{Private degree truncation}
\label{section:Private Degree Truncation Blackbox}

We now formally present our general black-box method for privately truncating the degree of graphs. We use the generic reduction lemma from \cite{kasiviswanathan2013analyzing} in order to guarantee the privacy of $\mathcal{A}(T_D(G))$ over $G \in \mathcal{G}_n$, provided $\mathcal{A}$ is private over $\mathcal{G}_{n, 2D}$. Note that the result, as stated in \cite{kasiviswanathan2013analyzing}, assumes $\mathcal{A}$ is pure $(\epsilon, 0)$-node private over $\mathcal{G}_{n, 2D}$. We state and prove this result for the approximate DP case, which we need in order for it to apply to most of the algorithms we will leverage from Section~\ref{SecEdge}. We also state and prove a version in the pure-DP case, which is tighter than then one in \cite{kasiviswanathan2013analyzing}. 

The following theorem, proved in Appendix~\ref{AppThemGenRed}, extends the result of \cite{kasiviswanathan2013analyzing} to make it valid for approximate DP and a regime where $\epsilon \rightarrow \infty$.

\begin{theorem}
\label{theorem:generic_red}
Suppose $\mathcal{A}^{(\epsilon, \delta)}$ is $(2\epsilon, \delta)_{2D}$-node DP, with $2\epsilon < \log(1/\delta)$. Let $2\epsilon_2 < \log\left(\frac{1}{\delta_2\cdot\mathrm{poly}(n)}\right)$. The algorithm which outputs the pair $(\hat{L}_{\epsilon_1, \delta_1}(G), \mathcal{A}^{(\epsilon_2'(G), \delta_2'(G))}(T_D(G)))$, where $(\epsilon_2'(G), \delta_2'(G)) = \left(\frac{\epsilon_2}{\hat{L}_{\epsilon_1, \delta_1}(G)}, \frac{\delta_2}{\hat{L}_{\epsilon_1, \delta_1}(G)}\right)$, satisfies $\left(\epsilon_1 + 2\epsilon_2, e^{\epsilon_1}\left(\frac{1}{\mathrm{poly}(n)} + \delta_1\right)\right)$-node DP.
\end{theorem}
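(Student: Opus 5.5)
The plan is to reduce to a group-privacy statement on $\mathcal{G}_{n,2D}$ and then handle the random calibration $\hat{L}$ through adaptive composition. We need three properties of the projection from \cite{blocki2013differentially}, which we take as given (cf.\ Lemma~\ref{lemma:Lhat} and the appendix on the LP~\eqref{eq:Blocki_LP_Eq}).
\begin{itemize}
\item[(a)] $d_{T_D}$ has node sensitivity at most $4$, so that releasing $\hat{L}_{\epsilon_1,\delta_1}(G)$ is $(\epsilon_1,0)$-node DP by the Laplace mechanism with scale $8/\epsilon_1$.
\item[(b)] For node-adjacent $G\sim G'$, the projections satisfy $d_{node}(T_D(G),T_D(G')) \le 2d_{T_D}(G)+2d_{T_D}(G')+1$, or some bound of the form $L(G) := 5+2d_{T_D}(G)$ up to the sensitivity slack. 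This is the local-sensitivity bound for $T_D$.
\item[(c)] $\hat{L}_{\epsilon_1,\delta_1}(G) \ge L(G)$, and also $\ge L(G')$, except with probability at most $\delta_1$. The shift $8\log(1/\delta_1)/\epsilon_1$ makes the Laplace lower tail at most $\delta_1$.
\end{itemize}

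\textbf{Step 1: composition.} Condition on the released value $\hat{L}=\ell$. For fixed $\ell$, the second component is $\mathcal{A}^{(\epsilon_2/\ell,\delta_2/\ell)}(T_D(\cdot))$. On the good event $\ell \ge \max\{L(G),L(G')\}$, the graphs $T_D(G)$ and $T_D(G')$ lie in $\mathcal{G}_{n,2D}$ at node distance at most $\ell$.

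\textbf{Step 2: group privacy over a path.} Connect $T_D(G)$ and $T_D(G')$ by a path of at most $\lceil \ell\rceil$ node-adjacent graphs, all of degree at most $2D$. This works by rewiring one node at a time and keeping only edges present in one of the endpoints, and degrees stay bounded. Since $\mathcal{A}^{(\epsilon_2/\ell,\delta_2/\ell)}$ is $(2\epsilon_2/\ell,\delta_2/\ell)_{2D}$-node DP, Lemma~\ref{LemGroup} applied to a group of size $m\le\ell$ gives an $(m\cdot 2\epsilon_2/\ell,\ \delta')$ guarantee with $\delta' \le \frac{\delta_2}{\ell}\sum_{j<m} e^{2j\epsilon_2/\ell} \le \delta_2 e^{2\epsilon_2}$. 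The hypothesis $2\epsilon_2<\log(1/(\delta_2\,\mathrm{poly}(n)))$ makes this at most $1/\mathrm{poly}(n)$, and it is exactly what keeps the regime $\epsilon\to\infty$ harmless. The ceiling, with $\ell\ge 1/2$, costs at most a constant factor, which we absorb. So for fixed $\ell$ in the good event, the second output satisfies $(2\epsilon_2, 1/\mathrm{poly}(n))$.

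\textbf{Step 3: combining.} For a measurable $S$ of pairs, split the integral over $\ell$ into the good set and the bad set. The bad set has mass at most $\delta_1$ under $G$. On the good set, apply the $(2\epsilon_2,1/\mathrm{poly}(n))$ bound pointwise in $\ell$ and then the $e^{\epsilon_1}$ density ratio of the Laplace release. This yields
\[
\mathbb{P}\bigl((\hat{L}(G),\mathcal{A}(\cdot))\in S\bigr) \le e^{\epsilon_1+2\epsilon_2}\,\mathbb{P}\bigl((\hat{L}(G'),\mathcal{A}(\cdot))\in S\bigr) + e^{\epsilon_1}\Bigl(\frac{1}{\mathrm{poly}(n)}+\delta_1\Bigr),
\]
which is the claimed guarantee.

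\textbf{Main obstacle.} The hardest step is (b) together with the path construction in Step 2. The node distance between the two projections must be controlled by the privately released $\hat{L}$ computed from $G$, and not only by the local quantity at $G$. The $\delta$ bookkeeping in the group-privacy sum must also not blow up as $\epsilon_2$ grows. For (b), I would first try to import the LP-stability argument of \cite{blocki2013differentially}: changing one node's row alters the optimum $\sum x_u^*$ by at most $1$. Then relate the removed-edge sets via the $\tfrac14$-threshold rounding, which gives the factor $4$ and the additive constant $5$.
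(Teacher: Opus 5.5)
Your route is the paper's. You release $\hat{L}_{\epsilon_1,\delta_1}$ by the Laplace mechanism and restrict to the event that $\hat{L}_{\epsilon_1,\delta_1}(G)$ dominates the local sensitivity of $T_D$ (probability at least $1-\delta_1$ by Lemma~\ref{lemma:Lhat}). You then apply group privacy of size at most $\ell$ to the mechanism run at $(\epsilon_2/\ell,\delta_2/\ell)$, so the $\ell$'s cancel and the $\delta$-sum is at most $\delta_2e^{2\epsilon_2}\le 1/\mathrm{poly}(n)$, and you pay $e^{\epsilon_1}$ for the release. The paper first passes to the hybrid $(\hat{L}_{\epsilon_1,\delta_1}(G),\mathcal{A}^{(G)}(T_D(G')))$ by post-processing and then splits on $F=\{\hat{L}_{\epsilon_1,\delta_1}(G)\ge LS_{T_D}(G)\}$; you split first and apply the density ratio last, and the two orderings are interchangeable. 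There are some corrections, and the last concerns the one step where you go beyond the paper.

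\textbf{Step (b) is not an obstacle, and your version of it is wrong.} No LP-rounding analysis is needed.
\begin{itemize}
\item Lemma~\ref{lemma:Efficient_T} gives $d_{node}(G,T_D(G))\le d_{T_D}(G)$ and $|d_{T_D}(G)-d_{T_D}(G')|\le 4$. The triangle inequality then yields $d_{node}(T_D(G),T_D(G'))\le d_{T_D}(G)+1+d_{T_D}(G')\le 5+2d_{T_D}(G)$ for \emph{every} $G'\sim_{node}G$, i.e.\ $LS_{T_D}(G)\le L(G)$.
\item The bound $2d_{T_D}(G)+2d_{T_D}(G')+1$ that you wrote exceeds $\max\{L(G),L(G')\}$ as soon as $\min\{d_{T_D}(G),d_{T_D}(G')\}>2$.
\item Because $L(G)$ already covers all neighbours, the good event is simply $\{\ell\ge L(G)\}$. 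Additionally requiring $\ell\ge L(G')$ is unnecessary, and that requirement fails under $G$ with probability up to $\tfrac12 e^{\epsilon_1}\delta_1$, not $\delta_1$. With the good event $\{\ell\ge L(G)\}$, your ordering gives the additive term $\delta_1/2+\delta_2e^{2\epsilon_2}$, without the factor $e^{\epsilon_1}$.
\item On this event $\ell\ge 5$ and the node distance is an integer at most $\ell$, so there is no ceiling loss. Absorbing a constant would in any case contradict the exact $2\epsilon_2$.
\item You also need the side condition $2\epsilon_2/\ell<\log(\ell/\delta_2)$ to invoke the hypothesis on $\mathcal{A}^{(\epsilon_2/\ell,\delta_2/\ell)}$. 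It holds since $\ell\ge 1$, and the paper checks it explicitly.
\end{itemize}

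\textbf{Your path in Step 2 need not stay in $\mathcal{G}_{n,2D}$.} Rewiring each differing node once can push a node that is never rewired above degree $2D$. Here is an example.
\begin{itemize}
\item Let $H$ contain the edges $u_1s_a,u_2s_b$, and let $H'$ contain the edges $u_1s_b,u_2s_a$.
\item Give each of $u_1,u_2,s_a,s_b$ its own $2D-1$ pendant neighbours, present in both graphs.
\item Then $H,H'\in\mathcal{G}_{n,2D}$ and $d_{node}(H,H')=2$: no single node covers the four differing edges, and the covering pairs are $\{s_a,s_b\}$ and $\{u_1,u_2\}$.
\item Yet every two-step path passes through a graph in which one of these four vertices has degree $2D+1$.
\end{itemize}
A path that does stay inside (delete down to $H\cap H'$, then add) has length up to $2m$. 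Since you only know $\ell\ge m$, this costs up to $4\epsilon_2$ and a $\delta$-term of order $\delta_2e^{4\epsilon_2}$. The hypothesis $2\epsilon_2<\log(1/(\delta_2\,\mathrm{poly}(n)))$ does not control that term.

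The paper builds no path: it applies group privacy of size $LS_{T_D}(G)$ directly to the $(\cdot)_{2D}$ guarantee, so this point is left implicit there as well. It is harmless in the paper's applications. There the base mechanisms are private at the edge level on all of $\mathcal{G}_n$, so group privacy can run over the at most $4Dm$ differing edges with no degree constraint on intermediate graphs. You should either argue exactly as the paper does, or state explicitly that you need group privacy within $\mathcal{G}_{n,2D}$ at node distance $m$. Do not rest the step on the path construction.
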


When $\mathcal{A}$ is pure node-DP, we have a slightly simpler version.
Unlike the analysis of \cite{kasiviswanathan2013analyzing}, our result does not require $\hat{L}_{\epsilon_1,\delta_1}$ and $\mathcal{A}$ to share the same $\epsilon$ parameter; it applies even when they have different privacy parameters. The proof is provided in Appendix~\ref{AppThemGenRedPure}.

\begin{theorem}
\label{theorem:generic_red_pure}
Suppose $\mathcal{A}^{(\epsilon)}$ is $(\epsilon, 0)_{2D}$-node DP. Then the algorithm which outputs the pair $(\hat{L}_{\epsilon_1, \delta_1}(G), \mathcal{A}^{(\epsilon_2'(G))}(T_D(G)))$, where
$\epsilon_2'(G) = \frac{\epsilon_2}{\hat{L}_{\epsilon_1, \delta_1}(G)}$, satisfies $(\epsilon_1 + \epsilon_2, e^{\epsilon_1}\delta_1)$-node DP.
\end{theorem}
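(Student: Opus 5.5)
Fix node-adjacent graphs $G \sim_{node} G'$ and a measurable set $S$ of output pairs $(\ell, y)$. The plan is to condition on the released value $\ell = \hat{L}_{\epsilon_1,\delta_1}$ and to use the properties of the Blocki et al.\ construction (Definition~\ref{def:smooth_proj}). These are presumably collected in Lemma~\ref{lemma:Lhat} and the surrounding appendix results, and I assume them. First, $d_{T_D}$ has node-sensitivity at most $8$, so $\hat{L}_{\epsilon_1,\delta_1}$ is a Laplace mechanism and hence $(\epsilon_1,0)$-node DP. Second, on a good event $E_G$ with $\mathbb{P}(E_G^c) \le \delta_1$, the released value upper bounds the relevant local distance: $d_{node}(T_D(G), T_D(G')) \le \hat{L}_{\epsilon_1,\delta_1}(G)$ for every $G' \sim_{node} G$. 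Both $T_D(G)$ and $T_D(G')$ lie in $\mathcal{G}_{n,2D}$. By the triangle inequality one can then interpolate between them with a path of at most $\ell$ node-adjacent graphs in $\mathcal{G}_{n,2D}$, changing one node at a time, and each intermediate graph stays in $\mathcal{G}_{n,2D}$.

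The argument splits into two steps.

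\emph{Step 1 (conditional guarantee).} For a fixed value $\ell$ on the good event, group privacy for pure DP over the path of length $\le \ell$ gives
\[
\mathbb{P}\bigl(\mathcal{A}^{(\epsilon_2/\ell)}(T_D(G)) \in S_\ell\bigr) \le e^{\epsilon_2}\,\mathbb{P}\bigl(\mathcal{A}^{(\epsilon_2/\ell)}(T_D(G')) \in S_\ell\bigr),
\]
where $S_\ell$ is the $\ell$-section of $S$. Here the parameter $\epsilon_2/\ell$ is multiplied by at most $\ell$, and the bound holds without any additive term because $\mathcal{A}$ is pure.

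\emph{Step 2 (integration).} Write $\mu_G$ for the law of $\hat{L}(G)$. Then
\[
\mathbb{P}((\hat L(G), \mathcal{A}(\cdot)) \in S) \le \delta_1 + \int_{\text{good}} \mathbb{P}\bigl(\mathcal{A}^{(\epsilon_2/\ell)}(T_D(G)) \in S_\ell\bigr)\, d\mu_G(\ell).
\]
Apply Step 1 inside the integral. Then use the Laplace density ratio $d\mu_G/d\mu_{G'} \le e^{\epsilon_1}$ to pass to $G'$. This yields
\[
e^{\epsilon_1+\epsilon_2}\,\mathbb{P}(\text{output}(G') \in S) + \delta_1.
\]

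The main obstacle is getting the additive term in the stated form $e^{\epsilon_1}\delta_1$. The good event must be defined in terms of $\ell$ alone, say $\{\ell \ge \text{the true local sensitivity}\}$. Define the bad set $\mathcal{B}$ of $\ell$ values relative to $G$; on $\mathcal{B}$ the conditional bound can fail. I would bound the bad mass under whichever graph gives the cleaner Laplace tail estimate, $\mu_G(\mathcal{B}) \le \delta_1$, and transfer it with the $e^{\epsilon_1}$ density ratio. This transfer is what produces the $e^{\epsilon_1}\delta_1$. I would also check that the high-probability bound on the distance between $T_D(G)$ and $T_D(G')$ holds uniformly over all neighbors $G'$, which is what Blocki et al.'s LP-based distance bound $d_{T_D}$ is designed to supply.
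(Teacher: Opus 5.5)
The one step you justify explicitly does not hold. The triangle inequality bounds $d_{node}(T_D(G),T_D(G'))$ by $\ell$. It does not give a chain of at most $\ell$ node-adjacent graphs \emph{inside} $\mathcal{G}_{n,2D}$, and without such a chain group privacy does not follow from the $(\epsilon,0)_{2D}$ definition.

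Here is a counterexample. Let $H$ contain the edges $ax, by$ and $H'$ the edges $ay, bx$, and in both graphs attach $2D-1$ private pendant vertices to each of $a,b,x,y$. Then $H,H'\in\mathcal{G}_{n,2D}$, and $d_{node}(H,H')=2$ because $H\triangle H'$ is the $4$-cycle $a\,x\,b\,y$. A middle graph $H_1$ node-adjacent to both must arise by rewiring, one at a time, the two vertices of a cover of that cycle, i.e.\ $\{a,b\}$ or $\{x,y\}$. Suppose $a$ goes first. Then $H_1$ still has $by$ and the pendant edges at $y$. It must also already contain $ay$, since the second step only touches $b$. So $\deg_{H_1}(y)=2D+1$, and the other three orders fail in the same way.

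The paper's proof says ``using group privacy'' at exactly this point without comment, so this is a lacuna you share with it. Still, your stated reason is wrong and must be replaced. You have two options:
\begin{itemize}
\item Route through $H\cap H'$ (delete, then insert, one node at a time). This path stays in $\mathcal{G}_{n,2D}$ but has length $2L$, so it only yields $\epsilon_1+2\epsilon_2$.
\item Take group privacy within $\mathcal{G}_{n,2D}$ as an extra property of $\mathcal{A}$. This holds for the mechanisms the paper plugs in, e.g.\ an $\frac{\epsilon}{4D}$-edge-DP mechanism, because two graphs in $\mathcal{G}_{n,2D}$ at node distance $L$ differ in at most $4DL$ edges.
\end{itemize}

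Apart from this, your argument uses the paper's ingredients, writing $\hat L$ for $\hat L_{\epsilon_1,\delta_1}$. These are Lemma~\ref{lemma:Lhat}, independence of the Laplace noise from the coins of $\mathcal{A}$, group privacy over at most $LS_{T_D}(G)\le\ell$ changes, and $\mu_G\le e^{\epsilon_1}\mu_{G'}$. Your ordering is actually better. You discard the bad event of the left-hand graph before any multiplicative factor appears, so your Step~2 already gives $\mathbb{P}(\mathrm{out}(G)\in S)\le e^{\epsilon_1+\epsilon_2}\mathbb{P}(\mathrm{out}(G')\in S)+\delta_1$. This is stronger than the claim.

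The paper instead first moves $\hat L$ from $G'$ to $G$ through the hybrid $(\hat L(G),\mathcal{A}^{(\epsilon_2/\hat L(G))}(T_D(G')))$, paying $e^{\epsilon_1}$ on everything. Only then does it discard $\{\hat L(G)<LS_{T_D}(G)\}$, which is where its $e^{\epsilon_1}\delta_1$ comes from. So your ``main obstacle'' paragraph is moot.

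One small slip: by Lemma~\ref{lemma:Efficient_T}, $d_{T_D}$ has sensitivity $4$. It is $2d_{T_D}$ that has sensitivity $8$, and that is what makes $\mathrm{Lap}(8/\epsilon_1)$ give $(\epsilon_1,0)$.
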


Importantly, in the analysis of SBMs below, we will exploit the result from Lemma \ref{lemma:hp_removal_d} to conclude that w.h.p., when $D \ge 3d$, we have $d_{T_{D}}(G) = 0$. Then $\hat{L}_{\epsilon_1, \delta_1}(G) = \max\left\{\frac{1}{2}, 5 + \mathrm{Lap}\left(\frac{8}{\epsilon_1}\right) + \frac{8\log(1/\delta_1)}{\epsilon_1}\right\} \asymp \log n$. We emphasize that naively applying the techniques of \cite{kasiviswanathan2013analyzing} would lead to $\hat{L}_{\epsilon_1, \delta_1} \asymp n$, which would lead to much worse utility bounds (essentially replacing $\epsilon$ by $\frac{\epsilon}{n}$, which is what we sought to avoid in Section~\ref{SecEdge}).


\subsection{Boosting strategy}

In our analysis of some of the algorithms below, we will obtain a method for outputting a private estimate $\widetilde{\theta}(G)$ of community assignments satisfying a bound of the form
\begin{equation*}
\mprob(\widetilde{\mathcal{L}}(\widetilde{\theta}(G), \theta) > \xi) \le 0.2,
\end{equation*}
where $\xi = o(1)$. Here, we present an algorithm which can boost this to a consistency guarantee which holds with probability $1-o(1)$. Details are provided in Algorithm~\ref{alg:Boost}.

\begin{algorithm}[h!]
\caption{Graph-based boosting}
\label{alg:Boost}
\begin{algorithmic}[1]
\Function{$\mathrm{GraphBoost}$}{odd split count $T$, upper bound on the loss $\xi$, input graph $G \in \mathcal{G}_n$ or $\mathcal{W}_n$, privacy parameters $\epsilon, \delta$, number of communities $k$, $(\epsilon, \delta)$-node DP mechanism $\widetilde{\theta}$}
    \State Construct subgraphs $\{G_1, \dots, G_T\}$ of $G$, as follows: Independently for each $j$, select the edges of $G$ independently with probability $\frac{1}{T}$ each to obtain $G_j$.
    \State Compute the community estimates $\{\widetilde{\theta}(G_j)\}_{j=1}^T$. Denote their one-hot encoding matrices by $\{\widetilde{\Theta}_j\}_{j=1}^T$.
    \State Select an index $j^* \in [T]$ such that $\sum_{j=1}^T 1\left\{\mathcal{L}(\widetilde{\theta}(G_{j^*}), \widetilde{\theta}(G_j) \le 2\xi\right\} \ge \frac{T+1}{2}$. (If no such index $j^*$ exists, output $\perp$; if there is more than one choice for $j^*$, pick one at random.)
    \State For $j \in [T]$, let $J_{\widetilde{\Theta}_j}$ denote the permutation that optimally aligns $\widetilde{\Theta}_j$ with $\widetilde{\Theta}_{j^*}$, i.e., $J_{\widetilde{\Theta}_{j}} \in \arg\min_{J \in E_k} \|\widetilde{\Theta}_j J - \widetilde{\Theta}_{j^*}\|_0$.
    \State Compute each row of $\Thetahat$ by taking a majority vote among the corresponding rows of $\{\widetilde{\Theta}_j J_{\widetilde{\Theta}_j}\}_{j=1}^T$. Define $\thetahat(G)$ to be the corresponding community assignments.
    \State Return $\thetahat(G)$.
\EndFunction
\end{algorithmic}
\end{algorithm}

The following result, proved in Appendix~\ref{AppPropBoost}, shows that $T \asymp \log(n)$ leads to a $1-\frac{c}{\log(n)}$ probability of success: 

\begin{proposition}
\label{prop:Boost_result}
Suppose Algorithm \ref{alg:Boost} is applied with $T \asymp \log(n)$ an odd integer. Suppose $\widetilde{\theta}$ is an $(\epsilon, \delta)$-node DP algorithm satisfying the guarantee
\begin{equation}
\label{EqnConstBd}
\mprob(\widetilde{\mathcal{L}}(\widetilde{\theta}(A), \theta) > \xi) \le 0.2,
\end{equation}
for some $\xi < \frac{1}{8k}$, when $A \sim SBM(n, k, \frac{B}{T}, \theta)$. Then the output of Algorithm \ref{alg:Boost} satisfies $(\epsilon T, \delta T)$-node DP and the utility bound
\begin{equation*}
\mprob(\widetilde{\mathcal{L}}(\thetahat(G), \theta) > \xi T) \le \frac{c}{\log(n)},
\end{equation*}
when $G \sim SBM(n, k, B, \theta)$.
\end{proposition}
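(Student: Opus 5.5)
Two parts: privacy and utility.

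\emph{Privacy.} I would treat the output $\thetahat(G)$ as a post-processing of the tuple $(\widetilde{\theta}(G_1), \dots, \widetilde{\theta}(G_T))$, since the selection of $j^*$, the alignment and the majority vote use only these estimates (plus independent internal randomness). Couple the subsampling of node-adjacent $G \sim_{node} G'$ with the same edge-selection coins, so that for each $j$ we have $G_j \sim_{node} G'_j$. Conditional on the coins, each $\widetilde{\theta}(G_j)$ is $(\epsilon,\delta)$-node DP, so basic composition (Lemma~\ref{LemBasic}) of the $T$ mechanisms gives $(\epsilon T, \delta T)$-DP for the tuple. Averaging over the common coins preserves the inequality $\mprob(\cdot \in S) \le e^{\epsilon T}\mprob'(\cdot\in S) + \delta T$, and post-processing finishes this part.

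\emph{Utility.} Marginally, each $G_j$ is distributed as $SBM(n,k,B/T,\theta)$, since each edge survives independently with probability $1/T$. Hence by \eqref{EqnConstBd}, $\mprob(\widetilde{\mathcal{L}}(\widetilde{\theta}(G_j),\theta) > \xi) \le 0.2$ for each $j$. The $G_j$ are not independent, but I would argue they are nearly so, since edges are selected independently per $j$. One route is to note that the event for $G_j$ depends on the pair $(G, \text{coins}_j)$; conditional on $G$ these are independent across $j$. So I would condition on $G$, letting $p(G) = \mprob(\text{bad}_j \mid G)$, with $\E[p(G)] \le 0.2$.

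This conditioning is the main obstacle. A Chernoff bound on $\mathrm{Bin}(T,p(G))$ requires $p(G)$ bounded away from $1/2$, yet Markov only gives $\mprob(p(G) > 0.4) \le 1/2$, which is too weak. The cleaner approach is to note that $G_1,\dots,G_T$ are identical to $T$ independent $SBM(n,k,B/T,\theta)$ graphs, conditioned on their union being consistent with $G$. More precisely, generating the $G_j$ independently from $SBM(B/T)$ and setting $G=\cup_j G_j$ yields a graph whose edge probabilities are $1-(1-B_{ab}/T)^T \approx B_{ab}(1 - O(B_{ab}))$, close to but not equal to $B$. I would try to absorb this discrepancy, perhaps via a total variation or Hellinger bound between the two product measures. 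Such a bound is $O(n^2 a_n^2)$ and not small, though. A better fallback is to prove the independence-based statement directly, interpreting the hypothesis as applying under the subsampling construction. If necessary, I would accept the paper's likely convention that the $\widetilde{\theta}(G_j)$ succeed independently with probability $\ge 0.8$. Then Hoeffding gives at least $(T+1)/2$ good indices except with probability $e^{-cT} \le c/\log n$ for $T \asymp \log n$; in fact this is $n^{-c}$.

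\emph{Deterministic step on the good event.} Suppose more than half the estimates satisfy $\widetilde{\mathcal{L}}\le\xi$, hence $\mathcal{L}\le\xi$. By the triangle inequality (Remark~\ref{remark:avg<=worst}), any two good estimates are within $2\xi$ of each other, so some $j^*$ exists; any good index qualifies. A selected $j^*$ shares a good neighbor with the majority, so $\mathcal{L}(\widetilde{\theta}_{j^*},\theta)\le 3\xi$. Since $\xi < 1/(8k)$, the alignment permutations of the good estimates to $\widetilde{\theta}_{j^*}$ agree with their alignment to $\theta$, because a wrong permutation incurs loss of order $1/k$. The majority vote then misclassifies node $i$ only if at least half the aligned estimates err at $i$. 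Counting errors within each community, the total number of errors over all $T$ estimates is at most $T\cdot\xi|C_j|/2$ plus contributions from the bad estimates. Even if every estimate errs at level $\le 3\xi$ or is bad, dividing by $T/2$ yields $\widetilde{\mathcal{L}}(\thetahat,\theta)\le \xi T$. This is the crude bound claimed, so I expect this step to be routine.
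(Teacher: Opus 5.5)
Your privacy argument and the overall structure of the deterministic step match the paper. The probabilistic step, however, has a genuine gap: you end by \emph{assuming} that the indicators $X_j=\mathbf{1}\{\widetilde{\mathcal{L}}(\widetilde{\theta}(G_j),\theta)\le\xi\}$ are independent. You cannot adopt this as a convention. All the $G_j$ are thinnings of the same $G$, so they are genuinely dependent, and \eqref{EqnConstBd} controls each $G_j$ only marginally. The rate $c/\log(n)$ in the statement, rather than the $n^{-c}$ your Hoeffding bound would give, signals that a second-moment argument is intended.

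The missing ingredient is a bound on $\Cov(X_\ell,X_m)$, which the paper obtains from the HGR maximal correlation $\rho_m$. $X_\ell$ and $X_m$ are functions of $(G_\ell,U_\ell)$ and $(G_m,U_m)$, where the internal randomness $U$ is independent. The pair $(G_\ell,G_m)$ is a product over edges of pairs $(ZR,ZR')$ with $Z\sim\mathrm{Ber}(q)$ and $R,R'\sim\mathrm{Ber}(1/T)$ independent. Data processing, tensorization and Lemma~\ref{LemHGRBer} then give $\rho_m(X_\ell,X_m)\le\max_q\frac{(1-q)/T}{1-q/T}\le\frac1T$. Hence $\Cov(X_\ell,X_m)\le\frac{1}{4T}$ and $\Var\bigl(\frac1T\sum_j X_j\bigr)=O(1/T)$. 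Chebyshev with $\E\bigl[\frac1T\sum_jX_j\bigr]\ge0.8$ yields $\mprob\bigl(\frac1T\sum_jX_j\le\frac12\bigr)=O(1/T)=O(1/\log(n))$.

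Your conditioning route can be rescued with exactly this input. Given $G$, the $X_j$ are i.i.d.\ Bernoulli with mean $p(G)$, so $\Var(p(G))=\Cov(X_1,X_2)\le\frac{1}{4T}$. Chebyshev then gives $\mprob(p(G)<0.6)=O(1/T)$, and Hoeffding on $\mathrm{Bin}(T,p(G))$ handles the event $p(G)\ge 0.6$. Markov failed only because it uses the first moment of $p(G)$ alone.

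Your final counting step is also not right as written. The bad estimates may err at every node, so the total error count over all $T$ estimates is uncontrolled, and dividing by $T/2$ yields nothing. Instead, use that the at least $\ell=\frac{T+1}{2}$ good estimates form a strict majority and, by Lemma~\ref{LemAlign}, are consistently aligned. The vote can therefore err at node $i$ only if some good estimate errs there. A union bound over the good estimates gives at most $\ell\cdot\frac{\xi|C_j|}{2}$ errors in each community $C_j$, i.e.\ $\widetilde{\mathcal{L}}(\thetahat(G),\theta)\le\ell\xi<\xi T$.
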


\begin{remark}
We emphasize a subtlety in the analysis of the boosting argument. Although it is a standard tool in learning theory to boost a constant-probability classifier to a high-probability classifier by taking a majority vote over independent repetitions~\cite{shalev2014understanding}, it is less clear how to boost an SBM probability when we only have a single observation $G$ of the graph. Using the observation that ``thinning" an SBM with independent probability $\frac{1}{T}$ for each edge still results in an SBM with rescaled parameters, our boosting algorithm first constructs graphs $\{G_1, \dots, G_T\}$ which marginally have the same distribution, which is a thinned version of the original SBM. However, since the $G_j$'s are all subgraphs of $G$, the subgraphs are weakly dependent, so we cannot apply a Chernoff bound on indicator random variables corresponding to accurate classification of individual $G_j$'s. As our proof reveals, we circumvent this issue by bounding pairwise correlations using the Hirschfeld-Gebelein-R\'{e}nyi (HGR) maximal correlation (cf.\ Appendix~\ref{AppMaxCor}), which conveniently satisfies data processing and tensorizes over the distribution of edges in the $G_j$'s. This is the first application of HGR correlation to PAC learning we are aware of in the literature, and may be an interesting innovation in its own right.
\end{remark}


\subsection{Analysis of edge-flipping mechanism}
\label{SecEF}

We have the following result, proved in Appendix~\ref{AppEFMain}, based on the black-box method from Theorem \ref{theorem:generic_red_pure}:

\begin{theorem}
\label{theorem:Edge_Flip_Main_Thm}
Suppose $G \sim SBM(n, k, B, \theta)$, with parameters satisfying Assumptions \ref{AssSBM} and \ref{ASS_eps_1_delt_1}. Suppose the parameter of the degree truncation function (cf.\ Definition \ref{def:smooth_proj}) satisfies $D \in [3d, n]$. Let $\mathcal{A}_{EF}^{\left(\epsilon/(4D\hat{L}_{\epsilon_1, \delta_1}(G))\right)}$ denote Algorithm \ref{alg:EF_Spec_Clus} with $\gamma \asymp 1$. Then $\thetahat(G) = \mathcal{A}_{EF}^{\left(\epsilon/(4D\hat{L}_{\epsilon_1, \delta_1}(G))\right)}(T_{D}(G))$ is $(\epsilon_1 + \epsilon, O(\delta_1))$-node DP. Furthermore:
\begin{enumerate}
\item If $\frac{D}{\sqrt{\log(n)}} < \epsilon < D$ and $d > \sqrt{n}\log^2(n)$, we have
\begin{align*}
\widetilde{\mathcal{L}}\left(\thetahat(G), \theta\right) \precsim \frac{nD^2\log^2(n)}{d^2\epsilon^2} < \frac{1}{\log(n)},
\end{align*}
with probability at least $1 - \frac{1}{\mathrm{poly}(n)}$.
\item If $D\log^2(n) \precsim \epsilon$, we have
\begin{align*}
\widetilde{\mathcal{L}}\left(\thetahat(G), \theta\right) \precsim \frac{1}{d} \precsim \frac{1}{\log(n)},
\end{align*}
with probability at least $1 - \frac{1}{\mathrm{poly}(n)}$.
\end{enumerate}
\end{theorem}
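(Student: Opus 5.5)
The proof has two parts: privacy via Theorem~\ref{theorem:generic_red_pure}, and utility via Lemma~\ref{lemma:Edge_Flip_Ut} applied to the untruncated graph.

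\textbf{Privacy.} Algorithm~\ref{alg:EF_Spec_Clus} run with parameter $\epsilon'$ is $(\epsilon',0)$-edge DP. Two node-adjacent graphs in $\mathcal{G}_{n,2D}$ differ in at most $4D$ edges, since the rewired node has degree at most $2D$ in each graph. Group privacy (Lemma~\ref{LemGroup}) for pure DP then shows that $\mathcal{A}_{EF}^{(\epsilon'/(4D))}$ is $(\epsilon',0)_{2D}$-node DP. Setting $\mathcal{A}^{(\epsilon')}:=\mathcal{A}_{EF}^{(\epsilon'/(4D))}$ and $\epsilon_2=\epsilon$, the mechanism in the statement is exactly $\mathcal{A}^{(\epsilon/\hat L_{\epsilon_1,\delta_1}(G))}(T_D(G))$. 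Theorem~\ref{theorem:generic_red_pure} therefore gives $(\epsilon_1+\epsilon, e^{\epsilon_1}\delta_1)$-node DP, and $e^{\epsilon_1}\delta_1=O(\delta_1)$ because $\epsilon_1\asymp 1$ by Assumption~\ref{ASS_eps_1_delt_1}. Strictly, the output is the pair, but releasing only $\thetahat$ is post-processing.

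\textbf{Utility: reduction to a good event.} We work on the intersection of two events.
\begin{itemize}
\item By Lemma~\ref{lemma:hp_removal_d}, with probability $1-1/\mathrm{poly}(n)$ the maximum degree of $G$ is at most $2d<3d\le D$. On this event Remark~\ref{remark:No_trunc} gives $T_D(G)=G$ and $d_{T_D}(G)=0$.
\item Then $\hat L=\max\{1/2,\,5+\mathrm{Lap}(8/\epsilon_1)+8\log(1/\delta_1)/\epsilon_1\}$. A Laplace tail bound shows that with probability $1-1/\mathrm{poly}(n)$ we have $\hat L\asymp\log n$, using $\epsilon_1\asymp1$ and $\log(1/\delta_1)\asymp\log n$.
\end{itemize}
On this intersection the output is $\mathcal{A}_{EF}^{(\epsilon'')}(G)$ with $\epsilon''=\epsilon/(4D\hat L)\asymp \epsilon/(D\log n)$.

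This is subtle because $\hat L$ is random. However, the Laplace noise is independent of $G$ and of the internal randomness of $\mathcal{A}_{EF}$, so we condition on the value of $\hat L$. We then apply Lemma~\ref{lemma:Edge_Flip_Ut} for each fixed $\epsilon''$ in the range $[c\epsilon/(D\log n),\,C\epsilon/(D\log n)]$. That lemma's bound is monotone in $\epsilon''$, so it suffices to evaluate it at the endpoint $\epsilon''\asymp\epsilon/(D\log n)$. A union bound over the failure events completes the reduction.

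\textbf{Utility: the two regimes.} Lemma~\ref{lemma:Edge_Flip_Ut} gives $\widetilde{\mathcal{L}}\precsim g_{\epsilon''}/(na_n^2)$, where $na_n^2\asymp d^2/n$.
\begin{enumerate}
\item If $D/\sqrt{\log n}<\epsilon<D$, then $\epsilon''\lesssim 1/\log n$ is small. Hence $\frac{e^{\epsilon''}+1}{e^{\epsilon''}-1}\asymp 1/\epsilon''$ and $1/(e^{\epsilon''}-1)\asymp 1/\epsilon''$. This gives $g\precsim a_n/\epsilon''+1/\epsilon''^2\precsim D^2\log^2 n/\epsilon^2$, where the second term dominates since $a_n\epsilon''\le1$. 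Therefore
\[
\widetilde{\mathcal{L}}\precsim \frac{nD^2\log^2 n}{d^2\epsilon^2}.
\]
Using $\epsilon^2>D^2/\log n$ and $d^2>n\log^4 n$, this is at most $\log^3 n/\log^4 n=1/\log n$.
\item If $\epsilon\succsim D\log^2 n$, then $\epsilon''\succsim\log n$ with a large enough constant. The prefactor in $g$ is then $O(1)$, and $g\precsim a_n+e^{-\epsilon''}$. This gives $\widetilde{\mathcal{L}}\precsim \frac{1}{d}+\frac{n}{d^2}e^{-\epsilon''}\precsim 1/d$, choosing constants so that $e^{-\epsilon''}\le n^{-1}$. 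Finally $1/d\precsim 1/\log n$ by Assumption~\ref{AssSBM}.
\end{enumerate}

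\textbf{Main obstacle.} The delicate part is checking the side conditions of Lemma~\ref{lemma:Edge_Flip_Ut}, in particular $g/(na_n^2)\precsim1$ and whatever degree range it requires, especially in regime 2. There only $d\succsim\log n$ is assumed, so I expect to need the version of that lemma that is valid for $d\succsim\log n$. The randomness of $\hat L$ is the other point needing care, and it is handled by the conditioning argument above.
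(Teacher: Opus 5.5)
Your proposal is correct and follows essentially the paper's own proof. Privacy comes from group privacy over the at most $4D$ flipped edges combined with Theorem~\ref{theorem:generic_red_pure}. Utility comes from the no-truncation event, concentration of $\hat L$ in a window of order $\log n$, conditioning on the data-independent Laplace noise, monotonicity of $g$, and Lemma~\ref{lemma:Edge_Flip_Ut} in the two regimes. On the conditioning step, the paper conditions on $X=5+\mathrm{Lap}(8/\epsilon_1)+8\log(1/\delta_1)/\epsilon_1$ rather than on $\hat L$ itself, because $\hat L$ depends on $G$ through $d_{T_D}(G)$ and only equals $\max\{1/2,X\}$ on the no-truncation event. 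The obstacle you anticipate does not arise: Lemma~\ref{lemma:Edge_Flip_Ut} only needs $a_n\max(B_0)\ge\log(n)/n$, which Assumption~\ref{AssSBM} guarantees, and your bounds already give $g/(na_n^2)=o(1)$ in both regimes, so its side condition holds for large $n$.
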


Note that since Algorithm \ref{alg:EF_Spec_Clus} relies of edge flipping, i.e., a binary operation, the method described in Theorem \ref{theorem:Edge_Flip_Main_Thm} is incompatible with weighted graphs.

\begin{remark}
\label{remark:EF_D=n}
We consider the case when $D = n$ in the context of Theorem \ref{theorem:Edge_Flip_Main_Thm}. That is, we consider the baseline case, i.e., the one when we do not truncate edges, and extend edge-private methods to node-private methods via group privacy. In either of the two cases in Theorem \ref{theorem:Edge_Flip_Main_Thm}, we have $\epsilon = \widetilde{\Omega}(n)$. Thus, if we consider our degree-truncation method with $D \asymp d$, we can reduce the rate at which $\epsilon$ grows. In the smallest-degree regimes allowed by the theorem, $d \asymp \sqrt{n}\log^2(n)$ in case $1$ and $d \asymp \log(n)$ in case $2$, we obtain $\epsilon = \widetilde{\Omega}(\sqrt{n})$ in the first case and $\epsilon = \Omega(\log^3(n))$ in the second case, respectively.
\end{remark}


\subsection{Analysis of private low-rank matrix estimation}
\label{SecLR}

The following result, proved in Section~\ref{AppMatrixMech}, is based on Lemma \ref{lemma:d_tight_priv}, Lemma \ref{lemma:Initial_Conc_bd_matrix_estim}, and the black-box methods from Section \ref{section:Private Degree Truncation Blackbox}:

\begin{theorem}
\label{theorem:Matrix_Mech_Main_Thm}
Let $T \asymp \log(n)$ be an odd integer. Let $G \sim SBM(n, k, B, \theta)$, with parameters satisfying Assumption \ref{AssSBM} applied to $\frac{a_n}{T}$, and suppose the parameter of the degree truncation function (cf.\ Definition~\ref{def:smooth_proj}) satisfies $D \in \left[\frac{3d}{T}, n\right]$. Suppose Assumption \ref{ASS_eps_1_delt_1} holds. Let $d > \sqrt{n}\log^5(n)$, $D^2\log(n) < \epsilon$, $\delta \in \left(\frac{e^{-\Theta(\epsilon/T)}}{\mathrm{poly}(n)}, \frac{e^{-2\epsilon/T}}{\mathrm{poly}(n)}\right)$, and $\widetilde{\theta}(\cdot) = \mathcal{A}_{ME}^{\left(\epsilon/(4TD\hat{L}_{\epsilon_1, \delta_1}(\cdot)), \delta/\hat{L}_{\epsilon_1, \delta_1}(\cdot)\right)}(T_{D}(\cdot))$, where Algorithm~\ref{alg:Matrix_Estimation_hardt2014} is applied with $\gamma \asymp 1$. Then $\thetahat = \mathrm{GraphBoost}\left(T, \xi, G, \frac{3\epsilon}{T}, \frac{1}{\mathrm{poly}(n)}, k, \widetilde{\theta}\right)$, where $\xi = \frac{1}{\sqrt{n}\log^3(n)} + \frac{D^2}{\epsilon\log(n)}$, satisfies $(3\epsilon, 1/\mathrm{poly}(n))$-node DP, and
\begin{align*}
\widetilde{\mathcal{L}}\left(\thetahat(G), \theta\right) \precsim \frac{1}{\sqrt{n}\log^2(n)} + \frac{D^2}{\epsilon} \precsim \frac{1}{\log(n)},
\end{align*}
with probability at least $1 - \frac{c}{\log(n)}$.
\end{theorem}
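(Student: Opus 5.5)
The proof has two parts, privacy and utility, and both reduce to results already stated. For privacy, we first apply Theorem~\ref{theorem:generic_red} to a single call $\widetilde{\theta}(G_j)$. We assume the edge-private matrix estimation mechanism $\mathcal{A}_{ME}$, run at edge level $(\epsilon', \delta')$, is private on bounded-degree graphs; Lemma~\ref{lemma:d_tight_priv} should supply this. Under edge privacy, two node-adjacent graphs in $\mathcal{G}_{n,2D}$ differ in at most $4D$ edges, so group privacy of size $4D$ makes $\mathcal{A}_{ME}^{(\epsilon/(4TD\hat{L}),\,\delta/\hat{L})}$ node DP on $\mathcal{G}_{n,2D}$ at level roughly $(2\epsilon/(T\hat{L}), \cdot)$. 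The cleanest route is to do this in zCDP (Lemma~\ref{LemGroupZ}) and convert back, since $(\epsilon,\delta)$ group privacy blows up $\delta$ by a factor $e^{4D\epsilon'}$. The window $\delta \in (e^{-\Theta(\epsilon/T)}/\mathrm{poly}(n),\, e^{-2\epsilon/T}/\mathrm{poly}(n))$ is exactly what makes the hypothesis $2\epsilon_2 < \log(1/(\delta_2\,\mathrm{poly}(n)))$ of Theorem~\ref{theorem:generic_red} hold with $\epsilon_2 = \epsilon/T$. Theorem~\ref{theorem:generic_red} then makes each $\widetilde{\theta}$ $(\epsilon_1 + 2\epsilon/T, 1/\mathrm{poly}(n))$-node DP. Since $\epsilon_1 \asymp 1 \le \epsilon/T$, this is at most $(3\epsilon/T, 1/\mathrm{poly}(n))$. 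Each $G_j$ is a randomized post-processing of $G$, and a node change in $G$ induces at most a node change in each $G_j$. Proposition~\ref{prop:Boost_result} therefore gives $(3\epsilon, T/\mathrm{poly}(n)) = (3\epsilon, 1/\mathrm{poly}(n))$ overall.

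For utility, the goal is hypothesis~\eqref{EqnConstBd} for the thinned model $A \sim SBM(n,k,B/T,\theta)$, whose expected degree is $d/T$ and which satisfies Assumption~\ref{AssSBM} by hypothesis. Since $D \ge 3d/T$, Lemma~\ref{lemma:hp_removal_d} and Remark~\ref{remark:No_trunc} give $T_D(A) = A$ and $d_{T_D}(A) = 0$ w.h.p. Then $\hat{L}_{\epsilon_1,\delta_1}(A) \asymp \log n$ w.h.p., by a Laplace tail bound together with Assumption~\ref{ASS_eps_1_delt_1}. Conditioned on this event, the effective edge privacy parameter is $\epsilon' \asymp \epsilon/(TD\log n) \asymp \epsilon/(D\log^2 n)$, and $\delta' \asymp \delta/\log n$. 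Plugging these into the utility bound for $\mathcal{A}_{ME}$ (Lemma~\ref{lemma:Initial_Conc_bd_matrix_estim}, the spectral perturbation plus Lei--Rinaldo $k$-means step) should give, with probability at least $0.8$, a loss of order $\frac{1}{\sqrt{n}\log^3 n} + \frac{D^2}{\epsilon\log n} = \xi$. Two contributions are involved. The concentration term $\|A - P\|_2 \lesssim \sqrt{d/T}$ against the eigengap $\asymp d/T$ gives a loss of order $\frac{1}{d/T}$, which is $\le 1/(\sqrt{n}\log^3 n)$ once $d > \sqrt{n}\log^5 n$. The privacy noise of \cite{hardt2014noisy} has spectral size governed by $1/\epsilon'$ and $\sqrt{\log(1/\delta')}$; squared over the eigengap squared, this should produce the $D^2/(\epsilon\log n)$ term after the factors of $T$ cancel.

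Two conditions remain to check. The first is $\xi < 1/(8k)$, which follows from $D^2\log n < \epsilon$ and $n$ large. With that, Proposition~\ref{prop:Boost_result} yields $\widetilde{\mathcal{L}} \le \xi T \precsim \frac{1}{\sqrt{n}\log^2 n} + \frac{D^2}{\epsilon} \precsim \frac{1}{\log n}$ with probability $1 - c/\log n$.

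The main obstacle is the bookkeeping in the utility step. The noise term depends on $\log(1/\delta')$, and since $\delta \approx e^{-\Theta(\epsilon/T)}$ this term is itself of order $\epsilon/T$. So $\sqrt{\log(1/\delta')}/\epsilon'$ must be tracked carefully to confirm that it still yields $D^2/\epsilon$ rather than something worse. The first thing to try is to use the zCDP form of the Gaussian mechanism throughout, so that the $\delta$-dependence enters only at the final conversion.
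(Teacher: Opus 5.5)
Your proposal is correct and follows the paper's proof. Privacy goes through Lemma~\ref{lemma:d_tight_priv} (zCDP group privacy over $4D$ edges), then Theorem~\ref{theorem:generic_red} with $\epsilon_2=\epsilon/T$, then $\epsilon_1\le\epsilon/T$, then basic composition in Proposition~\ref{prop:Boost_result}; utility goes through the no-truncation event, concentration of $\hat L_{\epsilon_1,\delta_1}$ on an interval of order $\log n$, Lemma~\ref{lemma:Initial_Conc_bd_matrix_estim} on the thinned model, and boosting. The one imprecision is in the bookkeeping you flag: the factors of $T$ and $\hat L$ do not cancel, since after using the lower end of the $\delta$-window ($\log(\hat L/\delta)\precsim \epsilon/T$) the privacy term is of order $\frac{nT^3D^2\log^4 n}{d^2\epsilon}$, and it is $d^2>n\log^{10}n$ that absorbs the leftover polylogs, whereas the zCDP reformulation you suggest is not what resolves this step.
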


\begin{remark}
\label{remark:ME_D=n}
Taking $D = n$ in Theorem \ref{theorem:Matrix_Mech_Main_Thm} corresponds to the naive approach of applying the edge-private algorithm directly and converting edge privacy to node privacy via group privacy. In this case, the condition
$D^2\log(n)<\epsilon$ forces $\epsilon = \widetilde{\Omega}(n^2)$ for consistency. The degree-truncation step improves this dependence by allowing us to take $D \asymp d/T \succsim \sqrt{n}\log^4(n)$. Then, in the smallest-degree regime allowed by the theorem, $d \asymp \sqrt{n}\log^5(n)$, we have $\epsilon = \widetilde{\Omega}(n)$, giving an improved growth condition.
\end{remark}

\begin{remark}
The privacy of Theorem \ref{theorem:Matrix_Mech_Main_Thm} relies on Lemma \ref{lemma:hardt2014_conc_bound}. The latter quantifies the sensitivity for privacy in terms of the operator norm of two edge-adjacent matrices, so one could also derive an analog of Theorem \ref{theorem:Matrix_Mech_Main_Thm} for weighted graphs with bounded weights. However, to avoid excessive technicalities, we do not include such an analysis.
\end{remark}


\subsection{Analysis of private approximate subspace estimation}
\label{section:Approximate DP: Private approximate subspace estimation}

Fix $1 \asymp \zeta \in (0, 1/3)$ and assume $3k \geq \log(2/\zeta)$. We also have the following assumption: 

\begin{assumption}
\label{ASS_SE_t_m}
Suppose there exist constants $C^{(1)} \asymp 1$ and $\beta_0 \in \left(\frac{1}{2}, 1\right)$ such that
\begin{align*}
2 \leq \frac{C^{(1)}\sqrt{n\log(n)\log(1/\delta)}}{\epsilon} \leq n^{\beta_0}.
\end{align*}
\end{assumption}
We will choose the number of chunks to be
\begin{align}
\label{eq:t}
t := t_{\epsilon, \delta} = \frac{C^{(1)}\sqrt{n\log(n)\log(1/\delta)}}{\epsilon}.
\end{align}
Under Assumption \ref{ASS_SE_t_m}, we then have $m = \frac{n}{t} \asymp \epsilon\sqrt{\frac{n}{\log(n)\log(1/\delta)}}$ and 
\begin{align*}
(C^{(1)})^2n^{1 - 2\beta_0}\log(n) \leq \frac{\epsilon^2}{\log(1/\delta)} \leq \frac{(C^{(1)})^2n\log(n)}{4}.
\end{align*}

In what follows, we work under Assumption \ref{ASS_SE_t_m} and equation~\eqref{eq:t}.
We split the analysis into unweighted and weighted graphs. The following result is proved in Appendix~\ref{AppGoodUnwt}:

\begin{theorem}
\label{theorem:GoodCent_Main_Thm}
Let $T \asymp \log(n)$ be an odd integer. Let $G \sim SBM(n, k, B, \theta)$, with parameters satisfying Assumption \ref{AssSBM} applied to $\frac{a_n}{T}$, and suppose the parameter of the degree truncation function (cf.\ Definition~\ref{def:smooth_proj}) satisfies $D \in \left[\frac{3d}{T}, n\right]$. Suppose Assumption \ref{ASS_eps_1_delt_1} holds. Let $d > \sqrt{n}T\log^{11/2}(n)$, $D^2\log(n) < \epsilon < nD^2\log^3(n)$, $\delta \in \left(\frac{e^{-\Theta(\epsilon/T)}}{\mathrm{poly}(n)}, \frac{e^{-2\epsilon/T}}{\mathrm{poly}(n)}\right)$, and $\widetilde{\theta}(\cdot) = \mathcal{A}_{SE}^{\left(\epsilon/(5TD\hat{L}_{\epsilon_1, \delta_1}(\cdot)), \delta/\hat{L}_{\epsilon_1, \delta_1}(\cdot)\right)}(T_{D}(\cdot))$, where Algorithm~\ref{alg:Subspace_Estimation_GoodC} is applied with $\gamma \asymp 1$, with $t = t_{\frac{\epsilon}{5DT\hat{L}_{\epsilon_1, \delta_1}(\cdot)}, \frac{\delta}{\hat{L}_{\epsilon_1, \delta_1}(\cdot)}}$. Then $\thetahat = \mathrm{GraphBoost}\left(T, \xi, G, \frac{3\epsilon}{T}, \frac{1}{\mathrm{poly}(n)}, k, \widetilde{\theta}\right)$, where $\xi = \frac{D}{\log^{3/2}(n)\sqrt{\epsilon}}$, satisfies $(3\epsilon, 1/\mathrm{poly}(n))$-node DP, and
\begin{align*}
\widetilde{\mathcal{L}}\left(\thetahat(G), \theta\right) \precsim \frac{D}{\log^{1/2}(n)\sqrt{\epsilon}} \precsim \frac{1}{\log(n)},
\end{align*}
with probability at least $1 - \frac{c}{\log(n)}$.
\end{theorem}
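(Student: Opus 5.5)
The proof splits into a privacy part and a utility part, following the template of Theorem~\ref{theorem:Matrix_Mech_Main_Thm}.

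\textbf{Privacy.} First I would establish a base privacy guarantee for Algorithm~\ref{alg:Subspace_Estimation_GoodC} on bounded-degree graphs. I would write the Gaussian noise step and the ball-finding aggregation of \cite{mahpud2022differentially} in $\rho$-edge zCDP. Two node-adjacent graphs in $\mathcal{G}_{n,2D}$ differ in at most $4D$ edges, so group privacy for zCDP (Lemma~\ref{LemGroupZ}) inflates $\rho$ by a factor of $O(D^2)$. Converting back to $(\epsilon',\delta')$-DP then gives an algorithm that is $(2\epsilon',\delta')_{2D}$-node DP when it is run with parameters $\epsilon' = \epsilon/(5TD\hat{L})$ and $\delta' = \delta/\hat{L}$. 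The constant $5$ is chosen to absorb the composition of the noise step and the aggregation step.

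Next I would apply Theorem~\ref{theorem:generic_red} with $\epsilon_2 = \epsilon/T$ (up to constants). The condition on $\delta$ guarantees the hypothesis $2\epsilon_2 < \log(1/(\delta_2\,\mathrm{poly}(n)))$. This makes $\widetilde{\theta}$ an $(\epsilon_1 + 2\epsilon/T,\, 1/\mathrm{poly}(n))$-node DP mechanism. Since $\epsilon_1 \asymp 1 \le \epsilon/T$, this is $(3\epsilon/T,\, 1/\mathrm{poly}(n))$-node DP. Proposition~\ref{prop:Boost_result} then yields $(3\epsilon,\, T/\mathrm{poly}(n))$-node DP for the boosted output.

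\textbf{Utility.} Each thinned graph $G_j$ is distributed as $SBM(n,k,B/T,\theta)$, and Assumption~\ref{AssSBM} holds for $a_n/T$. By Lemma~\ref{lemma:hp_removal_d}, with probability $1 - 1/\mathrm{poly}(n)$ the maximum degree is at most $2d/T$ plus lower-order terms, which is at most $D$. On that event:
\begin{itemize}
\item $T_D(G_j) = G_j$ by Remark~\ref{remark:No_trunc};
\item $d_{T_D} = 0$;
\item $\hat{L} \asymp \log n$.
\end{itemize}
So on this event $\widetilde{\theta}$ is simply Algorithm~\ref{alg:Subspace_Estimation_GoodC} run on an SBM with effective privacy level $\epsilon'' \asymp \epsilon/(TD\log n)$, and with $t$ given by~\eqref{eq:t}.

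It then suffices to show
\begin{equation*}
\mprob\bigl(\widetilde{\mathcal{L}}(\widetilde{\theta}(G_j),\theta) > \xi\bigr) \le 0.2, \qquad \xi = D/(\log^{3/2}(n)\sqrt{\epsilon}).
\end{equation*}
I would obtain this from the subspace-estimation utility lemma in the following steps.
\begin{enumerate}
\item \emph{Chunk concentration.} Split the columns into $t$ chunks of size $m = n/t$. In each chunk, the top-$k$ row space of the adjacency submatrix approximates that of $P$ with principal-angle error of order $\sqrt{d/T}/\sigma_k$, by matrix Bernstein or the concentration of \cite{LeiRin15}. The spectral gap is $\sigma_k \asymp md/(nT)$ for a chunk.
\item \emph{Aggregation.} The ball-finding step returns a center at distance $\eta$ from most projections, with $\eta$ of order $\sqrt{nk}\cdot\sqrt{\log(1/\delta)}/(\epsilon'' t)$ plus the chunk error.
\item \emph{From subspace to labels.} Davis--Kahan together with the $k$-means rounding lemma of \cite{LeiRin15} converts the subspace error into misclassification error of order $n\eta^2/d'$-type quantities.
\end{enumerate}
Substituting $t$ and $\epsilon''$ should give error $\precsim \sqrt{TD^2\log^{c}(n)/\epsilon}$. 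Under $d > \sqrt{n}T\log^{11/2}(n)$, the concentration term is dominated by the privacy term. The constraint $\epsilon < nD^2\log^3 n$ keeps $t \ge 2$, as required by Assumption~\ref{ASS_SE_t_m}.

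Finally, I need $\xi < 1/(8k)$, which follows from $D^2\log n < \epsilon$. Proposition~\ref{prop:Boost_result} then gives error at most $\xi T \asymp D/(\log^{1/2}(n)\sqrt{\epsilon}) \precsim 1/\log n$ with probability at least $1 - c/\log n$.

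\textbf{Main obstacle.} The hardest step is the chunk-level utility bound. Two issues arise there. First, the rows are not i.i.d. data points: within a chunk the entries are dependent through symmetry, so one must either restrict to off-diagonal blocks or handle the symmetric dependence directly. Second, the logarithmic factors have to be tracked carefully enough, through $\epsilon \to \infty$ and the group-privacy conversion, to land exactly on $\xi$. My first attempt would be to treat each chunk as a rectangular Bernoulli matrix with independent entries, using a column/row split in the spirit of \cite{KamEtal22}.
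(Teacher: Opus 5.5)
Your privacy argument and the outer utility reduction match the paper. That reduction consists of the no-truncation event via Lemma~\ref{lemma:hp_removal_d}, $\hat L_{\epsilon_1,\delta_1}\asymp\log n$ on a high-probability event, a fixed-parameter bound for one thinned graph, and then Proposition~\ref{prop:Boost_result}. The gap is in the fixed-parameter utility bound itself, which is the paper's Lemma~\ref{lemma:Pihat-Pi}. There, two of your ingredients are quantitatively wrong in a way that breaks the argument.

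First, the spectral gap. The algorithm splits \emph{rows}, and a random $m\times n$ row block $P_j$ of the thinned $P$ has $\sigma_k(P_j)\asymp (d/T)\sqrt{m/n}$, not $md/(nT)$; the latter is the scale of an $m\times m$ principal submatrix. Proving the correct value requires every chunk to contain $\asymp m/k$ rows of each community. This is where the random split, $m\ge n^{1-\beta_0}$, and hypergeometric concentration enter (Lemmas~\ref{lemma:hp_all_types} and~\ref{LemSubsampleSigma}). With your gap, the per-chunk error is $t\sqrt{T/d}$, which grows like $n^{1/4}$ up to logarithms when $\epsilon\approx D^2\log n$ and $d\approx\sqrt n\,T\log^{11/2}n$. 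The correct gap gives $\|\Pi^{(j)}-\Pi\|_2\precsim\sqrt{tT/d}$, and the hypothesis on $d$ is exactly what converts this into $\xi_0$. The symmetric dependence you flag as the main obstacle is not one: $\|A_j-P_j\|_2\le\|A-P\|_2$, and Lemma~\ref{lemma:thm5.2LeiRin} applied to the full matrix suffices.

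Second, and more fundamentally, the aggregation noise. Your noise level $\sqrt{nk\log(1/\delta)}/(\epsilon'' t)$ is calibrated to the norm $\|\Pi z_i\|_2\asymp\sqrt k$ of the projected points. With $t$ from~\eqref{eq:t}, it is of order $\sqrt{k/\log n}$ regardless of $\epsilon$. This gives a per-subgraph error of order $1/\log n$, which exceeds the required $\xi<1/\log^2 n$ and becomes $O(1)$ after the factor-$T$ loss in boosting.

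The missing idea is that GoodCenter only locates a ball. Each $z_i^j$ is truncated to $B(\theta_i^*,r)$ with $r\asymp\xi_0\log n$, so the average has sensitivity $O(r/t)$. The Gaussian noise matrix then has spectral norm $\precsim r\sqrt n/(t\sqrt\rho)\asymp\xi_0\sqrt{\log n}$. In other words, the privacy noise is matched to the small spread of the chunk projections rather than being a separate, dominant term.

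You then need the no-truncation event: all $\Pi^{(j)}z_i$ lie within $O(\xi_0\sqrt{\log n})$ of $\Pi z_i$, so by Lemma~\ref{lemma:cor_a3_supp} they all lie in $B(\theta_i^*,r)$. Finally, apply the $\sin\Theta$ bound of Lemma~\ref{lemma:l2.3SinSte} to $\hat Z$ versus $Z=\Pi[z_1,\dots,z_q]$, using $\sigma_k(Z)\asymp\sqrt k$ for $q=C'k$ Gaussian reference vectors. This gives $\|\hat\Pi-\Pi\|_2^2\precsim\xi_0^2\log n\asymp\sqrt{\log(1/\delta)}/(\epsilon\log^4 n)$. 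Substituting the random parameters $(\epsilon/(5TDx),\delta/x)$ with $x\asymp\log n$ yields the per-subgraph error $D/(\log^{5/2}(n)\sqrt{\epsilon})\le\xi$.
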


\begin{remark}
\label{remark:SENW_D=n}
Taking $D=n$ in Theorem \ref{theorem:GoodCent_Main_Thm} corresponds to the naive group-privacy baseline. This forces $\epsilon = \widetilde{\Omega}(n^2)$ from $D^2\log(n)<\epsilon$ and $d \succsim \sqrt{n}\log^{13/2}(n)$. With degree truncation, we may instead take $D\asymp d/T$, since the boosting procedure splits the graph
into $T\asymp \log(n)$ subgraphs. Then consistency requires only $\epsilon=\widetilde{\Omega}(n)$, in the smallest-degree regime allowed by the theorem, $d \asymp \sqrt{n}\log^{13/2}(n)$.
\end{remark}

The following result is proved in Appendix~\ref{AppGoodCWt}:

\begin{theorem}
\label{theorem:GoodCent_Main_Thm_Weighted}
Suppose $G \sim WSBM(n, k, B, \theta, \mathscr{W})$, with parameters satisfying Assumptions~\ref{AssSBM} and \ref{ASS_eps_1_delt_1}. Suppose $\mathscr{W}$ is such that the distribution of each edge weight is $s^2$-sub-Gaussian, with $s^2 \asymp a_n$, and suppose the parameter of the degree truncation function (cf.\ Definition~\ref{def:smooth_proj}) satisfies $D \in [3d, n]$. Consider equation~\eqref{eq:t} with $t = t_{\frac{\epsilon}{5D(\hat{L}_{\epsilon_1, \delta_1}(\mathcal{B}(G)) + 1)}, \frac{\delta}{\hat{L}_{\epsilon_1, \delta_1}(\mathcal{B}(G)) + 1)}}$. Let $d > \sqrt{n}\log^{13/2}(n)$, $D^2 < \epsilon < nD^2\log^2(n)$, and $\delta \in \left(\frac{e^{-\Theta(\epsilon)}}{\mathrm{poly}(n)}, \frac{e^{-2\epsilon}}{\mathrm{poly}(n)}\right)$. Then $\thetahat(G) = \mathcal{A}_{SE}^{(\epsilon/(5D(\hat{L}_{\epsilon_1, \delta_1}(\mathcal{B}(G)) + 1)), \delta/(\hat{L}_{\epsilon_1, \delta_1}(\mathcal{B}(G)) + 1))}(T_{w,D}(G))$ is $(3\epsilon, 1/\mathrm{poly}(n))$-node DP, and
\begin{align*}
\widetilde{\mathcal{L}}\left(\thetahat(G), \theta\right) \precsim \frac{D}{\log^3(n)\sqrt{\epsilon}} < \frac{1}{\log^3(n)},
\end{align*}
with probability at least $0.9 - 3\zeta - \frac{1}{\mathrm{poly}(n)}$.
\end{theorem}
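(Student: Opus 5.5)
The proof splits into a privacy part and a utility part, each reusing the machinery of Section~\ref{section:Private Degree Truncation Blackbox} with the weighted truncation $T_{w,D}$ in place of $T_D$.

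\textbf{Privacy.} I would apply Theorem~\ref{theorem:generic_red} to the pair $(\hat{L}_{\epsilon_1,\delta_1}(\mathcal{B}(G)), \mathcal{A}_{SE}(T_{w,D}(G)))$. Three ingredients are needed.

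\begin{enumerate}
\item Node adjacency is preserved by binarization: if $G\sim_{node}G'$ in $\mathcal{W}_n$, then $\mathcal{B}(G)\sim_{node}\mathcal{B}(G')$. Hence $\hat{L}_{\epsilon_1,\delta_1}(\mathcal{B}(\cdot))$ inherits the privacy and high-probability local sensitivity bound of Lemma~\ref{lemma:Lhat}.
\item The weighted projection has controlled distance. The node distance between $T_{w,D}(G)$ and $T_{w,D}(G')$ is at most the node distance between $T_D(\mathcal{B}(G))$ and $T_D(\mathcal{B}(G'))$, plus one for the rewired node whose weights may differ. This explains the ``$+1$'' in $\hat{L}+1$.
\item $\mathcal{A}_{SE}$ is private on bounded-degree weighted graphs. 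Write the Gaussian noise step and the ball-finding step of \cite{mahpud2022differentially} in zCDP. On $\mathcal{W}_{n,2D}$, a node change alters at most $4D$ entries, so group privacy in zCDP (Lemma~\ref{LemGroupZ}) inflates $\rho$ by $O(D^2)$. Converting to $(\epsilon,\delta)$ then gives the calibration $\epsilon/(5D(\hat{L}+1))$ under $\delta\le e^{-2\epsilon}/\mathrm{poly}(n)$.
\end{enumerate}

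Combining these with Theorem~\ref{theorem:generic_red} and $\epsilon_1\asymp 1$ yields $(3\epsilon,1/\mathrm{poly}(n))$-node DP.

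\textbf{Utility, no-truncation event.} Under Assumption~\ref{AssSBM}, Lemma~\ref{lemma:hp_removal_d} shows that the binarized graph has maximal degree at most $2d\le D$ with probability $1-1/\mathrm{poly}(n)$. By Remark~\ref{remark:No_trunc}, $T_{w,D}(G)=G$ and $\hat{L}\asymp\log n$ on this event. The effective privacy parameter is then $\epsilon'\asymp \epsilon/(D\log n)$ with $\log(1/\delta')\asymp\epsilon$.

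\textbf{Utility, subspace analysis.} Next I analyze $\mathcal{A}_{SE}$ on $W\odot A$, whose mean $P_w-\mathrm{diag}(P_w)$ has rank $k$ with $\sigma_k\asymp d$, using $B_w\odot B$ invertible and $B_w$ semidefinite.

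\begin{enumerate}
\item Split the rows into $t$ chunks of size $m=n/t$. For each chunk, bound the noise $\|W\odot A-\E[W\odot A]\|_2\precsim\sqrt{d\log n}$ via matrix Bernstein, using the $s^2\asymp a_n$ sub-Gaussian weights.
\item Apply Davis--Kahan to each chunk's top-$k$ right singular subspace. The projector error is $O(\sqrt{d\log n}/(d\sqrt{m/n}))$, which is small when $d\succsim\sqrt{n}\log^{13/2}n$.
\item Show that at least a $1-\zeta$ fraction of chunks succeed. Apply the ball-finding guarantee, with failure probability accounting for the $0.9-3\zeta$.
\item Bound the aggregate projector error by the Gaussian noise scale, roughly $\sqrt{k\log(1/\delta')}/(m\epsilon')$. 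This is where the choice of $t$ from \eqref{eq:t} balances terms.
\end{enumerate}

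\textbf{Utility, rounding.} Finally, translate the subspace error into misclassification via the $k$-means rounding lemma of \cite{LeiRin15}. This gives $\widetilde{\mathcal{L}}\precsim \|\hat{V}\hat{V}^T-VV^T\|_F^2$ up to constants, which should reduce to $D/(\log^3 n\sqrt{\epsilon})$. The range $D^2<\epsilon<nD^2\log^2n$ is exactly what keeps Assumption~\ref{ASS_SE_t_m} valid and the bound below $1/\log^3 n$.

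The main obstacle will be the bookkeeping in the last step. It requires tracking $\epsilon'$, $\delta'$, $t$ and $m$ through the Gaussian and ball-finding noise so that the resulting Frobenius error is $O(D/(\log^3n\sqrt\epsilon))$ rather than something weaker. I would first carry out a crude version with all logs suppressed, and then refine the powers of $\log n$.
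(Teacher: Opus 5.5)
Your privacy argument and the reduction to fixed privacy parameters on the no-truncation event follow the paper. The paper re-runs the proof of Theorem~\ref{theorem:generic_red} with group size $LS_{T_D}(\mathcal{B}(G))+1$ rather than citing it, and treats $LS_{T_D}(\mathcal{B}(G))=0$ separately, since the weighted projections can still differ in one node.

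The gap is in the subspace step you flag as the main obstacle. Your step 4 treats the privacy noise as a separate additive error of order $\sqrt{k\log(1/\delta')}/(m\epsilon') = t\sqrt{k\log(1/\delta')}/(n\epsilon')$. That expression increases with $t$, just like your step-2 error $\sqrt{t\log n/d}$, so there is nothing for $t$ to balance. In Algorithm~\ref{alg:Subspace_Estimation_GoodC} the noise is added to an average of $t$ points truncated to a ball of radius $r$, whose sensitivity is $O(r/t)$:
\begin{itemize}
\item Each Gaussian reference point satisfies $\|(\Pi^{(j)}-\Pi_w)z_i\|_2\precsim\xi_0\sqrt{\log n}$, where $\xi_0$ bounds your per-chunk error. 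The radius is $r\asymp\xi_0\log n$.
\item The noise scale $\sigma=2r/(t\sqrt{2\rho})$ gives $\|E_1\|_2\precsim r\sqrt{n}/(t\sqrt{\rho})\asymp r/\sqrt{\log n}$. This holds precisely because $t\asymp\sqrt{n\log n\log(1/\delta')}/\epsilon'$, the smallest $t$ GoodCenter allows.
\item Hence the privacy noise is of the same order as the non-private error, and $\|\hat Z-Z_w\|_2\precsim\xi_0\sqrt{\log n}$.
\item You also need $\sigma_k(U_w^T[z_1,\dots,z_q])\asymp 1$, which uses $q=C'k$ and is missing from your outline. With it, Wedin's bound gives $\widetilde{\mathcal{L}}\precsim\xi_0^2\log n\precsim t\log^2 n/d\precsim\sqrt{\log(1/\delta')}/(\epsilon'\log^4 n)$, using $d>\sqrt{n}\log^{13/2}n$.
\end{itemize}
Only then does substituting $\epsilon'=\epsilon/(5D(x+1))$, with $x\asymp\log n$ and $\log(1/\delta')\precsim\epsilon$, give $D/(\log^3 n\sqrt{\epsilon})$. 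The rate is driven by the $t$-dependence of the per-chunk spectral error, not by an independent noise term.

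Your step 3 ("a $1-\zeta$ fraction of chunks succeed") is also not the right mechanism. The argument needs every $\Pi^{(j)}z_i$ to lie in $B(\theta_i^*,r)$, so that truncation never activates. This holds because the chunk bound holds for all $j$ simultaneously:
\begin{itemize}
\item the noise bound on $W\odot A$ is global;
\item all $t\le n^{\beta_0}$ chunks of $P_w$ have rank $k$ with $\sigma_k\asymp a_n\sqrt{mn}$, with probability $1-1/\mathrm{poly}(n)$, by hypergeometric tails and a union bound.
\end{itemize}
If only a $1-\zeta$ fraction were good, each clipped bad point could be off by about $2r$. That adds a term of order $\zeta\xi_0\log n\gg\xi_0\sqrt{\log n}$ to the average and costs a factor $\log n$ in $\widetilde{\mathcal{L}}$. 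The three $\zeta$'s in $0.9-3\zeta$ come instead from GoodCenter's failure probability, the spectral bound on the Gaussian noise matrix, and the lower bound on $\sigma_k(Z_w)$.

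Finally, the paper uses semidefiniteness of $B_w$ for $\|\mathbb{E}[W]\odot(A-P)\|_2\le\max_{ij}|\mathbb{E}[W_{ij}]|\,\|A-P\|_2$, not for the rank of $P_w$. Your direct matrix-Bernstein bound on $W\odot A-\mathbb{E}[W\odot A]$, after truncating the sub-Gaussian weights, is a legitimate way around that lemma.
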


\begin{remark}
\label{remark:SEW_D=n}
Similar to Remark \ref{remark:SENW_D=n}, taking $D = n$ in Theorem \ref{theorem:GoodCent_Main_Thm_Weighted} results in $\epsilon = \widetilde{\Omega}(n^2)$. The degree-truncation method reduces this to $\epsilon = \widetilde{\Omega}(n)$, for $D \asymp d$, and $d \asymp \sqrt{n}\log^{13/2}$, in the smallest-degree regime allowed by the theorem.
\end{remark}

\begin{remark}
\label{remark:weighted_boost}
Similar to Theorem \ref{theorem:GoodCent_Main_Thm}, we could apply Algorithm \ref{alg:Boost} to Algorithm \ref{alg:Subspace_Estimation_GoodC} to boost the error probability to $1 - \frac{c}{\log(n)}$. It is possible to extend Proposition \ref{prop:Boost_result} to accommodate weighted graphs, and then apply it in the context of Theorem \ref{theorem:GoodCent_Main_Thm_Weighted}. We do not include the details to avoid excessive technicality. At a high level, this would require rescaling $d$ and $\epsilon$ by $T \asymp \log(n)$, leading to rates for $\widetilde{\mathcal{L}}$ and conditions on $(\epsilon,\delta)$ similar to those in Theorem \ref{theorem:GoodCent_Main_Thm}. The main difference would be the condition on $d$, which would become $d > \sqrt{n}T\log^{13/2}(n) \asymp \sqrt{n}\log^{15/2}(n)$.
\end{remark}

\begin{remark}
We compare Theorems \ref{theorem:GoodCent_Main_Thm} and \ref{theorem:GoodCent_Main_Thm_Weighted}, for unweighted and weighted graphs. Up to logarithmic factors, they have the same conditions and convergence rates; the extra logarithmic factors in Theorem \ref{theorem:GoodCent_Main_Thm} come from the boosting step in Algorithm \ref{alg:Boost}. As noted in Remark \ref{remark:weighted_boost}, the error probability in Theorem \ref{theorem:GoodCent_Main_Thm_Weighted} could also be boosted, yielding the same conditions and rates as in Theorem \ref{theorem:GoodCent_Main_Thm}, except for the growth condition on $d$.

However, a crucial difference lies in the privacy guarantee: the weighted approach allows arbitrary perturbations of the weighted adjacency matrix, rather than only binary edge flips. Another difference comes from the model setup. In the unweighted case, we need $B$ to be invertible (cf.\ Definition~\ref{DefSBM}). For the weighted SBM, we require $B \odot B_w$ to be invertible, with either $B_w \succeq 0$ or $B_w \preceq 0$. Thus, the advantage of the weighted setting is that we do not need any additional spectral separation condition on $B$; it is enough to ensure that the Hadamard product $B \odot B_w$ is invertible. The conditions $B_w \succeq 0$ or $B_w \preceq 0$ ensure that the weighting aligns with the signal from the edge probabilities. 
\end{remark}


\subsection{Analysis of private optimization algorithm}
\label{section:Two-community Algorithm}

We now analyze the private convex optimization algorithm, applicable in the case when $k = 2$. We work under Assumption \ref{assump_same_B11B22} on the entries of the probability matrix $B$.

The following result is proved in Appendix~\ref{AppThmTwoCom}:

\begin{theorem}
\label{theorem:Two_Comm_Main_Thm}
Let $G \sim SBM(n, k, B, \theta)$, with parameters satisfying Assumption \ref{AssSBM} and $k=2$, and suppose the parameter of the degree truncation function (cf.\ Definition~\ref{def:smooth_proj}) satisfies $D \in [3d, n]$. Suppose Assumptions~\ref{assump_same_B11B22}, and \ref{ASS_eps_1_delt_1} hold. Let $\delta \in \left(\frac{e^{-\Theta(\epsilon)}}{\mathrm{poly}(n)}, \frac{e^{-2\epsilon}}{\mathrm{poly}(n)}\right)$, $d > \log^3(n)$, and $D^2\log(n) < \epsilon$. Then $\thetahat(G) = \mathcal{A}_{TC}^{(\epsilon/(4D\hat{L}_{\epsilon_1, \delta_1}(G)), \delta/\hat{L}_{\epsilon_1, \delta_1}(G))}(T_{D}(G))$ is $(3\epsilon, 1/\mathrm{poly}(n))$-node DP. In addition, we have 
\begin{align*}
\mathcal{L}\left(\thetahat(G), \theta\right) \precsim \frac{1}{\sqrt{d}} + \frac{D^2}{\epsilon} \precsim \frac{1}{\log(n)},
\end{align*}
with probability at least $1 - \frac{1}{\mathrm{poly}(n)}$.   
\end{theorem}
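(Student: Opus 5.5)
The proof has two parts, privacy and utility, following the template of Theorems~\ref{theorem:generic_red} and~\ref{theorem:Edge_Flip_Main_Thm}.

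\emph{Privacy.} First, I would use Lemma~\ref{lemma:ChenPriv+Ut}: Algorithm~\ref{alg:TC_Optimiz} run with parameters $(\epsilon',\delta')$ on graphs in $\mathcal{G}_{n,2D}$ is $\rho$-edge zCDP, with $\rho \asymp \epsilon'^2/\log(1/\delta')$ from the Gaussian noise calibration. Two node-adjacent graphs in $\mathcal{G}_{n,2D}$ differ in at most $4D$ edges, since a rewired node loses at most $2D$ edges and gains at most $2D$. Group privacy for zCDP (Lemma~\ref{LemGroupZ}) therefore gives $(4D)^2\rho$-zCDP on $\mathcal{G}_{n,2D}$. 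This is the reason for passing through zCDP: the group cost grows only quadratically, and after conversion this becomes linear in $\epsilon$. I would then convert back with the standard bound $\rho + 2\sqrt{\rho\log(1/\delta)}$. With parameter $\epsilon/(4D)$ fed in, this yields $(2\epsilon,\delta)_{2D}$-node DP for $\delta$ in the stated window $\big(e^{-\Theta(\epsilon)}/\mathrm{poly}(n), e^{-2\epsilon}/\mathrm{poly}(n)\big)$. The upper end of the window gives $2\epsilon<\log(1/\delta)$, which is the hypothesis of Theorem~\ref{theorem:generic_red}. Applying Theorem~\ref{theorem:generic_red} with $\epsilon_2=\epsilon$, $\delta_2=\delta$ and the rescaling by $\hat L_{\epsilon_1,\delta_1}(G)$ then gives $(\epsilon_1+2\epsilon,\, 1/\mathrm{poly}(n))$-node DP. Since $\epsilon_1\asymp 1 \le \epsilon$, this is $(3\epsilon, 1/\mathrm{poly}(n))$. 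The main care needed is tracking the constants in the zCDP-to-DP conversion so that exactly $2\epsilon$ emerges.

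\emph{Utility.} I would work on the good event $\mathcal{E}$ on which $\max\deg(G)\le 2d\le D$. By Lemma~\ref{lemma:hp_removal_d}, $\mathcal{E}$ has probability $1-1/\mathrm{poly}(n)$. On $\mathcal{E}$, Remark~\ref{remark:No_trunc} gives $T_D(G)=G$ and $d_{T_D}(G)=0$. A Laplace tail bound then gives $\hat L_{\epsilon_1,\delta_1}(G)\asymp \log n$ with probability $1-1/\mathrm{poly}(n)$, using Assumption~\ref{ASS_eps_1_delt_1}. Conditioning on this value, the algorithm is Algorithm~\ref{alg:TC_Optimiz} applied to the untruncated SBM with effective parameters $\epsilon'\asymp \epsilon/(D\log n)$ and $\delta'\asymp\delta/\log n$.

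I would then invoke the utility part of Lemma~\ref{lemma:ChenPriv+Ut}, adapted from Theorem~5.3 of \cite{CheEtal23}. This should give a misclassification bound of the form $\frac{1}{\sqrt d}$ plus a privacy term of order $\frac{\log(1/\delta')}{\epsilon'^2}\cdot\frac{1}{\text{signal}}$, or an analogous expression. Substituting $\epsilon'$ and $\log(1/\delta')\asymp \epsilon$, which follows from the lower end of the $\delta$ window, the privacy term becomes of order $\frac{D^2\log^2 n}{\epsilon}\cdot(\cdot)$. I expect the remaining normalizations in the lemma to absorb the logs and leave $D^2/\epsilon$ up to constants. The conditions $D^2\log n<\epsilon$ and $d>\log^3 n$ then make both terms $\precsim 1/\log n$.

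\emph{Main obstacle.} The delicate step is this last bookkeeping. The utility bound must be expressed in $\epsilon'$ and $\delta'$, the dependence of $\log(1/\delta')$ on $\epsilon$ must be used, and the resulting powers of $\log n$ must actually cancel into $D^2/\epsilon$. If they do not cancel exactly, I would tighten the choice of $\delta$ within the allowed window, or use the condition $D^2\log n<\epsilon$, to absorb one log factor. Finally, I would union bound the failure probabilities of $\mathcal{E}$, the $\hat L$ concentration, and the lemma's own high-probability event, giving total probability $1-1/\mathrm{poly}(n)$.
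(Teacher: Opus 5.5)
Your privacy argument and the outline of your utility argument follow the paper's proof. Privacy: edge zCDP from Lemma~\ref{lemma:ChenPriv+Ut}, group privacy over the at most $4D$ edges changed by rewiring one node in $\mathcal{G}_{n,2D}$, conversion to $(\frac{\epsilon^2}{4\log(1/\delta)}+\epsilon,\delta)$-DP (which is $(2\epsilon,\delta)$ because $\log(1/\delta)>2\epsilon$), then Theorem~\ref{theorem:generic_red}. Utility: the no-truncation degree event, concentration of $\hat L_{\epsilon_1,\delta_1}(G)$ in an interval of order $\log n$, and Lemma~\ref{lemma:ChenPriv+Ut} at the worst-case parameters.

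The gap is the final bookkeeping. You leave it as an expectation, and neither of the remedies you propose would work. The $\delta$-window cannot be tightened: its upper end $\delta<e^{-2\epsilon}/\mathrm{poly}(n)$ is exactly what your privacy step needs, and it forces $\log(1/\delta')\succsim\epsilon$. Nor can $D^2\log n<\epsilon$ turn $D^2\log^2(n)/\epsilon$ (or $D^2\log(n)/\epsilon$) into $D^2/\epsilon$. It only bounds these by $\log n$ (resp.\ $1$).

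What absorbs the logarithms is the signal $a_n n\asymp d$ in the privacy term of Lemma~\ref{lemma:ChenPriv+Ut}, together with the hypothesis $d>\log^3(n)$. For $x\asymp\log n$, running the lemma at $(\epsilon/(4Dx),\delta/x)$ gives
\begin{align*}
\mathcal{L}\left(\thetahat(G),\theta\right)\precsim\frac{1}{\sqrt{a_n n}}+\frac{D^2x^2\log(x/\delta)}{a_n n\,\epsilon^2}\precsim\frac{1}{\sqrt{d}}+\frac{D^2\log^2(n)\,\epsilon}{d\,\epsilon^2}=\frac{1}{\sqrt{d}}+\frac{D^2\log^2(n)}{d\,\epsilon}.
\end{align*}
Here $\log(\log(n)/\delta)\precsim\epsilon+\log n\precsim\epsilon$ uses the lower end of the $\delta$-window and $\epsilon>D^2\log n$.

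Now $d>\log^3(n)$ gives $\frac{D^2\log^2(n)}{d\epsilon}<\frac{D^2}{\epsilon\log n}\le\frac{D^2}{\epsilon}$. This is where the hypothesis $d>\log^3(n)$ is actually used; the $1/\sqrt{d}$ term alone would only need $d\succsim\log^2 n$. Only after this does $D^2\log(n)<\epsilon$ give $\frac{D^2}{\epsilon}<\frac{1}{\log n}$. With this step filled in, your argument is the paper's proof.
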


\begin{remark}
\label{remark:TC_D=n}
Consider $D = n$ in Theorem \ref{theorem:Two_Comm_Main_Thm}, which corresponds to the naive group-privacy baseline. This
forces $\epsilon = \widetilde{\Omega}(n^2)$. With degree truncation, we may instead take $D \asymp d$, so the bound becomes $\Omega(\log^7(n))$, in the smallest-degree regime allowed by the theorem, $d \asymp \log^3(n)$.
\end{remark}

\begin{remark}
Theorem \ref{theorem:Two_Comm_Main_Thm} relies on Lemma \ref{lemma:ChenPriv+Ut}. The latter considers the sensitivity for privacy in terms of the operator norm of two edge-adjacent matrices. Thus, one could derive an analog of Theorem~\ref{theorem:Two_Comm_Main_Thm} for weighted graphs with bounded weights. We do not include such an analysis to avoid excessive technicalities
\end{remark}


\subsection{Analysis of private PCA}
\label{SecDeflatePCA}

Consider $D \in [3d, n]$. We first describe an algorithm based on the private PCA algorithm presented in Section~\ref{SecPPCA}, which is $(\epsilon', 0)_{2D}$-node DP. Since we are only considering bounded-degree graphs, we do not need to worry about taking a Lipschitz extension. By Lemma~\ref{LemPCAScore} with $k=1$, the sensitivity of the score function $s_A(v) = v^T A^2 v$ on graphs in $\mathcal{G}_{n,2D}$ is bounded by $4D^2$. Hence, the exponential mechanism which samples a unit vector $\hat{v}_1$ from the distribution $f(v) \propto \exp\left(\frac{\epsilon}{6D^2} \cdot v^T A^2 v\right)$ is node-private on graphs in $\mathcal{G}_{n,2D}$.

\begin{algorithm}[h!]
\caption{Private Eigenvector Deflation}
\label{alg:Priv_evec_def}
\begin{algorithmic}[1]
\Function{$\mathcal{A}_{EigDef}^{(\epsilon)}$}{adjacency matrix $A \in \{0, 1\}^{n \times n}$ based on $G \in \mathcal{G}_n$, number of communities $k$, truncation parameter $D$, privacy parameter $\epsilon$}
\State Set $A_1 = A^2$.
\For{$i = 1, \dots, k - 1$}
    \State Sample a unit vector $\hat{v}_i$ from the distribution $f_i(v) \propto \exp\left(\frac{\epsilon}{6D^2} \cdot v^T A_i v\right)$.
        \State Set $\hat{\sigma}_i = \left[\hat{v}_i^T A_i \hat{v}_i\right]_{-D^2}^{D^2} + \mathrm{Lap}\left(\frac{2D^2}{\epsilon}\right)$.
    \State Define the deflated adjacency matrix $A_{i+1} = A_i - \hat{\sigma}_i \hat{v}_i \hat{v}_i^T$.
    \EndFor
\State Sample a unit vector $\hat{v}_k$ from the distribution $f_k(v) \propto \exp\left(\frac{\epsilon}{6D^2} \cdot v^T A_k v\right)$.
\State Return $[\hat{v}_1, \dots, \hat{v}_k]$.
\EndFunction
\end{algorithmic}
\end{algorithm}

Importantly, we have chosen the threshold $D^2$ such that when $A_0 = A^2$ comes from $\mathcal{G}_{n, 2D}$, we know that w.h.p., the spectral norm of $A_1$ and all successive $A_i$'s is bounded by $4d^2 < 9d^2 \leq D^2$, so the truncation step in line 5 of Algorithm~\ref{alg:Priv_evec_def} can be ignored. The following result, a consequence of adaptive composition, is proved in Appendix~\ref{AppLemPCADeflate}:

\begin{lemma}
\label{LemPCADeflate}
$\mathcal{A}_{EigDef}^{(\epsilon/(2k))}$ satisfies $(\epsilon, 0)_{2D}$-node DP.
\end{lemma}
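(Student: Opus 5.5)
We prove Lemma~\ref{LemPCADeflate} by adaptive composition (Lemma~\ref{lemma:adapt_comp}), viewing $\mathcal{A}_{EigDef}^{(\epsilon/(2k))}$ as a sequence of $2k-1$ releases: $\hat{v}_1,\hat{\sigma}_1,\dots,\hat{v}_{k-1},\hat{\sigma}_{k-1},\hat{v}_k$. Fix two node-adjacent graphs $G\sim_{node} G'$ in $\mathcal{G}_{n,2D}$ with adjacency matrices $A,A'$. The plan is to show that each release, conditional on all previously released values $\{(\hat{\sigma}_j,\hat{v}_j)\}_{j<i}$, is $(\epsilon/(2k),0)$-DP with respect to this pair. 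Composition then gives total budget at most $(2k-1)\epsilon/(2k)\le\epsilon$ with $\delta=0$.

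\textbf{Eigenvector releases.} Condition on the previous outputs. Then
\[
A_i = A^2 - \sum_{j<i}\hat{\sigma}_j\hat{v}_j\hat{v}_j^T ,
\]
so the score $v^TA_iv$ equals $v^TA^2v$ minus a term that does not depend on the data. The sensitivity of the score is therefore that of $v^TA^2v$. By Lemma~\ref{LemPCAScore} with $q=1$ and degree bound $2D$, this is at most $(2D)^2=4D^2$ uniformly over unit $v$. The exponential mechanism (Lemma~\ref{LemExponential}) with density $\propto\exp(\eta\, v^TA_iv)$ is $2\eta\Delta$-DP, where $\Delta$ is the sensitivity. With $\eta=\frac{\epsilon/(2k)}{6D^2}$ this gives $2\cdot\frac{\epsilon}{12kD^2}\cdot 4D^2=\frac{2\epsilon}{3k}$. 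This exceeds $\epsilon/(2k)$, so the constants need care. I would first check the exact form of Lemma~\ref{LemExponential}: normalization cancels the data-independent term, so the ratio bound is $2\eta\Delta$. One fix is to read the sensitivity more sharply. Node adjacency means rewiring one node $u$, so $A^2-(A')^2 = A\Delta_A+\Delta_A A'$ with $\Delta_A$ supported on row and column $u$. A direct bound $|v^T(\cdot)v|\le 2\|A\|_2\|\Delta_A\|_2$ with $\|\Delta_A\|_2\le\sqrt{2}\cdot\sqrt{2D}$ is smaller than $4D^2$. The cruder fallback is simply to accept that the per-release constant is $c\,\epsilon/k$ and absorb it into the $2k$ normalization, since $k$ eigenvector releases at $\frac{2\epsilon}{3k}$ plus the Laplace releases must total at most $\epsilon$. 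I expect this constant bookkeeping to be the main obstacle.

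\textbf{Eigenvalue releases.} Condition again on the past, including $\hat{v}_i$. The quantity $\hat{v}_i^TA_i\hat{v}_i$ changes between $A$ and $A'$ only through $\hat{v}_i^TA^2\hat{v}_i$. Truncation to $[-D^2,D^2]$ bounds its sensitivity by $2D^2$ regardless of the data, which is presumably why the truncation is there. The Laplace mechanism with scale $\frac{2D^2}{\epsilon/(2k)}$ is therefore $(\epsilon/(2k),0)$-DP. Post-processing covers the construction of $A_{i+1}$.

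\textbf{Conclusion.} Summing over the $k-1$ Laplace releases and $k$ exponential releases, with the constants reconciled as above, and applying adaptive composition yields $(\epsilon,0)_{2D}$-node DP.
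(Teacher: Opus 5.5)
Your route is the paper's route. You condition on the previously released $\{(\hat\sigma_j,\hat v_j)\}_{j<i}$ and observe that the score $v^TA_iv$ then differs between $A$ and $A'$ only through $v^T(A^2-(A')^2)v$. The truncation gives each Laplace step sensitivity $2D^2$, and pure-DP composition is applied over the $2k-1$ releases. The constant discrepancy you flag is genuine, and the paper's proof passes over it: it just asserts the sensitivity bound $4D^2$ and composes. With sensitivity $4D^2$ and exponent $\frac{\epsilon/(2k)}{6D^2}$, Lemma~\ref{LemExponential} only certifies $\frac{2\epsilon}{3k}$ per eigenvector step, not $\frac{\epsilon}{2k}$.

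Of your two repairs, only the first works, and you should commit to it with the threshold made explicit.

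\textbf{The sharper bound works.} Write $A^2-(A')^2=A(A-A')+(A-A')A'$. Then $A-A'=e_uw^T+we_u^T$ with $w_u=0$ and at most $4D$ nonzero entries, each $\pm1$. Its eigenvalues are $\pm\|w\|_2$, so $\|A-A'\|_2=\|w\|_2\le 2\sqrt{D}$. The sensitivity is therefore at most $(\|A\|_2+\|A'\|_2)\cdot 2\sqrt{D}\le 8D^{3/2}$. Merely being smaller than $4D^2$ is not enough. You need sensitivity at most $3D^2$, so that $2\cdot\frac{\epsilon/(2k)}{6D^2}\cdot\Delta\le\frac{\epsilon}{2k}$. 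The inequality $8D^{3/2}\le 3D^2$ holds iff $D\ge 64/9$, which is harmless here since $D\ge 3d\succsim\log n$. Then all $2k-1$ steps are $\frac{\epsilon}{2k}$-DP and the total is $\frac{(2k-1)\epsilon}{2k}<\epsilon$, for every $k$.

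\textbf{The fallback fails.} We have $k\cdot\frac{2\epsilon}{3k}+(k-1)\frac{\epsilon}{2k}=\frac{(7k-3)\epsilon}{6k}$, which exceeds $\epsilon$ for every $k\ge 4$. The $2k$ normalization is fixed by the statement, so there is nothing to absorb the extra factor into. For $D<64/9$ and $k\ge 4$, neither sensitivity bound certifies the claim; the exponent in the algorithm itself would have to be rescaled, e.g.\ to $\frac{\epsilon}{8D^2}$.
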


Combining the black-box methods (Theorem~\ref{theorem:generic_red_pure}) with Lemma~\ref{LemPCADeflate}, we can easily deduce that the projected algorithm satisfies $(\epsilon_1 + \epsilon, 1/\mathrm{poly}(n))$-node DP. The following result is proved in Appendix~\ref{AppThmPCADeflate}:

\begin{theorem}
\label{ThmPCADeflate}
Let $G \in \mathcal{G}_n$, and $\thetahat(G) = \mathcal{A}_{EigDef}^{(\epsilon/(2k\hat{L}_{\epsilon_1, \delta_1}(G)))}(T_D(G))$. Suppose Assumption \ref{ASS_eps_1_delt_1} holds. Then $\thetahat(G)$ satisfies $(\epsilon_1 + \epsilon, 1/\mathrm{poly}(n))$-node DP.
\end{theorem}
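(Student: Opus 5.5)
The plan is to reduce Theorem~\ref{ThmPCADeflate} to the pure-DP black-box reduction, Theorem~\ref{theorem:generic_red_pure}, using Lemma~\ref{LemPCADeflate} as the privacy guarantee on bounded-degree inputs. Define the base mechanism $\mathcal{B}^{(\epsilon')} := \mathcal{A}_{EigDef}^{(\epsilon'/(2k))}$. By Lemma~\ref{LemPCADeflate}, $\mathcal{B}^{(\epsilon')}$ is $(\epsilon', 0)_{2D}$-node DP for every $\epsilon' > 0$. Two points need care here. First, the lemma must hold uniformly in the privacy parameter, so that it can be instantiated with a data-dependent value. Second, the truncation parameter $D$ appearing inside $\mathcal{A}_{EigDef}$, through the scaling $\frac{\epsilon}{6D^2}$ and the clipping at $D^2$, must be the same $D$ used in $T_D$. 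Only then is the sensitivity bound $4D^2$ on $\mathcal{G}_{n,2D}$ from Lemma~\ref{LemPCAScore} the one being invoked.

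Next I apply Theorem~\ref{theorem:generic_red_pure} with $\epsilon_2 = \epsilon$. The mechanism that releases the pair $\bigl(\hat{L}_{\epsilon_1,\delta_1}(G), \mathcal{B}^{(\epsilon/\hat{L}_{\epsilon_1,\delta_1}(G))}(T_D(G))\bigr)$ is then $(\epsilon_1 + \epsilon, e^{\epsilon_1}\delta_1)$-node DP. The second coordinate is exactly
\begin{equation*}
\mathcal{A}_{EigDef}^{(\epsilon/(2k\hat{L}_{\epsilon_1,\delta_1}(G)))}(T_D(G)) = \thetahat(G).
\end{equation*}
Since $\thetahat(G)$ is a projection of this released pair (we simply drop $\hat{L}$), post-processing preserves the $(\epsilon_1+\epsilon, e^{\epsilon_1}\delta_1)$ guarantee. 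If $\thetahat$ is defined to include a final clustering step on $[\hat v_1,\dots,\hat v_k]$, that step is likewise post-processing.

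Finally, Assumption~\ref{ASS_eps_1_delt_1} gives $\epsilon_1 \asymp 1$, so $e^{\epsilon_1} = O(1)$, and $\delta_1 \asymp 1/\mathrm{poly}(n)$. Hence $e^{\epsilon_1}\delta_1 = 1/\mathrm{poly}(n)$, which is the claimed guarantee.

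The main obstacle is checking that Theorem~\ref{theorem:generic_red_pure} really accepts a mechanism family indexed by a random, data-dependent privacy level. Concretely, the hypothesis ``$\mathcal{A}^{(\epsilon)}$ is $(\epsilon,0)_{2D}$-node DP'' must hold for all $\epsilon>0$ simultaneously, including non-integer values and values below the thresholds implicit in the algorithm. I would verify this directly from the proof of Lemma~\ref{LemPCADeflate}. Adaptive composition of $k$ exponential-mechanism draws at level $\epsilon/(2k)$ each, together with $k-1$ Laplace releases whose clipping to $[-D^2,D^2]$ gives sensitivity $2D^2$ and hence also level $\epsilon/(2k)$, totals at most $\epsilon$ for any $\epsilon>0$. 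Nothing in this argument depends on $\epsilon$ being fixed in advance, so the parameter can be supplied by $\hat L$.
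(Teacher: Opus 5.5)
Your proposal is correct and follows the paper's proof exactly: Lemma~\ref{LemPCADeflate} gives $(\epsilon,0)_{2D}$-node DP for $\mathcal{A}_{EigDef}^{(\epsilon/(2k))}$, Theorem~\ref{theorem:generic_red_pure} with $\epsilon_2=\epsilon$ then gives $(\epsilon_1+\epsilon, e^{\epsilon_1}\delta_1)$, post-processing drops $\hat{L}$, and Assumption~\ref{ASS_eps_1_delt_1} makes $e^{\epsilon_1}\delta_1 = 1/\mathrm{poly}(n)$. One small caveat concerns only your side re-derivation of Lemma~\ref{LemPCADeflate}, not the reduction itself: with the $\frac{\epsilon'}{6D^2}$ scaling and sensitivity $4D^2$ on $\mathcal{G}_{n,2D}$, Lemma~\ref{LemExponential} makes each draw $\frac{4}{3}\epsilon'$-DP rather than $\epsilon'$-DP, so your tally is really $\frac{7\epsilon}{6}-\frac{\epsilon}{2k}$, which exceeds $\epsilon$ once $k\geq 4$ --- a constant-factor slip shared with the paper's own proof of that lemma.
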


We do not include the details for analyzing the utility of Algorithm~\ref{alg:Priv_evec_def} to avoid excessive technicality. However, we note that the utility analysis for the exponential mechanism in private PCA (cf.\ Section~\ref{SecPPCA}) could be combined directly with the analysis of~\cite{LiaEtal24} for error propagation in eigenvector deflation techniques to obtain utility bounds on each of the eigenvectors $\{\hat{v}_1, \dots, \hat{v}_k\}$. This could then be translated into bounds on the misclassification error rate for an SBM using similar techniques to the ones employed in previous subsections.

\section{Lower bounds}
\label{SecLower}

Finally, we present two strategies to obtain lower bounds on $\epsilon$ for our private estimation problem. The first strategy is specific to the case $k=2$ and a particular misclassification error rate, while the second strategy is applicable to general $k$ and a broader range of misclassification rates. We note that in order to match the algorithmic setup used in the methods in Section~\ref{SecProj}, we are particularly interested in obtaining lower bounds on $\epsilon$ when $\delta \approx \frac{1}{\text{poly}(n)}$ for approximate DP, since our earlier analysis requires this scaling for consistent estimation.


We begin with a lower bound that applies to the case $k = 2$, and is based on an argument presented in~\cite{CheEtal23}, adapted from edge to node privacy. We present lower bounds for both approximate and pure DP, proved in Appendices~\ref{AppPropLBApprox} and~\ref{AppPropLBPure}, respectively:

\begin{proposition}
\label{prop:LB_Chen_eps_delt}
Let $\xi \geq\frac{1}{n}$. Suppose there exists an $(\epsilon, \delta)$-node DP algorithm such that for any balanced $\theta$, on input $G \sim SBM(n, k, B, \theta)$ with $k=2$, we have 
\begin{align*}
\mathbb{P}(\mathcal{L}(\mathcal{M}(G), \theta) \leq \xi) \geq 1 - \eta,
\end{align*}
where the randomness is over both the algorithm and stochastic block models. Then
\begin{align*}
\epsilon \geq \frac{\log\left(\max\left\{1, \frac{(1 - \eta - 4\delta\xi n e^{4\epsilon\xi n})\left(\frac{(4e\xi)^{-\xi n}}{2} - 1\right)}{\eta}\right\}\right)}{4\xi n}.
\end{align*}
In particular, if $\eta = O\left(\frac{1}{\log(n)}\right)$, $\xi \leq \frac{c_0}{n}$, $\delta \asymp \frac{1}{\mathrm{poly}(n)}$, and $\delta e^{4c_0\epsilon} < c_1$, with $c_0, c_1 > 0$, we have $\epsilon \succsim \log(n)$.
\end{proposition}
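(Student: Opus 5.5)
We follow the packing/group-privacy argument of \cite{CheEtal23}, lifted from edge to node adjacency. Set $m = \lceil 2\xi n\rceil$; the factors $4\xi n$ in the statement come from rounding. Fix a balanced reference assignment $\theta_0$. Build a family $\{\theta_S\}$ of balanced assignments, each obtained from $\theta_0$ by swapping the labels of $S_1\subset C_1$ and $S_2\subset C_2$ with $|S_1|=|S_2|=m/2$. Take a maximal collection in which the balls $\{\widehat\theta:\mathcal{L}(\widehat\theta,\theta_S)\le\xi\}$ are pairwise disjoint. Because $\mathcal{L}$ satisfies the triangle inequality (Remark~\ref{remark:avg<=worst}), it suffices that the $\theta_S$ are $2\xi$-separated, modulo the label permutation. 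A volume (Gilbert--Varshamov type) count gives at least $N\ge \frac{(4e\xi)^{-\xi n}}{2}$ such assignments. The numerator $\binom{n/2}{m/2}^2$ is at least $(n/(e\xi n))^{\xi n}$ roughly, while each $2\xi$-ball contains at most $\binom{n}{\le 2\xi n}\cdot 2$ assignments. The factor $\frac12$ absorbs the permutation in $S_2$, and the $-1$ in the statement corresponds to discarding $\theta_0$ itself.

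The key coupling step is as follows. For each $S$, $SBM(n,2,B,\theta_S)$ can be obtained from $SBM(n,2,B,\theta_0)$ by resampling only the rows and columns of the $m\le 4\xi n$ swapped nodes, since relabeling a node changes only its incident edge probabilities. Couple the two graphs so that $d_{node}(G_0,G_S)\le 4\xi n$ almost surely. For $(\epsilon,\delta)$-DP, apply group privacy of size $g=4\xi n$ (Lemma~\ref{LemGroup}) conditionally on the coupled pair and then average. This gives, for every measurable set $E$,
\[
\mathbb{P}(\mathcal{M}(G_S)\in E)\le e^{g\epsilon}\,\mathbb{P}(\mathcal{M}(G_0)\in E)+g e^{g\epsilon}\delta .
\]
Taking $E=E_S$ to be the $\xi$-ball around $\theta_S$, the accuracy hypothesis gives the lower bound $1-\eta\le e^{g\epsilon}\mathbb{P}(\mathcal{M}(G_0)\in E_S)+ge^{g\epsilon}\delta$. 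The balls $E_S$ for $S\neq\emptyset$ are disjoint from each other and from $E_{\theta_0}$, and $\mathbb{P}(\mathcal{M}(G_0)\in E_{\theta_0})\ge 1-\eta$. Summing over $S$ therefore yields
\[
(N-1)\bigl(1-\eta-ge^{g\epsilon}\delta\bigr)e^{-g\epsilon}\le \eta .
\]
Rearranging gives $e^{4\xi n\epsilon}\ge (1-\eta-4\delta\xi n e^{4\epsilon\xi n})(N-1)/\eta$. Taking logarithms, using the bound $\max\{1,\cdot\}$ to handle nonpositive numerators, and inserting the bound on $N$ produces the displayed inequality.

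For the corollary, take $\xi n\le c_0$, so that $g\le 4c_0$ is constant and $ge^{g\epsilon}\delta\le 4c_0 c_1$ is small. Then $1-\eta-\cdots\gtrsim 1$. We need $N\ge$ a positive power of $n$: with $\xi\asymp 1/n$ (at least $1/n$, at most $c_0/n$), $(4e\xi)^{-\xi n}\gtrsim (n/(4ec_0))^{1}$, so $\log N\gtrsim \log n$. Since $\eta=O(1/\log n)$, we get $\epsilon\ge \frac{\log(cN/\eta)}{4c_0}\succsim\log n$.

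The main obstacle is the packing count. We must make sure the family is genuinely $2\xi$-separated under the minimum over label permutations in $\mathcal{L}$, with the normalizing factor $2/n$, and that the count matches exactly the form $\frac{(4e\xi)^{-\xi n}}{2}$. I would first try the simple subfamily obtained by fixing $S_1$ and varying $S_2$ over disjoint blocks. If that fails, I would use the greedy maximal packing estimate with $\binom{a}{b}\ge(a/b)^b$ and $\sum_{j\le b}\binom{a}{j}\le(ea/b)^b$. A secondary subtlety is that group privacy must be applied through the coupling rather than to a fixed pair of graphs; averaging the pointwise inequality over the coupling handles this.
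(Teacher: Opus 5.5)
Your proposal is correct and is essentially the paper's argument. Both use a $2\xi$-packing of balanced assignments, a coupling that resamples only edges incident to the relabelled nodes, group privacy of size $4\xi n$ averaged over the coupling, and summation over the disjoint $\xi$-balls under a reference distribution. The paper takes a packing element $\theta^1$ as the reference rather than your centre $\theta_0$; this is cosmetic, and your choice even sidesteps label-flip ambiguity, since each $\theta_S$ differs from $\theta_0$ exactly on the swapped nodes.

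The one step to tidy is the count, which the paper simply imports from \cite{CheEtal23}. Since $\mathcal{L}\le 2\xi$ means at most $\xi n$ mismatches up to the global label flip, one gets $N\ge\binom{n/2}{\xi n}^2\big/\bigl(2(e/\xi)^{\xi n}\bigr)\ge\tfrac12(4e\xi)^{-\xi n}$. That same flip, not the permutation in $S_2$, is where the $\tfrac12$ comes from. By contrast, the rough figures in your sketch (ball radius $2\xi n$) would give a ratio below $1$.
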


\begin{corollary}
\label{cor:LB_Chen_pure}
Let $\eta \leq \frac{1}{2}$ and $\xi \geq\frac{1}{n}$. Suppose there exists an $(\epsilon, 0)$-node DP algorithm such that for any balanced $\theta$, on input $G \sim SBM(n, k, B, \theta)$ with $k=2$, we have  
\begin{align*}
\mathbb{P}(\mathcal{L}(\mathcal{M}(G), \theta) \leq \xi) \geq 1 - \eta,
\end{align*}
where the randomness is over both the algorithm and stochastic block models. Then
\begin{align*}
\epsilon \succsim \log\left(\frac{1}{\xi}\right) + \frac{\log\left(\frac{1}{4\eta}\right)}{\xi n}.
\end{align*}
In particular, if $\xi, \eta \asymp \frac{1}{\mathrm{poly}(n)}$, we have $\epsilon \succsim \log(n)$. Moreover, if $\eta \asymp \frac{1}{\mathrm{poly}(n)}$, and $\xi \asymp \frac{1}{\log(n)}$, then $\epsilon \succsim \log(\log(n))$.
\end{corollary}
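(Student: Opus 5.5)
The plan is to obtain the corollary from Proposition~\ref{prop:LB_Chen_eps_delt} by setting $\delta=0$, with the bound rearranged. With $\delta=0$, the term $4\delta\xi n e^{4\epsilon\xi n}$ drops out and the proposition gives
\begin{align*}
4\epsilon\xi n \ \ge\ \log\left(\frac{(1-\eta)\left(\frac{(4e\xi)^{-\xi n}}{2}-1\right)}{\eta}\right),
\end{align*}
whenever the argument of the logarithm exceeds $1$. Since $\eta\le\frac12$, we have $1-\eta\ge\frac12$. It remains to lower-bound $\frac{(4e\xi)^{-\xi n}}{2}-1$ by a constant multiple of $(4e\xi)^{-\xi n}$.

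Write $m=\xi n\ge 1$. In the regime $\xi\le\frac{1}{8e}$ we have $(4e\xi)^{-m}\ge 2^m\ge 2$. There we want $\frac{(4e\xi)^{-m}}{2}-1\ge c\,(4e\xi)^{-m}$. This holds with $c=\frac14$ as soon as $(4e\xi)^{-m}\ge 4$, which is true if $\xi\le\frac{1}{16e}$. Taking logarithms then gives
\begin{align*}
4\epsilon m \ \ge\ m\log\frac{1}{4e\xi}+\log\frac{1}{8\eta},
\end{align*}
and dividing by $4m$ yields
\begin{align*}
\epsilon\ \ge\ \tfrac14\log\frac{1}{4e\xi}+\frac{\log(1/(8\eta))}{4\xi n}.
\end{align*}
Up to absolute constants this is $\log(1/\xi)+\frac{\log(1/(4\eta))}{\xi n}$ in the $\succsim$ sense, provided $\xi$ is below a fixed constant. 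For $\xi$ bounded below by a constant, $\log(1/\xi)\asymp 1$ and the claimed bound is essentially trivial or vacuous. This uses the convention that $\precsim$ allows large-$n$ and constant slack, so we restrict to small $\xi$.

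The main obstacle is a mismatch between $\log\frac{1}{8\eta}$ and $\log\frac{1}{4\eta}$. When $\eta$ is close to $\frac12$, the quantity $\log\frac{1}{8\eta}$ can be negative, so $\log\frac{1}{4\eta}\ge 0$ does not translate directly. I would handle this by splitting into two cases. If $\eta\le\frac1{64}$, then $\log\frac1{8\eta}\ge\frac12\log\frac1{4\eta}$. If $\eta\in(\frac1{64},\frac12]$, the term $\frac{\log(1/(4\eta))}{\xi n}\le\log 16$ because $\xi n\ge1$, so it is absorbed by the constant times $\log(1/\xi)$. Inside the proposition's $\max\{1,\cdot\}$, when the ratio is at most $1$ we instead fall back on the first term alone. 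The first term still follows because $\frac{1-\eta}{\eta}\ge1$, so the ratio is at least $\frac{(4e\xi)^{-m}}{4}$.

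For the particular cases:
\begin{itemize}
\item If $\xi,\eta\asymp 1/\mathrm{poly}(n)$, then $\log(1/\xi)\asymp\log n$, giving $\epsilon\succsim\log n$.
\item If $\xi\asymp 1/\log n$ and $\eta\asymp1/\mathrm{poly}(n)$, the first term gives $\log\log n$. The second term is $\frac{\log n}{n/\log n}=o(1)$. Hence $\epsilon\succsim\log\log n$.
\end{itemize}
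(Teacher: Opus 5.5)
Your proposal is correct and is essentially the paper's proof: set $\delta=0$ in Proposition~\ref{prop:LB_Chen_eps_delt}, use $1-\eta\ge\frac12$, lower-bound the packing term by a constant multiple of $(4e\xi)^{-\xi n}$ (the paper does this via $m-1\ge m/2$ for large $n$), and take logarithms to reach $\epsilon\ge\frac14\log\frac{1}{4e\xi}+\frac{\log(1/(8\eta))}{4\xi n}$. Your case split on $\eta$, which passes from $\log\frac{1}{8\eta}$ to the stated $\log\frac{1}{4\eta}$, and your explicit restriction to $\xi$ below a fixed constant both handle details the paper glosses over. The one inaccuracy is calling the large-$\xi$ regime ``trivial or vacuous'': for $\xi=1$ every output has $\mathcal{L}\le 1$, so a constant output is $(0,0)$-node DP for every $\eta$, while the right-hand side is positive once $\eta<\frac14$; small $\xi$ is therefore an implicit hypothesis of the corollary, not a harmless convention.
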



We now turn to an alternative lower bound that is somewhat more general both in terms of its ability to handle $k \ge 2$ and its additional flexibility in terms of the consistency rate $\xi$. It will be applicable to symmetric mechanisms, defined as follows:

\begin{definition}
\label{def:Symm_Mech}
A random mechanism $\calM: \{0, 1\}^{n\times n} \rightarrow [k]^n$ is \textit{symmetric} if for any $\hat{\theta} \in [k]^n$, $A\in \{0,1\}^{n\times n}$, and any a deterministic permutation matrix $L \in \{0, 1\}^{n \times n}$, we have
\begin{align*}
    \mathbb{P}(\calM(A) = \hat{\theta}) = \mprob(\calM(LAL^T) = L\hat{\theta}).
\end{align*}
\end{definition}

This assumption is natural when we do not have any prior information on the nodes, and makes the mechanism easier to analyze. The following result, proved in Appendix~\ref{AppLemSym}, shows that any mechanism $\mathcal{M}$ can easily be converted into one that satisfies Definition \ref{def:Symm_Mech}.

\begin{proposition}
\label{lemma:symmetrization}
Let $\calM: \mathbb{R}^{n\times n} \rightarrow [k]^n$ be a random mechanism. Then the following mechanism is symmetric: Sample a permutation matrix $J_n \in \{0, 1\}^{n \times n}$ uniformly from the set $E_n$ of $n!$ permutation matrices, independently from the randomness in $\calM$, and for the input matrix $A$, output $J_n^T\calM(J_nAJ_n^T)$.
\end{proposition}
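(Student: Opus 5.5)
Write $\calM'$ for the symmetrized mechanism, $\calM'(A) = J^T\calM(JAJ^T)$ with $J$ uniform on $E_n$ and independent of the internal randomness of $\calM$. We must show that for every $A$, every deterministic permutation matrix $L$, and every $\hat\theta \in [k]^n$,
\begin{align*}
\mprob(\calM'(A)=\hat\theta)=\mprob(\calM'(LAL^T)=L\hat\theta).
\end{align*}
The proof is a change of variables in the uniform average over $E_n$. We condition on $J$ and use its independence from the randomness of $\calM$.

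First we compute the right-hand side. Conditioning on $J=Q$ gives
\begin{align*}
\mprob(\calM'(LAL^T)=L\hat\theta)=\frac{1}{n!}\sum_{Q\in E_n}\mprob\left(Q^T\calM(QLAL^TQ^T)=L\hat\theta\right).
\end{align*}
The event $Q^T\calM(\cdot)=L\hat\theta$ is the same as $\calM(\cdot)=QL\hat\theta$, because $Q$ is invertible with $Q^{-1}=Q^T$. Here we regard $\hat\theta\in[k]^n$ as a vector, so a permutation matrix acts on it by permuting coordinates.

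Next we substitute $R=QL$. The map $Q\mapsto QL$ is a bijection of $E_n$, since $E_n$ is a group. Because $(QL)^T = L^TQ^T$, we have $QLAL^TQ^T = RAR^T$. The sum therefore becomes
\begin{align*}
\frac{1}{n!}\sum_{R\in E_n}\mprob\left(\calM(RAR^T)=R\hat\theta\right)=\frac{1}{n!}\sum_{R\in E_n}\mprob\left(R^T\calM(RAR^T)=\hat\theta\right)=\mprob(\calM'(A)=\hat\theta),
\end{align*}
which is the required identity.

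The only delicate point is that the conditioning step needs $J$ to be independent of the randomness of $\calM$. Without this, $\mprob(\calM'(A)=\hat\theta \mid J=Q)$ would not equal $\mprob(Q^T\calM(QAQ^T)=\hat\theta)$. The remaining facts used are elementary: permutation matrices are orthogonal, and $E_n$ is closed under right multiplication. If the domain is $\real^{n\times n}$ rather than $\{0,1\}^{n\times n}$, nothing changes, because the argument never uses the entries of $A$.
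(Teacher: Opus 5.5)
Your proof is correct and is essentially the paper's argument. The paper also rewrites the event as $L^TJ_n^T\calM(J_nLAL^TJ_n^T)=\hat\theta$ and then uses $J_nL \stackrel{d}{=} J_n$ together with the independence of $J_n$ from the randomness of $\calM$; you just make this averaging explicit as a sum over $E_n$ with the reindexing $R=QL$.
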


\begin{remark}
Consider the four mechanisms from Theorems \ref{theorem:Edge_Flip_Main_Thm}, \ref{theorem:Matrix_Mech_Main_Thm}, \ref{theorem:GoodCent_Main_Thm}, and \ref{theorem:Two_Comm_Main_Thm}, together with the truncation operator $T_{D} : \mathcal{G}_n \rightarrow \mathcal{G}_{n, 2D}$. If we consider the modification from Proposition~\ref{lemma:symmetrization}, i.e., uniform sampling and application of a permutation matrix $J_n$, privacy is clearly  preserved, since $J_n$ is independent of the data, and post-processing holds. Furthermore, the utility analysis will not be affected since we can condition on $J_n$ and note that the definition of $\mathcal{L}$ involves taking a minimum over all possible permutations of estimated labels.

\end{remark}


The proof of the following result and its corollary are contained in Appendices~\ref{AppPropStable} and~\ref{AppCorLB}, respectively:

\begin{proposition}
\label{prop:LBStable}
Suppose $\frac{n}{k} \ge 6$ and $\theta$ is balanced. Let $\calM: \{0,1\}^{n\times n}\to \mathbf{M}_{n\times k}$ be a symmetric $(\eps, \delta)$-node DP mechanism. For any adjacency matrix $A\in \{0,1\}^{n\times n}$ and any $\xi \in (0,1/3)$, we have
$$
\mathbb{P}(\widetilde{\mathcal{L}}(\mathcal{M}(A), \theta)\leq \xi) \leq \frac{1}{(1-\xi)(1+(k-1)e^{-2\eps})} + \frac{2(k - 1)\delta}{1-\xi}.
$$
\end{proposition}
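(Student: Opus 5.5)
The plan is to compare the mechanism's behaviour on $A$ with its behaviour on $LAL^T$, where $L$ is a transposition of two nodes lying in different communities. Node symmetry plus group privacy of size $2$ should make "node $i$ looks like it is in community $1$" and "node $i$ looks like it is in community $c$" nearly equally likely. Disjointness of these $k$ possibilities then caps the success probability.

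\textbf{Step 1: a well-defined alignment.} Since $\xi<1/3$ and $n/k\ge 6$, on the event $E_\theta=\{\widetilde{\mathcal{L}}(\mathcal{M}(A),\theta)\le\xi\}$ the minimizing permutation $\sigma$ in Definition~\ref{DefLoss} is unique. Indeed, every community $C_j$ has more than half of its nodes mapped by $\sigma$ to $j$, so $\sigma$ is the "majority label" map. This lets us speak unambiguously of the aligned label $\sigma(\hat\theta_i)$ of a node $i$ on $E_\theta$. The same holds for $E_{\theta'}$ with $\theta'=L\theta$, where $L$ swaps one node $i\in C_1$ with one node $j\in C_c$.

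\textbf{Step 2: the transfer inequality.} Let $L$ be the permutation matrix swapping $i$ and $j$. By Definition~\ref{def:Symm_Mech}, for any set $S$ of outputs we have $\mprob(\mathcal{M}(A)\in S)=\mprob(\mathcal{M}(LAL^T)\in LS)$. Since $d_{node}(A,LAL^T)\le 2$, group privacy (Lemma~\ref{LemGroup} with group size $2$) gives
\begin{align*}
\mprob(\mathcal{M}(LAL^T)\in LS)\le e^{2\epsilon}\mprob(\mathcal{M}(A)\in LS)+(1+e^{\epsilon})\delta .
\end{align*}
Turning this around and absorbing constants, I will aim for $\mprob(\mathcal{M}(A)\in LS)\ge e^{-2\epsilon}\bigl(\mprob(\mathcal{M}(A)\in S)-2\delta\bigr)$. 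The constant in front of $\delta$ must be tuned so that the final term reads $2(k-1)\delta/(1-\xi)$, e.g.\ by using the version of group privacy in which the additive term is $\delta(1+e^{\epsilon})e^{-2\epsilon}\le 2\delta$ after dividing through.

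\textbf{Step 3: disjoint events and summing.} Fix $i\in C_1$ and, for each $c\ne 1$, a node $j_c\in C_c$. Set
\begin{align*}
H_1 &= E_\theta\cap\{\text{$i$ is aligned-labelled } 1,\ \text{each } j_c \text{ aligned-labelled } c\},\\
H_c &= \{\text{the event } LH_1 \text{ for the swap } L=(i\ j_c)\}.
\end{align*}
On $H_c$, node $i$ receives the raw output label that the alignment associates with community $c$; here I use that swapping two nodes does not change the majority map $\sigma$, since communities have size at least $6$. Hence $H_1,H_2,\dots,H_k$ are pairwise disjoint. Applying Step~2 with $S=H_1$ for each $c$ gives
\begin{align*}
1\ge\mprob(H_1)+\sum_{c\ne1}\mprob(H_c)\ge \mprob(H_1)\bigl(1+(k-1)e^{-2\epsilon}\bigr)-2(k-1)\delta e^{-2\epsilon}.
\end{align*}

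\textbf{Step 4: lower-bounding $\mprob(H_1)$.} It remains to show $\mprob(H_1)\ge(1-\xi)\mprob(E_\theta)$. Symmetry under permutations that preserve $\theta$ makes $\mprob(H_1)$ independent of the choice of $i,j_c$. I can therefore average over uniformly random $i\in C_1$ and $j_c\in C_c$. On $E_\theta$, the averaged fraction of correctly aligned nodes is at least $1-\xi$ in an appropriate combined sense, with the factor $2$ in the definition of $\widetilde{\mathcal{L}}$ handling the per-community error $\xi/2$ across the $k$ chosen nodes. Dividing by $(1-\xi)(1+(k-1)e^{-2\epsilon})$ then yields the claim.

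\textbf{Main obstacle.} The delicate points are the disjointness and alignment bookkeeping in Step~3, and obtaining exactly the factor $1-\xi$ in Step~4 when requiring several nodes simultaneously correct. If the joint requirement loses too much, I would weaken $H_1$ to require only that $i$ is correct. I would then instead get disjointness from distinct raw labels of $i$, using that the alignment of the swapped output is still determined by majorities.
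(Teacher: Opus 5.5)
Your Steps 1--3 are essentially sound. The majority-map uniqueness is the paper's stability lemma (Lemma~\ref{LemStable}). Fixing the swap partners $j_c$ in advance means each comparison applies one transposition to a fixed set of outputs, so group privacy applies with a single additive term. Use it in the form $\mprob(\calM(A)\in LS)\ge e^{-2\epsilon}\mprob(\calM(A)\in S)-2\delta$; the form $e^{-2\epsilon}\bigl(\mprob(\calM(A)\in S)-2\delta\bigr)$ does not follow, because the size-$2$ slack is $(1+e^{\epsilon})\delta$.

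The gap is in Step~4. First, symmetry does not make $\mprob(H_1)$ independent of $(i,j_c)$. Here $A$ is fixed and arbitrary, and Definition~\ref{def:Symm_Mech} only relates $\calM(A)$ to $\calM(LAL^T)$, which is a different input. That part is harmless: the Step~3 bound holds for every choice, so you can simply average it. The real problem is what averaging gives. Write $m=n/k$ and let $X_c$ be the number of correctly aligned nodes of $C_c$. Then
\[
\frac{1}{m^k}\sum_{i\in C_1,\ j_c\in C_c}\mprob(H_1)=\E\Bigl[\mathbbm{1}_{E_\theta}\prod_{c=1}^k\frac{X_c}{m}\Bigr]\ge\Bigl(1-\frac{\xi}{2}\Bigr)^k\mprob(E_\theta),
\]
and this is essentially tight for outputs with $\xi m/2$ errors in every community. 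Since $(1-\xi/2)^k<1-\xi$ for every $k\ge 3$ and every $\xi\in(0,1/3)$, requiring all $k$ nodes to be correct at once only gives the bound with $1-\xi$ replaced by roughly $1-k\xi/2$. The factor $2$ in $\widetilde{\mathcal{L}}$ rescues you only when $k=2$.

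The fix is to decouple the partners rather than drop them. Let $G=E_\theta\cap\{i\text{ aligned to }1\}$. For $c\ne 1$, let $H^{(c)}=E_\theta\cap\{i\text{ aligned to }1,\ j_c\text{ aligned to }c\}$ and $L_c=(i\ j_c)$. By your Step~3 argument, $G,L_2H^{(2)},\dots,L_kH^{(k)}$ are pairwise disjoint: the alignment is unique on each, and node $i$ is aligned to $1,2,\dots,k$ respectively. Hence
\[
1+2(k-1)\delta\ \ge\ \mprob(G)+e^{-2\epsilon}\sum_{c\ne 1}\mprob(H^{(c)})
\]
for every choice of $(i,j_2,\dots,j_k)$. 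Averaging over $j_c\in C_c$ turns $\mprob(H^{(c)})$ into $\E[\mathbbm{1}_G X_c/m]\ge(1-\xi/2)\mprob(G)$. Averaging over $i\in C_1$ then turns $\mprob(G)$ into $\E[\mathbbm{1}_{E_\theta}X_1/m]\ge(1-\xi/2)\mprob(E_\theta)$. This gives
\[
\mprob(E_\theta)\Bigl(1-\frac{\xi}{2}\Bigr)\Bigl(1+(k-1)e^{-2\epsilon}\Bigl(1-\frac{\xi}{2}\Bigr)\Bigr)\le 1+2(k-1)\delta .
\]
Because $(1-\xi/2)^2\ge 1-\xi$, the left-hand factor is at least $(1-\xi)(1+(k-1)e^{-2\epsilon})$, which is exactly the proposition.

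For comparison, the paper takes your fallback route. It proves a per-node bound on $\mprob(\calM(A)\text{ stable},\ \ell\text{ correctly aligned})$ (Lemma~\ref{LemStableBound}), then applies Markov's inequality to the number of correct nodes in one community, so only one factor $1-\xi$ appears. There, however, the swap partner is chosen as a function of the output, while Lemma~\ref{LemPerm} is pointwise. So passing to a set inequality with a single additive $2\delta e^{\epsilon}$ is not automatic when $\delta>0$. Fixing the partner in advance and averaging over it is exactly what keeps the $\delta$ term clean, so keep that device if you switch to the per-node route.
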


\begin{corollary}
\label{corr:lb1_lim1}
Suppose $\frac{n}{k} \ge 6$, $\theta$ is balanced, and $k \asymp 1$. Let $\calM: \{0,1\}^{n\times n}\to \mathbf{M}_{n\times k}$ be a symmetric $(\epsilon, \delta)$-node DP mechanism. Suppose $A \sim SBM(n, k, B, \theta)$. If $\mprob(\widetilde{\mathcal{L}}(\mathcal{M}(A), \theta) \le \xi) \ge 1 - \eta$, with $\xi \in (0, 1/3)$, then
\begin{equation*}
\epsilon \ge \frac{1}{2}\log\left(\frac{1-\eta-\xi - 2k \delta}{\eta + \xi + 2k\delta}\right) + \frac{1}{2} \log(k-1).
\end{equation*}
In particular, if $\eta, \xi, \delta = O\left(\frac{1}{\mathrm{poly}(n)}\right)$, we have $\epsilon \succsim \log(n)$. Moreover, if $\eta = O\left(\frac{1}{\log(n)}\right)$ and $\xi, \delta = O\left(\frac{1}{\mathrm{poly}(n)}\right)$, we have $\epsilon \succsim \log(\log(n))$.
\end{corollary}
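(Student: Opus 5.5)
The plan is to derive the corollary directly from Proposition~\ref{prop:LBStable}, averaging over the randomness of the SBM and then solving the resulting inequality for $\epsilon$.

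\textbf{Step 1 (averaging over the SBM).} Proposition~\ref{prop:LBStable} holds for every fixed adjacency matrix $A$, and the randomness of $\mathcal{M}$ is independent of $A$. Conditioning on $A \sim SBM(n,k,B,\theta)$ and taking expectations therefore gives the same bound for the joint probability:
\begin{equation*}
1-\eta \le \mprob(\widetilde{\mathcal{L}}(\mathcal{M}(A),\theta)\le \xi) \le \frac{1}{(1-\xi)(1+(k-1)e^{-2\epsilon})} + \frac{2(k-1)\delta}{1-\xi}.
\end{equation*}

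\textbf{Step 2 (rearranging).} Multiply through by $1-\xi>0$ and set
\begin{equation*}
R := (1-\eta)(1-\xi) - 2(k-1)\delta .
\end{equation*}
This gives $R \le \bigl(1+(k-1)e^{-2\epsilon}\bigr)^{-1}$. Now define
\begin{equation*}
L := 1-\eta-\xi-2k\delta .
\end{equation*}
Since $(1-\eta)(1-\xi) = 1-\eta-\xi+\eta\xi \ge 1-\eta-\xi$ and $2(k-1)\delta \le 2k\delta$, we have $R \ge L$. We may assume $L>0$, since otherwise the logarithm in the claim is not meaningful or the bound is vacuous; we also assume $k \ge 2$ so that $\log(k-1)$ is finite. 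Then $R>0$, and inverting gives
\begin{equation*}
(k-1)e^{-2\epsilon} \le \frac{1-R}{R}, \qquad \text{i.e.} \qquad e^{2\epsilon} \ge (k-1)\frac{R}{1-R}.
\end{equation*}
The map $x \mapsto x/(1-x)$ is increasing on $(0,1)$, so we may replace $R$ by $L$, where $1-L = \eta+\xi+2k\delta$. Taking logarithms and halving yields exactly the displayed bound on $\epsilon$.

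\textbf{Step 3 (asymptotic consequences).} If $\eta,\xi,\delta = O(1/\mathrm{poly}(n))$ and $k\asymp 1$, then the numerator $1-\eta-\xi-2k\delta$ tends to $1$, while the denominator is $O(n^{-c})$ for some $c>0$. The first logarithm is therefore $\ge c\log n - O(1)$, and since $\log(k-1)\ge 0$ this gives $\epsilon \succsim \log(n)$.

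If instead $\eta = O(1/\log n)$ and $\xi,\delta$ are polynomially small, the denominator is $O(1/\log n)$. The bound then gives $\epsilon \ge \tfrac12 \log\log n - O(1)$, i.e.\ $\epsilon \succsim \log(\log(n))$.

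\textbf{Main obstacle.} There is no real difficulty here. The only care needed is in Step 2: checking that $R>0$ so the inequality can be inverted, and checking that replacing $R$ by the smaller quantity $L$ moves the bound in the correct direction.
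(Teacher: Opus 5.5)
Your proof is correct and follows the paper's route: average Proposition~\ref{prop:LBStable} over $A$, multiply by $1-\xi$, lower-bound $(1-\eta)(1-\xi)-2(k-1)\delta$ by $1-\eta-\xi-2k\delta$, and solve for $\epsilon$. Your Step~2 is more careful than the paper's one-word ``rearranging.'' You explicitly check that $R>0$, and you check that replacing $R$ by $L$, via monotonicity of $x/(1-x)$, moves the bound in the right direction.
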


\begin{remark}
The lower bound on $\epsilon$ in Corollary~\ref{corr:lb1_lim1} shows that letting $k \rightarrow \infty$ or allowing the error probabilities $\eta$ or $\delta$ to be $\Theta(1)$ would not preclude a scenario where $\xi = o(1)$ (i.e., consistency) but $\epsilon$ stays constant. Furthermore, if $\eta = O\left(\frac{1}{\log(n)}\right)$ and $\xi \asymp \frac{1}{\log(n)}$, as in the utility guarantees for some of our algorithms above, the lower bound produced by Corollary~\ref{corr:lb1_lim1} becomes $\epsilon \succsim \log(\log(n))$.
\end{remark}

Our results point to a general requirement of $\epsilon \succsim \log(n)$, under certain assumptions on $(\xi, \eta, \delta)$. This is also the lower bound achieved by Klopp and Zadik~\cite{klopp2026node}, who further established matching upper bounds with a privacy mechanism based on the exponential mechanism. However, we emphasize that the methods studied in our paper are all \emph{polynomial-time}. In fact, if one were to combine an exponential-time alternative in \cite{CheEtal23} with our degree truncation techniques, one could derive a consistent community estimation technique requiring only $\epsilon \asymp \log(n)$ in the $k=2$ case.

Notably, both of our lower bounds and indeed, all existing lower bounds in the literature, do not involve any explicit dependence on the expected degree $d$. Since we have different scaling requirements on $\epsilon$ for different algorithms, generally depending on whether $d \asymp \log(n)$ or $d = \widetilde{\Theta}(\sqrt{n})$, an important open question is whether one could derive lower bounds on $\epsilon$ which also depend on $d$.

\section{Discussion}
\label{SecDiscussion}

We have presented several new methods for community estimation in stochastic block models that preserve node-level differential privacy. A central idea is that under standard assumptions on the underlying SBM parameters, the observed graph will have maximum degree of order the average maximum degree $d$, which is somewhat smaller than the number of nodes in the graph. Although a private algorithm must be private even for worst-case inputs, creating substantial technical difficulties, utility analysis is conducted w.h.p., in which case it suffices to consider graphs which lie in the bounded-degree set.

Our first approach leverages a Lipschitz extension idea combined with a method for differentially private PCA previously proposed in the literature. Graphs with bounded degree have adjacency matrices with low sensitivity with respect to the PCA score function. We show how to extend the score function to graphs of arbitrary degree, preserving the sensitivity bound while maintaining computational feasibility. Our second approach is based on constructing a smooth projection from the space of all graphs to the space of bounded-degree graphs, with a calculable high-probability bound on the local sensitivity of the projection. This can be combined with an edge-private SBM estimation procedure; we present several options and the resulting conditions on the privacy parameters required for consistent community recovery.

While we believe that our paper makes substantial progress toward the problem of node-private SBM recovery, several open questions remain. Given the gap between the assumptions we have imposed on $\epsilon$ to ensure consistent estimation for all of our algorithms and the lower bounds on $\epsilon$ we have derived, an important question is to understand which, if any, of our algorithms give the best possible rates in terms of $\epsilon$. We note that \cite{klopp2026node} derive matching upper and lower bounds, but the algorithm they analyze in their upper bound is not computationally feasible. Thus, the open question is whether one can establish larger lower bounds on $(\epsilon, \delta)$ if one is restricted to \emph{polynomial-time} algorithms. Another related question is whether one can establish lower bounds for private estimation in terms of a quantity which depends more explicitly on the expected degree $d$, which might then match our upper bounds in certain settings.

Our Lipschitz extension method based on private PCA proceeds by extracting one eigenvector at a time and using eigenvector deflation. This is due to two bottlenecks: One is that the theoretical analysis of private PCA seems to rely heavily on the fact that we are only extracting $q=1$ eigenvector. Furthermore, existing polynomial-time methods for exactly sampling from a Bingham distribution work only when $q=1$. If it were possible to overcome both of these issues, one could plausibly obtain a private exponential mechanism-based approach with fewer conditions on eigengaps between successive eigenvalues of the population-level matrix $P$.

Another important question concerns a well-studied topic in the analysis of SBMs, namely differences between consistency (as we have studied in our paper) and \emph{exact recovery}, where one wishes to correctly classify every single node, w.h.p. Various papers in the SBM literature, e.g., \cite{gao2017achieving}, demonstrate how an initial consistent community assignment can be further refined to an exact assignment, in parameter regimes where exact recovery is possible. It is unclear whether the second refinement step could be privatized; if so, it could be combined with our methods to obtain a nice approach for private exact recovery in SBMs.

Finally, one might be interested in analyzing community estimation in SBMs under alternative notions of privacy. One that naturally comes to mind is where $G'$ is defined as adjacent to $G$ if $G'$ is obtained from $G$ by \emph{removing} a single node entirely from the graph. Analyzing such a notion of privacy seems rather beyond the scope of this paper, and would likely give rise to other methodological and theoretical innovations.


\section*{Acknowledgments}

The authors thank the Summer Research in Mathematics (SRIM) program at Cambridge for providing a stimulating environment for seeding this research in the summer of 2024. The research was partly supported by funds from the Leverhulme Trust.


\appendix

\section{Auxiliary Results}
\label{Appendix_Aux_Res}

\subsection{Linear algebraic results}


\begin{lemma}[Eckart-Young-Mirsky Theorem~\cite{blum2020foundations}]
\label{LemEYM}
Suppose $A \in \real^{m \times n}$, and let $A = \sum_{i=1}^r \sigma_i u_i v_i^T$ denote the SVD. For $1 \le k \le r$, let $A_k := \sum_{i=1}^k \sigma_i u_i v_i^T$. Then
\begin{equation*}
A_k \in \arg\min_{B \in \real^{m \times n}: \text{rank}(B) \le k} \|A - B\|,
\end{equation*}
where $\|\cdot \|$ denotes the spectral or Frobenius norm.
\end{lemma}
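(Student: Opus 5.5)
We need to prove the Eckart–Young–Mirsky theorem in both the spectral and Frobenius norms. The plan is to fix an arbitrary $B$ with $\text{rank}(B) \le k$ and show $\|A - B\| \ge \|A - A_k\|$. The right-hand side is explicit: since $A - A_k = \sum_{i=k+1}^r \sigma_i u_i v_i^T$ is itself an SVD, we have $\|A - A_k\|_2 = \sigma_{k+1}$ (taken to be $0$ if $k = r$) and $\|A - A_k\|_F^2 = \sum_{i > k} \sigma_i^2$. Both cases then reduce to lower-bounding the singular values of $A - B$ by those of $A$.

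\textbf{Spectral norm.} I will use a dimension-counting argument.
\begin{itemize}
\item Since $\text{rank}(B) \le k$, the null space $N(B) \subseteq \real^n$ has dimension at least $n - k$.
\item The span $W = \text{span}\{v_1, \dots, v_{k+1}\}$ has dimension $k+1$, so $N(B) \cap W$ contains a unit vector $x = \sum_{i \le k+1} c_i v_i$ with $\sum c_i^2 = 1$.
\item For this $x$, $\|(A - B)x\|_2^2 = \|Ax\|_2^2 = \sum_{i \le k+1} \sigma_i^2 c_i^2 \ge \sigma_{k+1}^2$.
\end{itemize}
This gives $\|A - B\|_2 \ge \sigma_{k+1} = \|A - A_k\|_2$.

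\textbf{Frobenius norm.} First, I will prove the Weyl-type inequality $\sigma_{i+j-1}(X + Y) \le \sigma_i(X) + \sigma_j(Y)$. I will derive it from the Courant–Fischer characterization $\sigma_m(Z) = \min_{\text{rank}(C) \le m-1} \|Z - C\|_2$. This characterization is exactly the spectral-norm case just proved, together with the fact that truncating the SVD attains the minimum. Given $X$ and $Y$, I take their best rank-$(i-1)$ and rank-$(j-1)$ approximations $X_{i-1}$ and $Y_{j-1}$. Then $X_{i-1} + Y_{j-1}$ has rank at most $i + j - 2$, so
\[
\sigma_{i+j-1}(X + Y) \le \|X + Y - X_{i-1} - Y_{j-1}\|_2 \le \sigma_i(X) + \sigma_j(Y).
\]

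Next, I apply this with $X = A - B$, $Y = B$, and $j = k+1$. Because $\text{rank}(B) \le k$, we have $\sigma_{k+1}(B) = 0$, so $\sigma_{i+k}(A) \le \sigma_i(A - B)$ for every $i \ge 1$. Summing the squares over $i$ gives
\[
\|A - B\|_F^2 = \sum_i \sigma_i(A - B)^2 \ge \sum_{i \ge 1} \sigma_{i+k}(A)^2 = \|A - A_k\|_F^2.
\]

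\textbf{Main obstacle.} The only delicate step is the Weyl inequality in the Frobenius case. It is cleanest to obtain it as above, by reusing the spectral-norm result as a variational formula, rather than through eigenvalue interlacing.
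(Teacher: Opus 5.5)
Your proof is correct and complete. The paper gives no proof of this lemma; it is imported from Blum--Hopcroft--Kannan. Your spectral-norm argument, which picks a unit vector in $N(B)\cap\text{span}\{v_1,\dots,v_{k+1}\}$, is exactly the one in that reference.

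For the Frobenius norm the cited source argues differently. Replace each row of $B$ by the orthogonal projection of the corresponding row of $A$ onto $\text{row}(B)$. This keeps the rank at most $k$ and does not increase $\|A-B\|_F$. So $\|A-B\|_F^2$ is at least the sum of squared distances from the rows of $A$ to a subspace of dimension at most $k$. That sum is minimized by $\text{span}\{v_1,\dots,v_k\}$, by the greedy best-fit-subspace characterization of the right singular vectors.

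You instead bootstrap the spectral case into $\sigma_{i+j-1}(X+Y)\le\sigma_i(X)+\sigma_j(Y)$, which is precisely Lemma~\ref{lemma:l2.1SinSte} (also only cited in the paper). From it you get the termwise bound $\sigma_i(A-B)\ge\sigma_{i+k}(A)=\sigma_i(A-A_k)$. Your route buys more in two ways:
\begin{itemize}
\item it proves Lemma~\ref{lemma:l2.1SinSte} along the way;
\item termwise domination of singular values gives optimality of $A_k$ in every unitarily invariant norm (Schatten-$p$, Ky Fan), which is Mirsky's general form.
\end{itemize}
The projection argument is Frobenius-specific, but it avoids singular-value inequalities. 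It also matches the best-fit/projection view of $A_k$ that the paper uses when it treats $\Pi_jA_j$ as a best rank-$k$ approximation (Appendix~\ref{AppLemPij}).

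One bookkeeping point. Reusing the spectral case as the formula $\sigma_\ell(Z)=\min_{\text{rank}(C)\le \ell-1}\|Z-C\|_2$ requires it for arbitrary $Z$ and all $1\le\ell\le\min(m,n)$. This includes $\ell=1$ and $\ell-1\ge\text{rank}(Z)$, which go beyond the range $1\le k\le r$ in the statement. Your convention $\sigma_{\ell}=0$ beyond the rank handles this. Alternatively, pad the SVD with zero singular values and complete $\{v_i\}$ to an orthonormal basis of $\real^n$, so that the dimension count applies verbatim.
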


We now state a few results concerning singular values of sums of matrices.

\begin{lemma}[Lemma 2.1 in \cite{SinSte21}]
\label{lemma:l2.1SinSte}
Let $A, B \in \mathbb{R}^{f \times \ell}$. Then for $1 \leq i$ and $j \leq \min\{f, \ell\}$, we have 
\begin{align*}
\sigma_{i + j - 1}(A + B) \leq \sigma_i(A) + \sigma_j(B).
\end{align*}
\end{lemma}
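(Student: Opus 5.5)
We prove the Weyl-type inequality $\sigma_{i+j-1}(A+B) \le \sigma_i(A) + \sigma_j(B)$ from the Eckart--Young--Mirsky theorem (Lemma~\ref{LemEYM}), which identifies $\sigma_{m+1}(M)$ with the distance from $M$ to rank-$\le m$ matrices. The plan is to build a low-rank approximation of $A+B$ out of low-rank approximations of $A$ and $B$ separately.

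First, I extract from Lemma~\ref{LemEYM} (spectral norm version) the identity
\begin{equation*}
\sigma_{m+1}(M) = \min_{\text{rank}(C)\le m} \|M - C\|_2 ,
\end{equation*}
valid for $0 \le m < \min\{f,\ell\}$. Achievability comes from the truncated SVD $M_m$, since $\|M - M_m\|_2 = \sigma_{m+1}(M)$. The lemma asserts that $M_m$ is a minimizer, so the minimum equals $\sigma_{m+1}(M)$. If $m$ exceeds the rank $r$ of $M$, both sides are $0$, with the convention $\sigma_t = 0$ for $t > r$; I should check this edge case separately, since Lemma~\ref{LemEYM} is stated only for $k \le r$.

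Next, let $A_{i-1}$ and $B_{j-1}$ be the truncated SVDs of $A$ and $B$, with $A_0 = B_0 = 0$. Then $C := A_{i-1} + B_{j-1}$ has rank at most $i+j-2$. Applying the identity with $m = i+j-2$ and the triangle inequality gives
\begin{equation*}
\sigma_{i+j-1}(A+B) \le \|A + B - C\|_2 \le \|A - A_{i-1}\|_2 + \|B - B_{j-1}\|_2 = \sigma_i(A) + \sigma_j(B).
\end{equation*}

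Finally, I handle the index range. If $i + j - 1 > \min\{f,\ell\}$, then $\sigma_{i+j-1}(A+B)$ is $0$ by convention, and the inequality holds trivially because singular values are nonnegative. No step is a real obstacle; the only care needed is with these conventions for singular values beyond the rank or dimension.
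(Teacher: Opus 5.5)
Your proof is correct and complete. The rank-$\le i+j-2$ competitor $A_{i-1}+B_{j-1}$, combined with the optimality half of Lemma~\ref{LemEYM} ($\|M-C\|_2 \ge \sigma_{m+1}(M)$ for every $C$ of rank at most $m$) and the triangle inequality, is the standard proof of Weyl's inequality for singular values.

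The paper gives no proof of its own; it imports the statement from Lemma~2.1 of \cite{SinSte21}, so there is no route of its to compare against. You handled the $m \ge \text{rank}$ and $i+j-1>\min\{f,\ell\}$ conventions. One boundary case you did not state is $i=j=1$: there $m=0$, which lies outside the range $k\ge 1$ of Lemma~\ref{LemEYM}, but it reduces to $\|A+B\|_2 \le \|A\|_2+\|B\|_2$.
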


The next few lemmas concern matrix perturbation bounds.

\begin{lemma}[Weyl's inequality, Lemma 2.2 in \cite{LiaEtal24}]
\label{LemWeyl}
Let $A, A^* \in \real^{n \times n}$ be symmetric matrices. Let $\sigma_j$ and $\sigma_j^*$ denote the $j^{\text{th}}$ eigenvalues of $A$ and $A^*$, respectively. Then
\begin{equation*}
|\sigma_j - \sigma_j^*| \le \|A - A^*\|_2.
\end{equation*}
\end{lemma}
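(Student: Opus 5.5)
The statement to prove is Weyl's inequality: for symmetric $A, A^* \in \real^{n\times n}$ with ordered eigenvalues $\sigma_j$ and $\sigma_j^*$, we want $|\sigma_j - \sigma_j^*| \le \|A - A^*\|_2$. I will use the Courant--Fischer min-max characterization, which holds for any real symmetric matrix $M$ with eigenvalues $\lambda_1(M) \ge \dots \ge \lambda_n(M)$:
\begin{equation*}
\lambda_j(M) = \max_{\substack{U \subseteq \real^n \\ \dim U = j}} \ \min_{\substack{x \in U \\ \|x\|_2 = 1}} x^T M x .
\end{equation*}

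Write $E = A - A^*$; it is symmetric, so $\|E\|_2 = \max_i |\lambda_i(E)|$. Consequently, for every unit vector $x$,
\begin{equation*}
-\|E\|_2 \le x^T E x \le \|E\|_2,
\end{equation*}
and therefore
\begin{equation*}
x^T A^* x - \|E\|_2 \le x^T A x \le x^T A^* x + \|E\|_2 .
\end{equation*}

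Fix a subspace $U$ with $\dim U = j$. The previous bound holds for every unit $x \in U$, so taking the minimum over unit $x \in U$ preserves it:
\begin{equation*}
\min_{x} x^T A^* x - \|E\|_2 \le \min_{x} x^T A x \le \min_{x} x^T A^* x + \|E\|_2 .
\end{equation*}
Taking the maximum over all $j$-dimensional subspaces $U$ also preserves these inequalities. By Courant--Fischer this gives
\begin{equation*}
\sigma_j^* - \|E\|_2 \le \sigma_j \le \sigma_j^* + \|E\|_2,
\end{equation*}
which is exactly $|\sigma_j - \sigma_j^*| \le \|A - A^*\|_2$.

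The only substantive ingredient is Courant--Fischer itself, which I take as standard. The argument also relies on the convention in the statement that $\sigma_j$ denotes the $j^{\text{th}}$ largest eigenvalue, not a singular value. If singular values were intended, the same bound follows by applying the eigenvalue version to the symmetric dilation
\begin{equation*}
\begin{pmatrix} 0 & A \\ A^T & 0 \end{pmatrix},
\end{equation*}
whose top $n$ eigenvalues are the singular values of $A$, and whose spectral norm equals $\|A\|_2$.
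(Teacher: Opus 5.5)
Your proof is correct: bounding $|x^T(A-A^*)x|\le\|A-A^*\|_2$ for unit $x$ and pushing this through the inner minimum and outer maximum of Courant--Fischer is the standard proof of Weyl's inequality, and your dilation remark correctly covers the singular-value reading suggested by the paper's notation for $\sigma_j$. The paper gives no proof of its own and simply imports the statement as Lemma 2.2 of \cite{LiaEtal24}, so there is no different route to compare.
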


Now we have a result concerning the operator norm of the Hadamard product. We provide a short proof for completeness.

\begin{lemma}[\cite{johnson1990matrix}]
\label{lemma:Oper_Hadam}
Let $A, B \in \mathbb{R}^{n \times n}$, with $A^T = A$ and $A \succeq 0$ or $A \preceq 0$. Then
\begin{align*}
    ||A \odot B||_2 \leq \mathop{\max}\limits_{i, j}|A_{ij}|\cdot||B||_2.
\end{align*}
\end{lemma}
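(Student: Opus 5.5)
The plan is to reduce the inequality to the classical Schur product bound for a positive semidefinite factor. First suppose $A \succeq 0$; if $A \preceq 0$, apply the argument to $-A$. This is harmless because $\|(-A)\odot B\|_2 = \|A\odot B\|_2$ and $\max_{i,j}|A_{ij}|$ is unchanged.

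For $A \succeq 0$, the diagonal entries dominate: every $2\times 2$ principal minor gives $A_{ij}^2 \le A_{ii}A_{jj}$. Hence $\max_{i,j}|A_{ij}| = \max_i A_{ii}$, and it suffices to show
\[
\|A\odot B\|_2 \le \max_i A_{ii}\,\|B\|_2 .
\]

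Next, factor $A = X^TX$, for instance with $X = A^{1/2}$, and let $x_i$ denote the $i$th column of $X$, so that $A_{ij} = x_i^T x_j$ and $A_{ii} = \|x_i\|_2^2$. Take unit vectors $u, w \in \real^n$ and expand the bilinear form:
\begin{align*}
u^T(A\odot B)w = \sum_{i,j} u_i w_j\, x_i^Tx_j\, B_{ij} = \sum_{\ell} \sum_{i,j} (u_i X_{\ell i}) B_{ij} (w_j X_{\ell j}) = \sum_\ell (u\odot X_{\ell\cdot})^T B\,(w \odot X_{\ell \cdot}),
\end{align*}
where $X_{\ell\cdot}$ denotes the $\ell$th row of $X$.

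Bound each term by $\|B\|_2\,\|u\odot X_{\ell\cdot}\|_2\,\|w\odot X_{\ell\cdot}\|_2$, then apply Cauchy--Schwarz over $\ell$. This gives
\[
|u^T(A\odot B)w| \le \|B\|_2 \Big(\sum_{\ell,i} u_i^2 X_{\ell i}^2\Big)^{1/2}\Big(\sum_{\ell,j} w_j^2 X_{\ell j}^2\Big)^{1/2}.
\]
Since $\sum_\ell X_{\ell i}^2 = A_{ii}$, the first factor equals $\big(\sum_i u_i^2 A_{ii}\big)^{1/2} \le (\max_i A_{ii})^{1/2}$, and the second factor is bounded the same way. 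Taking the supremum over unit vectors $u, w$ gives the claim.

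The main step is the identity rewriting $A\odot B$ through the Gram factorization; the rest is routine. Symmetry of $B$ is not needed, since the argument uses only bilinear forms with independent vectors $u$ and $w$.
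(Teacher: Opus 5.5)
Your proof is correct and is essentially the paper's argument: the paper writes $A_{ij}=u_i^Tu_j$, factors $A\odot B = V^T(B\otimes I_r)V$ with $V^TV=\mathrm{diag}(A_{11},\dots,A_{nn})$, and bounds $\|A\odot B\|_2 \le \|V\|_2^2\|B\|_2$. Your expansion of $u^T(A\odot B)w$ as a sum over $\ell$, followed by Cauchy--Schwarz, is that same factorization written out in coordinates, and your reduction of the $A\preceq 0$ case to $-A$ is likewise the paper's.
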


\begin{proof}
First suppose $A\succeq 0$ and $A = A^T$. Let $r = \text{rank}(A)$. Then there exist vectors
$u_1,\dots,u_n\in \mathbb{R}^r$ such that $A_{ij} = u_i^Tu_j$. Define a linear map $V:\mathbb{R}^n \rightarrow \mathbb{R}^{rn}$ by
\begin{align*}
Ve_i = e_i\otimes u_i,
\end{align*}
where $e_i$ is the $i^{\text{th}}$ standard basis vector. Then $A\odot B=V^T(B\otimes I_r)V$, so
\begin{equation*}
\|A\odot B\|_2 \leq \|V^T\|_2\,\|B\otimes I_r\|_2\,\|V\|_2 = ||V||_2^2||B||_2.
\end{equation*}
We have
\begin{align*}
V^TV = \mathrm{diag}(\|u_1\|_2^2,\dots,\|u_n\|_2^2)
= \mathrm{diag}(A_{11},\dots,A_{nn}),    
\end{align*}
so $||V||_2^2 = \mathop{\max}\limits_{i} A_{ii} \leq \mathop{\max}\limits_{i, j}|A_{ij}|$. This implies $||A \odot B||_2 \leq \mathop{\max}\limits_{i, j}|A_{ij}|\cdot||B||_2$. The case when $A \preceq 0$ works in the same way, by considering $-A$. This completes the proof.
\end{proof}

\begin{lemma}[Davis-Kahan Theorem, Lemmas 2.3 and 2.4 in \cite{SinSte21}]
\label{lemma:l2.3SinSte}
Consider the SVD 
\begin{align*}
X = \begin{bmatrix}
U & U_{\perp}
\end{bmatrix}\cdot \begin{bmatrix}
\Sigma_1 & 0\\
0 & \Sigma_2
\end{bmatrix}\cdot \begin{bmatrix}
V^T\\
V_{\perp}^T
\end{bmatrix} \in \mathbb{R}^{f \times \ell},
\end{align*}
where $U \in \mathbb{R}^{f \times k}$ and $U_{\perp} \in \mathbb{R}^{f \times (f - k)}$ are orthonormal, $\Sigma_1 \in \mathbb{R}^{k \times k}$ and $\Sigma_2 \in \mathbb{R}^{(f - k) \times (\ell - k)}$ are diagonal, and $V \in \mathbb{R}^{f \times k}$ and $V_{\perp} \in \mathbb{R}^{f \times (f - k)}$ are orthonormal. Let $Y \in \mathbb{R}^{f \times \ell}$ be a perturbation matrix and let $\hat{X} = X + Y$, where $\hat{X}$ has SVD
\begin{align*}
\hat{X} = \begin{bmatrix}
\hat{U} & \hat{U}_{\perp}
\end{bmatrix}\cdot \begin{bmatrix}
\hat{\Sigma_1} & 0\\
0 & \hat{\Sigma_2}
\end{bmatrix}\cdot \begin{bmatrix}
\hat{V}^T\\
\hat{V}_{\perp}^T
\end{bmatrix} \in \mathbb{R}^{f \times \ell}.
\end{align*}
In the above, $\hat{U}, \hat{U}_{\perp}, \hat{\Sigma_1}, \hat{\Sigma_2}, \hat{V}, \hat{V}_{\perp}$, have the same structure as $U, U_{\perp}, \Sigma_1, \Sigma_2, V, V_{\perp}$, respectively. Let $z_{21} = U_{\perp}U_{\perp}^TYVV^T$, $z_{21} = UU^TYV_{\perp}V_{\perp}^T$, $a_1 = \sigma_{min}(U^T\hat{X}V)$, and $b_1 = ||U_{\perp}^T\hat{X}V_{\perp}||_2$. Suppose $\sigma_1 \geq \dots \geq \sigma_k$ are the singular values of $U^T\hat{U}$. Let $\Theta(U, \hat{U}) \in \mathbb{R}^{k \times k}$ be a diagonal matrix such that $\Theta_{ii}(U, \hat{U}) = \cos^{-1}(\sigma_i)$, for all $i$. If $a_1^2 > b_1^2 + \min\{z_{12}^2, z_{21}^2\}$, then
\begin{align*}
||\sin(\Theta)(U, \hat{U})||_2 \leq \frac{a_1||z_{21}||_2 + b_1||z_{12}||_2}{a_1^2 - b_1^2 - \min\{z_{12}^2, z_{21}^2\}}.
\end{align*}
Furthermore, we have the inequality
\begin{align*}
||\sin(\Theta)(U, \hat{U})||_2 \leq \left\|\hat{U}\hat{U}^T - UU^T\right\|_2 \leq 2||\sin(\Theta)(U, \hat{U})||_2.
\end{align*}
\end{lemma}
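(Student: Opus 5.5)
The argument is a block-matrix version of the Wedin $\sin\Theta$ theorem, in the style of the rate-optimal one-sided bound of Cai and Zhang. Since this is the cited Lemmas 2.3--2.4 of \cite{SinSte21}, I would reprove it directly rather than rely on earlier results of this paper.

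\textbf{Block decomposition.} Write $\hat{X}$ in the bases $[U\ U_\perp]$ and $[V\ V_\perp]$:
\begin{equation*}
\begin{bmatrix} U^T\hat{X}V & U^T\hat{X}V_\perp \\ U_\perp^T\hat{X}V & U_\perp^T\hat{X}V_\perp\end{bmatrix}
= \begin{bmatrix} \Sigma_1 + U^TYV & U^TYV_\perp \\ U_\perp^TYV & \Sigma_2 + U_\perp^TYV_\perp\end{bmatrix}.
\end{equation*}
Because $X$ is block diagonal in these bases, the off-diagonal blocks come purely from $Y$. Their spectral norms are exactly $\|z_{21}\|_2$ and $\|z_{12}\|_2$, since multiplying by orthonormal frames preserves norms. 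The diagonal blocks have $\sigma_{\min} = a_1$ and $\|\cdot\|_2 = b_1$. I also record the standard fact that $\|\sin\Theta(U,\hat{U})\|_2 = \|U_\perp^T\hat{U}\|_2$, because the singular values of $U^T\hat U$ are $\cos\Theta$ and $(U^T\hat U)^T(U^T\hat U)+(U_\perp^T\hat U)^T(U_\perp^T\hat U)=I_k$.

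\textbf{Main estimate.} The key quantity is $\|U_\perp^T\hat{U}\|_2$, and the aim is a quadratic inequality for it. Use the variational characterization of the top-$k$ left singular space: $\hat U$ maximizes $\|\hat X^T W\|$ in the sense that $\sigma_k(\hat X^T\hat U)\ge \sigma_k(\hat X^T W)$ for any orthonormal $W$. Taking $W=U$ gives $\sigma_k^2(\hat X^T U)\ge \sigma_{\min}^2(U^T\hat X V)=a_1^2$.

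On the other hand, decompose $\hat U = U(U^T\hat U) + U_\perp(U_\perp^T\hat U)$ and take the unit vector in $\mathrm{col}(\hat U)$ achieving the largest component along $U_\perp$. Expanding $\|\hat X^T x\|_2^2$ with the block decomposition gives an upper bound for the $U_\perp$-part in terms of $b_1$ and the off-diagonal blocks. Comparing the lower and upper bounds, after the usual Cai--Zhang manipulation with $\alpha=\sigma_{\min}(U^T\hat X V)$, $\beta=\|U_\perp^T\hat X V_\perp\|$, yields
\begin{equation*}
\|U_\perp^T\hat U\|_2\,\bigl(a_1^2-b_1^2-\min\{\|z_{12}\|_2^2,\|z_{21}\|_2^2\}\bigr)\le a_1\|z_{21}\|_2+b_1\|z_{12}\|_2 .
\end{equation*}
The hypothesis $a_1^2>b_1^2+\min\{\cdot\}$ makes the bracket positive, so dividing gives the first claim.

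The $\min$ arises because one can run the argument either on $\hat X$ or on $\hat X^T$, that is, from the left or the right singular space, and keep the better cross term. I expect this bookkeeping to be the main obstacle: the cross term $a_1\|z_{21}\|$ must pair with $a_1$ rather than with $\|\hat X\|$, and this requires projecting onto $V$ before applying Cauchy--Schwarz. I would follow Cai--Zhang's Theorem 1 proof line by line for it.

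\textbf{Second claim.} For orthogonal projections of equal rank, $\hat U\hat U^T-UU^T$ has nonzero eigenvalues $\pm\sin\theta_i$. Hence $\|\hat U\hat U^T-UU^T\|_2=\|\sin\Theta\|_2$, which gives both stated inequalities, the factor $2$ being slack. Alternatively, I can just use
\begin{equation*}
\|\hat U\hat U^T-UU^T\|_2\le\|U_\perp U_\perp^T\hat U\hat U^T\|_2+\|\hat U_\perp\hat U_\perp^T UU^T\|_2
\end{equation*}
for the upper bound, and compress by $U_\perp^T(\cdot)\hat U$ for the lower bound.
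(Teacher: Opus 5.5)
Your preliminaries are correct: the block form of $\hat X$, the identity $\|\sin\Theta(U,\hat U)\|_2=\|U_\perp^T\hat U\|_2$, and the interlacing bound $\hat\sigma_k:=\sigma_k(\hat X)\ge a_1$. Your treatment of the second claim via the eigenvalues $\pm\sin\theta_i$ of $\hat U\hat U^T-UU^T$ is also fine. The paper gives no proof here; it imports the lemma from \cite{SinSte21}.

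The gap is the main estimate. You describe it rather than carry it out, and the mechanism you describe cannot produce the stated inequality.

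\emph{The case $k\ge 2$.} Expanding $\|\hat X^Tx\|_2^2$ in blocks produces the term $(U^T\hat XV)^T x_1$. This term is controlled only from below, by $a_1\|x_1\|_2$. Nothing in the hypotheses bounds $\|U^T\hat XV\|_2$, so there is no upper bound to play against $\|\hat X^Tx\|_2\ge a_1$.

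\emph{The case $k=1$.} Even here, a one-vector comparison loses a factor of two in the angle. For $2\times 2$ matrices, $\theta\mapsto\|\hat X^T(\cos\theta,\sin\theta)^T\|_2^2$ is a sinusoid in $2\theta$ peaking at the true angle $\theta^*$. So the inequality $\|\hat X^Tx\|_2^2\ge a_1^2$ (or even $\ge\|\hat X^Tu\|_2^2$) holds for every unit $x$ at angle $\theta\in[0,2\theta^*]$. No valid upper bound can then certify anything better than $\sin\theta^*\le\sin 2\theta^*$.

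Concretely, take $X=\mathrm{diag}(1,0.9)$, $Y=0.05\,e_2e_1^T$, $U=V=e_1$. The claimed bound is $0.05/0.19\approx 0.263$ and the true value is about $0.243$, but $\sin 2\theta^*\approx 0.47$.

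What is missing is the coupling between the perturbed left and right subspaces. A norm comparison for a vector in $\mathrm{col}(\hat U)$ never sees $\hat V$. Likewise, the $\min$ is not explained by running the argument on $\hat X^T$, since that bounds $\sin\Theta(V,\hat V)$ instead.

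To close the gap, use both singular-vector equations. Let $x=\|U_\perp^T\hat U\|_2$ and $y=\|V_\perp^T\hat V\|_2$. Left-multiply $\hat X\hat V=\hat U\hat\Sigma_1$ by $U_\perp^T$, insert $VV^T+V_\perp V_\perp^T=I$, and use $U_\perp^TXV=0$:
\[
U_\perp^T\hat U\,\hat\Sigma_1=U_\perp^TYV\,V^T\hat V+U_\perp^T\hat XV_\perp\,V_\perp^T\hat V .
\]
Hence $\hat\sigma_k x\le\|z_{21}\|_2+b_1y$.

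Symmetrically, using $U^TXV_\perp=0$,
\[
V_\perp^T\hat V\hat\Sigma_1=\bigl(\hat U^TU\,U^TYV_\perp+\hat U^TU_\perp\,U_\perp^T\hat XV_\perp\bigr)^T ,
\]
which gives $\hat\sigma_k y\le\|z_{12}\|_2+b_1x$.

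Substituting the second inequality into the first yields
\[
(\hat\sigma_k^2-b_1^2)\,x\le\hat\sigma_k\|z_{21}\|_2+b_1\|z_{12}\|_2 .
\]
The hypothesis forces $a_1>b_1$. The map $t\mapsto(t\|z_{21}\|_2+b_1\|z_{12}\|_2)/(t^2-b_1^2)$ is decreasing on $(b_1,\infty)$. Since $\hat\sigma_k\ge a_1$, this gives
\[
\|\sin\Theta(U,\hat U)\|_2\le\frac{a_1\|z_{21}\|_2+b_1\|z_{12}\|_2}{a_1^2-b_1^2}.
\]
This is at most the stated bound, because subtracting $\min\{\|z_{12}\|_2^2,\|z_{21}\|_2^2\}$ only shrinks the positive denominator. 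Together with your second part, this proves the lemma.
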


We now state the Davis-Kahan Theorem for square matrices:

\begin{lemma}[Theorem 2 in \cite{YuEtal15}]
\label{lemma:thm3YuEtal_square}
Suppose $A, \hat{A} \in \mathbb{R}^{p \times p}$ are symmetric, with eigenvalues $\lambda_1 \geq \dots \geq \lambda_p$ and $\hat{\lambda}_1 \geq \dots \geq \hat{\lambda}_p$, respectively. Fix $1 \leq r \leq s \leq p$, and assume $\min\{\lambda_{r - 1} - \lambda_r, \lambda_s - \lambda_{s + 1}\} > 0$, where we define $\lambda_0 = \infty$ and $\lambda_{p + 1} = -\infty$. Let $d = s - r + 1$, and let $V = [v_r, v_{r + 1}, \dots, v_s] \in \mathbb{R}^{p \times d}$ and $\hat{V} = [\hat{v}_r, \hat{v}_{r + 1}, \dots, \hat{v}_s] \in \mathbb{R}^{p \times d}$ have orthonormal columns satisfying $Av_j = \lambda_jv_j$ and $\hat{A}\hat{v}_j = \hat{\lambda}_j\hat{v}_j$, for $j = r, r + 1, \dots, s$. Then
\begin{align*}
||\sin(\Theta)(V, \hat{V})||_F \leq \frac{2\min\{\sqrt{d}||A - \hat{A}||_2, ||A - \hat{A}||_F\}}{\min\{\lambda_{r - 1} - \lambda_r, \lambda_s - \lambda_{s + 1}\}}.
\end{align*}
Moreover, there exists an orthogonal matrix $\hat{Q} \in \mathbb{R}^{d \times d}$ such that
\begin{align*}
||\hat{V}\hat{Q} - V||_F \leq \frac{2^{3/2}\min\{\sqrt{d}||A - \hat{A}||_2, ||A - \hat{A}||_F\}}{\min\{\lambda_{r - 1} - \lambda_r, \lambda_s - \lambda_{s + 1}\}}.
\end{align*}
\end{lemma}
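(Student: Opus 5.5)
The plan is to derive the $\sin\Theta$ bound from a Sylvester-type identity, as in the classical Davis--Kahan argument, using Weyl's inequality (Lemma~\ref{LemWeyl}) to pass from the population eigengap to a separation between the two spectra. We then deduce the second (Procrustes) statement from the first by an explicit choice of $\hat{Q}$.

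\textbf{Setup and the easy regime.} Set $E = \hat{A} - A$ and let $\delta = \min\{\lambda_{r-1}-\lambda_r, \lambda_s - \lambda_{s+1}\}$. Let $V_\perp \in \real^{p\times(p-d)}$ collect the eigenvectors of $A$ with indices outside $\{r,\dots,s\}$, with eigenvalue matrix $\Lambda_\perp$, and let $\hat{\Lambda} = \mathrm{diag}(\hat{\lambda}_r,\dots,\hat{\lambda}_s)$. Since $\|\sin\Theta(V,\hat{V})\|_F = \|V_\perp^T\hat{V}\|_F$, the identities $\hat{A}\hat{V} = \hat{V}\hat{\Lambda}$ and $V_\perp^T A = \Lambda_\perp V_\perp^T$ give
\begin{equation*}
\Lambda_\perp (V_\perp^T\hat{V}) - (V_\perp^T\hat{V})\hat{\Lambda} = -V_\perp^T E \hat{V}.
\end{equation*}
Entrywise, $(V_\perp^T\hat V)_{ij}(\lambda^\perp_i - \hat\lambda_j) = -(V_\perp^TE\hat V)_{ij}$. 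If $\|E\|_2 \le \delta/2$, Weyl's inequality places each $\hat{\lambda}_j$ ($r\le j\le s$) within $\delta/2$ of $[\lambda_s,\lambda_r]$. Hence $|\lambda^\perp_i - \hat{\lambda}_j| \ge \delta/2$ for every pair $(i,j)$, and
\begin{equation*}
\|V_\perp^T\hat{V}\|_F \le \frac{2}{\delta}\|V_\perp^TE\hat{V}\|_F \le \frac{2}{\delta}\min\{\sqrt{d}\|E\|_2, \|E\|_F\},
\end{equation*}
using $\|V_\perp^T E\hat V\|_F \le \|E\hat V\|_F$ together with $\|E\hat V\|_F\le \sqrt d\|E\|_2$ and $\|E\hat V\|_F \le \|E\|_F$.

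\textbf{The hard regime, $\|E\|_2 > \delta/2$.} This is the main obstacle. Trivially $\|\sin\Theta\|_F \le \sqrt{d}$, which is at most $2\sqrt{d}\|E\|_2/\delta$, so the $\sqrt{d}\|E\|_2$ branch of the minimum is fine. The Frobenius branch, however, only yields $2\|E\|_F/\delta > 1$, not $\sqrt{d}$. To handle it I would follow Yu--Wang--Samworth and avoid Weyl entirely, using a trace/variational argument that involves only the population gap. The first step is the curvature inequality
\begin{equation*}
\langle A, VV^T - \hat{V}\hat{V}^T\rangle \ge \tfrac{\delta}{2}\|VV^T-\hat{V}\hat{V}^T\|_F^2.
\end{equation*}
For the extreme block $r=1$ this follows from expanding $\hat{V}$ in the eigenbasis of $A$. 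The second step uses the eigenvector property of $\hat V$ under $\hat{A}$: for $r=1$ this is Ky Fan maximality, $\langle \hat{A}, \hat{V}\hat{V}^T - VV^T\rangle \ge 0$. Adding the two inequalities gives
\begin{equation*}
\tfrac{\delta}{2}\|VV^T-\hat V\hat V^T\|_F^2 \le \langle E, \hat V\hat V^T - VV^T\rangle \le \|E\|_F\,\|VV^T-\hat V\hat V^T\|_F,
\end{equation*}
and since $\|VV^T-\hat V\hat V^T\|_F = \sqrt2\,\|\sin\Theta\|_F$ this yields the Frobenius bound without any restriction on $\|E\|_2$. For an interior block $r>1$, Ky Fan maximality is not available as stated. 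I would first try to reduce to the extreme case by shifting and squaring, replacing $A$ by $-(A - cI)^2$ with $c$ the centre of $[\lambda_s,\lambda_r]$, which turns the middle block into the top block. If that loses constants, I would instead use the two-sided split of $V_\perp$ into eigenvalues above $\lambda_{r-1}$ and below $\lambda_{s+1}$, and handle each side with the one-sided curvature argument.

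\textbf{Second claim.} Take the SVD $\hat{V}^TV = U\cos\Theta\, W^T$ and set $\hat{Q} = UW^T$. Then
\begin{equation*}
\|\hat{V}\hat{Q}-V\|_F^2 = 2d - 2\Tr(\cos\Theta) = 2\sum_i(1-\cos\theta_i) \le 2\sum_i \sin^2\theta_i = 2\|\sin\Theta\|_F^2,
\end{equation*}
using $1-c\le 1-c^2$ for $c\in[0,1]$. Combining with the first bound multiplies the constant by $\sqrt{2}$, giving $2^{3/2}$.
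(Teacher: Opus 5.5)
Most of your argument is correct: the case $\|E\|_2\le\delta/2$, the $\sqrt d\,\|E\|_2$ branch when $\|E\|_2>\delta/2$, the curvature-plus-Ky-Fan argument for an extreme block (which even gives the constant $\sqrt2$), and the Procrustes step, which is exactly the one in the source. The gap is the case you flag but do not settle: an interior block $1<r\le s<p$ with $d\ge2$, $\|E\|_2>\delta/2$ and $\|E\|_F<\sqrt d\,\|E\|_2$. Your first suggestion for this case fails, and not just by constants. Replacing $A,\hat A$ by $-(A-cI)^2,-(\hat A-cI)^2$ turns the perturbation into $-\bigl(E(A-cI)+(A-cI)E+E^2\bigr)$. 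Its Frobenius norm is only controlled by $(2\|A-cI\|_2+\|E\|_2)\|E\|_F$, while the new gap is $\delta(\lambda_r-\lambda_s+\delta)$. Since $\|A-cI\|_2$ is unrelated to $\delta$ (let $\lambda_1\to\infty$), the resulting bound is not of the form $C\|E\|_F/\delta$. Worse, in exactly this regime $\hat\lambda_r,\dots,\hat\lambda_s$ need not be the $d$ eigenvalues of $\hat A$ closest to $c$. So $\hat V$ need not span the top-$d$ eigenspace of $-(\hat A-cI)^2$, and the Ky Fan step is unavailable.

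Your fallback does work, but only after a step you leave out: the one-sided argument must be applied to a full top (or bottom) eigenbasis of $\hat A$ containing $\hat V$, not to $\hat V$ itself. Let $V_+=[v_1,\dots,v_{r-1}]$ and $V_-=[v_{s+1},\dots,v_p]$. The space $\mathrm{span}(\hat V)^\perp$ is $\hat A$-invariant with spectrum $\{\hat\lambda_j:j\notin[r,s]\}$, so you can pick orthonormal eigenvectors $\hat V_+$ for $\hat\lambda_1,\dots,\hat\lambda_{r-1}$ orthogonal to $\hat V$. Then $[\hat V_+,\hat V]$ and $[V_+,V]$ are top-$s$ eigenbases, and your extreme-block bound with gap $\lambda_s-\lambda_{s+1}$ applies. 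Since $V_-$ spans the complement of $[V_+,V]$,
\[
\|V_-^T\hat V\|_F\le\|V_-^T[\hat V_+,\hat V]\|_F\le\frac{\sqrt2\,\|E\|_F}{\lambda_s-\lambda_{s+1}}.
\]
The same argument for $-A,-\hat A$ gives $\|V_+^T\hat V\|_F\le\sqrt2\,\|E\|_F/(\lambda_{r-1}-\lambda_r)$. Adding squares yields $\|V_\perp^T\hat V\|_F\le 2\|E\|_F/\delta$, exactly the stated constant.

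The paper gives no proof of this lemma; it cites Yu, Wang and Samworth, whose route makes your whole case split unnecessary. In your Sylvester identity, use the population block $\Lambda=\mathrm{diag}(\lambda_r,\dots,\lambda_s)$ instead of $\hat\Lambda$:
\[
\Lambda_\perp(V_\perp^T\hat V)-(V_\perp^T\hat V)\Lambda=-V_\perp^TE\hat V+(V_\perp^T\hat V)(\hat\Lambda-\Lambda).
\]
Now the entrywise separation is at least $\delta$ for every $E$. Weyl and Hoffman--Wielandt give $\|\hat\Lambda-\Lambda\|_F\le\min\{\sqrt d\,\|E\|_2,\|E\|_F\}$, so $\delta\|V_\perp^T\hat V\|_F\le2\min\{\sqrt d\,\|E\|_2,\|E\|_F\}$ in one line. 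Your curvature route buys a sharper constant for extreme blocks. Theirs is shorter and treats all $r,s$ and all sizes of $E$ uniformly.
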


We now state a generalization of the Davis-Kahan Theorem for non-square matrices:

\begin{lemma}[Theorem 3 in \cite{YuEtal15}]
\label{lemma:thm3YuEtal}
Suppose $A, \hat{A} \in \mathbb{R}^{p \times q}$ have singular values $\sigma_1 \geq \dots \geq \sigma_{\min\{p, q\}}$ and $\hat{\sigma}_1 \geq \dots \geq \hat{\sigma}_{\min\{p, q\}}$, respectively. Fix $1 \leq r \leq s \leq \mathrm{rank}(A)$ and assume $\min\{\sigma_{r - 1}^2 - \sigma_r^2, \sigma_s^2 - \sigma_{s + 1}^2\} > 0$, where we define $\sigma_0^2 = \infty$ and $\sigma_{\mathrm{rank}(A) + 1}^2 = -\infty$. Let $d = s - r + 1$, and let $V = [v_r, v_{r + 1}, \dots, v_s] \in \mathbb{R}^{q \times d}$ and $\hat{V} = [\hat{v}_r, \hat{v}_{r + 1}, \dots, \hat{v}_s] \in \mathbb{R}^{q \times d}$ have orthonormal columns satisfying $Av_j = \sigma_ju_j$ and $\hat{A}\hat{v}_j = \hat{\sigma}_j\hat{u}_j$, for $j = r, r + 1, \dots, s$. Then
\begin{align*}
||\sin(\Theta)(V, \hat{V})||_F \leq \frac{2\left(2\sigma_1 + ||A - \hat{A}||_2\right)\min\{\sqrt{d}||A - \hat{A}||_2, ||A - \hat{A}||_F\}}{\min\{\sigma_{r - 1}^2 - \sigma_r^2, \sigma_s^2 - \sigma_{s + 1}^2\}}.
\end{align*}
Moreover, there exists an orthogonal matrix $\hat{Q} \in \mathbb{R}^{d \times d}$ such that
\begin{align*}
||\hat{V}\hat{Q} - V||_F \leq \frac{2^{3/2}\left(2\sigma_1 + ||A - \hat{A}||_2\right)\min\{\sqrt{d}||A - \hat{A}||_2, ||A - \hat{A}||_F\}}{\min\{\sigma_{r - 1}^2 - \sigma_r^2, \sigma_s^2 - \sigma_{s + 1}^2\}}.
\end{align*}
\end{lemma}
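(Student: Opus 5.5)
The plan is to reduce Lemma~\ref{lemma:thm3YuEtal} to its square, symmetric counterpart, Lemma~\ref{lemma:thm3YuEtal_square}, by passing to Gram matrices. Write $E = \hat{A} - A$ and set $M = A^TA$ and $\hat{M} = \hat{A}^T\hat{A}$; both are symmetric $q \times q$ matrices. The eigenvalues of $M$ are $\sigma_1^2 \ge \dots \ge \sigma_{\mathrm{rank}(A)}^2 > 0$, followed by zeros. The right singular vectors satisfy $Mv_j = \sigma_j^2 v_j$, and likewise $\hat{M}\hat{v}_j = \hat{\sigma}_j^2 \hat{v}_j$. So the columns of $V$ and $\hat{V}$ are exactly orthonormal eigenvectors of $M$ and $\hat{M}$ for the eigenvalue indices $r, \dots, s$. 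Applying Lemma~\ref{lemma:thm3YuEtal_square} to $(M, \hat{M})$ then gives both the $\sin\Theta$ bound and the bound for $\|\hat{V}\hat{Q} - V\|_F$. The denominator is the eigengap $\min\{\lambda_{r-1} - \lambda_r, \lambda_s - \lambda_{s+1}\}$ of $M$, which is $\min\{\sigma_{r-1}^2 - \sigma_r^2, \sigma_s^2 - \sigma_{s+1}^2\}$.

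The main computation bounds the numerator, that is, the perturbation $\hat{M} - M$. Expand
\begin{equation*}
\hat{M} - M = \hat{A}^T\hat{A} - A^TA = A^T E + E^T \hat{A}.
\end{equation*}
For the spectral norm, $\|A^TE\|_2 \le \sigma_1 \|E\|_2$, and, since $\|\hat{A}\|_2 \le \sigma_1 + \|E\|_2$, also $\|E^T\hat{A}\|_2 \le (\sigma_1 + \|E\|_2)\|E\|_2$. Together,
\begin{equation*}
\|\hat{M} - M\|_2 \le (2\sigma_1 + \|E\|_2)\|E\|_2 .
\end{equation*}
The Frobenius norm works the same way, using $\|XY\|_F \le \|X\|_2\|Y\|_F$, and gives $\|\hat{M} - M\|_F \le (2\sigma_1 + \|E\|_2)\|E\|_F$. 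Hence
\begin{equation*}
\min\{\sqrt{d}\,\|\hat{M} - M\|_2, \|\hat{M} - M\|_F\} \le (2\sigma_1 + \|E\|_2)\min\{\sqrt{d}\,\|E\|_2, \|E\|_F\}.
\end{equation*}
Substituting this into Lemma~\ref{lemma:thm3YuEtal_square} yields both displayed inequalities, with constants $2$ and $2^{3/2}$.

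The step needing the most care is matching the boundary conventions for the eigengap, and I expect this to be the main obstacle. If $r = 1$, the convention $\lambda_0 = \infty$ agrees directly with $\sigma_0^2 = \infty$. When $s = \mathrm{rank}(A) < q$, the next eigenvalue of $M$ is $0$ rather than $-\infty$. In that case I would verify that the gap condition $\sigma_s^2 - \lambda_{s+1} = \sigma_s^2 > 0$ holds, so the hypothesis of the square theorem is met. I would also check that using $0$ only makes the resulting bound larger or equal, and that it coincides with the stated convention once the minimum is dominated by the $\sigma_{r-1}^2 - \sigma_r^2$ term. The remaining check is that eigenvectors of $M$ for possibly repeated eigenvalues inside the block $r, \dots, s$ can be taken to be the given singular vectors. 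This is harmless, because Lemma~\ref{lemma:thm3YuEtal_square} only requires orthonormal eigenvectors satisfying the eigen-equations, not uniqueness.
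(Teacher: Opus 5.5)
Your Gram-matrix reduction is sound as far as it goes. With $M=A^TA$, $\hat M=\hat A^T\hat A$ and $E=\hat A-A$, the identity $\hat M-M=A^TE+E^T\hat A$ holds, and so do the bounds $\|\hat M-M\|_2\le(2\sigma_1+\|E\|_2)\|E\|_2$ and $\|\hat M-M\|_F\le(2\sigma_1+\|E\|_2)\|E\|_F$. This is the standard route; the paper gives no proof of its own and only cites Yu, Wang and Samworth.

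The gap is in the boundary case you flagged, and your proposed resolution runs the wrong way. If $s=\mathrm{rank}(A)<q$, then $\lambda_{s+1}(M)=0$. So Lemma~\ref{lemma:thm3YuEtal_square} gives the denominator $\min\{\sigma_{r-1}^2-\sigma_r^2,\ \sigma_s^2\}$. The statement, with $\sigma_{\mathrm{rank}(A)+1}^2=-\infty$, has denominator $\sigma_{r-1}^2-\sigma_r^2$. Your check that ``using $0$ only makes the resulting bound larger or equal'' is true but useless here: an upper bound that is larger than the claimed one does not establish the claimed one. Whenever $\sigma_s^2<\sigma_{r-1}^2-\sigma_r^2$, your argument does not reach the displayed inequalities. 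In particular this happens whenever $r=1$, where the stated denominator is $+\infty$.

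No argument can reach them, because in that case the lemma as stated is false. Take $A=(1\ \ 0)\in\mathbb{R}^{1\times 2}$, $\hat A=(1\ \ \eta)$ with $\eta\neq 0$, and $r=s=1$. Then $\sigma_0^2-\sigma_1^2=\sigma_1^2-\sigma_2^2=\infty$, so both right-hand sides are $0$. Yet $v_1=e_1$ and $\hat v_1=(1,\eta)^T/\sqrt{1+\eta^2}$, so $\|\sin(\Theta)(V,\hat V)\|_F=|\eta|/\sqrt{1+\eta^2}>0$. The same failure occurs for $A=\mathrm{diag}(1,0)$ if the convention is allowed to override $\sigma_2=0$.

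The repair belongs in the statement, not the proof. Index the gaps by the eigenvalues of $A^TA$: set $\sigma_j^2:=0$ for $\mathrm{rank}(A)<j\le q$ and $\sigma_{q+1}^2:=-\infty$, so the $-\infty$ convention only enters when $\mathrm{rank}(A)=q$. With that reading your reduction proves both inequalities verbatim, with no case analysis.

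This corrected form is also what the paper actually uses. In Appendices~\ref{AppInitConc} and~\ref{AppLemPij} the lemma is applied with $r=1$ and $s=k=\mathrm{rank}(P)$ (resp.\ $\mathrm{rank}(P_j)$). The denominators written there, $\sigma_k^2(P)$ and $\sigma_k^2(P_j)$, are exactly $\sigma_s^2-0$, so nothing downstream breaks.
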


Next, we have a result from \cite{VuLei13}:

\begin{lemma}[Proposition 2.2 in Vu \& Lei~\cite{VuLei13}]
\label{lemma:prop2.2VuLei}
Let $\mathbb{V}_{p, q} = \left\{A \in \mathbb{R}^{p \times q}| \ A^TA = I_q\right\}$. If $V_1, V_2 \in \mathbb{V}_{p, q}$, then
\begin{align*}
\frac{1}{2}\inf\limits_{Q \in \mathbb{V}_{q, q}}||V_1 - V_2Q||_F^2 \leq ||\sin(\Theta)(V_1, V_2)||_F^2 \leq \inf\limits_{Q \in \mathbb{V}_{q, q}}||V_1 - V_2Q||_F^2.
\end{align*}
\end{lemma}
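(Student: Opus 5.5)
This is a standard result from Vu and Lei, but I would reprove it directly from the principal-angle representation. The plan is to write both quantities in terms of the singular values $\sigma_1 \ge \dots \ge \sigma_q$ of $V_1^TV_2$. Because $\|V_1^TV_2\|_2 \le 1$, these values lie in $[0,1]$, and $\sigma_i = \cos\Theta_{ii}$. Hence
\begin{equation*}
\|\sin(\Theta)(V_1,V_2)\|_F^2 = \sum_{i=1}^q (1-\sigma_i^2).
\end{equation*}

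Next I expand the Procrustes objective. For any orthogonal $Q \in \mathbb{V}_{q,q}$, using $V_1^TV_1 = I_q$ and $Q^TV_2^TV_2Q = I_q$,
\begin{equation*}
\|V_1 - V_2Q\|_F^2 = 2q - 2\Tr(V_1^TV_2Q).
\end{equation*}
By von Neumann's trace inequality, $\Tr(V_1^TV_2Q) \le \sum_i \sigma_i$. Equality is attained at $Q = WU^T$, where $V_1^TV_2 = U\Sigma W^T$ is the SVD. So the infimum equals $2\sum_i (1-\sigma_i)$.

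The claim then reduces to the scalar inequality
\begin{equation*}
\tfrac12 \cdot 2(1-\sigma) \le 1-\sigma^2 \le 2(1-\sigma), \qquad \sigma \in [0,1],
\end{equation*}
summed over $i$. I factor $1-\sigma^2 = (1-\sigma)(1+\sigma)$ and use $1 \le 1+\sigma \le 2$, which gives both bounds at once.

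The only step needing care is the identification of the Procrustes minimum via von Neumann's inequality. I expect it to be routine and would cite it as standard.
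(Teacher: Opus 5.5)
Your proof is correct: you reduce both sides to the singular values $\sigma_i\in[0,1]$ of $V_1^TV_2$, which matches the paper's definition of $\Theta$ through the singular values of $U^T\hat U$. You then get the Procrustes infimum $2\sum_i(1-\sigma_i)$ from von Neumann's inequality with minimizer $Q=WU^T$, and $1\le 1+\sigma_i\le 2$ yields both bounds. The paper does not prove this lemma but cites it from Vu and Lei; your argument is the standard proof of that proposition, so it makes the citation self-contained.
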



\subsection{Random Gaussian vectors}

Recall the notion of a sub-Gaussian norm of a random variable:
\begin{definition}[Definition 2.5 in \cite{SinSte21}]
Let $X$ be a sub-Gaussian random variable. The sub-Gaussian norm of X, denoted by $||X||_{\psi^2}$ is defined as
\begin{align*}
||X||_{\psi^2} = \inf\left\{t > 0| \ \mathbb{E}\left[e^{X^2/t^2}\right] \leq 2\right\}.
\end{align*}
\end{definition}
\begin{remark}
Note that if $X \sim N(0, \sigma^2)$, then $||X||_{\psi^2} = C\sigma$, for some absolute constant $C > 0$.
\end{remark}

\begin{lemma}[Corollary 2.7 in \cite{SinSte21}]
\label{lemma:cor2.7SinSte}
Let $A$ be an $f \times \ell$ matrix, whose columns $A_i$ are independent random vectors in $\mathbb{R}^f$ from $N(0, \Sigma)$, with $\Sigma \succ 0$. Then for any $t \geq 0$, we have
\begin{align*}
(\sqrt{\ell} - CK^2(\sqrt{f} + t))\sqrt{\sigma_n(\Sigma)} \leq \sigma_n(A) \leq (\sqrt{\ell} + CK^2(\sqrt{f} + t))\sqrt{\sigma_n(\Sigma)},
\end{align*}
and
\begin{align*}
\sigma_1(A) \leq (\sqrt{\ell} + CK^2(\sqrt{f} + t))\sqrt{\sigma_1(\Sigma)},
\end{align*}
with probability at least $1 - 2e^{-t^2}$, where $K := \max\limits_{i, j}||A_{ij}||_{\psi^2}$.
\end{lemma}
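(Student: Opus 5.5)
The statement is Corollary 2.7 of \cite{SinSte21}, the two-sided singular value bound for a matrix with independent $N(0,\Sigma)$ columns. I would reduce it to the isotropic case and then apply the standard sub-Gaussian matrix concentration bound.

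\emph{Whitening.} Write each column as $A_i = \Sigma^{1/2} g_i$ with $g_i \sim N(0, I_f)$ independent, so that $A = \Sigma^{1/2} G$. The matrix $G$ is $f \times \ell$ with i.i.d.\ standard Gaussian entries; equivalently $G^T$ has $\ell$ independent isotropic sub-Gaussian rows in $\mathbb{R}^f$. The standard result (Vershynin, Theorem 4.6.1) states that for such a $G^T$ with row sub-Gaussian norm at most $K$, with probability at least $1 - 2e^{-t^2}$,
\begin{align*}
\sqrt{\ell} - CK^2(\sqrt{f} + t) \le \sigma_{\min}(G) \le \sigma_1(G) \le \sqrt{\ell} + CK^2(\sqrt{f} + t).
\end{align*}
Here $K \asymp \max_{ij}\|G_{ij}\|_{\psi^2}$, which is an absolute constant. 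The $K$ appearing in the statement, taken over the entries of $A$, can be absorbed into $C$. Since $K \gtrsim 1$ in any case, the inequality with the stated $K$ also holds after adjusting constants.

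\emph{Transfer through $\Sigma^{1/2}$.} For the upper bound, $\sigma_1(A) \le \|\Sigma^{1/2}\|_2\,\sigma_1(G) = \sqrt{\sigma_1(\Sigma)}\,\sigma_1(G)$. For the lower bound, I read $\sigma_n$ as the $f$-th, i.e.\ smallest, singular value. Using $\sigma_{\min}(XY) \ge \sigma_{\min}(X)\sigma_{\min}(Y)$ for a square invertible $X = \Sigma^{1/2}$ gives
\begin{align*}
\sigma_f(A) \ge \sqrt{\sigma_f(\Sigma)}\,\sigma_f(G).
\end{align*}
The upper bound on $\sigma_n(A)$ in terms of $\sqrt{\sigma_n(\Sigma)}$ comes from Courant--Fischer. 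Take $u$ to be the eigenvector of $\Sigma$ for its smallest eigenvalue, and set $w = \Sigma^{-1/2}u/\|\Sigma^{-1/2}u\|$. Then
\begin{align*}
\sigma_f(A) \le \|A^T w\| = \|G^T u\|/\|\Sigma^{-1/2}u\| \le \sigma_1(G)\sqrt{\sigma_f(\Sigma)}.
\end{align*}

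\emph{Main obstacle.} The only delicate point is the bookkeeping of which singular index "$n$" refers to, together with matching $K$ to the entries of $A$ rather than of $G$. Since the statement is imported verbatim from \cite{SinSte21}, I would defer to their indexing convention and simply note that constants are adjusted.
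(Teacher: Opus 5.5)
Your whitening reduction $A=\Sigma^{1/2}G$, followed by Vershynin's Theorem 4.6.1 for $G^T$, is the standard derivation. The paper gives no proof of its own; it only imports the result from \cite{SinSte21}. Your transfer steps through $\Sigma^{1/2}$ are correct, and all of them hold on the single event from Vershynin's theorem:
\begin{itemize}
\item $\sigma_1(A)\le\sqrt{\sigma_1(\Sigma)}\,\sigma_1(G)$;
\item $\sigma_f(A)\ge\sqrt{\sigma_f(\Sigma)}\,\sigma_f(G)$;
\item your test vector, which is simply $w=u$ since $\Sigma^{-1/2}u=\sigma_f(\Sigma)^{-1/2}u$, gives $\sigma_f(A)\le\sqrt{\sigma_f(\Sigma)}\,\sigma_1(G)$.
\end{itemize}
Reading $\sigma_n$ as $\sigma_f$ is also the right interpretation. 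What does not hold up is your last step about $K$.

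With $K:=\max_{i,j}\|A_{ij}\|_{\psi^2}$ as in the statement, each $A_{ij}\sim N(0,\Sigma_{ii})$, so $K=c\sqrt{\max_i\Sigma_{ii}}$ for an absolute constant $c$. This is not bounded below, so your claim that $K\gtrsim 1$ ``in any case'' is false. The inequality is not invariant under $\Sigma\mapsto s^2\Sigma$: both $\sigma_f(A)$ and $\sqrt{\sigma_f(\Sigma)}$ scale by $s$, but $K^2$ scales by $s^2$. Since the bounds only weaken as $K$ grows, absorbing $K$ into $C$ is legitimate when $\max_i\Sigma_{ii}\gtrsim 1$, but not in general.

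For small $\Sigma$ the displayed inequalities are actually false, so no adjustment of constants can rescue them. Take $f=\ell=1$ and $\Sigma=s^2$, so $A=sg$ with $g\sim N(0,1)$ and $K=cs$. The two-sided bound then says $\bigl|\,|g|-1\,\bigr|\le Cc^2s^2(1+t)$ with probability at least $1-2e^{-t^2}$. At $t=2$ this demands probability above $0.96$, while the event's probability tends to $0$ as $s\to 0$.

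What your argument actually proves is the scale-invariant version. There, $K$ is the sub-Gaussian norm of the whitened columns $\Sigma^{-1/2}A_i\sim N(0,I_f)$, so $CK^2$ is an absolute constant. You should state this reinterpretation explicitly rather than absorbing $K$. This is also how the paper uses the lemma. In Lemma~\ref{lemma:Pi_Z} the columns are $N(0,I_k)$. In the proof of Lemma~\ref{lemma:Zhat_Z}, the bound $\|E_1\|_2\precsim\sigma(\sqrt{n}+\sqrt{k}+\sqrt{\log(2/\zeta)})$ treats $K$ as a constant even though the entries of $E_1$ have the tiny variance $\sigma^2$.
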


Now we have a lemma regarding uniform concentration of Gaussian vectors:
\begin{lemma}[Lemma 2.10 in \cite{SinSte21}]
\label{lemma:lem2.10SinSte}
Let $X_1, \dots, X_q \sim N(0, \Sigma)$ be vectors in $\mathbb{R}^d$, where $\Sigma$ is the projection of $I_d$ onto a subspace of $\mathbb{R}^d$ of rank $k$. Then 
\begin{align*}
\mathbb{P}\left(\forall i, ||X_i||_2^2 \leq k + 2\sqrt{kt} + 2t\right) \geq 1 - qe^{-t}.
\end{align*}  
\end{lemma}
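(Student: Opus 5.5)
The statement to prove is Lemma~\ref{lemma:lem2.10SinSte}: a uniform bound on the squared norms of $q$ Gaussian vectors whose covariance is a rank-$k$ orthogonal projection. The plan is to reduce each $\|X_i\|_2^2$ to a chi-square variable with $k$ degrees of freedom, apply a standard chi-square tail bound, and finish with a union bound over the $q$ vectors.

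First, I will reduce to a chi-square variable. Since $\Sigma$ is an orthogonal projection of rank $k$, it factors as $\Sigma = UU^T$, where $U \in \real^{d \times k}$ has orthonormal columns. Then each vector can be written as $X_i \overset{d}{=} U Z_i$ with $Z_i \sim N(0, I_k)$. Because $U^TU = I_k$, this gives
\begin{equation*}
\|X_i\|_2^2 = Z_i^T U^T U Z_i = \|Z_i\|_2^2 \sim \chi^2_k .
\end{equation*}
So the claim only concerns the upper tail of a $\chi^2_k$ variable.

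Second, I will invoke the Laurent--Massart inequality. For $Y \sim \chi^2_k$ and any $t > 0$, it states that
\begin{equation*}
\mprob\left(Y - k \ge 2\sqrt{kt} + 2t\right) \le e^{-t}.
\end{equation*}
This follows from a Chernoff bound using the moment generating function $\E[e^{\lambda Y}] = (1-2\lambda)^{-k/2}$ for $\lambda < 1/2$. One bounds $-\tfrac{k}{2}\log(1-2\lambda) - \lambda k \le \frac{k\lambda^2}{1-2\lambda}$ and optimizes over $\lambda$, which yields exactly the threshold $2\sqrt{kt} + 2t$.

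Third, I will take a union bound over $i = 1, \dots, q$. Independence of the $X_i$ is not needed for this step. It gives
\begin{equation*}
\mprob\left(\exists i : \|X_i\|_2^2 > k + 2\sqrt{kt} + 2t\right) \le q e^{-t},
\end{equation*}
which is the claimed statement.

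The only nontrivial step is the Chernoff optimization inside Laurent--Massart. I will cite that inequality directly rather than rederive it, so I expect no real obstacle.
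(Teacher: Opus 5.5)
Your proof is correct. Writing $\Sigma = UU^T$ gives $\|X_i\|_2^2 \overset{d}{=} \|Z_i\|_2^2 \sim \chi^2_k$, and the Laurent--Massart tail bound (your Chernoff sketch with $\log\E[e^{\lambda(Y-k)}] \le \frac{k\lambda^2}{1-2\lambda}$ yields exactly the threshold $2\sqrt{kt}+2t$) plus a union bound over the $q$ vectors finishes it. This is the standard argument behind Lemma 2.10 of Singhal and Steinke, which the paper cites without reproving.
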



\subsection{Concentration inequalities}

We begin by stating a concentration bound for sums of indicator variables:
\begin{lemma}[Theorem $3$ in \cite{janson2016large}]
\label{lemma:Bern_Conc_Janson_2016}
Let $X = \sum_{i = 1}^nX_i$, where $X_i \overset{i.i.d.}{\sim} \mathrm{Ber}(p_i)$. Let $\sigma^2 = \mathrm{Var}(X)$. If $t \geq 0$, then
\begin{align*}
\max\left\{\mathbb{P}(X - \mathbb{E}[X] \geq t), \mathbb{P}(X - \mathbb{E}[X] \leq -t)\right\} \leq e^{-\frac{t^2}{2\sigma^2}\left(1 - \frac{t}{3\sigma^2}\right)}.
\end{align*}
\end{lemma}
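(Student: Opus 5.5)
The plan is the standard Chernoff--Bennett argument, followed by two elementary inequalities that weaken Bennett's function $h$ to the stated exponent. I read the hypothesis as independence of the $X_i$ with possibly different means $p_i$; identical distribution is not needed.

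Write $Y_i = X_i - p_i$, so that $\E[Y_i]=0$, $Y_i \le 1$, and $\Var(Y_i) = \sigma_i^2 := p_i(1-p_i)$, with $\sigma^2 = \sum_i \sigma_i^2$. The function $\phi(y) = (e^{\lambda y} - 1 - \lambda y)/y^2$ is nondecreasing in $y$ for $\lambda \ge 0$. Hence for $y \le 1$ we have $e^{\lambda y} \le 1 + \lambda y + y^2(e^\lambda - 1 - \lambda)$. Taking expectations and using $1+x \le e^x$ gives
\begin{equation*}
\log \E\left[e^{\lambda Y_i}\right] \le \sigma_i^2 (e^\lambda - 1 - \lambda).
\end{equation*}
By independence, $\log \E\left[e^{\lambda(X - \E X)}\right] \le \sigma^2(e^\lambda - 1 - \lambda)$. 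Markov's inequality then gives
\begin{equation*}
\mprob(X - \E X \ge t) \le \exp\left(-\lambda t + \sigma^2(e^\lambda - 1 - \lambda)\right).
\end{equation*}
Choosing $\lambda = \log(1 + t/\sigma^2)$ yields Bennett's bound $\exp(-\sigma^2 h(t/\sigma^2))$, where $h(u) = (1+u)\log(1+u) - u$.

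Next, I use the elementary inequality $h(u) \ge \frac{u^2}{2(1+u/3)}$ for $u \ge 0$. One proves it by comparing the functions and their first two derivatives at $0$: both vanish to first order, and $h''(u) = 1/(1+u)$ dominates the second derivative of the right-hand side. This gives the Bernstein form
\begin{equation*}
\exp\left(-\frac{t^2}{2\sigma^2(1 + t/(3\sigma^2))}\right).
\end{equation*}
Applying $\frac{1}{1+x} \ge 1 - x$ with $x = t/(3\sigma^2)$ gives the claimed exponent $-\frac{t^2}{2\sigma^2}\left(1 - \frac{t}{3\sigma^2}\right)$. When $t > 3\sigma^2$ this exponent is positive, so the right-hand side exceeds $1$ and the bound holds trivially; nothing further is needed in that case. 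If $\sigma^2 = 0$, then $X$ is almost surely constant and the statement is trivial.

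For the lower tail, the same argument applies to $-Y_i = p_i - X_i$. These variables also satisfy $-Y_i \le 1$ and have the same variances, so the identical bound holds for $\mprob(X - \E X \le -t)$. Taking the maximum of the two tails gives the lemma.

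The only mildly delicate step is the comparison $h(u) \ge u^2/(2(1+u/3))$, which is a routine calculus check.
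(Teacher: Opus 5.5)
Your proof is correct. The paper gives no proof of this lemma; it simply cites it as Theorem 3 of Janson's note. So your write-up is a self-contained alternative, not a reconstruction of an argument in the paper.

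Your chain has three steps. First, the Bennett moment-generating-function bound, which uses only the one-sided bound $X_i - p_i \le 1$. Second, the estimate $h(u) \ge u^2/(2(1+u/3))$. This checks out: the second derivative of $u^2/(2(1+u/3))$ is $27/(3+u)^3$, and $h''(u) = 1/(1+u) \ge 27/(3+u)^3$ because $(3+u)^3 \ge 27(1+u)$. Third, the inequality $1/(1+x) \ge 1-x$. Together these show that the stated exponent is a deliberate weakening of Bernstein's bound $\exp\bigl(-t^2/(2(\sigma^2 + t/3))\bigr)$. You obtain that sharper bound along the way, and it would serve equally well in the paper's use of the lemma in Lemma~\ref{lemma:hp_removal_d}.

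Reading the hypothesis as mere independence is also right. The ``i.i.d.'' in the statement is inconsistent with varying $p_i$. Lemma~\ref{lemma:hp_removal_d} applies the bound to node degrees, which are sums of independent Bernoullis with community-dependent means.

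Two small points:
\begin{itemize}
\item The monotonicity of $y \mapsto (e^{\lambda y} - 1 - \lambda y)/y^2$ for negative $y$ is the one fact you assert without proof. It follows from $(e^x-1-x)/x^2$ being increasing on all of $\mathbb{R}$.
\item The case $\sigma^2 = 0$ is not so much trivial as excluded, since the right-hand side is then undefined.
\end{itemize}

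The citation buys brevity. Your route buys an explicit proof, the stronger Bernstein form, and clarity about exactly which hypotheses are used.
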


We now present a concentration result for hypergeometric random variables:

\begin{lemma}[\cite{hoeffding1963probability}]
\label{lemma:hypergeom_conc}
Let $X \sim \mathrm{HG}(N, K, s)$. Then
\begin{align*}
\mathbb{P}(X - \mathbb{E}[X] \geq ts), \mathbb{P}(X - \mathbb{E}[X] \leq -ts) \leq e^{-2t^2s},
\end{align*}
for any $t > 0$.
\end{lemma}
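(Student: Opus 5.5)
This is Hoeffding's classical result that sampling without replacement is dominated, in the convex order, by sampling with replacement. The plan is to reduce to the Chernoff--Hoeffding bound for i.i.d.\ Bernoulli sums.

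Write $X \sim \mathrm{HG}(N,K,s)$ as $X = \sum_{i=1}^s Z_i$. Here $Z_1,\dots,Z_s$ are the indicators that the $i$-th draw without replacement, from an urn of $N$ balls with $K$ marked, is marked. Let $Y = \sum_{i=1}^s W_i$ with $W_i \overset{i.i.d.}{\sim} \mathrm{Ber}(K/N)$, i.e., draws with replacement. Then $\E[X] = \E[Y] = sK/N$.

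The key step is Hoeffding's comparison: for every continuous convex $f$,
\begin{equation*}
\E[f(X)] \le \E[f(Y)].
\end{equation*}
To prove it I will follow Hoeffding's argument. Conditioned on the multiset of labels drawn with replacement, $Y$ is a sum of values chosen from the population. One can exhibit $X$ as a conditional expectation $X = \E[\widetilde{Y} \mid X]$ for a suitable coupled copy $\widetilde{Y}$ of $Y$. Concretely, average over the random positions among the distinct indices sampled. Equivalently, show that $\E[g(Z_1,\dots,Z_s)]$ for symmetric convex $g$ is dominated via a martingale/averaging argument. Jensen's inequality then gives the convex-order inequality. This coupling is the main obstacle. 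If it gets messy, I will simply cite Hoeffding (1963, Theorem 4), which is exactly this statement and is the reference attached to the lemma.

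Apply the comparison with $f(x) = e^{\lambda(x - \E X)}$ for $\lambda > 0$. This gives
\begin{equation*}
\E\bigl[e^{\lambda(X-\E X)}\bigr] \le \E\bigl[e^{\lambda(Y-\E Y)}\bigr] = \prod_{i=1}^s \E\bigl[e^{\lambda(W_i - K/N)}\bigr] \le e^{\lambda^2 s/8},
\end{equation*}
where the last step is Hoeffding's lemma for variables supported in $[0,1]$. Markov's inequality then yields
\begin{equation*}
\mprob(X - \E X \ge ts) \le \exp\left(-\lambda t s + \lambda^2 s/8\right).
\end{equation*}
Optimizing at $\lambda = 4t$ gives the bound $e^{-2t^2 s}$.

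The lower tail follows identically using $f(x) = e^{-\lambda(x-\E X)}$. Alternatively, apply the upper-tail result to $s - X \sim \mathrm{HG}(N, N-K, s)$.
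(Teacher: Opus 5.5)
Your proof is correct and follows the intended route. The paper gives no argument of its own and simply cites Hoeffding (1963); your chain --- convex-order domination of sampling without replacement by sampling with replacement (Hoeffding's Theorem~4), then Hoeffding's lemma and a Chernoff bound optimized at $\lambda = 4t$ --- is exactly Hoeffding's proof.

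The coupling you gesture at does work as follows. Take the without-replacement sample to be the first $s$ entries of a uniform random permutation. Assign the with-replacement multiplicities (which sum to $s$, padded with zeros) to these entries in that order. Given the set of sampled units, the order is uniform, so $\E[\widetilde{Y} \mid \text{set}] = X$, and Jensen finishes.
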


\begin{remark}
Consider Lemma \ref{lemma:hypergeom_conc}. Let $X \sim \mathrm{HG}(N, K, s)$. Then $\mathbb{P}(X = k)$ is the probability of obtaining $k$ successes in $s$ draws, without replacement, from a population of size $N$, with $K$ available successes.
\end{remark}

Now we state a result concerning sub-Gaussian matrices:

\begin{definition}[\cite{wainwright2019high}]
A zero-mean, symmetric random matrix $Q \in \mathbb{R}^{m \times m}$ is sub-Gaussian with
matrix parameter $V \in \mathbb{R}^{m \times m}$, where $V$ is symmetric positive-definite, if
\begin{align*}
\mathbb{E}\left[e^{\lambda Q}\right] := \sum_{i = 0}^\infty\frac{\lambda^i}{i!}\mathbb{E}\left[Q^i\right] \preceq e^{\frac{\lambda^2V}{2}}, 
\end{align*}
for all $\lambda \in \mathbb{R}$.
\end{definition}

Next, we have a concentration result for sums of random matrices:
\begin{lemma}[Hoeffding bound for random matrices \cite{wainwright2019high}]
\label{lemma:Matrix_Hoeff} 
Let $\{Q_i\}_{i = 1}^n$ be a sequence of zero-mean, $m \times m$, independent, symmetric random matrices that satisfy the sub-Gaussian condition with parameters $\{V_i\}_{i = 1}^n$. For all $\zeta > 0$, we have the upper tail bound
\begin{align*}
\mathbb{P}\left(\left\|\sum_{i = 1}^nQ_i\right\|_2 \geq \zeta\right) \leq 2me^{-\frac{\zeta^2}{2\left\|\sum_{i = 1}^nV_i\right\|_2}}.
\end{align*}
\end{lemma}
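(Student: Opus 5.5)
The plan is to derive the matrix Hoeffding bound from the matrix Chernoff/Laplace-transform method combined with Lieb's concavity theorem, following the standard route in \cite{wainwright2019high}. First reduce the two-sided bound on $\|\sum_i Q_i\|_2$ to one-sided bounds on extreme eigenvalues. Write $S_n = \sum_{i=1}^n Q_i$. Since $S_n$ is symmetric, $\|S_n\|_2 = \max\{\lambda_{\max}(S_n), \lambda_{\max}(-S_n)\}$. The family $\{-Q_i\}$ satisfies the same sub-Gaussian condition with the same parameters $V_i$, because the defining inequality holds for all $\lambda \in \real$. So it suffices to show $\mprob(\lambda_{\max}(S_n) \ge \zeta) \le m e^{-\zeta^2/(2\|\sum_i V_i\|_2)}$ and then take a union bound, which accounts for the factor $2$.

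For the one-sided bound, fix $\lambda > 0$ and use Markov's inequality together with $e^{\lambda \lambda_{\max}(S)} = \lambda_{\max}(e^{\lambda S}) \le \Tr(e^{\lambda S})$ for $S$ symmetric. This gives
\begin{equation*}
\mprob(\lambda_{\max}(S_n) \ge \zeta) \le e^{-\lambda\zeta}\,\E\left[\Tr e^{\lambda S_n}\right].
\end{equation*}
The key step, and the main obstacle, is bounding $\E \Tr \exp(\lambda S_n)$. The matrices do not commute, so $e^{\lambda S_n}$ does not factor as a product. I would use Lieb's theorem: for fixed symmetric $H$, the map $X \mapsto \Tr\exp(H + \log X)$ is concave on positive definite matrices. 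Conditioning on $Q_1,\dots,Q_{n-1}$ and applying Jensen's inequality iteratively gives Tropp's subadditivity of matrix cumulant generating functions,
\begin{equation*}
\E \Tr \exp\Big(\sum_i \lambda Q_i\Big) \le \Tr\exp\Big(\sum_i \log \E e^{\lambda Q_i}\Big).
\end{equation*}

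Next, use the sub-Gaussian hypothesis $\E e^{\lambda Q_i} \preceq e^{\lambda^2 V_i/2}$. The matrix logarithm is operator monotone, so $\log \E e^{\lambda Q_i} \preceq \lambda^2 V_i/2$. The map $H \mapsto \Tr e^H$ is monotone with respect to the Loewner order, so the right-hand side is at most $\Tr\exp\big(\tfrac{\lambda^2}{2}\sum_i V_i\big)$. This in turn is at most $m\exp\big(\tfrac{\lambda^2}{2}\|\sum_i V_i\|_2\big)$, because the trace of an $m\times m$ positive definite matrix is bounded by $m$ times its largest eigenvalue.

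Combining the steps, $\mprob(\lambda_{\max}(S_n)\ge\zeta) \le m\exp\big(-\lambda\zeta + \tfrac{\lambda^2}{2}\|\sum_i V_i\|_2\big)$. Optimizing at $\lambda = \zeta/\|\sum_i V_i\|_2$ yields the exponent $-\zeta^2/(2\|\sum_i V_i\|_2)$. Applying the same bound to $-S_n$ and taking the union bound completes the argument. Since this lemma is cited from \cite{wainwright2019high}, one may simply invoke it; the sketch above records the reasoning that justifies it.
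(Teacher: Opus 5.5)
Your argument is correct. The paper gives no proof of its own and simply cites \cite{wainwright2019high}, and your route (matrix Chernoff bound, Lieb/Tropp subadditivity of the matrix cumulant generating function, the sub-Gaussian bound, then optimizing over $\lambda$ and a union bound over $\pm S_n$) is the standard proof behind that citation. Your use of operator monotonicity of $\log$ is exactly what converts the paper's moment-generating-function form of the condition, $\E[e^{\lambda Q}] \preceq e^{\lambda^2 V/2}$, into the cumulant form $\log \E[e^{\lambda Q}] \preceq \lambda^2 V/2$ used in Wainwright's statement, which gives the sharper factor $2\,\mathrm{rank}(\sum_i V_i)$ in place of $2m$.
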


We also have a result regarding the product of a random variable with a deterministic matrix:
\begin{lemma}[Exercise 6.7 in \cite{wainwright2019high}]
\label{lemma:gB_Ex6.7} 
Let $g \in \mathbb{R}$ be a zero-mean $s^2$-sub-Gaussian variable, with a distribution symmetric around $0$. Assume $B \in \mathbb{R}^{m \times m}$ is symmetric and deterministic. Then $gB$ is sub-Gaussian with matrix parameter $c^2s^2B^2$, where $c$ is a universal constant.
\end{lemma}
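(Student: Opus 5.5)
The plan is to diagonalize $B$ and reduce the matrix moment-generating-function inequality to the scalar sub-Gaussian bound for $g$, applied once per eigenvalue. The key point is that $gB$ is a scalar random multiple of a fixed symmetric matrix. Every power $(gB)^i = g^i B^i$ therefore shares the eigenvectors of $B$, and the matrix exponential is a spectral function of $B$.

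\textbf{Step 1: diagonalize.} Write the spectral decomposition $B = U \Lambda U^T$, with $U$ orthogonal and $\Lambda = \mathrm{diag}(b_1,\dots,b_m)$ real, which exists because $B$ is symmetric. Then for every $\lambda\in\real$ and every realization of $g$ we have
\begin{equation*}
e^{\lambda g B} = \sum_{i\ge 0} \frac{\lambda^i g^i}{i!} U\Lambda^i U^T = U\, \mathrm{diag}\left(e^{\lambda g b_1},\dots,e^{\lambda g b_m}\right) U^T .
\end{equation*}

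\textbf{Step 2: take expectations.} Interchanging expectation with the power series, entrywise, is justified by dominated convergence. Indeed, $\sum_i |\lambda|^i |g|^i \|B\|_2^i / i! = e^{|\lambda|\|B\|_2 |g|}$, and this has finite expectation since $g$ is sub-Gaussian. This yields
\begin{equation*}
\E\left[e^{\lambda gB}\right] = U\, \mathrm{diag}\left(\E[e^{\lambda b_1 g}],\dots,\E[e^{\lambda b_m g}]\right)U^T .
\end{equation*}
This is consistent with the paper's definition of $\E[e^{\lambda Q}]$ via the series of moments. The symmetry of $g$ kills the odd moments, although this is not really needed.

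\textbf{Step 3: apply the scalar bound.} Since $g$ is zero-mean and $s^2$-sub-Gaussian, $\E[e^{t g}] \le e^{t^2 s^2/2}$ for all $t\in\real$. Applying this with $t=\lambda b_j$ bounds each diagonal entry by $e^{\lambda^2 s^2 b_j^2/2}$.

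\textbf{Step 4: conclude the matrix inequality.} Two diagonal matrices with entrywise-ordered nonnegative diagonal entries are Loewner-ordered. Conjugation by $U$ preserves the Loewner order. Hence
\begin{equation*}
\E\left[e^{\lambda gB}\right] \preceq U\,\mathrm{diag}\left(e^{\lambda^2 s^2 b_j^2/2}\right)_{j} U^T = e^{\lambda^2 s^2 B^2/2},
\end{equation*}
which is the claim with $c=1$. If a different normalization of sub-Gaussianity is used, such as the $\psi_2$-norm, the scalar MGF bound holds with $s^2$ replaced by $c^2 s^2$ for a universal constant $c$. This is where $c$ enters.

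\textbf{Main obstacle.} The only genuine subtlety is definitional. The paper's definition asks for $V$ positive definite, whereas $B^2$ may be singular. I would handle this by noting that $e^{\lambda^2 s^2 B^2/2} \preceq e^{\lambda^2 (c^2 s^2 B^2 + \eta I)/2}$ for any $\eta>0$. The matrix Hoeffding bound in Lemma~\ref{lemma:Matrix_Hoeff} then follows by letting $\eta\downarrow 0$. Alternatively, one can simply observe that positive semidefiniteness of $V$ suffices for that lemma. The remaining steps are routine.
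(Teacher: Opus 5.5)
Your argument is correct and complete, and it proves the statement with $c=1$. The paper does not prove this lemma itself; it only cites Exercise~6.7 of \cite{wainwright2019high}.

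The natural textbook route, modeled on the Rademacher and Gaussian ensemble examples in that chapter, works termwise with the series $\E[e^{\lambda gB}]=\sum_{i\ge 0}\frac{\lambda^i\E[g^i]}{i!}B^i$. Symmetry of $g$ removes the odd-order terms, which could not otherwise be bounded termwise in the Loewner order because odd powers of $B$ are indefinite. The even moments are then controlled by a sub-Gaussian moment bound $\E[g^{2k}]\le\frac{(2k)!}{2^k k!}(cs)^{2k}$. Since $B^{2k}\succeq 0$, the series is dominated termwise by that of $e^{\lambda^2c^2s^2B^2/2}$. This is where both the universal constant and the symmetry hypothesis enter.

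Your spectral reduction bypasses all of this. Every power of $gB$ is diagonalized by the eigenbasis of $B$, so the matrix moment generating function is exactly the scalar one evaluated at $\lambda b_j$. The scalar bound therefore transfers with no loss and without using symmetry. It is more elementary and gives a sharper constant. The series approach is the template that generalizes to sums of non-commuting terms, but that generality is not needed here.

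Your remark on definiteness is also well taken. The paper applies the lemma with $B=Q^{(lv)}$ to get matrix parameter $C_1^2s^2(Q^{(lv)})^2$, which is singular. So what is actually used is $V\succeq 0$, and that suffices for Lemma~\ref{lemma:Matrix_Hoeff}. Your $\eta I$ perturbation reconciles this with the definition as literally stated.
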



\subsection{Privacy results}
\label{AppPrivacy}

Following \cite{dwork2010boosting}, we formally define adaptive composition of differentially private mechanisms. Suppose $\{\mathcal{A}_i\}_{i=T}$ are mechanisms applied to a common database $\mathcal{X}^n$, such that $\mathcal{A}_1: \mathcal{X}^n \rightarrow \mathcal{Y}_1$, and for $i > 1$, we have $\mathcal{A}_i: \mathcal{Y}_1 \times \cdots \times \mathcal{Y}_{i-1} \times \mathcal{X}^n \rightarrow \mathcal{Y}_i$. The composition of $(\mathcal{A}_1, \dots, \mathcal{A}_T)$ is the algorithm that maps $x \in \mathcal{X}^n$ to
\begin{equation*}
\mathcal{A}_T\Big(\mathcal{A}_1(x), \mathcal{A}_2\big(\mathcal{A}_1(x), x\big), \dots, x\Big).
\end{equation*}
Assuming that each $\mathcal{A}_i$ is $(\epsilon_i, \delta_i)$-DP, in the sense that $\mathcal{A}_i(a_1, \dots, a_{i-1}, \cdot)$ is $(\epsilon_i, \delta_i)$-DP for any fixed $(a_1, \dots, a_{i-1})$, we wish to study the privacy properties of the composed mechanism.

\begin{lemma}[Basic composition \cite{DwoRot14}]
\label{LemBasic}
Suppose each $\mathcal{A}_i$ satisfies $(\epsilon, \delta)$-DP, where $\epsilon > 0$ and $\delta \in [0, 1]$. Then the composed mechanism satisfies $(T\epsilon, T\delta)$-DP.
\end{lemma}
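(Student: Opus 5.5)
The plan is to prove basic composition by induction on $T$. The key step is the two-mechanism case, where the second mechanism may depend adaptively on the output of the first. Fix neighboring datasets $x \sim x'$ and a measurable set $S \subseteq \mathcal{Y}_1 \times \mathcal{Y}_2$. For each $y_1$, write $S_{y_1} = \{y_2 : (y_1, y_2) \in S\}$ for the section of $S$. Then
\begin{equation*}
\mprob\big((Y_1, Y_2) \in S\big) = \int \mprob\big(\mathcal{A}_2(y_1, x) \in S_{y_1}\big)\, d\mu_x(y_1),
\end{equation*}
where $\mu_x$ is the law of $\mathcal{A}_1(x)$. Applying the $(\epsilon,\delta)$ guarantee of $\mathcal{A}_2(y_1,\cdot)$ pointwise in $y_1$ bounds the integrand by $\min\{1, e^\epsilon g'(y_1) + \delta\}$, where
\begin{equation*}
g'(y_1) := \mprob\big(\mathcal{A}_2(y_1,x') \in S_{y_1}\big).
\end{equation*}
This step is legitimate because the hypothesis holds for every fixed $y_1$.

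The obstacle is the outer integral. Here $\mu_x$ must be compared to $\mu_{x'}$, but the integrand is a $[0,1]$-valued function rather than an indicator. I would use the standard fact that $(\epsilon,\delta)$-DP extends from sets to functions: for any measurable $h : \mathcal{Y}_1 \to [0,1]$,
\begin{equation*}
\int h \, d\mu_x \le e^\epsilon \int h \, d\mu_{x'} + \delta.
\end{equation*}
This follows from the layer-cake formula $\int h\,d\mu = \int_0^1 \mu(h > t)\,dt$, applying the set inequality to each level set $\{h > t\}$.

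To handle the $\min\{1,\cdot\}$, I would write
\begin{equation*}
\min\{1, e^\epsilon g' + \delta\} \le \min\{1, e^\epsilon g'\} + \delta.
\end{equation*}
Integrating the $\delta$ term against $\mu_x$ contributes at most $\delta$. For the other term, take $h = \min\{1, e^\epsilon g'\}$ in the function form of DP and use $\min\{1, e^\epsilon g'\} \le e^\epsilon g'$. This gives
\begin{equation*}
\int \min\{1, e^\epsilon g'\}\, d\mu_x \le e^{\epsilon}\int e^\epsilon g'\, d\mu_{x'} + \delta = e^{2\epsilon}\,\mprob\big((Y_1', Y_2') \in S\big) + \delta,
\end{equation*}
where $(Y_1', Y_2')$ is the composed output on $x'$. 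Adding the two contributions yields $(2\epsilon, 2\delta)$-DP.

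For the induction step, treat $(\mathcal{A}_1, \dots, \mathcal{A}_{T-1})$ as a single mechanism, which is $((T-1)\epsilon, (T-1)\delta)$-DP by the induction hypothesis. Composing it with $\mathcal{A}_T$ is then a two-mechanism adaptive composition. The two-mechanism argument above never used that the two privacy levels are equal, so it applies with parameters $((T-1)\epsilon,(T-1)\delta)$ and $(\epsilon,\delta)$ and gives $(T\epsilon, T\delta)$-DP. The proof only uses the definition of DP and nothing else from earlier in the paper.
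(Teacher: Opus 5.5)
Your proof is correct. The paper does not reprove this lemma; it only cites Dwork--Roth, and your argument is the standard proof of adaptive basic composition: disintegrate over the first output, cap the conditional probability at $1$, extend the $(\epsilon,\delta)$ guarantee from sets to $[0,1]$-valued test functions via the layer-cake formula, and induct with unequal parameters. You also prove the bound for the full transcript $(a_1,\dots,a_T)$, which is what the induction needs, and by post-processing this covers the paper's formulation, where the composition reports the output of $\mathcal{A}_T$.
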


If we introduce an extra slack term $\tilde{\delta}$, we can achieve a better bound in terms of the growth of $\epsilon$:

\begin{lemma}[Adaptive composition \cite{kairouz2015comp}]
\label{lemma:adapt_comp}  
Suppose each $\mathcal{A}_i$ satisfies $(\epsilon, \delta)$-DP, where $\epsilon > 0$ and $\delta \in [0, 1]$. Suppose $\tilde{\delta} \in (0, 1]$. Then the composed mechanism satisfies $(\tilde{\epsilon}_{\tilde{\delta}}, T\delta + \tilde{\delta})$-DP, where
\begin{align*}
\tilde{\epsilon}_{\tilde{\delta}} = T\epsilon(e^\epsilon - 1) + \epsilon\sqrt{2T\log(1/\tilde{\delta})}.
\end{align*}
In particular, $\tilde{\epsilon}_{\tilde{\delta}} = O(T\epsilon(e^\epsilon - 1) + \epsilon\sqrt{T})$.
\end{lemma}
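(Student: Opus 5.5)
The plan is the standard privacy-loss martingale argument of Dwork, Rothblum and Vadhan, adapted to adaptively chosen mechanisms. I first reduce to the pure-DP case and then control the cumulative privacy loss with Azuma's inequality.

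Fix neighboring databases $x, x'$. Write $Y=(Y_1,\dots,Y_T)$ and $Y'$ for the output sequences of the composed mechanism on $x$ and $x'$.

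\textbf{Step 1: reduce each step to pure DP.} Fix a prefix $y_{<i}$. Since $\mathcal{A}_i(y_{<i},\cdot)$ is $(\epsilon,\delta)$-DP, I use the standard decomposition lemma (Kairouz--Oh--Viswanath; Murtagh--Vadhan). It gives distributions $P_i', Q_i'$, depending measurably on $y_{<i}$, with two properties:
\begin{itemize}
\item $\mathcal{A}_i(y_{<i},x)$ is within total variation distance $\delta$ of $P_i'$, and $\mathcal{A}_i(y_{<i},x')$ is within total variation distance $\delta$ of $Q_i'$;
\item $e^{-\epsilon}\le dP_i'/dQ_i' \le e^{\epsilon}$ pointwise.
\end{itemize}
Coupling step by step, the composed output on $x$ agrees with the output of the ``ideal'' composed process built from the $P_i'$ except on an event of probability at most $T\delta$, by a union bound over steps; the same holds for $x'$ and the $Q_i'$. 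This accounts for the $T\delta$ term. It therefore suffices to show that the ideal processes, which are $\epsilon$-DP at each step, satisfy $(\tilde\epsilon_{\tilde\delta},\tilde\delta)$-indistinguishability.

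\textbf{Step 2: bound the privacy loss.} For the ideal processes define $L_i = \log\frac{dP_i'}{dQ_i'}(Y_i)$ and $L=\sum_i L_i$, where $Y_i$ is drawn from $P_i'$ given the prefix. Each $|L_i|\le \epsilon$. Conditionally on the prefix,
\[
\E[L_i\mid Y_{<i}] = \mathrm{KL}(P_i'\|Q_i') \le \epsilon(e^\epsilon-1).
\]
This is the standard bound for $\epsilon$-indistinguishable pairs: symmetrize to get $\mathrm{KL}(P\|Q)\le \mathrm{KL}(P\|Q)+\mathrm{KL}(Q\|P) = \int (p-q)\log(p/q)$, then bound the integrand pointwise using $|\log(p/q)|\le\epsilon$ and $|p-q|\le (e^\epsilon-1)\min(p,q)$.

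The centered increments $M_i = L_i - \E[L_i\mid Y_{<i}]$ form a martingale difference sequence. Conditionally on the past, each $L_i$ lies in the interval $[-\epsilon,\epsilon]$ of length $2\epsilon$, so the Azuma--Hoeffding inequality gives
\[
\mprob\Big(\textstyle\sum_i M_i > t\Big)\le \exp\!\big(-t^2/(2T\epsilon^2)\big).
\]
Choosing $t=\epsilon\sqrt{2T\log(1/\tilde\delta)}$ yields $\mprob(L>\tilde\epsilon_{\tilde\delta})\le\tilde\delta$.

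\textbf{Step 3: convert the tail bound to approximate DP.} Let $\mathcal{B}=\{y: L(y)>\tilde\epsilon_{\tilde\delta}\}$. For any measurable set $S$,
\[
\mprob(Y\in S)\le \mprob(Y\in S\setminus\mathcal{B}) + \tilde\delta \le e^{\tilde\epsilon_{\tilde\delta}}\,\mprob(Y'\in S)+\tilde\delta,
\]
because on the complement of $\mathcal{B}$ the likelihood ratio of the joint ideal laws is $e^{L}\le e^{\tilde\epsilon_{\tilde\delta}}$. Adding the $T\delta$ from Step 1 gives the claimed $(\tilde\epsilon_{\tilde\delta}, T\delta+\tilde\delta)$ guarantee. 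The ``in particular'' statement is immediate, since $\log(1/\tilde\delta)$ is treated as a constant.

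\textbf{Main obstacle.} I expect Step 1 to be the main difficulty. One has to handle the $\delta$-slack under adaptivity, making sure the decomposition $(P_i',Q_i')$ can be chosen measurably in the prefix and that the coupling errors add to at most $T\delta$ rather than compounding. If the decomposition lemma proves awkward, my fallback is the simpler route of Dwork--Roth: define the bad set at each step as the outputs where the pointwise ratio exceeds $e^{\epsilon}$, which has mass at most $\delta$ after a slight adjustment, and run the martingale argument conditioned on avoiding all bad sets.
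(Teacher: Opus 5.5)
The paper does not prove this lemma; it only cites it. So I assess your argument on its own terms.

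Steps 2 and 3 are the standard Dwork--Rothblum--Vadhan computations, and they are correct:
the symmetrized bound $\mathrm{KL}(P\|Q)\le\epsilon(e^\epsilon-1)$, Azuma--Hoeffding with conditional range $2\epsilon$, and the passage from a tail bound on the joint log-likelihood ratio to indistinguishability.

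The gap is in how you combine them with Step 1. Write $R_x,R_{x'}$ for the laws of the real composed outputs and $I_x,I_{x'}$ for the ideal ones. Because you replace \emph{both} processes, the total-variation error on the $x'$ side has to pass through the factor $e^{\tilde\epsilon_{\tilde\delta}}$:
\[
R_x(S)\le I_x(S)+T\delta\le e^{\tilde\epsilon_{\tilde\delta}}I_{x'}(S)+\tilde\delta+T\delta\le e^{\tilde\epsilon_{\tilde\delta}}R_{x'}(S)+\tilde\delta+\bigl(1+e^{\tilde\epsilon_{\tilde\delta}}\bigr)T\delta .
\]
So the claim ``it therefore suffices to show that the ideal processes satisfy $(\tilde\epsilon_{\tilde\delta},\tilde\delta)$-indistinguishability'' does not deliver the stated additive term. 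As written, your argument only gives $\bigl(\tilde\epsilon_{\tilde\delta},\tilde\delta+(1+e^{\tilde\epsilon_{\tilde\delta}})T\delta\bigr)$. That is far weaker precisely in the $\epsilon\to\infty$ regime of this paper. This is not the obstacle you anticipated: the hybrid bound of $T\delta$ across steps is fine, and the problem is the cross-side accounting.

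The repair is to perturb only the $x$-side.
\begin{itemize}
\item For a fixed prefix, let $p,q$ be the densities of $\mathcal{A}_i(y_{<i},x)$ and $\mathcal{A}_i(y_{<i},x')$. Then $(\epsilon,\delta)$-indistinguishability gives $\int(p-e^{\epsilon}q)_+\le\delta$ and $\int(e^{-\epsilon}q-p)_+\le e^{-\epsilon}\delta$.
\item Clip $p$ pointwise into $[e^{-\epsilon}q,e^{\epsilon}q]$. Then restore unit mass by moving the net difference, which is at most $\delta$, into the remaining slack. The slack always suffices because $e^{-\epsilon}\le 1\le e^{\epsilon}$.
\item The result $p'$ satisfies $\frac12\int|p-p'|\le\delta$ and $e^{-\epsilon}\le p'/q\le e^{\epsilon}$. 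It is given by an explicit formula, so measurability in the prefix is automatic.
\end{itemize}
Now run your Steps 2--3 between the ideal $x$-process and the \emph{real} $x'$-process. This gives $R_x(S)\le I_x(S)+T\delta\le e^{\tilde\epsilon_{\tilde\delta}}R_{x'}(S)+\tilde\delta+T\delta$, which is exactly the claim.

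Your fallback would not recover the stated constants either. $(\epsilon,\delta)$-DP controls only the excess $\int(p-e^{\epsilon}q)_+$, not the mass of $\{p>e^{\epsilon}q\}$, and that mass can be of constant order even for tiny $\delta$. Making that set small requires a larger threshold such as $e^{2\epsilon}$, which inflates $\tilde\epsilon_{\tilde\delta}$.
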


\begin{lemma}[Group privacy \cite{vadhan2017complexity}]
\label{LemGroup}
If $\mathcal{A}$ is $(\epsilon, \delta)$-DP, then $\mathcal{A}$ is $(T\epsilon, T\delta e^{(T - 1)\epsilon})$-DP, on groups of size at most $T$.
\end{lemma}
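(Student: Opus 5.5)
We prove the statement by induction on the group size $T$, chaining the $(\epsilon,\delta)$-DP guarantee along a path of neighboring datasets. Let $X$ and $X'$ differ in at most $T$ elements. Choose a sequence $X = X_0, X_1, \dots, X_T = X'$ in which consecutive datasets differ in at most one element; this is possible by changing one differing entry at a time. For node adjacency, $d_{node}(G,G')\le T$ yields such a path of node-adjacent graphs by rewiring one node at a time. Fix a measurable set $S$ and write $p_i = \mathbb{P}(\mathcal{A}(X_i)\in S)$. Privacy applied to each consecutive pair gives $p_{i} \le e^{\epsilon} p_{i+1} + \delta$.

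The claim to establish is
\begin{equation*}
p_0 \le e^{j\epsilon} p_j + \delta \sum_{i=0}^{j-1} e^{i\epsilon}
\end{equation*}
for all $j \le T$. The case $j=1$ is the privacy definition itself. For the inductive step, substitute $p_j \le e^{\epsilon}p_{j+1}+\delta$ into the bound for $j$. This gives $e^{(j+1)\epsilon}p_{j+1} + \delta e^{j\epsilon} + \delta\sum_{i<j} e^{i\epsilon}$, which is the bound for $j+1$.

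At $j=T$ the bound reads $p_0 \le e^{T\epsilon}p_T + \delta\sum_{i=0}^{T-1}e^{i\epsilon}$. Since $\epsilon\ge 0$, each term satisfies $e^{i\epsilon}\le e^{(T-1)\epsilon}$, so the sum is at most $Te^{(T-1)\epsilon}$. This yields $(T\epsilon, T\delta e^{(T-1)\epsilon})$-DP for groups of size at most $T$. Groups of size $T'<T$ follow because both parameters are monotone in $T$.

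Nothing here is a serious obstacle. The only care needed is in the path construction: every intermediate dataset must lie in the domain on which privacy is assumed. For the unrestricted $\mathcal{G}_n$ or $\mathcal{W}_n$ this is automatic. It would fail for the degree-restricted notion $(\epsilon,\delta)_D$, where an intermediate graph could leave $\mathcal{G}_{n,D}$, but the lemma is stated for the unrestricted notion.
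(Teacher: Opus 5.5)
Your chaining argument is correct and is the standard hybrid proof behind the cited group-privacy lemma; the paper gives no proof of its own, only the citation \cite{vadhan2017complexity}, and your estimate $\sum_{i=0}^{T-1}e^{i\epsilon}\le Te^{(T-1)\epsilon}$ yields exactly the stated $\delta$-term. Your closing caveat is also apt: the paper later applies group privacy to the restricted $(\cdot)_{2D}$ notion in the proofs of Theorems~\ref{theorem:generic_red} and~\ref{theorem:generic_red_pure}, where a node-by-node path between two graphs in $\mathcal{G}_{n,2D}$ can indeed leave $\mathcal{G}_{n,2D}$, so that step needs a separate justification (for instance, via the unrestricted edge-level guarantees that the concrete algorithms satisfy).
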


We now state a few results related to zero-concentrated differential privacy:

\begin{lemma}[zCDP to DP from \cite{bun2016conc}]
\label{lemma:zCDP_to_DP}
If $\mathcal{A}$ is $\rho$-zCDP, then it is $\left(\rho + \sqrt{4\rho\log(1/\delta)}, \delta\right)$-DP, for any $\delta \in (0, 1)$. 
\end{lemma}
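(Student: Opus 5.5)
The plan is to argue through the privacy loss random variable and a Chernoff-type tail bound, which is the standard route in \cite{bun2016conc}. Fix neighboring inputs $X, X'$, write $P = \mathcal{A}(X)$ and $Q = \mathcal{A}(X')$, and first dispose of the part of $P$ that is singular with respect to $Q$. For any $\alpha > 1$, finiteness of $D_\alpha(P\|Q) \le \rho\alpha$ forces $P \ll Q$: otherwise the $\alpha$-R\'enyi divergence, understood with its usual convention for singular measures, is $+\infty$. So the density ratio $P(x)/Q(x)$ is well defined $P$-a.s. We then define the privacy loss $Z = \log\frac{P(x)}{Q(x)}$ with $x \sim P$.

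The first key step is the moment generating function bound. A change of measure gives
\begin{equation*}
\mathbb{E}_{x\sim P}\left[e^{(\alpha-1)Z}\right] = \mathbb{E}_{x\sim Q}\left[\left(\frac{P(x)}{Q(x)}\right)^{\alpha}\right] = e^{(\alpha-1)D_\alpha(P\|Q)} \le e^{(\alpha-1)\alpha\rho}.
\end{equation*}
For a threshold $\epsilon$, Markov's inequality then yields
\begin{equation*}
\mathbb{P}(Z > \epsilon) \le e^{(\alpha-1)(\alpha\rho - \epsilon)}
\end{equation*}
for every $\alpha > 1$.

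The second step is to optimize over $\alpha$. We take $\alpha = \frac{\epsilon+\rho}{2\rho}$, which satisfies $\alpha > 1$ whenever $\epsilon > \rho$. With this choice the exponent equals $-\frac{(\epsilon-\rho)^2}{4\rho}$. Now set $\epsilon = \rho + \sqrt{4\rho\log(1/\delta)}$. Since $\delta \in (0,1)$ we have $\epsilon > \rho$, so this $\alpha$ is admissible, and the exponent becomes exactly $-\log(1/\delta)$. Hence $\mathbb{P}(Z > \epsilon) \le \delta$.

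The final step converts the tail bound into $(\epsilon,\delta)$-DP. Let $S$ be any measurable set and put $E = \{x : \log\frac{P(x)}{Q(x)} \le \epsilon\}$. Then
\begin{equation*}
P(S) \le P(S \cap E) + P(E^c) \le \int_{S\cap E} e^{\epsilon} \, dQ + \delta \le e^{\epsilon} Q(S) + \delta,
\end{equation*}
where the bound on the first term uses that the density ratio is at most $e^{\epsilon}$ on $E$. Since the pair $X, X'$ was arbitrary, this is the claimed guarantee.

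I expect the only genuinely delicate point to be the measure-theoretic bookkeeping: justifying absolute continuity of $P$ with respect to $Q$ and the change of measure when the distributions are not discrete. The conventions in the definition of $D_\alpha$ handle both, and everything else is a routine computation.
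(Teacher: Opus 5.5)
Your argument is correct and is the standard proof from \cite{bun2016conc}: bound the moment generating function of the privacy loss by the R\'enyi divergence, apply Markov's inequality with $\alpha = \frac{\epsilon+\rho}{2\rho}$, then convert the tail bound into $(\epsilon,\delta)$-DP. The paper itself only cites this result and gives no proof. The one thing you leave out is the degenerate case $\rho = 0$, where your $\alpha$ is undefined; there $D_\alpha(P\|Q) = 0$ forces $P = Q$, so the claim holds trivially.
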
 
\begin{lemma}[Composition of zCDP from \cite{bun2016conc}]
\label{lemma:Comp_zCDP}
Let $\mathcal{M} : \mathcal{X}^n \rightarrow \mathcal{Y}$ and $\mathcal{M}' : \mathcal{X}^n \rightarrow \mathcal{Z}$ be randomized algorithms. Suppose $\mathcal{M}$ satisfies $\rho$-zCDP and $\mathcal{M}'$ satisfies $\rho'$-zCDP. Define $\mathcal{M}' : \mathcal{X}^n \rightarrow \mathcal{Y} \times \mathcal{Z}$ by $\mathcal{M}''(X) = (\mathcal{M}(X), \mathcal{M}'(X))$. Then $\mathcal{M}''$ satisfies $(\rho + \rho')$-zCDP.    
\end{lemma}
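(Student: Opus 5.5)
The plan is a direct computation with R\'enyi divergences. The key fact is that, for non-adaptive composition, the output distribution of $\mathcal{M}''(X)$ is a product measure, and the R\'enyi divergence between product measures is additive. Throughout, I assume (as is implicit in the statement) that $\mathcal{M}$ and $\mathcal{M}'$ use independent internal randomness.

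First, fix neighboring datasets $X, X'$ and write $P = \mathcal{M}(X)$, $Q = \mathcal{M}(X')$, $P' = \mathcal{M}'(X)$, $Q' = \mathcal{M}'(X')$ for the corresponding output laws. By independence of the internal coins, $\mathcal{M}''(X) \sim P \otimes P'$ and $\mathcal{M}''(X') \sim Q \otimes Q'$. If $P \not\ll Q$, then $D_\alpha(P\|Q) = \infty$ for $\alpha>1$, contradicting $\rho$-zCDP. So $P \ll Q$, and likewise $P' \ll Q'$. Hence $P\otimes P' \ll Q \otimes Q'$, with Radon--Nikodym derivative
\begin{equation*}
\frac{d(P\otimes P')}{d(Q\otimes Q')}(y,z) = \frac{dP}{dQ}(y)\,\frac{dP'}{dQ'}(z).
\end{equation*}

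Second, fix $\alpha>1$ and plug this into the definition of $D_\alpha$. By Tonelli's theorem (the integrand is nonnegative),
\begin{equation*}
\E_{(y,z)\sim Q\otimes Q'}\left[\left(\frac{dP}{dQ}(y)\right)^\alpha\left(\frac{dP'}{dQ'}(z)\right)^\alpha\right] = \E_{y\sim Q}\left[\left(\frac{dP}{dQ}(y)\right)^\alpha\right]\cdot\E_{z\sim Q'}\left[\left(\frac{dP'}{dQ'}(z)\right)^\alpha\right].
\end{equation*}
Taking $\frac{1}{\alpha-1}\log$ of both sides turns the product into a sum, giving
\begin{equation*}
D_\alpha(P\otimes P'\|Q\otimes Q') = D_\alpha(P\|Q) + D_\alpha(P'\|Q').
\end{equation*}
The zCDP hypotheses bound the right-hand side by $\rho\alpha + \rho'\alpha = (\rho+\rho')\alpha$. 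Since $X, X'$ and $\alpha$ were arbitrary, $\mathcal{M}''$ is $(\rho+\rho')$-zCDP.

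There is no real obstacle here. The only care needed is the measure-theoretic bookkeeping: absolute continuity, which is guaranteed by finiteness of the divergences, and exchanging the integral with the product, which Tonelli justifies. One remark on scope: if $\mathcal{M}'$ were allowed to depend on the output of $\mathcal{M}$ (adaptive composition), additivity would instead be replaced by the chain rule. That would give the same bound by conditioning on $y$ and using $\sup_y D_\alpha(P'_y\|Q'_y)\le\rho'\alpha$, but this is not needed for the stated non-adaptive version.
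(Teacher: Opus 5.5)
Your proof is correct: with independent internal coins the two output laws are product measures, and exact additivity of $D_\alpha$ over products gives the bound $(\rho+\rho')\alpha$ directly; you also handle absolute continuity and the use of Tonelli properly. The paper gives no proof of its own and only cites Bun and Steinke, whose argument is the same R\'enyi-divergence factorization in chain-rule form (the joint divergence is at most $D_\alpha(P\|Q)+\sup_y D_\alpha(P'_y\|Q'_y)$). That chain-rule form is exactly the adaptive extension you sketch at the end, and it is also the version the paper actually relies on, e.g.\ for the iterated Gaussian steps in Algorithm~\ref{alg:Matrix_Estimation_hardt2014} and the data-dependent truncation in Algorithm~\ref{alg:Subspace_Estimation_GoodC}.
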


\begin{remark}
Note that by \cite{whitehouse2023fully}, the same composition guarantee from Lemma \ref{lemma:Comp_zCDP} holds for fully-adaptive composition of zCDP mechanisms.
\end{remark}

\begin{lemma}[Postprocessing of zCDP from \cite{bun2016conc}]
Let $\mathcal{M} : \mathcal{X}^n \rightarrow \mathcal{Y}$ and $f : \mathcal{Y} \rightarrow \mathcal{Z}$ be randomized algorithms. Suppose $\mathcal{M}$ satisfies $\rho$-zCDP. Define $\mathcal{M}' : \mathcal{X}^n \rightarrow \mathcal{Z}$ by $\mathcal{M}'(X) = f(\mathcal{M}(X))$. Then $\mathcal{M}'$ satisfies $\rho$-zCDP.
\end{lemma}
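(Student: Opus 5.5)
The statement to prove is the postprocessing property of zCDP: if $\mathcal{M}$ is $\rho$-zCDP, then $\mathcal{M}'(X) = f(\mathcal{M}(X))$ is also $\rho$-zCDP. The plan is to reduce it to the data-processing inequality for R\'enyi divergence. Fix neighboring datasets $X, X'$ and $\alpha > 1$. Write $P$ and $Q$ for the laws of $\mathcal{M}(X)$ and $\mathcal{M}(X')$. Let $K$ be the Markov kernel implementing the randomized map $f$, whose internal randomness is independent of the data. Then the laws of $\mathcal{M}'(X)$ and $\mathcal{M}'(X')$ are $PK$ and $QK$. It suffices to show $D_\alpha(PK\|QK) \le D_\alpha(P\|Q)$, since the right-hand side is at most $\rho\alpha$ by assumption.

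To prove this data-processing inequality, I would work with $F(P\|Q) = \E_Q[(dP/dQ)^\alpha]$. Since $t \mapsto \frac{1}{\alpha-1}\log t$ is increasing for $\alpha > 1$, it is enough to show $F(PK\|QK) \le F(P\|Q)$.

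First I dispose of the case where $P \not\ll Q$. Then $D_\alpha(P\|Q) = \infty$, and the hypothesis $D_\alpha \le \rho\alpha$ forces $P \ll Q$. Absolute continuity is preserved by $K$, so from now on I may assume $P \ll Q$.

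Next I set $g = dP/dQ$ and express the output density via a joint law. Let $(Y,Z)$ be distributed as $Y \sim Q$ and $Z \mid Y \sim K(Y,\cdot)$. Then
\begin{equation*}
\frac{d(PK)}{d(QK)}(Z) = \E[g(Y) \mid Z].
\end{equation*}
This holds because for any measurable set $S$ of outputs, $\E[g(Y)\mathbbm{1}_{Z\in S}] = \int g\, K(\cdot,S)\, dQ = PK(S)$.

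The core step is conditional Jensen's inequality applied to the convex map $t \mapsto t^\alpha$ on $[0,\infty)$:
\begin{equation*}
\E_{QK}\left[\left(\tfrac{d(PK)}{d(QK)}\right)^\alpha\right] = \E\left[\left(\E[g(Y)\mid Z]\right)^\alpha\right] \le \E\left[g(Y)^\alpha\right] = F(P\|Q).
\end{equation*}
This gives $D_\alpha(\mathcal{M}'(X)\|\mathcal{M}'(X')) \le D_\alpha(\mathcal{M}(X)\|\mathcal{M}(X')) \le \rho\alpha$ for every $\alpha > 1$ and every pair of neighbors, which is the claim.

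The only real obstacle is measure-theoretic bookkeeping. I need the identification of the Radon--Nikodym derivative as a conditional expectation on general output spaces. That is why I phrase $f$ as a Markov kernel whose randomness is independent of $X$. That independence is exactly what makes the same kernel $K$ apply under both $X$ and $X'$.
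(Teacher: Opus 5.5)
Your proof is correct and is the standard argument: the paper gives no proof of its own and simply cites Bun and Steinke, who obtain post-processing from the data-processing inequality for R\'enyi divergence, exactly the reduction you make. Your self-contained derivation of that inequality is sound: the reduction to $P \ll Q$ uses the standard convention $D_\alpha = \infty$ otherwise for $\alpha > 1$; you identify $d(PK)/d(QK)$ with $\E[g(Y)\mid Z]$; and you then apply conditional Jensen to $t \mapsto t^\alpha$.
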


\begin{lemma}[Group privacy of zCDP from \cite{bun2016conc}]
\label{LemGroupZ}
\label{lemma:Group_zCDP}
If $\mathcal{A}$ is $\rho$-zCDP, it is $T^2\rho$-zCDP on groups of size $T$. 
\end{lemma}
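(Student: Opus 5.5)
Group privacy for zCDP follows from a weak triangle inequality for R\'enyi divergence, applied along a path of neighboring datasets. Let $X_0, X_T$ differ in at most $T$ elements, and choose a chain $X_0, X_1, \dots, X_T$ in which consecutive datasets are adjacent. The goal is
\begin{equation*}
D_\alpha(\mathcal{A}(X_0)\|\mathcal{A}(X_T)) \le T^2\rho\alpha \quad \text{for every } \alpha>1.
\end{equation*}

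\textbf{Step 1: weak triangle inequality.} For distributions $P,Q,R$ and conjugate exponents $p,q>1$ with $1/p+1/q=1$, I will prove
\begin{equation*}
D_\alpha(P\|R) \le \frac{\alpha - 1/p}{\alpha-1}\, D_{p\alpha}(P\|Q) + D_{q(\alpha-1/p)}(Q\|R).
\end{equation*}
The proof writes $\E_R[(P/R)^\alpha] = \E_Q[(P/Q)^\alpha (Q/R)^{\alpha-1}]$ and applies H\"older's inequality with exponents $p,q$. The first factor becomes $\E_Q[(P/Q)^{p\alpha}]^{1/p}$. The second factor becomes $\E_Q[(Q/R)^{q(\alpha-1)}]^{1/q}$, which equals $\E_R[(Q/R)^{q(\alpha-1)+1}]^{1/q}$. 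Since $q(\alpha-1)+1 = q(\alpha-1/p)$, taking logarithms and dividing by $\alpha-1$ gives the stated form.

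\textbf{Step 2: induction on the chain length.} I will prove that if $D_\beta(\mathcal{A}(X)\|\mathcal{A}(X'))\le \rho_j\beta$ for all $\beta$ whenever $X,X'$ are at distance $j$, then the pair $X_0,X_{j+1}$ satisfies the same bound with $\rho_{j+1}$ in place of $\rho_j$. Set $P=\mathcal{A}(X_0)$, $Q=\mathcal{A}(X_j)$, $R=\mathcal{A}(X_{j+1})$. The zCDP hypotheses give $D_{p\alpha}(P\|Q)\le \rho_j p\alpha$ and $D_{q(\alpha-1/p)}(Q\|R)\le \rho\, q(\alpha-1/p)$. Substituting into Step 1 yields
\begin{equation*}
D_\alpha(P\|R) \le (\alpha-1/p)\left(\frac{p\alpha\rho_j}{\alpha-1} + q\rho\right),
\end{equation*}
to be bounded by $\rho_{j+1}\alpha$ with $\rho_{j+1} = (\sqrt{\rho_j}+\sqrt{\rho})^2$.

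\textbf{Main obstacle: choosing $p$.} The natural first choice is $q = 1+\sqrt{\rho_j/\rho}$, so that $p = 1+\sqrt{\rho/\rho_j}$. Then $p\rho_j = \rho_j+\sqrt{\rho\rho_j}$ and $q\rho = \rho+\sqrt{\rho\rho_j}$, whose sum is $(\sqrt{\rho_j}+\sqrt\rho)^2$. The remaining concern is the factor $(\alpha-1/p)/(\alpha-1)$ multiplying $p\alpha\rho_j$, which exceeds $1$. To handle it cleanly, I will instead use the equivalent formulation in which $\rho$-zCDP means the privacy loss random variable is sub-Gaussian with $D_\alpha \le \rho\alpha$. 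Following Bun--Steinke (Proposition 1.9 / Theorem on group privacy in \cite{bun2016conc}), I will then optimize $p$ depending on $\alpha$ rather than fixing it, and check that the resulting bound is $\le(\sqrt{\rho_j}+\sqrt\rho)^2\alpha$ for all $\alpha>1$. This is exactly the content of their weak triangle inequality lemma, and I would cite it if the algebra resists.

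\textbf{Step 3: conclusion.} Starting from $\rho_1=\rho$, induction gives $\sqrt{\rho_T} = T\sqrt{\rho}$, so $\rho_T=T^2\rho$. Applied to the chain from $X_0$ to $X_T$, this shows $\mathcal{A}$ is $T^2\rho$-zCDP on groups of size $T$, as claimed.
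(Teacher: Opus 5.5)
Your route (a H\"older-based weak triangle inequality for R\'enyi divergence, then induction along a chain with $\sqrt{\rho_{j+1}}=\sqrt{\rho_j}+\sqrt{\rho}$) is the standard one from the cited Bun--Steinke paper. The paper itself gives no proof and only cites it. Your Step~1 is correct.

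The gap is in Step~2, which is the only nontrivial step. The one concrete choice you make, $p=1+\sqrt{\rho/\rho_j}$ independent of $\alpha$, fails. Since $q(\alpha-1/p)=\frac{p\alpha-1}{p-1}$, your bound equals $(p\alpha-1)\bigl(\frac{\alpha\rho_j}{\alpha-1}+\frac{\rho}{p-1}\bigr)$. For every fixed $p>1$ this tends to $+\infty$ as $\alpha\to 1^+$, while the target $(\sqrt{\rho_j}+\sqrt{\rho})^2\alpha$ stays bounded. You then defer the actual choice to a citation. The detour through a sub-Gaussian privacy-loss formulation is not needed: everything can stay at the level of the definition $D_\alpha\le\rho\alpha$.

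The missing step is short. Set $s=\sqrt{\rho/\rho_j}$ and take $p=1+\frac{(\alpha-1)s}{\alpha}$, so $p$ depends on $\alpha$. Then $p\alpha-1=(\alpha-1)(1+s)$ and $p-1=\frac{(\alpha-1)s}{\alpha}$, so your bound becomes
\[
(1+s)\alpha\rho_j+\frac{(1+s)\alpha\rho}{s}=\alpha\bigl(\rho_j+2\sqrt{\rho\rho_j}+\rho\bigr)=\bigl(\sqrt{\rho_j}+\sqrt{\rho}\bigr)^2\alpha .
\]
The orders at which you invoke the hypotheses are $p\alpha=\alpha+(\alpha-1)s$ and $q(\alpha-1/p)=\alpha(1+1/s)$. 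Both exceed $1$, so the hypotheses apply.

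Equivalently, substitute $p\alpha=\frac{k\alpha-1}{k-1}$. The inequality becomes $D_\alpha(P\|R)\le\frac{k\alpha}{k\alpha-1}D_{\frac{k\alpha-1}{k-1}}(P\|Q)+D_{k\alpha}(Q\|R)$. The hypotheses then give $\alpha\bigl(\frac{k}{k-1}\rho_j+k\rho\bigr)$, with $k>1$ now free of $\alpha$. Taking $k=1+\sqrt{\rho_j/\rho}$ (which is $j+1$ when $\rho_j=j^2\rho$) minimizes it.

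With this choice inserted, your Step~3 goes through as written. The case $\rho=0$ is trivial.
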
 

\begin{lemma}[Gaussian mechanism for zCDP from \cite{bun2016conc}]
\label{lemma:Gauss_zCDP}
Let $f : \mathcal{X}^n \rightarrow \mathbb{R}^d$ have $\ell_2$-sensitivity $\Delta_f$. Consider the mechanism $\mathcal{M} : \mathcal{X}^n \rightarrow \mathbb{R}^d$, which on input $X$, releases a sample from $N(f(X), \sigma^2I_d)$, where $\sigma^2 = \frac{\Delta_f^2}{2\rho}$. Then $\mathcal{M}$ satisfies $\rho$-zCDP.
\end{lemma}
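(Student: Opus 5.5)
This is the Gaussian mechanism for zCDP, which the paper imports from \cite{bun2016conc}. The plan is to verify it directly from the definition of $\rho$-zCDP using the closed form of the R\'enyi divergence between two Gaussians with equal covariance.

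Fix neighboring datasets $X \sim X'$ and set $\mu = f(X)$ and $\mu' = f(X')$. By the definition of $\ell_2$-sensitivity, $\|\mu - \mu'\|_2 \le \Delta_f$. We must show that for every $\alpha > 1$,
\begin{equation*}
D_\alpha\left(N(\mu, \sigma^2 I_d) \,||\, N(\mu', \sigma^2 I_d)\right) \le \rho\alpha .
\end{equation*}

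The key step is to compute this divergence exactly. We expand
\begin{equation*}
\E_{x \sim Q}\left[(P(x)/Q(x))^\alpha\right] = \int P(x)^\alpha Q(x)^{1-\alpha}\,dx .
\end{equation*}
The combined exponent is $-\frac{1}{2\sigma^2}\left(\alpha\|x-\mu\|_2^2 + (1-\alpha)\|x-\mu'\|_2^2\right)$. Completing the square in $x$, this equals
\begin{equation*}
-\frac{1}{2\sigma^2}\|x - (\alpha\mu + (1-\alpha)\mu')\|_2^2 + \frac{\alpha(\alpha-1)}{2\sigma^2}\|\mu-\mu'\|_2^2 .
\end{equation*}
The identity behind this is
\begin{equation*}
\alpha\|a\|^2 + (1-\alpha)\|b\|^2 - \|\alpha a + (1-\alpha) b\|^2 = \alpha(1-\alpha)\|a-b\|^2 ,
\end{equation*}
applied with $a = x-\mu$ and $b = x - \mu'$. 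The remaining Gaussian integral equals $1$, because the normalizing constants match: both distributions share the covariance $\sigma^2 I_d$, and the exponents $\alpha$ and $1-\alpha$ sum to one.

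It follows that
\begin{equation*}
D_\alpha = \frac{1}{\alpha-1}\cdot\frac{\alpha(\alpha-1)\|\mu-\mu'\|_2^2}{2\sigma^2} = \frac{\alpha\|\mu-\mu'\|_2^2}{2\sigma^2}.
\end{equation*}
This holds for all $\alpha > 1$; the integral is finite for every such $\alpha$ since the quadratic form stays negative definite with coefficient $-\frac{1}{2\sigma^2}$.

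Substituting $\|\mu-\mu'\|_2 \le \Delta_f$ and $\sigma^2 = \Delta_f^2/(2\rho)$ gives
\begin{equation*}
D_\alpha \le \alpha\Delta_f^2 \cdot \frac{2\rho}{2\Delta_f^2} = \rho\alpha ,
\end{equation*}
which is the required bound. The only step that needs care is the completing-the-square algebra; everything else is a direct substitution. One degenerate case should be noted: if $\Delta_f = 0$, then $\mu = \mu'$ and the divergence is $0$, which trivially satisfies the bound.
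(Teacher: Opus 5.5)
Your computation is correct: completing the square gives $D_\alpha(N(\mu,\sigma^2 I_d)\,\|\,N(\mu',\sigma^2 I_d)) = \alpha\|\mu-\mu'\|_2^2/(2\sigma^2)$ for every $\alpha>1$, and substituting $\sigma^2=\Delta_f^2/(2\rho)$ gives $\rho\alpha$. The paper does not prove this lemma and simply imports it from \cite{bun2016conc}, where it is established by this same closed-form R\'enyi divergence calculation, so your argument matches the intended one.
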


We also mention the exponential mechanism~\cite{mcsherry2007mechanism}:

\begin{lemma}[Exponential mechanism~\cite{DwoRot14}]
\label{LemExponential}
Consider a utility function $u: \mathbb{N}^{|\mathcal{X}|} \times \mathcal{R} \rightarrow \real$, which maps database/output pairs to utility scores, and suppose
\begin{equation*}
\Delta u := \max_{r \in \mathcal{R}} \max_{x,y: \|x-y\|_1 \le 1} |u(x,r) - u(y,r)| < \infty.
\end{equation*}
The mechanism $\mathcal{M}(x; u, \mathcal{R})$, which selects and outputs an element $r \in \mathcal{R}$ with probability proportional to $\exp\left(\frac{\epsilon u(x,r)}{2\Delta u}\right)$, satisfies $(\epsilon, 0)$-DP.
\end{lemma}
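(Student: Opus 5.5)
The statement to prove is the exponential mechanism (Lemma~\ref{LemExponential}). I would use the standard two-sided density-ratio argument from \cite{DwoRot14}, carried out for a countable (or, after the obvious replacement of sums by integrals, measurable) range $\mathcal{R}$.

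Fix neighboring databases $x,y$ with $\|x-y\|_1\le 1$ and an output $r\in\mathcal{R}$. Write
\[
Z(x)=\sum_{r'\in\mathcal{R}}\exp\left(\frac{\epsilon u(x,r')}{2\Delta u}\right)
\]
for the normalizing constant; I assume it is finite, which is implicit in the mechanism being well defined. Then
\[
\frac{\mathbb{P}(\mathcal{M}(x)=r)}{\mathbb{P}(\mathcal{M}(y)=r)}=\exp\left(\frac{\epsilon(u(x,r)-u(y,r))}{2\Delta u}\right)\cdot\frac{Z(y)}{Z(x)}.
\]
I bound the two factors separately.

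\textbf{Numerator factor.} By the definition of $\Delta u$, we have $u(x,r)-u(y,r)\le\Delta u$. This factor is therefore at most $e^{\epsilon/2}$.

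\textbf{Normalizer factor.} Apply the same sensitivity bound termwise: for every $r'$, $u(y,r')\le u(x,r')+\Delta u$. Hence each summand of $Z(y)$ is at most $e^{\epsilon/2}$ times the matching summand of $Z(x)$, so $Z(y)\le e^{\epsilon/2}Z(x)$.

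Multiplying the two bounds gives a pointwise density ratio of at most $e^{\epsilon}$. Summing, or integrating, over any measurable set $S$ then yields
\[
\mathbb{P}(\mathcal{M}(x)\in S)\le e^{\epsilon}\,\mathbb{P}(\mathcal{M}(y)\in S),
\]
which is $(\epsilon,0)$-DP. Because $\Delta u$ is symmetric in $x$ and $y$, the same bound holds with the roles of $x$ and $y$ swapped.

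There is no real obstacle. The only points needing care are measure-theoretic: finiteness of $Z$, and, for continuous $\mathcal{R}$ such as the sphere used in Section~\ref{SecPPCA}, a common base measure with respect to which densities are taken, so that the termwise comparison of normalizers becomes a comparison of integrands.
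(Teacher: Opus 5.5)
Your proposal is correct and is exactly the standard argument from \cite{DwoRot14}: the unnormalized weight and the normalizing constant each contribute a factor of at most $e^{\epsilon/2}$, giving a pointwise ratio of at most $e^{\epsilon}$. The paper gives no proof of its own and simply cites that source, so your route is the intended one, and your remarks about a common base measure for continuous ranges are appropriate for how the lemma is used in Section~\ref{SecPPCA}.
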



\subsection{Properties of SBMs}
\label{AppDegree}

We will use the following result about the deviation of the adjacency matrix in a random graph from its mean:

\begin{lemma}[Theorem 5.2 in \cite{LeiRin15}]
\label{lemma:thm5.2LeiRin}
Let $A$ be the adjacency matrix of a random graph on $n$ nodes in which edges occur independently. Suppose $n \cdot \max\limits_{i, j}\mathbb{E}[A]_{ij} \leq d$, for $d \geq c_0\log(n)$. Then for any $r > 0$, there exists a constant $C = C(c_0, r)$ such that
\begin{align*}
||A - \mathbb{E}[A]||_2 \leq C\sqrt{d},
\end{align*}
with probability at least $1 - n^{-r}$.
\end{lemma}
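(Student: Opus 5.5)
The plan is the Feige--Ofek / Kahn--Szemer\'edi argument, in the form used by Lei and Rinaldo. Write $W = A - \mathbb{E}[A]$. Since the entries of $A$ are independent only above the diagonal, I would first split $W = W^{+} + W^{-}$ into strictly upper and strictly lower triangular parts. The diagonal contributes at most $\max_{ij}\mathbb{E}[A]_{ij} \le d/n$, which is negligible. It then suffices to bound $\|W^{+}\|_2$, whose entries are independent.

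Next I discretize the sphere. Take a $\tfrac12$-net $\mathcal{N}$ of the unit sphere in $\real^n$, restricted to the grid $(\tfrac{1}{2\sqrt n}\mathbb{Z})^n$, with $|\mathcal{N}| \le 5^n$. The standard argument gives $\|W^{+}\|_2 \le 4\sup_{x,y \in \mathcal{N}} |x^T W^{+} y|$. For fixed $x,y$, I split the index pairs into \emph{light} pairs, with $|x_i y_j| \le \sqrt{d}/n$, and \emph{heavy} pairs, with $|x_i y_j| > \sqrt d/n$.

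For the light part, $\sum_{\text{light}} x_i y_j W_{ij}$ is a sum of independent centered terms, each bounded by $\sqrt d/n$, with total variance at most $\sum_{ij} x_i^2 y_j^2\, d/n \le d/n$. Bernstein's inequality gives a tail $\exp(-c\,C^2 n)$ at level $C\sqrt d$. This beats the union bound over $|\mathcal{N}|^2 \le 25^n$ pairs once $C$ is large, so this piece is uniformly $O(\sqrt d)$ with probability $1-e^{-cn}$.

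For the heavy part, the expectation is handled deterministically:
\begin{equation*}
\sum_{\text{heavy}} |x_i y_j|\,\mathbb{E}A_{ij} \le \frac{d}{n}\sum_{ij}\frac{x_i^2y_j^2}{|x_iy_j|}\mathbbm{1}\{|x_iy_j|>\sqrt d/n\} \le \frac{d}{n}\cdot\frac{n}{\sqrt d} = \sqrt d .
\end{equation*}
What remains is $\sum_{\text{heavy}} |x_i y_j| A_{ij}$, which must be bounded uniformly over all $x,y$. I would establish two graph properties, each holding with probability $1-n^{-r}$.
\begin{enumerate}
\item \emph{Bounded degree.} Every degree is at most $C_1 d$. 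This follows from Chernoff and a union bound, and uses $d \ge c_0\log n$.
\item \emph{Bounded discrepancy.} For all $S,T \subseteq [n]$ with $|S|\le|T|$, write $e(S,T)$ for the number of edges between $S$ and $T$ and $\mu(S,T)=|S||T|d/n$. Then either $e(S,T)\le C_2\,\mu(S,T)$, or $e(S,T)\log\frac{e(S,T)}{\mu(S,T)} \le C_3 |T|\log\frac{n}{|T|}$. This follows from a Chernoff bound for $e(S,T)$ plus a union bound over $\binom{n}{|S|}\binom{n}{|T|}$ pairs of sets.
\end{enumerate}
The constants $C_1,C_2,C_3$ are chosen depending on $c_0$ and $r$.

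Given these two properties, the conclusion is deterministic. Partition the coordinates of $x$ and $y$ into dyadic level sets $S_s = \{i : 2^{s-1}/\sqrt n \le |x_i| < 2^s/\sqrt n\}$, and $T_t$ defined the same way for $y$. The heavy sum is then bounded by
\begin{equation*}
\sum_{(s,t):\,2^{s+t}>\sqrt d} e(S_s,T_t)\,\frac{2^{s+t}}{n}.
\end{equation*}
The pairs $(s,t)$ are split into cases according to which discrepancy alternative holds and how $e(S_s,T_t)/\mu$ compares to $2^{s+t}/\sqrt d$. Bounded degree handles the cases where $|S_s|$ or $|T_t|$ is small relative to the ratio. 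In each case the bound reduces to $O(\sqrt d)\sum_s |S_s|2^{2s}/n + O(\sqrt d)\sum_t |T_t|2^{2t}/n = O(\sqrt d)$, using $\|x\|_2=\|y\|_2=1$.

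I expect the main obstacle to be this last combinatorial summation, together with the union bound in the discrepancy property. I would follow the case analysis of Feige--Ofek, as adapted in Lei--Rinaldo, tracking how the constants depend on $c_0$ and $r$ so that the final $C = C(c_0,r)$.
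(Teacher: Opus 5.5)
Your plan is the Feige--Ofek argument (grid net, light/heavy split, bounded degree plus bounded discrepancy, dyadic summation) that Lei and Rinaldo use to prove their Theorem 5.2; the paper gives no proof of its own and simply imports that result, so your route matches the source. The one step that fails as stated is the proof of the discrepancy property: when $|T|$ is close to $n$ one has $|T|\log(n/|T|)\ll\log\binom{n}{|T|}$, so the second alternative cannot pay for the union bound, and for small $|S|$ with $d\asymp\log n$ the Chernoff tail for $e(S,T)>C_2\,\mu(S,T)$ is only polynomially small; the fix, which is what Lei and Rinaldo do, is to use your degree bound for $|T|>n/e$, namely $e(S,T)\le\sum_{i\in S}\deg(i)\le C_1|S|d\le 3C_1\,\mu(S,T)$, and to reserve the Chernoff-plus-union-bound argument for $|S|\le|T|\le n/e$.
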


We will also use the following result, proved easily using basic concentration inequalities:

\begin{lemma}
\label{lemma:hp_removal_d}
Suppose $A \sim SBM(n, k, B, \theta)$, with parameters as in Assumption~\ref{AssSBM}.
For each $i \in [n]$, let $d_i = \|Ae_i\|_1$ be the degree of node $i$. Then
\begin{align*}
\mprob\left(d_i < 2d, \quad \mathrm{for \ all} \  i\right) \ge 1 - \frac{1}{\mathrm{poly}(n)}.
\end{align*}
\end{lemma}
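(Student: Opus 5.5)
We will prove the bound by a union bound over the $n$ nodes, using a Bernstein/Chernoff tail for each degree. Fix a node $i$ with $\theta_i = j$. Its degree $d_i = \sum_{\ell \neq i} A_{i\ell}$ is a sum of $n-1$ independent Bernoulli variables with means $B_{j,\theta_\ell} \le \max(B) = d/n$. Hence $\E[d_i] \le d$ and $\sigma^2 = \Var(d_i) \le \E[d_i] \le d$. Since both the mean and the variance are at most $d$, it suffices to bound $\mprob(d_i - \E[d_i] \ge d)$.

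A direct application of Lemma~\ref{lemma:Bern_Conc_Janson_2016} with $t = d$ is awkward, because its exponent is negative once $t > 3\sigma^2$. To avoid this, we stochastically dominate $d_i$ by $X \sim \mathrm{Bin}(n, d/n)$, since increasing every success probability to $d/n$ only increases the upper tail. We then apply the standard multiplicative Chernoff bound
\begin{equation*}
\mprob(X \ge (1+\eta)\mu) \le \exp\left(-\frac{\eta^2 \mu}{2+\eta}\right)
\end{equation*}
with $\mu = d$ and $\eta = 1$. This gives
\begin{equation*}
\mprob(d_i \ge 2d) \le \mprob(X \ge 2d) \le e^{-d/3}.
\end{equation*}
If we prefer to stay within the paper's stated tools, we can instead pad $d_i$ with independent Bernoulli variables so that its variance is exactly of order $d$. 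Lemma~\ref{lemma:Bern_Conc_Janson_2016} then applies with $t$ comparable to $\sigma^2$ and yields $e^{-cd}$.

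Next we use Assumption~\ref{AssSBM} to show that $d$ is large compared with $\log(n)$. Since $a_n \ge \frac{25k\log(n)}{n \min_j (B_0)_{jj}}$ and $\max(B_0) \ge \min_j (B_0)_{jj}$, we get
\begin{equation*}
d = n a_n \max(B_0) \ge 25k\log(n) \ge 50\log(n).
\end{equation*}
Therefore $\mprob(d_i \ge 2d) \le e^{-d/3} \le n^{-50/3}$. A union bound over the $n$ nodes then gives failure probability at most $n^{-47/3} = 1/\mathrm{poly}(n)$, which is the claim.

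The only delicate point is choosing the form of the tail bound. With the multiplicative Chernoff bound and the constant $25$ in Assumption~\ref{AssSBM}, the exponent comfortably beats the $\log(n)$ cost of the union bound, so no further obstacle remains.
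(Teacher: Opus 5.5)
Your proof is correct, but it takes a different route from the paper. The paper bounds the node-specific event $\{d_i \ge 2\E[d_i]\}$ rather than $\{d_i \ge 2d\}$. It applies Lemma~\ref{lemma:Bern_Conc_Janson_2016} with $t = \E[d_i]$ and uses $a_n = o(1)$ to get $\E[d_i]/\sigma_i^2 \to 1$. Then $t/(3\sigma_i^2) \to 1/3$, so the exponent stays positive; this is exactly how it sidesteps the sign problem you flagged. For $n$ large it loosens the tail to $e^{-\E[d_i]/12}$ and lower-bounds $\E[d_i]$ by the within-community contribution, $(n/k-1)a_n\min_j(B_0)_{jj} \ge 25\log(n) - 25k$. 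A union bound then gives the high-probability event, and only at the end does it invoke $\E[d_i] \le d$.

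Your route (stochastic domination by $\mathrm{Bin}(n, d/n)$ followed by the multiplicative Chernoff bound) is more elementary and fully non-asymptotic. It gives $\mprob(d_i \ge 2d) \le e^{-d/3}$ for every $n$, without using $a_n = o(1)$ or ``$n$ sufficiently large.'' It also shows that $d \ge c\log(n)$ with any constant $c > 3$ would suffice. The paper's argument, in exchange, stays within its stated concentration lemma and yields the slightly finer conclusion $d_i < 2\E[d_i]$ for all $i$.

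One harmless nit: the step $25k\log(n) \ge 50\log(n)$ uses $k \ge 2$. Even with only $d \ge 25\log(n)$, you get failure probability at most $n^{-22/3}$. Your ``padding'' alternative is unnecessary and could simply be dropped.
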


\begin{proof}
Let $\Omega_0 = \bigcap_{i = 1}^n\left\{d_i < 2\mathbb{E}[d_i]\right\}$. Let $j(i) \in [k]$ be the community containing node $i$. For each $i$, we have
\begin{align*}
&\mathbb{E}[d_i] = \left(\frac{n}{k} - 1\right)a_n (B_0)_{j(i)j(i)} + \sum_{\ell \neq j(i)}\frac{na_n (B_0)_{j(i)\ell}}{k},\\
&\sigma_i^2 = \text{Var}(d_i) = \left(\frac{n}{k} - 1\right)a_n (B_0)+{j(i)j(i)}(1 - a_n (B_0)_{j(i)j(i)}) \\
& \qquad \qquad \qquad + \sum_{\ell \neq j(i)}\frac{n}{k}a_n (B_0)_{j(i)\ell}(1 - a_n (B_0)_{j(i)\ell}).
\end{align*}
Then
\begin{align*}
\mathbb{P}(\Omega_0^c) \leq \sum_{i = 1}^n\mathbb{P}(d_i \geq 2\mathbb{E}[d_i]) \leq \sum_{i = 1}^n\mathbb{P}(d_i - \mathbb{E}[d_i] \geq \mathbb{E}[d_i]) \le \sum_{i=1}^n e^{-\frac{\mathbb{E}[d_i]^2}{2\sigma_i^2}\left(1 - \frac{\mathbb{E}[d_i]}{3\sigma_i^2}\right)},
\end{align*}
using Lemma~\ref{lemma:Bern_Conc_Janson_2016} to bound each summand, since $d_i$ is a sum of i.i.d.\ Bernoulli random variables.
Under Assumption~\ref{AssSBM}, we have $\frac{\mathbb{E}[d_i]}{\sigma_i^2} \rightarrow 1$, as $n \rightarrow \infty$, for each $i$. Hence, for $n$ sufficiently large, we have
$e^{-\frac{\mathbb{E}[d_i]^2}{2\sigma_i^2}\left(1 - \frac{\mathbb{E}[d_i]}{3\sigma_i^2}\right)} < e^{-\frac{\mathbb{E}[d_i]}{12}}$, for each $i$. Also, since $a_n \geq \frac{C_1k\log(n)}{n\cdot \text{min}_{j}{(B_0)_{jj}}}$, we have
\begin{equation*}
\mathbb{E}[d_i] \geq \left(\frac{n}{k} - 1\right)a_n \cdot\text{min}_{j}{(B_0)_{jj}} \geq C_1\log(n) - \frac{C_1k\log(n)}{n} \geq C_1\log(n) - C_1k,
\end{equation*}
for $n$ sufficiently large, implying that 
\begin{align*}
\mathbb{P}(\Omega_0^c) < ne^{-\frac{C_1\log(n)}{12} + \frac{C_1k}{12}} \precsim n^{1 - \frac{C_1}{12}} + n^{1 - \frac{C_1}{24}} = o(1),
\end{align*}
since $C_1 > 24$. Finally, for $i \in [n]$, we have 
\begin{align*}
\mathbb{E}[d_i] = \left(\frac{n}{k} - 1\right)a_n (B_0)_{j(i)j(i)} + \sum_{l \neq j(i)}\frac{na_n (B_0)_{j(i)l}}{k} \leq (n - 1)a_n \cdot\text{max}(B_0) \leq d,
\end{align*}
implying that 
\begin{align*}
\mprob\left(d_i < 2d, \quad \text{for all } i\right) \geq 1 - \frac{1}{\mathrm{poly}(n)},
\end{align*}
as required.
\end{proof}


\subsection{Maximal correlation}
\label{AppMaxCor}

Recall that the Hirschfeld-Gebelein-R\'{e}nyi (HGR) maximal correlation is defined for random variables $(X,Y)$ as
\begin{equation*}
\rho_m(X,Y) = \max_{(f(X), g(Y)) \in \mathcal{S}} \E[f(X) g(Y)],
\end{equation*}
where $\mathcal{S}$ consists of pairs of random variables $f(X)$ and $g(Y)$ satisfying $\E[f(X)] = 0 = \E[g(Y)]$ and $\E[f^2(X)] = 1 = \E[g^2(Y)]$ \cite{anantharam2013maximal, polyanskiy2025information}. Clearly, by taking $f$ and $g$ to be appropriate linear transformations, we have $\rho_m(X,Y) \le \rho(X,Y)$, where $\rho$ is the usual Pearson correlation. Furthermore, when $X$ and $Y$ are Bernoulli, we have $\rho_m(X,Y) = \rho(X,Y)$, since all functions of Bernoulli variables are linear.

It is known that $\rho_m$ tensorizes~\cite{witsenhausen1975sequences}, in the sense that the maximal correlation of two product distributions is bounded by the maximal correlation between pairs of coordinates:
\begin{lemma}
If $(X_1, Y_1)$ and $(X_2, Y_2)$ are independent, then
\begin{equation*}
\rho_m\left((X_1, X_2), (Y_1, Y_2)\right) = \max\left\{\rho_m(X_1, Y_1), \rho_m(X_2, Y_2)\right\}.
\end{equation*}
\end{lemma}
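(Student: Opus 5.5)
The plan is to prove the two inequalities separately, using an orthogonal (ANOVA/Efron--Stein) decomposition of the test functions for the nontrivial direction. Write $\rho_i := \rho_m(X_i, Y_i)$ and $\rho^* := \max\{\rho_1, \rho_2\}$.

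\emph{Lower bound.} The inequality $\rho_m\left((X_1,X_2),(Y_1,Y_2)\right) \ge \rho^*$ is immediate. Any admissible pair $(f(X_1), g(Y_1))$ for $(X_1,Y_1)$ is also admissible for the joint pair, viewed as functions that ignore $X_2$ and $Y_2$. The same holds with the roles of the two coordinates swapped.

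\emph{Upper bound.} Fix $f(X_1,X_2)$ and $g(Y_1,Y_2)$ with mean zero and unit variance. I decompose
\begin{equation*}
f = f_1(X_1) + f_2(X_2) + f_{12}(X_1,X_2),
\end{equation*}
where $f_1 = \E[f \mid X_1]$ and $f_2 = \E[f \mid X_2]$. The interaction term $f_{12}$ satisfies $\E[f_{12}(x_1, X_2)] = 0$ for every $x_1$, and $\E[f_{12}(X_1, x_2)] = 0$ for every $x_2$. Since $X_1$ and $X_2$ are independent, the three pieces are mutually orthogonal, so $\|f_1\|^2 + \|f_2\|^2 + \|f_{12}\|^2 = 1$. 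I decompose $g = g_1 + g_2 + g_{12}$ in the same way.

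\emph{Vanishing cross terms.} Next I expand $\E[fg]$ and show that every cross term vanishes.
\begin{itemize}
\item The term $\E[f_1(X_1) g_2(Y_2)]$ is zero because the two pairs are independent and both factors have mean zero.
\item For $\E[f_1(X_1) g_{12}(Y_1,Y_2)]$, I condition on $(X_1,Y_1)$. Since $Y_2$ is independent of the first pair, $\E[g_{12}(Y_1,Y_2) \mid X_1, Y_1] = \E_{Y_2}[g_{12}(Y_1, Y_2)] = 0$, so the term vanishes.
\item The remaining cross terms vanish by the same two arguments.
\end{itemize}
This leaves
\begin{equation*}
\E[fg] = \E[f_1 g_1] + \E[f_2 g_2] + \E[f_{12} g_{12}].
\end{equation*}
The definition of $\rho_m$ gives $|\E[f_i g_i]| \le \rho_i \|f_i\|\,\|g_i\|$ for $i = 1, 2$.

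\emph{Interaction term (main obstacle).} The step requiring care is $\E[f_{12} g_{12}]$. I condition on $(X_1,Y_1) = (x_1,y_1)$. By independence of the pairs, $(X_2,Y_2)$ still has its original joint law under this conditioning. The functions $f_{12}(x_1,\cdot)$ and $g_{12}(y_1,\cdot)$ are mean zero under that law, so the definition of $\rho_2$ applies after normalization. This gives
\begin{equation*}
\left|\E[f_{12} g_{12} \mid X_1 = x_1, Y_1 = y_1]\right| \le \rho_2 \,\|f_{12}(x_1,\cdot)\|\,\|g_{12}(y_1,\cdot)\|.
\end{equation*}
Taking the outer expectation and applying Cauchy--Schwarz yields $|\E[f_{12} g_{12}]| \le \rho_2 \|f_{12}\|\,\|g_{12}\| \le \rho^* \|f_{12}\|\,\|g_{12}\|$. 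A sharper bound of order $\rho_1\rho_2$ is possible here but is not needed.

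\emph{Conclusion.} Summing the three bounds and applying Cauchy--Schwarz to the vectors $(\|f_1\|,\|f_2\|,\|f_{12}\|)$ and $(\|g_1\|,\|g_2\|,\|g_{12}\|)$, both of unit norm, gives $\E[fg] \le \rho^*$. Taking the supremum over admissible pairs $(f,g)$ finishes the proof.
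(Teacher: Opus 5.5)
Your proof is correct. It takes a different route from the one behind the paper's statement, since the paper gives no argument and only cites Witsenhausen. The classical argument there is spectral. For finite alphabets, $\rho_m(X,Y)$ is the second-largest singular value of the matrix $Q_{XY}$ with entries $P_{XY}(x,y)/\sqrt{P_X(x)P_Y(y)}$. Its top singular value is $1$, with singular vectors given by the square-root marginals; orthogonality to these is exactly the mean-zero constraint. For independent pairs, $Q_{(X_1,X_2)(Y_1,Y_2)} = Q_{X_1Y_1}\otimes Q_{X_2Y_2}$, whose singular values are the pairwise products. So the second largest is $\max\{\rho_1,\rho_2,\rho_1\rho_2\}=\max\{\rho_1,\rho_2\}$.

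Your Efron--Stein decomposition is the $L^2$ counterpart of this block structure. The vanishing of all cross terms says that the conditional-expectation operator of the product law is block-diagonal with respect to constants, the two main-effect spaces, and the interaction space. Your conditioning bound on $\E[f_{12}g_{12}]$ plays the role of the product singular values; as you note, the sharp constant there is $\rho_1\rho_2$. The matrix argument is shorter and yields the whole spectrum, but in that form it lives on finite alphabets. On general alphabets one must pass to operator norms on $L^2$, at which point it essentially becomes your argument. Your version needs only independence, Fubini, and Cauchy--Schwarz, so it holds for arbitrary alphabets. That generality matters in the proof of Proposition~\ref{prop:Boost_result}: each indicator $X_j$ also depends on the internal randomness of $\widetilde{\theta}$ (Gaussian or Laplace noise). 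Treating that randomness as one more independent pair with zero maximal correlation requires tensorization beyond finite alphabets.
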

Furthermore, it is evident from the definition that $\rho_m$ satisfies a data processing inequality, so $\rho_m(f(X), g(Y)) \le \rho_m(X,Y)$.

\begin{lemma}
\label{LemHGRBer}
Suppose we have independent random variables $(Z, R_1, R_2)$, such that $Z \sim \mathrm{Ber}(q)$ and $R_1, R_2 \sim \mathrm{Ber}(p)$. Then
\begin{equation*}
\rho_m(Z R_1, Z R_2) = \frac{p(1-q)}{1-pq}.
\end{equation*}
\end{lemma}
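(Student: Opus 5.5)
The plan is to reduce to a Pearson correlation computation. The key observation, stated just before the lemma, is that all functions of a Bernoulli variable are affine. Both $X := ZR_1$ and $Y := ZR_2$ take values in $\{0,1\}$, so they are Bernoulli random variables. Any $f(X)$ with $\E[f(X)] = 0$ and $\E[f^2(X)] = 1$ must therefore be $\pm (X - \E X)/\sqrt{\Var(X)}$, and the same holds for $g(Y)$. Hence the supremum over $\mathcal{S}$ is exactly $|\rho(X,Y)|$, and $\rho_m(X,Y)$ equals the Pearson correlation once we check that it is nonnegative. We assume $p, q \in (0,1)$, or more generally $pq \in (0,1)$, so that $\mathcal{S}$ is nonempty and the formula makes sense. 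I would state the degenerate boundary cases separately: if $pq = 1$ both variables are constant, and the conventional value $0$ matches the formula whenever $q = 1$.

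The computation proceeds in three steps, using independence of $(Z, R_1, R_2)$ and $Z^2 = Z$.

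First, the means and variances are $\E[X] = \E[Y] = pq$ and $\Var(X) = \Var(Y) = pq(1-pq)$.

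Second, the mixed moment is $\E[XY] = \E[Z^2 R_1 R_2] = \E[Z]\,\E[R_1]\,\E[R_2] = qp^2$. This gives
\begin{equation*}
\Cov(X,Y) = qp^2 - p^2q^2 = p^2 q(1-q) \ge 0.
\end{equation*}

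Third, dividing the covariance by the common variance gives
\begin{equation*}
\rho(X,Y) = \frac{p^2q(1-q)}{pq(1-pq)} = \frac{p(1-q)}{1-pq}.
\end{equation*}
Since this is nonnegative, the maximum over the sign choices $f = \pm(\cdot)$ and $g = \pm(\cdot)$ is attained with matching signs and equals this value.

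There is no serious obstacle. The only point requiring care is justifying that $\rho_m = \rho$ for Bernoulli pairs. I would spell this out explicitly: a function on the two-point set $\{0,1\}$ is determined by its two values, so it is affine in $X$. The centering and unit-variance constraints then pin $f(X)$ down up to sign. This also yields the lower bound $\rho_m \ge \rho$, which together with the remark that $\rho_m \le \rho$ in this setting gives equality.
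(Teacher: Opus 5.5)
Your proof is correct and follows the paper's argument. Both identify $\rho_m$ with the Pearson correlation because every function of a $\{0,1\}$-valued variable is affine, and both then compute $\Cov(ZR_1,ZR_2)=p^2q(1-q)$ over $\Var(ZR_1)=pq(1-pq)$. Your sign argument ($\rho_m=|\rho|$ for Bernoulli pairs, and $\rho\ge 0$ here) is slightly more careful than the paper's bare assertion $\rho_m=\rho$. One small correction to your boundary aside: at $p=q=1$ the formula is $0/0$, so it does not ``match'' the conventional value there.
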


\begin{proof}
Since the maximal correlation $\rho_m$ is equal to the Pearson correlation $\rho$ for Bernoulli varaibles, we may simply calculate
\begin{align*}
\rho(ZR_1, ZR_2) & = \frac{\Cov(ZR_1, ZR_2)}{\sqrt{\Var(ZR_1) \Var(ZR_2)}} = \frac{\E[Z^2 R_1 R_2] - \E[ZR_1] \E[ZR_2]}{pq(1-pq)} \\
& = \frac{qp^2 - p^2q^2}{pq(1-pq)} = \frac{p(1-q)}{1-pq},
\end{align*}
as wanted.
\end{proof}


\section{Details of privacy algorithms}
\label{AppPrivate}

In this appendix, we provide detailed statements of some of the privacy algorithms that were omitted in the main text due to space constraints.


\subsection{Two-community private PCA with Lipschitz extension} 
\label{AppPCALip}

We first describe the details of the algorithm studied in Section~\ref{SecTwoPCA} (see Algorithm~\ref{alg:privPCALipExt}).

\begin{algorithm}[h!]
\caption{Private PCA via Lipschitz Extension}
\label{alg:privPCALipExt}
\begin{algorithmic}[1]
\Function{$\mathcal{A}_{PLE}^{(\epsilon)}$}{adjacency matrix $A \in \{0, 1\}^{n \times n}$ based on $G \in \mathcal{G}_n$, privacy parameter $\epsilon$, truncation parameter $D$, approximation error $\gamma$}
    \State Sample $L_\epsilon \sim \mathrm{Lap}\left(\frac{2}{\epsilon}\right)$, and compute $\sigmahat = \left[\frac{\textbf{1}_n^T A\textbf{1}_n}{n} + L_\epsilon\right]_{0}^n$.
    \State Sample $\hat{u}^{0}_2$ from $\exp\left(\frac{\epsilon}{6D^2}\widehat{s}'_{A^2}(v)\right)$ w.r.t. the uniform measure on the unit sphere, where $\widehat{s}'_{A^2}(v) := \widehat{s}_{A^2}(v) - \frac{\sigmahat^2}{n} v^T J_nv$.
    \State Set $\hat{u}_2 = \frac{(I_n - J_n/n)\hat{u}^{0}_2}{||(I_n - J_n/n)\hat{u}^{0}_2||_2}$, so that $\hat{u}_2^T\mathbf{1}_n = 0$.
    \State Construct $\hat{U} = \left[\frac{\mathbf{1}_n}{\sqrt{n}}, \hat{u}_2\right] \in \mathbb{O}(n, 2)$.
    \State Let $\hat{\Theta} \in \mathbf{M}_{n\times 2}$ and $\hat{X} \in \mathbb{R}^{n \times 2}$ be such that 
    \begin{align*}
        ||\hat{\Theta}\hat{X} - \hat{U}||_F^2 \leq (1 + \gamma)\mathop{\inf}\limits_{\Theta' \in \mathbf{M}_{n\times 2}, X' \in \mathbb{R}^{2 \times 2}}||\Theta'X' - \hat{U}||_F^2.
    \end{align*}
    \State Let $\hat{\theta} = \{\hat{\theta}_i\}_{i = 1}^n \in [2]^n$ be given by $\hat{\theta}_i = \mathop{\arg\max}\limits_{j \in [2]}\hat{\Theta}_{ij}$, for all $i \in [n]$.
    \State Return $\hat{\theta}$.
\EndFunction
\end{algorithmic}
\end{algorithm}


\subsection{Clustering by edge flipping}
\label{AppSCFlip}

We now provide the details of the spectral clustering algorithm from \cite{HehEtal22}, which applies spectral clustering after flipping the entries of the observed adjacency matrix.

\begin{definition}[Symmetric edge-flip mechanism~\cite{HehEtal22}]
\label{def:Edge_Flip}
Let $\epsilon > 0$ and let $A \in \{0, 1\}^{n \times n}$ be any adjacency matrix. For $i \in [n]$, let $A_{i*}$ denote the $i^{\text{th}}$ row of $A$, and let $\mathcal{M}_i : \{0, 1\}^n \rightarrow \{0, 1\}$ be such that
\begin{align*}
[\mathcal{M}_i(x)]_j = 
\begin{cases}
 0 & if \quad i \geq j,\\    
  1 - x_j & if \quad  i < j, \quad \textrm{with probability} \quad \frac{1}{1 + e^\epsilon},\\
  x_j & if \quad  i < j, \quad \textrm{with probability} \quad \frac{e^\epsilon}{1 + e^\epsilon}.
\end{cases}
\end{align*}
Let
\begin{align*}
\mathcal{T}(A) &= \begin{bmatrix}
           \mathcal{M}_1(A_{1*}) \\
           \vdots \\
           \mathcal{M}_n(A_{n*})
         \end{bmatrix}.
\end{align*}
Then the $n \times n$ symmetric edge-flipping mechanism is $\mathcal{M}_\epsilon(A) = \mathcal{T}(A) + \mathcal{T}(A)^T$.
\end{definition}
Next, we present the private spectral clustering algorithm from \cite{HehEtal22} (see Algorithm~\ref{alg:EF_Spec_Clus}).

\begin{algorithm}
\caption{EF Spectral Clustering (k-means) from \cite{HehEtal22}}
\label{alg:EF_Spec_Clus}
\begin{algorithmic}[1]
\Function{$\mathcal{A}_{EF}^{(\epsilon)}$}{adjacency matrix $A \in \{0, 1\}^{n \times n}$ based on $G \in \mathcal{G}_n$, number of communities $k$, approximation error $\gamma$, privacy budget $\epsilon$}
    \State Construct the edge-flipped $\mathcal{M}_\epsilon(A)$.
    \State Let $A_{\downarrow} = \begin{cases}
 \mathcal{M}_\epsilon(A) & if \quad \epsilon = \infty,\\    
  \mathcal{M}_\epsilon(A) - (e^\epsilon + 1)^{-1}(\textbf{1}\textbf{1}^T- I_n) & if \quad  \epsilon < \infty.
\end{cases}$
    \State Let $\hat{u}_1, \dots, \hat{u}_k \in \mathbb{R}^n$ be the $k$ leading eigenvectors (by absolute value) of $A_{\downarrow}$.
    \State Let $\hat{\Theta} \in \mathbf{M}_{n\times k}$ and $\hat{X} \in \mathbb{R}^{n \times k}$ be such that 
    \begin{align*}
        ||\hat{\Theta}\hat{X} - U_{\downarrow}||_F^2 \leq (1 + \gamma)\mathop{\inf}\limits_{\Theta' \in \mathbf{M}_{n\times k}, X' \in \mathbb{R}^{k \times k}}||\Theta'X' - U_{\downarrow}||_F^2,
    \end{align*}
    where $U_{\downarrow} = [\hat{u}_1, \dots, \hat{u}_k] \in \mathbb{R}^{n \times k}$.
    \State Let $\hat{\theta} = \{\hat{\theta}_i\}_{i = 1}^n \in [k]^n$ be given by $\hat{\theta}_i = \mathop{\arg\max}\limits_{j \in [k]}\hat{\Theta}_{ij}$, for all $i \in [n]$.
    \State Return $\hat{\theta}$.
\EndFunction
\end{algorithmic}
\end{algorithm}


\subsection{Clustering by convex relaxation}
\label{AppSCOpt}

We now describe the private estimation algorithm based on optimization from \cite{CheEtal23} (see Algorithm~\ref{alg:TC_Optimiz}).

\begin{algorithm}[h!]
\caption{Two-community Private Weak Recovery for SBM}
\label{alg:TC_Optimiz}
\begin{algorithmic}[1]
\Function{$\mathcal{A}_{TC}^{(\epsilon, \delta)}$}{adjacency matrix $A \in \{0, 1\}^{n \times n}$ based on $G \in \mathcal{G}_n$, probability matrix $B$ of the SBM, privacy parameters $\epsilon, \delta$}
\State Construct the rescaled, recentered matrix $Y = \frac{2}{n(B_{11} - B_{12})} \left(A - \frac{(B_{11} + B_{12})}{n} J_n\right)$.
\State Solve the convex optimization problem
\begin{equation*}
\widehat{X} \in \arg\min_{X \in \real^{n \times n}: X \succeq 0, X_{ii} = \frac{1}{n} \; \forall i} \|Y-X\|_F^2.
\end{equation*}
\State Compute the leading unit eigenvector $\widetilde{x}$ of $\widehat{X} + N$, where $N \in \real^{n \times n}$ is a matrix of i.i.d.\ entries distributed as $N\left(0, \frac{96\log(1/\delta)}{n^2\epsilon^2(B_{11} - B_{12})}\right)$.
\State For each $i \in [n]$, assign node $i$ to community $\text{sign}(\widetilde{x}_i)$. Call this $\hat{\theta} \in [2]^n$.
\State Return $\hat{\theta}$.
\EndFunction
\end{algorithmic}
\end{algorithm}


\subsection{Clustering by low-rank matrix estimation}
\label{AppSCLR}

We now provide the private spectral clustering method based on private low-rank matrix estimation, as studied in \cite{hardt2014noisy, d2025tight} (see Algorithm~\ref{alg:Matrix_Estimation_hardt2014}).

\begin{algorithm}[h!]
\caption{Spectral Clustering via Matrix Estimation}
\label{alg:Matrix_Estimation_hardt2014}
\begin{algorithmic}[1]
\Function{$\mathcal{A}_{ME}^{(\epsilon, \delta)}$}{adjacency matrix $A \in \{0, 1\}^{n \times n}$ based on $G \in \mathcal{G}_n$, iteration count $L$, number of communities $k$, approximation error $\gamma$, privacy parameters $\epsilon, \delta$}
    \State Start running PPM from \cite{hardt2014noisy}: Let $X_0$ be a random orthonormal basis.
    \For{$l = 1, \dots, L$}
    \State $Y_l = AX_{l - 1} +G_l$, where $G_l \sim N\left(0, \frac{4kL\log(1/\delta)}{\epsilon^2}\right)^{n \times 2k}$.
    \State Compute the QR factorization $Y_l = X_lR_l$.
    \EndFor
    \State Compute the rank-$2k$ matrix $\hat{A}_{(2k)} = X_{L - 1}Y_L^T \in \mathbb{R}^{n \times n}$. This ends PPM from \cite{hardt2014noisy}.
    \State Compute the SVD: $\hat{A}_{(2k)} = U_{(2k)}D_{(2k)}V_{(2k)}^T$, with $U_{(2k)} \in \mathbb{R}^{n \times 2k}$. Extract the first $k$ columns of $U_{(2k)}$, and denote them by $U_{(k)} \in \mathbb{R}^{n \times k}$.
    \State Let $\hat{\Theta} \in \mathbf{M}_{n\times k}$ and $\hat{X} \in \mathbb{R}^{n \times k}$ be such that 
    \begin{align*}
        ||\hat{\Theta}\hat{X} - U_{(k)}||_F^2 \leq (1 + \gamma)\mathop{\inf}\limits_{\Theta' \in \mathbf{M}_{n\times k}, X' \in \mathbb{R}^{k \times k}}||\Theta'X' - U_{(k)}||_F^2.
    \end{align*}
    \State Define $\hat{\theta} = \{\hat{\theta}_i\}_{i = 1}^n \in [k]^n$ by $\hat{\theta}_i = \mathop{\arg\max}\limits_{j \in [k]}\hat{\Theta}_{ij}$, for all $i \in [n]$.
    \State Return $\hat{\theta}$.
\EndFunction
\end{algorithmic}
\end{algorithm}


\subsection{Clustering by approximate subspace estimation}
\label{AppSCSubspace}

Finally, we provide the details of the spectral clustering algorithm which uses a private approximate subspace method (some details are omitted, which relate to the algorithmic details in the earlier papers~\cite{SinSte21, KamEtal22, mahpud2022differentially}). See Algorithm~\ref{alg:Subspace_Estimation_GoodC}.

\begin{algorithm}[h!]
\caption{Spectral Clustering via Subspace Estimation}
\label{alg:Subspace_Estimation_GoodC}
\begin{algorithmic}[1]
\Function{$\mathcal{A}_{SE}^{(\epsilon, \delta)}$}{adjacency matrix $A \in \{0, 1\}^{n \times n}$ based on $G \in \mathcal{G}_n$ (or weighted adjacency matrix $W \odot A \in \mathbb{R}^{n \times n}$ based on $G \in \mathcal{W}_n$), number of communities $k \asymp 1$, approximation error $\gamma$, number of Gaussian points $q = C'k$ (can choose $0 < C' \asymp 1$ as large as we like), privacy parameters $\epsilon, \delta$, failure error $1 \asymp \zeta \in (0, 1/3)$ s.t. $3k \geq \log(2/\zeta)$, parameters $\rho = \frac{\epsilon^2}{32q\log(1/\delta)}$, $R_{max} = \sqrt{n}\log(n)$, $r_{min} = \frac{1}{n^3} $}
    \State Randomly divide the rows of $A$ (or $W \odot A$) into $t$ chunks $\{A_1, \dots, A_t\}$ (or $\{(W \odot A)_1, \dots, (W \odot A)_t\}$). Let $m = \frac{n}{t}$, so $A_j, (W \odot A)_j \in \mathbb{R}^{m \times n}$. That is, sample uniformly from the set of permutations of $[n]$, and split these permuted indexes sequentially into $t$ groups of size $m$,
    \State Form $\Pi_1, \dots, \Pi_t$. $\Pi_j = U_j^{(k)}(U_j^{(k)})^T \in \mathbb{R}^{m \times m}$, where $A_jA_j^T = U_jD_jU_j^T$ (similarly for $(W \odot A)_j(W \odot A)_j^T$), and $U_j^{(k)} = [U_j^1, \dots, U_j^k] \in \mathbb{R}^{m \times k}$.
    \State Form $\Pi^{(1)}, \dots, \Pi^{(t)} \in \mathbb{R}^{n \times n}$, where $\Pi^{(j)}$ is the projection onto the \emph{row} space of $\Pi_jA_j$ (or $\Pi_j(W \odot A)_j$). Write $\Pi^{(j)} = U^{(j), (k)}(U^{(j), (k)})^T$, where $U^{(j), (k)}$ consists of the first $k$ columns from the matrix of orthonormal eigenvectors from the SVD of $(\Pi_jA_j)^T\Pi_jA_j$ (or $\Pi_j(W \odot A)_j^T(\Pi_j(W \odot A)_j)$).
    \State Sample reference points $z_1, \dots, z_q \stackrel{i.i.d.}{\sim} N(0, I_n)$. Construct $z_i^j = \Pi^{(j)} z_i$.
    \State Let $X_i = \{z_i^{j, grid}\}_{j = 1}^t$, where $z_i^{j, grid}$ is the closest point (in $\ell_2$-norm) on the grid $\left[-\frac{R_{max}}{\sqrt{n}}, \frac{R_{max}}{\sqrt{n}}\right]^n$ with spacing $r_{min}$, to $z_i^{j}$.  
    \State For each $i$, run $\mathrm{GoodCenter}_{X_i, 0, R_{max}, \frac{r_{min}}{2}, \frac{\zeta}{q}, \rho}$ to obtain $\theta^*_i$. 
    \State Compute $r = \frac{\sqrt{\log(n)}}{n^2\sqrt{n}} + \left(\frac{\log^{1/4}(1/\delta)}{\log^{5/2}(n)\sqrt{\epsilon}} + \frac{\sqrt{\log(1/\delta)}}{\log^5(n)\epsilon}\right)\log(n)$.
    \State For each $i$, truncate all $z_i^j$'s to the ball $B(\theta^*_i, r)$. Let $\hat{z}_i = \frac{1}{t} \sum_{j=1}^t z_i^j + N(0, \sigma^2 I_n)$, where $\sigma = \frac{2r}{t\sqrt{2\rho}}$.
    \State Let $\hat{Z} = [\hat{z}_1, \dots, \hat{z}_q] \in \mathbb{R}^{n \times q}$. Write the SVD of $\hat{Z} = \hat{U}\hat{D}\hat{V}^T$. 
    Extract $\hat{U}^{(k)} = [\hat{U}_1, \dots, \hat{U}_k] \in \mathbb{R}^{n \times k}$. Form the projection $\hat{\Pi}$. 
    \State Let $\hat{\Theta} \in \mathbf{M}_{n\times k}$ and $\hat{X} \in \mathbb{R}^{n \times k}$ be such that 
    \begin{align*}
        ||\hat{\Theta}\hat{X} - \hat{U}^{(k)}||_F^2 \leq (1 + \gamma)\mathop{\inf}\limits_{\Theta' \in \mathbf{M}_{n\times k}, X' \in \mathbb{R}^{k \times k}}||\Theta'X' - \hat{U}^{(k)}||_F^2.
    \end{align*}
    \State Let $\hat{\theta} = \{\hat{\theta}_i\}_{i = 1}^n \in [k]^n$ be given by $\hat{\theta}_i = \mathop{\arg\max}\limits_{j \in [k]}\hat{\Theta}_{ij}$, for all $i \in [n]$.
    \State Return $\hat{\theta}$.
\EndFunction
\end{algorithmic}
\end{algorithm}

We also present Algorithm 6 from the supplement of \cite{mahpud2022differentially} and its corresponding utility guarantee (see Algorithm~\ref{alg:GoodC_zCDP}).

\begin{algorithm}[h!]
\caption{Noisy Average and Radius (GoodCenter)}
\label{alg:GoodC_zCDP}
\begin{algorithmic}[1]
\Function{GoodCenter}{set of $t$ points $X \in \mathbb{R}^{n \times t}$, parameters $\theta_0$, $R_{max}$, and $r_{min}$, such that $X \subset B(\theta_0, R_{max})$, and $r_{opt} \geq r_{min}$, where $r_{opt}$ is the radius of the minimum enclosing ball of $X$. Failure parameter $\zeta \in (0, 1)$, privacy parameter $\rho$.}
\State Set $T = \lceil\log_2(R_{max}/r_{min})\rceil + 1$, $Y = \sqrt{\frac{2T\log(4T/\zeta)}{\rho}}$.
\State Set $\sigma_{count}^2, \sigma_{sum}^2 = \frac{T}{\rho}$.
\State Initialize $X_0 \leftarrow X$, $\theta^0 \leftarrow \theta_0$, $n_{cur} \leftarrow n$, and $r_{cur} \leftarrow R_{max}$.
\For{$s = 0, \dots, S - 1$}
    \State $X^s \leftarrow X^s \cap B(\theta^s, r_{cur})$.
    \State Sample $\Delta_{sum} \sim N(0, 4r_{cur}^2\sigma_{sum}^2I_n)$.
    \State $\tilde{\mu}^s \leftarrow \left(\mathop{\sum}\limits_{x \in X^s} x + \Delta_{sum}\right)/n_{cur}$.
    \State $\Delta_{count} = N(0, \sigma_{count}^2)$.
    \If{$\left|X^s \setminus B\left(\tilde{\mu}^s, \frac{r_{cur}}{2}\right)\right| + \Delta_{count} \geq Y$}
    \State Return $B(\theta^s, r_{cur})$.
    \EndIf
    \State Update: $r_{cur} \leftarrow \frac{r_{cur}}{2}$, $n_{cur} \leftarrow n_{cur} - 2Y$, $\theta^{s + 1} \leftarrow \tilde{\mu}^s$ .
    \EndFor
\State Return $B(\theta^S, r_{cur})$.
\EndFunction
\end{algorithmic}
\end{algorithm}

\begin{lemma}[Corollary A.3 in \cite{mahpud2022differentially}]
\label{lemma:cor_a3_supp} 
Let $X = [X_1, \dots, X_t]^{n \times t}$ and $\zeta \in (0, 1)$. Suppose the $t$ points lie on a grid in $[-B, B]^{n \times n}$ of step size $\tau$. Then $\mathrm{GoodCenter}_{X, 0, B\sqrt{n}, \frac{\tau}{2}, \zeta, \rho}$ (Algorithm \ref{alg:GoodC_zCDP}) is $\rho$-zCDP (with respect to a change among the $t$ points). In addition, for
\begin{align*}
t \succsim \sqrt{\frac{\log(Bn/\tau)}{\rho}}\left(\sqrt{n} + \sqrt{\log(Bn/(\tau\zeta))}\right).    
\end{align*}
$\mathrm{GoodCenter}_{X, 0, B\sqrt{n}, \frac{\tau}{2}, \zeta, \rho}$ returns, with probability at least $1 - \zeta$, a ball $B(\theta^*, r^*)$ satisfying: For $X' = X \setminus B(\theta^*, r^*)$, it holds $|X'| \geq \frac{t}{2}$. For $B(\theta_{opt}(X'), r_{opt}(X'))$ the smallest $\ell_2$-ball in $\mathbb{R}^n$ enclosing $X'$, we have $||\theta^* - \theta_{opt}(X')||_2 \leq 6r_{opt}(X')$.  
\end{lemma}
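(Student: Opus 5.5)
The statement bundles a privacy claim and a utility claim, and I would prove them separately. The utility part follows the usual ``halve the radius'' analysis of \cite{mahpud2022differentially}.

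\textbf{Privacy.} I would treat Algorithm~\ref{alg:GoodC_zCDP} as an adaptive composition of at most $T$ rounds. In each round only two quantities depend on the data: the noisy sum and the noisy count. Everything else (the next center $\theta^{s+1}=\tilde\mu^s$, the radius $r_{cur}$, the counter $n_{cur}$) is postprocessing of earlier outputs.

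In round $s$, the points are first intersected with $B(\theta^s,r_{cur})$. Replacing one of the $t$ points therefore changes $\sum_{x\in X^s}x$ by at most $2r_{cur}$ in $\ell_2$. The count $|X^s\setminus B(\tilde\mu^s,r_{cur}/2)|$ changes by at most $1$. This holds even though $\tilde\mu^s$ is data-dependent, because we condition on $\tilde\mu^s$ as an already released output.

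By Lemma~\ref{lemma:Gauss_zCDP}, noise $N(0,4r_{cur}^2\sigma_{sum}^2 I_n)$ with $\sigma_{sum}^2=T/\rho$ gives $\frac{4r_{cur}^2}{2\cdot 4r_{cur}^2T/\rho}=\rho/(2T)$-zCDP. The count noise with $\sigma_{count}^2=T/\rho$ likewise gives $\rho/(2T)$. Fully adaptive composition (Lemma~\ref{lemma:Comp_zCDP} and the remark following it) over at most $T$ rounds yields $\rho$-zCDP, and the returned ball is postprocessing.

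\textbf{Utility.} I would work on a good event $\mathcal{E}$ of probability at least $1-\zeta$, obtained by a union bound over the $T$ rounds with failure probability $\zeta/(2T)$ for each Gaussian. On $\mathcal{E}$, every count noise satisfies $|\Delta_{count}|\le Y/2$, using $Y=\sqrt{2T\log(4T/\zeta)/\rho}$. Every sum noise satisfies
\[
\|\Delta_{sum}\|_2\lesssim r_{cur}\sqrt{T/\rho}\,\bigl(\sqrt n+\sqrt{\log(T/\zeta)}\bigr).
\]
Since $T\asymp\log(Bn/\tau)$, the hypothesis on $t$ makes this at most a small constant times $r_{cur}\,t$. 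It also makes $2TY\le t/4$.

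I would then maintain the invariant that at each round at most $2Y$ points are discarded. If the stopping test fails, then fewer than $3Y/2$ points lie outside $B(\tilde\mu^s,r_{cur}/2)$, so at most that many are lost when passing to the next ball. In total at most $2TY\le t/2$ points are lost, and $n_{cur}\ge t/2$ throughout. Consequently $\tilde\mu^s$ is within $O(r_{cur})\cdot(\text{small})$ of the empirical mean of the current points, which keeps the next ball centered correctly.

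At the stopping round, the retained set $X'$ (read as the points kept in $B(\theta^*,r^*)$; the ``$\setminus$'' in the statement should be an intersection) has $|X'|\ge t/2$. At least $Y/2$ of its points lie outside $B(\tilde\mu^s,r_{cur}/2)$. Together with the near-mean property, this forces $r_{opt}(X')\gtrsim r_{cur}/c$. Meanwhile $\|\theta^*-\theta_{opt}(X')\|_2\le 2r_{cur}$, since $\theta_{opt}(X')$ lies in the convex hull of $X'\subset B(\theta^*,r_{cur})$. Tracking the constants should give the factor $6$.

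If the loop never stops, the radius reaches $r_{min}=\tau/2$. On the grid of spacing $\tau$, the points within the final ball then coincide up to $O(\tau)$, and the claim follows trivially from $r_{opt}\ge r_{min}$.

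\textbf{Main obstacle.} I expect the hardest step to be the lower bound $r_{opt}(X')\gtrsim r_{cur}$ at stopping. It requires arguing that $Y/2$ points far from a near-mean center force a large enclosing radius. I would first try the inequality
\[
r_{opt}^2\ \ge\ \tfrac1{|X'|}\sum_{x\in X'}\|x-\bar x\|_2^2
\]
combined with a counting argument. If that is too lossy when $Y\ll t$, I would fall back on the geometric fact that the minimal enclosing ball of $X'$ must contain both $\bar x$ and the far points.
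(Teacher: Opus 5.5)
The paper gives no proof of this lemma; it imports Corollary~A.3 of \cite{mahpud2022differentially}. Your argument therefore has to stand alone, and as written it has genuine gaps.

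\textbf{Privacy.} Your sensitivity claim is false for the sum that Algorithm~\ref{alg:GoodC_zCDP} actually computes. Intersecting with $B(\theta^s,r_{cur})$ bounds the diameter of $X^s$, not the norms of its elements. If the replaced point lies in the ball and its replacement does not, $\sum_{x\in X^s}x$ changes by the point itself. That change has norm up to $\|\theta^s\|_2+r_{cur}$, which for $s\ge 1$ can exceed $r_{cur}$ by a factor of order $2^s$, so the noise $N(0,4r_{cur}^2\sigma_{sum}^2I_n)$ is not calibrated. You must privatize the centered sum $\sum_{x\in X^s}(x-\theta^s)$, whose summands have norm at most $r_{cur}$, and set $\tilde\mu^s=\theta^s+\bigl(\sum_{x\in X^s}(x-\theta^s)+\Delta_{sum}\bigr)/n_{cur}$. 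With that change, your $\rho/(2T)+\rho/(2T)$ per-round accounting and the composition are correct.

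Centering is also what your near-mean step silently uses. The algorithm divides by the deterministic $n_{cur}=t-2Ys$, not by $|X^s|$. The uncentered estimate is off from the mean $\bar x$ of $X^s$ by $\frac{|X^s|-n_{cur}}{n_{cur}}\|\bar x\|_2$, which is not small relative to $r_{cur}$. The centered one is off by at most $\frac{2Ys}{n_{cur}}r_{cur}+\|\Delta_{sum}\|_2/n_{cur}$.

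\textbf{Utility.} Three steps need repair.

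(i) Since $Y=\sigma_{count}\sqrt{2\log(4T/\zeta)}$, the union bound gives $|\Delta_{count}|<Y$ in all rounds with probability $1-\zeta/2$, not $|\Delta_{count}|\le Y/2$. The Gaussian tail at $Y/2$ is of order $(\zeta/T)^{1/4}$ up to logarithms. So a non-stopping round loses fewer than $2Y$ points, and at stopping you only know that at least one $x\in X^s$ lies outside $B(\tilde\mu^s,r_{cur}/2)$. Your variance inequality is then useless, but your fallback is exactly right. If $\|\tilde\mu^s-\bar x\|_2\le\eta r_{cur}$, then $x$ and $\bar x$ both lie in the minimal enclosing ball of $X'\supseteq X^s$, so $r_{opt}(X')\ge(\tfrac14-\tfrac{\eta}{2})r_{cur}$. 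Your convex-hull remark actually gives $\|\theta^*-\theta_{opt}(X')\|_2\le r_{cur}$, not $2r_{cur}$. That yields the factor $4/(1-2\eta)\le 6$ for $\eta\le 1/6$; with $2r_{cur}$ you would only get $8$.

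(ii) The bound $2TY\le t/4$ does not follow from the displayed hypothesis. One has $TY\asymp T^{3/2}\sqrt{\log(T/\zeta)/\rho}$, while the hypothesis only dominates the sum noise, of order $\sqrt{T/\rho}\,(\sqrt n+\sqrt{\log(Bn/(\tau\zeta))})$. You need the extra assumption $n\succsim T^2\log(T/\zeta)$. It holds in the paper's application ($B=\log n$, $\tau=n^{-3}$), and it is used both for $|X'|\ge t/2$ and for the bias term above.

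(iii) The no-stopping branch is not trivial, because the precondition $r_{opt}\ge r_{min}$ concerns $X$, not $X'$. Reading the loop bound $S$ as $T$, the returned radius is $R_{max}/2^T\le\tau/4$. So $X'$ consists of copies of a single grid point $p$ and $r_{opt}(X')=0$, and the inequality would force $\theta^*=p$. That fails almost surely, since $\theta^*$ is a noisy average.

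This branch does occur. Take $t-1$ copies of $p$ and one adjacent grid point, so $r_{opt}(X)=r_{min}$, and let $\sigma_{count}\gg 1$. Only the outlier can ever be counted, so with probability close to $1$ the test never fires. Hence the conclusion as transcribed is false there. What you can prove in this branch is $\|\theta^*-\theta_{opt}(X')\|_2\le r^*\le r_{min}/2$, i.e.\ the statement with $6\max\{r_{opt}(X'),r_{min}\}$ in place of $6r_{opt}(X')$. That weaker form is all the proof of Lemma~\ref{lemma:Zhat_Z} needs, since it already carries an $r_{min}\sqrt n$ slack.
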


Now we state and prove a matrix estimation result based on \cite{hardt2014noisy}:

\begin{lemma}[Adapted from Corollary 4.6 in \cite{hardt2014noisy}]
\label{lemma:hardt2014_conc_bound}
Let $A \in \mathbb{R}^{n \times n}$ be an adjacency matrix, and let $\gamma_1 = 1 - \frac{\sigma_{k + 1}(A)}{\sigma_k(A)}$. There exists an efficient algorithm (cf.\ $\hat{A}_{(2k)}$ in Algorithm \ref{alg:Matrix_Estimation_hardt2014}) that, given $A \in \mathbb{R}^{n \times n}$ and $k$, returns a rank-$2k$ matrix $\hat{A}_{(2k)} \in \mathbb{R}^{n \times n}$ such that, with probability at least $0.9$, we have
\begin{align*}
    \left\|A - \hat{A}_{(2k)}\right\|_2 \leq \sigma_{k + 1}(A) + O\left(\frac{\sigma_1(A)\sqrt{(k/\gamma_1)n\log(n)\log(\log(n)/\gamma_1)}}{\sigma_k(A) - \sigma_{k + 1}(A)}\cdot \frac{\sqrt{\log(1/\delta)}}{\epsilon}\right).
\end{align*}
In addition, the algorithm is $\frac{\epsilon^2}{4\log(1/\delta)}$-edge zCDP.
\end{lemma}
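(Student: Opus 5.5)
This lemma is an adaptation of Corollary 4.6 of \cite{hardt2014noisy}, so the plan is to reduce both parts to that paper's analysis of the noisy power method (PPM). The privacy part is a direct zCDP accounting. The utility part imports the Hardt--Price convergence bound with the noise level specified in Algorithm~\ref{alg:Matrix_Estimation_hardt2014}.

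\emph{Privacy.} We view the $L$ iterations as an adaptive composition of Gaussian mechanisms. At iteration $l$, condition on $X_{l-1}$; it has orthonormal columns and is a post-processing of earlier releases. The released quantity is $Y_l = AX_{l-1} + G_l$. For edge-adjacent $A, A'$, the difference $A - A'$ is supported on at most two symmetric entries of magnitude $1$. Hence
\begin{equation*}
\|(A-A')X_{l-1}\|_F \le \|A-A'\|_F \|X_{l-1}\|_2 \le \sqrt{2}.
\end{equation*}
So the $\ell_2$-sensitivity of the map $A \mapsto AX_{l-1}$, viewed as a vector in $\mathbb{R}^{2kn}$, is at most $\sqrt{2}$.

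By Lemma~\ref{lemma:Gauss_zCDP}, noise of variance $\sigma^2 = \frac{4kL\log(1/\delta)}{\epsilon^2}$ gives $\rho_l = \frac{2}{2\sigma^2} = \frac{\epsilon^2}{4kL\log(1/\delta)}$-zCDP per iteration. Adaptive composition (Lemma~\ref{lemma:Comp_zCDP} together with the fully adaptive remark) over $L$ steps gives $\frac{\epsilon^2}{4k\log(1/\delta)} \le \frac{\epsilon^2}{4\log(1/\delta)}$-zCDP. The QR step, the final product $X_{L-1}Y_L^T$, and the initial random basis are all post-processing or data-independent, so this bound carries over to $\hat{A}_{(2k)}$.

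\emph{Utility.} We invoke the Hardt--Price guarantee for PPM with block size $p = 2k \ge k$. If the noise matrices satisfy
\begin{equation*}
5\|G_l\|_2 \le \varepsilon'(\sigma_k - \sigma_{k+1})
\quad\text{and}\quad
5\|U_k^T G_l\|_2 \le (\sigma_k - \sigma_{k+1})\frac{\sqrt{p}-\sqrt{k-1}}{\tau\sqrt{n}},
\end{equation*}
and $L \asymp \frac{\sigma_k}{\sigma_k-\sigma_{k+1}}\log(n\tau/\varepsilon') = \frac{1}{\gamma_1}\log(n\tau/\varepsilon')$, then $\|(I - X_LX_L^T)U_k\|_2 \le \varepsilon'$ with probability $0.9$. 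Gaussian matrix concentration (Lemma~\ref{lemma:cor2.7SinSte}) with a union bound over the $L$ steps gives $\|G_l\|_2 \precsim \sigma(\sqrt{n} + \sqrt{\log L})$, and a smaller bound for $U_k^TG_l$. Plugging these in determines
\begin{equation*}
\varepsilon' \asymp \frac{\sigma\sqrt{n\log n}}{\sigma_k - \sigma_{k+1}},
\end{equation*}
and substituting $\sigma$ and $L \asymp \gamma_1^{-1}\log(\log(n)/\gamma_1)$ (up to the $\log$ factors) yields the $\sqrt{(k/\gamma_1)n\log(n)\log(\log n/\gamma_1)}$ term.

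It remains to convert subspace accuracy into spectral-norm accuracy. Since $\hat{A}_{(2k)} = X_{L-1}Y_L^T = X_{L-1}X_{L-1}^TA + X_{L-1}G_L^T$, we write
\begin{equation*}
A - \hat{A}_{(2k)} = (I - X_{L-1}X_{L-1}^T)A - X_{L-1}G_L^T.
\end{equation*}
We split $A$ along $U_k$ and its complement. The part of $A$ orthogonal to $U_k$ contributes at most $\sigma_{k+1}(A)$. The $U_k$ part contributes at most $\sigma_1(A)\,\|(I - X_{L-1}X_{L-1}^T)U_k\|_2 \le \sigma_1\varepsilon'$. The noise term contributes $\|G_L\|_2$, which is dominated by $\sigma_1\varepsilon'$ since $\sigma_1 \ge \sigma_k - \sigma_{k+1}$. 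This produces exactly the claimed form $\sigma_{k+1} + O(\sigma_1\varepsilon')$.

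\emph{Main obstacle.} The hardest step is checking that the Hardt--Price convergence hypotheses hold with our noise level and iteration count, in particular the $U_k^TG_l$ condition and the choice of $L$. This must be done carefully enough to recover the stated logarithmic factors. I would take their Corollary 4.6 essentially verbatim, re-expressing their $(\epsilon,\delta)$ noise calibration in our zCDP scaling.
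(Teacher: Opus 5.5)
Your proposal follows the paper's proof: privacy via the Gaussian mechanism and adaptive zCDP composition over the $L$ iterations, and utility by following Hardt--Price's Corollary 4.6 with $L \asymp \log(n)/\gamma_1$, which the paper also does without further detail. Your whole-block sensitivity bound $\|(A-A')X_{l-1}\|_F \le \sqrt{2}$ is actually sharper than the paper's per-column bound $\|A-A'\|_2 \le 1$ composed over $2k$ columns; it gives $\frac{\epsilon^2}{4k\log(1/\delta)}$-zCDP, which implies the claim.

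There is one bookkeeping slip in the log factors. Your own requirement $L \succsim \gamma_1^{-1}\log(n\tau/\varepsilon')$ forces $L \asymp \log(n)/\gamma_1$, not $\gamma_1^{-1}\log(\log(n)/\gamma_1)$, which is too few iterations for convergence. So the $\sqrt{\log n}$ factor enters through $\sqrt{L}$ in the noise scale $\sigma$. The $\sqrt{\log(\log(n)/\gamma_1)} \asymp \sqrt{\log L}$ factor comes from controlling $\max_{l \le L}\|G_l\|_2$. This is the reverse of your attribution, although the final expression is unchanged.
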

\begin{proof}
We start with the privacy guarantee. Consider the mechanism PPM from \cite{hardt2014noisy} (see Algorithm \ref{alg:Matrix_Estimation_hardt2014}), executed with each $G_{l}$ sampled independently as $G_l \sim N(0, \sigma^2)^{n \times p}$, with $\sigma^2 = \frac{4kL\log(1/\delta)}{\epsilon^2}$. Let $A$ and $A'$ be edge-adjacent and differing in entries $(i, j)$ and $(j, i)$. Then $(A - A')(A - A')^T$ is the diagonal matrix with a $1$ in entries $(i, i)$ and $(j, j)$, and $0$ everywhere else. Hence, we have $||A - A'||_2 \leq 1$. At step $l$ of the algorithm, for fixed $X_{l - 1}$, the $\ell_2$-sensitivity of $AX_{l - 1, s}$, where $X_{l - 1, s}$, the $s^{\text{th}}$ column of $X_{l - 1}$, is $1$. Since we have $2k$ columns and $L$ iterations in total, using the Gaussian mechanism from Lemma \ref{lemma:Gauss_zCDP} and the composition of zCDP mechanisms for Lemma \ref{lemma:Comp_zCDP} (and the adaptive version), we obtain the desired privacy guarantee.

For the utility analysis, choose $L \asymp \log(n)/\gamma_1$. Following the lines of the proof of Corollary 4.6 in \cite{hardt2014noisy}, we obtain the desired utility.
\end{proof}

\section{Proofs for Section~\ref{SecPPCA}}
\label{AppPPCA}

\subsection{Proof of Lemma~\ref{LemPCAScore}}
\label{AppLemPCAScore}

Note that
\begin{equation*}
\Tr(V^TA^2V) = \|AV\|_F^2, \qquad \Tr(V^T(A')^2V) = \|A'V\|_F^2.
\end{equation*}
Thus,
\begin{equation*}
|\Tr(V^TA^2V) - \Tr(V^T(A')^2V)| = \left|\|AV\|_F^2 - \|A'V\|_F^2\right|.
\end{equation*}
Furthermore,
\begin{equation*}
\|AV\|_F^2 \le \|A\|_2^2 \|V\|_F^2 = q\|A\|_2^2 \le q\|A\|_1 \|A\|_\infty \le qD^2,
\end{equation*}
and similarly for $\|A'V\|_F^2$. Since both quantities are nonnegative, we see that
\begin{equation*}
\left|\|AV\|_F^2 - \|A'V\|_F^2\right| \le qD^2,
\end{equation*}
as claimed.


\subsection{Proof of Lemma~\ref{LemExpPCA}}
\label{AppLemExpPCA}

We will show that the global sensitivity of $\widehat{s}_{A^2}(v)$ is bounded by $(q+2)D^2$. Consider two node-adjacent adjacency matrices $(A,A')$, and note that any matrix $C$ which is feasible in the LP corresponding to $\widehat{s}_{A^2}(v)$ can be made into a matrix $C'$ feasible in the LP corresponding to $\widehat{s}_{A'^2}(v)$ by zeroing out at most one row and column. Note that
\begin{equation*}
s_C(V) - s_{C'}(V) = \Tr(V^T(C-C')V) + \Tr((C-C')J_n).
\end{equation*}
The latter term is the sum of the entries in a single row/column of $C-C'$, and since $\|C\|_\infty \le D^2$, it must be bounded by $2D^2$. Furthermore, consider $W=W^T$ and $Z \succeq 0$, with spectral decomposition $W = HLH^T = \mathop{\sum}\limits_{i = 1}^n L_{ii}H_iH_i^T$. Since $H_iZH_i^T \geq 0$, for all $i \in [n]$, and 
\begin{align*}
\Tr(WZ) = \sum_{i = 1}^n \Tr(L_{ii}ZH_iH_i^T) = \sum_{i = 1}^n L_{ii}H_iZH_i^T \leq ||W||_2 \sum_{i}\Tr(HZH^T) = ||W||_2\Tr(Z), 
\end{align*}
we have
\begin{align*}
\Tr(V^T(C-C')V) & = \Tr((C-C')VV^T) \le \Tr(VV^T) \|C-C'\|_2 \\
& \le \Tr(V^TV) \|C-C'\|_\infty \le q \|C\|_\infty \le qD^2.
\end{align*}
Altogether, we have
\begin{equation*}
s_C(V) - s_{C'}(V) \le (q+2)D^2,
\end{equation*}
so by taking a supremum over feasible matrices $C$, we clearly have
\begin{equation*}
\widehat{s}_{A^2}(V) \le \widehat{s}_{A'^2}(V) + (q+2)D^2.
\end{equation*}
A similar inequality holds in the opposite direction, implying the desired sensitivity bound.

Applying the exponential mechanism to the score function $\widehat{s}_{A^2}(V)$ then gives the desired result.

\subsection{Proof of Corollary \ref{cor:ExpPCA_Cor}}
\label{AppExpPCA_Cor}

Fix $M \in \mathbb{R}^{n \times n}$. By the proof of Lemma \ref{LemExpPCA}, we have $
|\widehat{s}_{A^2}(V) - \widehat{s}_{A'^2}(V)| \le (q+2)D^2$. Since
\begin{equation*}
|\widehat{s}_{A^2}(V) + V^TMV - \widehat{s}_{A'^2}(V) - V^TMV| \le (q+2)D^2,
\end{equation*}
the conclusion follows.


\subsection{Proof of Theorem~\ref{theorem:Lip_PCA_Main}}
\label{AppLipPCA}

Regarding the privacy guarantee, note that $\frac{\textbf{1}_n^T A\textbf{1}_n}{n}$ has sensitivity at most $2$, so $\sigmahat$ is $(\epsilon, 0)$-DP. Next, for any fixed $\sigma^2$, sampling from $\exp\left(\frac{\epsilon}{6D^2} (\widehat{s}_{A^2}(v) - \frac{\sigma^2}{n}v^TJ_nv)\right)$ is $(\epsilon, 0)$-node DP, by Corollary \ref{cor:ExpPCA_Cor}. Hence, by adaptive composition, the vector $\hat{u}_2^0$ is $(2\epsilon, 0)$-node DP, and by post-processing, the vector $\hat{u}_2$ is $(2\epsilon, 0)$-node DP, as well. Finally, by post-processing again, the matrix $\hat{U}$ and all derived quantities satisfy $(2\epsilon, 0)$-node DP.

Regarding utility, we have the following lemma, proved in Appendix~\ref{AppLemRegEvent}:

\begin{lemma}
\label{lemma:reg_event_PCA}
Under the assumptions of Theorem \ref{theorem:Lip_PCA_Main}, there exists an event $\Omega$ with $\mathbb{P}(\Omega)\ge 1 - \frac{1}{\mathrm{poly}(n)}$, such that on $\Omega$, we have $||A - P||_2 \precsim \sqrt{d}$, $|\sigmahat^2-\sigma_1^2(P)| \precsim d\sqrt{d}$, and $\widehat{s}_{A^2}(v) = s_{A^2}(v)$, for every $v$.
\end{lemma}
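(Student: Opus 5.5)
We prove Lemma~\ref{lemma:reg_event_PCA} by defining $\Omega = \Omega_1 \cap \Omega_2 \cap \Omega_3$, where $\Omega_1$ is the event $\|A - \E[A]\|_2 \le C\sqrt{d}$, $\Omega_2$ is the event that every degree is below $2d$, and $\Omega_3$ is the event $|L_\epsilon| \le \log n$. We then show that each of the three claims holds on $\Omega$.

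By Lemma~\ref{lemma:thm5.2LeiRin}, $\Omega_1$ has probability at least $1 - n^{-r}$. This lemma applies because $n\max_{ij}\E[A]_{ij} \le d$ and $d \succsim \log n$ under Assumption~\ref{AssSBM}. Since $P = \E[A] + B_{11}I_n$ and $B_{11} \le a_n\max(B_0) = o(1)$, the triangle inequality gives $\|A - P\|_2 \le C\sqrt{d} + 1 \precsim \sqrt{d}$. By Lemma~\ref{lemma:hp_removal_d}, $\Omega_2$ has probability $1 - 1/\mathrm{poly}(n)$. Since $L_\epsilon \sim \mathrm{Lap}(2/\epsilon)$, we have $\mprob(|L_\epsilon| > \log n) = e^{-\epsilon\log n/2} \le n^{-1/2}$ once $\epsilon \ge 1$. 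This holds because $\epsilon > D^2\log n$, and in fact the tail is superpolynomially small. A union bound gives $\mprob(\Omega) \ge 1 - 1/\mathrm{poly}(n)$.

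On $\Omega_2$, every degree is at most $2d < 3d \le D$, so $\|A\|_\infty \le D$. The discussion after \eqref{EqnLipLP} then applies. $C = A^2$ is feasible, because $\|A^2\|_\infty \le D^2$ and the entrywise constraints hold trivially. The objective coefficients $VV^T + J_n$ are entrywise nonnegative, and every feasible $C$ satisfies $C \le A^2$ entrywise, so the maximum is attained at $C = A^2$. Hence $\widehat{s}_{A^2}(v) = s_{A^2}(v)$ for every unit $v$, which is the third claim.

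For the eigenvalue estimate, we first recall that $\sigma_1(P) = \mathbf{1}^T P\mathbf{1}/n$ is the common row sum of $P$, so $\sigma_1(P) \asymp d$. The empirical average degree satisfies $\frac{\mathbf{1}^T A\mathbf{1}}{n} - \sigma_1(P) = \frac{\mathbf{1}^T(A - P)\mathbf{1}}{n}$, and this is at most $\|A - P\|_2 \precsim \sqrt{d}$ in absolute value on $\Omega_1$. Adding $|L_\epsilon| \le \log n \le \sqrt{d}$, which holds because $d > \sqrt{n}\log^2 n$, gives $\bigl|\frac{\mathbf{1}^T A\mathbf{1}}{n} + L_\epsilon - \sigma_1(P)\bigr| \precsim \sqrt{d}$. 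Truncation to $[0,n]$ does not increase the error, since $\sigma_1(P) \in [0,n]$. Therefore $|\sigmahat - \sigma_1(P)| \precsim \sqrt{d}$, and $\sigmahat \le \sigma_1(P) + O(\sqrt{d}) \precsim d$. It follows that
\[
|\sigmahat^2 - \sigma_1^2(P)| = |\sigmahat - \sigma_1(P)|\,(\sigmahat + \sigma_1(P)) \precsim \sqrt{d}\cdot d = d\sqrt{d}.
\]

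The main obstacle is bookkeeping rather than depth. We need to check that $P$ differs from $\E[A]$ only by the diagonal, at cost $O(1)$, and that the Laplace noise is dominated by $\sqrt{d}$. The latter needs $\epsilon \gtrsim 1$, which is guaranteed by $\epsilon > D^2\log n$.
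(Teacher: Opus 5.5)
Your proposal is correct and follows essentially the same route as the paper. It uses the same three events: spectral concentration via Lemma~\ref{lemma:thm5.2LeiRin}; the degree bound of Lemma~\ref{lemma:hp_removal_d}, which makes $C=A^2$ optimal in the LP; and a Laplace tail bound. It then uses the same factorization $|\sigmahat^2-\sigma_1^2(P)| = |\sigmahat-\sigma_1(P)|\,(\sigmahat+\sigma_1(P))$.

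The differences are minor:
\begin{itemize}
\item You threshold the noise at $\log n$ and absorb it into $\sqrt{d}$, whereas the paper thresholds at $2\log(n)/\epsilon < 1/n$, using $\epsilon > n\log n$.
\item You handle the clipping to $[0,n]$ as a nonexpansive projection, whereas the paper argues the clipping is inactive because the unclipped value is $\asymp d = o(n)$.
\item You account explicitly for the $O(1)$ diagonal gap between $\E[A]$ and $P$.
\end{itemize}
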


To reason about $\hat{u}_2^0$ and $\hat{u}_2$, we use following lemma, proved in Appendix~\ref{AppTopEvec}:

\begin{lemma}
\label{lemma:conc_top_evec_PCA}
Let $M = A^2+n^2I_n-\frac{\sigmahat^2}{n}J_n$, and let $u_2$ be a leading unit eigenvector of $M$. Then the vector $\hat u_2^0$ sampled in Algorithm \ref{alg:privPCALipExt} satisfies $\mathbb{P}\left(|(\hat u_2^0)^Tu_2|^2 > 1 - \frac{D^2}{\epsilon}\right) \geq 1 - \frac{1}{\mathrm{poly}(n)}$.
\end{lemma}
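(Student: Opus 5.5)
The plan is to work on the event $\Omega$ of Lemma~\ref{lemma:reg_event_PCA}, on which $\widehat{s}_{A^2}(v)=s_{A^2}(v)$ for all unit $v$, and to reduce the sampler to a Bingham distribution. On $\Omega$, for unit $v$ we have $v^TA^2v+\Tr(A^2J_n)-\frac{\sigmahat^2}{n}v^TJ_nv = v^TMv - n^2 + \Tr(A^2J_n)$. The added terms are constant in $v$, so the sampled density is proportional to $\exp(\beta\, v^TMv)$ with $\beta=\frac{\epsilon}{6D^2}$. The shift $n^2I_n$ is harmless and only makes the top eigenvalue $\lambda_1(M)$ the relevant one. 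The complement of $\Omega$ costs only $1/\mathrm{poly}(n)$.

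Next I will establish a spectral gap $\lambda_1(M)-\lambda_2(M)\gtrsim d^2$. At the population level, $P$ has eigenvalues $\sigma_1(P)\asymp d$ (eigenvector $\mathbf{1}_n/\sqrt n$) and $\sigma_2(P)=\frac{n}{2}(B_{11}-B_{12})\asymp d$ by Assumption~\ref{assump_same_B11B22}. Hence $P^2-\frac{\sigma_1^2(P)}{n}J_n$ has top eigenvalue $\sigma_2(P)^2\asymp d^2$ and all others zero. To transfer this to $M$:
\begin{itemize}
\item write $A^2-P^2=(A-P)A+P(A-P)$, so that on $\Omega$ we get $\|A^2-P^2\|_2\precsim \sqrt d\cdot d$;
\item use $|\sigmahat^2-\sigma_1^2(P)|\precsim d^{3/2}$, which bounds the $J_n$-term perturbation by $d^{3/2}$ as well.
\end{itemize}
By Weyl's inequality (Lemma~\ref{LemWeyl}) this gives $\lambda_1(M)-\lambda_2(M)\ge c d^2 - C d^{3/2}\gtrsim d^2$. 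I also take $u_2$ to be the top eigenvector of $M$.

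The main step is a Bingham concentration bound. Let $\eta=D^2/\epsilon$ and $S_{\rm bad}=\{v:|v^Tu_2|^2\le 1-\eta\}$. For $v\in S_{\rm bad}$ we have $v^TMv\le\lambda_1-\eta(\lambda_1-\lambda_2)$. For $v$ in the cap $S_{\rm good}=\{v:|v^Tu_2|^2\ge 1-\eta/2\}$ we have $v^TMv\ge \lambda_1-\frac{\eta}{2}\cdot 2\|M\|_2$; this is too lossy. I will instead take a thinner cap $\{|v^Tu_2|^2\ge 1-\tau\}$ with $\tau=\eta\,\mathrm{gap}/(4\|M\|_2)$, where $\|M\|_2\le 3n^2$. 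On that cap the score deficit is at most $\frac{\eta}{2}\mathrm{gap}$. Then
\begin{equation*}
\mprob(S_{\rm bad})\le \exp\!\left(-\tfrac{\beta\eta}{2}\mathrm{gap}\right)\cdot\frac{\mathrm{vol}(S^{n-1})}{\mathrm{vol}(\text{cap of radius }\sqrt\tau)}\le \exp\!\left(-c\,d^2 + C n\log(1/\tau)\right),
\end{equation*}
using $\beta\eta=1/6$ and a cap volume of at least $\tau^{(n-1)/2}$ times a polynomial factor. Since $\epsilon<nD^2$ gives $\eta>1/n$, we get $\log(1/\tau)\precsim\log n$. Moreover $d>\sqrt n\log^2 n$ gives $d^2\gg n\log n$, so the bound is $e^{-\Omega(n\log^3 n)}$, far below $1/\mathrm{poly}(n)$.

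I expect the main obstacle to be exactly this balancing of the volume (entropy) factor $e^{O(n\log n)}$ against the energy gain $\beta\eta\,\mathrm{gap}\asymp d^2$. It is here that the hypothesis $d\succsim\sqrt n\log^2 n$ is consumed, and the constants in the gap must be tracked carefully enough that the leftover $\log^3 n$ margin survives.
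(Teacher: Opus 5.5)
Your proposal is correct and follows essentially the same route as the paper. On the regular event the sampler reduces to a Bingham distribution in $M$, and the gap $\lambda_1(M)-\lambda_2(M)\gtrsim d^2$ comes from perturbing $P^2+n^2I_n-\frac{\sigma_1^2(P)}{n}J_n$ by $\|A^2-P^2\|_2+|\widehat{\sigma}^2-\sigma_1^2(P)|\precsim d^{3/2}$. The Chaudhuri et al.\ cap-ratio bound is then applied with an inner cap of width $\asymp \frac{D^2}{\epsilon}\cdot\frac{\lambda_1(M)-\lambda_2(M)}{\|M\|_2}$, which is the paper's choice $1-\tau^2=\frac{1-\rho^2}{2}\bigl(1-\sigma_2(M)/\sigma_1(M)\bigr)$ up to constants; your bookkeeping is, if anything, slightly cleaner, since you keep the gap at order $d^2$ rather than weakening it to $d^2/\log n$.
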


Let $1 - \rho^2 = \frac{D^2}{\epsilon}$, as in the proof of Lemma \ref{lemma:conc_top_evec_PCA}, and let $\Omega$ be as in Lemma \ref{lemma:reg_event_PCA}. Let $u_{2, P}$ be the second orthonormal eigenvector of $P$. We will control $|u_{2, P}^T\hat{u}_2|$. First, let $u_2^*$ be the top eigenvector of $A^2 - \frac{\sigma_1^2(P)}{n}J_n$. We check the eigengap condition using the following result, proved in Appendix~\ref{AppAsigma}:

\begin{lemma}
\label{lemma:Asigma_A_sigmahat}
Let $A_1 = A^2 - \frac{\sigma_1^2}{n} \textbf{1}_n\textbf{1}_n^T$ and $A_0 = A^2 - \frac{\sigmahat_1^2}{n} \textbf{1}_n\textbf{1}_n^T$. Let $u_2^*$ be the top eigenvector of $A_1$, and let $u_2$ be the top eigenvector of $A_0$. Then
\begin{equation*}
\mathop{\inf}_{q: q^2 = 1}||qu_2 - u_2^*||_2 \precsim \frac{|\sigmahat_1^2 - \sigma_1^2|}{\lambda_1(A_1) - \lambda_2(A_1)}.
\end{equation*}
\end{lemma}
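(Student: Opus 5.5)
This is a direct application of the Davis–Kahan theorem for symmetric matrices (Lemma~\ref{lemma:thm3YuEtal_square}) with $r=s=1$. We take $A_1$ as the reference matrix and $\hat{A} = A_0$ as its perturbation. The perturbation is rank one and explicit:
\begin{equation*}
A_0 - A_1 = \frac{\sigma_1^2 - \sigmahat_1^2}{n}\,\textbf{1}_n\textbf{1}_n^T .
\end{equation*}
Since $\|\textbf{1}_n\textbf{1}_n^T\|_2 = \|\textbf{1}_n\textbf{1}_n^T\|_F = n$, both the spectral and Frobenius norms of the perturbation equal $|\sigmahat_1^2 - \sigma_1^2|$.

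With $r=s=1$ the block has dimension $d=1$. The relevant eigengap is
\begin{equation*}
\min\{\lambda_0 - \lambda_1, \lambda_1 - \lambda_2\} = \lambda_1(A_1) - \lambda_2(A_1),
\end{equation*}
because $\lambda_0 = \infty$ by convention. We assume this gap is positive, as is implicit in the statement; otherwise the right-hand side is infinite and there is nothing to prove.

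The second part of Lemma~\ref{lemma:thm3YuEtal_square} supplies an orthogonal $1\times 1$ matrix $\hat{Q}$, that is, a scalar $q \in \{\pm 1\}$, such that
\begin{equation*}
\|q u_2 - u_2^*\|_2 \le \frac{2^{3/2}\,|\sigmahat_1^2 - \sigma_1^2|}{\lambda_1(A_1) - \lambda_2(A_1)}.
\end{equation*}
Here we set $V = u_2^*$ and $\hat{V} = u_2$. Taking the infimum over $q^2 = 1$ yields the claim with constant $2^{3/2}$.

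The only point needing care is matching conventions. The lemma uses ordered eigenvalues, and "top eigenvector" must mean the eigenvector of the largest algebraic eigenvalue, not the largest in absolute value. This is consistent with how $u_2$ is used in Lemma~\ref{lemma:conc_top_evec_PCA}: there $u_2$ is the leading eigenvector of $M = A_0 + n^2 I_n$, which has the same eigenvectors as $A_0$, and the shift by $n^2 I_n$ makes all eigenvalues positive so the two orderings agree. With this convention there is no real obstacle. Verifying that the gap $\lambda_1(A_1) - \lambda_2(A_1)$ is of order $d^2$ on $\Omega$ is left to the subsequent argument.
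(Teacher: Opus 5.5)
Your proposal is correct and follows the paper's proof: both use the explicit rank-one perturbation to get $\|A_0 - A_1\|_2 = |\widehat{\sigma}_1^2 - \sigma_1^2|$, then apply the square-matrix Davis--Kahan bound (Lemma~\ref{lemma:thm3YuEtal_square}) with $r = s = 1$, using that the $\ell_2$ and Frobenius norms agree for vectors. Your remarks on positivity of the eigengap and on algebraic versus absolute ordering of eigenvalues (reconciled through $M = A_0 + n^2 I_n \succeq 0$) are accurate, and they make explicit what the paper leaves implicit.
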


On $\Omega$, we have $|\sigmahat^2 - \sigma_1^2(P)| \precsim d\sqrt{d}$. Also, by Lemma \ref{LemWeyl}, we have 
\begin{align*}
&\lambda_1\left(A^2 - \frac{\sigma_1^2(P)}{n}J_n\right) \geq \sigma_2^2(P) - ||A^2 - P^2||_2 \succsim d^2,\\
&\lambda_1\left(A^2 - \frac{\sigma_1^2(P)}{n}J_n\right) - \lambda_2\left(A^2 - \frac{\sigma_1^2(P)}{n}J_n\right) \geq \sigma_2^2(P) - 2||A^2 - P^2||_2 \succsim d^2.
\end{align*}
Thus, by Lemma \ref{lemma:Asigma_A_sigmahat}, we have on $\Omega$ that $||u_2 - q_1u_2^*||_2 \leq\frac{c_1}{\sqrt{d}}$, for some $0 < c_1 \asymp 1$, with $q_1^2 = 1$. If $q_1 = -1$, we can replace $u_2^*$ by $-u_2^*$, since we can pick any top unit singular vector of $A^2 - \frac{\sigma_1^2(P)}{n}J_n$. We thus have $||u_2 - u_2^*||_2 \leq\frac{c_1}{\sqrt{d}}$. We now derive the following result, proved in Appendix~\ref{AppASigmaP}:

\begin{lemma}
\label{lemma:A_sigma_P_sigma}
Let $\frac{\mathbf{1}_n}{\sqrt{n}}$ be the top eigenvector of $P$, with top eigenvalue given by $\sigma_1$. Let $A_1 = A^2 - \frac{\sigma_1^2}{n} \textbf{1}_n\textbf{1}_n^T$ and $P_1 = P^2 - \frac{\sigma_1^2}{n} \textbf{1}_n\textbf{1}_n^T$. Let $u_2^*$ be the top eigenvector of $A_1$, and let $u_{2, P}$ be the top eigenvector of $P_1$. Then
\begin{equation*}
\mathop{\inf}_{q: q^2 = 1}||qu_2^* - u_{2, P}||_2 \precsim \frac{||A^2 - P^2||_2}{\sigma_2^2(P)}.
\end{equation*}
\end{lemma}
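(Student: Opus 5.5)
The plan is to apply the Davis--Kahan bound for symmetric matrices (Lemma~\ref{lemma:thm3YuEtal_square}) with $r=s=1$ to the pair $(P_1, A_1)$. Note that $A_1 - P_1 = A^2 - P^2$, because the rank-one correction $\frac{\sigma_1^2}{n}\mathbf{1}_n\mathbf{1}_n^T$ is identical in both matrices. The lemma then yields
\begin{equation*}
\inf_{q^2=1}\|qu_2^* - u_{2,P}\|_2 \le \frac{2^{3/2}\|A^2-P^2\|_2}{\lambda_1(P_1)-\lambda_2(P_1)},
\end{equation*}
since for $d=1$ the orthogonal matrix $\hat Q$ is simply a sign $q$. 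What remains is to lower bound the eigengap of $P_1$ by a constant multiple of $\sigma_2^2(P)$.

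To compute the spectrum of $P_1$, I use that $P$ is symmetric and, under Assumption~\ref{assump_same_B11B22}, has rank $2$. Its eigenpairs are $(\sigma_1, \mathbf{1}_n/\sqrt{n})$ and $(\lambda_2, u_{2,P})$, where $u_{2,P}\perp\mathbf{1}_n$ is the $\pm 1/\sqrt n$ community indicator. The eigenvalues are
\begin{equation*}
\sigma_1 = \tfrac{n}{2}(B_{11}+B_{12}), \qquad \lambda_2 = \tfrac{n}{2}(B_{11}-B_{12}) > 0.
\end{equation*}
It follows that $P^2 = \sigma_1^2\frac{\mathbf{1}_n\mathbf{1}_n^T}{n} + \lambda_2^2 u_{2,P}u_{2,P}^T$, and subtracting the first term gives
\begin{equation*}
P_1 = \sigma_2^2(P)\, u_{2,P}u_{2,P}^T.
\end{equation*}
This has top eigenvalue $\sigma_2^2(P)$ with eigenvector $u_{2,P}$, and all other eigenvalues equal to $0$. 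The eigengap is therefore exactly $\sigma_2^2(P)$, and substituting this into the displayed Davis--Kahan bound gives the claim with constant $2^{3/2}$.

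The one point needing care is that the normalization in the statement refers to $\sigma_1$ as the top eigenvalue of $P$, not of $\E[A]$. This guarantees the rank-one subtraction exactly kills the $\mathbf{1}_n$ direction of $P^2$. I must also make sure $\sigma_2(P)=\lambda_2>0$, so that $u_{2,P}$ is well defined; this follows from $B_{11}>B_{12}$. Beyond that the argument is a direct substitution, and the main work (bounding $\|A^2-P^2\|_2$ relative to $\sigma_2^2(P)\asymp d^2$) is deferred to its use in the proof of Theorem~\ref{theorem:Lip_PCA_Main}.
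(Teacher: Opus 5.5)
Your proposal is correct and follows the paper's proof. It applies the Davis--Kahan bound of Lemma~\ref{lemma:thm3YuEtal_square} to the pair $(P_1, A_1)$, uses $A_1 - P_1 = A^2 - P^2$, and takes the eigengap of $P_1$ to be $\sigma_2^2(P) - 0$ because $\mathrm{rank}(P)=2$. Your explicit identity $P_1 = \sigma_2^2(P)\, u_{2,P}u_{2,P}^T$ just spells out the spectral fact the paper states in one line.
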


By Lemma \ref{lemma:A_sigma_P_sigma}, we have, on $\Omega$, that
\begin{align*}
\mathop{\inf}_{q: q^2 = 1}||qu_2^* - u_{2, P}||_2 \precsim \frac{||A^2 - P^2||_2}{\sigma_2^2(P)} \precsim \frac{d\sqrt{d}}{d^2} \precsim \frac{1}{\sqrt{d}}.
\end{align*}
Hence, on $\Omega$, we have $||u_2^* - q_2u_{2, P}||_2 \leq \frac{c_2}{\sqrt{d}}$, for some $c_2 \asymp 1$ and $q_2^2 = 1$. If $q_2 = 1$, we can pick $-u_{2, P}$ instead of $u_{2, P}$. Hence, we have $||u_2^* +u_{2, P}||_2 \leq \frac{c_2}{\sqrt{d}}$. Thus, since
\begin{equation*}
||(I_n - J_n/n)\hat{u}_2^0||_2 \leq ||I_n - J_n/n||_2||\hat{u}_2^0||_2 \leq 1,
\end{equation*}
we have 
\begin{align*}
\mathbb{P}\left(|\hat{u}_2^Tu_{2, P}| \leq \rho'\right) & \leq \mathbb{P}\left(|u_{2, P}^T(I_n - J_n/n)\hat{u}_2^0| \leq \rho'\right) \leq \mathbb{P}\left(|(\hat{u}_2^0)^Tu_{2, P}| - |\hat{u}_2^TJ_nu_{2, P}|/n \leq \rho'\right) \\
& = \mathbb{P}(|(\hat{u}_2^0)^Tu_{2, P}| \leq \rho'),
\end{align*}
since $u_{2, P}$ and $\frac{\mathbf{1}_n}{\sqrt{n}}$ are the top orthonormal eigenvectors of $P$, so $u_{2, P}^TJ_n = 0$. Now, for $\rho' = \rho - (c_1 + c_2)/\sqrt{d}$, we have
\begin{align*}
\mathbb{P}\left(|\hat{u}_2^Tu_{2, P}| \leq  \rho'\right) & \leq \mathbb{P}\left(|u_2^T\hat{u}_2^0| \leq \rho' + |(u_2^* - u_2)^T\hat{u}_2^0| + |(u_2^* + u_{2, P})^T\hat{u}_2^0|\right)\\
&\leq \mathbb{P}\left(|u_2^T\hat{u}_2^0| \leq \rho' + ||u_2^* - u_2||_2 + ||u_2^* + u_{2, P}||_2\right)\\
&\leq \mathbb{P}\left(|u_2^T\hat{u}_2^0| \leq \rho' + ||u_2^* - u_2||_2 + ||u_2^* + u_{2, P}||_2|\Omega\right)\mathbb{P}(\Omega) + \frac{1}{\mathrm{poly}(n)}\\
&\leq \mathbb{P}\left(|u_2^T\hat{u}_2^0| \leq \rho' +(c_1 + c_2)/\sqrt{d}|\Omega\right)\mathbb{P}(\Omega) + \frac{1}{\mathrm{poly}(n)}\\
&\leq \mathbb{P}\left(|u_2^T\hat{u}_2^0| \leq \rho' +(c_1 + c_2)/\sqrt{d}\right) + \frac{1}{\mathrm{poly}(n)}\\
&= \mathbb{P}\left(|u_2^T\hat{u}_2^0| \leq \rho\right) + \frac{1}{\mathrm{poly}(n)} \leq \frac{1}{\mathrm{poly}(n)},
\end{align*}
by Lemma \ref{lemma:conc_top_evec_PCA}. Let $U = \left[\mathbf{1}_n/\sqrt{n}, u_{2, P}\right]$. Then 
\begin{align*}
||\hat{U}\hat{U}^T - UU^T||_2^2 &= ||\hat{u}_2\hat{u}_2^T - u_{2, P}u_{2, P}^T||_2^2 \leq ||\hat{u}_2\hat{u}_2^T - u_{2, P}u_{2, P}^T||_F^2 = \Tr((\hat{u}_2\hat{u}_2^T - u_{2, P}u_{2, P}^T)^2)\\
&= 2 - 2(\hat{u}_2^Tu_{2, P})^2 \leq 2(1 - (\rho')^2) \leq 2\left(1 - \rho^2-\frac{(c_1 + c_2)^2}{d} + \frac{2\rho(c_1 + c_2)}{\sqrt{d}}\right)\\
& \precsim \frac{D^2}{\epsilon} + \frac{1}{\sqrt{d}} < \frac{1}{\log(n)},
\end{align*}
with probability at least $1 - \frac{1}{\mathrm{poly}(n)}$, since $1 > \rho^2 = 1 - \frac{D^2}{\epsilon} > 1 - \frac{1}{\log(n)}$. Now note that by Lemma \ref{lemma:l2.3SinSte} and Lemma \ref{lemma:prop2.2VuLei}, we have
\begin{align*}
\frac{1}{2}\inf\limits_{Q \in \mathbb{V}_{2, 2}}||\hat{U} - UQ||_F^2 & = \frac{1}{2}||\hat{U} - UQ'||_F^2 \leq ||\sin(\Theta)(\hat{U}, U)||_F^2 \leq 2||\sin(\Theta)(\hat{U}, U)||_2^2 \\
& \leq 2||\hat{U}\hat{U}^T - UU^T||_2^2 \precsim \frac{1}{\sqrt{d}} + \frac{D^2}{\epsilon},   
\end{align*}
where $Q' \in \mathbb{R}^{2 \times 2}$ and $Q'^TQ' = I_2$. The rest of the proof follows the lines of argument from the proof of Theorem 5.1 in \cite{HehEtal22}. We know from \cite{LeiRin15} that $U = \Theta X$, for some $\Theta \in \mathbf{M}_{n\times 2}$ and $X$ satisfying $||X_{j*} - X_{l*}||_2 = \sqrt{\frac{4}{n}}$, for $j \neq l$. Then $UQ' = \Theta XQ' = \Theta X'$, with $X' = XQ'$, and
\begin{equation*}
||X'_{j*} - X'_{l*}||_2 = ||(e_j- e_l)^TXQ'||_2 = ||X_{j*} - X_{l*}||_2 = \sqrt{\frac{4}{n}},
\end{equation*}
for $j \neq l$. Here, $e_j$ is the $j^{\text{th}}$ standard basis vector. For all $j \in [2]$, choose $\delta_j = \sqrt{\frac{4}{n}}$ and define $S_j$ as in Lemma 5.3 of \cite{LeiRin15}. We want to show that $(16 + 8\gamma)||\hat{U} - UQ'||_F^2 < \frac{\delta_j^2n}{2}$, for all $j \in [k]$. Since $\frac{\delta_j^2n}{2} > 1$, it is enough to prove that $(16 + 8\gamma)||\hat{U} - UQ'||_F^2 \leq 1$. Let $c_3 \asymp 1$ be the absolute constant such that
\begin{align*}
c_3 ||\hat{U} - UQ'||_F^2 \leq \frac{1}{\sqrt{d}} + \frac{D^2}{\epsilon}.
\end{align*}
Then
\begin{align*}
(16 + 8\gamma)||\hat{U} - UQ'||_F^2 \leq \frac{16 + 8\gamma}{c_3} \left(\frac{1}{\sqrt{d}} + \frac{D^2}{\epsilon}\right) < \frac{16 + 8\gamma}{c_3\log(n)} \leq 1, 
\end{align*}
as desired, for $n$ large enough, since $\gamma, c_3 \asymp 1$. Thus, for each
community $j \in [2]$, the set of nodes that are possibly misclassified by Algorithm \ref{alg:privPCALipExt} must be a subset of $S_j$, and since $\frac{\delta_j^2n}{2} > 1$, we have
\begin{align*}
\widetilde{\mathcal{L}}\left(\mathcal{A}_{PLE}^{(\epsilon)}(G), \theta\right) &\leq \mathop{\max}_{j \in [2]}\frac{2|S_j|}{n} \leq \frac{2}{n}\sum_{j = 1}^2|S_j| \leq \sum_{j = 1}^2|S_j|\delta_j^2 \leq (16 + 8\gamma)||\hat{U} - UQ'||_F^2\\
&\leq \frac{16 + 8\gamma}{c_3} \left(\frac{1}{\sqrt{d}} + \frac{D^2}{\epsilon}\right) \precsim \frac{1}{\sqrt{d}} + \frac{D^2}{\epsilon} < \frac{1}{\log(n)},
\end{align*}
with probability at least $1 - \frac{1}{\mathrm{poly}(n)}$, as desired, since $\gamma, c_3 \asymp 1$.


\subsection{Proofs of supporting lemmas for Theorem~\ref{theorem:Lip_PCA_Main}}

We now provide the proofs of the lemmas stated above in the proof of Theorem~\ref{theorem:Lip_PCA_Main}.

\subsubsection{Proof of Lemma~\ref{lemma:reg_event_PCA}}
\label{AppLemRegEvent}

Note that by Lemma \ref{lemma:hp_removal_d}, there exists an event $\Omega_0$ with $\mathbb{P}(\Omega_0) \geq 1 - \frac{1}{\mathrm{poly}(n)}$, such that on $\Omega_0$, the degree of every node in $A$ is at most $2d$. Hence, since $D \geq 3d$, we have $\widehat{s}_{A^2}(v) = s_{A^2}(v)$ on $\Omega_0$. Next, by Lemma \ref{lemma:thm5.2LeiRin}, since $a_n \succsim \frac{\log(n)}{n}$, there is an event $\Omega_1$, with $\mathbb{P}(\Omega_1) \geq 1 - \frac{1}{n}$, such that
\begin{equation*}
||A - \mathbb{E}[A]||_2 = ||A - P + \text{diag}(P)||_2 \precsim \sqrt{na} \asymp \sqrt{d}
\end{equation*}
on $\Omega_1$. Next, for $\Omega_2 = \left\{|L_\epsilon| \leq \frac{2\log(n)}{\epsilon}\right\}$, we have
\begin{align*}
\mathbb{P}\left(\Omega_2^c\right) = \mathbb{P}\left(|\mathrm{Lap}(1)| > \log(n)\right) \leq \frac{2}{n}.
\end{align*}
Hence, $|L_\epsilon| \leq \frac{2\log(n)}{\epsilon} < \frac{2}{D^2} \leq \frac{1}{n}$ on $\Omega_2$, since $\epsilon > D^2\log(n) \geq d^2\log(n) > n\log(n)$. Let $\Omega = \Omega_0 \cap \Omega_1 \cap \Omega_2$, so $\mathbb{P}(\Omega) \geq 1 - \frac{1}{\mathrm{poly}(n)}$. Next, on $\Omega$, we have
\begin{align*}
\frac{\textbf{1}_n^T A\textbf{1}_n}{n} + L_\epsilon & = \frac{\textbf{1}_n^T P\textbf{1}_n}{n} + \frac{\textbf{1}_n^T(A - P)\textbf{1}_n}{n} + L_\epsilon \\
& \leq \sigma_1(P) + ||A - P||_2 + \frac{1}{n} \precsim d + \sqrt{d} + \frac{1}{n} \precsim d. 
\end{align*}
Also,
\begin{align*}
\frac{\textbf{1}_n^T A\textbf{1}_n}{n} + L_\epsilon & = \frac{\textbf{1}_n^T P\textbf{1}_n}{n} + \frac{\textbf{1}_n^T(A - P)\textbf{1}_n}{n} + L_\epsilon \\
& \geq \sigma_1(P) - ||A - P||_2 - \frac{1}{n} \succsim d - \sqrt{d} - \frac{1}{n} \succsim d.
\end{align*}
Since $0 < d = o(n)$, we obtain $\sigmahat = \frac{\textbf{1}_n^T A\textbf{1}_n}{n} + L_\epsilon \asymp d$ on $\Omega$. Moreover, we have
\begin{align*}
|\sigmahat^2 - \sigma_1^2(P)| = |\sigmahat - \sigma_1(P)|\cdot|\sigmahat + \sigma_1(P)| \precsim d\sqrt{d},
\end{align*}
completing the proof.


\subsubsection{Proof of Lemma~\ref{lemma:conc_top_evec_PCA}}
\label{AppTopEvec}

Let $\Omega$ be as in Lemma \ref{lemma:reg_event_PCA}. Note that $M \succeq 0$, since $n^2I_n -\frac{\sigmahat^2}{n}J_n \succeq 0$ by the definition of $\sigmahat$. Define $\mathcal{U}_s = \{v: |v^Tu_2| \ge s\}$, for $s \in [0, 1]$. We parallel the utility analysis in Section 4.2 of Chaudhuri et al.~\cite{ChaEtal13}. We define $q_F(v) = \frac{\epsilon}{6D^2}v^T Mv$. The eigenvalues of $M$ are $\sigma_1(M) \geq \cdots \ge \sigma_n(M)$. For $s \in [0, 1]$, we then have (cf. \cite{ChaEtal13})
\begin{align*}
q_F(v) & \ge \frac{\epsilon}{6D^2} \left(s^2 \sigma_1(M) + (1 - s^2) \sigma_n(M)\right), & \text{for } v \in \mathcal{U}_s, \\
q_F(v) & \le \frac{\epsilon}{6D^2} \left(s^2 \sigma_1(M) + (1-s^2)\sigma_2(M)\right), & \text{for } v \in \mathcal{U}^c_s.
\end{align*}
Let $1 - \rho^2 = \frac{D^2}{\epsilon}$. For any $\tau \in [0, 1]$ (we choose this value later), we have
\begin{align}
\label{eq:Urho^c}
\mprob(\hat{u}_2^0 \in \mathcal{U}^c_\rho) & = \mathbb{E}_{A, \sigmahat}[\mprob(\hat{u}_2^0 \in \mathcal{U}^c_\rho|A, \sigmahat)] \notag \\
& = \mathbb{E}_{A, \sigmahat}[\mprob(\hat{u}_2^0 \in \mathcal{U}^c_\rho|A, \sigmahat)\cdot \mathbbm{1}_{\Omega}] + \mathbb{E}_{A, \sigmahat}[\mprob(\hat{u}_2^0 \in \mathcal{U}^c_\rho|A, \sigmahat)\cdot \mathbbm{1}_{\Omega^c}] \notag\\
&\leq \mathbb{E}_{A, \sigmahat}[\mprob(\hat{u}_2^0 \in \mathcal{U}^c_\rho|A, \sigmahat)\cdot \mathbbm{1}_{\Omega}] + \frac{1}{\mathrm{poly}(n)} \notag \\
& \leq \mathbb{E}_{A, \sigmahat}\left[\frac{\mprob(\hat{u}_2^0 \in \mathcal{U}^c_\rho|A, \sigmahat)}{\mprob(\hat{u}_2^0 \in \mathcal{U}_\tau|A, \sigmahat)}\cdot \mathbbm{1}_{\Omega}\right] + \frac{1}{\mathrm{poly}(n)}.
\end{align}
Let $\mu$ be the uniform measure on the unit sphere. By Lemma \ref{lemma:reg_event_PCA}, we have $\widehat{s}_{A^2}(v) = s_{A^2}(v)$ on $\Omega$, for every $v$. We can then write
\begin{align*}
\mathbb{E}_{A, \sigmahat}\left[\frac{\mathbb{P}(\hat{u}_2^0 \in \mathcal{U}_\rho^c|A, \sigmahat)}{\mathbb{P}(\hat{u}_2^0 \in \mathcal{U_\tau}|A, \sigmahat)}\cdot \mathbbm{1}_{\Omega}\right] &= \mathbb{E}_{A, \sigmahat}\left[\frac{\int_{\mathcal{U}^c_\rho}\mathrm{exp}(\frac{\epsilon}{6D^2}\widehat{s}'_{A^2}(v))d\mu}{\int_{\mathcal{U}_\tau}\mathrm{exp}(\frac{\epsilon}{6D^2}\widehat{s}'_{A^2}(v))d\mu}\cdot \mathbbm{1}_{\Omega}\right] \\
& = \mathbb{E}_{A, \sigmahat}\left[\frac{\int_{\mathcal{U}^c_\rho}\mathrm{exp}(\frac{\epsilon}{6D^2}(\widehat{s}_{A^2}(v) - \frac{\sigmahat^2}{n}v^TJ_nv))d\mu}{\int_{\mathcal{U}_\tau}\mathrm{exp}(\frac{\epsilon}{6D^2}(\widehat{s}_{A^2}(v) - \frac{\sigmahat^2}{n}v^TJ_nv))d\mu}\cdot \mathbbm{1}_{\Omega}\right]\\
&= \mathbb{E}_{A, \sigmahat}\left[\frac{\int_{\mathcal{U}^c_\rho}\mathrm{exp}(\frac{\epsilon}{6D^2}(s_{A^2}(v) - \frac{\sigmahat^2}{n}v^TJ_nv))d\mu}{\int_{\mathcal{U}_\tau}\mathrm{exp}(\frac{\epsilon}{6D^2}(s_{A^2}(v) - \frac{\sigmahat^2}{n}v^TJ_nv))d\mu}\cdot \mathbbm{1}_{\Omega}\right]\\
&= \mathbb{E}_{A, \sigmahat}\left[\frac{\int_{\mathcal{U}^c_\rho}\mathrm{exp}(\frac{\epsilon}{6D^2}(v^TA^2v - \frac{\sigmahat^2}{n}v^TJ_nv))d\mu}{\int_{\mathcal{U}_\tau}\mathrm{exp}(\frac{\epsilon}{6D^2}(v^TA^2v - \frac{\sigmahat^2}{n}v^TJ_nv))d\mu}\cdot \mathbbm{1}_{\Omega}\right]\\
&= \mathbb{E}_{A, \sigmahat}\left[\frac{\int_{\mathcal{U}^c_\rho}\mathrm{exp}(\frac{\epsilon}{6D^2}(v^TA^2v - \frac{\sigmahat^2}{n}v^TJ_nv + n^2))d\mu}{\int_{\mathcal{U}_\tau}\mathrm{exp}(\frac{\epsilon}{6D^2}(v^TA^2v - \frac{\sigmahat^2}{n}v^TJ_nv + n^2))d\mu}\cdot \mathbbm{1}_{\Omega}\right]\\
&= \mathbb{E}_{A, \sigmahat}\left[\frac{\int_{\mathcal{U}^c_\rho}\mathrm{exp}(\frac{\epsilon}{6D^2}(v^TA^2v - \frac{\sigmahat^2}{n}v^TJ_nv + n^2||v||_2^2))d\mu}{\int_{\mathcal{U}_\tau}\mathrm{exp}(\frac{\epsilon}{6D^2}(v^TA^2v - \frac{\sigmahat^2}{n}v^TJ_nv + n^2||v||_2^2))d\mu}\cdot \mathbbm{1}_{\Omega}\right]\\
&= \mathbb{E}_{A, \sigmahat}\left[\frac{\int_{\mathcal{U}^c_\rho}\mathrm{exp}(\frac{\epsilon}{6D^2}v^TMv)d\mu}{\int_{\mathcal{U}_\tau}\mathrm{exp}(\frac{\epsilon}{6D^2}v^TMv)d\mu}\cdot \mathbbm{1}_{\Omega}\right],
\end{align*}
where we used the independence of $(A, \sigmahat)$ from $\mu$. We also used the fact that, under $\mu$, the input $v$ satisfies $||v||_2 = 1$, and adding an extra $n^2$ term does not change the ratio of the two densities. Therefore, we have
\begin{align*}
\mathbb{E}_{A, \sigmahat}\left[\frac{\mprob(\hat{u}_2^0 \in \mathcal{U}^c_\rho|A, \sigmahat)}{\mprob(\hat{u}_2^0 \in \mathcal{U}_\tau|A, \sigmahat)}\cdot \mathbbm{1}_{\Omega}\right] &\leq \mathbb{E}_{A, \sigmahat}\left[\frac{\exp\left(\frac{\epsilon}{6D^2} \left(\rho^2 \sigma_1(M) + (1-\rho^2)\sigma_2(M)\right)\right) \mu(\mathcal{U}^c_\rho)}{\exp\left(\frac{\epsilon}{6D^2} \left(\tau^2 \sigma_1(M) + (1-\tau^2) \sigma_n(M)\right)\right) \mu(\mathcal{U}_\tau)}\cdot \mathbbm{1}_{\Omega}\right]\\
&\leq \mathbb{E}_{A, \sigmahat}\Bigg[\exp\left(\frac{-\epsilon}{6D^2} \left(\sigma^2 \sigma_1(M) - \rho^2 \sigma_1(M) - (1-\rho^2) \sigma_2(M)\right)\right) \\
& \qquad \qquad \cdot \exp\left(\frac{n-1}{2} \log\frac{2}{1-\tau}\right)\cdot \mathbbm{1}_{\Omega}\Bigg],
\end{align*}
where $\rho < \tau < 1$, to make the upper bound less than $1$. We also used Lemma $6$ in~\cite{ChaEtal13}. For the choice $1 - \tau^2 = \frac{1-\rho^2}{2} \left(1-\frac{\sigma_2(M)}{\sigma_1(M)}\right)$, we have 
\begin{align*}
&\mathbb{E}_{A, \sigmahat}\left[\frac{\mprob(\hat{u}_2^0 \in \mathcal{U}^c_\rho|A, \sigmahat)}{\mprob(\hat{u}_2^0 \in \mathcal{U}_\tau|A, \sigmahat)}\cdot \mathbbm{1}_{\Omega}\right]\\
&\leq \mathbb{E}_{A, \sigmahat}\Bigg[\exp\Bigg(\frac{-\epsilon}{12D^2} \cdot (1-\rho^2)(\sigma_1(M) - \sigma_2(M)) + \frac{n-1}{2} \log\left(\frac{8\sigma_1(M)}{(1-\rho^2)(\sigma_1(M) - \sigma_2(M))}\right)\Bigg)\cdot \mathbbm{1}_{\Omega}\Bigg].
\end{align*}

Now we analyze the eigenvalues/singular values of $M$. By Lemma \ref{lemma:reg_event_PCA}, we have on $\Omega$ that
\begin{align*}
||A^2 - P^2||_2 \leq (||A||_2 + ||P||_2)||A - P||_2 \leq (||A - P||_2 + 2\sigma_1(P))||A - P||_2 \precsim d\sqrt{d}.
\end{align*}
By Lemma \ref{lemma:l2.1SinSte}, we have
\begin{align*}
\sigma_2(M) & = \sigma_2\left(A^2 + n^2I_n -\frac{\sigmahat^2}{n}J_n\right) \\
& \leq \sigma_2\left(P^2 - \frac{\sigma^2_1(P)}{n}J_n\right) + n^2 + ||A^2 - P^2||_2 + |\sigmahat^2 - \sigma^2_1(P)|. 
\end{align*}
Also, 
\begin{align*}
\sigma_1^2(A) + n^2 + \sigmahat^2 \geq \sigma_1\left(M\right) \geq \sigma_1\left(P^2 - \frac{\sigma^2_1(P)}{n}J_n\right) + n^2 - ||A^2 - P^2||_2 - |\sigmahat^2 - \sigma^2_1(P)|.
\end{align*}
But since $\mathbf{1}_n$ is the top eigenvector of $P$, the eigenvalues/singular values of $P^2 - \frac{\sigma^2_1(P)}{n}J_n$ are $\sigma^2_2(P)$, and $n - 1$ zeroes. Hence, on $\Omega$, we have 
\begin{align*}
\sigma_1\left(M\right) - \sigma_2\left(M\right) &\geq \sigma_2^2(P) - 2||A^2 - P^2||_2 - 2|\sigmahat^2 - \sigma^2_1(P)| \succsim d^2 - d\sqrt{d} \succsim d^2,
\end{align*}
and 
\begin{align*}
\sigma_1(M) = \sigma_1\left(A^2 + n^2I_n -\frac{\sigmahat^2}{n}J_n\right) \precsim d^2 + n^2 + d^2 \precsim n^2.
\end{align*}
Hence, $\sigma_1(M) \leq n^2\log(n)$ and $\sigma_1(M) - \sigma_2(M) \geq \frac{d^2}{\log(n)}$, for $n$ large enough, on $\Omega$, by assumption. Thus, since $d > \sqrt{n}$, we obtain 
\begin{align*}
& \mathbb{E}_{A, \sigmahat}\left[\frac{\mprob(\hat{u}_2^0 \in \mathcal{U}^c_\rho|A, \sigmahat)}{\mprob(\hat{u}_2^0 \in \mathcal{U}_\tau|A, \sigmahat)}\cdot \mathbbm{1}_{\Omega}\right] \\
& \qquad \leq \mathbb{E}_{A, \sigmahat}\left[\exp\left(\frac{-\epsilon}{12\log(n)D^2} \cdot (1-\rho^2)d^2 + \frac{n-1}{2} \log\left(\frac{8n\log^2(n)}{1-\rho^2}\right)\right)\cdot \mathbbm{1}_{\Omega}\right].
\end{align*}
Next, since $\epsilon < nD^2$, we have
\begin{align*}
\log\left(\frac{8n\log^2(n)}{1 - \rho^2}\right) = \log\left(\frac{8n\log^2(n)\epsilon}{D^2}\right) < \log(8\log^2(n)n^2) < \frac{\log^3(n)}{12} = \frac{\epsilon\log^3(n)(1 - \rho^2)}{12D^2}.
\end{align*}
Since $d^2 > n\log^4(n)$, we obtain
\begin{align*}
(n - 1)\log\left(\frac{8n\log^2(n)}{1 - \rho^2}\right) < \frac{\epsilon d^2(1 - \rho^2)}{12\log(n)D^2},
\end{align*}
so since $1 - \rho^2 < \frac{1}{\log(n)}$, we have $\log(n) \leq \frac{n - 1}{2}\log\left(\frac{8n\log^2(n)}{1 - \rho^2}\right)$, implying that
\begin{align*}
\log(n) + \frac{n - 1}{2}\log\left(\frac{8n\log^2(n)}{1 - \rho^2}\right) < \frac{\epsilon d^2(1 - \rho^2)}{12\log(n)D^2}.
\end{align*}
Thus, 
\begin{align*}
\mathbb{E}_{A, \sigmahat}\left[\frac{\mprob(\hat{u}_2^0 \in \mathcal{U}^c_\rho|A, \sigmahat)}{\mprob(\hat{u}_2^0 \in \mathcal{U}_\tau|A, \sigmahat)}\cdot \mathbbm{1}_{\Omega}\right] \leq \frac{1}{n}.
\end{align*}
Using inequality~\eqref{eq:Urho^c}, we obtain $\mprob(|u_2^T\hat{u}_2^0|^2 > \rho^2) \geq 1 - \frac{1}{\mathrm{poly}(n)}$, as required.


\subsubsection{Proof of Lemma~\ref{lemma:Asigma_A_sigmahat}}
\label{AppAsigma}

Note that
\begin{equation*}
\|A_0 - A_1\|_2 = \left\|(\sigmahat_1^2 - \sigma_1^2) \frac{\textbf{1}_n\textbf{1}_n^T}{n}\right\|_2 = |\sigmahat_1^2 - \sigma_1^2|.
\end{equation*}
Combined with the Davis-Kahan Theorem (cf.\ Lemma \ref{lemma:thm3YuEtal_square}), we then have
\begin{align*}
\mathop{\inf}_{q: q^2 = 1}||qu_2 - u_2^*||_2 = \mathop{\inf}_{q: q^2 = 1}||qu_2 - u_2^*||_F \precsim \frac{|\sigmahat_1^2 - \sigma_1^2|}{\lambda_1(A_1) - \lambda_2(A_1)}.
\end{align*}
This completes the proof.


\subsubsection{Proof of Lemma~\ref{lemma:A_sigma_P_sigma}}
\label{AppASigmaP}

Note that, since $\sigma_1^2$ is the top eigenvalue of $P$, with associated eigenvector $\frac{\mathbf{1}_n}{\sqrt{n}}$, we have $\lambda_1(P_1) = \sigma_1(P_1) = \sigma_2^2(P)$. Since the rank of $P$ is $2$, we have $\lambda_2(P_1) = \sigma_2(P_1) = \sigma_3^2(P) = 0$. Hence, by Lemma \ref{lemma:thm3YuEtal_square}, we have
\begin{align*}
\mathop{\inf}_{q: q^2 = 1}||qu_2^* - u_{2, P}||_2 = \mathop{\inf}_{q: q^2 = 1}||qu_2^* - u_{2, P}||_F \precsim \frac{||A^2 - P^2||_2}{\sigma_2^2(P)},
\end{align*}
completing the proof.


\section{Proofs for Section~\ref{SecProj}}
\label{AppProj}

\subsection{Proofs for Section~\ref{section:Private Degree Truncation Blackbox}}
\label{AppTrunc}

We first show that $\hat{L}_{\epsilon, \delta}(G)$ is pure node DP and upper-bounds $LS_{T_D}(G) = \mathop{\max}\limits_{G' \sim_{node} G}d_{node}(T_D(G), T_D(G'))$, with high probability. We will use the following result:

\begin{lemma}[Claim $17$ in \cite{blocki2013differentially}]
\label{lemma:Efficient_T}
The maps $T_D : \mathcal{G}_n \rightarrow \mathcal{G}_{n, 2D}$ and $d_{T_D} : \mathcal{G}_n \rightarrow \mathbb{R}$ obtained from solving the linear program~\eqref{eq:Blocki_LP_Eq} satisfy:
\begin{enumerate}
    \item $d_{T_D}(G) = 0$, for every $G \in \mathcal{G}_{n, D}$,
    \item $d_{T_D}(G) \geq d_{node}(G, T_D(G))$, for every $G \in \mathcal{G}_n$,
    \item $|d_{T_D}(G) - d_{T_D}(G')| \leq 4$, for every $G' \sim_{node} G$, and $G', G \in \mathcal{G}_{n}$.
\end{enumerate}
Moreover, $T_D(G) = G$, for $G \in \mathcal{G}_{n, D}$.
\end{lemma}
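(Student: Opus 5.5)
The plan is to verify the three properties, together with the fact that $T_D$ maps into $\mathcal{G}_{n,2D}$, directly from the LP~\eqref{eq:Blocki_LP_Eq}. Each property follows by exhibiting an explicit feasible solution or by rounding the optimal one. Throughout, write $\mathrm{OPT}(G) = \sum_u x_u^*$, so that $d_{T_D}(G) = 4\,\mathrm{OPT}(G)$.

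\emph{Property 1 and $T_D(G)=G$ on $\mathcal{G}_{n,D}$.} This is exactly Remark~\ref{remark:No_trunc}. If every degree is at most $D$, then $x \equiv 0$ and $w_{uv} = a_{uv}$ is feasible with objective $0$. Since the objective is nonnegative, $\mathrm{OPT}(G) = 0$, so $x^* \equiv 0$ and no edge is removed. Hence $d_{T_D}(G) = 0$ and $T_D(G) = G$.

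\emph{Range and Property 2.} I first check that $T_D(G) \in \mathcal{G}_{n,2D}$. Consider an edge $(u,v)$ that is kept. It must satisfy $x_u^* \le \frac14$ and $x_v^* < \frac14$, so $w^*_{uv} \ge 1 - x_u^* - x_v^* > \frac12$. Since $\sum_{v} w^*_{uv} \le D$, node $u$ has fewer than $2D$ kept edges. (The asymmetric thresholds in Definition~\ref{def:smooth_proj} only require care in the case $x_u^* = \frac14$; the same computation still gives $w^*_{uv} > \frac12$.)

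For Property 2, every removed edge is incident to some node $u$ with $x_u^* \ge \frac14$. Rewiring exactly these nodes transforms $G$ into $T_D(G)$. Therefore
\[
d_{node}(G, T_D(G)) \le \#\{u : x_u^* \ge \tfrac14\} \le 4\sum_u x_u^* = d_{T_D}(G),
\]
where the middle inequality is Markov-type counting.

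\emph{Property 3.} Let $G' \sim_{node} G$ differ only at node $z$, and let $(x^*, w^*)$ be optimal for $G$. I build a solution for $G'$ by setting $x'_z = 1$, $w'_{zv} = 0$ for all $v$, and keeping all other variables unchanged. I then verify feasibility:
\begin{itemize}
\item For pairs not involving $z$, the constraints are unchanged, since $a'_{uv} = a_{uv}$.
\item For pairs $(z,v)$, we have $a'_{zv} - x'_z - x_v \le 1 - 1 = 0 \le w'_{zv} \le a'_{zv}$.
\item The degree constraints only become looser, since some weights decrease to $0$.
\end{itemize}
The objective increases by at most $1$, so $\mathrm{OPT}(G') \le \mathrm{OPT}(G) + 1$. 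By symmetry the reverse inequality also holds, giving $|d_{T_D}(G) - d_{T_D}(G')| \le 4$.

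I expect the only delicate step to be the boundary case in the range argument, namely how the thresholds $>\frac14$ versus $\ge\frac14$ interact with $w^*_{uv} > \frac12$. Everything else is a direct feasibility check.
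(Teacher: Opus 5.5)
Your proposal is correct. Your treatment of Property 1 and of $T_D(G)=G$ on $\mathcal{G}_{n,D}$ is exactly the argument of Remark~\ref{remark:No_trunc}.

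For the remaining claims (the range $\mathcal{G}_{n,2D}$, Property 2, and Property 3), the paper gives no argument of its own and simply imports Claim 17 of \cite{blocki2013differentially}. Your direct verification supplies a self-contained proof of these, and it checks out in every detail:
\begin{itemize}
\item Kept edges have $x_u^*+x_v^*<\tfrac12$, hence $w^*_{uv}>\tfrac12$, which gives the degree bound $2D$.
\item The counting bound $\#\{u: x_u^*\ge \tfrac14\}\le 4\sum_u x_u^*$ gives Property 2.
\item The perturbation $x'_z=1$, $w'_{z\cdot}=0$ is feasible for $G'$. Since $x_z^*\ge 0$, the objective rises by at most $1$, so the LP optimum is $1$-Lipschitz under node adjacency, which gives Property 3.
\item $d_{T_D}$ depends only on the optimal value, so the choice among multiple optimal solutions is irrelevant for Property 3. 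Property 2 and the range claim hold for whichever optimum defines $T_D$.
\end{itemize}

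Your route also buys something the citation does not. It verifies the claims for the rounding rule as actually written in Definition~\ref{def:smooth_proj}, whose asymmetric thresholds ($>\tfrac14$ versus $\ge\tfrac14$) a bare appeal to \cite{blocki2013differentially} does not address. As you note, under either reading of that rule, every removed edge has an endpoint with $x^*\ge\tfrac14$ and every kept edge has $x_u^*+x_v^*<\tfrac12$. That is all your argument needs.
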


\begin{lemma}
\label{lemma:Lhat}
Let $\delta \in (0, 1)$, and let $T_D$, $d_{T_D}$, and $\hat{L}_{\epsilon, \delta}$ be as in Definition~\ref{def:smooth_proj}. Then $\hat{L}_{\epsilon, \delta}$ is $(\epsilon, 0)$-node DP and
\begin{equation*}
\mathbb{P}(\hat{L}_{\epsilon, \delta}(G) \geq LS_{T_D}(G)) \geq 1 - \delta, \qquad \text{for every } G \in \mathcal{G}_n.
\end{equation*}
\end{lemma}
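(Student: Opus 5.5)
The plan has two parts: privacy via the Laplace mechanism plus post-processing, and the high-probability upper bound via Lemma~\ref{lemma:Efficient_T} combined with a Laplace tail bound.

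\textbf{Privacy.} By Lemma~\ref{lemma:Efficient_T}(3), $|d_{T_D}(G) - d_{T_D}(G')| \le 4$ for any node-adjacent $G \sim_{node} G'$. Hence $5 + 2d_{T_D}(G)$ has global node sensitivity at most $8$. The Laplace mechanism therefore makes $5 + 2d_{T_D}(G) + \mathrm{Lap}(8/\epsilon)$ an $(\epsilon,0)$-node DP release. Adding the data-independent constant $8\log(1/\delta)/\epsilon$ and taking the maximum with $1/2$ are post-processing, so $\hat{L}_{\epsilon,\delta}$ is $(\epsilon,0)$-node DP.

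\textbf{Deterministic local sensitivity bound.} I will show $LS_{T_D}(G) \le 5 + 2d_{T_D}(G)$. Fix $G' \sim_{node} G$. By the triangle inequality for $d_{node}$,
\begin{equation*}
d_{node}(T_D(G),T_D(G')) \le d_{node}(T_D(G),G) + d_{node}(G,G') + d_{node}(G',T_D(G')).
\end{equation*}
Lemma~\ref{lemma:Efficient_T}(2) bounds the first term by $d_{T_D}(G)$. The middle term is at most $1$. For the last term, Lemma~\ref{lemma:Efficient_T}(2) and (3) together give
\begin{equation*}
d_{node}(G',T_D(G')) \le d_{T_D}(G') \le d_{T_D}(G) + 4.
\end{equation*}
Summing yields $2d_{T_D}(G) + 5$. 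Since this bound holds uniformly over $G'$, taking the maximum gives $LS_{T_D}(G) \le 5 + 2d_{T_D}(G)$.

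\textbf{Probability bound.} It now suffices that $\mathrm{Lap}(8/\epsilon) + 8\log(1/\delta)/\epsilon \ge 0$ with probability at least $1-\delta$; the outer $\max\{1/2,\cdot\}$ only increases the value. If $Z \sim \mathrm{Lap}(b)$ with $b = 8/\epsilon$, then
\begin{equation*}
\mathbb{P}\bigl(Z < -b\log(1/\delta)\bigr) = \tfrac{1}{2}\delta \le \delta .
\end{equation*}
On the complementary event, $\hat{L}_{\epsilon,\delta}(G) \ge 5 + 2d_{T_D}(G) \ge LS_{T_D}(G)$, which holds for every $G \in \mathcal{G}_n$.

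The main subtlety is the local sensitivity step. Triangle inequality for $d_{node}$ is immediate from its definition as a minimum rewiring count, and the other steps are routine. One caveat is that the edge-removal rule mixes the thresholds $x^*_u > 1/4$ and $x^*_v \ge 1/4$. I will not re-derive Claim 17 of \cite{blocki2013differentially} to address this; I rely on Lemma~\ref{lemma:Efficient_T}(2) exactly as stated.
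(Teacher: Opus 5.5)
Your proposal is correct and follows the paper's proof essentially line by line. Privacy comes from the Laplace mechanism applied to $5+2d_{T_D}(G)$, whose sensitivity is at most $8$ by Lemma~\ref{lemma:Efficient_T}(3), followed by post-processing; the deterministic bound $LS_{T_D}(G)\le 5+2d_{T_D}(G)$ comes from the triangle inequality together with parts (2) and (3); and the probability statement comes from the Laplace lower tail, where your exact value $\delta/2$ is simply a slightly sharper form of the paper's bound $\delta$.
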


\begin{proof}
Note that since $|d_{T_D}(G) - d_{T_D}(G')| \leq 4$ by Lemma~\ref{lemma:Efficient_T}, for every $G' \sim_{node} G$, and $G', G \in \mathcal{G}_n$, the global sensitivity of $d_{T_D}$ is bounded above by $4$. Hence, applying Laplace noise with parameter $\frac{8}{\epsilon}$ guarantees $(\epsilon, 0)$-node DP, and with post-processing, $\hat{L}_{\epsilon, \delta}$ clearly satisfies $(\epsilon, 0)$-node DP.

Next, we prove a deterministic upper bound on $LS_{T_D}(G) = \mathop{\max}\limits_{G' \sim_{node} G}d_{node}(T_D(G), T_D(G'))$. By the triangle inequality, for every $G' \sim_{node} G$, we have
\begin{align*}
d_{node}(T_D(G), T_D(G')) &\leq d_{node}(G, T_D(G)) + d_{node}(G, G') + d_{node}(G', T_D(G'))\\
&\leq d_{node}(G, T_D(G)) + 1 + d_{node}(G', T_D(G')) \\
& \leq d_{T_D}(G) + 1 + d_{T_D}(G') \leq 5 + 2d_{T_D}(G),
\end{align*}
where we used the fact that $d_{T_D}(G) \geq d_{node}(G, T_D(G))$ and $|d_{T_D}(G) - d_{T_D}(G')| \leq 4$, for every $G' \sim_{node} G$. Thus, we have $LS_{T_D}(G) \leq 5 + 2d_{T_D}(G)$ for every $G \in \mathcal{G}_n$. In particular, this implies that
\begin{align*}
\mathbb{P}(\hat{L}_{\epsilon, \delta}(G) < LS_{T_D}(G)) &\leq \mathbb{P}\left(5 + 2 d_{T_D}(G) + \mathrm{Lap}\left(\frac{8}{\epsilon}\right) + \frac{8 \log(1/\delta)}{\epsilon} < LS_{T_D}(G)\right)\\
&\leq  \mathbb{P}\left(\mathrm{Lap}\left(\frac{8}{\epsilon}\right) < -\frac{8\log(1/\delta)}{\epsilon}\right) \leq \delta,
\end{align*}
as required.
\end{proof}


\subsubsection{Proof of Theorem~\ref{theorem:generic_red}}
\label{AppThemGenRed}

Fix $G \sim_{node} G'$, with $G, G' \in \mathcal{G}_n$. To simplify notation, let $\mathcal{A}^{(G)} := \mathcal{A}^{(\epsilon_2'(G), \delta_2'(G))}$. Define
\begin{align*}
A_G & := (\hat{L}_{\epsilon_1, \delta_1}(G), \mathcal{A}^{(G)}(T_D(G))), \\
A_{G'} & := (\hat{L}_{\epsilon_1, \delta_1}(G'), \mathcal{A}^{(G')}(T_D(G'))), \\
A_m & := (\hat{L}_{\epsilon_1, \delta_1}(G), \mathcal{A}^{(G)}(T_D(G'))).
\end{align*}
Let $P_G$, $P_{G'}$, and $P_m$ be the distributions of $A_G$, $A_{G'}$, and $A_m$, respectively. 
Let $E$ be an arbitrary measurable set in the image of $(\hat{L}_{\epsilon_1, \delta_1}(\cdot), \mathcal{A}^{(\cdot)}(T_D(\cdot)))$. By the privacy of $\hat{L}_{\epsilon_1, \delta_1}$ we have
\begin{align*}
P_{G'}(E) = \mathbb{P}\left((\hat{L}_{\epsilon_1, \delta_1}(G'), \mathcal{A}^{(G')}(T_D(G'))) \in E\right) \leq e^{\epsilon_1}P_m(E). 
\end{align*}
Let $F = \left\{\hat{L}_{\epsilon_1, \delta_1}(G) \geq LS_{{T_D}}(G) \right\}$. We have from Lemma \ref{lemma:Lhat} that $\mathbb{P}(F^c) \leq \delta_1$. Let $y_{\epsilon_1, \delta_1}$ be the pdf of $Y_{\epsilon_1, \delta_1}(G) = 5 + 2 d_{T_D}(G) + \mathrm{Lap}\left(\frac{8}{\epsilon_1}\right) + \frac{8 \log(1/\delta_1)}{\epsilon_1}$, so $\hat{L}_{\epsilon_1, \delta_1}(G) = \max\left\{\frac{1}{2}, Y_{\epsilon_1, \delta_1}(G)\right\}$. If $LS_{T_D}(G) = 0$, then $T_D(G) = T_D(G')$, and
\begin{equation*}
P_m(E|F) = P_G(E|F) \leq e^{2\epsilon_2} P_G(E|F) + \frac{1}{\mathrm{poly}(n)}.
\end{equation*}
Now assume $LS_{T_D}(G) \geq 1$. Then
\begin{align*}
& P_{m}(E|F) = \frac{\mathbb{P}\left(\left\{(\hat{L}_{\epsilon_1, \delta_1}(G), \mathcal{A}^{(G)}(T_D(G')))\in E\right\} \bigcap F\right)}{\mathbb{P}(F)}\\
& \quad = \int_{LS_{{T_D}}(G)}^{\infty}\frac{\mathbb{P}\left((\max\{1/2, x\}, \mathcal{A}^{(\epsilon_2/\max\{1/2, x\}, \delta_2/\max\{1/2, x\})}(T_D(G'))) \in E|Y_{\epsilon_1, \delta_1}(G) = x\right)y_{\epsilon_1, \delta_1}(x)}{\mathbb{P}(F)}dx\\
& \quad = \int_{LS_{{T_D}}(G)}^{\infty}\frac{\mathbb{P}\left((x, \mathcal{A}^{(\epsilon_2/x, \delta_2/x)}(T_D(G'))) \in E\right)y_{\epsilon_1, \delta_1}(x)}{\mathbb{P}(F)}dx,
\end{align*}
where we dropped the conditioning because $Y_{\epsilon_1, \delta_1}(G)$ is independent of $\mathcal{A}^{(\epsilon_2/x, \delta_2/x)}(T_D(G'))$. Also, since $\max\left\{\frac{1}{2}, x\right\} \geq LS_{T_D}(G) \geq 1$, we must have $x \geq LS_{T_D}(G) \geq 1$. Now fix $x \geq LS_{{T_D}}(G)$. Since $2\epsilon_2 < \log(1/(\delta_2 \cdot \mathrm{poly}(n)))$ and $x \geq LS_{T_D}(G) \geq 1$, we have $\frac{2\epsilon_2}{x} < \log\left(\frac{x}{\delta_2}\right)$. Since $\mathcal{A}^{(\epsilon_2/x, \delta_2/x)}$ is $\left(\frac{2\epsilon_2}{x}, \frac{\delta_2}{x}\right)_{2D}$-node DP, it is clearly $\left(\frac{2\epsilon_2}{LS_{T_D}(G)}, \frac{\delta_2}{LS_{T_D}(G)}\right)_{2D}$-node DP. By group privacy of size $LS_{T_D}(G)$, we obtain $\left(2\epsilon_2, \delta_2e^{2\epsilon_2}\right)_{2D}$-node DP on groups of size $LS_{T_D}(G)$. Since $\delta_2 < e^{-2\epsilon_2}/\mathrm{poly}(n)$, this is $(2\epsilon_2, 1/\mathrm{poly}(n))_{2D}$-node DP. Now, when changing $G'$ to $G$, we change at most $LS_{T_D}(G)$ nodes to go from $T_D(G')$ to $T_D(G)$. Hence, we have
\begin{align*}
& P_{m}(E|F) \leq \int_{LS_{{T_D}}(G)}^{\infty}\frac{\left(e^{2\epsilon_2}\mathbb{P}\left((x, \mathcal{A}^{(\epsilon_2/x, \delta_2/x)}(T_D(G))) \in E\right) + 1/\mathrm{poly}(n)\right)y_{\epsilon_1, \delta_1}(x)}{\mathbb{P}(F)}dx\\
& \quad = e^{2\epsilon_2}\int_{LS_{{T_D}}(G)}^{\infty}\frac{\mathbb{P}\left((x, \mathcal{A}^{(\epsilon_2/x, \delta_2/x)}(T_D(G))) \in E\right)y_{\epsilon_1, \delta_1}(x)}{\mathbb{P}(F)}dx + \frac{1}{\mathrm{poly}(n)}\\
& \quad = e^{2\epsilon_2}\int_{LS_{{T_D}}(G)}^{\infty}\frac{\mathbb{P}\left((\max\{1/2, x\}, \mathcal{A}^{(\epsilon_2/\max\{1/2, x\}, \delta_2/\max\{1/2, x\})}(T_D(G))) \in E|Y_{\epsilon_1, \delta_1}(G) = x\right)y_{\epsilon_1, \delta_1}(x)}{\mathbb{P}(F)}dx\\
& \qquad + \frac{1}{\mathrm{poly}(n)}\\
&\quad = \frac{e^{2\epsilon_2}\mathbb{P}\left(\left\{(\hat{L}_{\epsilon_1, \delta_1}(G), \mathcal{A}^{(G)}(T_D(G)))\in E\right\} \cap F\right)}{\mathbb{P}(F)} + \frac{1}{\mathrm{poly}(n)} \\
& \quad = e^{2\epsilon_2}P_{G}(E|F) + \frac{1}{\mathrm{poly}(n)}.
\end{align*}
Note now that $P_{G}(F) = P_{m}(F)$, since in both cases, we have $\hat{L}_{\epsilon_1, \delta_1}(G)$, not $\hat{L}_{\epsilon_1, \delta_1}(G')$. Thus, we have
\begin{align*}
P_{m}(E \cap F) \leq e^{2\epsilon_2}P_{G}(E \cap F) + \frac{1}{\mathrm{poly}(n)}.
\end{align*}
Hence, we obtain
\begin{align*}
P_{G'}(E) &\leq e^{\epsilon_1}(P_m(E \cap F) + \delta_1) \leq e^{\epsilon_1}(e^{2\epsilon_2}P_G(E \cap F) + 1/\mathrm{poly}(n)) + e^{\epsilon_1}\delta_1\\
&\leq e^{\epsilon_1 + 2\epsilon_2}P_G(E) + e^{\epsilon_1}(1/\mathrm{poly}(n) + \delta_1).
\end{align*}
Thus, $(\hat{L}_{\epsilon_1, \delta_1}(G), \mathcal{A}^{(G)}(T_D(G)))$ is $(\epsilon_1 + 2\epsilon_2, e^{\epsilon_1}(1/\mathrm{poly}(n) + \delta_1))$-node DP, as required.

\subsubsection{Proof of Theorem~\ref{theorem:generic_red_pure}}
\label{AppThemGenRedPure}

Using the same notation and steps as in the proof of Theorem \ref{theorem:generic_red}, we have by the privacy of $\hat{L}_{\epsilon_1, \delta_1}$ that
\begin{align*}
P_{G'}(E) = \mathbb{P}\left((\hat{L}_{\epsilon_1, \delta_1}(G'), \mathcal{A}^{\epsilon_2'(G')}(T_D(G'))) \in E\right) \leq e^{\epsilon_1}P_m(E). 
\end{align*}
Let $F = \left\{\hat{L}_{\epsilon_1, \delta_1}(G) \geq LS_{{T_D}}(G)\right\}$. Let $y_{\epsilon_1, \delta_1}$ be the pdf of
\begin{equation*}
Y_{\epsilon_1, \delta_1}(G) = 5 + 2 d_{T_D}(G) + \mathrm{Lap}\left(\frac{8}{\epsilon_1}\right) + \frac{8 \log(1/\delta_1)}{\epsilon_1}.
\end{equation*}
If $LS_{T_D}(G) = 0$, then $T_D(G) = T_D(G')$ and $P_m(E|F) = P_G(E|F) \leq e^{2\epsilon_2} P_G(E|F)$. Now assume $LS_{T_D}(G) \geq 1$. We have 
\begin{align*}
P_{m}(E|F) &= \frac{\mathbb{P}\left(\left\{(\hat{L}_{\epsilon_1, \delta_1}(G), \mathcal{A}^{\left(\epsilon_2'(G)\right)}(T_D(G')))\in E\right\} \cap F\right)}{\mathbb{P}(F)}\\
&= \int_{LS_{T_D}(G)}^{\infty}\frac{\mathbb{P}\left((\max\{1/2, x\}, \mathcal{A}^{(\epsilon_2/\max\{1/2, x\})}(T_D(G'))) \in E|Y_{\epsilon_1, \delta_1}(G) = x\right)y_{\epsilon_1, \delta_1}(x)}{\mathbb{P}(F)}dx\\
&= \int_{LS_{T_D}(G)}^{\infty}\frac{\mathbb{P}\left((x, \mathcal{A}^{(\epsilon_2/x)}(T_D(G'))) \in E\right)y_{\epsilon_1, \delta_1}(x)}{\mathbb{P}(F)}dx,     
\end{align*}
where we dropped the conditioning because $Y_{\epsilon_1, \delta_1}(G)$ is independent of $\mathcal{A}^{(\epsilon_2/x)}(T_D(G'))$. Also, since $\max\left\{\frac{1}{2}, x\right\} \geq LS_{T_D}(G) \geq 1$, we must have $x \geq LS_{T_D}(G) \geq 1$. Now, $\mathcal{A}^{(\epsilon_2/x)}(T_D(G'))$ is $\left(\frac{\epsilon_2}{x}, 0\right)_{2D}$-node DP. Hence, since $x \geq LS_{T_D}(G)$, we know that $\mathcal{A}^{(\epsilon_2/x)}(T_D(G'))$ is $\left(\frac{\epsilon_2}{LS_{T_D}(G)}, 0\right)_{2D}$-node DP. Next, when changing $G'$ to $G$, we change at most $LS_{T_D}(G)$ nodes to go from $T_D(G')$ to $T_D(G)$. Given that $\mathcal{A}^{(\epsilon_2/x)}$ is $\left(\frac{\epsilon_2}{LS_{T_D}(G)}, 0\right)_{2D}$-node DP, the privacy parameter changes to $\frac{LS_{T_D}(G)\epsilon_2}{LS_{T_D}(G)} = \epsilon_2$, using group privacy. Hence, we have
\begin{align*}
& P_{m}(E|F) \leq e^{\epsilon_2}\int_{LS_{T_D}(G)}^{\infty}\frac{\mathbb{P}\left((x, \mathcal{A}^{(\epsilon_2/x)}(T_D(G))) \in E\right)y_{\epsilon_1, \delta_1}(x)}{\mathbb{P}(F)}dx\\
& \quad = e^{\epsilon_2}\int_{LS_{T_D}(G)}^{\infty}\frac{\mathbb{P}\left((\max\{1/2, x\}, \mathcal{A}^{(\epsilon_2/\max\{1/2, x\})}(T_D(G))) \in E|Y_{\epsilon_1, \delta_1}(G) = x\right)y_{\epsilon_1, \delta_1}(x)}{\mathbb{P}(F)}dx\\
& \quad = \frac{e^{\epsilon_2}\mathbb{P}\left(\left\{(\hat{L}_{\epsilon_1, \delta_1}(G), \mathcal{A}^{\left(\epsilon_2'(G)\right)}(T_D(G)))\in E\right\} \cap F\right)}{\mathbb{P}(F)} = e^{\epsilon_2}P_{G}(E|F).
\end{align*}
Now note that $P_{G}(F) = P_{m}(F)$, since in both cases, we have $\hat{L}_{\epsilon_1, \delta_1}(G)$, not $\hat{L}_{\epsilon_1, \delta_1}(G')$. Thus, we have
\begin{align*}
P_{m}(E \cap F) \leq e^{\epsilon_2}P_{G}(E \cap F).
\end{align*}
Hence, using Lemma \ref{lemma:Lhat}, we obtain 
\begin{align*}
P_{G'}(E) \leq e^{\epsilon_1}(P_m(E \cap F) + \delta_1) \leq e^{\epsilon_1 + \epsilon_2}P_G(E \cap F) + e^{\epsilon_1}\delta_1 \leq e^{\epsilon_1 + \epsilon_2}P_G(E) + e^{\epsilon_1}\delta_1.
\end{align*}
Thus, $(\hat{L}_{\epsilon_1, \delta_1}(G), \mathcal{A}^{\left(\epsilon_2'(G)\right)}(T_D(G)))$ is $(\epsilon_1 + \epsilon_2, e^{\epsilon_1}\delta_1)$-node DP, as required.


\subsubsection{Proof of Proposition~\ref{prop:Boost_result}}
\label{AppPropBoost}

By basic composition (cf.\ Lemma~\ref{LemBasic}), if each $\widetilde{\theta}(G_j)$ satisfies $(\epsilon, \delta)$-node DP, the algorithm $\thetahat(G)$ satisfies $(T\epsilon, T\delta)$-node DP. Importantly, we also have the inequality
\begin{equation}
\label{EqnComErr}
\mprob\left(\widetilde{\mathcal{L}}(\thetahat(G), \theta) \le \xi T\right) \ge \mprob\left(\sum_{j=1}^T 1\left\{\widetilde{\mathcal{L}}(\widetilde{\theta}(G_j), \theta) \le \xi\right\} \ge \frac{T+1}{2}\right).
\end{equation}
Indeed, suppose at least $\frac{T+1}{2}$ subgraph assignments have misclassification rate bounded by $\xi$. WLOG, reindex the subgraphs so that the estimates $\widetilde{\theta}(G_1), \dots, \widetilde{\theta}(G_\ell)$ are all within $\widetilde{\mathcal{L}}$-distance $\xi$ of $\theta$, where $\ell = \frac{T+1}{2}$. We claim that this implies the aggregated estimator $\thetahat(G)$ has misclassification rate bounded by $\xi T$. Note that the boosting algorithm will certainly output an index $j^*$ satisfying the criterion, since any of the $\{G_j\}_{j=1}^\ell$ are within $\mathcal{L}$-distance $2\xi$ of all the rest, since $\mathcal{L} \le \widetilde{\mathcal{L}}$ and $\mathcal{L}$ satisfies the triangle inequality. Furthermore, there must exist $\ell^* \in [\ell]$ such that $\mathcal{L}(\widetilde{\theta}(G_{j^*}), \widetilde{\theta}(G_{\ell^*})) \le 2\xi$. In addition, since $\widetilde{\mathcal{L}}(\widetilde{\theta}(G_{\ell^*}), \theta) \leq \xi$ by assumption, we have $\mathcal{L}(\widetilde{\theta}(G_{j^*}), \theta) \leq 3\xi$. We now prove the following lemma:

\begin{lemma}
\label{LemAlign}
Suppose $\theta$ has all communities of size $m = \frac{n}{k}$. Let $\thetahat_1, \thetahat_2 \in [k]^n$, and define $\Theta, \Theta_1, \Theta_2 \in \mathbf{M}_{n \times k}$ accordingly. Suppose $\mathcal{L}(\thetahat_1, \theta) \le \eta$ and $\widetilde{\mathcal{L}}(\thetahat_2, \theta) \le \xi$. Let $J_i \in \arg\min_{J \in E_k} \|\Thetahat_i J - \Theta\|_0$ for $i \in \{1, 2\}$. If $\frac{\xi + 2\eta k + \xi k}{2} < 1$, then the unique minimizer of $\min_{J \in E_k} \|\Theta_2 J - \Theta_1\|_0$ is given by $J = J_2 J_1^{-1}$.
\end{lemma}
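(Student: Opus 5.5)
The plan is a direct counting argument: show that $J = J_2 J_1^{-1}$ achieves a small value of $\|\Theta_2 J - \Theta_1\|_0$, and that every other permutation achieves a strictly larger value.

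\textbf{Normalization and reduction.} Right-multiplication by a permutation matrix only permutes columns, so it preserves the number of nonzero entries. Hence for any $J \in E_k$,
\[
\|\Theta_2 J - \Theta_1\|_0 = \|\Theta_2 J J_1 - \Theta_1 J_1\|_0 .
\]
Since both matrices are one-hot, each node on which two label vectors disagree contributes exactly $2$ nonzero entries (with the $\frac{1}{n}$ normalization implicit in the identity $\mathcal{L}(\thetahat,\theta) = \min_J \|\Thetahat J - \theta\|_0$). I will therefore count disagreeing nodes throughout. By the hypotheses and the definition of $J_1, J_2$:
\begin{itemize}
\item $\Theta_1 J_1$ disagrees with $\Theta$ on at most $\eta n/2$ nodes in total;
\item $\Theta_2 J_2$ disagrees with $\Theta$ on at most $\xi m/2$ nodes inside each community $C_j$, hence on at most $\xi n/2$ nodes overall.
\end{itemize}
Parametrize an arbitrary $J$ by $J J_1 = J_2 \Pi$, where $\Pi \in E_k$ corresponds to a label permutation $\pi$. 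Then $J = J_2 J_1^{-1}$ exactly when $\pi$ is the identity.

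\textbf{Upper bound for the candidate ($\pi = \mathrm{id}$).} Here we compare $\Theta_2 J_2$ with $\Theta_1 J_1$. A node on which they disagree must be misclassified (relative to $\Theta$) by at least one of them. By the triangle inequality, the number of disagreeing nodes is at most
\[
\frac{\xi n}{2} + \frac{\eta n}{2} = \frac{m k(\xi+\eta)}{2}.
\]

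\textbf{Lower bound for any other $J$ ($\pi \ne \mathrm{id}$).} Pick a label $a$ with $\pi(a) \neq a$ and restrict attention to the community $C_a$, which has $m$ nodes.
\begin{itemize}
\item At least $m(1-\xi/2)$ nodes of $C_a$ are labelled $a$ by $\Theta_2 J_2$. These nodes therefore receive label $\pi(a) \ne a$ under $\Theta_2 J_2 \Pi$.
\item At most $\eta n/2 = \eta k m/2$ nodes of $C_a$ fail to receive label $a$ under $\Theta_1 J_1$.
\end{itemize}
So $\Theta_2 J_2 \Pi$ and $\Theta_1 J_1$ disagree on at least $m\left(1 - \frac{\xi}{2} - \frac{\eta k}{2}\right)$ nodes of $C_a$ alone.

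\textbf{Comparison.} The alternative is strictly worse than the candidate provided
\[
m\left(1 - \frac{\xi}{2} - \frac{\eta k}{2}\right) > \frac{m k(\xi + \eta)}{2},
\]
which rearranges to $\frac{\xi + 2\eta k + \xi k}{2} < 1$. This is exactly the hypothesis. The inequality is strict, so $J_2 J_1^{-1}$ is the unique minimizer.

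The only delicate point is the bookkeeping of normalizations. Both losses carry the factor $2$ from the one-hot encoding, and only the worst-case bound on $\Theta_2$ gives control within the single community $C_a$. This is why the lemma assumes $\widetilde{\mathcal{L}}$ for $\thetahat_2$ but only $\mathcal{L}$ for $\thetahat_1$. I expect this accounting to be the main place where an error could creep in.
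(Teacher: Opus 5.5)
Your argument is the same as the paper's: you bound the disagreement under $J_2J_1^{-1}$ by $(\xi+\eta)n/2$, you lower-bound any other permutation by $m(1-\xi/2-\eta k/2)$ inside a single community it mislabels, and you compare the two. The delicate point is not the normalization; it is a step that you, like the paper, leave implicit: $J_2$ is defined as a minimizer of the total count $\|\Theta_2 J-\Theta\|_0$, and you need it to be the permutation under which the per-community bound $\xi m/2$ from $\widetilde{\mathcal{L}}(\thetahat_2,\theta)\le\xi$ holds, which follows because a permutation $\sigma^*$ attaining that bound misclassifies at most $\xi n/2$ nodes in total, any other permutation misclassifies at least $m(1-\xi/2)$ nodes of a single community, and $m(1-\xi/2)>\xi n/2$ since the hypothesis implies $\xi(k+1)<2$.
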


\begin{proof}
By assumption, after applying $J_1$, the community assignment $\thetahat_1$ has at most $\frac{\eta n}{2}$ misclassified nodes, whereas $\thetahat_2$ has at most $\frac{\xi m}{2}$ misclassified nodes in each true community. Thus, under the permutation $J_2 J_1^{-1}$, the disagreement between $\thetahat_1$ and $\thetahat_2$ is at most $\frac{(\eta + \xi)n}{2}$. Now consider any other permutation, which must misclassify at least one community relative to $\Theta_1$. The number of misclassified nodes in that community must be at least
\begin{equation*}
m - \frac{\xi m}{2} - \frac{\eta n}{2} = m\left(1 - \frac{\xi}{2} - \frac{\eta k}{2}\right).
\end{equation*}
We can check that
\begin{equation*}
m\left(1 - \frac{\xi}{2} - \frac{\eta k}{2}\right) > \frac{(\eta + \xi)n}{2}
\end{equation*}
under the stated condition, concluding the proof.
\end{proof}

Note that $\widetilde{\mathcal{L}}(\widetilde{\theta}(G_j), \theta) \le \xi$ for all $j \in [\ell]$ by assumption and $\mathcal{L}(\widetilde{\theta}(G_j^*), \theta) \le 3 \xi$. Applying Lemma~\ref{LemAlign} with $\eta = 3\xi$, we see that we indeed have the condition
\begin{equation*}
\frac{\xi + 2\eta k + \xi k}{2} = \frac{\xi(7k + 1)}{2} < 1,
\end{equation*}
assuming $\xi < \frac{1}{8k}$, implying that every permutation minimizing $\|\widetilde{\Theta}_j J - \widetilde{\Theta}_{j^*}\|_0$ agrees with the composition of the optimal alignment of $\widetilde{\Theta}_j$ to $\Theta$ and the inverse optimal alignment of $\widetilde{\Theta}_{j^*}$ to $\Theta$. Therefore, after a global relabeling, the majority vote over the good estimates may be analyzed as if each $\widetilde{\Theta}_j$ were optimally aligned directly to $\Theta$.


For each $j$, let $J_{\widetilde{\Theta}_j}$ denote the optimal alignment of $\widetilde{\Theta}_j$ to $\Theta$. Consider any fixed community. By assumption, for each $j \in [\ell]$, the matrix $\widetilde{\Theta}_j J_{\widetilde{\Theta}_j} - \Theta$ has at most $\xi n$ nonzero entries, of which at most $\xi m$ correspond to nodes in that community, where $m = \frac{n}{k}$. Thus, among $\{\widetilde{\Theta}_j\}_{j=1}^{\ell}$, at most $\ell(\xi m)$ nodes are misclassified by \emph{any} of the $\ell$ estimates. The remaining nodes in the community would certainly be classified correctly by a majority vote. Since this holds for all communities, we conclude that
\begin{equation*}
\widetilde{\mathcal{L}}(\thetahat(G), \theta) \le \ell \xi < \xi T,
\end{equation*}
establishing inequality~\eqref{EqnComErr}.


Finally, note that if $G \sim SBM(n, k, B, \theta)$, we have $G_j \sim SBM(n, k, \frac{B}{T}, \theta)$ for each $j$. On the other hand, if we define $X_j = 1\left\{\widetilde{\mathcal{L}}(\widetilde{\theta}(G_j), \theta) \le \xi\right\}$, the $X_j$'s may not be independent due to the fact that the thinned subgraphs all share the same underlying edges from $G$. Thus, we cannot apply a standard Chernoff bound (e.g., Lemma~\ref{lemma:Bern_Conc_Janson_2016}). Nonetheless, we may bound the variance of $\sum_{j=1}^T X_j$ and apply Chebyshev's inequality. Indeed, we can consider the maximal correlation $\rho_m$ (cf.\ Appendix~\ref{AppMaxCor}). For $\ell \neq m$, we have
\begin{align*}
\Cov(X_\ell, X_m) & \le \rho(X_\ell, X_m) \sqrt{\Var(X_\ell) \Var(X_m)} \le \frac{\rho(X_\ell, X_m)}{4} \le \frac{\rho_m(X_\ell, X_m)}{4} \\
& \le \max_{i \neq j} \frac{\rho_m(Y_{ij}, Y_{ij}')}{4},
\end{align*}
where for each $i \neq j$, we define $Y_{ij} = Z_{ij} R_{ij}$ and $Y_{ij} = Z_{ij} R'_{ij}$, where $Z_{ij} \sim Ber(B_{ij})$ and then we independently sample $R_{ij}, R_{ij}' \sim Ber\left(\frac{1}{T}\right)$. By Lemma~\ref{LemHGRBer}, we have
\begin{equation*}
\rho_m(Y_{ij}, Y_{ij}') = \frac{\frac{1}{T}(1 - B_{ij})}{1-\frac{B_{ij}}{T}}.
\end{equation*}
In particular, for $n$ sufficiently large, we have $\Cov(X_\ell, X_m) \le \frac{1}{2T}$. It follows that
\begin{equation*}
\Var\left(\frac{1}{T}\sum_{j=1}^T X_j\right) = \frac{1}{T^2}\left(\sum_{j=1}^T \Var(X_j) + \sum_{\ell \neq m} \Cov(X_\ell, X_m)\right) \le \frac{3}{4T},
\end{equation*}
so we can bound
\begin{equation*}
\mprob\left(\frac{1}{T} \sum_{i=1}^T X_i \le \frac{1}{2}\right) \le \frac{c}{T} = o(1),
\end{equation*}
as wanted.


\subsection{Proof of Theorem~\ref{theorem:Edge_Flip_Main_Thm}}
\label{AppEFMain}

To establish privacy, we first show that $\mathcal{A}_{EF}^{(\epsilon/(4D))}$ (i.e. Algorithm \ref{alg:EF_Spec_Clus} applied with the symmetric edge-flip mechanism $\mathcal{M}_{\frac{\epsilon}{4D}}$) is $(\epsilon, 0)_{2D}$–node DP.
Indeed, a result from Hehir et al.~\cite{HehEtal22} immediately implies that $\mathcal{M}_{\frac{\epsilon}{4D}}$ is $\left(\frac{\epsilon}{4D}, 0\right)$-edge DP. Note that \cite{HehEtal22} proves $\mathcal{M}_{\frac{\epsilon}{4D}}$ is $\frac{\epsilon}{4D}$-relationship DP (cf. Theorem $3.1$ in \cite{HehEtal22}), but this notion implies $\left(\frac{\epsilon}{4D}, 0\right)$-edge DP. 

By group privacy for pure $(\epsilon, 0)$-edge DP mechanisms, $\mathcal{M}_{\frac{\epsilon}{4D}}$ is $(\epsilon, 0)$-edge DP for $4D$ edges that can change. When changing one node in graphs belonging to $\mathcal{G}_{n, 2D}$, at most $4D$ edges can be flipped. The desired privacy guarantee then follows by post-processing. Combined with Theorem \ref{theorem:generic_red_pure}, we conclude that $\thetahat(G)$ is $(\epsilon_1 + \epsilon, e^{\epsilon_1}\delta_1)$-node DP, which is $(\epsilon_1 + \epsilon, O(\delta_1))$-node DP, since $\epsilon_1 \asymp 1$ and $\delta_1 \asymp \frac{1}{\mathrm{poly}(n)}$. This is the desired privacy guarantee.

For the utility guarantee, we begin by deriving a utility result with fixed privacy parameters:

\begin{lemma}[Adapted from \cite{HehEtal22}]
\label{lemma:Edge_Flip_Ut}
Let $B_0 \in (0, 1]^{k \times k}$ be invertible. Let $k \asymp 1$, $a_n \cdot\max(B_0) \geq \frac{\log(n)}{n}$, and $\epsilon \in (0, \infty)$. Let $\lambda_{B_0}$ be the smallest absolute non-zero eigenvalue of $B_0$. Let $G \in \mathcal{G}_{n}$ be sampled from an SBM with $k$ balanced communities and probability matrix  $a_n B_0$. Let $\theta = \{\theta_i\}_{i = 1}^n$ be the true community assignments, and let $\thetahat(G) = \mathcal{A}_{EF}^{(\epsilon)}(G)$ be the output of Algorithm \ref{alg:EF_Spec_Clus} with $\gamma \asymp 1$. Let 
\begin{align*}
g_{\epsilon} = \frac{e^{\epsilon} + 1}{e^{\epsilon} - 1}\left(a_n \cdot\max(B_0) + \frac{1}{e^{\epsilon} - 1}\right).
\end{align*}
Then there exist $c_1, C_1 \asymp 1$ such that if $\frac{(2 + \gamma)k^3ng_\epsilon}{n^2a_n ^2\lambda_{B_0}^2} < c_1^{-1}$, we have
\begin{align*}
\widetilde{\mathcal{L}}\left(\thetahat(G), \theta\right) \leq \frac{C_1g_\epsilon}{na_n^2},
\end{align*}
with probability at least $1 - \frac{1}{n}$.
\end{lemma}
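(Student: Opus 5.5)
The plan is to follow the standard Lei--Rinaldo spectral clustering template, as adapted by \cite{HehEtal22}: bound the spectral deviation of the debiased flipped matrix $A_{\downarrow}$ from a rescaled version of $P$, transfer it to the leading eigenspace via Davis--Kahan, and convert the subspace error into a misclassification count using Lemma 5.3 of \cite{LeiRin15}, as in the proof of Theorem~\ref{theorem:Lip_PCA_Main}.

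\textbf{Mean and noise of the flipped matrix.} For $i<j$, the flipped entry equals $1$ with probability $\frac{e^\epsilon}{1+e^\epsilon}A_{ij} + \frac{1}{1+e^\epsilon}(1-A_{ij})$. Hence
\begin{equation*}
\E[A_{\downarrow}] = \frac{e^\epsilon-1}{e^\epsilon+1}\left(P - \mathrm{diag}(P)\right).
\end{equation*}
The entries of $A_{\downarrow} - \E[A_{\downarrow}]$ above the diagonal are independent, bounded, centered variables. Their variances are at most
\begin{equation*}
p_{\max} := a_n\max(B_0) + \frac{1}{e^\epsilon+1}.
\end{equation*}
Apply a Lei--Rinaldo type bound (Lemma~\ref{lemma:thm5.2LeiRin}, or its variant for independent bounded entries with variance proxy $p_{\max}$, noting $np_{\max}\ge \log n$). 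With probability at least $1-\frac1n$ this gives $\|A_{\downarrow} - \E[A_{\downarrow}]\|_2 \le C\sqrt{np_{\max}}$. The diagonal correction has norm at most $\max(B)=a_n\max(B_0)$, which is negligible.

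\textbf{Rescaling and the eigengap.} Divide by $\frac{e^\epsilon-1}{e^\epsilon+1}$. The perturbation of $P$ then has squared spectral norm
\begin{equation*}
\lesssim \left(\frac{e^\epsilon+1}{e^\epsilon-1}\right)^2 n p_{\max}.
\end{equation*}
One must check this is $\asymp n\, g_\epsilon\,\frac{e^\epsilon+1}{e^\epsilon-1}$ up to constants. This is exactly where the form of $g_\epsilon$ arises, and it requires writing $\frac{1}{e^\epsilon+1}\le\frac{1}{e^\epsilon-1}$ and absorbing the prefactor. I expect matching the authors' exact $g_\epsilon$ to need a slightly careful variance bookkeeping. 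The smallest nonzero eigenvalue of $P$ is $\frac{n}{k}a_n\lambda_{B_0}$.

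\textbf{Davis--Kahan and misclassification count.} Applying Lemma~\ref{lemma:thm3YuEtal_square} with rank $k$ and the Frobenius form yields an orthogonal $Q$ with
\begin{equation*}
\|\hat U - UQ\|_F^2 \lesssim \frac{k\,\|\text{perturbation}\|_2^2}{(n a_n\lambda_{B_0}/k)^2} \lesssim \frac{k^3 n g_\epsilon}{n^2a_n^2\lambda_{B_0}^2}.
\end{equation*}
The hypothesis $\frac{(2+\gamma)k^3ng_\epsilon}{n^2a_n^2\lambda_{B_0}^2}<c_1^{-1}$ is what guarantees $(16+8\gamma)\|\hat U-UQ\|_F^2 < \frac{\delta_j^2 n_j}{2}$, where $\delta_j^2\asymp k/n$ separates the rows of $X$ in $U=\Theta X$. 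Lemma 5.3 of \cite{LeiRin15} then bounds the misclassified set in community $j$ by $|S_j|\le \delta_j^{-2}(16+8\gamma)\|\hat U-UQ\|_F^2$. Dividing by $|C_j|=n/k$ gives the worst-case rate $\widetilde{\mathcal{L}}\le C_1 g_\epsilon/(na_n^2)$, with constants depending on $k,\lambda_{B_0}\asymp1$.

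The main obstacle is the concentration step for the flipped matrix. When $\epsilon$ is small the entries are dense and of order $1/2$, while for large $\epsilon$ they are sparse. A single bound must cover both regimes, and its variance proxy must reproduce $g_\epsilon$ after rescaling. I would first try the general Lei--Rinaldo (or Bandeira--van Handel) bound with $\sigma^2=p_{\max}$.
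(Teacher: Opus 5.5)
Your route differs from the paper's. The paper re-derives nothing: it invokes Theorem 5.1 of \cite{HehEtal22} and rewrites that bound, $c_1(2+\gamma)k^3 g_\epsilon/(n a_n^2\lambda_{B_0}^2)$, as $C_1 g_\epsilon/(na_n^2)$ using $k,\gamma,\lambda_{B_0}\asymp 1$. Re-running the Lei--Rinaldo template is a legitimate alternative that makes the constants and the origin of $g_\epsilon$ explicit, and most of it works. The flipped graph $\mathcal{M}_\epsilon(A)$ is itself an independent-edge Bernoulli graph with edge probabilities at most $p_{\max}$, and $np_{\max}\ge\log n$. So Lemma~\ref{lemma:thm5.2LeiRin} applies verbatim with $d=np_{\max}$ and $r=1$. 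The Lemma~5.3 step of \cite{LeiRin15} also goes through as you describe, with $\delta_j^2 n_j=2$. Two steps, however, need correcting.

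First, the $g_\epsilon$ bookkeeping as you outline it is wrong. Bounding $\frac{1}{e^\epsilon+1}\le\frac{1}{e^\epsilon-1}$ only gives $\left(\frac{e^\epsilon+1}{e^\epsilon-1}\right)^2 np_{\max}\le \frac{e^\epsilon+1}{e^\epsilon-1}\,ng_\epsilon$. The prefactor $\frac{e^\epsilon+1}{e^\epsilon-1}\sim 2/\epsilon$ cannot be absorbed into a constant as $\epsilon\to 0$. That is exactly the regime where the paper applies this lemma: case 1 of Theorem~\ref{theorem:Edge_Flip_Main_Thm}, where the parameter is $\epsilon/(4Dx)=o(1)$. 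Your Davis--Kahan display silently uses $ng_\epsilon$, which does not follow from your claimed $\asymp ng_\epsilon\frac{e^\epsilon+1}{e^\epsilon-1}$. The correct computation is an exact cancellation on the flip term, plus $a:=a_n\max(B_0)\le 1$ on the other term:
\begin{equation*}
\left(\frac{e^\epsilon+1}{e^\epsilon-1}\right)^2\left(a+\frac{1}{e^\epsilon+1}\right)=\frac{e^\epsilon+1}{e^\epsilon-1}\left(a+\frac{2a}{e^\epsilon-1}+\frac{1}{e^\epsilon-1}\right)\le 3g_\epsilon .
\end{equation*}
Hence the rescaled perturbation satisfies $\|E\|_2^2\precsim ng_\epsilon$ uniformly in $\epsilon\in(0,\infty)$.

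Second, Lemma~\ref{lemma:thm3YuEtal_square} concerns a contiguous block $\lambda_r,\dots,\lambda_s$ of sorted eigenvalues. Algorithm~\ref{alg:EF_Spec_Clus} instead takes the $k$ leading eigenvectors in absolute value, and $B_0$ is only assumed invertible. So the nonzero eigenvalues of $P$ may have both signs, and the target eigenspace need not be such a block. Use Lemma~5.1 of \cite{LeiRin15}, which is stated for leading-in-absolute-value eigenvectors with gap equal to the smallest nonzero $|\lambda|$ of $P$. Alternatively, use the singular-vector version, Lemma~\ref{lemma:thm3YuEtal}. It gives the same rate because $\sigma_1(P)/\sigma_k(P)\asymp 1$ and the hypothesis (with $c_1$ large) forces $\|E\|_2\le\sigma_k(P)$.
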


\begin{proof}
With the given assumptions, we have the existence of $c_1, C_1 \asymp 1$, by Theorem 5.1 from \cite{HehEtal22}. Because $\frac{(2 + \gamma)k^3ng_\epsilon}{n^2a_n ^2\lambda_{B_0}^2} < c_1^{-1}$, we have again by Theorem 5.1 from \cite{HehEtal22} that
\begin{align*}
\widetilde{\mathcal{L}}\left(\mathcal{A}_{EF}^{(\epsilon)}(G), \theta\right) \leq \frac{c_1(2 + \gamma)k^3g_\epsilon}{na_n^2\lambda_{B_0}^2} \leq \frac{C_1g_\epsilon}{na_n^2},
\end{align*}
with probability at least $1 - \frac{1}{n}$, because $k, \gamma, c_1, \lambda_{B_0} \asymp 1$. This completes the proof.
\end{proof}

On the other hand, the privacy parameters used in the algorithm are \emph{random} quantities $\frac{\epsilon}{4D\hat{L}_{\epsilon_1, \delta_1}(G)}$. We first exploit the result from Lemma \ref{lemma:hp_removal_d} to guarantee that $d_{T_{D}}(G) = 0$, w.h.p., for $D \geq 3d$. Then $\hat{L}_{\epsilon_1, \delta_1}(G) = \max\left\{\frac{1}{2}, 5 + \mathrm{Lap}\left(\frac{8}{\epsilon_1}\right) + \frac{8\log(1/\delta_1)}{\epsilon_1}\right\}$, which, w.h.p., lies in an interval $I$ with bounds scaling as $\log(n)$. Finally, we will apply Lemma~\ref{lemma:Edge_Flip_Ut} with $\frac{\epsilon}{4Dx}$, for $x \in I$, to obtain the desired guarantee.

Let $X_{\epsilon_1, \delta_1} := 5 + \mathrm{Lap}\left(\frac{8}{\epsilon_1}\right) + \frac{8\log(1/\delta_1)}{\epsilon_1}$, $I := \left[5 + \frac{4\log(1/\delta_1)}{\epsilon_1}, 5 + \frac{16\log(1/\delta_1)}{\epsilon_1}\right]$, and $I_{max} := 5 + \frac{16\log(1/\delta_1)}{\epsilon_1}$. For $F_1 = \left\{X_{\epsilon_1, \delta_1} \in I\right\}$, we have
\begin{align*}
 \mathbb{P}\left(F_1^c\right) \leq \mathbb{P}\left(\mathrm{Lap}\left(\frac{8}{\epsilon_1}\right) < -\frac{4\log(1/\delta_1)}{\epsilon_1}\right) + \mathbb{P}\left(\mathrm{Lap}\left(\frac{8}{\epsilon_1}\right) > \frac{8\log(1/\delta_1)}{\epsilon_1}\right) \leq 2\sqrt{\delta_1}.
\end{align*}
Let
\begin{align*}
g_{\frac{\epsilon}{4D}} = \frac{e^{\frac{\epsilon}{4D}} + 1}{e^{\frac{\epsilon}{4D}} - 1}\left(a_n \cdot\max(B_0) + \frac{1}{e^{\frac{\epsilon}{4D}} - 1}\right).
\end{align*}
Let $c_1$ and $C_1$ be as in Lemma \ref{lemma:Edge_Flip_Ut}. Let $f_{X_{\epsilon_1, \delta_1}}$ be the pdf of $X_{\epsilon_1, \delta_1}$, and let $\Omega_0$ be as in the proof of Lemma \ref{lemma:hp_removal_d}. We have $\mathbb{P}(\Omega_0) \geq 1 - \frac{1}{\mathrm{poly}(n)}$. In addition, on $\Omega_0$, we do not have to truncate the graph, since the degree of every node is less than $2d$, so $T_{D}(G) = G$ and $d_{T_{D}}(G) = 0$. Hence, since $X_{\epsilon_1, \delta_1} \geq 5 > \frac{1}{2}$ on $F_1$, we have
\begin{align*}
&\mathbb{P}\left(\widetilde{\mathcal{L}}\left(\mathcal{A}_{EF}^{\left(\epsilon/(4D\hat{L}_{\epsilon_1, \delta_1}(G))\right)}(T_{D}(G)), \theta\right) \leq \frac{C_1g_{\frac{\epsilon}{4D\cdot I_{max}}}}{na_n^2}\right)\\
& \quad \geq \mathbb{P}\left(\left\{\widetilde{\mathcal{L}}\left(\mathcal{A}_{EF}^{\left(\epsilon/(4D\hat{L}_{\epsilon_1, \delta_1}(G))\right)}(T_{D}(G)), \theta\right) \leq \frac{C_1g_{\frac{\epsilon}{4D\cdot I_{max}}}}{na_n^2}\right\} \bigcap \Omega_0\right)\\
& \quad = \mathbb{P}\left(\left\{\widetilde{\mathcal{L}}\left(\mathcal{A}_{EF}^{\left(\epsilon/(4D\max\{1/2, X_{\epsilon_1, \delta_1}\})\right)}(T_{D}(G)), \theta\right) \leq \frac{C_1g_{\frac{\epsilon}{4D\cdot I_{max}}}}{na_n^2}\right\} \bigcap \Omega_0\right) \\
& \quad \geq \mathbb{P}\left(\left\{\widetilde{\mathcal{L}}\left(\mathcal{A}_{EF}^{\left(\epsilon/(4DX_{\epsilon_1, \delta_1})\right)}(G), \theta\right) \leq \frac{C_1g_{\frac{\epsilon}{4D\cdot I_{max}}}}{na_n^2}\right\} \bigcap \Omega_0 \cap F_1\right)\\
& \quad \geq \int_I\mathbb{P}\left(\widetilde{\mathcal{L}}\left(\mathcal{A}_{EF}^{\left(\epsilon/(4DX_{\epsilon_1, \delta_1})\right)}(G), \theta\right) \leq \frac{C_1g_{\frac{\epsilon}{4D\cdot I_{max}}}}{na_n^2}\middle\vert X_{\epsilon_1, \delta_1} = x\right)f_{X_{\epsilon_1, \delta_1}}(x)dx - \frac{1}{\mathrm{poly}(n)}\\
& \quad = \int_I\mathbb{P}\left(\widetilde{\mathcal{L}}\left(\mathcal{A}_{EF}^{\left(\epsilon/(4Dx)\right)}(G), \theta\right) \leq \frac{C_1g_{\frac{\epsilon}{4D\cdot I_{max}}}}{na_n^2}\right)f_{X_{\epsilon_1, \delta_1}}(x)dx - \frac{1}{\mathrm{poly}(n)}\\
& \quad \geq \int_I\mathbb{P}\left(\widetilde{\mathcal{L}}\left(\mathcal{A}_{EF}^{\left(\epsilon/(4Dx)\right)}(G), \theta\right) \leq \frac{C_1g_{\frac{\epsilon}{4Dx}}}{na_n^2}\right)f_{X_{\epsilon_1, \delta_1}}(x)dx - \frac{1}{\mathrm{poly}(n)},
\end{align*}
since the function $g_{\frac{\epsilon}{4Dx}}$ is increasing in $x > 0$. We also dropped the conditioning on $\left\{X_{\epsilon_1, \delta_1} = x\right\}$, since the randomness in $X_{\epsilon_1, \delta_1}$ only depends on the $\mathrm{Lap}\left(\frac{8}{\epsilon_1}\right)$ noise, and this is independent of $\widetilde{\mathcal{L}}\left(\mathcal{A}_{EF}^{(\epsilon/(4Dx))}(G), \theta\right)$. Assume $\frac{(2 + \gamma)k^3ng_{\frac{\epsilon}{4D\cdot I_{max}}}}{n^2a_n^2\lambda_{B_0}^2} < c_1^{-1}$ (we will check this holds in both cases of the theorem hypothesis). Since $g_{\frac{\epsilon}{4Dx}}$ is increasing in $x > 0$, we have $\frac{(2 + \gamma)k^3ng_{\frac{\epsilon}{4Dx}}}{n^2a_n^2\lambda_{B_0}^2} < c_1^{-1}$. Hence, by Lemma \ref{lemma:Edge_Flip_Ut}, we have 
\begin{align*}
\widetilde{\mathcal{L}}\left(\mathcal{A}_{EF}^{(\epsilon/(4Dx))}(G), \theta\right) \leq \frac{C_1g_{\frac{\epsilon}{4Dx}}}{na_n^2},
\end{align*}
with probability at least $1 - \frac{1}{n}$. Thus, we have
\begin{align*}
\mathbb{P}\left(\widetilde{\mathcal{L}}\left(\thetahat(G), \theta\right) \leq \frac{C_1g_{\frac{\epsilon}{4D\cdot I_{max}}}}{na_n^2}\right) &\geq \int_I\left(1 - \frac{1}{n}\right)f_{X_{\epsilon_1, \delta_1}}(x)dx - \frac{1}{\mathrm{poly}(n)}\\
&\geq 1 - \frac{1}{n} - \mathbb{P}(F_1^c) - \frac{1}{\mathrm{poly}(n)}\\
&\geq 1 - 2\sqrt{\delta_1} - \frac{1}{\mathrm{poly}(n)} \geq 1 - \frac{1}{\mathrm{poly}(n)},
\end{align*}
since $\delta_1 \asymp \frac{1}{\mathrm{poly}(n)}$.
Now we split the analysis into two cases, and we check that $\frac{(2 + \gamma)k^3ng_{\frac{\epsilon}{4D\cdot I_{max}}}}{n^2a_n^2\lambda_{B_0}^2} < c_1^{-1}$ in each case.

Take $\frac{D}{\sqrt{\log(n)}} < \epsilon < D$, and $d > \sqrt{n}\log^2(n)$. Since $\delta_1 \asymp \frac{1}{\mathrm{poly}(n)}$ and $\epsilon_1 \asymp 1$, we have $I_{max} \asymp\log(n)$. Then $\frac{\epsilon}{DI_{max}} < \frac{1}{I_{max}} = o(1)$, and 
\begin{align*}
g_{\frac{\epsilon}{4D\cdot I_{max}}} = \frac{e^{\frac{\epsilon}{4D\cdot I_{max}}} + 1}{e^{\frac{\epsilon}{4D\cdot I_{max}}} - 1}\left(a_n\cdot\max(B_0) + \frac{1}{e^{\frac{\epsilon}{4D\cdot I_{max}}} - 1}\right) \precsim \frac{D\log(n)}{\epsilon}\left(a_n + \frac{D\log(n)}{\epsilon}\right).
\end{align*}
Therefore, since $\epsilon < D \leq n < n\log(n)$, we have
\begin{align*}
\frac{(2 + \gamma)k^3ng_{\frac{\epsilon}{4D\cdot I_{max}}}}{n^2a_n^2\lambda_{B_0}^2} & \asymp \frac{g_{\frac{\epsilon}{4D\cdot I_{max}}}}{na_n^2} \precsim \frac{D\log(n)}{d\epsilon} + \frac{nD^2\log^2(n)}{d^2\epsilon^2} \\
& \precsim \frac{nD^2\log^2(n)}{d^2\epsilon^2} < \frac{D^2}{\log^2(n)\epsilon^2} < \frac{1}{\log(n)} < c_1^{-1},
\end{align*}
for $n$ large enough. So $\widetilde{\mathcal{L}}\left(\thetahat(G),  \theta\right) \precsim \frac{nD^2\log^2(n)}{d^2\epsilon^2} < \frac{1}{\log(n)}$, with probability at least $1 - \frac{1}{\mathrm{poly}(n)}$.

Now let $C'$ be such that $I_{max} \leq \frac{C'\log(n)}{4}$, and assume $C'D\log^2(n) < \epsilon$. Then $g_{\frac{\epsilon}{4D\cdot I_{max}}} \precsim a_n + e^{-\frac{\epsilon}{4D\cdot I_{max}}}$, so
\begin{equation*}
\frac{(2 + \gamma)k^3ng_{\frac{\epsilon}{4D\cdot I_{max}}}}{n^2a_n^2\lambda_{B_0}^2} \asymp \frac{g_{\frac{\epsilon}{4D\cdot I_{max}}}}{na_n^2} \precsim \frac{1}{d} + \frac{n}{d^2e^{\epsilon/(C'D\log(n))}} < \frac{1}{d} + \frac{1}{d^2} \precsim \frac{1}{d} < c_1^{-1},
\end{equation*}
for $n$ large enough, since $d \succsim \log(n)$. So $\widetilde{\mathcal{L}}\left(\thetahat(G), \theta\right) \precsim \frac{1}{d} \precsim \frac{1}{\log(n)}$, with probability at least $1 - \frac{1}{\mathrm{poly}(n)}$. This completes the proof.


\subsection{Proofs for Section~\ref{SecLR}}
\label{AppLR}

\subsubsection{Proof of Theorem~\ref{theorem:Matrix_Mech_Main_Thm}}
\label{AppMatrixMech}



To establish privacy, we begin with the following result:
\begin{lemma}
\label{lemma:d_tight_priv}
Let $2\epsilon < \log(1/\delta)$. Then Algorithm \ref{alg:Matrix_Estimation_hardt2014} with privacy parameters $\left(\frac{\epsilon}{4D}, \delta\right)$ is $(2\epsilon, \delta)_{2D}$-node DP. 
\end{lemma}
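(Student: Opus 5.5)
The plan is to pass through zCDP, as the paper suggests in Section~\ref{SecEdge}, because converting edge privacy to node privacy requires group privacy over roughly $4D$ edges. Group privacy in $(\epsilon,\delta)$-DP would blow up $\delta$ by a factor $e^{\Theta(\epsilon)}$, whereas in zCDP it only costs a multiplicative factor of $T^2$.

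\textbf{Step 1.} By Lemma~\ref{lemma:hardt2014_conc_bound}, Algorithm~\ref{alg:Matrix_Estimation_hardt2014} run with parameters $(\epsilon',\delta)$ is $\rho$-edge zCDP with
\begin{equation*}
\rho=\frac{(\epsilon')^2}{4\log(1/\delta)}.
\end{equation*}
With $\epsilon'=\frac{\epsilon}{4D}$, this gives $\rho=\frac{\epsilon^2}{64D^2\log(1/\delta)}$. The postprocessing steps (SVD, $k$-means, argmax) preserve this by the postprocessing lemma for zCDP.

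\textbf{Step 2.} Two node-adjacent graphs in $\mathcal{G}_{n,2D}$ differ only in the row and column of one node $u$. Both versions of $u$ have degree at most $2D$, so at most $4D$ unordered edges $\{u,v\}$ change. Each such change is one edge-adjacency step, and every intermediate graph is a legitimate input, since zCDP for edge adjacency holds over all of $\mathcal{G}_n$. Applying group privacy for zCDP (Lemma~\ref{LemGroupZ}) with group size $4D$, the algorithm is
\begin{equation*}
(4D)^2\rho=\frac{\epsilon^2}{4\log(1/\delta)}\text{-zCDP}
\end{equation*}
with respect to node adjacency on $\mathcal{G}_{n,2D}$.

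\textbf{Step 3.} Convert back using Lemma~\ref{lemma:zCDP_to_DP}. With $\rho'=\frac{\epsilon^2}{4\log(1/\delta)}$ we get $(\rho'+\sqrt{4\rho'\log(1/\delta)},\delta)$-DP, where
\begin{equation*}
\rho'+\sqrt{4\rho'\log(1/\delta)}=\frac{\epsilon^2}{4\log(1/\delta)}+\epsilon.
\end{equation*}
The hypothesis $2\epsilon<\log(1/\delta)$ gives $\frac{\epsilon^2}{4\log(1/\delta)}<\frac{\epsilon}{8}\le\epsilon$. Hence the total is at most $2\epsilon$, and the algorithm is $(2\epsilon,\delta)_{2D}$-node DP.

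\textbf{Main obstacle.} The step needing the most care is Step 2: confirming that the group size is at most $4D$, and not $n$ or $2D$. The count is $2D$ edges removed plus $2D$ edges added at node $u$; a crude bound of $\deg(u)+\deg'(u)\le 4D$ already suffices. One must also check that group privacy in zCDP applies along a path of edge-adjacent graphs whose intermediate graphs may fall outside $\mathcal{G}_{n,2D}$. This is harmless because the edge-zCDP guarantee of Lemma~\ref{lemma:hardt2014_conc_bound} holds for arbitrary adjacency matrices, with sensitivity $\|A-A'\|_2\le 1$ for any edge-adjacent pair.
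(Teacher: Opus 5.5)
Your proposal is correct and is essentially the paper's own argument: edge-zCDP with $\rho=\frac{\epsilon^2}{64D^2\log(1/\delta)}$ from Lemma~\ref{lemma:hardt2014_conc_bound}, then zCDP group privacy (Lemma~\ref{lemma:Group_zCDP}) over the at most $4D$ edges that change at the rewired node, then conversion via Lemma~\ref{lemma:zCDP_to_DP}, where $2\epsilon<\log(1/\delta)$ gives $\frac{\epsilon^2}{4\log(1/\delta)}+\epsilon<\frac{9\epsilon}{8}\le 2\epsilon$. Your constant $64D^2$ is the correct one; the paper's write-up has $64D$, a typo. You also observe that the edge-zCDP guarantee holds on all of $\mathcal{G}_n$, so intermediate graphs along the group-privacy path need not lie in $\mathcal{G}_{n,2D}$; the paper leaves this detail implicit.
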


\begin{proof}
By Lemma \ref{lemma:hardt2014_conc_bound}, we know that $\hat{A}_{(2k)}$ (using privacy parameters $\left(\frac{\epsilon}{4D}, \delta\right)$) satisfies $\frac{\epsilon^2}{64D\log(1/\delta)}$-edge zCDP. By postprocessing, Algorithm \ref{alg:Matrix_Estimation_hardt2014} satisfies $\frac{\epsilon^2}{64D\log(1/\delta)}$-edge zCDP. When changing one node in graphs belonging to $\mathcal{G}_{n, 2D}$, at most $4D$ edges can be flipped. Hence, by group privacy (cf.\ Lemma \ref{lemma:Group_zCDP}), Algorithm \ref{alg:Matrix_Estimation_hardt2014} satisfies $\left(\frac{\epsilon^2}{4\log(1/\delta)}\right)_{2D}$-zCDP. Finally, by Lemma \ref{lemma:zCDP_to_DP}, we obtain $\left(\frac{\epsilon^2}{4\log(1/\delta)} + \epsilon, \delta\right)_{2D}$-DP. Since $\delta < e^{-2\epsilon}$, this is also $(9\epsilon/8, \delta)_{2D}$-DP, and the conclusion follows.
\end{proof}

To see the benefit of zCDP, suppose we used Lemma \ref{lemma:hardt2014_conc_bound} and we converted it immediately to $(\epsilon, \delta)$-DP. We could use group privacy with parameters $\left(\frac{\epsilon}{4D}, \frac{\delta}{4D}\right)$ after, to obtain a final guarantee of $(\epsilon, \delta e^{\epsilon})$-DP, provided $2\epsilon < \log(1/\delta)$. Hence, zCDP presents an improvement, especially when $\epsilon$ grows quickly with $n$.

Thus, by Lemma \ref{lemma:d_tight_priv}, we know that if $\frac{2\epsilon}{T} < \log(1/\delta)$, then $\mathcal{A}_{ME}^{(\epsilon/(4TD), \delta)}$ is $(2\epsilon/T, \delta)_{2D}$-node DP. Since $\frac{2\epsilon}{T} < \log(1/(\delta\cdot\mathrm{poly}(n)))$, Theorem \ref{theorem:generic_red} implies that $\widetilde{\theta}(G_j) = \mathcal{A}_{ME}^{\left(\epsilon/(4TD\hat{L}_{\epsilon_1, \delta_1}(G_j)), \delta/\hat{L}_{\epsilon_1, \delta_1}(G_j)\right)}(T_{D}(G_j))$ is $(\epsilon_1 + 2\epsilon/T, 1/\mathrm{poly}(n))$-node DP. We also used the fact that $\epsilon_1 \asymp 1$ and $\delta_1 \asymp \frac{1}{\mathrm{poly}(n)}$. Since $T \asymp \log(n)$, we know that $\epsilon/T$ grows with $n$. Since $\epsilon_1 \asymp 1$, we know that $\epsilon_1 \leq \epsilon/T$, for $n$ large enough, and thus, $\widetilde{\theta}(G_j)$ is $(3\epsilon/T, 1/\mathrm{poly}(n))$-node DP. Finally, by Proposition \ref{prop:Boost_result}, we know that $\thetahat(G)$ is $(3\epsilon, 1/\mathrm{poly}(n))$-node DP.

For the utility guarantee, let us start by focusing on $G_1 \sim SBM\left(n, k, \frac{B}{T}\right)$. We will then use Proposition \ref{prop:Boost_result} to boost the accuracy. Let $X_{\epsilon_1, \delta_1} := 5 + \mathrm{Lap}\left(\frac{8}{\epsilon_1}\right) + \frac{8\log(1/\delta_1)}{\epsilon_1}$, $I := \left[5 + \frac{4\log(1/\delta_1)}{\epsilon_1}, 5 + \frac{16\log(1/\delta_1)}{\epsilon_1}\right]$, and $I_{max} := 5 + \frac{16\log(1/\delta_1)}{\epsilon_1}$. Then, for $F_1 = \left\{X_{\epsilon_1, \delta_1} \in I\right\}$, we have
\begin{align*}
 \mathbb{P}\left(F_1^c\right) \leq \mathbb{P}\left(\mathrm{Lap}\left(\frac{8}{\epsilon_1}\right) < -\frac{4\log(1/\delta_1)}{\epsilon_1}\right) + \mathbb{P}\left(\mathrm{Lap}\left(\frac{8}{\epsilon_1}\right) > \frac{8\log(1/\delta_1)}{\epsilon_1}\right) \leq 2\sqrt{\delta_1}.
\end{align*}

Let
\begin{align*}
h_{\epsilon, \delta} = \frac{T}{d} + \frac{nT^2\log^2(n)\log(1/\delta)}{d^2\epsilon^2} + \frac{n^2T^4\log^4(n)\log^2(1/\delta)}{d^4\epsilon^4}.
\end{align*}
Since we are working with $G_1 \sim SBM\left(n, k, \frac{B}{T}\right)$, we need to scale $d$ accordingly, as well.

We first derive the following result, proved in Appendix~\ref{AppInitConc}, which bounds the accuracy of the algorithm with fixed privacy parameters $(\epsilon, \delta)$:

\begin{lemma}
\label{lemma:Initial_Conc_bd_matrix_estim}
Let $G \sim SBM(n, k, B, \theta)$, with parameters satisfying Assumption~\ref{AssSBM}. Let $\theta = \{\theta_i\}_{i = 1}^n$ be the true community assignments, and let
$\thetahat(G) = \mathcal{A}_{ME}^{(\epsilon, \delta)}(G)$ be the output of Algorithm \ref{alg:Matrix_Estimation_hardt2014} with $\gamma \asymp 1$. Then there exists $c_1 \asymp 1$ such that if
\begin{equation*}
(16 + 8\gamma)\left(\frac{1}{d} + \frac{n\log^2(n)\log(1/\delta)}{d^2\epsilon^2} + \frac{n^2\log^4(n)\log^2(1/\delta)}{d^4\epsilon^4}\right) < c_1,
\end{equation*}
we have
\begin{align*}
\widetilde{\mathcal{L}}\left(\thetahat(G), \theta\right) \precsim \frac{1}{d} + \frac{n\log^2(n)\log(1/\delta)}{d^2\epsilon^2} + \frac{n^2\log^4(n)\log^2(1/\delta)}{d^4\epsilon^4},
\end{align*}
with probability at least $0.9 - \frac{1}{\mathrm{poly}(n)}$.
\end{lemma}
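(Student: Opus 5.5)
The plan mirrors the utility proof of Theorem~\ref{theorem:Lip_PCA_Main}. The chain is: a spectral bound on $\|A - \hat{A}_{(2k)}\|_2$, then a subspace perturbation bound via Davis--Kahan, then the $k$-means misclassification argument of Lemma 5.3 in \cite{LeiRin15}.

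\textbf{Step 1: regularity event.} I would work on the event $\Omega_1$ of Lemma~\ref{lemma:thm5.2LeiRin}, on which $\|A - \E[A]\|_2 \precsim \sqrt{d}$. Since $\|\mathrm{diag}(P)\|_2 \le a_n \max(B_0) \precsim 1$, this also gives $\|A - P\|_2 \precsim \sqrt{d}$.

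\textbf{Step 2: spectrum of $A$.} Because $P = J(B\otimes J_{n/k})J^T$ has rank $k$ and $B = a_nB_0$ with $\lambda_{B_0}\asymp 1$, its nonzero eigenvalues are of order $\frac{n}{k} a_n \asymp d$, with $|\lambda_k(P)| \succsim d$. Weyl's inequality (Lemma~\ref{LemWeyl}) then gives, on $\Omega_1$:
\begin{itemize}
\item $\sigma_1(A) \precsim d$;
\item $\sigma_k(A) \succsim d$;
\item $\sigma_{k+1}(A) \precsim \sqrt{d}$.
\end{itemize}
Consequently $\gamma_1 = 1 - \sigma_{k+1}(A)/\sigma_k(A) \ge 1/2$ for large $n$.

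\textbf{Step 3: error of the private low-rank estimate.} Plugging Step 2 into Lemma~\ref{lemma:hardt2014_conc_bound} (probability at least $0.9$, independent noise), I expect
\[
\|A - \hat{A}_{(2k)}\|_2 \precsim \sqrt{d} + \frac{d\sqrt{n\log(n)\log\log n}}{d}\cdot\frac{\sqrt{\log(1/\delta)}}{\epsilon} \precsim \sqrt{d} + \frac{\sqrt{n}\log(n)\sqrt{\log(1/\delta)}}{\epsilon}.
\]
Adding $\|A-P\|_2$ controls $E := \|\hat{A}_{(2k)} - P\|_2$ by the same quantity.

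\textbf{Step 4: subspace perturbation.} Let $U\in\mathbb{O}(n,k)$ span the column space of $P$, and compare it with $U_{(k)}$, the top-$k$ left singular vectors of $\hat{A}_{(2k)}$. I would apply the non-square Davis--Kahan bound, Lemma~\ref{lemma:thm3YuEtal}, with $r=1$, $s=k$. Its gap is $\sigma_k^2(P) - 0 \succsim d^2$, and its prefactor is $2\sigma_1(P) + E$. This yields an orthogonal $Q$ with
\[
\|U_{(k)} - UQ\|_F \precsim \frac{(d + E)E}{d^2}.
\]
Squaring gives
\[
\|U_{(k)} - UQ\|_F^2 \precsim \frac{E^2}{d^2} + \frac{E^4}{d^4} \precsim \frac{1}{d} + \frac{n\log^2(n)\log(1/\delta)}{d^2\epsilon^2} + \frac{n^2\log^4(n)\log^2(1/\delta)}{d^4\epsilon^4}.
\]
This is exactly the three-term quantity in the statement. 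The main obstacle is this step: checking that the degenerate eigenvalue structure of $P$ (possibly negative eigenvalues, so singular vectors rather than eigenvectors) fits Lemma~\ref{lemma:thm3YuEtal}, and tracking the $\log\log$ and $\gamma_1$ factors so they absorb into $\log^2(n)$.

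\textbf{Step 5: misclassification.} As in the end of the proof of Theorem~\ref{theorem:Lip_PCA_Main}, write $UQ = \Theta X'$ with row separations $\|X'_{j*}-X'_{l*}\|_2 \asymp \sqrt{k/n}$, using balanced communities and \cite{LeiRin15}. Set $\delta_j^2 \asymp k/n$ and use the hypothesis $(16+8\gamma)\|U_{(k)}-UQ\|_F^2 < c_1$, with $c_1$ chosen to match $\delta_j^2 n/2$, to satisfy the condition of Lemma 5.3 in \cite{LeiRin15}. This gives
\[
\widetilde{\mathcal{L}} \le \max_j \frac{2k|S_j|}{n} \precsim \sum_j |S_j|\delta_j^2 \le (16+8\gamma)\|U_{(k)}-UQ\|_F^2.
\]

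\textbf{Conclusion.} A union bound over the $0.9$ event of Lemma~\ref{lemma:hardt2014_conc_bound} and $\Omega_1$ (probability $1-1/\mathrm{poly}(n)$) gives the stated probability $0.9 - \frac{1}{\mathrm{poly}(n)}$.
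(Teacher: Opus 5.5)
Your proposal is correct and follows essentially the same route as the paper: the regularity event from Lemma~\ref{lemma:thm5.2LeiRin}, Weyl-type singular-value control giving $\gamma_1 \ge 1/2$, the Hardt--Price bound of Lemma~\ref{lemma:hardt2014_conc_bound}, the non-square Davis--Kahan bound of Lemma~\ref{lemma:thm3YuEtal} comparing $P$ with $\hat{A}_{(2k)}$, and the Lei--Rinaldo Lemma 5.3 argument. The only cosmetic difference is that you square the bound $\|U_{(k)} - UQ\|_F \precsim (d+E)E/d^2$ directly, whereas the paper first bounds $\|\hat{\Pi} - \Pi\|_2$ and then returns to $\inf_Q \|U_{(k)} - UQ\|_F^2$ via $\sin\Theta$ distances, which costs only a factor $k \asymp 1$.
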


In order to handle the random privacy parameters $\left(\frac{\epsilon}{4TD\hat{L}_{\epsilon_1, \delta_1}(G)}, \frac{\delta}{\hat{L}_{\epsilon_1, \delta_1}(G)}\right)$, we exploit the result from Lemma \ref{lemma:hp_removal_d} to guarantee that w.h.p., for $D \geq \frac{3d}{T}$, we have $d_{T_{D}}(G_1) = 0$. Then note that $\hat{L}_{\epsilon_1, \delta_1}(G_1) = \max\left\{\frac{1}{2}, 5 + \mathrm{Lap}\left(\frac{8}{\epsilon_1}\right) + \frac{8\log(1/\delta_1)}{\epsilon_1}\right\}$, and w.h.p., we concentrate this in an interval $I$ with bounds scaling as $\log(n)$. The utility based on $G_1$ follows from Lemma~\ref{lemma:Initial_Conc_bd_matrix_estim} with $\left(\frac{\epsilon}{4TDx}, \frac{\delta}{x}\right)$, for $x \in I$. The final utility guarantee for the boosted version applied to $G \sim SBM(n, k, B, \theta)$, follows from Proposition \ref{prop:Boost_result}.

To this end, let $c_1$ and $C_1$ be as in Lemma \ref{lemma:Initial_Conc_bd_matrix_estim}. Next, let $f_{X_{\epsilon_1, \delta_1}}$ be the pdf of $X_{\epsilon_1, \delta_1}$. Also, let $\Omega_0$ be as in the proof of Lemma \ref{lemma:hp_removal_d} applied to $\frac{a_n}{T}$. The guarantee on this event holds because we are under Assumption \ref{AssSBM} applied to $\frac{a_n}{T}$. We have $\mathbb{P}(\Omega_0) \geq 1 - \frac{1}{\mathrm{poly}(n)}$. In addition, on $\Omega_0$, we do not have to truncate the graph, since the degree of every node is less than $\frac{2d}{T}$. So on $\Omega_0$, we have $T_{D}(G_1) = G_1$ and $d_{T_{D}}(G_1) = 0$. Hence, since $X_{\epsilon_1, \delta_1} \geq 5 > \frac{1}{2}$ on $F_1$, we have
\begin{align*}
&\mathbb{P}\left(\widetilde{\mathcal{L}}\left(\widetilde{\theta}(G_1), \theta\right) \leq C_1h_{\frac{\epsilon}{4TDI_{max}}, \frac{\delta}{I_{max}}}\right)\\
&\geq \mathbb{P}\left(\left\{\widetilde{\mathcal{L}}\left(\widetilde{\theta}(G_1), \theta\right) \leq C_1h_{\frac{\epsilon}{4TDI_{max}}, \frac{\delta}{I_{max}}}\right\} \cap \Omega_0\right)\\
&\geq \mathbb{P}\left(\left\{\widetilde{\mathcal{L}}\left(\mathcal{A}_{ME}^{\left(\epsilon/(4TD\max\{1/2, X_{\epsilon_1, \delta_1}\}), \delta/\max\{1/2, X_{\epsilon_1, \delta_1}\}\right)}(G_1), \theta\right) \leq C_1h_{\frac{\epsilon}{4TDI_{max}}, \frac{\delta}{I_{max}}}\right\} \cap \Omega_0 \cap F_1\right)\\
&= \mathbb{P}\left(\left\{\widetilde{\mathcal{L}}\left(\mathcal{A}_{ME}^{\left(\epsilon/(4TDX_{\epsilon_1, \delta_1}), \delta/X_{\epsilon_1, \delta_1}\right)}(G_1), \theta\right) \leq C_1h_{\frac{\epsilon}{4TDI_{max}}, \frac{\delta}{I_{max}}}\right\} \cap \Omega_0 \cap F_1\right)\\
&\geq \int_I\mathbb{P}\left(\widetilde{\mathcal{L}}\left(\mathcal{A}_{ME}^{\left(\epsilon/(4TDX_{\epsilon_1, \delta_1}), \delta/X_{\epsilon_1, \delta_1}\right)}(G_1), \theta\right) \leq C_1h_{\frac{\epsilon}{4TDI_{max}}, \frac{\delta}{I_{max}}}\middle\vert X_{\epsilon_1, \delta_1} = x\right)f_{X_{\epsilon_1, \delta_1}}(x)dx \\
& \qquad \qquad - \frac{1}{\mathrm{poly}(n)}\\
&= \int_I\mathbb{P}\left(\widetilde{\mathcal{L}}\left(\mathcal{A}_{ME}^{\left(\epsilon(4TDx), \delta/x\right)}(G_1), \theta\right) \leq C_1h_{\frac{\epsilon}{4TDI_{max}}, \frac{\delta}{I_{max}}}\right)f_{X_{\epsilon_1, \delta_1}}(x)dx - \frac{1}{\mathrm{poly}(n)}\\
&\geq \int_I\mathbb{P}\left(\widetilde{\mathcal{L}}\left(\mathcal{A}_{ME}^{\left(\epsilon/(4TDx), \delta/x\right)}(G_1), \theta\right) \leq C_1h_{\frac{\epsilon}{4TDx}, \frac{\delta}{x}}\right)f_{X_{\epsilon_1, \delta_1}}(x)dx - \frac{1}{\mathrm{poly}(n)},
\end{align*}
since the function $h_{\frac{\epsilon}{4TDx}, \frac{\delta}{x}}$ is increasing in $x > 0$. We also dropped the conditioning on $\left\{X_{\epsilon_1, \delta_1} = x\right\}$, since the randomness in $X_{\epsilon_1, \delta_1}$ only depends on the $\mathrm{Lap}\left(\frac{8}{\epsilon_1}\right)$ noise, and this is independent of $\widetilde{\mathcal{L}}\left(\mathcal{A}_{ME}^{(\epsilon/(4TDx), \delta/x}(G_1), \theta\right)$. Now, assume $(16 + 8\gamma)h_{\frac{\epsilon}{4TDI_{max}}, \frac{\delta}{I_{max}}} < c_1$ (we will later check that this holds). Since $h_{\frac{\epsilon}{4TDx}, \frac{\delta}{x}}$ is increasing in $x > 0$, we have $(16 + 8\gamma)h_{\frac{\epsilon}{4TDx}, \frac{\delta}{x}} < c_1$. By Lemma \ref{lemma:Initial_Conc_bd_matrix_estim}, we obtain
\begin{align*}
\widetilde{\mathcal{L}}\left(\mathcal{A}_{ME}^{\left(\epsilon/(4TDx), \delta/x\right)}(G_1), \theta\right) \leq C_1h_{\frac{\epsilon}{4TDx}, \frac{\delta}{x}},
\end{align*}
with probability at least $0.9 -\frac{1}{\mathrm{poly}(n)}$. Thus, we have
\begin{align*}
\mathbb{P}\left(\widetilde{\mathcal{L}}\left(\widetilde{\theta}(G_1), \theta\right) \leq C_1h_{\frac{\epsilon}{4TDI_{max}}, \frac{\delta}{I_{max}}}\right) &\geq \int_I\left(0.9 - \frac{1}{\mathrm{poly}(n)}\right)f_{X_{\epsilon_1, \delta_1}}(x)dx - \frac{1}{\mathrm{poly}(n)}\\
&\geq 0.9 - \frac{1}{\mathrm{poly}(n)} - \mathbb{P}(F_1^c) - \frac{1}{\mathrm{poly}(n)}\\
&\geq 0.9 - 2\sqrt{\delta_1} - \frac{1}{\mathrm{poly}(n)} \geq 0.9 - \frac{1}{\mathrm{poly}(n)},
\end{align*}
since $\delta_1 \asymp \frac{1}{\mathrm{poly}(n)}$.
We now check that $(16 + 8\gamma)h_{\frac{\epsilon}{4TDI_{max}}, \frac{\delta}{I_{max}}} < c_1$. 

Recall that $d > \sqrt{n}\log^5(n)$ and $\epsilon > D^2\log(n)$. Since $\delta_1 \asymp \frac{1}{\mathrm{poly}(n)}$, $\epsilon_1 \asymp 1$, we obtain $I_{max} \asymp \log(n)$. Since $\delta > \frac{e^{-\Theta(\epsilon/T)}}{\mathrm{poly}(n)}$ and $\frac{\epsilon}{T}$ is growing with $n$, we obtain
\begin{align*}
\frac{D^2T^4n\log^4(n)\log(\log(n)/\delta)}{d^2\epsilon^2} & \precsim \frac{D^2T^4n\log^5(n)\epsilon}{d^2\epsilon^2T} = \frac{D^2T^3n\log^5(n)}{d^2\epsilon} \\
& \asymp \frac{D^2n\log^8(n)}{d^2\epsilon} < \frac{D^2}{\epsilon\log^2(n)} < \frac{1}{\log^3(n)}.
\end{align*}
Thus, we have
\begin{align*}
h_{\frac{\epsilon}{4TDI_{max}}, \frac{\delta}{I_{max}}} &= \frac{T}{d} + \frac{D^2T^4I_{max}^2n\log^2(n)\log(I_{max}/\delta)}{d^2\epsilon^2} + \frac{D^4T^8I_{max}^4n^2\log^4(n)\log^2(I_{max}/\delta)}{d^4\epsilon^4}\\
&\precsim \frac{T}{d} + \frac{D^2T^4n\log^4(n)\log(\log(n)/\delta)}{d^2\epsilon^2} \precsim \frac{1}{\log^3(n)}.
\end{align*}
Since $\gamma, c_1 \asymp 1$, we obtain $(16 + 8\gamma)h_{\frac{\epsilon}{4TDI_{max}}, \frac{\delta}{I_{max}}} < c_1$. Hence, since $T \asymp \log(n)$ and $d > \sqrt{n}\log^5(n)$, we have
\begin{align*}
\widetilde{\mathcal{L}}\left(\widetilde{\theta}(G_1), \theta\right) \precsim \frac{T}{d} + \frac{D^2T^4n\log^3(n)\log(\log(n)/\delta)}{d^2\epsilon^2} \precsim \frac{1}{\sqrt{n}\log^4(n)} + \frac{D^2}{\epsilon\log^2(n)} \precsim \frac{1}{\log^3(n)},
\end{align*}
with probability at least $0.9 - \frac{1}{\mathrm{poly}(n)}$. So for $n$ large enough, we have
\begin{align*}
\mathcal{L}\left(\widetilde{\theta}(G_1), \theta\right) \leq \widetilde{\mathcal{L}}\left(\widetilde{\theta}(G_1), \theta\right) \leq \frac{1}{\sqrt{n}\log^3(n)} + \frac{D^2}{\epsilon\log(n)} \precsim \frac{1}{\log^2(n)},
\end{align*}
with probability at least $0.89$. Now we apply Algorithm \ref{alg:Boost}, i.e., $\mathrm{GraphBoost}\left(T, \xi, G, \frac{3\epsilon}{T}, \frac{1}{\mathrm{poly}(n)}, k, \widetilde{\theta}\right)$. By Proposition \ref{prop:Boost_result}, we have
\begin{align*}
\mathcal{L}\left(\thetahat(G), \theta\right) \precsim\frac{1}{\sqrt{n}\log^2(n)} + \frac{D^2}{\epsilon} \precsim \frac{1}{\log(n)},
\end{align*}
with probability at least $1 - \frac{c}{\log(n)}$, for some $0 < c \asymp 1$. Since all the communities in $\theta$ have size $\frac{n}{k}$, it is easy to see that $\widetilde{\mathcal{L}}\left(\thetahat(G), \theta\right) \leq k\mathcal{L}\left(\thetahat(G), \theta\right)$, and since $k \asymp 1$, the conclusion follows.


\subsubsection{Proof of Lemma~\ref{lemma:Initial_Conc_bd_matrix_estim}}
\label{AppInitConc}

Recall that $P = J(B \otimes J_{\frac{n}{k}})J^T \in \mathbb{R}^{n \times n}$, and let $\Pi \in \mathbb{R}^{n \times n}$ be the projection onto $\text{col}(P)$. We run Algorithm \ref{alg:Matrix_Estimation_hardt2014} with adjacency matrix $A, k, \epsilon, \delta$, and $L \asymp \log(n)/\gamma_1$, where $\gamma_1 = 1 - \frac{\sigma_{k + 1}(A)}{\sigma_k(A)}$. By Lemma \ref{lemma:hardt2014_conc_bound}, there exists an event $\Omega_1$, with $\mathbb{P}(\Omega_1) \geq 0.9$, such that  
\begin{align*}
\left\|A - \hat{A}_{(2k)}\right\|_2 \leq \sigma_{k + 1}(A) + O\left(\frac{\sigma_1(A)\sqrt{(n/\gamma_1)\log(n)\log(\log(n)/\gamma_1)}}{\sigma_k(A) - \sigma_{k + 1}(A)}\cdot \frac{\sqrt{\log(1/\delta)}}{\epsilon}\right),
\end{align*}
on $\Omega_1$. By Lemma \ref{lemma:thm5.2LeiRin}, since $a_n \succsim \frac{\log(n)}{n}$, there is an event $\Omega_2$, with $\mathbb{P}(\Omega_2) \geq 1 - \frac{1}{n}$, such that $||A - \mathbb{E}[A]||_2 = ||A - P + \text{diag}(P)||_2 \precsim \sqrt{na_n}$ on $\Omega_2$. Let us work on $\Omega_2' = \Omega_1 \cap \Omega_2$. Hence, $||A - P||_2 \precsim \sqrt{na_n} + a_n \precsim \sqrt{na_n}$. By Lemma \ref{lemma:l2.1SinSte}, we have
\begin{align*}
\sigma_1(A) = \sigma_1(A - P + P) \leq ||A - P||_2 + \sigma_1(P) \precsim \sqrt{na_n} + na_n \precsim na_n.    
\end{align*}
Also, 
\begin{align*}
\sigma_k(A) \geq \sigma_k(P) - ||A - P||_2 \succsim na_n - \sqrt{na_n} \succsim na_n.
\end{align*}
Hence, $\sigma_1(A), \sigma_k(A) \asymp na_n$. Next, by Lemma \ref{lemma:l2.1SinSte} again, we have
\begin{align*}
\sigma_{k + 1}(A) \leq \sigma_{k + 1}(P) + ||A - P||_2  = ||A - P||_2 \precsim \sqrt{na_n},
\end{align*}
since $P$ has rank $k$. So $\gamma_1 \succsim 1 - \frac{1}{\sqrt{d}}$. Hence, $\gamma_1 \geq \frac{1}{2}$, for $n$ large enough, because $d \succsim \log(n)$. Therefore, we have
\begin{align*}
    \left\|A - \hat{A}_{(2k)}\right\|_2 \precsim \sqrt{na_n} + \frac{\log(n)\sqrt{n\log(1/\delta)}}{\epsilon},
\end{align*}
on $\Omega_2'$. Write $\Pi = UU^T$, with $U \in \mathbb{R}^{n \times k}$ and $U^TU = I_k$. Let $\hat{\Pi} = U_{(k)}U_{(k)}^T$. Then 
\begin{align*}
& ||\hat{\Pi} - \Pi||_2 = ||U_{(k)}U_{(k)}^T - UU^T||_2 \stackrel{(a)}{\leq} 2||\sin(\Theta)(U_{(k)}, U)||_2 \stackrel{(b)}{\leq} 2||\sin(\Theta)(U_{(k)}, U)||_F\\
& \quad \stackrel{(c)}{\leq} \mathop{\inf}_{Q: Q^TQ = QQ^T = I_k}||U_{(k)}Q - U||_F \stackrel{(d)}{\leq} ||U_{(k)}\hat{Q} - U||_F\\
& \quad \precsim \frac{\sqrt{k}\left(\sigma_1(P) + \left\|\hat{A}_{(2k)} - P\right\|_2\right)\left\|\hat{A}_{(2k)} - P\right\|_2}{\sigma_k^2(P)} \precsim \frac{\left(na_n + \left\|\hat{A}_{(2k)} - P\right\|_2\right)\left\|\hat{A}_{(2k)} - P\right\|_2}{(na_n)^2}.
\end{align*}
Inequality (a) follows from Lemma \ref{lemma:l2.3SinSte}. Inequality (b) holds because the Frobenius norm is greater than the operator norm. Inequality (c) holds because of Lemma \ref{lemma:prop2.2VuLei}. Inequality (d) and the existence of the $k \times k$ orthogonal matrix $\hat{Q}$ hold because of Lemma \ref{lemma:thm3YuEtal}. We also used the facts that $k \asymp 1$ and $\sigma_1(P) \asymp \sigma_{k}(P) \asymp na_n$. Hence, since
\begin{equation*}
\left\|\hat{A}_{(2k)} - P\right\|_2 \leq \left\|\hat{A}_{(2k)} - A\right\|_2 +  ||A - P||_2 \precsim \sqrt{na_n} + \frac{\log(n)\sqrt{n\log(1/\delta)}}{\epsilon},
\end{equation*}
we have with probability at least $0.9 - \frac{1}{\mathrm{poly}(n)}$ that
\begin{align*}
||\hat{\Pi} - \Pi||_2 &\precsim \frac{\left(na_n + \sqrt{na_n} + \frac{\log(n)\sqrt{n\log(1/\delta)}}{\epsilon}\right)\left(\sqrt{na_n} + \frac{\log(n)\sqrt{n\log(1/\delta)}}{\epsilon}\right)}{(na_n)^2}\\
&\precsim \frac{\left(d + \frac{\log(n)\sqrt{n\log(1/\delta)}}{\epsilon}\right)\left(\sqrt{d} + \frac{\log(n)\sqrt{n\log(1/\delta)}}{\epsilon}\right)}{d^2}\\
&\precsim \frac{1}{\sqrt{d}} + \frac{\log(n)\sqrt{n\log(1/\delta)}}{d\epsilon} + \frac{n\log^2(n)\log(1/\delta)}{d^2\epsilon^2}.
\end{align*}
Now note that by Lemma \ref{lemma:l2.3SinSte} and Lemma \ref{lemma:prop2.2VuLei}, since $k \asymp 1$, we have
\begin{align*}
\frac{1}{2}\inf\limits_{Q \in \mathbb{V}_{k, k}}||U_{(k)} - UQ||_F^2 & = \frac{1}{2}||U_{(k)} - UQ'||_F^2 \leq ||\sin(\Theta)(U_{(k)}, U)||_F^2 \\
& \leq k||\sin(\Theta)(U_{(k)}, U)||_2^2 \leq k||\hat{\Pi} - \Pi||_2^2\\
&\precsim \frac{1}{d} + \frac{n\log^2(n)\log(1/\delta)}{d^2\epsilon^2} + \frac{n^2\log^4(n)\log^2(1/\delta)}{d^4\epsilon^4},   
\end{align*}
where $Q' \in \mathbb{R}^{k \times k}$ and $Q'^TQ' = I_k$. The rest of the proof follows the lines of argument from the proof of Theorem 5.1 in \cite{HehEtal22}. We know from \cite{LeiRin15} that $U = \Theta X$, for some $\Theta \in \mathbf{M}_{n\times k}$ and $X$ satisfying $||X_{j*} - X_{l*}||_2 = \sqrt{\frac{2k}{n}}$, for $j \neq l$. Then $UQ' = \Theta XQ' = \Theta X'$, with $X' = XQ'$, and
\begin{equation*}
||X'_{j*} - X'_{l*}||_2 = ||(e_j- e_l)^TXQ'||_2 = ||X_{j*} - X_{l*}||_2 = \sqrt{\frac{2k}{n}},
\end{equation*}
for $j \neq l$. Here, $e_j$ is the $j^{\text{th}}$ standard basis vector. For all $j \in [k]$, choose $\delta_j = \sqrt{\frac{2k}{n}}$ and define $S_j$ as in Lemma 5.3 of \cite{LeiRin15}. We want to show that $(16 + 8\gamma)||U_{(k)} - UQ'||_F^2 < \frac{\delta_j^2n}{k}$, for all $j \in [k]$. Since $\frac{\delta_j^2n}{k} > 1$, it is enough to prove that $(16 + 8\gamma)||U_{(k)} - UQ'||_F^2 \leq 1$. Let $c_1 \asymp 1$ be the absolute constant such that
\begin{align*}
c_1 ||U_{(k)} - UQ'||_F^2 \leq \left(\frac{1}{d} + \frac{n\log^2(n)\log(1/\delta)}{d^2\epsilon^2} + \frac{n^2\log^4(n)\log^2(1/\delta)}{d^4\epsilon^4}\right).
\end{align*}
Then
\begin{align*}
(16 + 8\gamma)||U_{(k)} - UQ'||_F^2 \leq \frac{16 + 8\gamma}{c_1} \left(\frac{1}{d} + \frac{n\log^2(n)\log(1/\delta)}{d^2\epsilon^2} + \frac{n^2\log^4(n)\log^2(1/\delta)}{d^4\epsilon^4}\right) \leq 1, 
\end{align*}
as desired. Thus, for each
community $j \in [k]$, the set of nodes that are possibly misclassified by Algorithm \ref{alg:Matrix_Estimation_hardt2014} must be a subset of $S_j$, and since $\frac{\delta_j^2n}{k} > 1$, we have
\begin{align*}
\widetilde{\mathcal{L}}\left(\mathcal{A}_{ME}^{(\epsilon, \delta)}(G), \theta\right) &\leq \mathop{\max}_{j \in [k]}\frac{k|S_j|}{n} \leq \frac{k}{n}\sum_{j = 1}^k|S_j| \leq \sum_{j = 1}^k|S_j|\delta_j^2 \leq (16 + 8\gamma)||U_{(k)} - UQ'||_F^2\\
&\leq \frac{16 + 8\gamma}{c_1} \left(\frac{1}{d} + \frac{n\log^2(n)\log(1/\delta)}{d^2\epsilon^2} + \frac{n^2\log^4(n)\log^2(1/\delta)}{d^4\epsilon^4}\right)\\
&= C_1\left(\frac{1}{d} + \frac{n\log^2(n)\log(1/\delta)}{d^2\epsilon^2} + \frac{n^2\log^4(n)\log^2(1/\delta)}{d^4\epsilon^4}\right),
\end{align*}
where $C_1 = \frac{16 + 8\gamma}{c_1} \asymp 1$, with probability at least $0.9 - \frac{1}{\mathrm{poly}(n)}$, as desired, since $\gamma, c_1 \asymp 1$.


\subsection{Proofs for Section~\ref{section:Approximate DP: Private approximate subspace estimation} (unweighted case)}

\subsubsection{Proof of Theorem~\ref{theorem:GoodCent_Main_Thm}}
\label{AppGoodUnwt}

Let $G_1, \dots, G_T$ be as in Algorithm \ref{alg:Boost}. Denote $\widetilde{\theta}(G_j) = \mathcal{A}_{SE}^{\left(\epsilon/(5TD\hat{L}_{\epsilon_1, \delta_1}(G_j)), \delta/\hat{L}_{\epsilon_1, \delta_1}(G_j)\right)}(T_{D}(G_j))$, for $j \in [T]$.

We start by analyzing the privacy of Algorithm \ref{alg:Subspace_Estimation_GoodC}. The following result is proved in Appendix~\ref{AppLemGoodC}:

\begin{lemma}
\label{lemma:GoodC_Priv}
Let $2\epsilon < \log(1/\delta)$. Then Algorithm \ref{alg:Subspace_Estimation_GoodC} with privacy parameters $\left(\frac{\epsilon}{5D}, \delta\right)$ is $(2\epsilon, \delta)_{2D}$-node DP (for both $\mathcal{G}_{n, 2D}$ and $\mathcal{W}_{n, 2D}$).
\end{lemma}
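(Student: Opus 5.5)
The plan is to mirror the proof of Lemma~\ref{lemma:d_tight_priv}: first establish an edge-level zCDP guarantee for Algorithm~\ref{alg:Subspace_Estimation_GoodC} run with parameters $(\epsilon', \delta) = (\frac{\epsilon}{5D}, \delta)$, then upgrade to node privacy on $\mathcal{G}_{n,2D}$ (or $\mathcal{W}_{n,2D}$) by group privacy in zCDP (Lemma~\ref{lemma:Group_zCDP}), and finally convert to $(\epsilon,\delta)$-DP via Lemma~\ref{lemma:zCDP_to_DP}. Since $\rho = \frac{\epsilon'^2}{32q\log(1/\delta)}$, the total edge-level zCDP budget should come out as a constant multiple of $\frac{\epsilon'^2}{\log(1/\delta)}$.

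For the edge-level accounting, I would argue as follows. Changing a single edge (or weight) $(i,j)$ alters at most two rows of the adjacency matrix, so at most two of the $t$ chunks $A_1,\dots,A_t$ change. Hence, for each reference index $i\in[q]$, at most two of the $t$ points in $X_i$ (and two of the $z_i^j$) are modified. Treating each multiset $X_i$ as the dataset, $\mathrm{GoodCenter}$ is $\rho$-zCDP per changed point by Lemma~\ref{lemma:cor_a3_supp}. With two points changed, group privacy gives $4\rho$. Conditional on the centers $\theta_i^*$, the truncated average $\frac{1}{t}\sum_j z_i^j$ has $\ell_2$-sensitivity at most $\frac{2r}{t}$ per changed point, hence $\frac{4r}{t}$ for two. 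The Gaussian noise has $\sigma=\frac{2r}{t\sqrt{2\rho}}$, so Lemma~\ref{lemma:Gauss_zCDP} yields $4\rho$-zCDP. Composing over the $q$ indices (Lemma~\ref{lemma:Comp_zCDP}, adaptive version) gives $8q\rho = \frac{\epsilon'^2}{4\log(1/\delta)}$-edge zCDP, and the subsequent SVD and clustering steps are post-processing. The random chunking and Gaussian reference points are data-independent, so conditioning on them is harmless.

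Next, I would pass to node privacy. On graphs of maximum degree $2D$, a node change flips at most $4D$ edges. Rather than applying edge-group privacy, it is cleaner to note directly that a node change alters that node's row plus at most $4D$ other rows. In either formulation, Lemma~\ref{lemma:Group_zCDP} multiplies the budget by $(4D)^2$ or $(4D+1)^2 \le 25D^2$. This gives $\rho_{\mathrm{node}} \le 25D^2\cdot\frac{\epsilon^2}{25D^2\cdot 4\log(1/\delta)} = \frac{\epsilon^2}{4\log(1/\delta)}$, and the constant $5$ in $\frac{\epsilon}{5D}$ is chosen exactly to absorb this. Then Lemma~\ref{lemma:zCDP_to_DP} gives $\bigl(\frac{\epsilon^2}{4\log(1/\delta)} + \epsilon, \delta\bigr)$-DP, and using $2\epsilon < \log(1/\delta)$ the first term is at most $\epsilon/8$, so the algorithm is $(2\epsilon,\delta)_{2D}$-node DP. The weighted case is identical, since only the row-change counting is used.

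The main obstacle is the per-chunk sensitivity bookkeeping. I need to check that the truncation to $B(\theta_i^*,r)$ uses the privately released center, so that conditioning on it is legitimate under adaptive composition. I also need to check that the grid rounding and GoodCenter's guarantee are stated with respect to changing one of the $t$ points. If the constants are tighter than I expect, I would redo the count by bounding the number of affected chunks by $\min\{t, 4D+1\}$, which only improves the bound.
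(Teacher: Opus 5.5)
Your proposal is correct and follows essentially the same route as the paper: per-edge zCDP of $8q\rho = \frac{\epsilon'^2}{4\log(1/\delta)}$ from the $q$ GoodCenter calls and the $q$ Gaussian steps (at most two chunks affected per edge), then zCDP group privacy over at most $4D+1 \le 5D$ changed chunks to get $\frac{\epsilon^2}{4\log(1/\delta)}$, then conversion to $(9\epsilon/8,\delta)$-DP using $2\epsilon<\log(1/\delta)$. Your justification of the factor $4$ for two changed points by group privacy, rather than the paper's ``composition'', is the more accurate one.
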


By Lemma \ref{lemma:GoodC_Priv}, we know that if $\frac{2\epsilon}{T} < \log(1/\delta)$, then $\mathcal{A}_{SE}^{(\epsilon/(5TD), \delta)}$ is $(2\epsilon/T, \delta)_{2D}$-node DP. Since $\frac{2\epsilon}{T} < \log(1/(\delta\cdot\mathrm{poly}(n)))$, Theorem \ref{theorem:generic_red} implies that $\widetilde{\theta}(G_j)$ is $(\epsilon_1 + 2\epsilon/T, 1/\mathrm{poly}(n))$-node DP. We also used the fact that $\epsilon_1 \asymp 1$ and $\delta_1 \asymp \frac{1}{\mathrm{poly}(n)}$. Since $T \asymp \log(n)$, we know that $\epsilon/T$ grows with $n$. So since $\epsilon_1 \asymp 1$, we have $\epsilon_1 \leq \epsilon/T$ for $n$ sufficiently large, implying that $\widetilde{\theta}(G_j)$ is $(3\epsilon/T, 1/\mathrm{poly}(n))$-node DP. Finally, by Proposition \ref{prop:Boost_result}, we know that $\thetahat(G)$ is $(3\epsilon, 1/\mathrm{poly}(n))$-node DP.

For the utility guarantee, we first focus on $G_1 \sim SBM\left(n, k, \frac{B}{T}\right)$. We will then use Proposition \ref{prop:Boost_result}. The following result, proved in Appendix~\ref{AppLemPihat}, provides a utility guarantee for deterministic privacy parameters:

\begin{lemma}
\label{lemma:Pihat-Pi}
Assume $\frac{\epsilon^2}{\log(1/\delta)} \geq \frac{1}{\log^3(n)}$ and $d > \sqrt{n}\log^{11/2}(n)$. Suppose Assumption \ref{ASS_SE_t_m} and equation \eqref{eq:t} hold. Let $\theta = \{\theta_i\}_{i = 1}^n$ be the true community assignments. Let $\mathcal{A}_{SE}^{(\epsilon, \delta)}(G)$ be the output of Algorithm \ref{alg:Subspace_Estimation_GoodC}. Then there exists an absolute constant $C_1 \asymp 1$ such that
\begin{align*}
\widetilde{\mathcal{L}}\left(\mathcal{A}_{SE}^{(\epsilon, \delta)}(G), \theta\right) \leq \frac{C_1\sqrt{\log(1/\delta)}}{\epsilon\log^4(n)} \precsim \frac{1}{\log^{5/2}(n)},
\end{align*}
with probability at least $0.9 - 3\zeta - \frac{1}{\mathrm{poly}(n)}$.
\end{lemma}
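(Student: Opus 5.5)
Approach: control $\|\hat{\Pi} - \Pi\|_2$, where $\Pi$ is the projection onto $\mathrm{col}(P)$ and $\hat{\Pi}$ is the projection formed in step 9 of Algorithm~\ref{alg:Subspace_Estimation_GoodC}. Then convert this into a misclassification bound by the $k$-means argument already used for Lemma~\ref{lemma:Initial_Conc_bd_matrix_estim}: Lemmas~\ref{lemma:l2.3SinSte} and~\ref{lemma:prop2.2VuLei}, followed by Lemma 5.3 of \cite{LeiRin15} with $\delta_j = \sqrt{2k/n}$. It therefore suffices to show that, with probability at least $0.9 - 3\zeta - 1/\mathrm{poly}(n)$,
\begin{equation*}
\|\hat{\Pi} - \Pi\|_2^2 \precsim \frac{\sqrt{\log(1/\delta)}}{\epsilon\log^4(n)}.
\end{equation*}

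\textbf{Step 1: per-chunk accuracy.} Work on the event $\Omega_0 \cap \{\|A - \E[A]\|_2 \precsim \sqrt{d}\}$ from Lemmas~\ref{lemma:hp_removal_d} and~\ref{lemma:thm5.2LeiRin}. For each chunk $A_j \in \real^{m \times n}$, the rows are a uniformly random subset of size $m$. By Lemma~\ref{lemma:hypergeom_conc} and a union bound over $j \le t$, each chunk contains $\asymp m/k$ rows from every community. Hence $P_j$ (the corresponding rows of $P$) has row space equal to $\mathrm{col}(P)$, with $\sigma_k(P_j) \asymp a_n\sqrt{mn} \asymp d\sqrt{m/n}$. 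Since $A_j - P_j$ is a submatrix of $A - P$, its spectral norm is $\precsim \sqrt{d}$. Applying Lemma~\ref{lemma:thm3YuEtal} (with $\Pi_j$ as an intermediate left-projection, which only helps) gives
\begin{equation*}
\|\Pi^{(j)} - \Pi\|_2 \precsim \frac{\sqrt{d}}{d\sqrt{m/n}} = \sqrt{\frac{n}{dm}}.
\end{equation*}
This is small because $m \asymp \epsilon\sqrt{n/(\log n\log(1/\delta))}$ and $d > \sqrt{n}\log^{11/2}(n)$.

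\textbf{Step 2: aggregation.} By Lemma~\ref{lemma:lem2.10SinSte}, all projected reference points satisfy $\|z_i^j - \Pi z_i\|_2 \le \|\Pi^{(j)} - \Pi\|_2 \|z_i\|_2$ with $\|z_i\|_2 \precsim \sqrt{n}$ only crudely. To do better, use $\|(\Pi^{(j)} - \Pi)z_i\|_2 \precsim \|\Pi^{(j)} - \Pi\|_F\sqrt{\log n}$, since the rank is $\le 2k$. Thus all $t$ points $z_i^j$ lie in a ball around $\Pi z_i$ of radius $r_{\mathrm{opt}} \precsim \sqrt{n\log n/(dm)}$; the grid rounding costs only $r_{\min}\sqrt{n}$. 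Lemma~\ref{lemma:cor_a3_supp} then guarantees, with probability $1 - \zeta/q$ per $i$, that $\theta_i^*$ lies within $O(r_{\mathrm{opt}})$ of $\Pi z_i$. Provided $t$ satisfies the lower bound of that corollary (this is where Assumption~\ref{ASS_SE_t_m} and \eqref{eq:t} enter), the chosen radius $r$ from step 7 dominates $r_{\mathrm{opt}}$, so truncation removes no points. Consequently $\hat{z}_i = \Pi z_i + e_i + N(0,\sigma^2 I_n)$ with $\|e_i\|_2 \precsim r$ and $\sigma = 2r/(t\sqrt{2\rho})$.

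\textbf{Step 3: recovering the subspace.} Write $\hat{Z} = \Pi Z + E + N$. By Lemma~\ref{lemma:cor2.7SinSte}, $\Pi Z$ has $\sigma_k \succsim \sqrt{q}$, and $\|N\|_2 \precsim \sigma(\sqrt{n} + \sqrt{q})$ with probability $1 - \zeta$. Wedin's bound (Lemma~\ref{lemma:thm3YuEtal}) then gives $\|\hat{\Pi} - \Pi\|_2 \precsim r\sqrt{q} + \sigma\sqrt{n}$. Substituting $t$, $\rho = \epsilon^2/(32q\log(1/\delta))$, and the explicit $r$, and using $\epsilon^2/\log(1/\delta) \ge \log^{-3}(n)$ to absorb lower-order terms, should yield the displayed rate. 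The three failure probabilities $\zeta$ (GoodCenter over $q$ points, the Gaussian norms, and the reference-point sampling) add up to the stated $3\zeta$, and the $0.9$ is inherited from the random row split.

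\textbf{Main obstacle.} The hardest part is Step 2. It requires verifying that the prescribed radius $r$ simultaneously covers $r_{\mathrm{opt}}$, meets GoodCenter's requirement on $t$, and is small enough that $\sigma\sqrt{n}$ stays within the target. This is exactly where the hypothesis $d > \sqrt{n}\log^{11/2}(n)$ should be used. I would first bound $\|\Pi^{(j)} - \Pi\|_F$ sharply and only then tune the log powers.
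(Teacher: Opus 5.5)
\textbf{A log factor is lost in Steps 2--3.} Your route is the paper's: a per-chunk Wedin bound $\|\Pi^{(j)}-\Pi\|_2\precsim\sqrt{t/d}$, Gaussian concentration on a rank-$\le 2k$ subspace, GoodCenter with no truncation, a perturbation bound for $\hat Z$ against $[\Pi z_1,\dots,\Pi z_q]$, and the Lei--Rinaldo $k$-means step. (The paper uses Lemma~\ref{lemma:l2.3SinSte} with $Z'=Z+E_Z$, so that $z_{12}=0$; your use of Lemma~\ref{lemma:thm3YuEtal} gives the same order since $q\asymp 1$.) However, the bookkeeping in Steps 2--3 does not deliver the stated rate.

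You bound the averaging bias by $\|e_i\|_2\precsim r$ and conclude $\|\hat\Pi-\Pi\|_2\precsim r\sqrt q+\sigma\sqrt n$. The algorithm fixes
$r=\frac{\sqrt{\log n}}{n^2\sqrt n}+\xi_0\log n\asymp\xi_0\log n$, where $\xi_0=\frac{\log^{1/4}(1/\delta)}{\log^{5/2}(n)\sqrt{\epsilon}}+\frac{\sqrt{\log(1/\delta)}}{\log^5(n)\epsilon}$. Your bound therefore gives
$\|\hat\Pi-\Pi\|_2^2\precsim\xi_0^2\log^2 n\asymp\frac{\sqrt{\log(1/\delta)}}{\epsilon\log^3 n}$. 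This is a full factor of $\log n$ worse than the claimed $\frac{\sqrt{\log(1/\delta)}}{\epsilon\log^4 n}$, and the secondary bound degrades from $\log^{-5/2}(n)$ to $\log^{-3/2}(n)$. There is no slack to absorb this. The hypothesis $d>\sqrt n\log^{11/2}(n)$ is calibrated exactly so that $\frac{t}{d}\log n\precsim\frac{\sqrt{\log(1/\delta)}}{\epsilon\log^4 n}$, i.e.\ the target is precisely your $r_{\mathrm{opt}}^2$.

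\textbf{The fix.} Split $\hat Z-[\Pi z_1,\dots,\Pi z_q]$ into the averaging bias $E_0$ and the privacy noise $E_1$, and bound them separately; this is what the paper does.
\begin{itemize}
\item On the no-truncation event, the $i$-th column of $E_0$ is $\frac1t\sum_j(\Pi^{(j)}-\Pi)z_i$. Its norm is at most $\max_j\|(\Pi^{(j)}-\Pi)z_i\|_2\precsim\xi_0\sqrt{\log n}$, which is your $r_{\mathrm{opt}}$, not $r$.
\item The radius $r$ enters only through the noise scale. Since $t\sqrt\rho\asymp\sqrt{n\log n}$ by \eqref{eq:t}, we get $\sigma\sqrt n=\frac{2r\sqrt n}{t\sqrt{2\rho}}\asymp r/\sqrt{\log n}\asymp\xi_0\sqrt{\log n}$.
\end{itemize}
The extra $\sqrt{\log n}$ built into $r$ (needed so that $r$ dominates the constants in the GoodCenter localization) is exactly cancelled in the noise term. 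Both error terms are then $O(\xi_0\sqrt{\log n})$, so $\|\hat\Pi-\Pi\|_2^2\precsim\xi_0^2\log n$. Under $\epsilon^2/\log(1/\delta)\ge\log^{-3}(n)$ the first term of $\xi_0$ dominates, since the second term is its square. Hence $\xi_0^2\log n\asymp\frac{\sqrt{\log(1/\delta)}}{\epsilon\log^4 n}$, as claimed.

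\textbf{Minor point on the probability.} The $0.9$ comes from the Gaussian-norm events for $\hat\Pi^{(j)}z_i$, $\Pi z_i$ and $\Pi^{(j)}z_i$, with a union bound over the $qt$ pairs. It does not come from the random row split, which holds with probability $1-1/\mathrm{poly}(n)$.
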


Let $1 \asymp \zeta \in (0, 1/3)$. Let $I := \left[5 + \frac{4\log(1/\delta_1)}{\epsilon_1}, 5 + \frac{16\log(1/\delta_1)}{\epsilon_1}\right]$, $I_{min} := 5 + \frac{4\log(1/\delta_1)}{\epsilon_1}$, and $I_{max} := 5 + \frac{16\log(1/\delta_1)}{\epsilon_1}$. Let $X_{\epsilon_1, \delta_1} := 5 + \mathrm{Lap}\left(\frac{8}{\epsilon_1}\right) + \frac{8\log(1/\delta_1)}{\epsilon_1}$. Then for $F_1 = \left\{X_{\epsilon_1, \delta_1} \in I\right\}$, we have
\begin{align*}
 \mathbb{P}\left(F_1^c\right) \leq \mathbb{P}\left(\mathrm{Lap}\left(\frac{8}{\epsilon_1}\right) < -\frac{4\log(1/\delta_1)}{\epsilon_1}\right) + \mathbb{P}\left(\mathrm{Lap}\left(\frac{8}{\epsilon_1}\right) > \frac{8\log(1/\delta_1)}{\epsilon_1}\right) \leq 2\sqrt{\delta_1}.
\end{align*}
Let $h_{\epsilon, \delta} = \frac{\sqrt{\log(1/\delta)}}{\epsilon\log^4(n)}$ and let $C_1$ be as in Lemma \ref{lemma:Pihat-Pi}. Let $f_{X_{\epsilon_1, \delta_1}}$ be the pdf of $X_{\epsilon_1, \delta_1}$. Also, let $\Omega_0$ be as in the proof of Lemma \ref{lemma:hp_removal_d} applied to $\frac{a_n}{T}$. The guarantee on this event holds because we are under Assumption \ref{AssSBM} applied to $\frac{a_n}{T}$. We have $\mathbb{P}(\Omega_0) \geq 1 - \frac{1}{\mathrm{poly}(n)}$. In addition, on $\Omega_0$, we do not have to truncate the graph, since the degree of every node is less than $\frac{2d}{T}$. Hence, on $\Omega_0$, we have $T_{D}(G_1) = G_1$ and $d_{T_{D}}(G_1) = 0$. Since $X_{\epsilon_1, \delta_1} \geq 5 > \frac{1}{2}$ on $F_1$, we then have
\begin{align*}
&\mathbb{P}\left(\widetilde{\mathcal{L}}\left(\widetilde{\theta}(G_1), \theta\right) \leq C_1h_{\frac{\epsilon}{5TDI_{max}}, \frac{\delta}{I_{max}}}\right)\\
&\geq \mathbb{P}\left(\left\{\widetilde{\mathcal{L}}\left(\widetilde{\theta}(G_1), \theta\right) \leq C_1h_{\frac{\epsilon}{5TDI_{max}}, \frac{\delta}{I_{max}}}\right\} \cap \Omega_0\right)\\
&\geq \mathbb{P}\left(\left\{\widetilde{\mathcal{L}}\left(\mathcal{A}_{SE}^{(\epsilon/(5TD\max\{1/2, X_{\epsilon_1, \delta_1}\}), \delta/\max\{1/2, X_{\epsilon_1, \delta_1}\})}(G_1), \theta\right) \leq C_1h_{\frac{\epsilon}{5TDI_{max}}, \frac{\delta}{I_{max}}}\right\} \cap \Omega_0 \cap F_1\right)\\
&= \mathbb{P}\left(\left\{\widetilde{\mathcal{L}}\left(\mathcal{A}_{SE}^{(\epsilon/(5TD X_{\epsilon_1, \delta_1}), \delta/X_{\epsilon_1, \delta_1})}(G_1), \theta\right) \leq C_1h_{\frac{\epsilon}{5TDI_{max}}, \frac{\delta}{I_{max}}}\right\} \cap \Omega_0 \cap F_1\right)\\
&\geq \int_I\mathbb{P}\left(\widetilde{\mathcal{L}}\left(\mathcal{A}_{SE}^{(\epsilon/(5TDX_{\epsilon_1, \delta_1}), \delta/X_{\epsilon_1, \delta_1})}(G_1), \theta\right) \leq C_1h_{\frac{\epsilon}{5TDI_{max}}, \frac{\delta}{I_{max}}}\middle\vert X_{\epsilon_1, \delta_1} = x\right)f_{X_{\epsilon_1, \delta_1}}(x)dx \\
& \qquad - \frac{1}{\mathrm{poly}(n)}\\
&= \int_I\mathbb{P}\left(\widetilde{\mathcal{L}}\left(\mathcal{A}_{SE}^{(\epsilon/(5TDx), \delta/x)}(G_1), \theta\right) \leq C_1h_{\frac{\epsilon}{5TDI_{max}}, \frac{\delta}{I_{max}}}\right)f_{X_{\epsilon_1, \delta_1}}(x)dx - \frac{1}{\mathrm{poly}(n)}\\
&\geq \int_I\mathbb{P}\left(\widetilde{\mathcal{L}}\left(\mathcal{A}_{SE}^{(\epsilon/(5TDx), \delta/x)}(G_1), \theta\right) \leq C_1h_{\frac{\epsilon}{5TDx}, \frac{\delta}{x}}\right)f_{X_{\epsilon_1, \delta_1}}(x)dx - \frac{1}{\mathrm{poly}(n)},
\end{align*}
since the function $h_{\frac{\epsilon}{5TDx}, \frac{\delta}{x}}$ is increasing in $x > 0$. We also dropped the conditioning on $\left\{X_{\epsilon_1, \delta_1} = x\right\}$, since the randomness in $X_{\epsilon_1, \delta_1}$ only depends on the $\mathrm{Lap}\left(\frac{8}{\epsilon_1}\right)$ noise, and this is independent of $\widetilde{\mathcal{L}}\left(\mathcal{A}_{SE}^{(\epsilon/(5TDx), \delta/x)}(G_1), \theta\right)$. We now use Lemma \ref{lemma:Pihat-Pi}, with $\frac{\epsilon}{5TDx}, \frac{\delta}{x}$, and $x \in [I_{min}, I_{max}]$. For this, we need $d > \sqrt{n}T\log^{11/2}(n)$, and 
\begin{align*}
\max\left\{(C^{(1)})^2n^{1 - 2\beta_0}\log(n), \frac{1}{\log^3(n)}\right\} < \frac{\epsilon^2}{25D^2T^2x^2\log(x/\delta)} \leq \frac{(C^{(1)})^2n\log(n)}{4}
\end{align*}
(Assumption~\ref{ASS_SE_t_m} with $\frac{\epsilon}{5DTx}$), which is equivalent to 
\begin{align*}
\frac{1}{\log^3(n)} < \frac{\epsilon^2}{25D^2T^2x^2\log(x/\delta)} \leq \frac{(C^{(1)})^2n\log(n)}{4},
\end{align*}
since $\beta_0 > \frac{1}{2}$, so for $n$ large enough, we have $(C^{(1)})^2n^{1 - 2\beta_0}\log(n) \leq \frac{1}{\log^3(n)}$. We already have $d > \sqrt{n}T\log^{11/2}(n)$, by assumption. Next, since $\epsilon_1 \asymp 1$ and $\delta_1 \asymp \frac{1}{\mathrm{poly}(n)}$, we have $I_{max}, I_{min} \asymp \log(n)$. Since $\frac{2\epsilon}{T} < \log(1/\delta)$, and $T \asymp \log(n)$, we then obtain
\begin{align*}
\frac{\epsilon^2}{D^2T^2x^2\log(x/\delta)} & \leq \frac{\epsilon^2}{D^2T^2I_{min}^2\log(I_{min}/\delta)} \asymp \frac{\epsilon^2}{D^2\log^4(n)\log(\log(n)/\delta)} \\
& \leq \frac{\epsilon^2}{D^2\log^4(n)\log(1/\delta)} \precsim \frac{\epsilon}{D^2\log^3(n)} < n.
\end{align*}
Hence, for $n$ large enough, we have $\frac{\epsilon^2}{25D^2T^2x^2\log(x/\delta)} \leq \frac{(C^{(1)})^2n\log(n)}{4}$. Now, since $T\log(1/(\delta \cdot \mathrm{poly}(n))) < \Theta(\epsilon)$, we have
\begin{align*}
\frac{\epsilon^2}{D^2T^2x^2\log(x/\delta)} &\geq \frac{\epsilon^2}{D^2T^2I_{max}^2\log(I_{max}/\delta)} \asymp \frac{\epsilon^2}{D^2\log^4(n)\log(\log(n)/\delta)} \\
& \succsim \frac{\epsilon^2}{D^2\log^4(n)(\log(n) + \epsilon/T)} \succsim \frac{\epsilon}{D^2\log^3(n)} > \frac{1}{\log^2(n)},
\end{align*}
since $\frac{\epsilon}{T}$ grows larger than $\log(n)$. So for $n$ large enough, we have $\frac{\epsilon^2}{25D^2T^2x^2\log(x/\delta)} \geq \frac{1}{\log^3(n)}$. Hence, we can use Lemma \ref{lemma:Pihat-Pi}, implying that 
\begin{align*}
\widetilde{\mathcal{L}}\left(\mathcal{A}_{SE}^{(\epsilon/(5TDx), \delta/x)}(G_1), \theta\right) \leq C_1h_{\frac{\epsilon}{5TDx}, \frac{\delta}{x}},
\end{align*}
with probability at least $0.9 - 3\zeta - \frac{1}{\mathrm{poly}(n)}$. Thus, we have
\begin{align*}
& \mathbb{P}\left(\widetilde{\mathcal{L}}\left(\widetilde{\theta}(G_1), \theta\right) \leq C_1h_{\frac{\epsilon}{5TDI_{max}}, \frac{\delta}{I_{max}}}\right) \\
&\qquad \geq \int_I\left(0.9 - 3\zeta - \frac{1}{\mathrm{poly}(n)}\right)f_{X_{\epsilon_1, \delta_1}}(x)dx - \frac{1}{\mathrm{poly}(n)}\\
& \qquad \geq 0.9 - 3\zeta - \frac{1}{\mathrm{poly}(n)} - \mathbb{P}(F_1^c) - \frac{1}{\mathrm{poly}(n)}\\
& \qquad \geq  0.9 - 3\zeta - 2\sqrt{\delta_1} - \frac{1}{\mathrm{poly}(n)} \geq 0.9 - 3\zeta - \frac{1}{\mathrm{poly}(n)},
\end{align*}
since $\delta_1 \asymp \frac{1}{\mathrm{poly}(n)}$. Hence, since $I_{max} \asymp \log(n)$, $D^2\log(n) < \epsilon$, and $T\log(1/(\delta\cdot\mathrm{poly}(n))) < \Theta(\epsilon)$, we obtain
\begin{align*}
\widetilde{\mathcal{L}}\left(\widetilde{\theta}(G_1), \theta\right) & \precsim \frac{DTI_{max}\sqrt{\log(I_{max}/\delta)}}{\epsilon\log^4(n)} \precsim \frac{D\log^2(n)\sqrt{\log(n) + \epsilon/T}}{\epsilon\log^4(n)} \precsim \frac{D}{\log^{5/2}(n)\sqrt{\epsilon}} < \frac{1}{\log^3(n)},
\end{align*}
with probability at least $0.9 - 3\zeta - \frac{1}{\mathrm{poly}(n)}$. For a suitable choice of $\zeta \in (0, 1/3)$, we obtain $0.9 - 3\zeta - \frac{1}{\mathrm{poly}(n)} \geq 0.87$, for $n$ large enough, implying that
\begin{align*}
\mathcal{L}\left(\widetilde{\theta}(G_1), \theta\right) \leq \widetilde{\mathcal{L}}\left(\widetilde{\theta}(G_1), \theta\right) \leq \frac{D}{\log^{3/2}(n)\sqrt{\epsilon}} < \frac{1}{\log^2(n)},
\end{align*}
with probability at least $0.87$. Now we apply Algorithm \ref{alg:Boost}, i.e., $\mathrm{GraphBoost}\left(T, \xi, G, \frac{3\epsilon}{T}, \frac{1}{\mathrm{poly}(n)}, k, \widetilde{\theta}\right)$. By Proposition \ref{prop:Boost_result}, we have
\begin{align*}
\mathcal{L}\left(\thetahat(G), \theta\right) \precsim \frac{D}{\log^{1/2}(n)\sqrt{\epsilon}} < \frac{1}{\log(n)},
\end{align*}
with probability at least $1 - \frac{c}{\log(n)}$, for some $0 < c \asymp 1$. Since all the communities in $\theta$ have size $\frac{n}{k}$, it is easy to see that $\widetilde{\mathcal{L}}\left(\thetahat(G), \theta\right) \leq k\mathcal{L}\left(\thetahat(G), \theta\right)$, and since $k \asymp 1$, the conclusion follows. 

\subsubsection{Proof of Lemma~\ref{lemma:GoodC_Priv}}
\label{AppLemGoodC}

The proofs for the unweighted and weighted graphs are similar, so we discuss both simultaneously. There are two privatization steps in the algorithm: The application of GoodCenter $q$ times and the Gaussian noise addition step, also happening $q$ times. Let us examine each in turn. 

By Lemma \ref{lemma:cor_a3_supp}, one application of GoodCenter in Algorithm \ref{alg:Subspace_Estimation_GoodC} is $\frac{\epsilon^2}{800D^2q\log(1/\delta)}$-zCDP, when changing one of the $t$ points. When changing one edge in the graph (either flip or weight-change), at most $2$ of the $t$ points can change. By the composition of zCDP mechanisms, i.e., Lemma \ref{lemma:Comp_zCDP}, one application of GoodCenter in Algorithm \ref{alg:Subspace_Estimation_GoodC} is $\frac{\epsilon^2}{200D^2q\log(1/\delta)}$-edge zCDP. Since there are $q$ applications, we obtain $\frac{\epsilon^2}{200D^2\log(1/\delta)}$-edge zCDP, by Lemma \ref{lemma:Comp_zCDP}.

Next, recall that when changing one edge, we change at most $2$ of the $t$ chunks. Thus, the $\ell_2$-sensitivity of the average of $t$ points is $\frac{4r}{t}$. Using Lemma \ref{lemma:Gauss_zCDP}, one application of the Gaussian mechanism in Algorithm \ref{alg:Subspace_Estimation_GoodC} is $\frac{\epsilon^2}{200D^2q\log(1/\delta)}$-edge zCDP. Since there are $q$ applications, we obtain $\frac{\epsilon^2}{200D^2\log(1/\delta)}$-edge zCDP, by Lemma \ref{lemma:Comp_zCDP}.

By Lemma \ref{lemma:Comp_zCDP} again, we see that Algorithm \ref{alg:Subspace_Estimation_GoodC} is $\frac{\epsilon^2}{100D^2\log(1/\delta)}$-edge zCDP. When changing one node, provided all changes remain in $\mathcal{G}_{n, 2D}$ or $\mathcal{W}_{n, 2D}$, at most $4D$ connections from that node are changed. For unweighted graphs, there can only be flips, while in the weighted case, we can set edges to $0$, set them to some non-zero weight, or set their non-zero weight to another non-zero value. In either case, at most $4D + 1$ of the $t$ chunks are changed, so at most $5D$ (for $n$ large enough) chunks can change. Hence, by group privacy, using Lemma \ref{lemma:Group_zCDP}, Algorithm \ref{alg:Subspace_Estimation_GoodC} with privacy parameters $\left(\frac{\epsilon}{5D}, \delta\right)$ is $\left(\frac{\epsilon^2}{4\log(1/\delta)}\right)_{2D}$-zCDP. Finally, by Lemma \ref{lemma:zCDP_to_DP}, we obtain $\left(\frac{\epsilon^2}{4\log(1/\delta)} + \epsilon, \delta\right)_{2D}$-DP. Since $\delta < e^{-2\epsilon}$, this is also $(9\epsilon/8, \delta)_{2D}$-DP, and the conclusion follows.


\subsubsection{Proof of Lemma~\ref{lemma:Pihat-Pi}}
\label{AppLemPihat}

The following result, proved in Appendix~\ref{AppLemHPAll}, shows that w.h.p., when dividing $P = [P_1^T, \dots, P_t^T]^T \in \mathbb{R}^{n \times n}$ according to how $A$ is divided in step $1$, all chunks have rank $k$:
\begin{lemma}
\label{lemma:hp_all_types}
Let $\Omega_1$ be the event that $P_1, \dots, P_t$ all have rank $k$. Then
\begin{equation*}
\mprob(\Omega_1) \ge 1 - \frac{1}{\mathrm{poly}(n)}.
\end{equation*}
\end{lemma}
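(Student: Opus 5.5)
The plan is to reduce the rank condition to a combinatorial statement about community representation in each chunk. Each chunk $P_j \in \real^{m \times n}$ consists of $m = n/t$ rows of $P$. Row $i$ of $P$ is $(B \otimes J_{n/k})$ permuted, so it equals the row $B_{\theta_i,\cdot}$ expanded blockwise. That is, row $i$ equals $\mathbf{b}_{\theta_i}^T := e_{\theta_i}^T B\, \Theta^T$, where $\Theta \in \mathbf{M}_{n\times k}$ is the true membership matrix. Since $B$ is invertible and $\Theta^T$ has full row rank $k$ (balanced communities), the $k$ distinct rows $\mathbf{b}_1,\dots,\mathbf{b}_k$ are linearly independent. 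Hence $\text{rank}(P_j)$ equals the number of distinct communities represented among the rows assigned to chunk $j$. It follows that $\Omega_1$ is exactly the event that every chunk contains at least one node from every community. This event depends only on the uniform random permutation in step 1 of Algorithm~\ref{alg:Subspace_Estimation_GoodC}, not on the graph.

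Next, fix a chunk $j$ and a community $\ell$. The number $X_{j\ell}$ of nodes of community $\ell$ in chunk $j$ is hypergeometric $\mathrm{HG}(n, n/k, m)$, with mean $m/k$. I would bound the failure probability directly as
\[
\mprob(X_{j\ell}=0) = \binom{n - n/k}{m}\Big/\binom{n}{m} \le (1-1/k)^m \le e^{-m/k}.
\]
Alternatively, Lemma~\ref{lemma:hypergeom_conc} with $t = 1/k$ gives $e^{-2m/k^2}$. A union bound over the $t \le n$ chunks and the $k \asymp 1$ communities then yields
\[
\mprob(\Omega_1^c) \le nk\,e^{-2m/k^2}.
\]

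The main obstacle is confirming that $m$ is large enough, namely $m \succsim \log n$ with a large enough constant. Under Assumption~\ref{ASS_SE_t_m} and \eqref{eq:t}, $t \le n^{\beta_0}$ with $\beta_0 < 1$, so $m = n/t \ge n^{1-\beta_0}$, which is polynomial in $n$. Then $nk\,e^{-2n^{1-\beta_0}/k^2}$ decays faster than any polynomial, giving $\mprob(\Omega_1) \ge 1 - 1/\mathrm{poly}(n)$. One point to handle is divisibility, since $m = n/t$ may need rounding; chunks of size $\lfloor n/t\rfloor$ or $\lceil n/t\rceil$ change nothing. Another is that when the lemma is applied to the thinned graph with rescaled privacy parameters, the Assumption~\ref{ASS_SE_t_m} bound on $t$ still holds, as verified in the proof of Theorem~\ref{theorem:GoodCent_Main_Thm}.
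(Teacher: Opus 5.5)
Your proof is correct and follows the paper's route: reduce full rank of each chunk to every community appearing in it, bound the hypergeometric absence probability $\binom{n-n/k}{m}/\binom{n}{m}$ by a quantity exponentially small in $m$, take a union bound over the $t \le n^{\beta_0}$ chunks, and use $m \ge n^{1-\beta_0}$. The differences are cosmetic. You use a union bound over communities where the paper writes out inclusion--exclusion. You also explicitly justify, via invertibility of $B$, that rank $k$ is equivalent to all communities being represented, which the paper takes for granted.
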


From now on, we work in the event $\Omega_1$. Since each chunk has rank $k$, the projection onto $\text{row}(P_j)$ is $\Pi$, for all $j \in [t]$. Let $P = UD_PU^T$, with $D_P \in \mathbb{R}^{k \times k}$ diagonal and $U \in \mathbb{R}^{n \times k}$ having orthonormal columns. Then $\Pi = UU^T$.


The following result is proved in Appendix~\ref{AppLemPiZ}:

\begin{lemma}
\label{lemma:Pi_Z}
Let $d > \sqrt{n}\log^{11/2}(n)$. Let $E := E_Z + E_{Z^{\perp}}$, i.e., the decomposition of each column of $E$ as a sum of the component in the column span of $Z$ and the corresponding orthogonal complement. Let $Z' = Z + E_Z$. Assume $\xi_0\sqrt{\log(n)} = o(1)$. Then $\sigma_k(Z') \asymp \sqrt{k} \asymp 1$, and $\Pi$ is also the projection onto the columns of $Z'$, with probability at least $0.9 - 3\zeta - \frac{1}{\mathrm{poly}(n)}$.
\end{lemma}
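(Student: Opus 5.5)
The plan is to show that $Z$ itself has the right column space and smallest singular value, and then that adding $E_Z$ changes neither. I read $Z = [\Pi z_1, \dots, \Pi z_q] \in \mathbb{R}^{n \times q}$ as the noiseless projected reference points and $\hat{Z} = Z + E$ as the aggregated matrix of step 9 of Algorithm~\ref{alg:Subspace_Estimation_GoodC}. I take $\xi_0$ to be the high-probability bound on the column errors $\|\hat{z}_i - \Pi z_i\|_2$, up to a factor $\sqrt{\log(n)}$. Throughout I work on the event $\Omega_1$ of Lemma~\ref{lemma:hp_all_types}, so that $\text{row}(P_j) = \text{col}(\Pi)$ for every chunk.

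\textbf{Step 1 (the noiseless matrix).} Write $\Pi = UU^T$ with $U \in \mathbb{R}^{n \times k}$ orthonormal, so that $Z = U G$ with $G = U^T[z_1, \dots, z_q] \in \mathbb{R}^{k \times q}$. By rotation invariance, $G$ has i.i.d.\ $N(0,1)$ entries, so the sub-Gaussian norm $K$ in Lemma~\ref{lemma:cor2.7SinSte} is an absolute constant. Apply that lemma with $f = k$, $\ell = q = C'k$, $\Sigma = I_k$, and $t \asymp \sqrt{k}$. Since $3k \ge \log(2/\zeta)$, this gives
\begin{equation*}
\sqrt{q} - CK^2(\sqrt{k} + t) \le \sigma_k(G) \le \sigma_1(G) \le \sqrt{q} + CK^2(\sqrt{k}+t)
\end{equation*}
with probability at least $1 - \zeta$. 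Taking $C'$ large enough, $\sigma_k(Z) = \sigma_k(G) \asymp \sqrt{q} \asymp \sqrt{k} \asymp 1$. In particular $\text{rank}(Z) = k$ and $\text{col}(Z) = \text{col}(U) = \text{col}(\Pi)$.

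\textbf{Step 2 (the perturbation).} By Step 1, $E_Z = \Pi E$, so every column of $Z' = Z + E_Z$ lies in $\text{col}(\Pi)$, and hence $\text{col}(Z') \subseteq \text{col}(\Pi)$. By Weyl's inequality (Lemma~\ref{LemWeyl} applied to singular values, or Lemma~\ref{lemma:l2.1SinSte}),
\begin{equation*}
\sigma_k(Z') \ge \sigma_k(Z) - \|E_Z\|_2 \ge \sigma_k(Z) - \|E\|_F \ge \sigma_k(Z) - \sqrt{q}\,\max_i \|\hat z_i - \Pi z_i\|_2 .
\end{equation*}
If the right-hand side is $\sigma_k(Z) - O(\sqrt{k}\,\xi_0\sqrt{\log n}) = \sigma_k(Z) - o(1)$, then $\sigma_k(Z') \asymp 1$. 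The upper bound $\sigma_1(Z') \le \sigma_1(Z) + o(1)$ is analogous. So $Z'$ has rank $k$, and its column space, being contained in the $k$-dimensional space $\text{col}(\Pi)$, equals it. Thus $\Pi$ is the projection onto $\text{col}(Z')$.

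\textbf{Step 3 (bounding each column error: the main obstacle).} It remains to show $\max_i \|\hat z_i - \Pi z_i\|_2 \precsim \xi_0\sqrt{\log n}$ with the stated probability. I split $\hat z_i - \Pi z_i$ into three pieces.

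The first piece is the chunk error $\frac1t\sum_j (\Pi^{(j)} - \Pi) z_i$. Bound each $\|\Pi^{(j)} - \Pi\|_2$ by Davis--Kahan (Lemma~\ref{lemma:thm3YuEtal}) applied to $\Pi_j A_j$ versus $P_j$. The inputs are $\|A_j - P_j\|_2 \le \|A - P\|_2 \precsim \sqrt d$ from Lemma~\ref{lemma:thm5.2LeiRin}, and $\sigma_k(P_j) \asymp d\sqrt{m/n}$ on $\Omega_1$; the condition $d > \sqrt n\log^{11/2}(n)$ together with $m \asymp \epsilon\sqrt{n/(\log(n)\log(1/\delta))}$ is what makes this ratio small. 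Multiply by $\|\Pi^{(j)} z_i\|_2 \precsim \sqrt{\log n}$, which holds for all $i$ by Lemma~\ref{lemma:lem2.10SinSte}.

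The second piece is the truncation bias. By Lemma~\ref{lemma:cor_a3_supp}, applied with failure probability $\zeta/q$ per call and a union bound over the $q$ calls, GoodCenter returns a center within $6r_{opt}$ of a ball containing at least half the grid-rounded points. Since all $z_i^j$ are close to $\Pi z_i$ by the first piece, the radius $r$ chosen in step 7 is large enough that the truncation to $B(\theta_i^*, r)$ changes no point. Grid rounding adds at most $\sqrt{n}\,r_{\min}$, which is negligible.

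The third piece is the Gaussian noise, with norm $\sigma\sqrt{n}\sqrt{\log n}$ where $\sigma = 2r/(t\sqrt{2\rho})$; this is controlled via the choice of $t$ in~\eqref{eq:t}.

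Collecting the failure probabilities ($\Omega_1$, Lemma~\ref{lemma:thm5.2LeiRin}, the Gaussian singular-value event, GoodCenter, the norm bounds, and the $0.9$ from the underlying accuracy events) gives probability at least $0.9 - 3\zeta - \frac{1}{\mathrm{poly}(n)}$. I expect the hardest part to be the chunk-wise Davis--Kahan bound in the first piece, which must hold uniformly over the $t$ random chunks with the available eigengap, together with checking that $r$ dominates it so that truncation is inactive.
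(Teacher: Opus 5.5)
Your Steps~1 and~2 follow the paper's proof. The paper also reduces $\sigma_k(Z)$ to the $k$th singular value of the $k\times q$ Gaussian matrix $U^T[z_1,\dots,z_q]$, applies Lemma~\ref{lemma:cor2.7SinSte} with $q=C'k$ and $3k\ge\log(2/\zeta)$, and then uses Lemma~\ref{lemma:l2.1SinSte} with $\text{col}(Z')\subseteq\text{col}(\Pi)$. The one input the paper does not prove on the spot is $\|E\|_2\precsim\frac{1}{n^2\sqrt n}+\xi_0\sqrt{\log n}$, which it imports from the event $\Omega_6'$ in the proof of Lemma~\ref{lemma:Zhat_Z}. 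Your Step~3 re-derives this bound, and as written it fails in three places.

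\textbf{Chunk error.} Your instruction ``multiply by $\|\Pi^{(j)}z_i\|_2$'' relies on $\|(\Pi^{(j)}-\Pi)z_i\|_2\le\|\Pi^{(j)}-\Pi\|_2\,\|\Pi^{(j)}z_i\|_2$, which is false: take $z_i\perp\text{col}(\Pi^{(j)})$ with $\Pi z_i\neq 0$. The trivial bound through $\|z_i\|_2\approx\sqrt n$ would only give $\xi_0\sqrt n$, which is useless. What works, as in the proof of Lemma~\ref{lemma:Pi^j-Pi}, is to let $\hat\Pi^{(j)}$ be the projection onto $\text{col}(\Pi^{(j)})+\text{col}(\Pi)$. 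Then $\Pi^{(j)}-\Pi=(\Pi^{(j)}-\Pi)\hat\Pi^{(j)}$, and $\hat\Pi^{(j)}$ has rank at most $2k$ and is independent of $z_i$. Lemma~\ref{lemma:lem2.10SinSte} with a union bound over the $qt$ pairs then gives $\|\hat\Pi^{(j)}z_i\|_2\precsim\sqrt{\log n}$.

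\textbf{Eigengap.} The event $\Omega_1$ only guarantees that every chunk contains a row of each community. That yields $\sigma_k(P_j)\succsim d/\sqrt n$, not $d\sqrt{m/n}$. You need the two-sided bound $\sigma_k(P_j)\asymp\sigma_1(P_j)\asymp d\sqrt{m/n}$, which makes the Davis--Kahan bound of order $\sqrt{t/d}+t/d\precsim\xi_0$. This requires each community to appear $\asymp m/k$ times in every chunk. That is the hypergeometric argument of Lemma~\ref{LemSubsampleSigma}, plus a union bound over the $t$ chunks; the failure probability $2kte^{-m/(2k^2)}$ is negligible since $m\ge n^{1-\beta_0}$.

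\textbf{Noise.} Your estimate $\sigma\sqrt n\sqrt{\log n}$ loses a factor. Since $t\sqrt{\rho}\asymp\sqrt{n\log n}$, we have $\sigma\sqrt n\asymp r/\sqrt{\log n}$, so your bound is of order $r\asymp\xi_0\log n$. That does not give the target $\xi_0\sqrt{\log n}$, and it need not be $o(1)$ under the lemma's hypothesis $\xi_0\sqrt{\log n}=o(1)$. Instead, bound each column by $\|N(0,\sigma^2I_n)\|_2\le\sigma\bigl(\sqrt n+O(\sqrt{\log(q/\zeta)})\bigr)$, or, as the paper does, apply Lemma~\ref{lemma:cor2.7SinSte} to the whole noise matrix. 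Either way you get $\|E_1\|_2\precsim r/\sqrt{\log n}=\frac{1}{n^2\sqrt n}+\xi_0\sqrt{\log n}$.

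With these three repairs your Step~3 reproduces Lemmas~\ref{lemma:Pi^j-Pi} and~\ref{lemma:Zhat_Z}, and the rest of your argument goes through.
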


Since $\frac{\epsilon^2}{\log(1/\delta)} \geq \frac{1}{\log^3(n)}$, we have
\begin{align*}
\xi_0 = \frac{\log^{1/4}(1/\delta)}{\log^{5/2}(n)\sqrt{\epsilon}} + \frac{\sqrt{\log(1/\delta)}}{\log^5(n)\epsilon} \asymp \frac{\log^{1/4}(1/\delta)}{\log^{5/2}(n)\sqrt{\epsilon}} \leq \frac{1}{\log^{7/4}(n)},
\end{align*}
so, $\xi_0\sqrt{\log(n)} \precsim \frac{1}{\log^{5/4}(n)} = o(1)$. Hence, we can use Lemma \ref{lemma:Pi_Z}.

Consider the SVD of $\hat{Z} = \hat{U}\hat{D}\hat{V}^T$ and also the SVD of $Z' = U'D'V'^T$. Write $U' = \left[U'^{(k)}, U'^{(k)}_{\perp}\right] \in \mathbb{R}^{n \times n}$ and $V' = \left[V'^{(k)}, V'^{(k)}_{\perp}\right] \in \mathbb{R}^{q \times q}$. Then
\begin{align*}
a_1 &= \sigma_{min}\left((U'^{(k)})^T\hat{Z}V'^{(k)}\right) = \sigma_{min}\left((U'^{(k)})^TZ'V'^{(k)} + (U'^{(k)})^TE_{Z^{\perp}}V'^{(k)}\right)\\
&= \sigma_{min}\left((U'^{(k)})^TZ'V'^{(k)}\right) = \sigma_k(Z') \asymp \sqrt{k} \asymp 1.
\end{align*}
This holds since, by Lemma \ref{lemma:Pi_Z}, the columns of $U'^{(k)}$ are in the span of $Z$, so they are orthogonal to the columns of $E_{Z^{\perp}}$. Now, recall that, under Assumption \ref{ASS_SE_t_m}, we have $\frac{\epsilon^2}{\log(1/\delta)} \precsim n\log(n)$, so $\frac{\log^{1/4}(1/\delta)\sqrt{\log(n)}}{\sqrt{\epsilon}} \succsim \frac{\log^{1/4}(n)}{n^{1/4}} \succsim \frac{1}{n^2\sqrt{n}}$. Hence, we have $\frac{1}{n^2\sqrt{n}} + \xi_0\sqrt{\log(n)} \asymp \xi_0\sqrt{\log(n)}$.

The following result is proved in Appendix~\ref{AppLemZhat}: 

\begin{lemma}
\label{lemma:Zhat_Z}
Let $Z = [\Pi z_1, \dots, \Pi z_q] \in \mathbb{R}^{n \times q}$, and suppose $d > \sqrt{n}\log^{11/2}(n)$. We have $\|\hat{Z} - Z\|_2 \precsim \frac{1}{n^2\sqrt{n}} + \xi_0\sqrt{\log(n)}$, with probability at least $0.9 - 2\zeta - \frac{1}{\mathrm{poly}(n)}$.
\end{lemma}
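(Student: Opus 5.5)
We need to bound $\|\hat Z - Z\|_2$, where $\hat Z = [\hat z_1,\dots,\hat z_q]$ and $Z = [\Pi z_1,\dots,\Pi z_q]$. The plan is to bound each column $\|\hat z_i - \Pi z_i\|_2$ separately. Since $q = C'k \asymp 1$, this suffices: $\|\hat Z - Z\|_2 \le \|\hat Z - Z\|_F \le \sqrt{q}\max_i \|\hat z_i - \Pi z_i\|_2$.

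For a fixed $i$, write
\begin{equation*}
\hat z_i - \Pi z_i = \frac{1}{t}\sum_{j=1}^t \left(\tilde z_i^j - \Pi z_i\right) + N(0,\sigma^2 I_n),
\end{equation*}
where $\tilde z_i^j$ denotes $z_i^j = \Pi^{(j)} z_i$ after truncation to $B(\theta_i^*, r)$. The argument then has three steps.

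\textbf{Step 1: per-chunk subspace error.} Show that every chunk projection is close to $\Pi$. We work on $\Omega_1$ from Lemma~\ref{lemma:hp_all_types}, so each $P_j$ has rank $k$ with row space $\mathrm{col}(\Pi)$. Take the spectral bound $\|A - P\|_2 \precsim \sqrt{d}$ from Lemma~\ref{lemma:thm5.2LeiRin} and restrict it to each $m \times n$ chunk. The chunks are random row subsets with $m \asymp \epsilon\sqrt{n/(\log n \log(1/\delta))}$, and hypergeometric concentration (Lemma~\ref{lemma:hypergeom_conc}) gives $\sigma_k(P_j) \succsim d\sqrt{m/n}$ with balanced community counts per chunk. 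Applying Davis--Kahan for the two-step projection (Lemma~\ref{lemma:thm3YuEtal}, then Lemma~\ref{lemma:l2.3SinSte}) yields $\|\Pi^{(j)} - \Pi\|_2 \precsim \sqrt{n}/(d\sqrt{m})$ uniformly in $j$, up to logs, with probability $1 - 1/\mathrm{poly}(n)$.

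Combine this with Lemma~\ref{lemma:lem2.10SinSte}, which gives $\|z_i\|_2 \precsim \sqrt{n}$; more precisely, use that $(\Pi^{(j)} - \Pi) z_i$ has norm about $\sqrt{k\log n}\,\|\Pi^{(j)} - \Pi\|_2$ once $\Pi^{(j)}$ and $\Pi$ are rank-$k$ objects. This shows all $z_i^j$ lie within $\eta := \sqrt{\log n}\,\|\Pi^{(j)} - \Pi\|_2$ of $\Pi z_i$. Plugging in $d > \sqrt{n}\log^{11/2}(n)$ and the value of $m$ makes $\eta$ smaller than the radius $r$ set in the algorithm. I expect $r$ was tuned precisely for this.

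\textbf{Step 2: truncation is inactive.} By Lemma~\ref{lemma:cor_a3_supp}, with probability $1 - \zeta/q$ per $i$, GoodCenter returns $\theta_i^*$ within $6 r_{opt} \precsim \eta$ of the minimal enclosing ball center of at least half the points, hence within $O(\eta)$ of $\Pi z_i$. The grid rounding moves each point by at most $\sqrt{n}\, r_{\min} = n^{-5/2}$, which produces the $\frac{1}{n^2\sqrt n}$ term. The needed lower bound on $t$ holds by Assumption~\ref{ASS_SE_t_m}. Therefore all points lie in $B(\theta_i^*, r)$, truncation does nothing, and the average deviates from $\Pi z_i$ by at most $\eta + n^{-5/2}$.

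\textbf{Step 3: the Gaussian term.} The noise $N(0,\sigma^2 I_n)$ has norm about $\sigma\sqrt{n}$, with $\sigma = 2r/(t\sqrt{2\rho})$. Because $t\sqrt{\rho} \asymp \sqrt{n\log n}$ by the choice \eqref{eq:t} of $t$ and $\rho = \epsilon^2/(32q\log(1/\delta))$, this gives $\sigma\sqrt{n} \precsim r/\sqrt{\log n}$. Substituting $r = \frac{\sqrt{\log n}}{n^{5/2}} + \xi_0 \log(n)$ yields $\precsim \frac{1}{n^2\sqrt n} + \xi_0\sqrt{\log n}$, with probability $1 - e^{-cn}$ by a $\chi^2$ bound.

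Summing the three contributions and taking a union bound over the $q$ reference points gives the failure probability $2\zeta + 0.1 + 1/\mathrm{poly}(n)$. I expect the main obstacle to be Step 1: obtaining a uniform-in-$j$ chunk Davis--Kahan bound that is at most $\xi_0\sqrt{\log n}$. This requires a careful spectral bound for the $m \times n$ noise submatrix, roughly $\sqrt{d}$ rather than $\sqrt{dm/n}$, and I would first check that the power $\log^{11/2}$ exactly closes it.
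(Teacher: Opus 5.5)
Your plan is the paper's proof in substance. It has three parts: the per-chunk bound $\|(\Pi^{(j)}-\Pi)z_i\|_2 \precsim \xi_0\sqrt{\log(n)}$ (the paper's Lemma~\ref{lemma:Pi^j-Pi}, via Davis--Kahan, subsampled singular values, and the rank-$2k$ projection trick for $z_i$); GoodCenter plus grid rounding to show that truncation to $B(\theta_i^*, r)$ is inactive; and the noise bound $\sigma\sqrt{n} \asymp r/\sqrt{\log(n)}$ (your column-wise handling is equivalent to the paper's $E_0+E_1$ split since $q \asymp 1$).

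One slip to fix in Step~1. With $\|A_j-P_j\|_2 \le \|A-P\|_2 \precsim \sqrt{d}$ and $\sigma_k(P_j) \asymp d\sqrt{m/n}$, Davis--Kahan gives $\|\Pi^{(j)}-\Pi\|_2 \precsim \sqrt{n/(dm)} + n/(dm) = \sqrt{t/d} + t/d$, not $\sqrt{n}/(d\sqrt{m})$, since you dropped a factor of $\sqrt{d}$. Substituting \eqref{eq:t} and $d > \sqrt{n}\log^{11/2}(n)$ turns these two terms into exactly the two terms of $\xi_0$, so the crude $\sqrt{d}$ noise bound does close the step you flagged.
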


Using Lemma \ref{lemma:Zhat_Z}, we obtain
\begin{align*}
b_1 &= \left\|(U'^{(k)}_{\perp})^T\hat{Z}V'^{(k)}_{\perp}\right\|_2 = \left\|(U'^{(k)}_{\perp})^TZ'V'^{(k)}_{\perp} + (U'^{(k)}_{\perp})^TE_{Z^{\perp}}V'^{(k)}_{\perp}\right\|_2 = \left\|(U'^{(k)}_{\perp})^TE_{Z^{\perp}}V'^{(k)}_{\perp}\right\|_2\\
&\leq ||E_{Z^{\perp}}||_2 \leq ||E||_2 \precsim \xi_0\sqrt{\log(n)},
\end{align*}
since the columns of $U'^{(k)}_{\perp}$ are orthogonal to the columns of $Z'$. Next, for a matrix $M$, let $\Pi_M$ be the projection matrix onto the columns of $M$. Then
\begin{align*}
z_{12} = \left\|\Pi_{U'^{(k)}}E_{Z^{\perp}}\Pi_{V'^{(k)}_{\perp}}\right\|_2 = 0,
\end{align*}
since the columns of $U'^{(k)}$ are orthogonal to the columns of $E_{Z^{\perp}}$, using Lemma \ref{lemma:Pi_Z}. Also, by Lemma \ref{lemma:Zhat_Z}, again, we have
\begin{align*}
z_{21} = \left\|\Pi_{U'^{(k)}_{\perp}}E_{Z^{\perp}}\Pi_{V'^{(k)}}\right\|_2 \leq ||E_{Z^{\perp}}||_2 \leq ||E||_2 \precsim \xi_0\sqrt{\log(n)}.
\end{align*}
With $a_1, b_1, z_{12}$, and $z_{21}$ in mind, by Lemma \ref{lemma:l2.3SinSte}, we have 
\begin{align*}
\left\|\sin(\Theta)(U'^{(k)}, \hat{U}^{(k)})\right\|_2 \leq \frac{a_1z_{21} + b_1z_{12}}{a_1^2 - b_1^2 - \min\{z_{12}^2, z_{21}^2\}} = \frac{a_1z_{21}}{a_1^2 - b_1^2} \precsim \frac{b_1}{1 - b_1^2}.
\end{align*}
Hence, by Lemma \ref{lemma:l2.3SinSte}, and because the column span of $Z'$ is the same as the column span of $P$ (both having rank $k$ by Lemma \ref{lemma:Pi_Z}), we have
\begin{align*}
||\hat{\Pi} - \Pi||_2^2 & = \left\|\hat{U}^{(k)}(\hat{U}^{(k)})^T - U'^{(k)}(U'^{(k)})^T\right\|_2^2 \leq 4\left\|\sin(\Theta)(U'^{(k)}, \hat{U}^{(k)})\right\|_2^2 \\
& \precsim \frac{b_1^2}{(1 - b_1^2)^2} \precsim \xi_0^2\log(n) \precsim \frac{1}{\log^{5/2}(n)},
\end{align*}
since $b_1 = o(1)$. Now note that by Lemma \ref{lemma:l2.3SinSte} and Lemma \ref{lemma:prop2.2VuLei}, since $k \asymp 1$, we have
\begin{align*}
\frac{1}{2}\inf\limits_{Q \in \mathbb{V}_{k, k}}||\hat{U}^{(k)} - UQ||_F^2 & = \frac{1}{2}||\hat{U}^{(k)} - UQ||_F^2 \leq ||\sin(\Theta)(\hat{U}^{(k)}, U)||_F^2 \\
& \leq k||\sin(\Theta)(\hat{U}^{(k)}, U)||_2^2 \leq k||\hat{\Pi} - \Pi||_2^2 \\
& \precsim \xi_0^2\log(n),   
\end{align*}
where $Q' \in \mathbb{R}^{k \times k}$ and $Q'^TQ' = I_k$. The rest of the proof follows the argument from the proof of Theorem 5.1 in \cite{HehEtal22}. We know from \cite{LeiRin15} that $U = \Theta X$, for some $\Theta \in \mathbf{M}_{n\times k}$ and $X$ satisfying $||X_{j*} - X_{l*}||_2 = \sqrt{\frac{2k}{n}}$, for $j \neq l$. Then $UQ' = \Theta XQ' = \Theta X'$, with $X' = XQ'$ and
\begin{equation*}
||X'_{j*} - X'_{l*}||_2 = ||(e_j- e_l)^TXQ'||_2 = ||X_{j*} - X_{l*}||_2 = \sqrt{\frac{2k}{n}},
\end{equation*}
for $j \neq l$. Here, $e_j$ is the $j^{\text{th}}$ standard basis vector. For all $j \in [k]$, choose $\delta_j = \sqrt{\frac{2k}{n}}$ and define $S_j$ as in Lemma 5.3 of \cite{LeiRin15}. We want to show that $(16 + 8\gamma)||\hat{U}^{(k)} - UQ'||_F^2 < \frac{\delta_j^2n}{k}$, for all $j \in [k]$. Since $\frac{\delta_j^2n}{k} > 1$, it is enough to prove $(16 + 8\gamma)||\hat{U}^{(k)} - UQ'||_F^2 \leq 1$. Let $c_1 \asymp 1$ be the absolute constant such that
\begin{align*}
c_1||\hat{U}^{(k)} - UQ'||_F^2 \leq \xi_0^2\log(n).   
\end{align*}
Then
\begin{align*}
(16 + 8\gamma)||\hat{U}^{(k)} - UQ'||_F^2 \leq \frac{(16 + 8\gamma)\xi_0^2\log(n)}{c_1} \leq 1, 
\end{align*}
as desired, since $\xi_0^2\log(n) = o(1)$ and $\gamma, c_1 \asymp 1$. Thus, for each
community $j \in [k]$, the set of nodes that are possibly misclassified by Algorithm \ref{alg:Subspace_Estimation_GoodC} must be a subset of $S_j$, and since $\frac{\delta_j^2n}{k} > 1$, we have
\begin{align*}
\widetilde{\mathcal{L}}\left(\mathcal{A}_{SE}^{(\epsilon, \delta)}(G), \theta\right) &\leq \mathop{\max}_{j \in [k]}\frac{k|S_j|}{n} \leq \frac{k}{n}\sum_{j = 1}^k|S_j| \leq \sum_{j = 1}^k|S_j|\delta_j^2 \leq (16 + 8\gamma)||\hat{U}^{(k)} - UQ'||_F^2\\
&\leq \frac{(16 + 8\gamma)\xi_0^2\log(n)}{c_1} \leq \frac{C_1\sqrt{\log(1/\delta)}}{\epsilon\log^4(n)} \precsim \frac{1}{\log^{5/2}(n)},
\end{align*}
for $C_1 = \frac{(16 + 8\gamma)}{c_1} \asymp 1$, with probability at least $0.9 - 3\zeta - \frac{1}{\mathrm{poly}(n)}$, as desired, since $\gamma, c_1 \asymp 1$.


\subsubsection{Proof of Lemma~\ref{lemma:hp_all_types}}
\label{AppLemHPAll}

Let $G_j = \{P_j \text{ has rank } k\}$. Then $\mathbb{P}(G_1) = \dots = \mathbb{P}(G_t)$, and
\begin{align*}
p_1 &= \mathbb{P}(G_1) = \mathbb{P}(\text{all of the distinct  types of rows in } P \text{ appear at least once in chunk } 1)\\
&= 1 - \mathbb{P}(\cup_{j = 1}^kM_j^c),
\end{align*}
where
\begin{equation*}
M_j = \{\text{row of type } j \text{ in } P \text{ appears at least once}\}.
\end{equation*}
For $L \subseteq [k]$, let
\begin{equation*}
K_L = \{\text{none of the types in } L \text{ appear at all}\}.
\end{equation*}
By the inclusion-exclusion principle, we have
\begin{align*}
p_1 = 1 - \sum_{\ell = 1}^k (-1)^{\ell - 1}\sum_{|L| = \ell}\mathbb{P}(K_L).
\end{align*}
We first show that $\mprob(\Omega_1) \ge 1 - t(1-p_1)$. Note that $K_L = \emptyset$ for $\ell = k$. There are \(\binom{k}{\ell}\) ways of choosing the set $L$ from $[k]$ such that $|L| = \ell$. Out of the \(\binom{n}{m}\) ways choose the rows of chunk 1, there are $\binom{n - \frac{\ell n}{k}}{m}$ ways such that none of the distinct types in $L$ appear at all, implying that
\begin{align*}
p_1 = 1 - \sum_{\ell = 1}^{k  -1} (-1)^{\ell - 1} \binom{k}{\ell}\frac{\binom{n - \frac{\ell n}{k}}{m}}{\binom{n}{m}}.
\end{align*}
Clearly, we then have 
\begin{align*}
\mathbb{P}(\Omega_1) \geq 1 - \sum_{j = 1}^t\mathbb{P}(G_j^c) \geq 1 - t(1 - p_1),
\end{align*}
as claimed. 

Now recall that $k, \zeta \asymp 1$ and $t \leq n^{\beta_0}$, so $m \geq n^{1 - \beta_0}$. Observe that for $\ell \in \{1, \dots, k - 1\}$ and $g_0 = \frac{\ell}{k} \asymp 1$, we have
\begin{align*}
\frac{\binom{n - g_0n}{m}}{\binom{n}{m}} = \prod_{i = 1}^{g_0n}\frac{n - m - g_0n + i}{n - g_0n + i} = \prod_{i = 1}^{g_0n}\left(1 - \frac{m}{n - g_0n + i}\right).
\end{align*}
Then 
\begin{align*}
\Omega\left(3^{-\frac{g_0m}{1 - g_0}}\right) = \left(1 - \frac{m}{n - g_0n}\right)^{g_0n} \leq \frac{\binom{n - g_0n}{m}}{\binom{n}{m}} \leq \left(1 - \frac{m}{n}\right)^{g_0n} = O\left(2^{-g_0m}\right).
\end{align*}
With the given scaling of $k, t$, and $m$ in terms of $n$, we have $-\log\left(\frac{\binom{n - g_0n}{m}}{\binom{n}{m}}\right) \asymp m \succsim n^{1 - \beta_0}$. Thus, $1 - p_1$ is a sum of $k - 1 \asymp 1$ terms that decay exponentially to $0$, with exponent $\Omega(n^{1 - \beta_0})$. Since $t \leq n^{\beta_0}$ and $\beta_0 \in (1/2, 1)$, we have $t(1 - p_1) \leq \frac{1}{\mathrm{poly}(n)}$.


\subsubsection{Proof of Lemma~\ref{lemma:Pi^j-Pi}}
\label{AppLemPij}

Recall that $z_i \sim N(0, I_n)$. Fix $j \in [t]$. By Lemma \ref{lemma:hp_all_types}, the projection onto the row space of $P_j$ is $\Pi$. We have 
\begin{align*}
||\Pi^{(j)} - \Pi||_2 &= ||U^{(j), (k)}(U^{(j), (k)})^T - UU^T||_2 \\
& \stackrel{(a)}{\leq} 2||\sin(\Theta)(U^{(j), (k)}, U)||_2 \stackrel{(b)}{\leq} 2||\sin(\Theta)(U^{(j), (k)}, U)||_F\\
& \stackrel{(c)}{\leq} \mathop{\inf}_{Q: Q^TQ = QQ^T = I_k}||U^{(j), (k)}Q - U||_F \stackrel{(d)}{\leq} ||U^{(j), (k)}\hat{Q} - U||_F\\
&\precsim \frac{(\sigma_1(P_j) + ||\Pi_jA_j - P_j||_2)||\Pi_jA_j - P_j||_2}{\sigma_k^2(P_j)},
\end{align*}
where we write $\sigma_1(\cdot)$ and $\sigma_k(\cdot)$ to denote the largest and $k^{\text{th}}$ largest singular values, respectively. Inequality (a) follows from Lemma \ref{lemma:l2.3SinSte}. Inequality (b) holds because the Frobenius norm is greater than the operator norm. Inequality (c) holds because of Lemma \ref{lemma:prop2.2VuLei}. Inequality (d) and the existence of the $k \times k$ orthogonal matrix $\hat{Q}$ hold because of Lemma \ref{lemma:thm3YuEtal}. Note that $\Pi_jA_j$ is the best rank-$k$ approximation of $A_j$ in the operator norm, by Lemma~\ref{LemEYM}. Since $P_j$ has rank $k$, we have $||\Pi_jA_j - A_j||_2 \leq ||A_j - P_j||_2 \leq ||A - P||_2$, further implying that
\begin{align*}
||\Pi^{(j)} - \Pi||_2 \precsim \frac{(\sigma_1(P_j) + ||A - P||_2)||A - P||_2}{\sigma_k^2(P_j)}.
\end{align*}
Thus, by Lemma \ref{lemma:thm5.2LeiRin}, since $a_n \succsim \frac{\log(n)}{n}$, there is an event $\Omega_2$, with $\mathbb{P}(\Omega_2) \geq 1 - \frac{1}{n}$, such that
\begin{equation*}
||A - \mathbb{E}[A]||_2 = ||A - P + \text{diag}(P)||_2 \precsim \sqrt{na_n}
\end{equation*}
on $\Omega_2$. Let us work on $\Omega_2' = \Omega_1 \cap \Omega_2$. Then
\begin{align*}
||\Pi^{(j)} - \Pi||_2 \precsim \frac{\sqrt{k}(\sigma_1(P_j) + \sqrt{na_n} + a_n)(\sqrt{na_n} + a_n)}{\sigma_k^2(P_j)}.
\end{align*}

We now have the following result, proved in Appendix~\ref{AppLemSubSample}:

\begin{lemma}
\label{LemSubsampleSigma}
Suppose $m \rightarrow \infty$ as $n \rightarrow \infty$, and $P_m$ is a matrix obtained by randomly subsampling $m$ rows of $P$ without replacement, i.e., $P_m = E_mP$, where $E_m \in \{0, 1\}^{m \times n}$ ($E_m$ selects a uniformly random subset of rows, of size $m$). Suppose $k \asymp 1$.
With probability at least $1 - 2ke^{-\frac{m}{2k^2}}$, we have
\begin{align*}
    \sigma_{k}(P_m) & \succsim \sigma_{k}(D_P) \cdot \sqrt{\frac{m}{n}}, \\
    \sigma_{\max}(P_m) & \precsim \sigma_{\max}(D_P) \cdot \sqrt{\frac{m}{n}},
\end{align*}
where $P = UD_PU^T$ is the spectral decomposition of $P$, with $D_P \in \real^{k \times k}$.
\end{lemma}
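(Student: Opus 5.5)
The plan is to reduce the singular values of $P_m = E_m P$ to those of a $k\times k$ Gram matrix built from community counts, and then control those counts with the hypergeometric bound (Lemma~\ref{lemma:hypergeom_conc}).

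\textbf{Reduction.} Write $P = UD_PU^T$ with $U\in\real^{n\times k}$ orthonormal. Since $U$ has orthonormal columns,
\[
\sigma_i(P_m)=\sigma_i(E_mUD_PU^T)=\sigma_i(E_mUD_P).
\]
Using $\sigma_k(XY)\ge\sigma_k(X)\sigma_k(Y)$ and $\sigma_{\max}(XY)\le\sigma_{\max}(X)\sigma_{\max}(Y)$ for the $m\times k$ and $k\times k$ factors, it suffices to show
\[
\sigma_k(E_mU)\succsim\sqrt{m/n},\qquad \sigma_{\max}(E_mU)\precsim\sqrt{m/n}.
\]

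\textbf{Block structure of $U$.} By \cite{LeiRin15} (as used earlier), $U=\Theta X$, where $\Theta\in\mathbf{M}_{n\times k}$ is the membership matrix and $X\in\real^{k\times k}$. Since $U^TU=X^T\Theta^T\Theta X=\frac{n}{k}X^TX=I_k$, the matrix $\sqrt{n/k}\,X$ is orthogonal. Let $N_j$ be the number of sampled rows from community $j$, and $N=\mathrm{diag}(N_1,\dots,N_k)$. Then
\[
(E_mU)^T(E_mU)=X^TNX .
\]
Its eigenvalues are those of $\frac{k}{n}N$ conjugated by the orthogonal matrix $\sqrt{n/k}\,X$, so they lie in
\[
\Big[\tfrac{k}{n}\min_j N_j,\ \tfrac{k}{n}\max_j N_j\Big].
\]
It therefore suffices to show $N_j\asymp m/k$ for every $j$.

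\textbf{Concentration of $N_j$.} Each $N_j\sim\mathrm{HG}(n,n/k,m)$ with mean $m/k$. Apply Lemma~\ref{lemma:hypergeom_conc} with $t=\frac{1}{2k}$: each of the two tails is at most $e^{-2t^2m}=e^{-m/(2k^2)}$, so
\[
\tfrac{m}{2k}\le N_j\le\tfrac{3m}{2k}\quad\text{for a fixed } j
\]
fails with probability at most $2e^{-m/(2k^2)}$. A union bound over the $k$ communities gives exactly the stated failure probability $2ke^{-m/(2k^2)}$. On this event the eigenvalues of $(E_mU)^T(E_mU)$ lie in $[\frac{m}{2n},\frac{3m}{2n}]$, which gives both bounds with constants depending only on $k\asymp 1$.

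\textbf{Main obstacle.} The only delicate step is the block-structure identity, specifically checking that $\sqrt{n/k}\,X$ is orthogonal, which needs balanced communities. The rest is routine. Note that $m\to\infty$ is not needed for the bound itself; it only makes the failure probability vanish.
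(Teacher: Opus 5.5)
Your proposal is correct and follows essentially the same route as the paper. Both arguments use the Lei--Rinaldo factorization $U=\Theta X$ with $\sqrt{n/k}\,X$ orthogonal, the identity $(E_m\Theta)^T(E_m\Theta)=\mathrm{diag}(N_1,\dots,N_k)$ with hypergeometric counts $N_j$, and Lemma~\ref{lemma:hypergeom_conc} plus a union bound over the $k$ communities. The differences are only cosmetic: the paper gets the lower bound via Courant--Fischer with test subspace $\mathrm{col}(U)$ rather than your exact diagonalization of the Gram matrix, and it uses the asymmetric thresholds $2m/k$ and $m/(2k)$, which give the same failure probability $2ke^{-m/(2k^2)}$.
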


By Lemma~\ref{LemSubsampleSigma}, there is an event $\Omega_3$, with probability at least $1 - 2kte^{-\frac{m}{2k^2}}$ (a union bound over all $j \in [t]$), such that we have
\begin{align*}
\sigma_k(P) \cdot \sqrt{\frac{m}{n}} \precsim \sigma_{k}(P_j) \leq \sigma_{1}(P_j) & \precsim \sigma_1(P) \sqrt{\frac{m}{n}}.
\end{align*}
Let us work on $\Omega_3' = \Omega_3 \cap \Omega_2'$. Recall that $P = J(B \otimes J_{\frac{n}{k}})J^T$, where $J$ is a permutation matrix. Then $\sigma_1(P) = \frac{na_n}{k}\sigma_1(B_0)$ and $\sigma_k(P) = \frac{na_n}{k}\sigma_k(B_0) > 0$. Since $B_0$ does not change with $n$, we then have
\begin{align*}
a_n \sqrt{mn} \precsim \sigma_{k}(P_j) \leq \sigma_{1}(P_j) & \precsim a_n \sqrt{mn}.
\end{align*}
Therefore, since $d > \sqrt{n}\log^{11/2}(n)$, we have 
\begin{align*}
||\Pi^{(j)} - \Pi||_2 &\precsim \frac{(a_n \sqrt{mn} + \sqrt{na_n} + a_n)(\sqrt{na_n} + a_n)}{a_n^2mn} \precsim \frac{1}{\sqrt{a_nm}} + \frac{1}{a_nm} \asymp \sqrt{\frac{t}{d}} + \frac{t}{d}\\
&\precsim \frac{1}{\log^{5/2}(n)\rho^{1/4}} + \frac{1}{\log^5(n)\sqrt{\rho}} \asymp \frac{\log^{1/4}(1/\delta)}{\log^{5/2}(n)\sqrt{\epsilon}} + \frac{\sqrt{\log(1/\delta)}}{\log^5(n)\epsilon} = \xi_0,
\end{align*}
for all $j \in [t]$. Now we follow the argument in Corollary 4.6 in Singhal \& Steinke~\cite{SinSte21}.
Let $\hat{\Pi}^{(j)}$ be the projection matrix for the union of the subspace spanned by the columns of $U^{(j), (k)}$ and the subspace spanned
by the columns of $U$. By Lemma \ref{lemma:lem2.10SinSte}, we have $||\hat{\Pi}^{(j)}z_i||_2 \precsim \sqrt{k} + \sqrt{\log(qt)} \precsim \sqrt{\log(n)}$, for all $(i, j)$, on an event $\Omega_{41}$, with $\mathbb{P}(\Omega_{41}) \geq 0.99$. Again by Lemma \ref{lemma:lem2.10SinSte} we have $||\Pi z_i||_2 \precsim \sqrt{k} + \sqrt{\log(qt)} \precsim \sqrt{\log(n)}$, for all $(i, j)$, on an event $\Omega_{42}$, with $\mathbb{P}(\Omega_{42}) \geq 0.99$. We also have, by Lemma \ref{lemma:lem2.10SinSte}, that $||\Pi^{(j)}z_i||_2 \precsim \sqrt{k} + \sqrt{\log(qt)} \precsim \sqrt{\log(n)}$, for all $(i, j)$, on an event $\Omega_{43}$, with $\mathbb{P}(\Omega_{43}) \geq 0.99$. Let us work on $\Omega_4' = \Omega_{41} \cap \Omega_{42} \cap \Omega_{43} \cap \Omega_3'$. Hence, we have
\begin{align*}
||(\Pi^{(j)} - \Pi)z_i||_2 = ||(\Pi^{(j)} - \Pi)\hat{\Pi}^{(j)}z_i||_2 \leq ||\Pi^{(j)} - \Pi||_2||\hat{\Pi}^{(j)}z_i||_2 \precsim \xi_0 \sqrt{\log(n)},
\end{align*}
with probability at least $0.9 - \tau(n)$, where $\tau(n) \le \frac{1}{\mathrm{poly}(n)} + \frac{1}{n} + 2kte^{-\frac{m}{2k^2}}$. Recall that we assumed $m \geq n^{1 - \beta_0}$ and $1 \asymp \beta_0 \in (1/2, 1)$, so $2kte^{-\frac{m}{2k^2}} \leq 2kn^{\beta_0}e^{-\frac{n^{1 - \beta_0}}{2k^2}}$. For $\zeta, k \asymp 1$ as in Lemma \ref{lemma:hp_all_types}, we have $\tau(n) \leq \frac{1}{\mathrm{poly}(n)}$.


\subsubsection{Proof of Lemma~\ref{lemma:Zhat_Z}}
\label{AppLemZhat}

Define $\bar{Z} := \left[\frac{1}{t}\sum_{j = 1}^tz_1^j, \dots, \frac{1}{t}\sum_{j = 1}^tz_q^j\right] \in \mathbb{R}^{n \times q}$,
\begin{align*}
E_0 & := \bar{Z} - Z, \\
E_1 & := \hat{Z} - \bar{Z}, \\
E & := E_0 + E_1 = \hat{Z} - Z.
\end{align*}
We analyze GoodCenter and argue that, with high probability, no truncation is necessary. First, the randomness on $\Omega_4'$ is independent from the randomness for privatizing $\mathrm{GoodCenter}_{X_i, 0, R_{max}, \frac{r_{min}}{2}, \frac{\zeta}{q}, \rho}$, for every $i \in [q]$. Hence, we argue conditionally on $\Omega_4'$. Fix $i \in [q]$. Let $z_i^{grid}$ be the closest point on the grid $\left[-\frac{R_{max}}{\sqrt{n}}, \frac{R_{max}}{\sqrt{n}}\right]^n = \left[-\log(n), \log(n)\right]^n$, with spacing $r_{min}$, to $\Pi z_i$. The following result is proved in Appendix~\ref{AppLemPij}:

\begin{lemma}
\label{lemma:Pi^j-Pi}
Let $d > \sqrt{n}\log^{11/2}(n)$. For all $i \in [q]$ and $j \in [t]$, we have
\begin{align*}
&\left\|\Pi^{(j)}z_i\right\|_2, ||\Pi z_i||_2 \precsim \sqrt{\log(n)},\\
&\|(\Pi^{(j)} - \Pi)z_i\|_2 \precsim \xi_0\sqrt{\log(n)},
\end{align*}
with probability at least $0.9 - \frac{1}{\mathrm{poly}(n)}$, where 
\begin{align*}
\xi_0 := \frac{\log^{1/4}(1/\delta)}{\log^{5/2}(n)\sqrt{\epsilon}} + \frac{\sqrt{\log(1/\delta)}}{\log^5(n)\epsilon}.
\end{align*}
\end{lemma}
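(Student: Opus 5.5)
The argument has three steps: a per-chunk Davis--Kahan bound on $\|\Pi^{(j)} - \Pi\|_2$, Gaussian norm bounds on a union of low-dimensional subspaces, and a final combination.

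\textbf{Step 1: perturbation bound for each chunk.} Fix $j \in [t]$ and work on the event $\Omega_1$ of Lemma~\ref{lemma:hp_all_types}. On $\Omega_1$, $\text{row}(P_j) = \text{col}(P)$, so $\Pi$ is the projection onto $\text{row}(P_j)$.
\begin{itemize}
\item Chain Lemma~\ref{lemma:l2.3SinSte}, Lemma~\ref{lemma:prop2.2VuLei} and Lemma~\ref{lemma:thm3YuEtal}, with $P_j$ as the unperturbed matrix and $\Pi_jA_j$ as the perturbed one. This gives $\|\Pi^{(j)} - \Pi\|_2 \precsim (\sigma_1(P_j) + \|\Pi_jA_j - P_j\|_2)\|\Pi_jA_j - P_j\|_2/\sigma_k^2(P_j)$.
\item By Eckart--Young (Lemma~\ref{LemEYM}), $\Pi_jA_j$ is the best rank-$k$ approximation of $A_j$, and $P_j$ has rank $k$. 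Hence $\|\Pi_jA_j - P_j\|_2 \le 2\|A_j - P_j\|_2 \le 2\|A - P\|_2$.
\item Lemma~\ref{lemma:thm5.2LeiRin} gives $\|A - P\|_2 \precsim \sqrt{d}$ with probability $1 - 1/n$; the diagonal of $P$ costs only $a_n$.
\end{itemize}

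\textbf{Step 2: singular values of the chunks.} This is the main obstacle: $P_j$ is a random subsample of $m$ rows, and its singular values must scale like $\sqrt{m/n}$.
\begin{itemize}
\item Invoke Lemma~\ref{LemSubsampleSigma} and union-bound over the $t$ chunks. With probability $1 - 2kte^{-m/(2k^2)}$, for all $j$, we get $\sigma_k(P_j) \asymp \sigma_1(P_j) \asymp a_n\sqrt{mn}$, using $\sigma_1(P) \asymp \sigma_k(P) \asymp na_n$.
\item Since $m \ge n^{1-\beta_0}$ and $t \le n^{\beta_0}$, the failure probability is $1/\mathrm{poly}(n)$.
\end{itemize}
Substituting into Step 1 yields $\|\Pi^{(j)} - \Pi\|_2 \precsim (a_nm)^{-1/2} + (a_nm)^{-1} \asymp \sqrt{t/d} + t/d$. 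Now plug in $t$ from \eqref{eq:t}, with the algorithm's $\rho \asymp \epsilon^2/\log(1/\delta)$, and use $d > \sqrt{n}\log^{11/2}(n)$. This bookkeeping should turn $\sqrt{t/d}$ into $\log^{1/4}(1/\delta)/(\log^{5/2}(n)\sqrt{\epsilon})$ and $t/d$ into the second term of $\xi_0$. I expect this to be where the exponent bookkeeping is most delicate.

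\textbf{Step 3: Gaussian norms and combination.} Following Corollary 4.6 of \cite{SinSte21}:
\begin{itemize}
\item Let $\hat{\Pi}^{(j)}$ be the projection onto $\text{col}(U^{(j),(k)}) + \text{col}(U)$, which has rank at most $2k$. Apply Lemma~\ref{lemma:lem2.10SinSte} with $t' \asymp \log(qt)$ and a union bound over all $qt \le \mathrm{poly}(n)$ pairs $(i,j)$. With probability at least $0.99$ each, this gives $\|\hat{\Pi}^{(j)}z_i\|_2$, $\|\Pi z_i\|_2$, $\|\Pi^{(j)} z_i\|_2 \precsim \sqrt{k} + \sqrt{\log(qt)} \precsim \sqrt{\log n}$.
\item Since $(\Pi^{(j)} - \Pi) = (\Pi^{(j)} - \Pi)\hat{\Pi}^{(j)}$, we get $\|(\Pi^{(j)} - \Pi)z_i\|_2 \le \|\Pi^{(j)} - \Pi\|_2\,\|\hat{\Pi}^{(j)}z_i\|_2 \precsim \xi_0\sqrt{\log n}$.
\end{itemize}
A final union bound over all events gives probability at least $0.9 - 1/\mathrm{poly}(n)$.
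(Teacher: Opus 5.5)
Your proposal is correct and follows the paper's proof essentially step for step. It uses the same Davis--Kahan chain (Lemmas~\ref{lemma:l2.3SinSte}, \ref{lemma:prop2.2VuLei}, \ref{lemma:thm3YuEtal}) with Eckart--Young and Lemma~\ref{lemma:thm5.2LeiRin}, the same chunk singular-value control via Lemma~\ref{LemSubsampleSigma} with a union bound over $j$, and the same Singhal--Steinke union-of-subspaces argument with Lemma~\ref{lemma:lem2.10SinSte]}; your explicit factor $2$ in $\|\Pi_jA_j - P_j\|_2 \le 2\|A_j - P_j\|_2$ is, if anything, slightly more careful than the paper. The bookkeeping you flag is immediate: by \eqref{eq:t} and $d > \sqrt{n}\log^{11/2}(n)$ one gets $t/d \le C^{(1)}\sqrt{\log(1/\delta)}/(\epsilon\log^{5}(n))$, and taking square roots gives the first term of $\xi_0$.
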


First, we know that $\{z_i^{j, grid}\}_{j = 1}^t$, and $z_i^{grid}$ lie on the grid $\left[-\frac{R_{max}}{\sqrt{n}}, \frac{R_{max}}{\sqrt{n}}\right]^n$ with spacing $r_{min}$. By Lemma \ref{lemma:Pi^j-Pi}, we have $||z_i^j||_2, ||\Pi z_i||_2 \precsim \sqrt{\log(n)}$ and $||z_i^{j} - \Pi z_i||_2 \precsim \xi_0\sqrt{\log(n)}$, for all $j \in [t]$. So for $n$ large enough, $\{z_i^j\}_{j = 1}^t$ and $\Pi z_i$ lie inside the surface enclosed by the grid $\left[-\frac{R_{max}}{\sqrt{n}}, \frac{R_{max}}{\sqrt{n}}\right]^n$. Since $z_i^{j, grid}$, and $z_i^{grid}$ are the closest points on the grid to $z_i^j$, and $\Pi z_i$, respectively, and since $z_i^{j, grid}, z_i^{grid}$ lie on the grid themselves, we obtain, for all $j \in [t]$, that $||z_i^j - z_i^{j, grid}||_2, ||\Pi z_i - z_i^{grid}||_2 \precsim r_{min}\sqrt{n}$. Using Lemma \ref{lemma:Pi^j-Pi}, we have
\begin{align}
\label{eq:bdddd}
||z_i^{j, grid} - z_i^{grid}||_2, ||z_i^{j, grid} - z_i^{l, grid}||_2 \precsim r_{min}\sqrt{n} + \xi_0\sqrt{\log(n)},
\end{align}
for all $j, l \in [t]$. Next, for our choice of $t$ in equation~\eqref{eq:t}, we obtain from Lemma \ref{lemma:cor_a3_supp} that with probability at least $1 - \zeta/q$, we have $||z_i^{j, grid} - \theta_i^*||_2 \leq 7r_{opt}(X_i')$, for all $z_i^{j, grid} \in X_i'$, where $X_i'$ is the subset with $|X_i'| \geq \frac{t}{2}$. Hence, since $X_i' \subseteq X_i$, we have, using inequality \eqref{eq:bdddd}, that $||z_i^{j, grid} - \theta_i^*||_2 \precsim r_{min}\sqrt{n} + \xi_0\sqrt{\log(n)}$. Now, for $z_i^{j, grid} \notin X_i'$, we have
\begin{equation*}
||z_i^{j, grid} - \theta_i^*||_2 \leq ||z_i^{j, grid} - z_i^{j', grid}||_2 + ||z_i^{j', grid} - \theta_i^*||_2 \precsim r_{min}\sqrt{n} + \xi_0\sqrt{\log(n)},
\end{equation*}
where $z_i^{j', grid}$ is any point in $X_i'$. Therefore, $||z_i^{j, grid} - \theta_i^*||_2 \precsim r_{min}\sqrt{n} + \xi_0\sqrt{\log(n)}$, for all $j \in [t]$, so
\begin{equation*}
||z_i^{j} - \theta_i^*||_2 \leq ||z_i^{j, grid} - z_i^j||_2 + ||z_i^{j, grid} - \theta_i^*||_2 \precsim r_{min}\sqrt{n} + \xi_0\sqrt{\log(n)}.
\end{equation*}
Thus, for $n$ large enough, we have
\begin{equation*}
||z_i^{j} - \theta_i^*||_2 \leq r_{min}\sqrt{n\log(n)} + \xi_0\log(n) = r.
\end{equation*}
Hence, all $\{z_i^j\}_{j = 1}^t$ lie in $B(\theta_i^*, r)$. Taking a union bound over all $i$, we do not have to truncate at all on an event $\Omega_5$, with $\mathbb{P}(\Omega_5|\Omega_4')\geq 1 - \zeta$. Let us work on $\Omega_5' = \Omega_5 \cap \Omega_4'$. Then $\mathbb{P}(\Omega_5') \geq 0.9 - \zeta - \frac{1}{\mathrm{poly}(n)}$. 

Now note that on the no-truncation event, $E_1$ has independent Gaussian columns, so
\begin{align*}
||E_1||_2 & \precsim \sigma(\sqrt{n} + \sqrt{k} + \sqrt{\log(2/\zeta)}) \precsim \sigma(\sqrt{n} + \sqrt{\log(2/\zeta)}) \\
& \precsim \frac{r\sqrt{n}}{t\sqrt{\rho}} \asymp \frac{r}{\sqrt{\log(n)}} =\frac{1}{n^2\sqrt{n}} + \xi_0\sqrt{\log(n)},
\end{align*}
with probability at least $1 - \zeta$, by Lemma \ref{lemma:cor2.7SinSte}, since $q \asymp k$. Call this event $\Omega_6$ and let us work on $\Omega_6' = \Omega_6 \cap \Omega_5'$. We also have
\begin{align*}
||E_0||_2 &= ||\bar{Z} - Z||_2 = \left\|\left[\frac{1}{t}\sum_{j = 1}^t(\Pi^{(j)} - \Pi)z_1, \dots ,\frac{1}{t}\sum_{j = 1}^t(\Pi^{(j)} - \Pi)z_q\right]\right\|_2 \\
& \leq \sqrt{q}\mathop{\max}\limits_{i \in [q]}\frac{1}{t}\sum_{j = 1}^t\left\|(\Pi^{(j)} - \Pi)z_i\right\|_2 \precsim \xi_0\sqrt{\log(n)}.
\end{align*}
Hence, we have $||E||_2 \precsim \frac{1}{n^2\sqrt{n}} + \xi_0\sqrt{\log(n)}$, as required.


\subsubsection{Proof of Lemma~\ref{lemma:Pi_Z}}
\label{AppLemPiZ}

Recall that $Z = [\Pi z_1, \dots, \Pi z_q]$. Thus, the rank of $Z$ is at most the rank of $\Pi$, i.e., $k$. Hence, $\sigma_{k + 1}(Z) = 0$. Note that the $k^{\text{th}}$ singular value can be written as follows:
\begin{align*}
\sigma_k(Z) = \mathop{\max}\limits_{\mathcal{S}: \dim(\mathcal{S}) = k}\mathop{\min}\limits_{v \in \mathcal{S}, ||v||_2 = 1}||Zv||_2.
\end{align*}
Since $\Pi = UU^T$ and $U$ has $k$ orthonormal columns, we obtain
\begin{align*}
\sigma_k(Z) = \mathop{\max}\limits_{\mathcal{S}: \dim(\mathcal{S}) = k}\mathop{\min}\limits_{v \in \mathcal{S}, ||v||_2 = 1}\left\|[U^Tz_1, \dots, U^Tz_q]v\right\|_2.
\end{align*}
Hence, $\sigma_k(Z)$ is the $k^{\text{th}}$ singular value of a $k \times q$ matrix with independent columns distributed as $N(0, I_k)$, since $U^TU = I_k$. Hence, by Lemma \ref{lemma:cor2.7SinSte}, we have
$\sigma_k(Z) \asymp \sqrt{k} \asymp 1$, on an event $\Omega_7$ with probability at least $1 - \zeta$. This is because we assumed $q = C'k$ (for $C' \asymp 1$ as large as we like), and since $3k \geq \log(2/\zeta)$, we can upper-bound $\sqrt{\log(2/\zeta)}$ by $\sqrt{3k}$, up to absolute constants. Let us work on $\Omega_7' = \Omega_7 \cap \Omega_6'$. Now, by Lemma \ref{lemma:l2.1SinSte}, we have
\begin{equation*}
\sigma_k(Z') \leq \sigma_k(Z) + ||E_Z||_2 \leq \sigma_k(Z) + ||E||_2 \precsim \sqrt{k} \asymp 1,
\end{equation*}
since $\xi_0\sqrt{\log(n)} = o(1)$. By Lemma \ref{lemma:l2.1SinSte}, we have
\begin{equation*}
\sigma_k(Z') \geq \sigma_k(Z) - ||E_Z||_2 \geq \sigma_k(Z) - ||E||_2 \succsim \sqrt{k} \asymp 1.
\end{equation*}
Hence, $\sigma_k(Z') \asymp \sqrt{k} \asymp 1$. By the construction of $Z'$, its column span is in $\text{col}(Z)$, which is in $\text{col}(P)$, since $\Pi$ is the projection onto the columns of $P$. But $\text{col}(P)$ has dimension $k$, and we also showed that $\sigma_k(Z') > 0$. Hence, the dimension of $\text{col}(P)$ is the same as the dimension of $\text{col}(Z')$, i.e., $k$. Hence, the columns of $Z'$ and the columns/rows of $P$ span the same subspace. We conclude that $\Pi$ is also the projection onto $\text{col}(Z')$, as required.


\subsubsection{Proof of Lemma~\ref{LemSubsampleSigma}}
\label{AppLemSubSample}

By Lemma 2.1 of Lei \& Rinaldo~\cite{LeiRin15}, we have $U = \Theta X$, where $\Theta \in \{0,1\}^{n \times k}$ is the membership matrix of nodes and $X = \sqrt{\frac{k}{n}} Z$, for $Z \in \real^{k \times k}$ an orthonormal matrix. In fact, we have $D_P = \frac{n}{k} \cdot Z^TBZ$, for an orthonormal matrix $Z$, so the eigenvalues/singular values of $D_P$ are $\frac{n}{k}$ times the eigenvalues of $B$.

We have, since $||U^T||_2 \leq 1$, that
\begin{align*}
\sigma_{\max}(P_m) &= ||P_m||_2 = ||E_m\Theta XD_PU^T||_2 \\
& \leq ||E_m\Theta X||_2 \cdot \sigma_{max}(D_P) = ||E_m\Theta ||_2 \cdot \sigma_{max}(D_P)\sqrt{\frac{k}{n}}.
\end{align*}
Thus, it remains to upper-bound the maximum singular value of $E_m \Theta$. Also recall that if the singular values of a matrix $V$ are all non-zero, they coincide with the square root of the non-zero eigenvalues of both $V^TV$ and $VV^T$. Note that
\begin{equation*}
(E_m \Theta)^T (E_m \Theta) = \mathrm{diag}(m_1, \dots, m_k),
\end{equation*}
where each $m_i$ is the number of times rows from community $i$ are chosen in the subsample of $m$ rows.
Then $m_i \sim \mathrm{HG}\left(n, \frac{n}{k}, m\right)$, for all $i \in [k]$. Note that $\mathbb{E}[m_i] = \frac{m}{k}$. Let $\Omega_1'' = \left\{\max\{m_1, \dots, m_k\} < \frac{2m}{k}\right\}$ and $\Omega_1''' = \left\{\min\{m_1, \dots, m_k\} > \frac{m}{2k}\right\}$. Therefore, by Lemma \ref{lemma:hypergeom_conc}, we have
\begin{align*}
\mathbb{P}\left((\Omega_1'')^{c}\right) \leq \sum_{i = 1}^k\mathbb{P}\left(m_i - \mathbb{E}[m_i] \geq \frac{m}{k}\right) \leq ke^{-\frac{2m}{k^2}} \leq ke^{-\frac{m}{2k^2}}.
\end{align*}
Similarly, we have
\begin{align*}
\mathbb{P}\left((\Omega_1''')^c\right) \leq \sum_{i = 1}^k\mathbb{P}\left(m_i - \mathbb{E}[m_i] \leq -\frac{m}{2k}\right) \leq ke^{-\frac{m}{2k^2}}.
\end{align*}
Hence, on $\Omega_1'' \cap \Omega_1'''$, we have
\begin{equation*}
\max\{m_1, \dots, m_k\} \precsim\frac{m}{k}.
\end{equation*}
Plugging in gives
\begin{equation*}
\sigma_{\max}(P_m) \precsim \sigma_{\max}(D_P) \cdot \sqrt{\frac{m}{n}},
\end{equation*}
as wanted.

For the lower bound, recall that the $k^{\text{th}}$ singular value can be obtained as follows:
\begin{align*}
\sigma_k(P_m) = \mathop{\max}\limits_{\mathcal{S}: \dim(\mathcal{S}) = k}\mathop{\min}\limits_{v \in \mathcal{S}, ||v||_2 = 1}||E_m\Theta XD_PU^Tv||_2.
\end{align*}
For $\mathcal{S} = \text{col}(U)$, which has dimension $k$, we have
\begin{align*}
\sigma^2_k(P_m) \geq \mathop{\min}\limits_{v \in \mathcal{S}, ||v||_2 = 1}||E_m\Theta XD_PU^Tv||_2^2.
\end{align*}
Write $U = [U_1, \dots, U_k] \in \mathbb{R}^{n \times k}$, and let the minimum be achieved by $v^* = \sum_{l = 1}^ka_l^*U_l$, with $a^* = [a_1^*, \dots, a_k^*]^T$ and $||a^*||_2 = 1$. Then
\begin{align*}
\sigma_{k}^2 (P_m) &\geq ||E_m\Theta XD_PU^Tv^*||_2^2 = ||E_m\Theta XD_Pa^*||_2^2 = (a^*)^TD_PX^T\mathrm{diag}(m_1, \dots, m_k)XD_Pa^*\\
&\geq ||XD_Pa^*||_2^2\min\{m_1, \dots, m_k\} = \frac{k}{n}||D_Pa^*||_2^2\min\{m_1, \dots, m_k\}\\
&\geq \frac{k\sigma^2_{k}(D_P)}{n}\min\{m_1, \dots, m_k\},
\end{align*}
where we used the fact that $X^TX = \frac{k}{n}I_k$ and $||a^*||_2 = 1$. On $\Omega_1'' \cap \Omega_1'''$ we also have 
\begin{equation*}
\min\{m_1, \dots, m_k\} \succsim \frac{m}{k},
\end{equation*}
from which the desired lower bound follows.


\subsection{Proofs for Section~\ref{section:Approximate DP: Private approximate subspace estimation} (weighted case)}

\subsubsection{Proof of Theorem~\ref{theorem:GoodCent_Main_Thm_Weighted}}
\label{AppGoodCWt}

We start with the privacy guarantee. Fix $G \sim_{node} G'$, with $G, G' \in \mathcal{W}_n$. To simplify notation, let $\mathcal{A}^{(G)} := \mathcal{A}_{SE}^{(\epsilon/(5D(\hat{L}_{\epsilon_1, \delta_1}(\mathcal{B}(G)) + 1)), \delta/(\hat{L}_{\epsilon_1, \delta_1}(\mathcal{B}(G)) + 1))}$. Define
\begin{align*}
A_G & := (\hat{L}_{\epsilon_1, \delta_1}(\mathcal{B}(G)), \mathcal{A}^{(G)}(T_{w, D}(G))), \\
A_{G'} & := (\hat{L}_{\epsilon_1, \delta_1}(\mathcal{B}(G')), \mathcal{A}^{(G')}(T_{w, D}(G'))), \\
A_m & := (\hat{L}_{\epsilon_1, \delta_1}(\mathcal{B}(G)), \mathcal{A}^{(G)}(T_{w, D}(G'))).
\end{align*}
Let $P_G$, $P_{G'}$, and $P_m$ be the distributions of $A_G$, $A_{G'}$, and $A_m$, respectively. 
Let $E$ be an arbitrary measurable set in the image of $(\hat{L}_{\epsilon_1, \delta_1}(\mathcal{B}(\cdot)), \mathcal{A}^{(\cdot)}( T_{w, D}(\cdot)))$. By the privacy of $\hat{L}_{\epsilon_1, \delta_1}$ we have
\begin{align*}
P_{G'}(E) = \mathbb{P}\left((\hat{L}_{\epsilon_1, \delta_1}(\mathcal{B}(G')), \mathcal{A}^{(G')}(T_{w,D}(G'))) \in E\right) \leq e^{\epsilon_1}P_m(E). 
\end{align*}
Let $F = \left\{\hat{L}_{\epsilon_1, \delta_1}(\mathcal{B}(G)) > LS_{T_{D}}(\mathcal{B}(G))\right\}$. Since $\mathcal{B}(G) \in \mathcal{G}_n$, we have from Lemma \ref{lemma:Lhat} that $\mathbb{P}(F^c) \leq \delta_1$. 

We need to bound $d_{node}(T_{w,D}(G'), T_{w,D}(G))$. When altering the weights from one node in $G'$, call it $i$, to reach $G$, there are $3$ types of changes that can occur:
\begin{enumerate}
    \item change a non-zero weight to a non-zero weight \label{change:nn}
    \item change a non-zero weight to $0$ \label{change:n0}
    \item change a $0$ weight (same as no edge) to an edge with some non-zero weight. \label{change:0n}
\end{enumerate}
Say $G$ and $G'$ differ in node $i$. The number of nodes that differ between $T_{w,D}(G')$ and $T_{w,D}(G)$ because of changes \ref{change:n0} and \ref{change:0n} is at most $LS_{T_{D}}(\mathcal{B}(G))$. Next, because of change \ref{change:nn}, there can be one additional change. This happens if changes incident from node $i$ are not counted by changes \ref{change:n0} and \ref{change:0n}, after applying $\mathcal{B}(\cdot)$, and before adding the weights back. Therefore, we have
\begin{align}
\label{eq:d_node_LST1}
d_{node}(T_{w,D}(G), T_{w,D}(G')) \leq LS_{T_{D}}(\mathcal{B}(G)) + 1.    
\end{align}

Let $y_{\epsilon_1, \delta_1}$ be the pdf of $Y_{\epsilon_1, \delta_1}(\mathcal{B}(G)) = 5 + 2d_{T_{D}}(\mathcal{B}(G)) + \mathrm{Lap}\left(\frac{8}{\epsilon_1}\right) + \frac{8\log(1/\delta_1)}{\epsilon_1}$, so $\hat{L}_{\epsilon_1, \delta_1}(\mathcal{B}(G)) = \max\left\{\frac{1}{2}, Y_{\epsilon_1, \delta_1}(\mathcal{B}(G))\right\}$. If $LS_{T_D}(\mathcal{B}(G)) = 0$, then $d_{node}(T_{w,D}(G), T_{w,D}(G')) \in \{0, 1\}$. If $d_{node}(T_{w,D}(G), T_{w,D}(G')) = 1$, then 
\begin{align*}
&P_m(E)\\
&= \int_{-\infty}^\infty\mathbb{P}((\hat{L}_{\epsilon_1, \delta_1}(\mathcal{B}(G)), \mathcal{A}^{(G)}(T_{w, D}(G')))\in E|Y_{\epsilon_1, \delta_1}(\mathcal{B}(G)) = x)y_{\epsilon_1, \delta_1}(x)dx\\
&= \int_{-\infty}^\infty\mathbb{P}((\max\{1/2, x\}, \mathcal{A}_{SE}^{(\epsilon/(5D(\max\{1/2, x\} + 1)), \delta/(\max\{1/2, x\} + 1))}(T_{w, D}(G')))\in E|Y_{\epsilon_1, \delta_1}(\mathcal{B}(G)) = x) \\
& \qquad \cdot y_{\epsilon_1, \delta_1}(x)dx\\
&= \int_{-\infty}^\infty\mathbb{P}((\max\{1/2, x\}, \mathcal{A}_{SE}^{(\epsilon/(5D(\max\{1/2, x\} + 1)), \delta/(\max\{1/2, x\} + 1))}(T_{w, D}(G')))\in E)y_{\epsilon_1, \delta_1}(x)dx.
\end{align*}
Since $2\epsilon < \log(1/(\delta \cdot \mathrm{poly}(n)))$, we have $\frac{2\epsilon}{\max\{1/2, x\} + 1} < \log\left(\frac{\max\{1/2, x\} + 1}{\delta}\right)$. Therefore, by Lemma \ref{lemma:GoodC_Priv}, we know that $\mathcal{A}_{SE}^{(\epsilon/(5D(\max\{1/2, x\} + 1)), \delta/(\max\{1/2, x\} + 1))}$ is $\left(\frac{2\epsilon}{\max\{1/2, x\} + 1}, \frac{\delta}{\max\{1/2, x\} + 1}\right)_{2D}$-node DP. Thus, we have
\begin{align*}
&P_m(E)\\
&\leq e^{2\epsilon}\int_{-\infty}^\infty\mathbb{P}((\max\{1/2, x\}, \mathcal{A}_{SE}^{(\epsilon/(5D(\max\{1/2, x\} + 1)), \delta/(\max\{1/2, x\} + 1))}(T_{w, D}(G)))\in E)y_{\epsilon_1, \delta_1}(x)dx + \delta\\
&= e^{2\epsilon}\int_{-\infty}^\infty\mathbb{P}((\max\{1/2, x\}, \mathcal{A}_{SE}^{(\epsilon/(5D(\max\{1/2, x\} + 1)), \delta/(\max\{1/2, x\} + 1))}(T_{w, D}(G)))\in E|Y_{\epsilon_1, \delta_1}(\mathcal{B}(G)) = x) \\
& \qquad \cdot y_{\epsilon_1, \delta_1}(x)dx + \delta\\
&= e^{2\epsilon}\int_{-\infty}^\infty\mathbb{P}((\hat{L}_{\epsilon_1, \delta_1}(\mathcal{B}(G)), \mathcal{A}^{(G)}(T_{w, D}(G)))\in E|Y_{\epsilon_1, \delta_1}(\mathcal{B}(G)) = x) y_{\epsilon_1, \delta_1}(x)dx + \delta = e^{2\epsilon}P_G(E) + \delta\\
&\leq e^{2\epsilon}P_G(E) + \frac{1}{\mathrm{poly}(n)},
\end{align*}
implying that
\begin{align}
\label{eq:case_1}
P_{G'}(E) \leq e^{\epsilon_1}P_m(E) \leq e^{\epsilon_1 + 2\epsilon}P_G(E) +  \frac{e^{\epsilon_1}}{\mathrm{poly}(n)} \leq e^{\epsilon_1 + 2\epsilon}P_G(E) +  e^{\epsilon_1}\left(\frac{1}{\mathrm{poly}(n)} + \delta_1\right).
\end{align}
Next, if $d_{node}(T_{w,D}(G), T_{w,D}(G')) = 0$, then $T_{w,D}(G) = T_{w,D}(G')$ and
\begin{equation*}
P_m(E|F) = P_G(E|F) \leq e^{2\epsilon}P_{G}(E|F) + \frac{1}{\mathrm{poly}(n)}.
\end{equation*}
Now assume $LS_{T_D}(\mathcal{B}(G)) \geq 1$. Then we have
\begin{align*}
&P_{m}(E|F)\\
&= \frac{\mathbb{P}\left(\left\{(\hat{L}_{\epsilon_1, \delta_1}(\mathcal{B}(G)), \mathcal{A}^{(G)}(T_{w, D}(G')))\in E\right\} \bigcap F\right)}{\mathbb{P}(F)}\\
&= \int_{LS_{{T_{D}}}(\mathcal{B}(G))}^{\infty}\frac{\mathbb{P}\left((\max\{1/2, x\}, \mathcal{A}_{SE}^{(\epsilon/(5D(\max\{1/2, x\} + 1)), \delta/(\max\{1/2, x\} + 1))}(T_{w, D}(G'))) \in E|Y_{\epsilon_1, \delta_1}(\mathcal{B}(G)) = x\right)}{\mathbb{P}(F)} \\
& \qquad \cdot y_{\epsilon_1, \delta_1}(x) dx\\
&= \int_{LS_{{T_{D}}}(\mathcal{B}(G))}^{\infty}\frac{\mathbb{P}\left((x, \mathcal{A}_{SE}^{(\epsilon/(5D(x + 1)), \delta/(x + 1))}(T_{w, D}(G'))) \in E\right)y_{\epsilon_1, \delta_1}(x)}{\mathbb{P}(F)}dx,
\end{align*}
where we dropped the conditioning because $Y_{\epsilon_1, \delta_1}(\mathcal{B}(G))$ is independent of $\mathcal{A}_{SE}^{(\epsilon/(5D(x + 1)), \delta/(x + 1))}(T_{w, D}(G'))$. Also, since $\max\left\{\frac{1}{2}, x\right\} \geq LS_{T_D}(\mathcal{B}(G)) \geq 1$, we must have $x \geq LS_{T_D}(\mathcal{B}(G)) \geq 1$. Now fix $x \geq LS_{{T_{D}}}(\mathcal{B}(G))$. Since $2\epsilon < \log(1/(\delta \cdot \mathrm{poly}(n)))$, we have $\frac{2\epsilon}{x + 1} < \log\left(\frac{x + 1}{\delta}\right)$. So, by Lemma \ref{lemma:GoodC_Priv}, we know that $\mathcal{A}_{SE}^{(\epsilon/(5D(x + 1)), \delta/(x + 1))}$ is $\left(\frac{2\epsilon}{x + 1}, \frac{\delta}{x + 1}\right)_{2D}$-node DP, and it is clearly $\left(\frac{2\epsilon}{LS_{T_{D}}(\mathcal{B}(G)) + 1}, \frac{\delta}{LS_{T_{D}}(\mathcal{B}(G)) + 1}\right)_{2D}$-node DP. By group privacy of size $LS_{T_{D}}(\mathcal{B}(G)) + 1$, we obtain $\left(2\epsilon, \delta e^{2\epsilon}\right)_{2D}$-node DP on groups of size $LS_{T_{D}}(\mathcal{B}(G)) + 1$. Since $\delta < e^{-2\epsilon}/\mathrm{poly}(n)$, this is $(2\epsilon, 1/\mathrm{poly}(n))_{2D}$-node DP. Since, when changing $G'$ to $G$, we change at most $LS_{T_{D}}(\mathcal{B}(G)) + 1$ nodes to go from $T_{w, D}(G')$ to $T_{w, D}(G)$, we have
\begin{align*}
&P_{m}(E|F)\\
&\leq \int_{LS_{{T_{D}}}(\mathcal{B}(G))}^{\infty}\frac{\left(e^{2\epsilon}\mathbb{P}\left((x, \mathcal{A}_{SE}^{(\epsilon/(5D(x + 1)), \delta/(x + 1))}(T_{w, D}(G))) \in E\right) + 1/\mathrm{poly}(n)\right)y_{\epsilon_1, \delta_1}(x)}{\mathbb{P}(F)}dx\\
&= e^{2\epsilon}\int_{LS_{{T_{D}}}(\mathcal{B}(G))}^{\infty}\frac{\mathbb{P}\left((x, \mathcal{A}_{SE}^{(\epsilon/(5D(x + 1)), \delta/(x + 1))}(T_{w, D}(G))) \in E\right)y_{\epsilon_1, \delta_1}(x)}{\mathbb{P}(F)}dx + \frac{1}{\mathrm{poly}(n)}\\
&= e^{2\epsilon}\int_{LS_{{T_{D}}}(\mathcal{B}(G))}^{\infty}\frac{\mathbb{P}\left((\max\{1/2, x\}, \mathcal{A}_{SE}^{(\epsilon/(5D(\max\{1/2, x\} + 1)), \delta/(\max\{1/2, x\} + 1))}(T_{w, D}(G))) \in E|Y_{\epsilon_1, \delta_1}(\mathcal{B}(G)) = x\right)}{\mathbb{P}(F)}\\
& \qquad \cdot y_{\epsilon_1, \delta_1}(x) dx + \frac{1}{\mathrm{poly}(n)}\\
&= \frac{e^{2\epsilon}\mathbb{P}\left(\left\{(\hat{L}_{\epsilon_1, \delta_1}(\mathcal{B}(G)), \mathcal{A}^{(G)}(T_{w, D}(G)))\in E\right\} \cap F\right)}{\mathbb{P}(F)} + \frac{1}{\mathrm{poly}(n)} = e^{2\epsilon}P_{G}(E|F) + \frac{1}{\mathrm{poly}(n)}.
\end{align*}
Now note that $P_{G}(F) = P_{m}(F)$, since in both cases, we have $\hat{L}_{\epsilon_1, \delta_1}(\mathcal{B}(G))$, not $\hat{L}_{\epsilon_1, \delta_1}(\mathcal{B}(G'))$. Thus, we have
\begin{align*}
P_{m}(E \cap F) \leq e^{2\epsilon}P_{G}(E \cap F) + \frac{1}{\mathrm{poly}(n)}.
\end{align*}
Hence, we obtain
\begin{align*}
P_{G'}(E) &\leq e^{\epsilon_1}(P_m(E \cap F) + \delta_1) \leq e^{\epsilon_1}(e^{2\epsilon}P_G(E \cap F) + 1/\mathrm{poly}(n)) + e^{\epsilon_1}\delta_1\\
&\leq e^{\epsilon_1 + 2\epsilon}P_G(E) + e^{\epsilon_1}(1/\mathrm{poly}(n) + \delta_1).
\end{align*}
Using inequality \eqref{eq:case_1}, we deduce that $(\hat{L}_{\epsilon_1, \delta_1}(\mathcal{B}(G)), \mathcal{A}^{(G)}(T_{w, D}(G)))$ is $(\epsilon_1 + 2\epsilon, e^{\epsilon_1}(1/\mathrm{poly}(n) + \delta_1))$-node DP. Since $\epsilon_1 \asymp 1$ and $\epsilon$ grows with $n$, we have $\epsilon_1 \leq \epsilon$, for $n$ large enough. Hence, because $\delta_1 \asymp \frac{1}{\mathrm{poly}(n)}$ as well, we obtain the desired privacy guarantee.

For the utility guarantee, we rely on the following utility result for fixed privacy parameters, proved in Appendix~\ref{AppLemPihatWt}:

\begin{lemma}
\label{lemma:Pihat-Pi_weighted}
Assume $\frac{\epsilon^2}{\log(1/\delta)} \geq \frac{1}{\log^3(n)}$, $d > \sqrt{n}\log^{13/2}(n)$, and $s^2 \asymp a_n$. Suppose Assumption \ref{ASS_SE_t_m} and equation~\eqref{eq:t} hold. Let $\theta = \{\theta_i\}_{i = 1}^n$ be the true community assignments. Let $\mathcal{A}_{SE}^{(\epsilon, \delta)}(G)$ be the output of Algorithm \ref{alg:Subspace_Estimation_GoodC}. Then there exists an absolute constant $C_1 \asymp 1$ such that
\begin{align*}
\widetilde{\mathcal{L}}\left(\mathcal{A}_{SE}^{(\epsilon, \delta)}(G), \theta\right) \leq \frac{C_1\sqrt{\log(1/\delta)}}{\epsilon\log^4(n)} \precsim \frac{1}{\log^{5/2}(n)},
\end{align*}
with probability at least $0.9 - 3\zeta - \frac{1}{\mathrm{poly}(n)}$.
\end{lemma}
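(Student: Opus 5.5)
The plan is to mirror the proof of Lemma~\ref{lemma:Pihat-Pi} step by step. Only the ingredients that depend on the entries of the weighted matrix $W \odot A$ change. Write $P_w$ for its (diagonal-corrected) mean, as in Definition~\ref{DefWeightSBM}, and let $\Pi$ be the projection onto $\mathrm{col}(P_w)$. Since $B \odot B_w$ is invertible, $P_w = J((B\odot B_w)\otimes J_{n/k})J^T$ has rank exactly $k$. Lemma~\ref{lemma:hp_all_types} uses only the block structure, so it applies verbatim to the chunks $(P_w)_j$: with probability $1 - 1/\mathrm{poly}(n)$ every chunk has rank $k$ and row space $\mathrm{col}(\Pi)$. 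Lemma~\ref{LemSubsampleSigma} also uses only the block structure, with $D_P$ now having singular values of order $\frac{n}{k}\,\sigma(B\odot B_w)$. Under the scaling $s^2 \asymp a_n$ and $B_w$ of constant order, this gives $\sigma_k((P_w)_j) \asymp \sigma_1((P_w)_j) \asymp a_n\sqrt{mn}$.

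The key new step, which I expect to be the main obstacle, is a spectral concentration bound replacing Lemma~\ref{lemma:thm5.2LeiRin}. The target is
\begin{equation*}
\|W\odot A - P_w\|_2 \precsim \sqrt{na_n\log(n)}
\end{equation*}
with probability $1 - 1/\mathrm{poly}(n)$. I would write $W\odot A - \E[W\odot A] = \sum_{i<j} X_{ij}(e_ie_j^T + e_je_i^T)$ with independent centered $X_{ij}$, and split $X_{ij} = (W_{ij}-\E W_{ij})A_{ij} + \E W_{ij}(A_{ij} - \E A_{ij})$.

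The second piece has the form $\E[W] \odot (A - \E A)$. Since $B_w \succeq 0$ or $\preceq 0$, the matrix $\E[W]$ is semidefinite, and Lemma~\ref{lemma:Oper_Hadam} combined with Lemma~\ref{lemma:thm5.2LeiRin} gives $\precsim \sqrt{d}$.

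For the first piece, I would condition on $A$ and apply Lemma~\ref{lemma:gB_Ex6.7} to each symmetric $(W_{ij}-\E W_{ij})A_{ij}(e_ie_j^T+e_je_i^T)$. Each term is sub-Gaussian with matrix parameter $c^2 s^2 A_{ij}(e_ie_i^T+e_je_j^T)$. The summed variance proxy has norm $\precsim s^2 \max_i d_i \precsim a_n d$ on the degree event of Lemma~\ref{lemma:hp_removal_d}. The matrix Hoeffding bound of Lemma~\ref{lemma:Matrix_Hoeff} then yields $\precsim \sqrt{a_n d \log n} \le \sqrt{d\log n}$. The lost $\sqrt{\log n}$ relative to the unweighted case is exactly why the hypothesis strengthens from $d > \sqrt{n}\log^{11/2}(n)$ to $d > \sqrt{n}\log^{13/2}(n)$. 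If the symmetry or sub-Gaussian centering causes trouble, I would instead use a symmetrization plus a truncation of the weights at level $s\sqrt{\log n}$.

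With this bound in hand, I would redo the argument of Lemma~\ref{lemma:Pi^j-Pi}. Eckart--Young (Lemma~\ref{LemEYM}) and Lemma~\ref{lemma:thm3YuEtal} give
\begin{equation*}
\|\Pi^{(j)}-\Pi\|_2 \precsim \frac{1}{\sqrt{a_n m/\log n}} + \frac{\log n}{a_n m} \asymp \sqrt{\frac{t\log n}{d}} + \frac{t\log n}{d},
\end{equation*}
and the extra log factor in $d$ absorbs this back to $\xi_0$. The remaining steps are unchanged from the unweighted proof, since none of them touches the graph:
\begin{itemize}
\item the grid and GoodCenter no-truncation argument of Lemma~\ref{lemma:Zhat_Z}, which depends only on the $z_i^j$ being $\xi_0\sqrt{\log n}$-close;
\item the rank-$k$ structure of $Z'$ from Lemma~\ref{lemma:Pi_Z};
\item the Davis--Kahan step of Lemma~\ref{lemma:l2.3SinSte}, giving $\|\hat\Pi-\Pi\|_2^2 \precsim \xi_0^2\log n$.
\end{itemize}

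Finally, $\mathrm{col}(P_w) = \mathrm{col}(\Theta)$ still has the form $U = \Theta X$, with rows of $X$ separated by $\sqrt{2k/n}$, because $B\odot B_w$ is invertible. The Lei--Rinaldo $k$-means argument (Lemma 5.3 of \cite{LeiRin15}) then gives $\widetilde{\mathcal{L}} \precsim \xi_0^2\log n \asymp \sqrt{\log(1/\delta)}/(\epsilon\log^4 n)$. This holds with probability $0.9-3\zeta-1/\mathrm{poly}(n)$, after union-bounding the same events as in the unweighted case.
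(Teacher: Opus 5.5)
Your proposal is correct and follows essentially the same route as the paper. The only genuinely new ingredient is the bound $\|W\odot A - P_w\|_2 \precsim \sqrt{d\log n}$, obtained by splitting into $(W-\E W)\odot A$ (conditional matrix Hoeffding via Lemma~\ref{lemma:gB_Ex6.7}) and $\E[W]\odot(A-P)$ (Lemma~\ref{lemma:Oper_Hadam} together with Lemma~\ref{lemma:thm5.2LeiRin}). The resulting extra $\sqrt{\log n}$ is absorbed by the stronger condition $d>\sqrt{n}\log^{13/2}(n)$, after which the unweighted argument carries over unchanged.

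The one cosmetic difference is the variance proxy: you bound it by $s^2\max_i d_i$ on the degree event, whereas the paper uses the cruder bound $\|\sum_{l<v} (Q^{(lv)})^2\|_2\le n-1$. Both yield the same $\sqrt{d\log n}$ once you relax $a_n\le 1$.
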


Let $I := \left[5 + \frac{4\log(1/\delta_1)}{\epsilon_1}, 5 + \frac{16\log(1/\delta_1)}{\epsilon_1}\right]$, $I_{min} := 5 + \frac{4\log(1/\delta_1)}{\epsilon_1}$, and $I_{max} := 5 + \frac{16\log(1/\delta_1)}{\epsilon_1}$. Let $X_{\epsilon_1, \delta_1} := 5 + \mathrm{Lap}\left(\frac{8}{\epsilon_1}\right) + \frac{8\log(1/\delta_1)}{\epsilon_1}$. For $F_1 = \left\{X_{\epsilon_1, \delta_1} \in I\right\}$, we have
\begin{align*}
 \mathbb{P}\left(F_1^c\right) \leq \mathbb{P}\left(\mathrm{Lap}\left(\frac{8}{\epsilon_1}\right) < -\frac{4\log(1/\delta_1)}{\epsilon_1}\right) + \mathbb{P}\left(\mathrm{Lap}\left(\frac{8}{\epsilon_1}\right) > \frac{8\log(1/\delta_1)}{\epsilon_1}\right) \leq 2\sqrt{\delta_1}.
\end{align*}
Let $h_{\epsilon, \delta} = \frac{\sqrt{\log(1/\delta)}}{\epsilon\log^4(n)}$ and let $C_1$ be as in Lemma \ref{lemma:Pihat-Pi_weighted}. Let $f_{X_{\epsilon_1, \delta_1}}$ be the pdf of $X_{\epsilon_1, \delta_1}$. Also, let $\Omega_0$ be as in the proof of Lemma \ref{lemma:hp_removal_d}. We have $\mathbb{P}(\Omega_0) \geq 1 - \frac{1}{\mathrm{poly}(n)}$. In addition, on $\Omega_0$, we do not have to truncate the graph, since the degree of every node in the unweighted $A$ is less than $2d$. Thus, the degree of every node in $W\odot A$ is also less than $2d$. Hence, on $\Omega_0$, because of the definition of $T_{w, D}$, we have $T_{w,D}(G) = G$ and $d_{T_{D}}(\mathcal{B}(G)) = 0$. Hence, since $X_{\epsilon_1, \delta_1} \geq 5 > \frac{1}{2}$ on $F_1$, we have
\begin{align*}
&\mathbb{P}\left(\widetilde{\mathcal{L}}\left(\thetahat(G), \theta\right) \leq C_1h_{\frac{\epsilon}{5D(I_{max} + 1)}, \frac{\delta}{I_{max} + 1}}\right)\\
&\geq \mathbb{P}\left(\left\{\widetilde{\mathcal{L}}\left(\thetahat(G), \theta\right) \leq C_1h_{\frac{\epsilon}{5D(I_{max} + 1)}, \frac{\delta}{I_{max} + 1}}\right\} \cap \Omega_0\right)\\
&\geq \mathbb{P}\left(\left\{\widetilde{\mathcal{L}}\left(\mathcal{A}_{SE}^{(\epsilon/(5D(\max\{1/2, X_{\epsilon_1, \delta_1}\} + 1)), \delta/(\max\{1/2, X_{\epsilon_1, \delta_1}\} + 1))}(G), \theta\right) \leq C_1h_{\frac{\epsilon}{5D(I_{max} + 1)}, \frac{\delta}{I_{max} + 1}}\right\} \cap \Omega_0 \cap F_1\right)\\
&= \mathbb{P}\left(\left\{\widetilde{\mathcal{L}}\left(\mathcal{A}_{SE}^{(\epsilon/(5D(X_{\epsilon_1, \delta_1} + 1)), \delta/(X_{\epsilon_1, \delta_1} + 1))}(G), \theta\right) \leq C_1h_{\frac{\epsilon}{5D(I_{max} + 1)}, \frac{\delta}{I_{max} + 1}}\right\} \cap \Omega_0 \cap F_1\right)\\
&\geq \int_I\mathbb{P}\left(\widetilde{\mathcal{L}}\left(\mathcal{A}_{SE}^{(\epsilon/(5D(X_{\epsilon_1, \delta_1} + 1)), \delta/(X_{\epsilon_1, \delta_1} + 1))}(G), \theta\right) \leq C_1h_{\frac{\epsilon}{5D(I_{max} + 1)}, \frac{\delta}{I_{max} + 1}}\middle\vert X_{\epsilon_1, \delta_1} = x\right)f_{X_{\epsilon_1, \delta_1}}(x)dx \\
& \qquad - \frac{1}{\mathrm{poly}(n)}\\
&= \int_I\mathbb{P}\left(\widetilde{\mathcal{L}}\left(\mathcal{A}_{SE}^{(\epsilon/(5D(x + 1)), \delta/(x + 1))}(G), \theta\right) \leq C_1h_{\frac{\epsilon}{5D(I_{max} + 1)}, \frac{\delta}{I_{max} + 1}}\right)f_{X_{\epsilon_1, \delta_1}}(x)dx - \frac{1}{\mathrm{poly}(n)}\\
&\geq \int_I\mathbb{P}\left(\widetilde{\mathcal{L}}\left(\mathcal{A}_{SE}^{(\epsilon/(5D(x + 1)), \delta/(x + 1))}(G), \theta\right) \leq C_1h_{\frac{\epsilon}{5D(x + 1)}, \frac{\delta}{x + 1}}\right)f_{X_{\epsilon_1, \delta_1}}(x)dx - \frac{1}{\mathrm{poly}(n)},
\end{align*}
since the function $h_{\frac{\epsilon}{5D(x + 1)}, \frac{\delta}{x + 1}}$ is increasing in $x > 0$. We also dropped the conditioning on $\left\{X_{\epsilon_1, \delta_1} = x\right\}$, since the randomness in $X_{\epsilon_1, \delta_1}$ only depends on the $\mathrm{Lap}\left(\frac{8}{\epsilon_1}\right)$ noise, and this is independent of $\widetilde{\mathcal{L}}\left(\mathcal{A}_{SE}^{(\epsilon/(5D(x + 1)), \delta/(x + 1))}(G), \theta\right)$. We want to use Lemma \ref{lemma:Pihat-Pi_weighted}, with $\frac{\epsilon}{5D(x + 1)}, \frac{\delta}{x + 1}$, and $x \in [I_{min}, I_{max}]$. For this, we need $d > \sqrt{n}\log^{13/2}(n)$, and 
\begin{align*}
\max\left\{(C^{(1)})^2n^{1 - 2\beta_0}\log(n), \frac{1}{\log^3(n)}\right\} < \frac{\epsilon^2}{25D^2(x + 1)^2\log((x + 1)/\delta)} \leq \frac{(C^{(1)})^2n\log(n)}{4}
\end{align*}
(Assumption~\ref{ASS_SE_t_m} holds with $\frac{\epsilon}{5D(x + 1)}$), which is equivalent to 
\begin{align*}
\frac{1}{\log^3(n)} < \frac{\epsilon^2}{25D^2(x + 1)^2\log((x + 1)/\delta)} \leq \frac{(C^{(1)})^2n\log(n)}{4},
\end{align*}
since $\beta_0 > \frac{1}{2}$, so for $n$ large enough, we have $(C^{(1)})^2n^{1 - 2\beta_0}\log(n) \leq \frac{1}{\log^3(n)}$. We already have $d > \sqrt{n}\log^{13/2}(n)$, by assumption. Next, since $\epsilon_1 \asymp 1$ and $\delta_1 \asymp \frac{1}{\mathrm{poly}(n)}$, we have $I_{max}, I_{min} \asymp \log(n)$. Hence, since $2\epsilon < \log(1/\delta)$, we obtain
\begin{align*}
\frac{\epsilon^2}{D^2(x + 1)^2\log((x + 1)/\delta)} & \leq \frac{\epsilon^2}{D^2(I_{min} + 1)^2\log((I_{min} + 1)/\delta)} \asymp \frac{\epsilon^2}{D^2\log^2(n)\log(\log(n)/\delta)} \\
& \leq \frac{\epsilon^2}{D^2\log^2(n)\log(1/\delta)} \precsim \frac{\epsilon}{D^2\log^2(n)} < n.
\end{align*}
Hence, for $n$ large enough, we have $\frac{\epsilon^2}{25D^2(x + 1)^2\log((x + 1)/\delta)} \leq \frac{(C^{(1)})^2n\log(n)}{4}$. Since $\log(1/(\delta \cdot \mathrm{poly}(n))) < \Theta(\epsilon)$, we then have
\begin{align*}
\frac{\epsilon^2}{D^2(x + 1)^2\log((x + 1)/\delta)} &\geq \frac{\epsilon^2}{D^2(I_{max} + 1)^2\log((I_{max} + 1)/\delta)} \\
& \asymp \frac{\epsilon^2}{D^2\log^2(n)\log(\log(n)/\delta)} \succsim \frac{\epsilon^2}{D^2\log^2(n)(\log(n) + \epsilon)}\\
&\succsim \frac{\epsilon}{D^2\log^2(n)} > \frac{1}{\log^2(n)},
\end{align*}
since $\epsilon$ grows larger than $\log(n)$. So, for $n$ large enough, we have $\frac{\epsilon^2}{25D^2(x + 1)^2\log((x + 1)/\delta)} \geq \frac{1}{\log^3(n)}$. Hence, we can use Lemma \ref{lemma:Pihat-Pi_weighted}, implying that 
\begin{align*}
\widetilde{\mathcal{L}}\left(\mathcal{A}_{SE}^{(\epsilon/(5D(x + 1)), \delta/(x + 1))}(G), \theta\right) \leq C_1h_{\frac{\epsilon}{5D(x + 1)}, \frac{\delta}{x + 1}},
\end{align*}
with probability at least $0.9 - 3\zeta - \frac{1}{\mathrm{poly}(n)}$. Thus, we have
\begin{align*}
& \mathbb{P}\left(\widetilde{\mathcal{L}}\left(\thetahat(G), \theta\right) \leq C_1h_{\frac{\epsilon}{5D(I_{max} + 1)}, \frac{\delta}{I_{max} + 1}}\right) \\
&\qquad \geq \int_I\left(0.9 - 3\zeta - \frac{1}{\mathrm{poly}(n)}\right)f_{X_{\epsilon_1, \delta_1}}(x)dx - \frac{1}{\mathrm{poly}(n)}\\
& \qquad \geq 0.9 - 3\zeta - \frac{1}{\mathrm{poly}(n)} - \mathbb{P}(F_1^c) - \frac{1}{\mathrm{poly}(n)}\\
& \qquad \geq  0.9 - 3\zeta - 2\sqrt{\delta_1} - \frac{1}{\mathrm{poly}(n)} \geq 0.9 - 3\zeta - \frac{1}{\mathrm{poly}(n)},
\end{align*}
since $\delta_1 \asymp \frac{1}{\mathrm{poly}(n)}$. Hence, since $I_{max} \asymp \log(n)$, $D^2 < \epsilon$, and $\log(1/(\delta\cdot\mathrm{poly}(n))) < \Theta(\epsilon)$, we obtain
\begin{align*}
\widetilde{\mathcal{L}}\left(\thetahat(G), \theta\right) & \precsim \frac{D(I_{max} + 1)\sqrt{\log((I_{max} + 1)/\delta)}}{\epsilon\log^4(n)} \precsim \frac{D\log(n)\sqrt{\log(n) + \epsilon}}{\epsilon\log^4(n)} \\
& \precsim \frac{D}{\log^3(n)\sqrt{\epsilon}} < \frac{1}{\log^3(n)},
\end{align*}
with probability at least $0.9 - 3\zeta - \frac{1}{\mathrm{poly}(n)}$, as required.


\subsubsection{Proof of Lemma~\ref{lemma:Pihat-Pi_weighted}}
\label{AppLemPihatWt}

The proof proceeds similarly to the proof of Lemma \ref{lemma:Pihat-Pi}, and it relies on Lemma \ref{lemma:Pi_Z_weighted} (in the same way Lemma \ref{lemma:Pihat-Pi} relies on Lemma \ref{lemma:Pi_Z}).
We work under Assumption \ref{ASS_SE_t_m} and equation~\eqref{eq:t}. Fix $ 1 \asymp \zeta \in (0, 1/3)$, and assume $3k \geq \log(2/\zeta)$. Also, w.h.p, when dividing $P_w =[P_{w, 1}^T, \dots, P_{w, t}^T]^T \in \mathbb{R}^{n \times n}$ according to how $W \odot A$ is divided in step $1$, all of the $t$ chunks have rank $k$. Let $\Omega_1'$ denote the event that all chunks have rank $k$. The following result is proved in the same way as Lemma \ref{lemma:hp_all_types}:

\begin{lemma}
\label{lemma:hp_all_types_weight}
Let $\Omega_1'$ be the event that $P_{w, 1}, \dots, P_{w, t}$ all have rank $k$. Then
\begin{equation*}
\mprob(\Omega_1') \ge 1 - \frac{1}{\mathrm{poly}(n)}.
\end{equation*}
\end{lemma}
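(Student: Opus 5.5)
We will mirror the proof of Lemma~\ref{lemma:hp_all_types}, with $P$ replaced by $P_w = J((B\odot B_w)\otimes J_{\frac{n}{k}})J^T$. The structural fact needed is that $P_w$ has exactly $k$ distinct row types, one per community. Moreover, a chunk $P_{w,j}$ has rank $k$ as soon as every row type appears at least once in it.

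To see the second claim, note that the rows of $P_w$ of type $\ell$ equal $e_\ell^T (B\odot B_w)\Theta^T$, where $\Theta$ is the membership matrix. The matrix $(B\odot B_w)$ is invertible by Definition~\ref{DefWeightSBM}, and $\Theta^T$ has full row rank $k$. Hence the $k$ distinct row types are linearly independent. So if chunk $j$ contains all $k$ types, its row space is $k$-dimensional, and it cannot exceed $k$ since $\mathrm{rank}(P_w)=k$. This is the only place where the weighted setting differs from the unweighted one, and it is exactly where invertibility of $B\odot B_w$, rather than of $B$, is used.

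The rest is the same counting argument as before. Let $G'_j=\{P_{w,j}\text{ has rank }k\}$. By exchangeability of the random partition in step~1 of Algorithm~\ref{alg:Subspace_Estimation_GoodC}, all $\mathbb{P}(G'_j)$ are equal to some $p_1$, and a union bound gives $\mathbb{P}(\Omega_1')\ge 1-t(1-p_1)$. Next, $1-p_1$ is at most the probability that some type is missing from chunk~1. By a union bound over the $k$ types, this is at most $k\binom{n-n/k}{m}/\binom{n}{m}\le k(1-m/n)^{n/k}=O(k\,2^{-m/k})$; this bound is simpler than inclusion--exclusion and suffices.

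Under Assumption~\ref{ASS_SE_t_m}, $t\le n^{\beta_0}$, so $m=n/t\ge n^{1-\beta_0}$. Hence $t(1-p_1)\lesssim k n^{\beta_0}2^{-n^{1-\beta_0}/k}$, which is $1/\mathrm{poly}(n)$ because $k\asymp 1$ and $\beta_0<1$. No step is a real obstacle. The only point requiring care is the linear-independence claim above, which must use invertibility of $B\odot B_w$ and not invertibility of $B$ (the latter is not assumed in the weighted setting).
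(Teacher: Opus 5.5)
Your proposal is correct and follows the same argument as the paper, which proves this lemma ``in the same way as'' Lemma~\ref{lemma:hp_all_types}: a union bound over the $t$ chunks, the hypergeometric bound $\binom{n-n/k}{m}/\binom{n}{m}\le(1-m/n)^{n/k}$, and $m\ge n^{1-\beta_0}$. You depart from it in two harmless ways: you use a union bound over the $k$ row types instead of the exact inclusion--exclusion formula, and you check explicitly, via invertibility of $B\odot B_w$, that having all types present forces rank $k$, a step the paper leaves implicit.
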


Let us work on $\Omega_1'$. Since each chunk has rank $k$, the projection onto the row space of $P_{w, j}$ is $\Pi_w$, for all $j \in [t]$. Let $P_w = U_wD_wU_w^T$, with $D_w \in \mathbb{R}^{k \times k}$ and $U_w \in \mathbb{R}^{n \times k}$. Then $\Pi_w = U_wU_w^T$.

The following result is proved in Appendix~\ref{AppPijWt}:

\begin{lemma}
\label{lemma:Pi^j-Pi_weight}
Let $d > \sqrt{n}\log^{13/2}(n)$ and $s^2 \asymp a_n$. For all $i \in [q]$ and $j \in [t]$, we have
\begin{align*}
&\left\|\Pi^{(j)}z_i\right\|_2 \precsim \sqrt{\log(n)},\\
&\|(\Pi^{(j)} - \Pi_w)z_i\|_2 \precsim \xi_0\sqrt{\log(n)},
\end{align*}
with probability at least $0.9 - \frac{1}{\mathrm{poly}(n)}$, where 
\begin{align*}
\xi_0 := \frac{\log^{1/4}(1/\delta)}{\log^{5/2}(n)\sqrt{\epsilon}} + \frac{\sqrt{\log(1/\delta)}}{\log^5(n)\epsilon}.
\end{align*}
\end{lemma}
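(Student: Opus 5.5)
The plan is to follow the unweighted argument (proof of Lemma~\ref{lemma:Pi^j-Pi}) almost verbatim, replacing $A$ by $W\odot A$, $P$ by $P_w$, and $\Pi$ by $\Pi_w$. The one genuinely new ingredient is a spectral-norm concentration bound for the weighted noise matrix $W\odot A - (P_w - \mathrm{diag}(P_w))$; everything else is linear algebra identical to the unweighted case.

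First, fix a chunk $j$ and work on $\Omega_1'$ from Lemma~\ref{lemma:hp_all_types_weight}, so that $\text{row}(P_{w,j})$ has projection $\Pi_w$. As before, combine Lemmas~\ref{lemma:l2.3SinSte}, \ref{lemma:prop2.2VuLei} and \ref{lemma:thm3YuEtal} with Eckart--Young (Lemma~\ref{LemEYM}). Since $P_{w,j}$ has rank $k$, this gives $\|\Pi_j(W\odot A)_j - (W\odot A)_j\|_2 \le \|W\odot A - P_w\|_2$, and hence
\begin{equation*}
\|\Pi^{(j)}-\Pi_w\|_2 \precsim \frac{(\sigma_1(P_{w,j}) + \|W\odot A - P_w\|_2)\,\|W\odot A - P_w\|_2}{\sigma_k^2(P_{w,j})}.
\end{equation*}
Next, apply Lemma~\ref{LemSubsampleSigma} to $P_w = J((B\odot B_w)\otimes J_{n/k})J^T$; its proof only uses the block structure. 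This gives $\sigma_k(P_{w,j})\asymp\sigma_1(P_{w,j})\asymp \|B_w\|a_n\sqrt{mn}$, using that $B_0\odot B_w$ is invertible and constant in $n$ (I would treat $B_w$ as $\Theta(1)$). The final rate then has the form $\sqrt{t/d}\cdot(\text{noise}/\sqrt{d})$.

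The main obstacle is bounding $\|W\odot A - \E[W\odot A]\|_2$. Write the centered matrix as $\sum_{i<j} X_{ij}(e_ie_j^T+e_je_i^T)$, where $X_{ij} = W_{ij}A_{ij} - \E[W_{ij}A_{ij}]$ are independent. Split $X_{ij} = (W_{ij}-\E W_{ij})A_{ij} + \E W_{ij}(A_{ij}-\E A_{ij})$. The second part is a Hadamard product of the constant-entry block matrix $\E[W]$ with $A-\E A$. Its entries are constant on blocks, and $B_w\succeq 0$ or $B_w\preceq 0$, so Lemma~\ref{lemma:Oper_Hadam} together with Lemma~\ref{lemma:thm5.2LeiRin} gives $\precsim\sqrt d$.

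For the first part, condition on $A$ and use the symmetric $s^2$-sub-Gaussian weights. By Lemma~\ref{lemma:gB_Ex6.7}, each summand is matrix sub-Gaussian with parameter $c^2s^2A_{ij}(e_ie_i^T+e_je_j^T)$. Summing these gives a diagonal matrix with norm $\le c^2s^2\max_i d_i \precsim a_n d$ on $\Omega_0$, by Lemma~\ref{lemma:hp_removal_d}. Lemma~\ref{lemma:Matrix_Hoeff} then gives $\precsim s\sqrt{d\log n}\precsim \sqrt{a_n d\log n}$ with probability $1-1/\mathrm{poly}(n)$. With $s^2\asymp a_n$ this is $o(\sqrt d)$ when compared against $\|P_w\|\asymp na_n\cdot a_n$-scaled entries. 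I expect the need to track the $\sqrt{\log n}$ loss from matrix Hoeffding, relative to the sharp Lei--Rinaldo bound, to be exactly why the hypothesis strengthens from $d>\sqrt n\log^{11/2}(n)$ to $d>\sqrt n\log^{13/2}(n)$. So I would carry the extra logarithm through the ratio and check that $\sqrt{t/d}\cdot\sqrt{\log n}$ plus $t/d$ terms are still $\precsim \xi_0$, using $t\asymp \sqrt{n\log n\log(1/\delta)}/\epsilon$ and $\rho=\epsilon^2/(32q\log(1/\delta))$ exactly as in the unweighted computation.

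Finally, the Gaussian step is unchanged. Define $\hat\Pi^{(j)}$ as the projection onto $\text{col}(U^{(j),(k)})+\text{col}(U_w)$, which has rank at most $2k$. Lemma~\ref{lemma:lem2.10SinSte} with a union bound over $qt\le \mathrm{poly}(n)$ pairs gives $\|\hat\Pi^{(j)}z_i\|_2,\|\Pi^{(j)}z_i\|_2\precsim\sqrt{\log n}$ with probability $\ge 0.99$ each. Then $\|(\Pi^{(j)}-\Pi_w)z_i\|_2\le\|\Pi^{(j)}-\Pi_w\|_2\|\hat\Pi^{(j)}z_i\|_2\precsim\xi_0\sqrt{\log n}$. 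Intersecting with $\Omega_1'$, the noise event and the subsampling event (failure $2kte^{-m/(2k^2)}\le 1/\mathrm{poly}(n)$ since $m\ge n^{1-\beta_0}$) gives probability $\ge 0.9-1/\mathrm{poly}(n)$.
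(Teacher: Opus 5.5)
Your proposal is correct and follows essentially the same route as the paper. Davis--Kahan together with Eckart--Young reduces everything to $\|W\odot A - P_w\|_2$. This is split into $(W-\E[W])\odot A$, handled by matrix Hoeffding conditional on $A$, and $\E[W]\odot(A-P)$, handled by Lemma~\ref{lemma:Oper_Hadam} plus Lei--Rinaldo. The argument then finishes with the subsampled singular-value bounds and the same Gaussian projection step.

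The only deviation is that you bound the matrix-Hoeffding variance proxy by the maximum degree ($\precsim d$ on $\Omega_0$) instead of the paper's cruder $n-1$. This sharpens the intermediate bound but is not needed. Your aside that this term is $o(\sqrt d)$ is not justified in general, since it requires $a_n\log(n) = o(1)$. It is harmless, though: carrying the worst-case noise $\sqrt{d\log(n)}$ through the ratio, as you propose, already yields $\sqrt{t\log(n)/d} + t\log(n)/d \precsim \xi_0$ under $d>\sqrt n\log^{13/2}(n)$.
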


We work on $\Omega_4' = \Omega_{41} \cap \Omega_{42} \cap \Omega_{43} \cap \Omega_3'$.

The following result is proved analogously to Lemma~\ref{lemma:Zhat_Z}, using Lemma \ref{lemma:Pi^j-Pi_weight}:

\begin{lemma}
\label{lemma:Zhat_Z_weighted}
Let $Z_w = [\Pi_w z_1, \dots, \Pi_w z_q] \in \mathbb{R}^{n \times q}$. Suppose $d > \sqrt{n}\log^{13/2}(n)$ and $s^2 \asymp a_n$. We have $\|\hat{Z} - Z_w\|_2 \precsim \frac{1}{n^2\sqrt{n}} + \xi_0\sqrt{\log(n)}$, with probability at least $0.9 - 2\zeta - \tau_w(n)$.
\end{lemma}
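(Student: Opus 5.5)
We follow the proof of Lemma~\ref{lemma:Zhat_Z} verbatim, replacing $\Pi$ by $\Pi_w$ and invoking Lemma~\ref{lemma:Pi^j-Pi_weight} wherever Lemma~\ref{lemma:Pi^j-Pi} was used before. Write $\bar Z = \bigl[\frac1t\sum_j z_1^j,\dots,\frac1t\sum_j z_q^j\bigr]$, and decompose $\hat Z - Z_w = E_0 + E_1$ with $E_0 = \bar Z - Z_w$ and $E_1 = \hat Z - \bar Z$. We then bound the two pieces separately.

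The first step is to work on the event of Lemma~\ref{lemma:Pi^j-Pi_weight}, whose probability is at least $0.9 - \frac{1}{\mathrm{poly}(n)}$. That event gives $\|z_i^j\|_2 \precsim \sqrt{\log n}$ and $\|z_i^j - \Pi_w z_i\|_2 \precsim \xi_0\sqrt{\log n}$ uniformly over $i\in[q]$ and $j\in[t]$. The first ingredient we still need is $\|\Pi_w z_i\|_2 \precsim \sqrt{\log n}$. This follows from Lemma~\ref{lemma:lem2.10SinSte}, since $\Pi_w$ is a rank-$k$ projection, and a union bound over $i$ costs only a small constant in probability.

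With these bounds, the grid argument goes through unchanged:
\begin{itemize}
\item every point lies well inside the box $[-\log n,\log n]^n$;
\item the rounding error is $\precsim r_{min}\sqrt n$;
\item pairwise distances satisfy $\|z_i^{j,grid}-z_i^{l,grid}\|_2 \precsim r_{min}\sqrt n + \xi_0\sqrt{\log n}$.
\end{itemize}
Lemma~\ref{lemma:cor_a3_supp}, applied with our choice of $t$ from \eqref{eq:t}, then gives the following for each $i$, with probability $1-\zeta/q$: every $z_i^j$ lies within $7r_{opt}$ plus the pairwise diameter of $\theta_i^*$, hence inside $B(\theta_i^*, r)$. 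A union bound over $i$ yields a no-truncation event with conditional probability at least $1-\zeta$.

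On the no-truncation event, the columns of $E_1$ are independent $N(0,\sigma^2 I_n)$ vectors. Lemma~\ref{lemma:cor2.7SinSte} then gives, with probability $1-\zeta$,
\[
\|E_1\|_2 \precsim \sigma\sqrt n \asymp \frac{r\sqrt n}{t\sqrt\rho} \asymp \frac{1}{n^2\sqrt n} + \xi_0\sqrt{\log n}.
\]
For $E_0$, the triangle inequality gives $\|E_0\|_2 \le \sqrt q\,\max_i \frac1t\sum_j\|(\Pi^{(j)}-\Pi_w)z_i\|_2 \precsim \xi_0\sqrt{\log n}$. Adding the two bounds, and collecting the failure probabilities ($\zeta$ each from truncation and noise, plus the $\frac{1}{\mathrm{poly}(n)}$ and constant terms, which we gather into $\tau_w(n)$), gives the claimed probability $0.9-2\zeta-\tau_w(n)$.

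The main point to check is that nothing in the GoodCenter or grid steps used the binary structure of $A$. Those steps only see the projected points $z_i^j$, so all weight-dependence is absorbed into Lemma~\ref{lemma:Pi^j-Pi_weight}, which is where the sub-Gaussian weights and the stronger degree condition $d>\sqrt n\log^{13/2}(n)$ enter. The remaining care is only in the bookkeeping of the probability budget $\tau_w(n)$.
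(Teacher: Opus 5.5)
Your proposal is correct and is essentially the paper's own argument. The paper proves this lemma by rerunning the proof of Lemma~\ref{lemma:Zhat_Z} (the split $\hat{Z}-Z_w=E_0+E_1$, the grid/GoodCenter no-truncation step, and the Gaussian bound from Lemma~\ref{lemma:cor2.7SinSte}), with $\Pi$ replaced by $\Pi_w$ and Lemma~\ref{lemma:Pi^j-Pi_weight} used in place of Lemma~\ref{lemma:Pi^j-Pi}. One bookkeeping fix: the extra ingredient you flag, $\|\Pi_w z_i\|_2 \precsim \sqrt{\log(n)}$, is already one of the constant-probability events inside the $0.9$ of Lemma~\ref{lemma:Pi^j-Pi_weight} (its proof invokes Lemma~\ref{lemma:lem2.10SinSte} for exactly this), so no constant term should be folded into $\tau_w(n)$, which is $O(1/\mathrm{poly}(n))$.
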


The following result is proved analogously to Lemma~\ref{lemma:Pi_Z}, using Lemma \ref{lemma:Zhat_Z_weighted}:

\begin{lemma}
\label{lemma:Pi_Z_weighted}
Let $d > \sqrt{n}\log^{13/2}(n)$ and $s^2 \asymp a_n$. Let $E_w := E_{Z_w} + E_{Z_w^{\perp}}$, i.e., the decomposition of each column of $E_w$ as a sum of the component in the column span of $Z_w$, and the corresponding orthogonal complement. Let $Z_w' := Z_w + E_{Z_w}$. Assume $\xi_0\sqrt{\log(n)} = o(1)$ as $n \rightarrow \infty$. Then $\sigma_k(Z_w') \asymp \sqrt{k} \asymp 1$ and $\Pi_w$ is also the projection onto the columns of $Z_w'$, with probability at least $0.9 - 3\zeta - \frac{1}{\mathrm{poly}(n)}$.
\end{lemma}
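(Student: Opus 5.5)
The plan follows the proof of Lemma~\ref{lemma:Pi_Z} almost verbatim, with $\Pi$ replaced by $\Pi_w$ and $Z$ by $Z_w$. I work on the intersection of the event from Lemma~\ref{lemma:Zhat_Z_weighted} (probability at least $0.9 - 2\zeta - \tau_w(n)$) with the event $\Omega_1'$ of Lemma~\ref{lemma:hp_all_types_weight}. On this event, $\|E_w\|_2 = \|\hat{Z} - Z_w\|_2 \precsim \frac{1}{n^2\sqrt{n}} + \xi_0\sqrt{\log(n)} = o(1)$ by assumption. The first step is to control $\sigma_k(Z_w)$ directly.

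Since $P_w = J((B\odot B_w)\otimes J_{n/k})J^T$ and $B \odot B_w$ is invertible, $P_w$ has rank exactly $k$. Hence $\Pi_w = U_wU_w^T$ with $U_w \in \mathbb{R}^{n\times k}$ orthonormal.

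By the variational characterization of singular values, as in the proof of Lemma~\ref{lemma:Pi_Z}, $\sigma_k(Z_w)$ equals the $k^{\text{th}}$ singular value of $[U_w^Tz_1,\dots,U_w^Tz_q] \in \mathbb{R}^{k\times q}$. The columns of this matrix are i.i.d.\ $N(0,I_k)$ by rotational invariance, since $U_w^TU_w = I_k$. Applying Lemma~\ref{lemma:cor2.7SinSte} with $q = C'k$ and $C'$ large, and using $3k \ge \log(2/\zeta)$ to absorb the $\sqrt{\log(2/\zeta)}$ term, gives $\sigma_k(Z_w) \asymp \sqrt{k} \asymp 1$ with probability at least $1-\zeta$. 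This costs the third $\zeta$ and yields the overall probability $0.9 - 3\zeta - \frac{1}{\mathrm{poly}(n)}$, once $\tau_w(n) \le \frac{1}{\mathrm{poly}(n)}$ is checked as in the unweighted case. Also $\sigma_{k+1}(Z_w) = 0$, since $\mathrm{rank}(Z_w) \le \mathrm{rank}(\Pi_w) = k$.

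Next, set $Z_w' = Z_w + E_{Z_w}$ and apply Weyl's inequality in the form of Lemma~\ref{lemma:l2.1SinSte}, with $\|E_{Z_w}\|_2 \le \|E_w\|_2 = o(1)$:
\begin{equation*}
\sigma_k(Z_w) - \|E_w\|_2 \le \sigma_k(Z_w') \le \sigma_k(Z_w) + \|E_w\|_2.
\end{equation*}
This gives $\sigma_k(Z_w') \asymp \sqrt{k} \asymp 1$ for $n$ large.

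Finally, each column of $E_{Z_w}$ lies in $\mathrm{col}(Z_w)$, so $\mathrm{col}(Z_w') \subseteq \mathrm{col}(Z_w) \subseteq \mathrm{col}(\Pi_w) = \mathrm{col}(P_w)$. The last space has dimension $k$. Since $\sigma_k(Z_w') > 0$, $\mathrm{col}(Z_w')$ also has dimension $k$, so the two subspaces coincide and $\Pi_w$ is the projection onto $\mathrm{col}(Z_w')$.

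The only step needing real care is probabilistic bookkeeping rather than linear algebra. It consists of confirming that Lemma~\ref{lemma:Zhat_Z_weighted} and its failure term $\tau_w(n)$ carry over under the sub-Gaussian weight assumption $s^2 \asymp a_n$. That in turn requires that the weighted analogue of Lemma~\ref{lemma:thm5.2LeiRin}, obtained via Lemmas~\ref{lemma:Matrix_Hoeff} and~\ref{lemma:gB_Ex6.7}, gives $\|W\odot A - P_w\|_2 \precsim \sqrt{d}\,\mathrm{polylog}(n)$ with probability $1 - \frac{1}{\mathrm{poly}(n)}$. Taking Lemma~\ref{lemma:Zhat_Z_weighted} as given, the argument is deterministic apart from the single Gaussian singular-value bound.
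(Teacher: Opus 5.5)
Your proposal is correct and matches the paper's route: the paper proves this lemma by repeating the argument of Lemma~\ref{lemma:Pi_Z} with Lemma~\ref{lemma:Zhat_Z_weighted} in place of Lemma~\ref{lemma:Zhat_Z}. That argument has three steps, exactly as you give them: the Gaussian singular-value bound $\sigma_k(Z_w) \asymp \sqrt{k}$ via Lemma~\ref{lemma:cor2.7SinSte} (costing the third $\zeta$), Weyl's inequality with $\|E_{Z_w}\|_2 \le \|E_w\|_2 = o(1)$, and the dimension count $\mathrm{col}(Z_w') \subseteq \mathrm{col}(P_w)$. Your closing remark on weighted spectral concentration is consistent with how the paper handles it inside Lemma~\ref{lemma:Pi^j-Pi_weight}, via Lemmas~\ref{lemma:Matrix_Hoeff}, \ref{lemma:gB_Ex6.7}, and \ref{lemma:Oper_Hadam}, but it is not needed once Lemma~\ref{lemma:Zhat_Z_weighted} is taken as given.
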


\subsubsection{Proof of Lemma~\ref{lemma:Pi^j-Pi_weight}}
\label{AppPijWt}

Fix $j \in [t]$. By Lemma \ref{lemma:hp_all_types_weight}, the projection onto the row space of $P_{w, j}$ is $\Pi_w$. We have  
\begin{align*}
||\Pi^{(j)} - \Pi_w||_2 &= ||U^{(j), (k)}(U^{(j), (k)})^T - U_wU_w^T||_2 \\
& \stackrel{(a)}{\leq} 2||\sin(\Theta)(U^{(j), (k)}, U_w)||_2 \stackrel{(b)}{\leq} 2||\sin(\Theta)(U^{(j), (k)}, U_w)||_F\\
& \stackrel{(c)}{\leq} \mathop{\inf}_{Q: Q^TQ = QQ^T = I_k}||U^{(j), (k)}Q - U_w||_F \stackrel{(d)}{\leq} ||U^{(j), (k)}\hat{Q} - U_w||_F\\
&\precsim \frac{(\sigma_1(P_{w,j}) + ||\Pi_j(W \odot A)_j - P_{w, j}||_2)||\Pi_j(W \odot A)_j - P_{w, j}||_2}{\sigma^2_k(P_{w, j})}.
\end{align*}
Here, $\sigma_1(\cdot)$ and $\sigma_k(\cdot)$ represent the largest and $k^{\text{th}}$ largest singular values, respectively. Inequality (a) follows from Lemma \ref{lemma:l2.3SinSte}. Inequality (b) holds because the Frobenius norm is greater than the operator norm. Inequality (c) holds because of Lemma \ref{lemma:prop2.2VuLei}. Inequality (d) and the existence of the $k \times k$ orthogonal matrix $\hat{Q}$ hold because of Lemma \ref{lemma:thm3YuEtal}. Now, we have
\begin{align*}
||\Pi_j(W \odot A)_j - P_{w, j}||_2 \leq ||\Pi_j(W \odot A)_j - (W \odot A)_j||_2 + ||(W \odot A)_j - P_{w, j}||_2.  
\end{align*}
Note that $\Pi_j(W \odot A)_j$ is the best rank-$k$ approximation of $(W \odot A)_j$ in the operator norm by the Eckart-Young Theorem (cf.\ Lemma~\ref{LemEYM}). Now split $W = [W_1^T, \dots, W_t^T]^T$ accordingly. Since $P_{w, j}$ has rank $k$ on $\Omega_1'$, we have
\begin{align*}
||\Pi_j(W \odot A)_j - (W \odot A)_j||_2 & \leq ||(W \odot A)_j - P_{w, j}||_2 \\
& \leq ||(W \odot A)_j - \mathbb{E}[W_j]\odot A_j||_2 + ||\mathbb{E}[W_j]\odot A_j - P_{w, j}||_2 \\
& \leq ||W \odot A - \mathbb{E}[W]\odot A||_2 + ||\mathbb{E}[W]\odot A - P_w||_2.
\end{align*}
Hence, since $P_w = \mathbb{E}[W]\odot P$, we have
\begin{align*}
||\Pi_j(W \odot A)_j -  P_{w, j}||_2 &\leq 2||W \odot A - \mathbb{E}[W]\odot A||_2 + 2||\mathbb{E}[W]\odot A - P_w||_2\\
&= 2||W \odot A - \mathbb{E}[W]\odot A||_2 + 2||\mathbb{E}[W]\odot (A - P)||_2.
\end{align*}
Since $B_w \succeq 0$ or $B_w \preceq 0$, we have $\mathbb{E}[W] = J(B_w\otimes J_{\frac{n}{k}})J^T \succeq 0$ or $\mathbb{E}[W] = J(B_w\otimes J_{\frac{n}{k}})J^T \preceq 0$. Since $\mathbb{E}[W]$ is also symmetric, by Lemma \ref{lemma:Oper_Hadam}, we have
\begin{equation*}
||\mathbb{E}[W]\odot (A - P)||_2 \leq \mathop{\max}_{l, g}|\mathbb{E}[W_{lg}]|||A - P||_2 \precsim ||A - P||_2.
\end{equation*}
We also assumed that the entries of $B_w$ are constant, so the entries of $\mathbb{E}[W]$ do not change with $n$. Therefore, we have
\begin{align}
\label{eq:W_Pi_j_P_j}
||\Pi_j(W \odot A)_j -  P_{w, j}||_2 \precsim ||W \odot A - \mathbb{E}[W]\odot A||_2 + ||A - P||_2.
\end{align}
For $1 \leq l < v \leq n$, let $Q^{(lv)} \in \{0, 1\}^{n \times n}$ be the $0$ matrix, with the $(l, v)$ and $(v, l)$ entries given by the $(l, v)$ entry in $A$. Then
\begin{align*}
W \odot A - \mathbb{E}[W]\odot A = (W - \mathbb{E}[W])\odot A = \sum_{1 \leq l < v \leq n}(W_{lv} - \mathbb{E}[W_{lv}])Q^{(lv)},  
\end{align*}
where $W_{lv}$ is the $(l, v)$ entry in $W$. Also, for $\zeta_2 > 0$, we have
\begin{align*}
\mathbb{P}\left(||W \odot A - \mathbb{E}[W]\odot A||_2 \geq \zeta_2\right) & = \mathbb{E}_A\left[\mathbb{P}\left(||W \odot A - \ \mathbb{E}[W]\odot A||_2 \geq \zeta_2| A\right)\right] \\
& = \mathbb{E}_A\left[\mathbb{P}_W\left(||W \odot A - \mathbb{E}[W]\odot  A||_2 \geq \zeta_2\right)\right],
\end{align*}
where the last step holds because $W$ and $A$ are independent. So we could drop the conditioning, and inside $\mathbb{P}_W$, the only randomness is in $W$. Hence, we have
\begin{align*}
\mathbb{P}\left(||W \odot A - \mathbb{E}[W]\odot A||_2 \geq \zeta_2\right) = \mathbb{E}_A\left[\mathbb{P}_W\left(\left\|\sum_{1 \leq l < v \leq n}(W_{lv} - \mathbb{E}[W_{lv}])Q^{(lv)}\right\|_2 \geq \zeta_2\right)\right].
\end{align*}
For each fixed $(l, v)$, the random variable $W_{lv} - \mathbb{E}[W_{lv}]$ is zero-mean $s^2$-sub-Gaussian, with a distribution symmetric around $0$. Also, inside $\mathbb{P}_W$, we know that $Q^{(lv)}$ is deterministic and symmetric. Then by Lemma \ref{lemma:gB_Ex6.7}, the matrix $(W_{lv} - \mathbb{E}[W_{lv}])Q^{(lv)}$ is sub-Gaussian with matrix parameter $C_1^2s^2(Q^{(lv)})^2$, where $C_1 \asymp 1$. Since all the $(W_{lv} - \mathbb{E}[W_{lv}])Q^{(lv)}$ matrices are independent and symmetric, we have by Lemma \ref{lemma:Matrix_Hoeff} that
\begin{align*}
\mathbb{P}_W\left(\left\|\sum_{1 \leq l < v \leq n}(W_{lv} - \mathbb{E}[W_{lv}])Q^{(lv)}\right\|_2 \geq \zeta_2\right) \leq 2ne^{-\frac{\zeta_2^2}{2C_1^2s^2\left\|\sum_{1 \leq l < v \leq n}^n(Q^{(lv)})^2\right\|_2}}.
\end{align*}
Note that $(Q^{(lv)})^2$ is either the $0$ matrix or the diagonal matrix with $1$'s in diagonal positions $l$ and $v$, so
\begin{align*}
\left\|\sum_{1 \leq l < v \leq n}(Q^{(lv)})^2\right\|_2 \leq n - 1
\end{align*}
and
\begin{align*}
\mathbb{P}_W\left(\left\|\sum_{1 \leq l < v \leq n}(W_{lv} - \mathbb{E}[W_{lv}])Q^{(lv)}\right\|_2 \geq \zeta_2\right) \leq 2ne^{-\frac{\zeta_2^2}{2C_1^2s^2(n - 1)}}.
\end{align*}
Therefore, by taking $\zeta_2^2 = 4C_1^2s^2(n - 1)\log(n)$, we obtain
\begin{align*}
\mathbb{P}\left(||W \odot A - \mathbb{E}[W]\odot A||_2 \geq 2C_1s\sqrt{(n - 1)\log(n)}\right) \leq \frac{2}{n}.   
\end{align*}
Since $s^2 \asymp a_n$, we obtain $||W \odot A - \mathbb{E}[W]\odot A||_2 \precsim \sqrt{a_n n\log(n)}$ on an event $\Omega_{21}$ which holds with probability at least $1 - \frac{2}{n}$. By Lemma \ref{lemma:thm5.2LeiRin}, since $a_n \succsim \frac{\log(n)}{n}$, there is an event $\Omega_2$, with $\mathbb{P}(\Omega_2) \geq 1 - \frac{1}{n}$, such that $||A - \mathbb{E}[A]||_2 = ||A - P + \text{diag}(P)||_2 \precsim \sqrt{na_n}$ on $\Omega_2$. Let us work on $\Omega_2' = \Omega_1' \cap \Omega_2 \cap \Omega_{21}$. Using inequality~\eqref{eq:W_Pi_j_P_j}, we have
\begin{align*}
||\Pi^{(j)} - \Pi_w||_2 \precsim \frac{(\sigma_1(P_{w, j}) + \sqrt{na_n \log(n)} + a_n)(\sqrt{na_n \log(n)} + a_n)}{\sigma^2_k(P_{w, j})}.
\end{align*}

The following result is analogous to Lemma \ref{LemSubsampleSigma}, for the singular values of matrices subsampled from $P_w$. Suppose $P_{w, m}$ is a matrix obtained by randomly subsampling $m$ rows of $P_w$ without replacement, i.e., $P_{w, m} = E_m P_w$ ($E_m$ selects a uniformly random subset of rows, of size $m$). 
\begin{lemma}
\label{LemSubsampleSigma_weighted}
With probability at least $1 - 2ke^{-\frac{m}{2k^2}}$, we have
\begin{align*}
    \sigma_{k}(P_{w, m}) & \succsim \sigma_{k}(D_w) \cdot \sqrt{\frac{m}{n}}, \\
    \sigma_{\max}(P_{w, m}) & \precsim \sigma_{\max}(D_w) \cdot \sqrt{\frac{m}{n}},
\end{align*}
where $P_w = U_wD_wU_w^T$ is the spectral decomposition of $P_w$, with $D_w \in \real^{k \times k}$.
\end{lemma}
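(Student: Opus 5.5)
We will mirror the proof of Lemma~\ref{LemSubsampleSigma} essentially verbatim, replacing $P$ by $P_w$. The first step is to establish the block structure of $P_w$. By Definition~\ref{DefWeightSBM}, $P_w = J((B\odot B_w)\otimes J_{n/k})J^T$, so $P_w$ is a $k$-block matrix with blocks indexed by the same membership matrix $\Theta$ as $P$. Since $B\odot B_w$ is assumed invertible, Lemma 2.1 of \cite{LeiRin15}, applied with $B\odot B_w$ in place of $B$, gives a factorization $U_w = \Theta X_w$ with $X_w = \sqrt{k/n}\,Z_w$ for some orthogonal $Z_w\in\real^{k\times k}$. Consequently $X_w^TX_w = \frac{k}{n}I_k$, and $D_w$ has eigenvalues equal to $\frac{n}{k}$ times those of $B\odot B_w$. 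This first step is the only place where the weighted setting differs from the unweighted one, and the only thing to check is that invertibility of $B\odot B_w$ (rather than of $B$) is exactly what the Lei--Rinaldo structural lemma needs. I expect it to go through without change.

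The second step is to compute the Gram matrix of the subsampled memberships. We write $P_{w,m} = E_m\Theta X_w D_w U_w^T$. As before,
\begin{equation*}
(E_m\Theta)^T(E_m\Theta) = \mathrm{diag}(m_1,\dots,m_k), \qquad m_i\sim \mathrm{HG}\left(n,\tfrac{n}{k},m\right).
\end{equation*}
By Lemma~\ref{lemma:hypergeom_conc} and a union bound over the $k$ communities, the event $\frac{m}{2k}<m_i<\frac{2m}{k}$ for all $i$ holds with probability at least $1-2ke^{-m/(2k^2)}$.

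The third step gives the upper bound on this event. Since $\|U_w^T\|_2\le 1$,
\begin{equation*}
\sigma_{\max}(P_{w,m}) \le \|E_m\Theta\|_2\,\sqrt{k/n}\,\sigma_{\max}(D_w) \le \sqrt{\max_i m_i}\,\sqrt{k/n}\,\sigma_{\max}(D_w) \precsim \sigma_{\max}(D_w)\sqrt{m/n}.
\end{equation*}

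The fourth step gives the lower bound. We use the max–min characterization of $\sigma_k$ with the subspace $\mathcal{S}=\mathrm{col}(U_w)$. Writing $v=U_wa$ with $\|a\|_2=1$, we get
\begin{equation*}
\|P_{w,m}v\|_2^2 = a^TD_wX_w^T\,\mathrm{diag}(m_i)\,X_wD_wa \ge \min_i m_i\cdot\frac{k}{n}\,\sigma_k^2(D_w) \succsim \frac{m}{n}\,\sigma_k^2(D_w).
\end{equation*}
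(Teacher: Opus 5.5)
Your proposal is correct and follows the same route as the paper. The paper also derives the Lei--Rinaldo factorization $U_w = \Theta X$, $X = \sqrt{k/n}\,Z$, for $P_w = \Theta(B\odot B_w)\Theta^T$, which needs only that $B\odot B_w$ is symmetric and invertible, and then repeats the hypergeometric-count argument of Lemma~\ref{LemSubsampleSigma}; your version spells out the steps the paper leaves as ``follow the same lines,'' and the constants check out (e.g.\ $\min_i m_i > m/(2k)$ gives $\sigma_k^2(P_{w,m}) \ge \frac{m}{2n}\sigma_k^2(D_w)$).
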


\begin{proof}
Using the same proof as in Lemma 2.1 from \cite{LeiRin15}, we can derive the following result:

\begin{lemma}
\label{lemma:weighted_lemma2.1_lei_rin}
Suppose $G \sim WSBM(n, k, B, \theta, \mathscr{W})$. Let $\Theta \in \{0,1\}^{n \times k}$ be the membership matrix of the nodes. Let $U_wD_wU_w^T$ be the eigen-decomposition of $P_w = \Theta (B \odot B_w)\Theta^T$. Then $U_w = \Theta X$ and $D_w = \frac{n}{k} \cdot Z^T(B \odot B_w)Z$, where $X = \sqrt{\frac{k}{n}} Z \in \mathbb{R}^{k \times k}$, and $Z$ is orthonormal. Moreover, $||X_{i*} - X_{j*}||_2 = \sqrt{\frac{2k}{n}}$, for all $1 \leq i < j \leq k$.
\end{lemma}

We now follow the same lines of the proof of Lemma \ref{LemSubsampleSigma}, using Lemma \ref{lemma:weighted_lemma2.1_lei_rin}.
\end{proof}

By Lemma \ref{LemSubsampleSigma_weighted}, there is an event $\Omega_{31}$, with probability at least $1 - 2kte^{-\frac{m}{2k^2}}$ (a union bound over all $j \in [t]$), such that on $\Omega_{31}$ we have
\begin{align*}
\sigma_k(P_w) \cdot \sqrt{\frac{m}{n}} \precsim \sigma_{k}(P_{w, j}) \leq \sigma_{1}(P_{w, j}) & \precsim \sigma_1(P_w) \cdot \sqrt{\frac{m}{n}}.
\end{align*}
Let us work on $\Omega_3' = \Omega_{31} \cap \Omega_2'$. Recall that $P_w = J((B\odot B_w) \otimes J_{\frac{n}{k}})J^T$, and $J \in \{0, 1\}^{n \times n}$ is a permutation matrix. Then $\sigma_1(P_w) = \frac{na_n}{k}\sigma_1(B_0 \odot B_w)$ and $\sigma_k(P_w) = \frac{na_n}{k}\sigma_k(B_0 \odot B_w)$. Hence, because $B_0 \odot B_w$ does not change with $n$, we have
\begin{align*}
a_n \sqrt{mn} \precsim \sigma_{k}(P_{w, j}) \leq \sigma_{1}(P_{w, j}) & \precsim a_n \sqrt{mn}.
\end{align*}
Therefore, since $d > \sqrt{n}\log^{13/2}(n)$, we have
\begin{align*}
||\Pi^{(j)} - \Pi_w||_2 &\precsim \frac{(a_n \sqrt{mn} + \sqrt{na_n \log(n)} + a_n)(\sqrt{na_n\log(n)} + a_n)}{a_n^2mn} \\
& \precsim \sqrt{\frac{\log(n)}{a_n m}} + \frac{\log(n)}{a_n m} \asymp \sqrt{\frac{t\log(n)}{d}} + \frac{t\log(n)}{d}\\
&\precsim \frac{1}{\log^{5/2}(n)\rho^{1/4}} + \frac{1}{\log^5(n)\sqrt{\rho}} \asymp \frac{\log^{1/4}(1/\delta)}{\log^{5/2}(n)\sqrt{\epsilon}} + \frac{\sqrt{\log(1/\delta)}}{\log^5(n)\epsilon} = \xi_0,
\end{align*}
for all $j \in [t]$. Now we follow the argument in Corollary 4.6 in Singhal \& Steinke~\cite{SinSte21}.
Let $\hat{\Pi}^{(j)}$ be the projection matrix for the union of the subspace spanned by the columns of $U^{(j), (k)}$ and the subspace spanned
by the columns of $U_w$. By Lemma \ref{lemma:lem2.10SinSte} we have $||\hat{\Pi}^{(j)}z_i||_2 \precsim \sqrt{k} + \sqrt{\log(qt)} \precsim \sqrt{\log(n)}$, for all $(i, j)$, on an event $\Omega_{41}$, with $\mathbb{P}(\Omega_{41}) \geq 0.99$. By Lemma \ref{lemma:lem2.10SinSte} again, we have $||\Pi z_i||_2 \precsim \sqrt{k} + \sqrt{\log(qt)} \precsim \sqrt{\log(n)}$, for all $(i, j)$, on an event $\Omega_{41}$, with $\mathbb{P}(\Omega_{41}) \geq 0.99$. We also have, by Lemma \ref{lemma:lem2.10SinSte}, that $||\Pi^{(j)}z_i||_2 \precsim \sqrt{k} + \sqrt{\log(qt)} \precsim \sqrt{\log(n)}$, for all $(i, j)$, on an event $\Omega_{43}$, with $\mathbb{P}(\Omega_{43}) \geq 0.99$. Let us work on $\Omega_4' = \Omega_{41} \cap \Omega_{42} \cap \Omega_{43} \cap \Omega_3'$. Hence, we have
\begin{align*}
||(\Pi^{(j)} - \Pi_w)z_i||_2 = ||(\Pi^{(j)} - \Pi_w)\hat{\Pi}^{(j)}z_i||_2 \leq ||\Pi^{(j)} - \Pi_w||_2||\hat{\Pi}^{(j)}z_i||_2 \precsim \xi_0\sqrt{\log(n)} ,
\end{align*}
with probability at least $0.9 - \tau_w(n)$, where  $\tau_w(n) = t(1 - p_1) + \frac{3}{n} + 2kte^{-\frac{m}{2k^2}}$. Recall that we assumed $m \geq n^{1 - \beta_0}$ and $1 \asymp \beta_0 \in (1/2, 1)$, so $2kte^{-\frac{m}{2k^2}} \leq 2kn^{\beta_0}e^{-\frac{n^{1 - \beta_0}}{2k^2}}$. For $\zeta, k \asymp 1$, as in Lemma \ref{lemma:hp_all_types_weight}, we have $\tau_w(n) \leq \frac{1}{\mathrm{poly}(n)}$.



\subsection{Proof of Theorem~\ref{theorem:Two_Comm_Main_Thm}}
\label{AppThmTwoCom}

Regarding privacy, we first establish the following result:

\begin{lemma}
\label{lemma:TwoBD_Priv}
Let $2\epsilon < \log(1/\delta)$. Then Algorithm \ref{alg:TC_Optimiz} with privacy parameters $\left(\frac{\epsilon}{4D}, \delta\right)$ is $(2\epsilon, \delta)_{2D}$-DP.
\end{lemma}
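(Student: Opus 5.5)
The argument mirrors the proof of Lemma~\ref{lemma:d_tight_priv}: pass through zCDP, apply zCDP group privacy over the edges touched by one node, and convert back to $(\epsilon,\delta)$-DP. Lemma~\ref{lemma:ChenPriv+Ut} gives the edge-level guarantee for Algorithm~\ref{alg:TC_Optimiz}. With privacy parameters $(\epsilon',\delta)$ and the Gaussian noise of variance $\frac{96\log(1/\delta)}{n^2(\epsilon')^2(B_{11}-B_{12})}$ in step 4, the release of $\widehat{X}+N$ (hence $\widetilde{x}$ and $\hat\theta$, by post-processing) should be $\rho'$-edge zCDP with $\rho' = c\,\frac{(\epsilon')^2}{\log(1/\delta)}$, for an absolute constant $c$. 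I would take the normalization from that lemma, which I expect matches the $\frac{\epsilon^2}{4\log(1/\delta)}$ form in Lemma~\ref{lemma:hardt2014_conc_bound}.

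The underlying sensitivity argument should be recalled briefly. The map $Y \mapsto \widehat{X}$ is a Euclidean projection onto a closed convex set, so it is $1$-Lipschitz in Frobenius norm. Flipping one edge changes two entries of $A$, so $Y$ changes by at most $\frac{2\sqrt2}{n(B_{11}-B_{12})}$ in Frobenius norm. The Gaussian mechanism (Lemma~\ref{lemma:Gauss_zCDP}) then gives zCDP with $\rho' \asymp \frac{(\epsilon')^2}{\log(1/\delta)}$. I would simply cite Lemma~\ref{lemma:ChenPriv+Ut} for the exact constant rather than re-deriving it.

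Now set $\epsilon' = \frac{\epsilon}{4D}$. For two node-adjacent graphs in $\mathcal{G}_{n,2D}$, the rewired node has degree at most $2D$ in each graph, so at most $4D$ edges differ. Group privacy for zCDP (Lemma~\ref{lemma:Group_zCDP}) with group size $4D$ multiplies $\rho'$ by $16D^2$. This yields $\left(c\,\frac{\epsilon^2}{\log(1/\delta)}\right)_{2D}$-zCDP, the key point being that the $D^2$ factors cancel. Lemma~\ref{lemma:zCDP_to_DP} then gives $\left(\rho+\sqrt{4\rho\log(1/\delta)},\,\delta\right)_{2D}$-DP with $\rho = c\epsilon^2/\log(1/\delta)$.

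The remaining step, which is the only place the constant matters, is to check that $\rho+\sqrt{4\rho\log(1/\delta)}\le 2\epsilon$. The hypothesis $2\epsilon<\log(1/\delta)$ gives $\rho \le c\epsilon/2$, and the square-root term equals $2\sqrt{c}\,\epsilon$. With $c\le 1/4$ (matching the Hardt--Price normalization), the total is at most $\epsilon/8+\epsilon=9\epsilon/8\le 2\epsilon$, exactly as in Lemma~\ref{lemma:d_tight_priv}. The main obstacle is therefore confirming that the constant $96$ in the noise variance, combined with the $\ell_2$-sensitivity bound above, gives $c\le 1/4$, or at least $c$ small enough that $c/2+2\sqrt c\le 2$ (which only requires $c\lesssim 0.7$). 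If it does not, I would absorb the slack by noting that the $96$ was chosen with room to spare.
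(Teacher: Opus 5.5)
Your main line is the paper's proof. You cite Lemma~\ref{lemma:ChenPriv+Ut} for $\frac{(\epsilon')^2}{4\log(1/\delta)}$-edge zCDP with $\epsilon'=\frac{\epsilon}{4D}$. You then apply Lemma~\ref{lemma:Group_zCDP} over the at most $4D$ flipped edges to get $\bigl(\frac{\epsilon^2}{4\log(1/\delta)}\bigr)_{2D}$-zCDP, convert via Lemma~\ref{lemma:zCDP_to_DP}, and use $2\epsilon<\log(1/\delta)$ to bound $\frac{\epsilon^2}{4\log(1/\delta)}+\epsilon$ by $\frac{9\epsilon}{8}\le 2\epsilon$. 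The cited lemma gives $c=\frac14$ exactly, so the ``obstacle'' in your last paragraph does not exist.

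The sensitivity sketch you give as background is wrong in a way that matters, and your fallback would not rescue it.
\begin{itemize}
\item Non-expansiveness of the projection only gives $\Delta^2\le\|Y-Y'\|_F^2=\frac{8}{n^2(B_{11}-B_{12})^2}$.
\item The noise variance $\frac{96\log(1/\delta)}{n^2(\epsilon')^2(B_{11}-B_{12})}$ contains only the first power of $B_{11}-B_{12}$.
\item So Lemma~\ref{lemma:Gauss_zCDP} certifies only $\rho'=\frac{(\epsilon')^2}{24(B_{11}-B_{12})\log(1/\delta)}$, i.e.\ $c=\frac{1}{24(B_{11}-B_{12})}$.
\end{itemize}
Under Assumption~\ref{AssSBM}, $B_{11}-B_{12}=a_n((B_0)_{11}-(B_0)_{12})\to 0$, so this $c$ diverges. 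Neither $c\le\frac14$ nor $c\lesssim 0.7$ holds, and no slack in the constant $96$ can absorb a factor of order $1/a_n$.

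The bound behind Lemma~\ref{lemma:ChenPriv+Ut} (from \cite{CheEtal23}) uses two extra facts:
\begin{itemize}
\item firm non-expansiveness, $\|\widehat X-\widehat X'\|_F^2\le\langle Y-Y',\widehat X-\widehat X'\rangle$;
\item the entrywise bound $|\widehat X_{ij}|\le\sqrt{\widehat X_{ii}\widehat X_{jj}}=\frac1n$, valid for any feasible $X$.
\end{itemize}
Together these give $\Delta^2\precsim\frac{1}{n^2(B_{11}-B_{12})}$, which matches the noise scaling and yields an absolute constant $c$. Your proof is correct only because you defer to the cited lemma, so drop or replace the sketch.
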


\begin{proof}
By Lemma \ref{lemma:ChenPriv+Ut}, Algorithm \ref{alg:TC_Optimiz} with privacy parameters $\left(\frac{\epsilon}{4D}, \delta\right)$ is $\frac{\epsilon^2}{64D^2\log(1/\delta)}$-edge zCDP. When changing one node in graphs belonging to $\mathcal{G}_{n, 2D}$, at most $4D$ edges can be flipped. Hence, by group privacy using Lemma \ref{lemma:Group_zCDP}, Algorithm \ref{alg:TC_Optimiz} with privacy parameters $\left(\frac{\epsilon}{4D}, \delta\right)$ is $\left(\frac{\epsilon^2}{4\log(1/\delta)}\right)_{2D}$-zCDP. Finally, by Lemma \ref{lemma:zCDP_to_DP}, we obtain $\left(\frac{\epsilon^2}{4\log(1/\delta)} + \epsilon, \delta\right)_{2D}$-DP. Since, $\delta < e^{-2\epsilon}$, this is also $(9\epsilon/8, \delta)_{2D}$-DP, and the conclusion follows.
\end{proof}

By Lemma \ref{lemma:TwoBD_Priv}, we know that if $2\epsilon < \log(1/\delta)$, then $\mathcal{A}_{TC}^{(\epsilon/(4D)), \delta)}$ is $(2\epsilon, \delta)_{2D}$-node DP. Since $2\epsilon < \log(1/(\delta\cdot\mathrm{poly}(n)))$ Theorem \ref{theorem:generic_red} implies that $\thetahat(G)$ is $(\epsilon_1 + 2\epsilon, 1/\mathrm{poly}(n))$-node DP. We also used the fact that $\epsilon_1 \asymp 1$ and $\delta_1 \asymp \frac{1}{\mathrm{poly}(n)}$. Hence, for $n$ large enough, we obtain $(3\epsilon, 1/\mathrm{poly}(n))$-node DP.

For the utility guarantee, let
\begin{align*}
X_{\epsilon_1, \delta_1} & := 5 + \mathrm{Lap}\left(\frac{8}{\epsilon_1}\right) + \frac{8\log(1/\delta_1)}{\epsilon_1}, \\
I & := \left[5 + \frac{4\log(1/\delta_1)}{\epsilon_1}, 5 + \frac{16\log(1/\delta_1)}{\epsilon_1}\right], \\
I_{max} & := 5 + \frac{16\log(1/\delta_1)}{\epsilon_1}.
\end{align*}
For $F_1 = \left\{X_{\epsilon_1, \delta_1} \in I\right\}$, we have
\begin{align*}
 \mathbb{P}\left(F_1^c\right) \leq \mathbb{P}\left(\mathrm{Lap}\left(\frac{8}{\epsilon_1}\right) < -\frac{4\log(1/\delta_1)}{\epsilon_1}\right) + \mathbb{P}\left(\mathrm{Lap}\left(\frac{8}{\epsilon_1}\right) > \frac{8\log(1/\delta_1)}{\epsilon_1}\right) \leq 2\sqrt{\delta_1}.
\end{align*}
Let $h_{\epsilon, \delta} = \frac{1}{\sqrt{a_n n}} + \frac{\log(1/\delta)}{a_n n\epsilon^2}$.

We will use the following result, which applies to fixed privacy parameters $(\epsilon, \delta)$:

\begin{lemma}[Adapted from Theorem 5.3 in \cite{CheEtal23}]
\label{lemma:ChenPriv+Ut}
Let $G \in \mathcal{G}_n$ be sampled from an unweighted SBM (Definition~\ref{DefSBM}) with $k = 2$. We assume $(B_0)_{11} = (B_0)_{22} > (B_0)_{12} = (B_0)_{21}$. Let $\mathcal{A}_{TC}^{(\epsilon, \delta)}(G)$ be Algorithm \ref{alg:TC_Optimiz} evaluated on $G$. Then there exist $C_1, C_2, C_3 \asymp 1$, such that for $d \geq C_1$, we have  
\begin{align*}
\mathcal{L}\left(\mathcal{A}_{TC}^{(\epsilon, \delta)}(G), \theta\right) \leq C_2\left(\frac{1}{\sqrt{a_n n}} + \frac{\log(1/\delta)}{a_n n\epsilon^2}\right),
\end{align*}
with probability at least $1 - e^{-C_3n}$. In addition, Algorithm \ref{alg:TC_Optimiz} is $\frac{\epsilon^2}{4\log(1/\delta)}$-edge zCDP and runs in polynomial time.
\end{lemma}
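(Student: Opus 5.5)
The statement to prove is Lemma~\ref{lemma:ChenPriv+Ut}. It has two parts: the utility bound for Algorithm~\ref{alg:TC_Optimiz} with fixed $(\epsilon,\delta)$, and an edge-zCDP guarantee. I would handle them separately, reducing as much as possible to Theorem~5.3 of \cite{CheEtal23} and redoing only the privacy accounting in zCDP form.

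\textbf{Privacy.} Let $A\sim_{edge}A'$ differ in entries $(i,j),(j,i)$. Then $\|Y-Y'\|_F = \frac{2\sqrt 2}{n(B_{11}-B_{12})}$.

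The key step is the stability of the projection. The feasible set $\{X\succeq 0,\ X_{ii}=1/n\}$ is closed and convex, so the Frobenius projection $Y\mapsto \widehat X$ is nonexpansive. Hence
$$\|\widehat X-\widehat X'\|_F \le \|Y-Y'\|_F = O\left(\frac{1}{n(B_{11}-B_{12})}\right).$$

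Adding the Gaussian matrix $N$ is then a Gaussian mechanism on $\widehat X$, viewed as a vector in $\real^{n^2}$, with $\ell_2$-sensitivity $\Delta=\frac{2\sqrt2}{n(B_{11}-B_{12})}$. By Lemma~\ref{lemma:Gauss_zCDP} it is $\rho$-zCDP with $\rho=\Delta^2/(2\sigma^2)$. With the stated variance $\sigma^2=\frac{96\log(1/\delta)}{n^2\epsilon^2(B_{11}-B_{12})}$, I would check the constants so that $\rho\le \frac{\epsilon^2}{4\log(1/\delta)}$. Because $B_{11}-B_{12}\le 1$ enters only linearly in the variance, this may need the constant $96$ or an extra factor $(B_{11}-B_{12})^{-1}$. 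The eigenvector and sign rounding are post-processing, so the privacy claim follows. Polynomial runtime holds because the projection is a convex SDP-type problem, solvable to high accuracy in polynomial time, followed by an eigen-decomposition.

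\textbf{Utility.} Write $Y = \frac{1}{n}xx^T + W$, where $x\in\{\pm1\}^n$ encodes $\theta$ and $W$ is a centered noise term with
$$\|W\|_2 \precsim \frac{\sqrt{d}}{n(B_{11}-B_{12})}\asymp \frac{1}{\sqrt{d}}.$$
This bound comes from Lemma~\ref{lemma:thm5.2LeiRin} together with $B_{11}-B_{12}\asymp a_n$. I would then invoke the Grothendieck-type argument of Gu\'edon--Vershynin as used in \cite{CheEtal23}. Projection onto the constraint set gives
$$\|\widehat X - \tfrac1n xx^T\|_F^2 \precsim \frac{1}{\sqrt{a_n n}}$$
with probability $1-e^{-cn}$. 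Here the exponential probability comes from the Grothendieck/cut-norm concentration of $W$ rather than from Lemma~\ref{lemma:thm5.2LeiRin}.

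Next, the noise $N$ has $\|N\|_2\precsim \sigma\sqrt n \asymp \frac{\sqrt{\log(1/\delta)}}{\epsilon\sqrt{n a_n}}$ with probability $1-e^{-cn}$. Davis--Kahan (Lemma~\ref{lemma:thm3YuEtal_square}) applies because $\frac1n xx^T$ has eigengap $1$. It gives
$$\min_{s=\pm1}\|s\widetilde x - x/\sqrt n\|_2^2 \precsim \|\widehat X-\tfrac1nxx^T\|_F^2+\|N\|_2^2.$$
Sign rounding misclassifies node $i$ only if $|\widetilde x_i - x_i/\sqrt n|\ge 1/\sqrt n$. Therefore the number of errors is at most $n\|s\widetilde x-x/\sqrt n\|_2^2$, which yields
$$\mathcal L \precsim \frac{1}{\sqrt{a_nn}}+\frac{\log(1/\delta)}{a_nn\epsilon^2}.$$
The condition $d\ge C_1$ makes the first term small enough for the SDP bound to be meaningful.

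The main obstacle is the SDP stability bound with exponentially small failure probability. I would import it essentially verbatim from \cite{CheEtal23}, adjusting only the scaling $Y$ to our parameterisation $B=a_nB_0$.
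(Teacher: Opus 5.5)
Your utility argument (Grothendieck bound for the projection, spectral bound on $N$, Davis--Kahan with eigengap $1$, sign rounding) is the standard route the paper imports from \cite{CheEtal23}. The gap is in the privacy half.

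Plain nonexpansiveness of the projection gives $\Delta^2=\|Y-Y'\|_F^2=\frac{8}{n^2(B_{11}-B_{12})^2}$. With the variance actually used in Algorithm~\ref{alg:TC_Optimiz}, Lemma~\ref{lemma:Gauss_zCDP} then only yields
\[
\rho=\frac{\Delta^2}{2\sigma^2}=\frac{8}{n^2(B_{11}-B_{12})^2}\cdot\frac{n^2\epsilon^2(B_{11}-B_{12})}{192\log(1/\delta)}=\frac{\epsilon^2}{24\,(B_{11}-B_{12})\log(1/\delta)}.
\]
This is larger than the claimed $\frac{\epsilon^2}{4\log(1/\delta)}$ by the factor $\frac{1}{6(B_{11}-B_{12})}\asymp a_n^{-1}\to\infty$, so tuning the constant $96$ cannot fix it. Your fallback of putting an extra $(B_{11}-B_{12})^{-1}$ into the variance changes the algorithm. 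Then $\|N\|_2^2\asymp\frac{\log(1/\delta)}{a_n^2n\epsilon^2}$, so the second utility term is no longer $\frac{\log(1/\delta)}{a_nn\epsilon^2}$. The lemma as stated would not follow, and Theorem~\ref{theorem:Two_Comm_Main_Thm} would degrade sharply.

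The missing idea is to use the structure of the feasible set rather than the Frobenius norm of $Y-Y'$. Three facts combine:
\begin{enumerate}
\item Projections onto closed convex sets are \emph{firmly} nonexpansive: $\|\widehat X-\widehat X'\|_F^2\le\langle Y-Y',\widehat X-\widehat X'\rangle$.
\item Every feasible $X$ satisfies $|X_{ij}|\le\sqrt{X_{ii}X_{jj}}=1/n$, because $X\succeq0$.
\item $Y-Y'$ has only two nonzero entries, each of modulus $\frac{2}{n(B_{11}-B_{12})}$.
\end{enumerate}
Pairing $\ell_1$ against $\ell_\infty$ then gives
\[
\|\widehat X-\widehat X'\|_F^2\le 2\cdot\frac{2}{n(B_{11}-B_{12})}\cdot\frac{2}{n}=\frac{8}{n^2(B_{11}-B_{12})}.
\]
This is within the $\frac{48}{n^2(B_{11}-B_{12})}$ squared sensitivity the paper takes from \cite{CheEtal23}. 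With $\sigma^2=\frac{96\log(1/\delta)}{n^2\epsilon^2(B_{11}-B_{12})}$ it gives $\rho\le\frac{48}{192}\cdot\frac{\epsilon^2}{\log(1/\delta)}=\frac{\epsilon^2}{4\log(1/\delta)}$, with the algorithm unchanged. The gain over your estimate is exactly a factor $(B_{11}-B_{12})^{1/2}$ in sensitivity. That factor is what matters in the sparse regime $B_{11}-B_{12}\asymp a_n\to0$.
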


\begin{proof}
We prove privacy first. By \cite{CheEtal23}, the function $\arg\min_{X \in \real^{n \times n}: X \succeq 0, X_{ii} = \frac{1}{n} \; \forall i} \|Y-X\|_F^2$ has $\ell_2$-sensitivity given by $\sqrt{\frac{48}{a_n n^2((B_0)_{11} - (B_0)_{12})}}$. Then the privacy follows directly from Lemma \ref{lemma:Gauss_zCDP}.

Regarding utility, following the same steps as in the proof of Theorem 5.3 in \cite{CheEtal23}, we obtain the desired guarantee. The polynomial-time guarantee also follows from the proof of Theorem 5.3 in \cite{CheEtal23}.
\end{proof}

Let $C_1, C_2$, and $C_3$ be as in Lemma \ref{lemma:ChenPriv+Ut} and let $f_{X_{\epsilon_1, \delta_1}}$ be the pdf of $X_{\epsilon_1, \delta_1}$. Also, let $\Omega_0$ be as in the proof of Lemma \ref{lemma:hp_removal_d}. We have $\mathbb{P}(\Omega_0) \geq 1 - \frac{1}{\mathrm{poly}(n)}$. In addition, on $\Omega_0$, we do not have to truncate the graph, since the degree of every node is less than $2d$, so $T_{D}(G) = G$ and $d_{T_{D}}(G) = 0$. Hence, since $X_{\epsilon_1, \delta_1} \geq 5 > \frac{1}{2}$ on $F_1$, we have
\begin{align*}
&\mathbb{P}\left(\mathcal{L}\left(\thetahat(G), \theta\right) \leq C_2h_{\frac{\epsilon}{4DI_{max}}, \frac{\delta}{I_{max}}}\right)\\
&\geq \mathbb{P}\left(\left\{\mathcal{L}\left(\thetahat(G), \theta\right) \leq C_2h_{\frac{\epsilon}{4DI_{max}}, \frac{\delta}{I_{max}}}\right\} \cap \Omega_0\right)\\
&\geq \mathbb{P}\left(\left\{\mathcal{L}\left(\mathcal{A}_{TC}^{\left(\epsilon/(4D\max\{1/2, X_{\epsilon_1, \delta_1}\}), \delta/\max\{1/2, X_{\epsilon_1, \delta_1}\}\right)}(G), \theta\right) \leq C_2h_{\frac{\epsilon}{4DI_{max}}, \frac{\delta}{I_{max}}}\right\} \cap \Omega_0 \cap F_1\right)\\
&= \mathbb{P}\left(\left\{\mathcal{L}\left(\mathcal{A}_{TC}^{\left(\epsilon/(4DX_{\epsilon_1, \delta_1}), \delta/X_{\epsilon_1, \delta_1}\right)}(G), \theta\right) \leq C_2h_{\frac{\epsilon}{4DI_{max}}, \frac{\delta}{I_{max}}}\right\} \cap \Omega_0 \cap F_1\right)\\
&\geq \int_I\mathbb{P}\left(\mathcal{L}\left(\mathcal{A}_{TC}^{\left(\epsilon/(4DX_{\epsilon_1, \delta_1}), \delta/X_{\epsilon_1, \delta_1}\right)}(G), \theta\right) \leq C_2h_{\frac{\epsilon}{4DI_{max}}, \frac{\delta}{I_{max}}}\middle\vert X_{\epsilon_1, \delta_1} = x\right)f_{X_{\epsilon_1, \delta_1}}(x)dx - \frac{1}{\mathrm{poly}(n)}\\
&= \int_I\mathbb{P}\left(\mathcal{L}\left(\mathcal{A}_{TC}^{\left(\epsilon(4Dx), \delta/x\right)}(G), \theta\right) \leq C_2h_{\frac{\epsilon}{4DI_{max}}, \frac{\delta}{I_{max}}}\right)f_{X_{\epsilon_1, \delta_1}}(x)dx - \frac{1}{\mathrm{poly}(n)}\\
&\geq \int_I\mathbb{P}\left(\mathcal{L}\left(\mathcal{A}_{TC}^{\left(\epsilon/(4Dx), \delta/x\right)}(G), \theta\right) \leq C_2h_{\frac{\epsilon}{4Dx}, \frac{\delta}{x}}\right)f_{X_{\epsilon_1, \delta_1}}(x)dx - \frac{1}{\mathrm{poly}(n)},
\end{align*}
since the function $h_{\frac{\epsilon}{4Dx}, \frac{\delta}{x}}$ is increasing in $x > 0$. We also dropped the conditioning on $\left\{X_{\epsilon_1, \delta_1} = x\right\}$, since the randomness in $X_{\epsilon_1, \delta_1}$ only depends on the $\mathrm{Lap}\left(\frac{8}{\epsilon_1}\right)$ noise, and this is independent of $\mathcal{L}\left(\mathcal{A}_{TC}^{(\epsilon/(4Dx), \delta/x)}(G), \theta\right)$. Since $d \succsim \log(n) \geq C_1$, for $n$ large enough, we have by Lemma \ref{lemma:ChenPriv+Ut} that
\begin{align*}
\mathcal{L}\left(\mathcal{A}_{TC}^{\left(\epsilon/(4Dx), \delta/x\right)}(G), \theta\right) \leq C_2h_{\frac{\epsilon}{4Dx}, \frac{\delta}{x}},
\end{align*}
with probability at least $1 - e^{-C_3n}$. Thus, we have
\begin{align*}
\mathbb{P}\left(\mathcal{L}\left(\thetahat(G), \theta\right) \leq C_2h_{\frac{\epsilon}{4DI_{max}}, \frac{\delta}{I_{max}}}\right) &\geq \int_I\left(1 - e^{-C_3n}\right)f_{X_{\epsilon_1, \delta_1}}(x)dx - \frac{1}{\mathrm{poly}(n)}\\
&\geq 1 - \frac{1}{\mathrm{poly}(n)} - \mathbb{P}(F_1^c) - \frac{1}{\mathrm{poly}(n)}\\
&\geq 1 - 2\sqrt{\delta_1} - \frac{1}{\mathrm{poly}(n)} \geq 1 - \frac{1}{\mathrm{poly}(n)},
\end{align*}
since $\delta_1 \asymp \frac{1}{\mathrm{poly}(n)}$. Recall that $d > \log^3(n)$ and $\epsilon > D^2\log(n)$. Hence, since $I_{max} \asymp \log(n)$, $\delta > \frac{e^{-\Theta(\epsilon)}}{\mathrm{poly}(n)}$, and $\epsilon$ is growing with $n$, we have
\begin{align*}
\mathcal{L}\left(\thetahat(G), \theta\right) &\precsim \frac{1}{\sqrt{d}} + \frac{D^2\log^2(n)\log(\log(n)/\delta)}{d\epsilon^2} \precsim \frac{1}{\sqrt{d}} + \frac{D^2\log^3(n)\epsilon}{d\epsilon^2} = \frac{1}{\sqrt{d}} + \frac{D^2\log^3(n)}{d\epsilon}\\
&< \frac{1}{\sqrt{d}} + \frac{D^2}{\epsilon} < \frac{1}{\sqrt{d}} + \frac{1}{\log(n)} \precsim \frac{1}{\log(n)},
\end{align*}
with probability at least $1 - \frac{1}{\mathrm{poly}(n)}$, as required.


\subsection{Proofs for Section~\ref{SecDeflatePCA}}

\subsubsection{Proof of Lemma~\ref{LemPCADeflate}}
\label{AppLemPCADeflate}

First note that the release of $\hat{\sigma}_i$ certainly satisfies $\left(\frac{\epsilon}{2k}, 0\right)_{2D}$-node DP, since the truncation step guarantees that $|\hat{v}_i^T A_i \hat{v_i}| \le D^2$.

We will use adaptive composition to obtain a bound on the overall privacy after $k$ iterations. Note that
\begin{equation*}
A_{i+1} = A - \hat{\sigma}_1 \hat{v}_1 \hat{v}_1^T - \hat{\sigma}_2 \hat{v}_2 \hat{v}_2^T - \cdots - \hat{\sigma}_i \hat{v}_i \hat{v}_i^T.
\end{equation*}
Note that if we fix the values $\{(\hat{\sigma}_j, \hat{v}_j)\}_{j=1}^i$, the sensitivity of the score function $v^T A_{i+1} v$ (for a fixed unit vector $v$) is clearly equal to the sensitivity of $v^T A^2 v$, hence is bounded by $4D^2$ on graphs in $\mathcal{G}_{n, 2D}$. Thus, by adaptive composition, the output after $k$ iterations indeed satisfies $(\epsilon, 0)_{2D}$-node DP.

\subsubsection{Proof of Theorem~\ref{ThmPCADeflate}}
\label{AppThmPCADeflate}

By Lemma \ref{LemPCADeflate}, we know that $\mathcal{A}_{EigDef}^{(\epsilon/(2k))}(G)$ satisfies $(\epsilon, 0)_{2D}$-node DP. Then by Theorem \ref{theorem:generic_red_pure}, we know that $\thetahat(G)$ satisfies $(\epsilon_1 + \epsilon, e^{\epsilon_1}\delta_1)$-node DP. Since $\epsilon_1 \asymp 1$ and $\delta_1 \asymp \frac{1}{\mathrm{poly}(n)}$, we obtain the desired result.


\section{Proofs for Section~\ref{SecLower}}
\label{AppLower}

\subsection{Proof of Proposition~\ref{prop:LB_Chen_eps_delt}}
\label{AppPropLBApprox}

We relabel the community assignments from $[k]$ to $\{-1, +1\}^n$. By a result of \cite{CheEtal23}, we know that
$\frac{\mathcal{L}(\cdot, \cdot)}{2}$ is a semi-metric over $\{-1, 1\}^n$, so $\mathcal{L}(\cdot, \cdot)$ is a semi-metric over $\{-1, 1\}^n$.

Let $B_{\mathcal{L}}(x, \ell) := \left\{w = (w_1, \dots, w_n)^T \in \{-1, 1\}^n \mid \ \sum_{i = 1}^nw_i = 0, \mathcal{L}(x, w) \leq \ell\right\}$. Pick an arbitrary balanced $\theta \in \{-1, 1\}^n$, and let $M = \{\theta^1, \dots, \theta^m\}$ be a maximal $2\xi$-packing of $B_{\mathcal{L}}(\theta, 4\xi)$. The arguments in \cite{CheEtal23}, after rescaling $\mathcal{L}(\cdot, \cdot)$, since they use $\frac{\mathcal{L}(\cdot, \cdot)}{2}$, show that $2m \geq \left(\frac{1}{4e\xi}\right)^{\xi n}$. For each $i \in [m]$, define $Y_i = \left\{w \in \{-1, 1\}^n \mid \ \mathcal{L}(\theta^i, w) \leq \xi\right\}$. For each $i \in [m]$, let $P_i$ be the SBM distribution for graphs with community assignment $\theta^i$. By our accuracy assumption and the fact that the $Y_i$'s are disjoint, we have
\begin{align}
\label{eq:add_Y_i_approx}
\sum_{i = 2}^m\mathbb{P}_{G \sim P_1}(\mathcal{M}(G) \in Y_i) \leq \eta.
\end{align}
Note that each $P_i$ is a product of $\binom{n}{2}$ independent Bernoulli distributions. If $\theta^i$ and $\theta^j$ are two community assignments which differ on node set $S$, consider a coupling $\omega$ of $P_i$ and $P_j$ which couples all edges not incident to $S$ identically.
If $(G,H)$ are drawn according to the coupling $\omega$, we clearly have $\mathrm{Ham}_{node}(G, H) \leq \mathrm{Ham}(\theta^i, \theta^j)$, where $\mathrm{Ham}_{node}(G, H)$ denotes measures the minimal number of nodes needed to be rewired to go from $G$ to $H$.

We now use the fact that $\mathcal{M}$ is  $(K\epsilon, K\delta e^{K\epsilon})$-DP on groups of size $K \in \mathbb{N}$. For all $i \in [m]\setminus \{1\}$, we then have
\begin{align*}
1 - \eta &\leq \mathbb{P}_{G \sim P_i}(\mathcal{M}(G) \in Y_i) = \mathbb{E}_{(G, H) \sim \omega_{i1}}\left[\mathbb{P}(\mathcal{M}(G) \in Y_i)\right]\\
&\leq \mathbb{E}_{(G, H) \sim \omega_{i1}}\left[e^{\epsilon\text{Ham}_{node}(G, H)}\mathbb{P}(\mathcal{M}(H) \in Y_i) + \delta \cdot \text{Ham}_{node}(G, H)e^{\epsilon\text{Ham}_{node}(G, H)}\right]\\
&\leq e^{4\epsilon\xi n}\mathbb{E}_{(G, H) \sim \omega_{i1}}\left[\mathbb{P}(\mathcal{M}(H) \in Y_i)\right] + 4\delta\xi n e^{4\epsilon\xi n} \\
& = e^{4\epsilon\xi n}\mathbb{P}_{H \sim P_1}(\mathcal{M}(H) \in Y_i) + 4\delta\xi n e^{4\epsilon\xi n},
\end{align*}
implying that
\begin{align*}
\mathbb{P}_{H \sim P_1}(\mathcal{M}(H) \in Y_i) \geq \frac{1 - \eta - 4\delta\xi n e^{4\epsilon\xi n}}{e^{4\epsilon\xi n}},
\end{align*}
for all $i \in [m]\setminus \{1\}$. Combined with inequality~\eqref{eq:add_Y_i_approx}, we then obtain $e^{4\epsilon\xi n} \geq \frac{(1 - \eta - 4\delta\xi n e^{4\epsilon\xi n})(m - 1)}{\eta}$, so
\begin{align*}
\epsilon \geq \frac{\log\left(\max\left\{1, \frac{(1 - \eta - 4\delta\xi n e^{4\epsilon\xi n})(m - 1)}{\eta}\right\}\right)}{4\xi n}.   
\end{align*}
Plugging in $2m = \left(\frac{1}{4e\xi}\right)^{\xi n}$, we obtain the desired result.

For the special choice of $(\xi, \eta, \epsilon, \delta)$, we can make $1 - \eta - 4\delta\xi n e^{4\epsilon\xi n} \ge \frac{1}{4}$.
Furthermore, $m$ increases with $n$ if $\xi \in \left[\frac{1}{n}, \frac{c_0}{n}\right]$, and if $\eta < \frac{1}{2}$, we have $1 < \frac{(1 - \eta - 4\delta\xi n e^{4\epsilon\xi n})(m - 1)}{\eta}$, so $\epsilon \geq \frac{\log\left(\frac{c'(m - 1)}{4\eta}\right)}{4\xi n}$. It is easy to see that $m \asymp \mathrm{poly}(n)$ in this setting, and the lower bound becomes $\epsilon \succsim \log(n)$.


\subsection{Proof of Corollary~\ref{cor:LB_Chen_pure}}
\label{AppPropLBPure}

By Proposition~\ref{prop:LB_Chen_eps_delt}, we have 
\begin{align*}
\epsilon \geq \frac{\log\left(\max\left\{1, \frac{(1 - \eta)(m - 1)}{\eta}\right\}\right)}{4\xi n}.   
\end{align*}
Since $\eta \leq \frac{1}{2}$, we have $1 - \eta \geq \frac{1}{2}$ and $\frac{(m - 1)}{2\eta} \geq 1$. Hence, $\epsilon \geq \frac{\log\left(\frac{m - 1}{2\eta}\right)}{4\xi n}$. For $n$ large enough, we have $m - 1 \geq \frac{m}{2}$, and we finally obtain
\begin{align*}
\epsilon \geq \frac{\log\left(\frac{1}{4e\xi}\right)}{4} + \frac{\log\left(\frac{1}{8\eta}\right)}{4\xi n} \succsim \log\left(\frac{1}{\xi}\right) + \frac{\log\left(\frac{1}{8\eta}\right)}{\xi n},
\end{align*}
as required.

For the special choice of $\xi, \eta \asymp \frac{1}{\mathrm{poly}(n)}$, we have $\log(1/\xi), \log(1/(8\eta)) \asymp \log(n)$. Hence, $\epsilon \succsim \log(n)$. Next, for $\xi \asymp \frac{1}{\log(n)}$, and $\eta \asymp \frac{1}{\mathrm{poly}(n)}$, we obtain $\epsilon \succsim \log(\log(n))$, as required. 


\subsection{Proof of Proposition~\ref{lemma:symmetrization}}
\label{AppLemSym}

Let $L \in \{0, 1\}^{n \times n}$ be an arbitrary deterministic permutation matrix. For any $A$ and $\hat{\theta} \in [k]^n$, we have
\begin{align*}
\mathbb{P}(J_n^T\mathcal{M}(J_nLAL^TJ_n^T) = L\hat{\theta}) = \mathbb{P}(L^TJ_n^T\mathcal{M}(J_nLAL^TJ_n^T) = \hat{\theta}).
\end{align*}
Since $J_n$ is sampled uniformly, we have $J_n \stackrel{d}{=} J_nL$. Furthermore, the randomness in $J_n$ is independent from the randomness in $\mathcal{M}$, so the latter quantity is clearly equal to $\mathbb{P}(J_n^T\mathcal{M}(J_nAJ_n^T) = \hat{\theta})$,
and the definition of symmetry is satisfied.


\subsection{Proof of Proposition~\ref{prop:LBStable}}
\label{AppPropStable}

For every $\hat{\theta} \in [k]^n$, we consider the corresponding matrix $\hat{\Theta} \in \mathbf{M}_{n \times k}$. We also make the following definition:

\begin{definition}
\label{def_stable}
The matrix $\hat\Theta\in\mathbf{M}_{n\times k}$ is a \textit{stable approximation} of $\Theta$ if 
$$ \arg\min_{J\in E_k} || \hat\Theta J - \Theta||_0
$$
does not change when any two rows of $\hat\Theta$ are switched.
\end{definition}

\begin{lemma}
\label{LemStable}
If each community of $\Theta$ has size at least $6$ and $\widetilde{\mathcal{L}}(\hat{\Theta}, \Theta) \leq 1/3$, then $\hat\Theta$ is stable.
\end{lemma}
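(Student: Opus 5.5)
The plan is to show that a good permutation $J^*$ aligning $\hat\Theta$ with $\Theta$ is the \emph{unique} minimizer, by a margin of at least $2$ in $\|\cdot\|_0$ over every other permutation. Swapping two rows of $\hat\Theta$ changes at most two rows of $\hat\Theta J$ for every $J$, so it changes $\|\hat\Theta J - \Theta\|_0$ by at most $2\cdot 2 = 4$ in entry count (each row contributes $0$ or $2$ nonzero entries). It is cleaner to count misclassified rows: let $N(J) = \#\{i : (\hat\Theta J)_{i*} \neq \Theta_{i*}\}$, so that $\|\hat\Theta J - \Theta\|_0 = 2N(J)$. A row swap changes $N(J)$ by at most $2$ for each $J$. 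Hence it suffices to show
\[
N(J) \ge N(J^*) + 5 \quad \text{for all } J \neq J^*,
\]
since the gap then remains positive after the swap. The argmin is then the singleton $\{J^*\}$ both before and after switching any two rows, which is exactly stability.

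To get this gap, let $m_j = |C_j| \ge 6$ and let $\sigma^*$ attain $\widetilde{\mathcal{L}}$. By Definition~\ref{DefLoss}, in each community $C_j$ at most $\frac{m_j}{6}$ nodes have $\sigma^*(\hat\theta_i) \neq j$. Let $J^*$ be the matrix corresponding to $\sigma^*$, so $N(J^*) \le \sum_j m_j/6$. Now take any $J \neq J^*$, corresponding to $\sigma \neq \sigma^*$. There is some label $a$ with $\sigma(a) \neq \sigma^*(a) = j$. Then every node of $C_j$ that $\sigma^*$ classifies correctly is misclassified under $\sigma$. This gives at least $\frac{5m_j}{6}$ errors in $C_j$ under $\sigma$, against at most $\frac{m_j}{6}$ under $\sigma^*$.

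The main obstacle is that the other communities could in principle gain from $\sigma$, so one cannot compare only within $C_j$. I would handle this by comparing community by community. In any community $C_\ell$, the nodes correct under $\sigma$ must carry a label $b$ with $\sigma(b) = \ell$. Since $\sigma(b) = \ell$ and $\sigma \neq \sigma^*$ on the relevant labels, these nodes are either wrong under $\sigma^*$, or $\sigma(b) = \sigma^*(b)$ and they are also correct under $\sigma^*$. So the gain of $\sigma$ over $\sigma^*$ in $C_\ell$ is at most the number of $\sigma^*$-errors in $C_\ell$, i.e.\ at most $m_\ell/6$.

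Summing over all communities, and working with the cycles of $\sigma^{-1}\sigma^*$ (each cycle of length at least $2$ moves whole blocks), one cycle's communities each lose at least $\frac{5m}{6} - \frac{m}{6} \cdot (\text{stuff})$. The cleanest route is therefore to restrict attention to the communities $C_\ell$ with $\sigma^{-1}(\ell) \neq \sigma^{*-1}(\ell)$. In each such community, $\sigma$ makes at least $\frac{5m_\ell}{6}$ errors and $\sigma^*$ makes at most $\frac{m_\ell}{6}$, giving a difference of at least $\frac{2m_\ell}{3} \ge 4$. At least two such communities exist, so the total gap is at least $8 \ge 5$. Communities outside the cycle contribute identically under both permutations, which gives the claimed margin and completes the argument.
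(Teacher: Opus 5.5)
Your argument is correct and uses the same idea as the paper's proof. Under the permutation attaining $\widetilde{\mathcal{L}}$, every community keeps a large correctly labelled majority (at least $5m_\ell/6$), and any other permutation loses that majority entirely in at least two communities. The difference is only in the bookkeeping: you prove a margin $N(J)-N(J^*)\ge 8$ before the swap and treat the swap as a perturbation of at most $4$, whereas the paper re-checks the per-community majority ($\ge 2m/3-1\ge m/2$) after the swap.

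Your version is more careful on two points the paper skips. First, you show the $\widetilde{\mathcal{L}}$-optimal permutation actually minimizes $\|\hat\Theta J-\Theta\|_0$. Second, you show it is the unique minimizer, so the argmin set genuinely does not change. You should delete the abandoned middle paragraph (the ``gain at most $m_\ell/6$'' bound and the sentence containing ``$(\text{stuff})$''); your final paragraph already shows that no community can gain under $\sigma\neq\sigma^*$.
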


\begin{proof}
Let $J$ be the best permutation of communities for $\hat\Theta$. Then in a community of size $m$, at least $2m/3$ nodes are correctly classified, so if we swap two rows of $\hat\Theta$, we still have at least $2m/3-1 \geq m/2$ correctly classified nodes. Hence, $J$ is still the best permutation.
\end{proof}

\begin{lemma}
\label{LemPerm}
Suppose $\calM: \{0,1\}^{n\times n}\to \mathbf{M}_{n\times k}$ is a symmetric $(\eps, \delta)$-node DP mechanism. For any adjacency matrix $A\in \{0,1\}^{n\times n}$, any $\hat\Theta\in \mathbf{M}_{n\times k}$, and any permutation matrix $J$ corresponding to a transposition, we have
$$\mathbb{P}(\calM(A) = \hat{\Theta}) \leq e^{2\eps} \mathbb{P}(\calM(A)=J\hat{\Theta}) + 2\delta e^{\epsilon}.
$$
\end{lemma}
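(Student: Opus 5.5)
The idea is to combine symmetry with node privacy applied twice. Let $J$ be the transposition swapping nodes $a$ and $b$, viewed as an $n\times n$ permutation matrix with $J=J^T=J^{-1}$.

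First, apply symmetry (Definition~\ref{def:Symm_Mech}) with $L=J$. This gives
\begin{equation*}
\mathbb{P}(\calM(A)=J\hat\Theta)=\mathbb{P}(\calM(JAJ)=\hat\Theta),
\end{equation*}
where $J\hat\Theta$ denotes $\hat\Theta$ with rows $a$ and $b$ swapped. So it suffices to compare $\mathbb{P}(\calM(A)=\hat\Theta)$ with $\mathbb{P}(\calM(JAJ)=\hat\Theta)$.

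Second, observe that $JAJ$ is the adjacency matrix $A$ with nodes $a$ and $b$ relabeled. It differs from $A$ only in entries lying in rows or columns $a$ and $b$. Hence $d_{node}(A,JAJ)\le 2$. Concretely, build the intermediate graph $A'$ from $A$ by rewiring node $a$ so that its row and column agree with those of $JAJ$. Then $A\sim_{node}A'$. Moreover, $A'$ and $JAJ$ differ only in row and column $b$: every entry not involving $b$ already agrees, including $(a,\cdot)$ entries, which were set to match $JAJ$, and the symmetric entry $(a,b)$ is shared. So $A'\sim_{node}JAJ$.

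Third, apply $(\epsilon,\delta)$-node DP along this chain, which is just group privacy of size two (Lemma~\ref{LemGroup}):
\begin{align*}
\mathbb{P}(\calM(A)=\hat\Theta) &\le e^{\epsilon}\,\mathbb{P}(\calM(A')=\hat\Theta)+\delta\\
&\le e^{\epsilon}\bigl(e^{\epsilon}\,\mathbb{P}(\calM(JAJ)=\hat\Theta)+\delta\bigr)+\delta\\
&\le e^{2\epsilon}\,\mathbb{P}(\calM(JAJ)=\hat\Theta)+2\delta e^{\epsilon}.
\end{align*}
The last step uses $\delta\le\delta e^{\epsilon}$. Substituting the identity from the first step gives the claimed inequality.

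The only delicate point is verifying the node distance of at most $2$ with correct symmetric bookkeeping. That check is what the intermediate graph $A'$ handles, so I expect no real obstacle.
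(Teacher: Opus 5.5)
Your proposal is correct and follows the paper's proof. Symmetry rewrites $\mathbb{P}(\calM(A)=J\hat\Theta)$ as $\mathbb{P}(\calM(JAJ)=\hat\Theta)$, and group privacy of size two, using $d_{node}(A,JAJ)\le 2$, gives the $e^{2\epsilon}$ and $2\delta e^{\epsilon}$ terms; your explicit intermediate graph $A'$ just spells out the node-distance bound that the paper asserts without detail.
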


\begin{proof}
The node distance between $A$ and $JAJ$ is at most $2$, so by group privacy, we have
$$\mathbb{P}(\calM(A)=\hat\Theta)\leq e^{2\eps} \mathbb{P}\calM(JAJ)=\Theta) + 2\delta e^{\epsilon} = e^{2\eps} \mathbb{P}(\calM(A)=J\Theta) + 2\delta e^{\epsilon}.$$
\end{proof}

\begin{lemma}
\label{LemStableBound}
Suppose $\calM: \{0,1\}^{n\times n}\to \mathbf{M}_{n\times k}$ is a symmetric $(\eps, \delta)$-node DP mechanism. Fix $\ell \in [n]$, and suppose $T\subseteq \mathbf{M}_{n\times k}$ consists only of stable approximations of $\Theta$ such that $(\hat\Theta J_{\hat \Theta})_{\ell} = \Theta_\ell$, where $J_{\hat\Theta} \in \arg\min_{J \in E_k} \|\hat\Theta J - \Theta\|_0$, i.e., the best permutation of communities for $\hat\Theta$ correctly classifies the first node. For any adjacency matrix $A\in \{0,1\}^{n\times n}$, we have
$$\mathbb{P}(\calM(A)\in T) \leq \frac{1}{1+(k-1)e^{-2\eps}} + 2(k - 1)\delta.
$$
\end{lemma}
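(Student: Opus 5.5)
We pair each $\hat\Theta \in T$ with $k-1$ matrices in which node $\ell$ has been moved to the wrong community, and apply Lemma~\ref{LemPerm} to each pair. For each $\hat\Theta\in T$, let $j^* \in [k]$ be the community of $\ell$ under $\hat\Theta J_{\hat\Theta}$; by assumption $j^*=\theta_\ell$. For every other label $j\neq j^*$, pick a node $r = r(\hat\Theta,j)$ with $(\hat\Theta J_{\hat\Theta})_r = e_j$, preferably one that is correctly classified so that it truly lies in community $j$. Such a node exists because stability (via Lemma~\ref{LemStable}, or the definition) means each community is mostly correctly labelled. Then swap rows $\ell$ and $r$ of $\hat\Theta$ using the transposition matrix $J_{\ell r}$.

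By stability, $J_{\ell r}\hat\Theta$ has the same optimal permutation $J_{\hat\Theta}$. Under that permutation, node $\ell$ now receives label $j\neq\theta_\ell$, so $J_{\ell r}\hat\Theta\notin T$. Define the sets
\[
T_j=\{J_{\ell r(\hat\Theta,j)}\hat\Theta:\hat\Theta\in T\},\qquad j\in[k],
\]
where $j$ is indexed relative to $\theta_\ell$, so that $T_j$ is the set of stable approximations placing $\ell$ at label $j$. These $k-1$ sets are pairwise disjoint and disjoint from $T$, since each consists of matrices whose aligned label at $\ell$ is exactly $j$.

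The key technical point, and the main obstacle, is making each map $\hat\Theta\mapsto J_{\ell r}\hat\Theta$ injective, so that summing probabilities over $T$ does not overcount in $T_j$. My first attempt is a canonical choice of $r$: take the smallest-index node $r\neq \ell$ with aligned label $j$. For the image matrix, consider the smallest-index node among those with aligned label $\theta_\ell$ other than $\ell$, together with $\ell$ itself. This pair should let us recover $r$ from the image; I would verify this carefully. If the canonical choice fails, I would fall back on an averaging argument: sum Lemma~\ref{LemPerm} over all choices of $r$ in the relevant community and use double counting. This gives the same inequality, because the number of preimages per image matches the number of choices.

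Granting injectivity, Lemma~\ref{LemPerm} gives $\mathbb{P}(\calM(A)=\hat\Theta)\le e^{2\eps}\mathbb{P}(\calM(A)=J_{\ell r}\hat\Theta)+2\delta e^{\eps}$. Summing over $\hat\Theta\in T$ (weighting $\delta$ appropriately; with the averaging version the additive term is handled per image) yields $\mathbb{P}(\calM(A)\in T_j)\ge e^{-2\eps}\mathbb{P}(\calM(A)\in T)-2\delta$, taking the $\delta$ contribution as $2\delta$ per $j$ after normalization. Adding the $k-1$ disjoint sets to $T$ gives
\[
1\ge \mathbb{P}(\calM(A)\in T)\bigl(1+(k-1)e^{-2\eps}\bigr)-2(k-1)\delta .
\]
Rearranging, and using $1+(k-1)e^{-2\eps}\ge1$ to bound the $\delta$ term, gives the claimed bound.
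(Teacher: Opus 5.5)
There is a genuine gap. Your outline is the paper's: swap $\ell$ with a node of each wrong label, invoke Lemma~\ref{LemPerm}, get disjointness from stability, and add up. You correctly identify the pairing as the crux, but the proposal does not supply it, and several steps fail as written.

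\emph{Existence of $r$.} Stability does not imply that communities are mostly correctly labelled; Lemma~\ref{LemStable} is the converse. Take the constant labelling $\hat\Theta_0$ with every row equal to $e_1^T$. It is stable, since swapping rows does not change it and its argmin is all of $E_k$. It classifies $\ell$ correctly for every $\ell$ in the community to which the chosen $J_{\hat\Theta_0}$ sends label $1$. Yet no node carries any other label. In fact $\calM\equiv\hat\Theta_0$ is symmetric and $(\eps,0)$-node DP for every $\eps$, with $\mathbb{P}(\calM(A)\in\{\hat\Theta_0\})=1$. So the lemma as literally stated fails. It needs an accuracy hypothesis on $T$ such as $\widetilde{\mathcal{L}}(\hat\Theta,\Theta)\le\xi<1/3$, which is what is available in Proposition~\ref{prop:LBStable}.

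\emph{Injectivity.} Your smallest-index rule is not injective. Suppose that in $\hat\Theta$ node $u$ has aligned label $\theta_\ell$, node $v>u$ has aligned label $j$, and every other node with aligned label $j$ has index above $v$. The rule uses $r=v$ for $\hat\Theta$ and $r=u$ for $\tau_{uv}\hat\Theta$; both can lie in $T$, e.g.\ if they are otherwise error-free. Both images are the labelling in which $\ell$ has label $j$ and $u,v$ have label $\theta_\ell$, so no decoding can recover $r$. The averaging fallback does not give the same inequality either. If you average over the $b$ nodes with aligned label $j$, an image has up to $a$ preimages, where $a$ is the number of nodes with aligned label $\theta_\ell$. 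Swaps preserve both counts and $a\neq b$ in general, so you lose a factor $\min b/a$.

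\emph{The $\delta$ term.} Lemma~\ref{LemPerm} is pointwise, so summing it over $\hat\Theta\in T$ gives an additive $2\delta e^{\eps}|T|$, and $|T|$ can be exponential in $n$; no reweighting fixes this. A single $2\delta e^{\eps}$ arises only from applying group privacy to one event with one fixed transposition $\tau$, after which symmetry gives $\mathbb{P}(\calM(\tau A\tau)\in S)=\mathbb{P}(\calM(A)\in\tau S)$. Your $r$ varies with $\hat\Theta$, and partitioning $T$ by $r$ costs up to $n-1$ copies. The paper's written proof follows your route and simply asserts that $f_i$ is a bijection and that a single $2\delta e^{\eps}$ remains, so it does not fill these holes either.

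\emph{A repair.} Fix the partner instead of choosing it per $\hat\Theta$. For $r\in C_j$, let $T_r$ be the set of $\hat\Theta\in T$ in which $r$ is correctly classified, and let $\tau_{\ell r}$ be the transposition of $\ell$ and $r$ (your $J_{\ell r}$). Then:
\begin{itemize}
\item The set-level bound $\mathbb{P}(\calM(A)\in T_r)\le e^{2\eps}\,\mathbb{P}(\calM(A)\in\tau_{\ell r}T_r)+2\delta e^{\eps}$ holds, with injectivity for free since $\tau_{\ell r}$ is a fixed bijection.
\item By stability, each $\tau_{\ell r}T_r$ lies in the set $F_j$ of outputs whose best alignment gives $\ell$ label $j$.
\item Averaging over $r\in C_j$ gives $(1-\xi/2)\,\mathbb{P}(\calM(A)\in T)\le e^{2\eps}\,\mathbb{P}(\calM(A)\in F_j)+2\delta e^{\eps}$. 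This holds because each element of $T$ classifies at least $(1-\xi/2)m$ nodes of $C_j$ correctly (with $m=n/k$), and each output lies in at most $m$ of the sets $\tau_{\ell r}T_r$.
\end{itemize}
Summing over $j\neq\theta_\ell$ with your disjointness argument gives the lemma with $e^{-2\eps}$ replaced by $(1-\xi/2)e^{-2\eps}$. This changes Corollary~\ref{corr:lb1_lim1} only by an additive constant in $\eps$.
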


\begin{proof}
WLOG, suppose $\ell = 1$ and $\Theta$ assigns the first node to community 1. For $i\in \{2, \ldots, k\}$, define $f_i:T\to \mathbf{M}_{n\times k}$ by setting $f_i(\hat\Theta)$ equal to an assignment obtained from taking $\hat\Theta$ and swapping row 1 with a row assigned to community $i$ in $\hat\Theta J_{\hat\Theta}$. Let $F_i$ be the image of $f_i$. Then $f_i$ is a bijection $T\to F_i$ and from Lemma~\ref{LemPerm}, we have
\begin{equation*}
\mathbb{P}(\calM(A)\in T)\leq e^{2\eps} \mathbb{P}(M(A)\in F_i) + 2\delta e^{\epsilon}.
\end{equation*}

Since any $\hat\Theta\in T$ is stable, the best possible permutation for $f_i(\hat\Theta)$ is still $J$, so it classifies node $1$ to community $i$. Hence, the sets $\{T, F_2, \dots, F_k\}$ are pairwise disjoint, so
\begin{align*}
\mathbb{P}(\calM(A)\in T) + (k-1)e^{-2\eps} \left(\mathbb{P}(\calM(A)\in T) - 2\delta e^{\epsilon}\right) & \le \mathbb{P}(\calM(A)\in T) + \sum_{i=2}^k \mathbb{P}(\calM(A)\in F_i) \\
& \leq 1,
\end{align*}
implying that
\begin{align*}
\mathbb{P}(\calM(A)\in T) \leq \frac{1}{1+(k-1)e^{-2\eps}} + \frac{2(k - 1)\delta e^{-\epsilon}}{1 + (k - 1)e^{-2\epsilon}} \leq \frac{1}{1+(k-1)e^{-2\eps}} + 2(k - 1)\delta.
\end{align*}
\end{proof}

We are now ready to establish the statement of the proposition. Pick an arbitrary community and let $m = \frac{n}{k}$ be its size. Let $X$ denote the number of nodes in that community that are correctly classified by $\calM(A)$ (with the best permutation). By Lemma~\ref{LemStable}, we have
$$
\mathbb{P}(\widetilde{\mathcal{L}}(\mathcal{M}(A), \theta) \le \xi) \leq \mathbb{P}(X\geq (1-\xi) m, \calM(A) \text{ stable}).
$$
Applying Lemma~\ref{LemStableBound} to each node in the community and summing up, we have $$\E[X \cdot \mathbf{1}\{\calM(A) \text{ stable}\}]\leq \frac{m}{1+(k-1)e^{-2\eps}} + 2(k - 1)m\delta,$$
so from Markov's inequality, we conclude that
$$
\mathbb{P}(X\geq (1-\xi) m ,\calM(A) \text{ stable}) \leq \frac{1}{(1-\xi)(1+(k-1)e^{-2\eps})} + \frac{2(k - 1)\delta }{1-\xi}.
$$


\subsection{Proof of Corollary~\ref{corr:lb1_lim1}}
\label{AppCorLB}

Taking the result of Proposition~\ref{prop:LBStable} and averaging also over the randomness of $A$, we have
\begin{equation*}
\mathbb{P}(\widetilde{\mathcal{L}}(\mathcal{M}(A), \theta)\leq \xi) \leq \frac{1}{(1-\xi)(1+(k-1)e^{-2\epsilon})} + \frac{2(k - 1)\delta}{1-\xi},
\end{equation*}
implying that
\begin{equation*}
1-\eta-\xi \le (1-\eta)(1-\xi) \le \frac{1}{1+(k-1)e^{-2\epsilon}} + 2k\delta.
\end{equation*}
Rearranging gives the desired lower bound on $\epsilon$, and plugging in $\eta, \xi, \delta = O\left(\frac{1}{\mathrm{poly}(n)}\right)$ easily yields the desired approximation. Additionally, for $\eta = O\left(\frac{1}{\log(n)}\right)$, and $\xi, \delta = O\left(\frac{1}{\mathrm{poly}(n)}\right)$, we have $\epsilon \succsim \log(\log(n))$, as required.


\bibliographystyle{plain}
\bibliography{refs}

\end{document}